\newcommand{\E}{\mathbb{E}}
\newcommand{\bbP}{\mathbb{P}}
\newcommand{\Var}{\mathrm{Var}}
\newcommand{\Cov}{\mathrm{Cov}}
\newcommand{\D}{\mathcal{D}}
\newcommand{\bx}{\boldsymbol{x}}
\newcommand{\by}{\boldsymbol{y}}
\newcommand{\bX}{\boldsymbol{X}}
\newcommand{\boldm}{\boldsymbol{m}}
\newcommand{\bO}{\boldsymbol{O}}
\newcommand{\bo}{\boldsymbol{o}}
\newcommand{\cP}{\mathcal{P}}
\newcommand{\keywords}[1]{\textbf{\textit{Keywords---}} #1}
\newtheorem{theorem}{Theorem}
\newtheorem{cor}{Corollary}
\newtheorem{remark}{Remark}
\title{Nuisance Function Tuning and Sample Splitting for Optimally Estimating a Doubly Robust Functional}
\author[1]{Sean McGrath}
\author[2]{Rajarshi Mukherjee}
\affil[1]{Department of Biostatistics, Yale University, New Haven, CT, USA}
\affil[2]{Department of Biostatistics, Harvard University, Boston, MA, USA}
\date{}
\begin{document}

\maketitle

\begin{abstract}
Estimators of doubly robust functionals typically rely on estimating two complex nuisance functions, such as the propensity score and conditional outcome mean for the average treatment effect functional. We consider the problem of how to estimate nuisance functions to obtain optimal rates of convergence for a doubly robust nonparametric functional that has witnessed applications across the causal inference and conditional independence testing literature. For several plug-in estimators and a first-order bias-corrected estimator, we illustrate the interplay between different tuning parameter choices for the nuisance function estimators and sample splitting strategies on the optimal rate of estimating the functional of interest. For each of these estimators and each sample splitting strategy, we show the necessity to either undersmooth or oversmooth the nuisance function estimators under low regularity conditions to obtain optimal rates of convergence for the functional of interest. Unlike the existing literature, we show that plug-in and first-order bias-corrected estimators can achieve minimax rates of convergence across all Hölder smoothness classes of the nuisance functions by careful combinations of sample splitting and nuisance function tuning strategies. We complement these results with numerical simulations illustrating the impact of different nuisance function tuning and sample splitting strategies.
\end{abstract}

\keywords{Nuisance function tuning, sample splitting, doubly robust functional}

\section{Introduction}

Inferring functionals of observed data distributions where one needs to estimate complex nuisance parameters and functions is a staple in the modern statistical paradigm. In this regard, a class of problems has gathered recent momentum through a combination of ideas from the machine learning literature and modern semiparametric theory. This class of problems is characterized by the presence of two complex nuisance functions which are often estimated through state-of-the-art machine learning techniques before being subjected to the lens of semiparametric de-biasing ideas to yield asymptotically normal inference of the functional of interest. These methods -- often referred to as Double Machine Learning (DML) methods -- furnish a rich tapestry of examples with instances spread across causal inference, missing data problems, high dimensional regression, and conditional independence testing among many \cite{bang2005doubly,chernozhukov2018double,shah2020hardness}. As a specific example, when estimating the average treatment effect of a binary exposure on an outcome of interest, one often estimates the propensity score and outcome regression before plugging them into the (now) classical doubly robust estimator of the average treatment effect (see Bang and Robins \cite{bang2005doubly} and Chernozhukov et al. \cite{chernozhukov2018double}). The appeal of this approach derives from the fact that one only needs to control the product of the errors for estimating the two nuisance functions instead of individual control of the errors, thereby justifying the doubly robust terminology. Recent literature has explored a broad set of sufficient conditions under which functionals of data-generating distributions can benefit from such DML methods. This class of functionals has been referred to by Robins et al. \cite{robins2008higher} and Rotnitzky et al. \cite{rotnitzky2021characterization} as doubly robust functionals. 

The approach described above for estimating doubly robust functionals requires estimating two nuisance functions (e.g. propensity score and outcome regression functions for causal inference type problems) through machine learning algorithms which require appropriately setting tuning parameters. Examples of such tuning parameters include regularizing parameters in penalized regression methods (e.g., Lasso) and bandwidths/resolutions in nonparametric regression methods (e.g., kernel regression). At this point, the philosophical scope of the investigation can be divided into two broad themes. In one regime, the researcher uses an optimal tuning for the estimation of nuisance functions and relies on semiparametric theory of bias correction to yield the best possible estimation of the functional of interest that can be obtained using these nuisance function estimators \cite{robins2008higher,robins2017higher,liu2021adaptive}. In a parallel tenet, researchers choose a specific estimator of the functional of interest and aim to obtain the optimal tuning/estimation of the nuisance functions that produces the best estimation of the functional under study. Several works have identified benefits of undersmoothing nuisance function estimators and performing sample splitting (i.e., estimating the nuisance function(s) from a part of the sample and using them for the functional estimation in a different part of the sample) for this second perspective \cite{hall1992effect,newey1998undersmoothing,gine2008simple,paninski2008undersmoothed,newey2018cross,van2019efficient,van2019causal,fisher2023three,kennedy2024minimax,balakrishnan2023fundamental, bruns2023augmented,mcgrath2025optimal}. In this regard, this line of research has mostly focused on demonstrating a sufficiency of the aforementioned undersmoothing principle. However, it remains unclear whether undersmoothing is also necessary. Indeed, the development of such a theory will depend on the functional of interest, type of estimator of the functional (which involves the nuisance function estimators in its construction), function space for the nuisance functions, and sample splitting strategy.  In this paper, to take the first steps towards such a theory, we consider this second perspective and explore in detail the estimation of a specific doubly robust functional concerning its interplay with the estimator chosen by the investigator, sample splitting strategies to estimate the nuisance functions, smoothness of the nuisance functions, the tuning of the nuisance function estimators, and the best possible mean squared error one might obtain as a result for estimating the functional. Since we aim to explore a complicated canvas of interplay, we keep our discussions focused through a specific functional, nuisance function estimation strategy, and type of function space for the nuisance functions that we describe below.

Specifically, in this paper, we explore the interplay between different types of estimators (e.g., plug-in type estimators, first-order bias-corrected estimator), tuning parameter choices, and sample splitting strategies through the lens of a doubly robust functional which has gained recent popularity in both the causal inference and conditional independence testing literature. We consider observing $n$ i.i.d.\ copies of $\bO = (\bX, A, Y) \sim \bbP$ where $\bX \in \mathbb{R}^d$, $A \in \mathbb{R}$, and $Y \in \mathbb{R}$
and focus on the functional  
\begin{equation*}
    \psi(\bbP) = \E_\bbP[\Cov_\bbP(A, Y|\bX)].
\end{equation*}
As we elaborate on in Section \ref{sec: motivation}, $\psi(\bbP)$ has connections to an average treatment effect functional when we consider $\bX$ to be a vector of baseline covariates, $A$ a binary treatment, and $Y$ a binary outcome of interest. Indeed, estimating $\psi(\bbP)$ is challenging due to its dependence on the nuisance functions $p(\bx):=\E_\bbP[A | \bX = \bx]$ and $b(\bx):=\E_\bbP[Y | \bX = \bx]$ when one does not assume parametric function classes, such as Hölder classes in our case. The main focus of this paper is to consider estimators of the form $\hat{\psi}(\{\bO_i\}_{i\in \mathcal{D}_3};\hat{p}_{\mathcal{D}_{1}}, \hat{b}_{\mathcal{D}_2})$ where one uses estimators of $p,b$ based on sample indices $\mathcal{D}_1,\mathcal{D}_2\in \{1,\ldots,n\}$ respectively and uses them in estimating $\psi(\bbP)$ with samples indexed by $\mathcal{D}_3$  (see Section \ref{sec: estimation strategies} for details). By changing the overlap between $\mathcal{D}_j$'s for $j=1,2,3$ we present a comprehensive study of the interplay between the optimal tuning of estimators of $p$ and $b$, sample splitting, and optimal rates of estimation of $\psi(\bbP)$. Specifically, the main contributions of this paper are listed below: 
\begin{enumerate}
    \item We derive matching necessary and sufficient conditions for tuning the nuisance function estimators to achieve optimal estimation rates for $\psi(\bbP)$ under various regularity regimes and sample splitting strategies. Specifically, we describe when one can tune the nuisance function estimators with prediction-optimal resolutions to obtain optimal rates of estimation for $\psi(\bbP)$, as well as when using prediction-\emph{suboptimal} resolutions (e.g., undersmoothing and oversmoothing) is necessary to obtain optimal rates. We ultimately show that all of the estimators we consider require performing some degree of undersmoothing or oversmoothing in low regularity regimes. We summarize these results in Figures \ref{fig:tuning double}, \ref{fig:tuning single}, and \ref{fig:tuning no} once we introduce the necessary notation and setup. 

    \item We describe the impact of different sample splitting techniques on whether the plug-in and first-order type estimators can be minimax optimal, which we illustrate in Figure \ref{fig:minimax} once we introduce the notation and setup. When not performing sample splitting, none of the estimators can achieve minimax optimal rates of convergence for any resolution choice in low regularity regimes. However, when using single or double sample splitting, some of the estimators (but not all) can be minimax optimal when appropriately tuning the nuisance function estimators. Further, we show that these different sample splitting techniques impose different requirements on how to optimally tune the nuisance function estimators. We also give a detailed description of how the best rates of estimation for these estimators change when comparing the different sample splitting strategies.     

    \item We perform numerical simulations assessing the impact of nuisance function tuning and sample splitting strategies for the plug-in estimators and the first order estimator across different regularity regimes ($\sqrt{n}$ and non-$\sqrt{n}$ regimes). In line with our  theoretical results, we find that optimal nuisance function tuning requires undersmoothing and/or oversmoothing in low regularity regimes, and that doing so leads to substantial improvements in estimating $\psi(\bbP)$ compared to prediction-optimal tuning.   
\end{enumerate}

The rest of the paper is organized as follows. In Section \ref{sec: setup}, we further motivate the functional $\psi(\bbP)$, introduce the mathematical framework under which we will operate, and discuss estimation strategies. We present the main results of the paper in Section \ref{sec: results}. In Section~\ref{sec: simulations}, we present the results of our numerical simulations. We discuss our results and directions for future work in Section \ref{sec: discussion}. In Appendix A of the Supplementary Material, we review some results on the theory of function spaces and orthonormal wavelet bases which we use throughout the paper. Appendix B contains some extensions of our results. The proofs of our results are given in Appendices C--J. Appendix K contains additional results from our numerical simulations.

\subsection{Notation}

Our results focus on the rates of convergence of estimators, which involves the following asymptotic notation. For nonnegative sequences $(a_n)_{n = 1}^{\infty}$ and $(b_n)_{n= 1}^{\infty}$, let $a_n \lesssim b_n$ and $a_n = O(b_n)$ indicate that there exists a positive constant $C$ such that $a_n \leq Cb_n$ for all $n \geq n_0$. Similarly, let $a_n \gtrsim b_n$ indicate that $a_n \geq C b_n$ for all $n \geq n_0$ (i.e., $b_n \lesssim a_n$). Let $a_n \asymp b_n$ indicate that $a_n \lesssim b_n$ and $b_n \lesssim a_n$. Moreover, let $a_n \ll b_n$ indicate that $\lim_{n \to \infty} a_n / b_n = 0$ and $a_n \gg b_n$ indicate that $\lim_{n \to \infty} a_n / b_n = \infty$. 

For $1 \leq p < \infty$ , let $L^p([0,1]^d)$ denote the space of functions $h:[0,1]^d \to \mathbb{R}$ such that $\| h \|_p := (\int_{[0,1]^d} |h(\bx)|^{p} d\bx)^{1/p} < \infty$. Similarly, let $L^{\infty}([0,1]^d)$ denote the space of functions $h:[0,1]^d \to \mathbb{R}$ such that $\| h \|_{\infty} := \sup_{\bx \in [0,1]^d} |h(\bx)| < \infty$. We consider Hölder type regularity conditions on the nuisance functions. Let $H(\alpha, M)$ denote the Hölder ball with exponent $\alpha$ and radius $M$. See Appendix A for details.

For $a, b \in \mathbb{R}$, we let $a \vee b := \max(a, b)$ and $a \wedge b := \min(a, b)$. Throughout the paper, all integrals are over $[0, 1]^d$ unless otherwise stated.

The estimators we study are based on splitting the sample into subsamples, which involves the following notation. Suppose we split the sample into $m$ disjoint subsamples denoted by $\D_1, \dots, \D_m$. For index set $\mathcal{J} \subseteq \{1, \dots, m\}$, Let $\E_{\bbP, \mathcal{J}}$ and $\Var_{\bbP, \mathcal{J}}$ denote the expectation and variance under distribution $\bbP$ over sets $\cup_{\ell \in \mathcal{J}} \D_\ell$ conditional on $\cup_{\ell \not\in \mathcal{J}} \D_\ell$.

\section{Setup} \label{sec: setup}

\subsection{Motivation and background} \label{sec: motivation}

The functional $\E_\bbP[\Cov_\bbP(A, Y|\bX)]$ arose  from the causal inference literature \cite{robins1992estimating, crump2009dealing}. Specifically, consider an epidemiological study where $\bX$ denotes a vector of baseline covariates, $A$ denotes a binary treatment, $Y$ denotes a binary outcome of interest. Under the standard assumptions of no unmeasured confounding, positivity, and consistency \cite{hernan2020causal}, the average treatment effect for the population with $\bX = \bx$ is given by $ c(\bx) = \mathbb{E}_\bbP[Y | A = 1, \bX = \bx] - \mathbb{E}_\bbP[Y | A = 0, \bX = \bx].$
Thereafter, the variance-weighted average treatment effect is given by
$ \tau := \E_\bbP \left(\frac{ \Var_\bbP(A | \bX) c(\bX) }{\E_\bbP \left[ \Var_\bbP(A | \bX) \right]}\right)$ and a straightforward calculation shows that $\tau$ can be expressed in terms of $\E_\bbP \left[ \Cov_\bbP(A, Y | \bX) \right]$ as $ \tau = \frac{\E_\bbP \left[ \Cov_\bbP(A, Y | \bX) \right]}{\E_\bbP \left[ \Var_\bbP(A | \bX) \right]}.$ It is worth noting that exploring inference on $\E_\bbP \left[ \Cov_\bbP(A, Y | \bX) \right]$ sheds light on strategies for $\E_\bbP \left[ \Var_\bbP(A | \bX) \right]$. Indeed, as noted by \cite{robins2008higher}, point estimators and confidence intervals for $\E_\bbP \left[ \Cov_\bbP(A, Y | \bX) \right]$ can be used to construct point estimators and confidence intervals for $\tau$. Apart from the variance-weighted average treatment effect, $\E_\bbP \left[ \Cov_\bbP(A, Y | \bX) \right]$ has recently appeared in other causal functionals such as in measures of causal influence \cite{diaz2023non} and in marginal interventional effects \cite{zhou2022marginal}. Beyond motivations in causal studies, $\E_\bbP \left[ \Cov_\bbP(A, Y | \bX) \right]$ has found applications in conditional independence testing \cite{shah2020hardness} and also appeared as one of the classical examples of doubly robust functionals \cite{rotnitzky2021characterization} in semiparametric theory.

To describe the literature for inferring $\psi(\bbP)$, we focus on H\"{o}lder type regularity conditions. Let $p(\bx) := \E_\bbP[A | \bX = \bx]$ and $b(\bx) := \E_\bbP[Y | \bX = \bx]$. In the parlance of the causal inference literature, $p$ is referred to as the propensity score, and $b$ as the outcome regression function. Let $\cP_{(\alpha,\beta)}$ denote the set of distributions $\bbP$ such that $A, Y \in [C_1, C_2]$  $\text{a.s.} \, \bbP$ and $p \in H(\alpha, M), \, b \in H(\beta, M)$ where $C_1, C_2 \in \mathbb{R}$ and $\alpha, \beta, M \in \mathbb{R}^+$ are known constants.\footnote{Instead of assuming that $A, Y$ are bounded, our results can be seen to hold when assuming that $\mathbb{E}_{\mathbb{P}}[A^4|\bX = \bx]$ and $\mathbb{E}_{\mathbb{P}}[Y^4|\bX = \bx]$ are uniformly bounded on $[0, 1]^d$.} For ease of illustration, we also let $\bX \sim \text{Uniform}([0,1]^d)$ although it is straightforward to see that our results hold under Hölder type regularity conditions on the density of $\bX$.\footnote{Specifically, let $\bX$ have a known density $f$ with respect to the Lebesgue measure on $\mathbb{R}^d$ that has a compact support, which we take to be $[0, 1]^d$. Our results can easily be seen to hold for any $f$ such that $ f \in H(\gamma, M) $ where $\gamma \geq \alpha \vee \beta$ and $f(\bx) \in [M_1, M_2]$ for all $\bx \in [0, 1]^d$ where $M_1, M_2 \in \mathbb{R}^{+}$ are known constant. In fact, the proofs of all of the upper bounds are written out under these conditions, and the proofs of the lower bounds can be seen to hold when using a wavelet basis orthonormalized with respect to $f$ rather than the Lebesgue measure (see Section \ref{sec: nuisance function estimators}). We elaborate on our motivation to consider $f$ known and extend our results for unknown $f$ in Appendix B.}

Robins et al.\ \cite{robins2009semiparametric,robins2008higher} established that the minimax risk for estimating $\psi(\bbP)$ associated with the model $\cP_{(\alpha,\beta)}$ is
\begin{equation}  \label{eq: minimax rate}
\inf_{\hat{\psi}}\sup_{\bbP\in \cP_{(\alpha,\beta)}}\mathbb{E}_\mathbb{\bbP}\left[(\hat{\psi}-\psi(\bbP))^2\right] \asymp \begin{cases}
n^{-1}, & \frac{\alpha + \beta}{2} \geq \frac{d}{4}\\
n^{-\frac{4\alpha + 4\beta}{2\alpha + 2\beta +d}}, & \frac{\alpha + \beta}{2} < \frac{d}{4}
\end{cases}.
\end{equation}
General rate-optimal estimators of $\psi(\bbP)$ can be constructed based on higher-order bias corrections of plug-in estimators and sample splitting \cite{robins2008higher}, which we describe next along with other possible strategies. Finally, it is worth mentioning that for $\frac{\alpha+\beta}{2}>\frac{d}{4}$, Newey and Robins \cite{newey2018cross} established that a first-order bias-corrected estimator (see Section \ref{sec: estimation strategies} for more details) achieves this sharp optimality in this regime. In contrast, our results offer a complementary set of evidence in the non-$\sqrt{n}$-regime.   

\subsection{Estimation strategies} \label{sec: estimation strategies}

We now gather possible strategies and intuitions for estimating $\psi(\bbP)$ from the literature which we will eventually compare in subsequent sections. Indeed, $\psi(\bbP)$ depends on the nuisance functions $p$ and $b$ and we can describe possible estimation procedures which depend on suitable estimators of these unknown quantities. In increasing order of sophistication, these can be described as plug-in estimation, one-step bias-corrected estimation, and higher-order bias-corrected estimation. It is now standard in the literature \cite{bickel1982adaptive, bickel1988estimating,  powell1989semiparametric,laurent1996efficient, zheng2010asymptotic,chernozhukov2018double, newey2018cross, kennedy2023towards, kennedy2024minimax} to employ additional ideas such as sample splitting to estimate the nuisance functions and thereby avoid overfitting and simplify the statistical analysis. In order to provide a streamlined analysis of these principles, below we focus on a specific class of nuisance function estimation arising from wavelet projections and set up the corresponding notation and estimation strategies.

\subsubsection{Sample Splitting} \label{sec: sample splitting}

We consider splitting the sample into disjoint subsamples, $\{\D_1, \dots, \D_m\}$. Without meaningful loss of generality (in terms of optimal rates of convergence of the estimators of $\psi(\bbP)$), we take the subsamples to be of equal size. The number of subsamples, $m$, will depend on the choice of the estimator of $\psi(\bbP)$. For notational simplicity, we consider the total size of the sample to be $m n$ so that each of the subsamples is of size $n$. 

We consider two different sample splitting techniques which differ based on whether the nuisance functions are estimated in the same subsample versus separate subsamples. In the \emph{single sample splitting} case, the nuisance functions are estimated in one subsample ($\D_j$) and $\psi(\bbP)$ is estimated from the remaining data ($\cup_{\ell \neq j} \D_\ell$). In the \emph{double sample splitting} case, the nuisance functions are estimated in separate subsamples themselves. We also consider not performing sample splitting, in which case the nuisance functions and $\psi(\bbP)$ are all estimated from $\D_1$.

In practice, one may be concerned about a loss of efficiency when performing sample splitting because it involves estimating the nuisance functions and $\psi(\bbP)$ based on a subsample of the full data set. A commonly used approach for improving efficiency when performing sample splitting is the practice of so-called \emph{cross-fitting}. Cross-fitting involves estimating $\psi(\bbP)$ exchanging the roles of $\{\D_1, \dots, \D_m\}$ and then averaging the $m!$ estimates of $\psi(\bbP)$ to obtain a final estimate. In Sections \ref{sec: double ss} and \ref{sec: single ss}, we discuss the rates of estimation of $\psi(\bbP)$ under cross-fitting strategies as well.

\subsubsection{Estimators of nuisance functions} \label{sec: nuisance function estimators}
Our strategy for estimating the nuisance functions can be described as follows. The approximate wavelet projection estimator of $p$ with resolution $k_1$ that is fit in subsample $\D_j$ is given by
\begin{equation*}
    \hat{p}_{k_1}^{(j)}(\bx) = \frac{1}{n} \sum_{i \in \mathcal{D}_j} A_i K_{V_{k_1}}(\bX_i,\bx), \qquad \bx \in [0, 1]^d
\end{equation*}
where $K_{V_{k_1}}$ denotes a kernel of an orthogonal projection onto the linear subspace $V_{k_1}$ of $L^2([0,1]^d)$. For an orthonormal basis $\{\Phi_{k_1 \boldm}\}_{\boldm \in \mathcal{Z}_{k_1}}$ of $V_{k_1}$, the kernel $K_{V_{k_1}}$ is given by
\begin{equation*}
    K_{V_{k_1}}(\bx, \by) =  \sum_{\boldm \in \mathcal{Z}_{k_1}} \Phi_{k_1 \boldm}(\bx) \Phi_{k_1 \boldm}(\by), \qquad \bx, \by \in [0, 1]^d.
\end{equation*}
In Appendix A, we define the orthonormal basis we adopt. Intuitively, $\hat{p}_{k_1}^{(j)}$ estimates the projection of $p$ onto the span of the wavelet basis functions at the $k_1$th resolution level of a multiresolution analysis of $L^2([0,1]^d)$. We estimate $b$ analogously by
\begin{equation*}
    \hat{b}_{k_2}^{(j^{\prime})}(\bx) = \frac{1}{n} \sum_{i \in \mathcal{D}_{j^{\prime}}} Y_i K_{V_{k_2}}(\bX_i,\bx), \qquad \bx \in [0, 1]^d.
\end{equation*}

The choice of $k_1$ and $k_2$ controls the bias-variance trade-off of the estimators of $p$ and $b$. When choosing resolutions for $\hat{p}_{k_1}^{(j)}$ and $\hat{b}_{k_2}^{(j^{\prime})}$ to minimize their respective mean (integrated) squared errors, i.e. for minimizing the rates of convergence of 
\begin{align*}
    & \sup_{\bbP \in \cP_{(\alpha, \beta)}} \mathbb{E}_\bbP \int (\hat{p}_{k_1}^{(j)}(\bx) - p(\bx))^2 d\bx \quad \text{and} \quad \sup_{\bbP \in \cP_{(\alpha, \beta)}} \mathbb{E}_\bbP \int (\hat{b}_{k_2}^{(j^{\prime})}(\bx) - b(\bx))^2 d\bx,
\end{align*}
the optimal choices are $k_1^{\text{pred}} = c_1 n^\frac{d}{2\alpha + d}$ and $k_2^{\text{pred}} = c_2 n^\frac{d}{2\beta + d}$, respectively, for fixed $c_1, c_2 > 0$ (Hardle et al. \cite{hardle2012wavelets}, Chapter 10). We refer to such choices of $k_1$ and $k_2$ as \emph{prediction-optimal} resolutions. Choosing $k_1$ that grows faster than $k_1^{\text{pred}}$ is referred to as \emph{undersmoothing}, as the squared bias of $\hat{p}_{k_1}^{(j)}$ shrinks faster than its variance; Choosing $k_1$ that grows slower than $k_1^{\text{pred}}$ is referred to as \emph{oversmoothing}. Undersmoothing and oversmoothing are defined analogously for $k_2$. As we will see in Section \ref{sec: results}, the choice of $k_1$ and $k_2$ will be critical in constructing rate optimal estimators of $\psi(\bbP)$. 

Finally, it is worth mentioning that $\hat{p}_{k_1}^{(j)},\hat{b}_{k_2}^{(j')}$ described above uses wavelet basis orthonormal with respect to the Lebesgue measure since $\bX \sim \text{Uniform}([0,1]^d)$ in our set up. As described earlier, one can easily extend the results to the known density of $\bX$ by simple modifications of our arguments and by using an orthonormalization of the basis functions under the known density. The case of unknown covariate density presents several subtleties in terms of information-theoretic limits of the problem, the nature of sample splitting, and the choice of estimators of nuisance functions. We defer these discussions to Appendix B.

\subsubsection{Estimators of $\psi(\bbP)$} \label{sec: estimators of psi}

We next describe strategies for constructing estimators of $\psi(\bbP)$ based on these nuisance function estimators.

\paragraph{Plug-in type estimators}

We consider the following three plug-in type estimators. 

Considering the representation $\psi(\bbP) = \E_{\bbP}[AY] - \E_{\bbP}[p(\bX)b(\bX)]$, one can estimate $\E_{\bbP}[AY]$ at a $\sqrt{n}$ rate by taking the sample mean of the $A_iY_i$. The main challenge lies in estimating $\E_{\bbP}[p(\bX)b(\bX)]$ due to its dependence on the nuisance functions. The first plug-in type estimators we consider are based on different approaches for estimating $\E_{\bbP}[p(\bX)b(\bX)]$. The \emph{integral-based plug-in estimator} of $\psi(\bbP)$ given by
\begin{equation*}
    \hat{\psi}_{k_1, k_2}^{\mathrm{INT}} = \frac{1}{n}\sum_{i \in \D_{\ell_3}} A_i Y_i -  \int \hat{p}_{k_1}^{(\ell_1)}(\bx)\hat{b}_{k_2}^{(\ell_2)}(\bx)d\bx .
\end{equation*}
In practice, evaluating the integral in $\hat{\psi}_{k_1, k_2}^{\mathrm{INT}}$ may be challenging when $d$ is moderately large. Naturally, one may consider a  plug-in estimator of $\psi(\bbP)$  which replaces the integral appearing in $\hat{\psi}_{k_1, k_2}^{\mathrm{INT}}$ with Monte Carlo integration over a subsample of the data. The \emph{Monte Carlo-based plug-in estimator} of $\psi(\bbP)$ is given by
\begin{equation*}
    \hat{\psi}^{\mathrm{MC}}_{k_1, k_2} = \frac{1}{n}\sum_{i \in \D_{\ell_3}} A_i Y_i -  \frac{1}{n}\sum_{i \in \D_{\ell_4}} \hat{p}^{(\ell_1)}_{k_1}(\bX_i)\hat{b}^{(\ell_2)}_{k_2}(\bX_i).
\end{equation*}

We also consider plug-in estimators that only depends on one nuisance function. Considering the representation $\psi(\bbP) = \E_{\bbP}[A(Y-b(\bX))]$, the \emph{Newey and Robins plug-in estimator} of $\psi(\bbP)$ is given by
\begin{equation*}
   \hat{\psi}_{k}^{\mathrm{NR}} = \frac{1}{n} \sum_{i \in \D_{\ell_2}} A_i(Y_i - \hat{b}_k^{(\ell_1)}(\bX_i)).
\end{equation*}
We refer to this estimator as the Newey and Robins plug-in estimator because a similar estimator was considered by Newey and Robins \cite{newey2018cross}. However, it is important to note that Newey and Robins \cite{newey2018cross} used regression splines to estimate $b$ rather than approximate wavelet projections as we consider. Some of the implications of this distinction will be discussed in Section \ref{sec: single ss}. One could also consider a similar estimator by exchanging the roles of $A$ and $Y$, i.e. $\frac{1}{n} \sum_{i \in \D_{\ell_2}} Y_i(A_i - \hat{p}_k^{(\ell_1)}(\bX_i))$. As we will note in Section \ref{sec: single ss one}, analogous results can be seen to hold for this estimator. 

\paragraph{First-order bias-corrected estimators}

Consider an initial estimator $\hat{\bbP}$ of $\bbP$ based on some nonparametric estimators $\hat{p}$ and $\hat{b}$. Typically, one chooses rate-optimal estimators of the nuisance functions and the resulting plug-in estimator, $\psi(\hat{\bbP})$, has a sub-optimal rate of estimation in low regularity regimes. Consequently, one may consider a first-order bias-corrected estimator. Such estimators are based on a von Mises expansion of $\psi$
\begin{equation*}
   \psi(\bbP) =  \psi(\hat{\bbP}) + \int \mathrm{IF}_{\hat{\bbP}}(\bo) d\bbP(\bo) + R_2(\hat{\bbP}, \bbP)
\end{equation*}
where $\mathrm{IF}_{\bbP}(\bo) = (y-b(\bx))(a - p(\bx)) - \psi(\bbP)$ is the first-order influence function and $R_2(\hat{\bbP}, \bbP)$ is a remainder term \cite{kennedy2024semiparametric, tsiatis2006semiparametric, robins2008higher}. Adding an estimate of the derivative term to $\psi(\hat{\bbP})$ gives rise to the first-order bias-corrected estimator $\hat{\psi}^{\mathrm{IF,1}}$
\begin{equation*}
    \hat{\psi}^{\mathrm{IF,1}} := \psi(\hat{\bbP})+\frac{1}{n}\sum\limits_{i=1}^n\mathrm{IF}_{\hat{\bbP}}(\bO_i)  = \frac{1}{n} \sum_{i = 1}^n (A_i - \hat{p}(\bX_i))(Y_i - \hat{b}(\bX_i)).
\end{equation*}
In particular, we consider the following first-order estimator
\begin{equation*}
    \hat{\psi}^{\mathrm{IF}}_{k_1, k_2} = \frac{1}{n} \sum_{i \in \D_{\ell_3}} (A_i - \hat{p}^{(\ell_1)}_{k_1}(\bX_i))(Y_i - \hat{b}^{(\ell_2)}_{k_2}(\bX_i)).
\end{equation*}

First-order bias-corrected estimators of this nature have witnessed immense popularity owing to its attractive doubly robust properties \cite{bang2005doubly,chernozhukov2018double} and the implied flexibility thereof to use off-the-shelf machine learning tools to estimate the nuisance functions $p$ and $b$. This flexibility, however, comes at a subtle price when viewed through the lens of minimax optimality in the non-$\sqrt{n}$ rate of convergence regime. Indeed, off-the-shelf machine learning methods are typically aimed at estimating $p$ and $b$ optimally, and, as we show in our main results, such a strategy can imply sub-optimal rates of convergence for estimating $\psi(\mathbb{P})$.

\paragraph{Higher-order estimators}

Although we do not analyze the following estimator (which has been now thoroughly analyzed elsewhere \cite{robins2008higher}), we provide the last rung of the ladder for the sake of completeness and also to benchmark our somewhat simpler estimators compared to existing ones. In particular, to further reduce the bias of the first-order estimator and achieve rate optimality in low regularity regimes, higher-order bias corrections can be performed \cite{robins2008higher, robins2017minimax, robins2017higher}. 
In this regard, Robins et al. \cite{robins2008higher} showed that one version of a  second-order sample split estimator of $\psi(\bbP)$ is given by
\begin{align*}
    \hat{\psi}^{\mathrm{IF,2}}_k & = \frac{1}{n} \sum_{i \in \D_2} (A_i - \hat{p}^{(1)}(\bX_i))(Y_i - \hat{b}^{(1)}(\bX_i)) \\ & \quad - \frac{1}{n(n-1)} \sum_{\substack{i_1, i_2 \in \D_2 \\ i_1 \neq i_2}} ( A_{i_1} - \hat{p}^{(1)}(\bX_{i_1}) ) K_{V_k}(\bX_{i_1}, \bX_{i_2}) ( Y_{i_2} - \hat{b}^{(1)}(\bX_{i_2}) ).
\end{align*}
It has been shown that $\hat{\psi}^{\mathrm{IF,2}}_k$ is rate optimal for a suitably chosen resolution $k$ provided that one tunes $\hat{p}^{(1)}$ and $\hat{b}^{(1)}$ so that they are rate optimal. In fact, one can be agnostic about the tuning of $\hat{p}^{(1)}$ and $\hat{b}^{(1)}$ by performing further bias corrections on $\hat{\psi}^{\mathrm{IF,2}}_k$ (see Robins et al. \cite{robins2017higher} and Remark 7 in Liu et al. \cite{liu2017semiparametric}). However, our work considers a different estimation strategy based on optimally tuning estimators of $p$ and $b$ for downstream inference on $\psi(\bbP)$ when using plug-in and first-order estimators.

\section{Main results} \label{sec: results}

Before presenting the theoretical results in full detail, we first give a high-level summary of the impact of sample splitting and optimal nuisance function tuning. 
    
The impact of sample splitting is largely seen in the reduction the bias of the estimator of $\psi(\bbP)$. For any $\bbP \in \cP_{(\alpha,\beta)}$ and each estimator $\hat{\psi}_{k_1, k_2}$, we can decompose its bias into three parts: 
    \begin{equation*}
        \E_\bbP\left(\hat{\psi}_{k_1, k_2} - \psi(\bbP)\right) = g_{1, \bbP}(k_1, k_2, n) + g_{2, \bbP}(k_1, k_2, n) + g_{3, \bbP}(k_1, k_2, n).
    \end{equation*}
    The first part, $g_{1, \bbP}(k_1, k_2, n)$, arises due to using the same sample to estimate the nuisance functions $p,b$ and the functional of interest $\psi(\bbP)$ (i.e., estimation of $\psi(\bbP)$ involves $\hat{p}(\bX_i)$ where $\bX_i$ was used to obtain $\hat{p}$), called \emph{own-observation bias} by Newey and Robins \cite{newey2018cross} in a similar context. The second part, $g_{2, \bbP}(k_1, k_2, n)$ arises due to estimating the nuisance functions $p, b$ in the same subsample, called \emph{non-linearity bias} by Newey and Robins \cite{newey2018cross} because $\E_{\bbP}[\hat{p}(\bx) \hat{b}(\bx)] \neq \E_{\bbP}[\hat{p}(\bx)]  \E_{\bbP}[\hat{b}(\bx)]$. 
    The last part, $g_{3, \bbP}(k_1, k_2, n)$, which we refer to as the \emph{approximation bias}, is the remaining bias of the estimator. While estimators without sample splitting have all three sources of bias, single sample splitting removes own-observation bias and double sample splitting additionally removes non-linearity bias. 

    Our results provide sharp bounds on these three sources of bias. We show that the own-observation bias can be characterized as $\sup_{\bbP \in \cP_{(\alpha,\beta)}} \left| g_{1, \bbP}(k_1,k_2,n) \right| \asymp  \frac{k_1 \vee k_2}{n}$ and the non-linearity bias can be characterized by $\sup_{\bbP \in \cP_{(\alpha,\beta)}} \left| g_{2, \bbP}(k_1,k_2,n) \right| \asymp  \frac{k_1 \wedge k_2}{n}$. The form of $\sup_{\bbP \in \cP_{(\alpha,\beta)}} \left| g_{3, \bbP}(k_1, k_2, n) \right|$ depends on the choice of estimator, where
    \begin{equation*}
        \sup_{\bbP \in \cP_{(\alpha,\beta)}} \left| g_{3, \bbP}(k_1,k_2,n) \right| \asymp \begin{cases}  k^{-(\alpha+\beta)/d} \quad & \text{for $\hat{\psi}_{k}^{\mathrm{NR}}$} \\ (k_1 \wedge k_2)^{-(\alpha+\beta)/d}  & \text{for $\hat{\psi}_{k_1, k_2}^{\mathrm{INT}}, \hat{\psi}_{k_1, k_2}^{\mathrm{MC}}$} \\(k_1 \vee k_2)^{-(\alpha+\beta)/d} & \text{for $\hat{\psi}_{k_1, k_2}^{\mathrm{IF}}$}\\ \end{cases}.
    \end{equation*}
    
    To optimally tune the nuisance function estimators, we choose $k_1, k_2$ to minimize the rate of convergence of the mean squared error of the estimator. In regimes with strong regularity conditions, the optimal $k_1, k_2$ coincide with prediction-optimal choices. However, without such strong regularity conditions, the optimal $k_1, k_2$ involve performing undersmoothing and/or oversmoothing to balance the sizes of the different sources of bias and variance. With suitable nuisance function tuning and appropriate sample splitting, we show that several of these estimators can achieve minimax rates of convergence across all Hölder smoothness classes.

    In the following subsection, we make these notions more rigorous. We analyze the estimators with double sample splitting in Section \ref{sec: double ss}, single sample splitting in Section \ref{sec: single ss}, and no sample splitting in Section \ref{sec: no ss}.
    
\subsection{Double sample splitting} \label{sec: double ss}

In this subsection, we analyze the estimators in the double sample splitting case. We analyze $\hat{\psi}_{k_1, k_2}^{\mathrm{INT}}$, $\hat{\psi}_{k_1, k_2}^{\mathrm{MC}}$, and $\hat{\psi}_{k_1, k_2}^{\mathrm{IF}}$ as defined in Section \ref{sec: estimators of psi}, where we take each of the $\ell_j$ to be distinct.

The following theorem establishes bounds on the bias and variance of the estimators in terms of the resolutions of the nuisance function estimators and the sample size. We include lower bounds on the bias of all the estimators in order to establish when using prediction-optimal resolutions is sub-optimal. We include additional lower bounds for $\hat{\psi}_{k_1, k_2}^{\mathrm{MC}}$ and $\hat{\psi}_{k_1, k_2}^{\mathrm{IF}}$ for the following reasons. Since the upper bounds corresponding to $\hat{\psi}^{\mathrm{MC}}_{k_1, k_2}$ are too large to establish minimax rate optimality in low regularity regimes, we provide corresponding lower bounds in order to eventually show that $\hat{\psi}^{\mathrm{MC}}_{k_1, k_2}$ is sub-optimal for any choice of $k_1$ and $k_2$ in low regularity regimes. We include lower bounds for $\hat{\psi}^{\mathrm{IF}}_{k_1, k_2}$ because we will show that rate optimality is impossible to achieve for this estimator when letting the resolutions of the two nuisance function estimators have the same rate (see Remark \ref{rem: same resolution known f}).

\begin{theorem} \label{theorem: plugin known f} 
Suppose that double sample splitting is performed. Under the assumptions given in Section \ref{sec: motivation}, the following statements hold:
\begin{enumerate} 
    \item[1)] The estimator $\hat{\psi}_{k_1, k_2}^{\mathrm{INT}}$ satisfies
\begin{align*}
    \sup_{\bbP \in \cP_{(\alpha,\beta)}} \left|\E_\bbP\left(\hat{\psi}_{k_1, k_2}^{\mathrm{INT}} - \psi(\bbP)\right)\right| & \asymp (k_1 \wedge k_2)^{-(\alpha + \beta)/d} \\
    \sup_{\bbP \in \cP_{(\alpha,\beta)}} \Var_\bbP(\hat{\psi}_{k_1, k_2}^{\mathrm{INT}}) & \lesssim \frac{1}{n}  +  \frac{k_1 \wedge k_2}{n^2}. 
\end{align*}

    \item[2a)] The estimator $\hat{\psi}_{k_1, k_2}^{\mathrm{MC}}$ satisfies
\begin{align*}
    \sup_{\bbP \in \cP_{(\alpha,\beta)}} \left|\E_\bbP\left(\hat{\psi}_{k_1, k_2}^{\mathrm{MC}} - \psi(\bbP)\right)\right| & \asymp (k_1 \wedge k_2)^{-(\alpha + \beta)/d} \\
    \sup_{\bbP \in \cP_{(\alpha,\beta)}} \Var_\bbP(\hat{\psi}_{k_1, k_2}^{\mathrm{MC}}) & \lesssim \frac{1}{n}  +  \frac{k_1 \vee k_2}{n^2} + \frac{k_1k_2}{n^3}.
\end{align*}
    \item[2b)]
    There exists a $\bbP \in \cP_{(\alpha,\beta)}$ such that for $k_1, k_2 \gg n$
\begin{align*}
    \left|\E_\bbP\left(\hat{\psi}^{\mathrm{MC}}_{k_1, k_2} - \psi(\bbP)\right)\right| & \gtrsim (k_1 \wedge k_2)^{-(\alpha + \beta)/d} \\
    \Var_\bbP(\hat{\psi}^{\mathrm{MC}}_{k_1, k_2}) & \gtrsim \frac{k_1k_2}{n^3}.
\end{align*}
    
    \item[3a)] The estimator $\hat{\psi}^{\mathrm{IF}}_{k_1, k_2}$ satisfies
\begin{align*}
    \sup_{\bbP \in \cP_{(\alpha,\beta)}} \left|\E_\bbP\left(\hat{\psi}^{\mathrm{IF}}_{k_1, k_2} - \psi(\bbP)\right)\right| & \asymp (k_1 \vee k_2)^{-(\alpha + \beta)/d}   \\
    \sup_{\bbP \in \cP_{(\alpha,\beta)}} \Var_\bbP(\hat{\psi}^{\mathrm{IF}}_{k_1, k_2}) & \lesssim  \frac{1}{n}  +  \frac{k_1 \vee k_2}{n^2} + \frac{k_1k_2}{n^3}.
\end{align*}
\item[3b)] There exists a $\bbP \in \cP_{(\alpha,\beta)}$ such that for $k_1,k_2 \gg n$
\begin{align*}
    \left|\E_\bbP\left(\hat{\psi}^{\mathrm{IF}}_{k_1,k_2} - \psi(\bbP)\right)\right| & \gtrsim (k_1 \vee k_2)^{-(\alpha + \beta)/d} \\
    \Var_\bbP(\hat{\psi}^{\mathrm{IF}}_{k_1,k_2}) & \gtrsim \frac{k_1k_2}{n^3}.
\end{align*}
\end{enumerate}
\end{theorem}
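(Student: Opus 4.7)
The plan is to exploit two structural facts throughout: (i) the nested wavelet projection spaces satisfy $V_{k_1} \subseteq V_{k_2}$ for $k_1 \leq k_2$ with the reproducing property $\int K_{V_{k_1}}(\bx,\by)K_{V_{k_2}}(\by,\bz)d\by = K_{V_{k_1 \wedge k_2}}(\bx,\bz)$, and (ii) under double sample splitting, $\hat{p}_{k_1}^{(\ell_1)}$ and $\hat{b}_{k_2}^{(\ell_2)}$ are independent of each other and of $\D_{\ell_3},\D_{\ell_4}$. Under the uniform design, direct computation gives $\E[\hat{p}_{k_1}^{(\ell_1)}(\bx)] = p_{k_1}(\bx)$ and $\E[\hat{b}_{k_2}^{(\ell_2)}(\bx)] = b_{k_2}(\bx)$, the $L^2$ projections onto $V_{k_1},V_{k_2}$, and the Hölder assumption yields the standard wavelet approximation bounds $\|p-p_k\|_2 \lesssim k^{-\alpha/d}$ and $\|b-b_k\|_2 \lesssim k^{-\beta/d}$.

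For the bias of $\hat{\psi}^{\mathrm{INT}}_{k_1,k_2}$ and $\hat{\psi}^{\mathrm{MC}}_{k_1,k_2}$, both reduce to $\int (pb - p_{k_1}b_{k_2})\,d\bx$ (the MC case because $\bX_i$ is uniform on $\D_{\ell_4}$). Assuming WLOG $k_1 \leq k_2$, I would use the nesting $V_{k_1} \subset V_{k_2}$ to decompose this into $\int(p - p_{k_2})(b - b_{k_2})\,d\bx + \int(p - p_{k_1})(b_{k_2} - b_{k_1})\,d\bx$, where orthogonality kills all remaining cross terms; each piece is then bounded by Cauchy--Schwarz, both producing $(k_1 \wedge k_2)^{-(\alpha+\beta)/d}$. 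For $\hat{\psi}^{\mathrm{IF}}_{k_1,k_2}$, conditioning on the training folds and using independence of $\hat{p},\hat{b}$ together with $\E[\Cov(A,Y|\bX)] = \psi(\bbP)$ collapses the bias to $\int (p - p_{k_1})(b - b_{k_2})\,d\bx$. Expanding in the wavelet basis, only scales $\geq k_1 \vee k_2$ contribute since $p - p_{k_1}$ lives in $V_{k_1}^{\perp}$ and $b - b_{k_2}$ in $V_{k_2}^{\perp}$, which yields the $(k_1 \vee k_2)^{-(\alpha+\beta)/d}$ rate after Cauchy--Schwarz. This is the source of the $\min$ versus $\max$ contrast between the plug-in and bias-corrected estimators.

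For the variances, I would first rewrite $\int \hat{p}_{k_1}^{(\ell_1)}\hat{b}_{k_2}^{(\ell_2)}\,d\bx$ as $\frac{1}{n^2}\sum_{j \in \D_{\ell_1},\,k \in \D_{\ell_2}} A_j Y_k K_{V_{k_1\wedge k_2}}(\bX_j,\bX_k)$ via the reproducing property, then decompose its variance by cases of index overlap. Using $\int K_{V_k}^2(\bx,\by)\,d\by = K_{V_k}(\bx,\bx) \asymp k$ gives the $\frac{k_1 \wedge k_2}{n^2}$ diagonal contribution, while the $\frac{1}{n}$ term comes from the sample mean $\frac{1}{n}\sum A_iY_i$. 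For $\hat{\psi}^{\mathrm{MC}}$ and $\hat{\psi}^{\mathrm{IF}}$, the same idea applied to a triple sum over $\D_{\ell_1} \times \D_{\ell_2} \times \D_{\ell_4}$ (resp.\ $\D_{\ell_3}$) produces an extra $\frac{k_1 k_2}{n^3}$ term from the fully-distinct-indices case (since the two kernels no longer collapse), with $\frac{k_1 \vee k_2}{n^2}$ arising from two-index coincidences; independence across subsamples kills mixed covariance terms that would otherwise be fatal.

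For the lower bounds, I would construct specific laws saturating the Hölder balls. For the bias lower bounds, I would pick $p$ and $b$ whose wavelet coefficients concentrate on the relevant scale --- the coarsest scale $k_1 \wedge k_2$ for INT/MC and the finest scale $k_1 \vee k_2$ for IF --- with aligned signs so that the dominant term in the decomposition does not cancel. For parts 2b and 3b, I would choose a $\bbP$ with $\Var(A|\bX),\Var(Y|\bX)$ bounded away from zero so that the fully-distinct-index case in the triple-sum variance decomposition is itself bounded below by a constant multiple of $k_1k_2/n^3$; when $k_1,k_2 \gg n$ this case dominates and the independence of the subsamples ensures that no cancellation occurs with the other cases. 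The main obstacle I anticipate is the bookkeeping of the triple- and quadruple-sum variance decompositions for $\hat{\psi}^{\mathrm{MC}}$ and $\hat{\psi}^{\mathrm{IF}}$: there are many coincidence cases, and verifying that the cross terms either vanish by independence or are absorbed into the claimed rates via the reproducing-kernel identity is the part most prone to error. A secondary subtlety is the sign-indefinite matching lower bound on the IF bias, which requires a careful alignment of wavelet coefficients of $p$ and $b$ at scales above $k_1 \vee k_2$.
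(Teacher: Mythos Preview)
Your proposal is correct and follows the same overall architecture as the paper, but with a few genuine differences in execution that are worth noting. For the INT variance, your use of the reproducing identity $\int K_{V_{k_1}}(\cdot,\by)K_{V_{k_2}}(\by,\cdot)\,d\by = K_{V_{k_1\wedge k_2}}$ to collapse $\int\hat p_{k_1}\hat b_{k_2}$ into a single-kernel bilinear form over $\D_{\ell_1}\times\D_{\ell_2}$ is cleaner than the paper's route, which instead develops general lemmas bounding $\E[\tilde p(\bx)\tilde p(\by)]$ and $\iint \E|K_{V_{k_1}}(\bX,\bx)K_{V_{k_2}}(\bX,\by)|\,d\bx\,d\by$ (their Lemmas~1--2) and applies them inside a law-of-total-variance decomposition. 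Conversely, the paper writes the bias proofs for a general bounded density $f$, which forces an extra orthogonality trick --- rewriting $\int bf\,\Pi(p|V_{k_1}^\perp)$ as $\int \Pi(bf|V_{k_1}^\perp)\Pi(p|V_{k_1}^\perp)$ --- whereas your two-term decomposition $\int(p-p_{k_2})(b-b_{k_2})+\int(p-p_{k_1})(b_{k_2}-b_{k_1})$ exploits the uniform design directly and is more transparent; with $f=1$ both reduce to the single expression $\int(p-p_{k_1\wedge k_2})(b-b_{k_1\wedge k_2})$. For the lower bounds, the paper uses $p,b$ with saturated wavelet coefficients at \emph{all} scales and computes via Parseval, rather than concentrating at a single scale as you propose; either works. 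One small caution: your phrase ``no cancellation occurs with the other cases'' for the variance lower bound in 2b/3b is not quite an argument --- the paper makes this rigorous by conditioning, i.e.\ $\Var\geq \E_{\D_{\ell_1},\D_{\ell_2}}[\Var_{\D_{\ell_4}}(\cdot)]$, and then showing the inner conditional variance already has the right size.
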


\noindent We defer the proof of this result to Appendix C and discuss some subtleties of the proof instead. The main architecture of the proof relies on expressing the bias for each of the estimators in terms of projections of $p$ and $b$ onto the suitable subspaces of dimensions $k_1$ and $k_2$ respectively and subsequently using the geometry of these subspaces to inform our arguments. Specifically, we use known results on function approximations in Hölder spaces based on wavelet projections (see Appendix A) to obtain tight bounds (both upper and lower) on the bias. The understanding of the variances, in turn, relies on the compact support of the wavelet bases used in the construction of the estimators of $p$ and $b$.

The following corollary establishes the optimal resolution choices for each of the estimators based on the bounds given in Theorem \ref{theorem: plugin known f} and whether such resolution choices result in achieving minimax rate optimality for $\psi(\bbP)$ (i.e., the rate described by (\ref{eq: minimax rate})).

\begin{cor} \label{cor: opt known f double}
If $\frac{\alpha + \beta}{2} \geq \frac{d}{4}$, then
\begin{enumerate}
    \item $\hat{\psi}^{\mathrm{INT}}_{k_1, k_2}$ is minimax rate optimal when choosing $ n^{\frac{d}{2\alpha + 2\beta}} \lesssim k_1 \wedge k_2 \lesssim n$.
    \item $\hat{\psi}^{\mathrm{MC}}_{k_1, k_2}$ is minimax rate optimal when choosing $n^{\frac{d}{2\alpha + 2\beta}} \lesssim k_\ell \lesssim n$ for $\ell = 1, 2$.
    \item $\hat{\psi}^{\mathrm{IF}}_{k_1, k_2}$ is minimax rate optimal when choosing $ n^{\frac{d}{2\alpha + 2\beta}} \lesssim k_1 \vee k_2 \lesssim n$. 
\end{enumerate}
If $\frac{\alpha + \beta}{2} < \frac{d}{4}$, then
\begin{enumerate}
    \item $\hat{\psi}^{\mathrm{INT}}_{k_1, k_2}$ is minimax rate optimal when choosing $k_1 \wedge k_2 \asymp n^{\frac{2d}{2\alpha + 2\beta + d}}$.
    \item $\hat{\psi}^{\mathrm{MC}}_{k_1, k_2}$ cannot be minimax rate optimal for any choice of $k_1, k_2$. The best rate of estimation of $\psi(\bbP)$ for $\hat{\psi}^{\mathrm{MC}}_{k_1, k_2}$ is $n^{-\frac{3\alpha + 3\beta}{\alpha + \beta + d}}$ which occurs when choosing $k_\ell \asymp  n^{\frac{3d}{2\alpha + 2\beta + 2d}} $ for $\ell = 1, 2$.
    \item $\hat{\psi}^{\mathrm{IF}}_{k_1, k_2}$ is minimax rate optimal when choosing $k_1 \vee k_2 \asymp n^{\frac{2d}{2\alpha + 2\beta + d}}$ and $k_1 \wedge k_2 \lesssim n $.
\end{enumerate}
\end{cor}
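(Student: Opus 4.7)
The plan is to combine the decomposition $\E_\bbP(\hat\psi - \psi(\bbP))^2 = (\E_\bbP\hat\psi - \psi(\bbP))^2 + \Var_\bbP(\hat\psi)$ with the bias and variance bounds of Theorem \ref{theorem: plugin known f}, then optimize the resulting mean squared error over admissible $(k_1,k_2)$, comparing the outcome to the minimax rate in (\ref{eq: minimax rate}). The argument splits naturally along the two regularity regimes of (\ref{eq: minimax rate}) and across the three estimators.

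In the $\sqrt n$-regime $(\alpha+\beta)/2 \geq d/4$, matching the target $n^{-1}$ amounts to forcing both the squared bias and every variance term listed in Theorem \ref{theorem: plugin known f} to be $O(n^{-1})$. For $\hat\psi^{\mathrm{INT}}_{k_1,k_2}$ the bias and the $k$-dependent variance piece both depend only on $k_1 \wedge k_2$, so the two inequalities merge into $n^{d/(2\alpha+2\beta)} \lesssim k_1 \wedge k_2 \lesssim n$. For $\hat\psi^{\mathrm{MC}}_{k_1,k_2}$ the bias still depends on the minimum but the variance introduces $(k_1\vee k_2)/n^2$ and $k_1 k_2/n^3$; the bias forces each $k_\ell \gtrsim n^{d/(2\alpha+2\beta)}$ and the variance forces each $k_\ell \lesssim n$ (the product term being automatic once both $k_\ell \lesssim n$). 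For $\hat\psi^{\mathrm{IF}}_{k_1,k_2}$ the bias depends on $k_1 \vee k_2$ rather than on the minimum, producing the analogous window $n^{d/(2\alpha+2\beta)} \lesssim k_1 \vee k_2 \lesssim n$.

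In the low regularity regime $(\alpha+\beta)/2 < d/4$ I would balance the squared bias against the dominant variance term. For $\hat\psi^{\mathrm{INT}}_{k_1,k_2}$, equating $(k_1\wedge k_2)^{-2(\alpha+\beta)/d}$ with $(k_1\wedge k_2)/n^2$ yields $k_1\wedge k_2 \asymp n^{2d/(2\alpha+2\beta+d)}$ and MSE of order $n^{-4(\alpha+\beta)/(2\alpha+2\beta+d)}$, matching (\ref{eq: minimax rate}). For $\hat\psi^{\mathrm{IF}}_{k_1,k_2}$ the same balance applied to the maximum gives $k_1\vee k_2 \asymp n^{2d/(2\alpha+2\beta+d)}$, and the extra requirement $k_1 \wedge k_2 \lesssim n$ forces $k_1 k_2/n^3 \lesssim (k_1 \vee k_2)/n^2$ so the variance never exceeds the bias order. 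For $\hat\psi^{\mathrm{MC}}_{k_1,k_2}$, taking $k_1 = k_2 = k$ and balancing the bias against the product term $k^2/n^3$ gives $k \asymp n^{3d/(2\alpha+2\beta+2d)}$ and MSE of order $n^{-3(\alpha+\beta)/(\alpha+\beta+d)}$, once I check that $1/n$ and $k/n^2$ are both dominated by $k^2/n^3$ in this regime.

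The main obstacle is to prove that no choice of $(k_1,k_2)$ allows $\hat\psi^{\mathrm{MC}}$ to improve upon $n^{-3(\alpha+\beta)/(\alpha+\beta+d)}$, which I would do by combining the lower bounds in parts 1) and 2b) of Theorem \ref{theorem: plugin known f} over two cases. If $k_1 \wedge k_2 \lesssim n$, the bias lower bound alone gives MSE $\gtrsim n^{-2(\alpha+\beta)/d}$, and the elementary inequality $2(\alpha+\beta+d) \leq 3d$ in low regularity delivers $n^{-2(\alpha+\beta)/d} \gtrsim n^{-3(\alpha+\beta)/(\alpha+\beta+d)}$. If $k_1, k_2 \gg n$, the variance lower bound together with $k_1 k_2 \geq (k_1 \wedge k_2)^2$ yields MSE $\gtrsim (k_1\wedge k_2)^{-2(\alpha+\beta)/d} + (k_1\wedge k_2)^2/n^3$, whose minimum over positive $k_1 \wedge k_2$ is attained at $n^{3d/(2\alpha+2\beta+2d)}$ (which exceeds $n$ precisely in low regularity) and equals $n^{-3(\alpha+\beta)/(\alpha+\beta+d)}$. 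The strict inequality $3(\alpha+\beta)(2\alpha+2\beta+d) < 4(\alpha+\beta)(\alpha+\beta+d)$, equivalent to $2(\alpha+\beta) < d$, then shows that this best achievable rate for $\hat\psi^{\mathrm{MC}}$ is strictly slower than the minimax rate in (\ref{eq: minimax rate}).
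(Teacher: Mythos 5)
Your proposal carries out exactly the calculation the paper intends: Corollary \ref{cor: opt known f double} is stated without a dedicated proof, and its verification is precisely the MSE $=$ (squared bias) $+$ (variance) balancing against the bounds of Theorem \ref{theorem: plugin known f}, followed by a comparison to the minimax target in (\ref{eq: minimax rate}). Your case analysis in the $\sqrt n$ regime, the bias-variance balancing in the low-regularity regime, the check that the subleading variance terms $1/n$ and $k/n^2$ (resp.\ $k_1k_2/n^3$) are dominated, and the two-case lower-bound argument for $\hat\psi^{\mathrm{MC}}$ (using the bias lower bound for $k_1\wedge k_2 \lesssim n$ and the Theorem \ref{theorem: plugin known f}(2b) variance lower bound for $k_1,k_2\gg n$) are all correct and complete. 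One small bookkeeping slip: the bias lower bound for $\hat\psi^{\mathrm{MC}}$ in the first case comes from the ``$\asymp$'' in part 2a) (and 2b)), not from part 1), which concerns $\hat\psi^{\mathrm{INT}}$; the inequality itself is the same, so nothing breaks.
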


The top panel of Figure \ref{fig:minimax} summarizes some of the results in Corollary \ref{cor: opt known f double}, illustrating the regularity conditions where there exist resolution choices that yield minimax rate optimality. Unlike the other estimators, $\hat{\psi}^{\mathrm{MC}}_{k_1, k_2}$ cannot be minimax rate optimal across all Hölder smoothness classes because its bias and variance properties impose conflicting requirements on $k_1, k_2$. Specifically, since $\hat{\psi}^{\mathrm{MC}}_{k_1, k_2}$ is not doubly robust, its bias only shrinks sufficiently fast when both $k_1$ and $k_2$ are large. However, 
having both $k_1$ and $k_2$ large causes the variance to grow too fast due to 
the term of order $\frac{k_1 k_2}{n^3}$, preventing $\hat{\psi}^{\mathrm{MC}}_{k_1, k_2}$ from 
achieving the minimax rate when $\frac{\alpha + \beta}{2} < \frac{d}{4}$.

\begin{figure}[!ht]
  \centering
    {
      \includegraphics[width=0.8\textwidth]{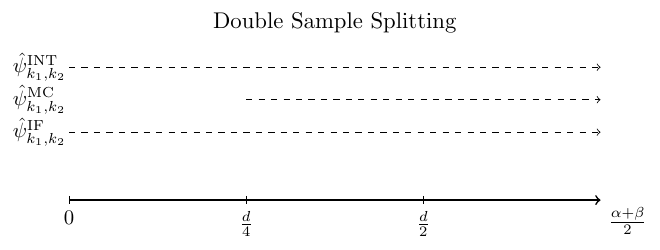}
    }
    \\
    {
      \includegraphics[width=0.8\textwidth]{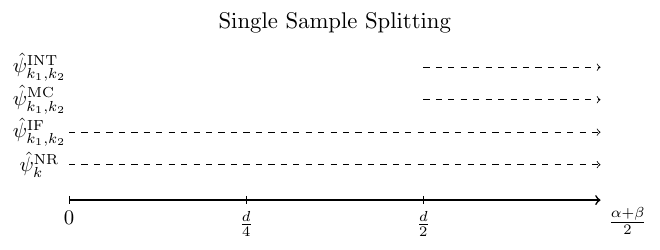}
    } \\
    {
      \includegraphics[width=0.8\textwidth]{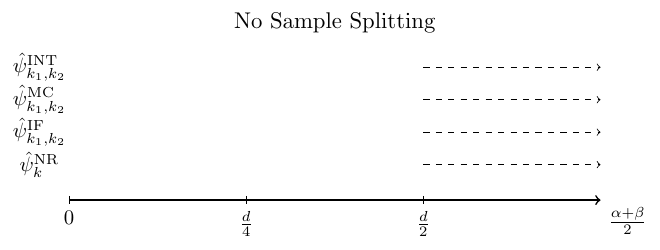}
    }
    \caption{Rate-optimality of the estimators when using double sample splitting (top panel), single sample splitting (middle panel), and no sample splitting (bottom panel). The dashed lines illustrate the region of the parameter space where the respective estimators are rate optimal when using optimal resolution choices.}
    \label{fig:minimax}
  \end{figure}

\begin{remark}
    It is straightforward to see that the bounds for the optimal resolutions given in Corollary \ref{cor: opt known f double} are not only sufficient but also necessary. In fact, the lower bounds on the bias in Theorem \ref{theorem: plugin known f} imply that the lower bounds for the optimal resolutions in Corollary \ref{cor: opt known f double} are indeed necessary (which, as we will see, imply the necessity to perform undersmoothing in low regularity regimes). Some additional lower bounds on the variance are needed to establish the necessity of some of the upper bounds on the optimal resolutions. Nevertheless, the necessity of the upper bounds on the optimal resolution choices is inconsequential to our discussion of the necessity to undersmooth versus use prediction-optimal resolutions.
\end{remark}

\begin{remark} \label{rem: same resolution known f}
In some settings (e.g., when $A = Y$), it is natural to consider tuning both of the nuisance functions in the same manner. While the optimal resolution choices for $\hat{\psi}^{\mathrm{INT}}_{k_1, k_2}$ and $\hat{\psi}^{\mathrm{MC}}_{k_1, k_2}$ allow setting $k_1 = k_2$, this is not the case for $\hat{\psi}^{\mathrm{IF}}_{k_1, k_2}$ in low regularity regimes. That is, if we enforce $k_1 = k_2 = k$,  then $\hat{\psi}^{\mathrm{IF}}_{k_1, k_2}$ cannot be minimax rate optimal for any choice of $k$ when $\frac{\alpha + \beta}{2} < \frac{d}{4}$. The best rate of estimation of $\psi(\bbP)$ for $\hat{\psi}^{\mathrm{IF}}_{k_1, k_2}$ in this case is $n^{-\frac{3\alpha + 3\beta}{\alpha + \beta + d}}$ which occurs if and only if choosing $k \asymp  n^{\frac{3d}{2\alpha + 2\beta + 2d}} $. The requirement that $k_1$ and $k_2$ be of different orders arises in order to simultaneously control the size of the bias and variance of $\hat{\psi}^{\mathrm{IF}}_{k_1, k_2}$. Due to the double robustness property of $\hat{\psi}^{\mathrm{IF}}_{k_1, k_2}$, only one of the nuisance function estimators needs to converge quickly (i.e., only one of $k_1, k_2$ needs to be large) so that the bias of $\hat{\psi}^{\mathrm{IF}}_{k_1, k_2}$ is rate optimal. However, in order to keep the variance of the estimator of $\psi(\mathbb{P})$ from growing too large, the other tuning parameter must be kept small in low regularity regimes. Although $\hat{\psi}^{\mathrm{IF}}_{k_1, k_2}$ does not allow $k_1 = k_2$ in low regularity regimes, note that $\hat{\psi}^{\mathrm{IF}}_{k_1, k_2}$ does not have more stringent 
requirements on $k_1, k_2$ than $\hat{\psi}^{\mathrm{INT}}_{k_1, k_2}$. Rather, they have different requirements that are not strictly stronger or weaker, which is due to the different rates of convergence of the bias and variance of these estimators. For example, $\hat{\psi}^{\mathrm{IF}}_{k_1, k_2}$ allows for $k_1 \wedge k_2 \lesssim n$ in this regime, whereas $\hat{\psi}^{\mathrm{INT}}_{k_1, k_2}$ requires that $k_1 \wedge k_2 \asymp n^{\frac{2d}{2\alpha + 2\beta + d}}$. 

\end{remark}

\begin{remark}
Note that the estimators $\hat{\psi}_{k_1, k_2}^{\mathrm{INT}}$, $\hat{\psi}_{k_1, k_2}^{\mathrm{MC}}$, and $\hat{\psi}_{k_1, k_2}^{\mathrm{IF}}$ are minimax rate optimal in the $\frac{\alpha + \beta}{2} > \frac{d}{4}$ regime when setting $k_1 = k_2 = n$. Therefore, these estimators are adaptive over the Hölder smoothness classes of $p$ and $b$ in the $\frac{\alpha + \beta}{2} > \frac{d}{4}$ regime in the sense that they do not require explicit knowledge of $\alpha$ and $\beta$ to achieve minimax rate optimality. This is not the case in the $\frac{\alpha + \beta}{2} < \frac{d}{4}$ regime, as our choices of optimal resolutions depend on $\alpha$ and $\beta$. As we will see in Sections \ref{sec: single ss} and \ref{sec: no ss}, the same phenomenon holds in the single sample splitting case (when $\frac{\alpha + \beta}{2} \geq \frac{d}{4}$) and the no sample splitting case (when $\frac{\alpha + \beta}{2} \geq \frac{d}{2}$). Although tangential to our main research questions, one may potentially produce minimax optimal estimators that are adaptive over the Hölder smoothness classes of the nuisance functions in this regime by applying Lepski-type methods \cite{lepskii1991problem, lepskii1992asymptotically}.
\end{remark}

Next, we analyze when using prediction-optimal resolutions is allowed for achieving rate optimality. We then compare prediction-optimal resolutions and resolutions that yield the best possible rate for estimating $\psi(\bbP)$.

\begin{cor} \label{cor: pred known f double}
Suppose we choose prediction-optimal resolutions $k_1 = k_1^{\text{pred}}$ and $k_2 = k_2^{\text{pred}}$. Then, 
\begin{enumerate}
    \item $\hat{\psi}_{k_1^{\text{pred}}, k_2^{\text{pred}}}^{\mathrm{INT}}$ achieves its best rate for estimating $\psi(\bbP)$ (i.e., the minimax rate in (\ref{eq: minimax rate})) if and only if $\alpha \wedge \beta \geq \frac{d}{2}$.
    \item $\hat{\psi}_{k_1^{\text{pred}}, k_2^{\text{pred}}}^{\mathrm{MC}}$ achieves its best rate for estimating $\psi(\bbP)$ (i.e., $n^{-1}$ when $\frac{\alpha + \beta}{2} \geq \frac{d}{4}$ and $n^{-\frac{3\alpha + 3\beta}{\alpha + \beta + d}}$ if $\frac{\alpha + \beta}{2} < \frac{d}{4}$) if and only if $\alpha \wedge \beta \geq \frac{d}{2}$.
    \item $\hat{\psi}_{k_1^{\text{pred}}, k_2^{\text{pred}}}^{\mathrm{IF}}$ achieves its best rate for estimating $\psi(\bbP)$ (i.e., the minimax rate in (\ref{eq: minimax rate})) if and only if $\alpha \vee \beta \geq \frac{d}{2}$.
\end{enumerate}
\end{cor}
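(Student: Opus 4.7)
The plan is to deduce this corollary directly from Corollary \ref{cor: opt known f double} by plugging the prediction-optimal choices $k_1^{\text{pred}}\asymp n^{d/(2\alpha+d)}$ and $k_2^{\text{pred}}\asymp n^{d/(2\beta+d)}$ into the admissible resolution ranges already identified there. The reduction is legitimate because, for any $\alpha,\beta>0$, the exponent $d/(2x+d)$ lies strictly in $(0,1)$, so $k_\ell^{\text{pred}}\ll n$ and $k_1^{\text{pred}}k_2^{\text{pred}}\ll n^2$; hence each of $(k_1\wedge k_2)/n^2$, $(k_1\vee k_2)/n^2$, and $k_1k_2/n^3$ in Theorem \ref{theorem: plugin known f} is $\lesssim 1/n$ under prediction-optimal tuning, so variance never binds and the problem becomes a purely algebraic comparison of biases.

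The key computation is the monotonicity of $d/(2x+d)$, which yields
\begin{align*}
k_1^{\text{pred}}\wedge k_2^{\text{pred}} & \asymp n^{d/(2(\alpha\vee\beta)+d)}, \\
k_1^{\text{pred}}\vee k_2^{\text{pred}} & \asymp n^{d/(2(\alpha\wedge\beta)+d)}.
\end{align*}
The bias bounds for INT and MC in Theorem \ref{theorem: plugin known f} are controlled by $k_1\wedge k_2$, so their rate under prediction-optimal tuning is driven by the first display, whereas IF's bias is controlled by $k_1\vee k_2$ and driven by the second.

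For parts 1 and 2, the Case A admissibility condition from Corollary \ref{cor: opt known f double} is $k_1\wedge k_2\gtrsim n^{d/(2\alpha+2\beta)}$; substituting the first display and clearing denominators reduces this to $2(\alpha\wedge\beta)\geq d$. This inequality automatically forces Case A (it implies $\alpha+\beta\geq d$), so to conclude the ``if and only if'' one only has to rule out an accidental exponent match in Case B. For INT, such a match would require $d/(2(\alpha\vee\beta)+d)=2d/(2\alpha+2\beta+d)$, which simplifies to $2|\alpha-\beta|=-d$ and is impossible. For MC, the two exponents $d/(2\alpha+d)$ and $d/(2\beta+d)$ would both have to equal the Case~B target $3d/(2\alpha+2\beta+2d)$; solving the resulting linear system forces $\beta=-d/2$, again impossible. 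Part 3 is analogous: the Case~A inequality $d/(2\alpha+2\beta)\leq d/(2(\alpha\wedge\beta)+d)$ rearranges to $2(\alpha\vee\beta)\geq d$ and forces Case A, while the Case~B match $2|\alpha-\beta|=d$ is incompatible with $\alpha+\beta<d/2$.

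The main obstacle in writing this proof out is organizational rather than technical: one must carefully verify that each iff condition automatically selects the Case A regime, so that the Case B branch of Corollary \ref{cor: opt known f double} needs only to be tested for accidental exponent coincidences. The ``only if'' direction itself is already secured by the matching bias lower bounds in Theorem \ref{theorem: plugin known f}, which guarantee that when $\alpha\wedge\beta<d/2$ (for INT and MC) or $\alpha\vee\beta<d/2$ (for IF), the squared bias under prediction-optimal tuning strictly dominates $1/n$ and the minimax rate is not attained.
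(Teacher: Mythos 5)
Your proposal is correct, and it proceeds by the only reasonable route: substitute the prediction-optimal exponents into the two-sided bias bounds and one-sided variance bounds of Theorem \ref{theorem: plugin known f}, observe that $k_\ell^{\text{pred}}\ll n$ forces the variance to be $\asymp 1/n$, and reduce the claim to a comparison of bias exponents across the two minimax regimes. The paper does not spell out a separate proof of this corollary, so there is no distinct "paper route" to contrast with; your derivation (including the algebraic verification that the prediction-optimal exponent cannot accidentally hit the Case B target, $2|\alpha-\beta|=-d$ for INT, $\alpha=\beta=-d/2$ for MC, and $2|\alpha-\beta|=d$ being ruled out by $|\alpha-\beta|<\alpha+\beta<d/2$ for IF) is the natural formalization of what the paper leaves implicit.
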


\begin{cor} \label{cor: undermoothing known f double} The following statements hold:
    \begin{enumerate}
    \item $\hat{\psi}_{k_1, k_2}^{\mathrm{INT}}$ requires undersmoothing $k_1$ when $\beta < \frac{d}{2}$ and undersmoothing $k_2$ when $\alpha < \frac{d}{2}$ to achieve its best rate for estimating $\psi(\bbP)$ (i.e., the minimax rate in (\ref{eq: minimax rate})). 
    \item $\hat{\psi}_{k_1, k_2}^{\mathrm{MC}}$ requires undersmoothing $k_1$ when $\beta < \frac{d}{2}$ and undersmoothing $k_2$ when $\alpha < \frac{d}{2}$ to achieve its best rate for estimating $\psi(\bbP)$ (i.e., $n^{-1}$ when $\frac{\alpha + \beta}{2} \geq \frac{d}{4}$ and $n^{-\frac{3\alpha + 3\beta}{\alpha + \beta + d}}$ if $\frac{\alpha + \beta}{2} < \frac{d}{4}$).
    \item $\hat{\psi}_{k_1, k_2}^{\mathrm{IF}}$ requires undersmoothing $k_1$ or $k_2$ (and it does not matter which) when $\alpha \vee \beta < \frac{d}{2}$ to achieve its best rate for estimating $\psi(\bbP)$ (i.e., the minimax rate in (\ref{eq: minimax rate})).
\end{enumerate}
\end{cor}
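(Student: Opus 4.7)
The plan is to deduce Corollary \ref{cor: undermoothing known f double} directly from Corollary \ref{cor: opt known f double} by substituting the prediction-optimal resolutions $k_1^{\text{pred}} \asymp n^{\frac{d}{2\alpha + d}}$ and $k_2^{\text{pred}} \asymp n^{\frac{d}{2\beta + d}}$ into the optimal resolution ranges identified there and checking which of $k_1, k_2$ must be inflated above its prediction-optimal value. In each case the problem reduces to a straightforward comparison of exponents of $n$, and the threshold $d/2$ arises as the boundary of those comparisons.

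For $\hat{\psi}^{\mathrm{INT}}_{k_1,k_2}$, Corollary \ref{cor: opt known f double} requires $k_1 \wedge k_2 \gtrsim n^{\frac{d}{2\alpha + 2\beta}}$ in the $\sqrt{n}$-regime and $k_1 \wedge k_2 \asymp n^{\frac{2d}{2\alpha + 2\beta + d}}$ in the non-$\sqrt{n}$-regime. Comparing exponents, the inequality $\frac{d}{2\alpha + d} \geq \frac{d}{2\alpha + 2\beta}$ holds iff $\beta \geq d/2$, while in the low-regularity regime $\frac{d}{2\alpha + d} \geq \frac{2d}{2\alpha + 2\beta + d}$ reduces to $2\beta \geq 2\alpha + d$, which is incompatible with $\alpha + \beta < d/2$. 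Combining the two regimes, $k_1^{\text{pred}}$ falls below the required lower bound on $k_1$ exactly when $\beta < d/2$, forcing undersmoothing of $k_1$ in that range; the symmetric computation produces the condition $\alpha < d/2$ for $k_2$. The argument for $\hat{\psi}^{\mathrm{MC}}_{k_1,k_2}$ is word-for-word identical, since the lower bounds on $k_1 \wedge k_2$ imposed by Corollary \ref{cor: opt known f double} coincide with those for $\hat{\psi}^{\mathrm{INT}}_{k_1,k_2}$, even though the best attainable rate of $\hat{\psi}^{\mathrm{MC}}_{k_1,k_2}$ is different in the non-$\sqrt{n}$-regime.

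For $\hat{\psi}^{\mathrm{IF}}_{k_1,k_2}$, Corollary \ref{cor: opt known f double} only constrains $k_1 \vee k_2$ from below. By the exponent comparisons above, $k_1^{\mathrm{pred}}$ alone clears the required lower bound iff $\beta \geq d/2$, and $k_2^{\mathrm{pred}}$ alone clears it iff $\alpha \geq d/2$. Consequently $\max(k_1^{\mathrm{pred}}, k_2^{\mathrm{pred}})$ is admissible iff $\alpha \vee \beta \geq d/2$; otherwise one must inflate at least one of $k_1, k_2$ above its prediction-optimal value, and the symmetric form of the constraint on the maximum explains the "it does not matter which" clause.

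The main point of difficulty is really just bookkeeping across the two regularity regimes of Corollary \ref{cor: opt known f double}: one must verify that in the non-$\sqrt{n}$-regime the prediction-optimal exponent is always strictly below the required exponent, so that the undersmoothing threshold $d/2$ does not split into two different conditions depending on the regime. This is the algebraic observation that $\frac{d}{2\alpha + d} = \frac{2d}{2\alpha + 2\beta + d}$ would force $\beta = \alpha + d/2$, which cannot coexist with $\alpha + \beta < d/2$, so the exponent comparison breaks in the same direction as in the $\sqrt{n}$-regime.
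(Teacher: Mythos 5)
Your derivation is correct and follows the only natural route: substitute $k_i^{\text{pred}}$ into the admissible ranges of Corollary \ref{cor: opt known f double} and compare exponents. The exponent comparisons you carry out are exact, the boundary-consistency check (that the non-$\sqrt{n}$ regime forces the exponent inequality in the same direction) is the right thing to verify, and the treatment of $\hat{\psi}^{\mathrm{IF}}_{k_1,k_2}$ correctly exploits that only $k_1 \vee k_2$ is constrained from below.

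One small inaccuracy worth fixing: you assert that the lower bounds on $k_1 \wedge k_2$ for $\hat{\psi}^{\mathrm{MC}}$ coincide with those for $\hat{\psi}^{\mathrm{INT}}$. They coincide only in the $\sqrt{n}$-regime; in the regime $\frac{\alpha+\beta}{2} < \frac{d}{4}$, Corollary \ref{cor: opt known f double} requires $k_\ell \asymp n^{\frac{3d}{2\alpha+2\beta+2d}}$ for $\hat{\psi}^{\mathrm{MC}}$ versus $k_1 \wedge k_2 \asymp n^{\frac{2d}{2\alpha+2\beta+d}}$ for $\hat{\psi}^{\mathrm{INT}}$, and these exponents are genuinely different there (since $\frac{3d}{2\alpha+2\beta+2d} < \frac{2d}{2\alpha+2\beta+d}$ iff $2(\alpha+\beta) < d$). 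The argument still goes through unchanged because $k_\ell^{\text{pred}} \gtrsim n^{\frac{3d}{2\alpha+2\beta+2d}}$ would require $\beta \geq 2\alpha + d/2$ (resp. $\alpha \geq 2\beta + d/2$), which is impossible when $\alpha+\beta < d/2$; so prediction-optimal tuning again falls short and undersmoothing is forced. The claim of identity should be replaced by this separate (but equally elementary) exponent check.
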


These corollaries are summarized in the Figure \ref{fig:tuning double}. Specifically, Figure \ref{fig:tuning double} describes when prediction-optimal resolutions are satisfactory and when prediction-suboptimal resolutions are required to obtain the best possible rate of estimation of $\psi(\bbP)$ (even if minimax optimal rates are not possible, such as for $\hat{\psi}^{\mathrm{MC}}_{k_1, k_2}$). In sufficiently low regularity regimes (e.g., when $\alpha \vee \beta < \frac{d}{2}$), all of the estimators require performing some degree of undersmoothing, which is needed to make the bias of the estimators converge at a fast enough rate. While both of the plug-in estimators require undersmoothing both nuisance functions, the first-order estimator only requires undersmoothing one of nuisance function and it does not matter which one is undersmoothed. Interestingly, the other nuisance function need not be undersmoothed; The only requirement on the other nuisance function resolution when $\frac{\alpha + \beta}{2} < \frac{d}{4}$ is that it is $O(n)$,  which ensures that the variance of $\hat{\psi}^{\mathrm{IF}}_{k_1, k_2}$ does not grow too large. The key reason why nuisance function tuning requirements for $\hat{\psi}^{\mathrm{IF}}_{k_1, k_2}$ differ from those of $\hat{\psi}^{\mathrm{INT}}_{k_1, k_2}$ and $\hat{\psi}^{\mathrm{MC}}_{k_1, k_2}$ is that the bias of $\hat{\psi}^{\mathrm{IF}}_{k_1, k_2}$ is controlled entirely by the less smooth nuisance function estimator whereas the bias of $\hat{\psi}^{\mathrm{INT}}_{k_1, k_2}$ and $\hat{\psi}^{\mathrm{MC}}_{k_1, k_2}$ is controlled by the smoother nuisance function estimator. 

\begin{remark}
    It is an interesting question to compare rates of estimation under double sample splitting to those under a corresponding cross-fitting strategy (see Section \ref{sec: sample splitting}). When performing cross-fitting, the bias of the estimators are identical and it is not hard to see that the same variance upper bounds hold. Therefore, Corollaries \ref{cor: opt known f double}, \ref{cor: pred known f double}, and \ref{cor: undermoothing known f double} also hold for cross-fit versions of $\hat{\psi}_{k_1, k_2}^{\mathrm{INT}}$ and $\hat{\psi}_{k_1, k_2}^{\mathrm{IF}}$. These corollaries also hold for the cross-fit version of $\hat{\psi}_{k_1, k_2}^{\mathrm{MC}}$, except that the optimality of the rate of $n^{\frac{3\alpha + 3\beta}{\alpha + \beta + d}}$ in the $\frac{\alpha + \beta}{2} < \frac{d}{4}$ regime is not immediately implied. An analogous variance lower bound for the cross-fit estimator would be needed to establish that $n^{-\frac{3\alpha + 3\beta}{\alpha + \beta + d}}$ is also the optimal rate in the $\frac{\alpha + \beta}{2} < \frac{d}{4}$ regime. 
\end{remark}

\begin{remark}
    When using prediction-optimal resolutions, the first-order estimator is minimax rate optimal for a larger set of $(\alpha, \beta)$ compared to the plug-in estimators (Corollary \ref{cor: pred known f double}). In fact, it is straightforward to see from Theorem \ref{theorem: plugin known f} that the rate of estimation of the first-order estimator is at least as fast as the plug-in estimators for any $(\alpha, \beta)$ when using prediction-optimal resolutions. However, the advantage of the first-order estimator disappears when selecting optimal resolutions for estimating $\psi(\bbP)$, as the plug-in estimator $\hat{\psi}_{k_1, k_2}^{\mathrm{INT}}$ with suitable $k_1, k_2$ achieves minimax rate optimality across all Hölder smoothness classes. A similar phenomenon occurs in the single sample splitting case (Section \ref{sec: single ss}).
\end{remark}

\begin{figure}[!h]
\centering
\begin{tabular}{ll}
  \includegraphics[width=0.45\textwidth]{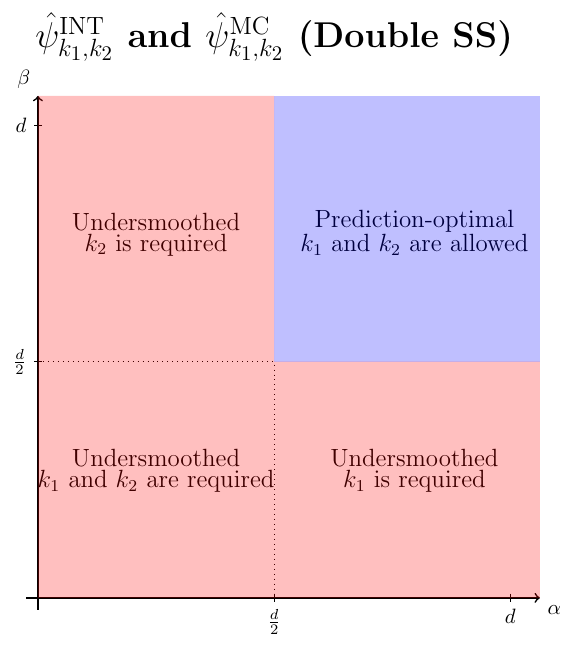} & \includegraphics[width=0.45\textwidth]{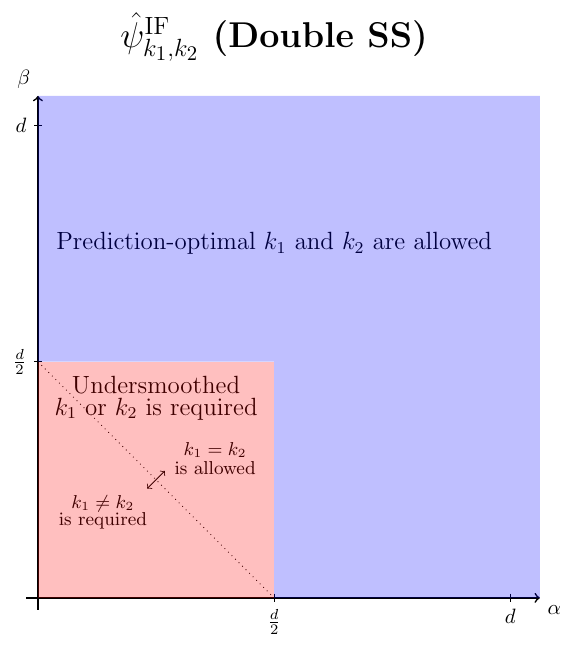}
\end{tabular}
\caption{Optimally tuning the resolutions for $\hat{\psi}_{k_1, k_2}^{\mathrm{INT}}$, $\hat{\psi}_{k_1, k_2}^{\mathrm{MC}}$, and $\hat{\psi}_{k_1, k_2}^{\mathrm{IF}}$ with double sample splitting (SS).}
    \label{fig:tuning double}
  \end{figure}

\subsection{Single sample splitting} \label{sec: single ss}

Here, we analyze the estimators in the single sample splitting case. In Section \ref{sec: single ss two}, we analyze single sample split versions of the estimators that depend on two nuisance functions. That is, we analyze $\hat{\psi}_{k_1, k_2}^{\mathrm{INT}}$, $\hat{\psi}_{k_1, k_2}^{\mathrm{MC}}$, and $\hat{\psi}_{k_1, k_2}^{\mathrm{IF}}$ when taking $\ell_1 = \ell_2$ and the other $\ell_j$ distinct. In Section \ref{sec: single ss one}, we analyze the single sample split Newey and Robins plug-in estimator (i.e., $\hat{\psi}_{k}^{\mathrm{NR}}$ when taking $\ell_1 \neq \ell_2$), which only relies on one nuisance function. 

\subsubsection{Estimators that depend on two nuisance functions} \label{sec: single ss two}

The following theorem establishes bounds on the bias and variance of the estimators that depend on two nuisance functions in the single sample splitting case. Since the bounds depend on a number of additional terms (compared to the double sample splitting case) which are not symmetric in $k_1$ and $k_2$, we present the bounds in this subsection in terms of $k_{\mathrm{min}} := k_1 \wedge k_2$ and $k_{\mathrm{max}} := k_1 \vee k_2$. We do not provide lower bounds on the variance because the lower bounds on the bias are sufficient to establish the optimal resolution choices and when prediction-optimal resolutions are satisfactory for obtaining the best possible rate of estimation of $\psi(\bbP)$.

\begin{theorem} \label{theorem: plugin known f single} 
Suppose that single sample splitting is performed. Under the assumptions given in Section \ref{sec: motivation}, the following statements hold:
\begin{enumerate} 
    \item The estimator $\hat{\psi}_{k_1, k_2}^{\mathrm{INT}}$ satisfies
\begin{align*}
    \sup_{\bbP \in \cP_{(\alpha,\beta)}} \left|\E_\bbP\left(\hat{\psi}_{k_1, k_2}^{\mathrm{INT}} - \psi(\bbP)\right)\right| & \asymp k_{\mathrm{min}}^{-(\alpha + \beta) / d}   \vee \frac{k_{\mathrm{min}}}{n} \\
    \sup_{\bbP \in \cP_{(\alpha,\beta)}} \Var_\bbP(\hat{\psi}_{k_1, k_2}^{\mathrm{INT}}) & \lesssim \frac{1}{n} + \frac{k_{\mathrm{min}}^2}{n^2}.
\end{align*}

    \item The estimator $\hat{\psi}_{k_1, k_2}^{\mathrm{MC}}$ satisfies 
\begin{align*}
    \sup_{\bbP \in \cP_{(\alpha,\beta)}} \left|\E_\bbP\left(\hat{\psi}_{k_1, k_2}^{\mathrm{MC}} - \psi(\bbP)\right)\right| & \asymp k_{\mathrm{min}}^{-(\alpha + \beta) / d}   \vee \frac{k_{\mathrm{min}}}{n} \\
    \sup_{\bbP \in \cP_{(\alpha,\beta)}} \Var_\bbP(\hat{\psi}_{k_1, k_2}^{\mathrm{MC}}) & \lesssim \frac{1}{n} + \frac{k_{\mathrm{min}}^2}{n^2} + \frac{k_{\mathrm{max}}}{n^2} + \frac{k_{\mathrm{min}}k_{\mathrm{max}}}{n^3} + \frac{k_{\mathrm{min}}^2 k_{\mathrm{max}}}{n^4}.
\end{align*}

\item The estimator $\hat{\psi}^{\mathrm{IF}}_{k_1, k_2}$ satisfies
\begin{align*}
    \sup_{\bbP \in \cP_{(\alpha,\beta)}} \left|\E_\bbP\left(\hat{\psi}^{\mathrm{IF}}_{k_1, k_2} - \psi(\bbP)\right)\right| & \asymp k_{\mathrm{max}}^{-\frac{\alpha + \beta}{d}}   \vee \frac{k_{\mathrm{min}}}{n} \\
    \sup_{\bbP \in \cP_{(\alpha,\beta)}} \Var_\bbP(\hat{\psi}^{\mathrm{IF}}_{k_1, k_2}) & \lesssim  \frac{1}{n} + \frac{k_{\mathrm{min}}^2}{n^2} + \frac{k_{\mathrm{max}}}{n^2} + \frac{k_{\mathrm{min}}k_{\mathrm{max}}}{n^3} + \frac{k_{\mathrm{min}}^2 k_{\mathrm{max}}}{n^4}.
\end{align*}
\end{enumerate}
\end{theorem}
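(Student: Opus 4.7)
The proof has three components---bias upper bounds, matching bias lower bounds, and variance upper bounds---all driven by the reproducing-kernel identity $\int K_{V_{k_1}}(\bx, \bz)\, K_{V_{k_2}}(\by, \bz)\, d\bz = K_{V_{k_{\mathrm{min}}}}(\bx, \by)$ that follows from the nesting $V_{k_1} \cap V_{k_2} = V_{k_{\mathrm{min}}}$ in the multiresolution analysis. The defining feature of single sample splitting---that $\hat{p}^{(\ell_1)}_{k_1}$ and $\hat{b}^{(\ell_1)}_{k_2}$ share the subsample $\D_{\ell_1}$---means that products and integrals of these estimators become double sums
\begin{equation*}
\int \hat{p}^{(\ell_1)}_{k_1}(\bx)\, \hat{b}^{(\ell_1)}_{k_2}(\bx)\, d\bx \;=\; \frac{1}{n^2}\sum_{i,j \in \D_{\ell_1}} A_i Y_j \, K_{V_{k_{\mathrm{min}}}}(\bX_i, \bX_j),
\end{equation*}
which split naturally into an off-diagonal part ($i \neq j$) that mimics the double-split case with common resolution $k_{\mathrm{min}}$, and a new diagonal part ($i = j$) whose expectation is $\tfrac{1}{n}\int \E_\bbP[AY \mid \bX=\bx]\, K_{V_{k_{\mathrm{min}}}}(\bx,\bx)\, d\bx \asymp k_{\mathrm{min}}/n$ (using $\int K_{V_k}(\bx,\bx)\, d\bx \asymp k$). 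Combining with $\E_\bbP[AY] - \psi(\bbP) = \int pb$, this expresses the bias of $\hat\psi^{\mathrm{INT}}_{k_1,k_2}$ (and of $\hat\psi^{\mathrm{MC}}_{k_1,k_2}$, whose Monte Carlo step is conditionally unbiased given $\D_{\ell_1}$) as $\int (I - \Pi_{k_{\mathrm{min}}}) p \cdot (I - \Pi_{k_{\mathrm{min}}}) b + O(k_{\mathrm{min}}/n)$, where $\Pi_k$ denotes the $L^2$ projection onto $V_k$; by the wavelet Hölder approximation bounds recalled in Appendix~\ref{sec: wavelets} this is $O(k_{\mathrm{min}}^{-(\alpha+\beta)/d} + k_{\mathrm{min}}/n)$. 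For $\hat\psi^{\mathrm{IF}}_{k_1,k_2}$, the analogous expansion produces the leading term $\int (I - \Pi_{k_1}) p \cdot (I - \Pi_{k_2}) b$, which by orthogonality of wavelet detail spaces at different resolutions collapses to a sum over scales above $k_{\mathrm{max}}$ and is therefore bounded by $k_{\mathrm{max}}^{-(\alpha+\beta)/d}$, while the $k_{\mathrm{min}}/n$ diagonal contribution is unchanged.

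For the matching bias lower bounds I would exhibit two adversarial distributions, since the supremum over $\cP_{(\alpha,\beta)}$ allows realizing the two rates separately. A standard wavelet construction placing mass on detail coefficients at the highest resolution still admissible under $H(\alpha, M) \times H(\beta, M)$ realizes the $k_{\mathrm{min}}^{-(\alpha+\beta)/d}$ (for INT and MC) or $k_{\mathrm{max}}^{-(\alpha+\beta)/d}$ (for IF) term, just as in the double sample split lower bounds of Theorem~\ref{theorem: plugin known f}. To realize the $k_{\mathrm{min}}/n$ term, take a distribution with $\E_\bbP[AY \mid \bX = \bx] \geq c > 0$ (for instance, constant nuisance functions together with $A = Y$ bounded away from zero); then $p, b \in V_{k_{\mathrm{min}}}$ so the projection cross-product vanishes, while the diagonal integral is $\gtrsim \int K_{V_{k_{\mathrm{min}}}}(\bx,\bx)\, d\bx \asymp k_{\mathrm{min}}$.

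For the variance upper bounds I would first exploit the independence of $\D_{\ell_1}$ from $\D_{\ell_3}$ (and $\D_{\ell_4}$ for MC), which separates each estimator into pieces whose variances add; the sample-mean piece $\tfrac{1}{n}\sum_{i \in \D_{\ell_3}} A_i Y_i$ contributes the $1/n$ term. The remaining V-statistic is handled by a Hoeffding-style decomposition in which I enumerate pairs of index tuples by their overlap pattern and plug in the two central kernel moment identities $\E_\bbP[K_{V_k}(\bX,\bX')^2] = \int K_{V_k}(\bx,\bx)\, d\bx \asymp k$ (reproducing property) and $\E_\bbP[K_{V_k}(\bX,\bX)^2] \lesssim k^2$ (using $K_{V_k}(\bx,\bx) = \sum_{\boldm} \Phi_{k\boldm}(\bx)^2 \lesssim k$ for compactly supported wavelets at each resolution). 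The main obstacle is the IF estimator: because the same $\D_{\ell_1}$ feeds into both $\hat{p}$ and $\hat{b}$, expanding $(A_i - \hat{p}(\bX_i))(Y_i - \hat{b}(\bX_i))$ and taking the variance generates cross-covariance terms between the four resulting summands that are absent from the double split case of Theorem~\ref{theorem: plugin known f}, and the bookkeeping of how tuples of indices over $\D_{\ell_1}$ and $\D_{\ell_3}$ overlap is what produces the five distinct terms appearing in the stated bound. For $\hat\psi^{\mathrm{MC}}_{k_1,k_2}$, the same structure is present, plus one extra layer from $\D_{\ell_4}$; the decomposition $\Var_\bbP = \E_\bbP[\Var_\bbP(\cdot \mid \D_{\ell_1})] + \Var_\bbP(\E_\bbP[\cdot \mid \D_{\ell_1}])$ isolates the additional Monte Carlo contribution on top of the INT variance.
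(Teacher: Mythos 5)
Your proposal is essentially correct and follows the same architecture as the paper's proof: decompose the bias into a double-split-style projection term plus a nonlinearity remainder of order $k_{\mathrm{min}}/n$, establish lower bounds by exhibiting adversarial distributions, and bound the variance by conditioning and enumerating index-overlap patterns in the resulting V-statistic. Two differences are worth noting. First, your use of the reproducing-kernel identity $\int K_{V_{k_1}}(\bx, \bz)K_{V_{k_2}}(\by, \bz)\,d\bz = K_{V_{k_{\mathrm{min}}}}(\bx, \by)$ (valid because $V_{k_{\mathrm{min}}} \subset V_{k_{\mathrm{max}}}$ for dyadic resolutions, not merely because the intersection equals $V_{k_{\mathrm{min}}}$) collapses $\hat\psi^{\mathrm{INT}}_{k_1,k_2}$ into a single quadratic form in $K_{V_{k_{\mathrm{min}}}}$; the paper instead manipulates $\E_\bbP[\hat p_{k_1}\hat b_{k_2}]$ pointwise and invokes the projection algebra $\int \Pi(p\mid V_{k_1})\Pi(b\mid V_{k_2}) = \int\Pi(p\mid V_{k_{\mathrm{min}}})\Pi(b\mid V_{k_{\mathrm{min}}})$ only after integrating. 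Your route is cleaner for the bias and for $\Var(\hat\psi^{\mathrm{INT}})$, though it does not simplify the MC and IF cases, where the kernel products are evaluated at an external point $\bX_i \in \D_{\ell_3}$ or $\D_{\ell_4}$ and so do not collapse. Second, for the bias lower bound you propose two separate adversarial distributions, one for each of the two rates in the maximum; the paper uses a single adversarial distribution (wavelet-adversarial $p, b$ with $\E_\bbP[AY\mid\bX]=c\neq 0$) and rules out cancellation between the two bias contributions by shrinking the $\epsilon$ in the wavelet construction. Your variant is legitimate and arguably cleaner, provided you verify that the first distribution can be chosen so that $\E_\bbP[AY\mid\bX]\equiv 0$ (achievable by adjusting $\Cov_\bbP(A,Y\mid\bX)=-p(\bX)b(\bX)$, which is feasible once $\epsilon$ is small).

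The one place where your sketch is materially underspecified is the variance bookkeeping. The two moment identities you name---$\E_\bbP[K_{V_k}(\bX,\bX')^2]\asymp k$ and $K_{V_k}(\bx,\bx)\lesssim k$---only involve a single resolution and do not by themselves produce the $k_{\mathrm{max}}/n^2$, $k_{\mathrm{min}}k_{\mathrm{max}}/n^3$, and $k_{\mathrm{min}}^2 k_{\mathrm{max}}/n^4$ terms; those arise from cross-resolution moments such as $\int \E_\bbP[K_{V_{k_1}}(\bX,\bx)^2]\E_\bbP[K_{V_{k_2}}(\bX,\bx)^2]\,d\bx \lesssim k_1 k_2$ and $\int\E_\bbP[K_{V_{k_1}}(\bX,\bx)^2 K_{V_{k_2}}(\bX,\bx)^2]\,d\bx\lesssim k_{\mathrm{min}}^2 k_{\mathrm{max}}$, which require the compact-support arguments of the paper's Lemma~\ref{lem: exp kernel single} and cannot be obtained from Cauchy--Schwarz on your two identities without losing a factor. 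Some of these cross-resolution integrals can be derived directly from the reproducing property (e.g.\ $\E_\bbP[K_{V_{k_1}}(\bX,\bx)K_{V_{k_2}}(\bX,\bx)] = K_{V_{k_{\mathrm{min}}}}(\bx,\bx)$ when $\bX$ is uniform), but the quartic ones cannot, because $K_{V_{k}}(\cdot,\bx)^2 \notin V_k$. Make these kernel moment bounds explicit before claiming the bookkeeping closes.
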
 
\noindent The proof is given in Appendix D. We express the bias of these estimators in terms of the bias in the double sample splitting case plus a remainder term which we refer to as nonlinearity bias. We show that the nonlinearity bias is of order $\frac{k_{\mathrm{min}}}{n}$ by analyzing expectations of products of kernel functions. The variance bounds are derived in a similar manner as those in the double sample splitting case. The key difference in the variance bounds here is the existence of expectations of higher-order products of kernels, which contributes the additional $O(\frac{k_{\mathrm{min}}^2}{n^2})$ and $O( \frac{k_{\mathrm{min}}^2 k_{\mathrm{max}}}{n^4})$ terms.

Next, we describe the optimal resolution choices for each of the estimators and whether such resolution choices yield minimax rate optimality for estimating $\psi(\bbP)$.
\begin{cor} \label{cor: opt known f single}
If $\frac{\alpha + \beta}{2} \geq \frac{d}{2}$, then 
\begin{enumerate}
    \item $\hat{\psi}^{\mathrm{INT}}_{k_1, k_2}$ is minimax rate optimal when choosing $n^{\frac{d}{2\alpha + 2\beta}} \lesssim k_{\mathrm{min}}  \lesssim \sqrt{n}$.
    \item $\hat{\psi}^{\mathrm{MC}}_{k_1, k_2}$ is minimax rate optimal when choosing $n^{\frac{d}{2\alpha + 2\beta}} \lesssim k_{\mathrm{min}} \lesssim \sqrt{n}$ and $k_{\mathrm{max}} \lesssim n$.
    \item $\hat{\psi}^{\mathrm{IF}}_{k_1, k_2}$ is minimax rate optimal when choosing $k_{\mathrm{min}} \lesssim \sqrt{n}$ and $n^{\frac{d}{2\alpha + 2\beta}} \lesssim k_{\mathrm{max}} \lesssim n$. 
\end{enumerate}
If $\frac{\alpha + \beta}{2} < \frac{d}{2}$, then 
\begin{enumerate}
    \item $\hat{\psi}^{\mathrm{INT}}_{k_1, k_2}$ cannot be minimax rate optimal for any choice of $k_1, k_2$. The best rate of estimation of $\psi(\bbP)$ for $\hat{\psi}^{\mathrm{INT}}_{k_1, k_2}$ is $n^{-\frac{2\alpha + 2\beta}{\alpha + \beta + d}}$ which occurs when choosing $k_{\mathrm{min}} \asymp  n^{\frac{d}{\alpha + \beta + d}} $.
    \item $\hat{\psi}^{\mathrm{MC}}_{k_1, k_2}$ cannot be minimax rate optimal for any choice of $k_1, k_2$. As with $\hat{\psi}^{\mathrm{INT}}_{k_1, k_2}$, the best rate of estimation of $\psi(\bbP)$ for $\hat{\psi}^{\mathrm{MC}}_{k_1, k_2}$ is $n^{-\frac{2\alpha + 2\beta}{\alpha + \beta + d}}$ which occurs when choosing $k_{\mathrm{min}} \asymp  n^{\frac{d}{\alpha + \beta + d}} $ and $k_{\mathrm{max}} \lesssim  n^{\frac{2d}{\alpha + \beta + d}} $.
    \item $\hat{\psi}^{\mathrm{IF}}_{k_1, k_2}$ is minimax rate optimal when choosing $k_{\mathrm{min}} \lesssim \sqrt{n}$ and $n^{\frac{d}{2\alpha + 2\beta}} \lesssim k_{\mathrm{max}} \lesssim n$ when $\frac{d}{4} < \frac{\alpha + \beta}{2} < \frac{d}{2}$ and $k_{\mathrm{min}} \lesssim n^{\frac{d}{2\alpha + 2\beta + d}}$ and $k_{\mathrm{max}} \asymp n^{\frac{2d}{2\alpha + 2\beta + d}}$ when $\frac{\alpha + \beta}{2} < \frac{d}{4}$.
\end{enumerate}
\end{cor}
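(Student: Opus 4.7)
The plan is to derive each claim in Corollary \ref{cor: opt known f single} by combining the bias and variance bounds of Theorem \ref{theorem: plugin known f single} into an MSE upper bound, optimizing over the resolution pair $(k_1, k_2)$, and comparing the resulting rate against the minimax rate from (\ref{eq: minimax rate}). The matching lower bounds on the biases already asserted in Theorem \ref{theorem: plugin known f single} will make the stated resolution ranges \emph{necessary} rather than merely sufficient.

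First I would isolate the key structural distinction from the double sample splitting case: in single sample splitting each bias decomposes as a maximum of an approximation term and a nonlinearity term of order $k_{\mathrm{min}}/n$. For $\hat{\psi}^{\mathrm{INT}}_{k_1,k_2}$ and $\hat{\psi}^{\mathrm{MC}}_{k_1,k_2}$ both pieces depend on $k_{\mathrm{min}}$, so the squared bias contributes $k_{\mathrm{min}}^{-2(\alpha+\beta)/d} \vee k_{\mathrm{min}}^{2}/n^{2}$ to the MSE. For $\hat{\psi}^{\mathrm{IF}}_{k_1,k_2}$ the approximation piece depends on $k_{\mathrm{max}}$ while the nonlinearity piece still depends on $k_{\mathrm{min}}$, decoupling the two requirements and providing more flexibility. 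This is the phenomenon that drives the different thresholds ($\alpha+\beta \geq d$ versus $\alpha+\beta \geq d/2$) appearing in the three parts of the corollary.

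The core calculation is then a direct case-by-case optimization. For $\hat{\psi}^{\mathrm{INT}}_{k_1,k_2}$, the MSE bound is $k_{\mathrm{min}}^{-2(\alpha+\beta)/d} + k_{\mathrm{min}}^{2}/n^{2} + 1/n$; forcing the first two squared-bias terms to be $\lesssim 1/n$ yields $n^{d/(2\alpha+2\beta)} \lesssim k_{\mathrm{min}} \lesssim \sqrt{n}$, which is nonempty iff $\alpha+\beta \geq d$. When $\alpha+\beta < d$, I would equate $k_{\mathrm{min}}^{-2(\alpha+\beta)/d}$ with $k_{\mathrm{min}}^{2}/n^{2}$ to obtain $k_{\mathrm{min}} \asymp n^{d/(\alpha+\beta+d)}$ and the best rate $n^{-(2\alpha+2\beta)/(\alpha+\beta+d)}$. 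The $\hat{\psi}^{\mathrm{MC}}_{k_1,k_2}$ argument is identical for the bias and for the tuning of $k_{\mathrm{min}}$; the new task is to verify that every one of the five variance terms stays $\lesssim$ the target rate once the optimal $k_{\mathrm{min}}$ is substituted, which pins down the allowed range of $k_{\mathrm{max}}$ (namely $k_{\mathrm{max}} \lesssim n$ in the parametric regime and $k_{\mathrm{max}} \lesssim n^{2d/(\alpha+\beta+d)}$ in the subparametric regime). For $\hat{\psi}^{\mathrm{IF}}_{k_1,k_2}$ the bias is driven by $k_{\mathrm{max}}$; demanding $k_{\mathrm{max}}^{-2(\alpha+\beta)/d} \lesssim 1/n$ together with $k_{\mathrm{max}}/n^{2} \lesssim 1/n$ yields $n^{d/(2\alpha+2\beta)} \lesssim k_{\mathrm{max}} \lesssim n$, which is nonempty iff $(\alpha+\beta)/2 \geq d/4$, exactly covering the parametric minimax regime. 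In the subparametric regime $(\alpha+\beta)/2 < d/4$ I would balance $k_{\mathrm{max}}^{-2(\alpha+\beta)/d}$ against $k_{\mathrm{max}}/n^{2}$ to obtain $k_{\mathrm{max}} \asymp n^{2d/(2\alpha+2\beta+d)}$ and the matching rate $n^{-(4\alpha+4\beta)/(2\alpha+2\beta+d)}$, and separately control the nonlinearity-bias term and the cross-variance terms via $k_{\mathrm{min}} \lesssim n^{d/(2\alpha+2\beta+d)}$.

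The main obstacle will be the bookkeeping in the $\hat{\psi}^{\mathrm{MC}}_{k_1,k_2}$ and $\hat{\psi}^{\mathrm{IF}}_{k_1,k_2}$ cases, where each of the five variance contributions $1/n$, $k_{\mathrm{min}}^{2}/n^{2}$, $k_{\mathrm{max}}/n^{2}$, $k_{\mathrm{min}}k_{\mathrm{max}}/n^{3}$, $k_{\mathrm{min}}^{2}k_{\mathrm{max}}/n^{4}$ must be checked individually against the target rate after substituting the candidate resolutions. In each regime most of these are strictly dominated by one or two leading terms, but verifying this systematically requires care, particularly in the subparametric IF case where the leading term is $k_{\mathrm{max}}/n^{2}$ rather than $k_{\mathrm{min}}^{2}/n^{2}$. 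Necessity of the lower endpoints of the stated ranges then follows immediately from the matching lower bounds on the biases in Theorem \ref{theorem: plugin known f single}: any $k_{\mathrm{min}}$ (or $k_{\mathrm{max}}$) smaller than the prescribed endpoint makes the corresponding squared-bias lower bound dominate the claimed rate.
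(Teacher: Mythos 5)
Your plan is correct and is essentially the paper's implicit proof: the corollary is read off directly from the bias and variance bounds of Theorem~\ref{theorem: plugin known f single} by balancing the two components of the squared bias against $1/n$ (and against each other in the subparametric regime), then checking that every remaining variance term is dominated by the target rate for the stated resolution ranges.

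One small omission in the write-up: in the first-order case with $\frac{\alpha+\beta}{2}\geq\frac{d}{4}$ you list only the constraints coming from the $k_{\mathrm{max}}$-dependent terms, but the bias also carries the nonlinearity piece $k_{\mathrm{min}}/n$, whose square must be $\lesssim 1/n$; this is precisely what produces the constraint $k_{\mathrm{min}}\lesssim\sqrt{n}$ appearing in item~3 of the $\frac{\alpha+\beta}{2}\geq\frac{d}{2}$ block and in item~3 of the intermediate regime $\frac{d}{4}<\frac{\alpha+\beta}{2}<\frac{d}{2}$. You clearly have this decomposition in mind (you flag it in the second paragraph), but it should be stated explicitly when deriving the admissible window for the IF estimator, since otherwise the $k_{\mathrm{min}}$ constraint appears unmotivated. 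The rest of the bookkeeping you sketch checks out: for INT, $k_{\mathrm{min}}^{-2(\alpha+\beta)/d}\asymp k_{\mathrm{min}}^{2}/n^{2}$ gives $k_{\mathrm{min}}\asymp n^{d/(\alpha+\beta+d)}$ and rate $n^{-2(\alpha+\beta)/(\alpha+\beta+d)}$, which is strictly slower than both $n^{-1}$ and $n^{-4(\alpha+\beta)/(2\alpha+2\beta+d)}$ whenever $\alpha+\beta<d$; for MC, the same $k_{\mathrm{min}}$ plus $k_{\mathrm{max}}\lesssim n^{2d/(\alpha+\beta+d)}$ keeps $k_{\mathrm{max}}/n^{2}$, $k_{\mathrm{min}}k_{\mathrm{max}}/n^{3}$, and $k_{\mathrm{min}}^{2}k_{\mathrm{max}}/n^{4}$ all $\lesssim n^{-2(\alpha+\beta)/(\alpha+\beta+d)}$; and for IF in the subparametric regime, balancing $k_{\mathrm{max}}^{-2(\alpha+\beta)/d}$ against $k_{\mathrm{max}}/n^{2}$ gives $k_{\mathrm{max}}\asymp n^{2d/(2\alpha+2\beta+d)}$ with matching rate, and $k_{\mathrm{min}}\lesssim n^{d/(2\alpha+2\beta+d)}$ suffices to keep $k_{\mathrm{min}}^{2}/n^{2}\lesssim k_{\mathrm{max}}/n^{2}$ and the remaining variance terms subdominant.
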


The middle panel of Figure \ref{fig:minimax} describes the region of the parameter space where minimax rate optimality is possible for each of the estimators. Observe that both of the plug-in estimators have slower rates of convergence in the single sample splitting case compared to the double sample splitting case whenever $\frac{\alpha + \beta}{2} \leq \frac{d}{2}$. This is due to the nonlinearity bias arising in the single sample splitting case, which prevents these estimators from having a bias that converges at a sufficiently fast rate in this regime. However, the first-order estimator can be minimax rate optimal across the entire parameter space regardless of whether single or double sample splitting is used. This is because the nonlinearity bias increases with respect to $k_{\mathrm{min}}$ (which can be set arbitrarily small) and the other terms in the bias of first-order estimator decrease with respect to $k_{\mathrm{max}}$ (which can be set sufficiently large).

We next make a few remarks on the optimal resolution choices.

\begin{remark}
As in the double sample splitting case, we note that it is straightforward to see that the optimal resolution bounds in Corollary \ref{cor: opt known f single} are indeed necessary. The lower bounds on the bias in Theorem \ref{theorem: plugin known f single} imply the necessity of the lower bounds on the optimal resolutions in Corollary \ref{cor: opt known f single}. The necessity of some of the upper bounds on the optimal resolutions require lower bounds on the variance of $\hat{\psi}^{\mathrm{MC}}_{k_1, k_2}$ and $\hat{\psi}^{\mathrm{INT}}_{k_1, k_2}$ which follow from the same proofs techniques used in the double sample splitting case. An analogous comment applies to the bounds in Sections \ref{sec: single ss one} and \ref{sec: no ss}.
\end{remark}

\begin{remark}
Recall that it may be natural to set $k_1 = k_2$ in some settings. As in the double sample splitting case, the optimal resolution choices for $\hat{\psi}^{\mathrm{INT}}_{k_1, k_2}$ and $\hat{\psi}^{\mathrm{MC}}_{k_1, k_2}$ allow for setting $k_1 = k_2$, although this is not the case for $\hat{\psi}^{\mathrm{IF}}_{k_1, k_2}$ in low regularity regimes. If we enforce $k_1 = k_2 = k$,  then $\hat{\psi}^{\mathrm{IF}}_{k_1, k_2}$ cannot be minimax rate optimal for any choice of $k$ when $\frac{\alpha + \beta}{2} < \frac{d}{2}$. The best rate of estimation of $\psi(\bbP)$ for $\hat{\psi}^{\mathrm{IF}}_{k_1, k_2}$ is given by $n^{-\frac{2\alpha + 2\beta}{\alpha + \beta + d}}$ which occurs when choosing $k \asymp n^{\frac{d}{\alpha + \beta + d}}$.
\end{remark}

In the following two corollaries, we describe when using prediction-optimal resolutions yields minimax rate optimality and compare them to the optimal resolutions.
\begin{cor} \label{cor: pred known f single}
Suppose we choose prediction-optimal resolutions $k_1 = k_1^{\text{pred}}$ and $k_2 = k_2^{\text{pred}}$. Then, 
\begin{enumerate}
    \item $\hat{\psi}_{k_1^{\text{pred}}, k_2^{\text{pred}}}^{\mathrm{INT}}$ achieves its best rate for estimating $\psi(\bbP)$ (i.e., $n^{-1}$ when $\frac{\alpha + \beta}{2} \geq \frac{d}{2}$ and $n^{-\frac{2\alpha + 2\beta}{\alpha + \beta + d}}$ if $\frac{\alpha + \beta}{2} < \frac{d}{2}$) if and only if $\alpha \wedge \beta \geq \frac{d}{2}$ or $\alpha = \beta$.
    \item $\hat{\psi}_{k_1^{\text{pred}}, k_2^{\text{pred}}}^{\mathrm{MC}}$ achieves its best rate for estimating $\psi(\bbP)$ (i.e., $n^{-1}$ when $\frac{\alpha + \beta}{2} \geq \frac{d}{2}$ and $n^{-\frac{2\alpha + 2\beta}{\alpha + \beta + d}}$ if $\frac{\alpha + \beta}{2} < \frac{d}{2}$) if and only if $\alpha \wedge \beta \geq \frac{d}{2}$ or $\alpha = \beta$.
    \item $\hat{\psi}_{k_1^{\text{pred}}, k_2^{\text{pred}}}^{\mathrm{IF}}$ achieves its best rate for estimating $\psi(\bbP)$ (i.e., the minimax rate in (\ref{eq: minimax rate})) if and only if $\alpha \vee \beta \geq \frac{d}{2}$.
\end{enumerate}
\end{cor}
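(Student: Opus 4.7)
The plan is to substitute the prediction-optimal resolutions $k_1^{\text{pred}} \asymp n^{d/(2\alpha+d)}$ and $k_2^{\text{pred}} \asymp n^{d/(2\beta+d)}$ directly into the bias and variance bounds of Theorem \ref{theorem: plugin known f single}, and then to compare the resulting mean squared error with the best achievable rate from Corollary \ref{cor: opt known f single} (for the plug-in estimators) and the minimax rate (\ref{eq: minimax rate}) (for the first-order estimator). Without loss of generality I would assume $\alpha \leq \beta$, so that $k_{\min} \asymp n^{d/(2\beta+d)}$ and $k_{\max} \asymp n^{d/(2\alpha+d)}$; the case $\alpha \geq \beta$ follows by symmetry.

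For $\hat{\psi}^{\mathrm{INT}}_{k_1,k_2}$, the bias at prediction-optimal resolutions reduces to $n^{-(\alpha+\beta)/(2\beta+d)} \vee n^{-2\beta/(2\beta+d)}$, with the first term dominating because $\alpha \leq \beta$. I would then split into the two regimes of Corollary \ref{cor: opt known f single}. In the regime $(\alpha+\beta)/2 \geq d/2$, requiring squared bias $\lesssim n^{-1}$ forces $\alpha \geq d/2$ and requiring variance $\lesssim n^{-1}$ forces $\beta \geq d/2$, together giving $\alpha \wedge \beta \geq d/2$. In the regime $(\alpha+\beta)/2 < d/2$, matching the squared bias to the target $n^{-(2\alpha+2\beta)/(\alpha+\beta+d)}$ produces the inequality $2(\alpha+\beta)/(2\beta+d) \geq (2\alpha+2\beta)/(\alpha+\beta+d)$, which simplifies to $\alpha \geq \beta$ and hence $\alpha = \beta$; under $\alpha=\beta$, the prediction-optimal choice coincides with the optimal resolution identified in Corollary \ref{cor: opt known f single}, so the variance bound automatically matches the target. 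The analysis of $\hat{\psi}^{\mathrm{MC}}_{k_1,k_2}$ proceeds identically since its bias has the same form, and the additional variance terms $k_{\max}/n^2$, $k_{\min}k_{\max}/n^3$, $k_{\min}^2 k_{\max}/n^4$ are verified by direct computation to be negligible whenever $\alpha \wedge \beta \geq d/2$ or $\alpha = \beta$.

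For $\hat{\psi}^{\mathrm{IF}}_{k_1,k_2}$, the bias instead reduces to $n^{-(\alpha+\beta)/(2\alpha+d)} \vee n^{-2\beta/(2\beta+d)}$, and a short algebraic comparison shows that the first term dominates precisely when $\beta \leq d/2$. Because $\alpha \vee \beta \geq d/2$ implies $(\alpha+\beta)/2 \geq d/4$, the only relevant target is $n^{-1}$: when $\beta \geq d/2$ I would verify that the bias is $\lesssim n^{-1/2}$ and that each variance term in Theorem \ref{theorem: plugin known f single} is $\lesssim n^{-1}$, while when $\beta < d/2$ a direct exponent comparison shows that the squared bias $n^{-2(\alpha+\beta)/(2\alpha+d)}$ exceeds both $n^{-1}$ (in the $(\alpha+\beta)/2 \geq d/4$ subcase) and the non-$\sqrt{n}$ minimax target $n^{-4(\alpha+\beta)/(2\alpha+2\beta+d)}$ (in the $(\alpha+\beta)/2 < d/4$ subcase). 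The main obstacle is the bookkeeping: for each estimator I must determine which term of the max defining the bias dominates under the prediction-optimal choice, compare to the appropriate best-rate target (which itself depends on the regularity regime), and simultaneously confirm that every variance term is controlled. The underlying algebra is elementary but sign-sensitive near the boundaries of the regimes, so care is required in the comparisons of exponents.
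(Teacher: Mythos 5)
Your proposal is correct and is exactly the intended proof: plug the prediction-optimal rates $k_1^{\text{pred}}\asymp n^{d/(2\alpha+d)}$, $k_2^{\text{pred}}\asymp n^{d/(2\beta+d)}$ into the bias and variance bounds of Theorem~\ref{theorem: plugin known f single}, compare the resulting exponents to the best-rate targets from Corollary~\ref{cor: opt known f single} (for the plug-in estimators) and (\ref{eq: minimax rate}) (for the first-order estimator), and use the two-sidedness of the bias bound ($\asymp$) to establish the ``only if'' directions. Your identification of the dominant bias term under $\alpha\le\beta$, the reduction in regime $(\alpha+\beta)/2<d/2$ to the constraint $\alpha\ge\beta$ (hence $\alpha=\beta$), the verification that the extra MC variance terms are subdominant, and the exponent comparison for $\hat\psi^{\mathrm{IF}}$ showing failure whenever $\alpha\vee\beta<d/2$ are all accurate; the symmetry reduction is valid because the bias and variance bounds depend only on $\alpha\wedge\beta$ and $\alpha\vee\beta$.
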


\begin{cor} \label{cor: undermoothing known f single}
The following statements hold
\begin{enumerate}
    \item $\hat{\psi}_{k_1, k_2}^{\mathrm{INT}}$ requires undersmoothing $k_1$ when $\beta < \alpha \wedge \frac{d}{2}$ and requires undersmoothing $k_2$ when $\alpha < \beta \wedge \frac{d}{2}$ to achieve its best rate for estimating $\psi(\bbP)$ (i.e., $n^{-1}$ when $\frac{\alpha + \beta}{2} \geq \frac{d}{2}$ and $n^{-\frac{2\alpha + 2\beta}{\alpha + \beta + d}}$ if $\frac{\alpha + \beta}{2} < \frac{d}{2}$).
    \item $\hat{\psi}_{k_1, k_2}^{\mathrm{MC}}$ requires undersmoothing $k_1$ when $\beta < \alpha \wedge \frac{d}{2}$ and requires undersmoothing $k_2$ when $\alpha < \beta \wedge \frac{d}{2}$ to achieve its best rate for estimating $\psi(\bbP)$ (i.e., $n^{-1}$ when $\frac{\alpha + \beta}{2} \geq \frac{d}{2}$ and $n^{-\frac{2\alpha + 2\beta}{\alpha + \beta + d}}$ if $\frac{\alpha + \beta}{2} < \frac{d}{2}$).
    \item $\hat{\psi}_{k_1, k_2}^{\mathrm{IF}}$ requires undersmoothing $k_{\ell_1}$ and oversmoothing $k_{\ell_2}$ ($\ell_1, \ell_2 \in \{1, 2\}$, $\ell_1 \neq \ell_2$) when $\alpha \vee \beta < \frac{d}{2}$ to achieve its best rate for estimating $\psi(\bbP)$ (i.e., the minimax rate in (\ref{eq: minimax rate})).
\end{enumerate}
\end{cor}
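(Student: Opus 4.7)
The plan is to deduce Corollary \ref{cor: undermoothing known f single} directly from Corollary \ref{cor: opt known f single} by comparing the prediction-optimal resolutions $k_1^{\text{pred}} \asymp n^{d/(2\alpha+d)}$ and $k_2^{\text{pred}} \asymp n^{d/(2\beta+d)}$ against the optimal ranges for $k_1, k_2$ given there. Because those ranges are necessary as well as sufficient (as noted in the remark following Corollary \ref{cor: opt known f single}, which in turn uses the bias lower bounds in Theorem \ref{theorem: plugin known f single}), a prediction-optimal resolution that lies outside the optimal range forces a prediction-suboptimal correction, and the direction of the violation determines whether one must undersmooth or oversmooth.

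Without loss of generality, assume $\alpha \geq \beta$, so $k_1^{\text{pred}} \leq k_2^{\text{pred}}$ and hence $k_{\min}^{\text{pred}} = k_1^{\text{pred}}$, $k_{\max}^{\text{pred}} = k_2^{\text{pred}}$. For parts 1 and 2 the optimal ranges require $k_{\min} \gtrsim n^{d/(2\alpha+2\beta)}$ in the regime $\frac{\alpha+\beta}{2} \geq \frac{d}{2}$ and $k_{\min} \asymp n^{d/(\alpha+\beta+d)}$ in $\frac{\alpha+\beta}{2} < \frac{d}{2}$. A direct comparison of exponents yields $k_1^{\text{pred}} \gtrsim n^{d/(2\alpha+2\beta)}$ iff $\beta \geq \frac{d}{2}$, and $k_1^{\text{pred}} \asymp n^{d/(\alpha+\beta+d)}$ iff $\alpha = \beta$. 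Thus whenever $\beta < \alpha \wedge \frac{d}{2}$, the prediction-optimal $k_1^{\text{pred}}$ is strictly below the required lower bound on $k_{\min}$ in whichever of the two regimes applies, so one must choose $k_1 \gg k_1^{\text{pred}}$, i.e.\ undersmooth $k_1$. Swapping the roles of $\alpha$ and $\beta$ gives the symmetric requirement that undersmoothing $k_2$ is needed when $\alpha < \beta \wedge \frac{d}{2}$.

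For part 3, I consider $\alpha \vee \beta < \frac{d}{2}$, still under $\alpha \geq \beta$. Corollary \ref{cor: opt known f single} requires $k_{\min} \lesssim \sqrt{n}$ in the regime $\frac{d}{4} < \frac{\alpha+\beta}{2} < \frac{d}{2}$ and $k_{\min} \lesssim n^{d/(2\alpha+2\beta+d)}$ in $\frac{\alpha+\beta}{2} < \frac{d}{4}$. Since $\alpha < \frac{d}{2}$, the exponent comparison $\frac{d}{2\alpha+d} > \frac{1}{2}$ gives $k_1^{\text{pred}} > \sqrt{n}$, and $\frac{d}{2\alpha+d} > \frac{d}{2\alpha+2\beta+d}$ (true since $\beta > 0$) gives $k_1^{\text{pred}} > n^{d/(2\alpha+2\beta+d)}$, so in either regime $k_1$ must be oversmoothed to serve as $k_{\min}$. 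For the other resolution, which serves as $k_{\max}$, the required lower bounds are $n^{d/(2\alpha+2\beta)}$ and $n^{2d/(2\alpha+2\beta+d)}$ respectively, and both strictly exceed $k_2^{\text{pred}} = n^{d/(2\beta+d)}$ when $\alpha < \frac{d}{2}$ (for instance, $\frac{d}{2\beta+d} < \frac{d}{2\alpha+2\beta}$ iff $\alpha < \frac{d}{2}$, and $\frac{d}{2\beta+d} < \frac{2d}{2\alpha+2\beta+d}$ iff $\alpha < \beta+\frac{d}{2}$, which holds since $\alpha < \frac{d}{2}$), so $k_2$ must be undersmoothed. Because the bias of $\hat{\psi}^{\mathrm{IF}}_{k_1, k_2}$ depends only on $k_{\max}$, the assignment can be reversed: one could instead push $k_1$ well above $k_2^{\text{pred}}$ (undersmoothing $k_1$) and drive $k_2$ below $k_1^{\text{pred}}$ (oversmoothing $k_2$); this accounts for the $(\ell_1, \ell_2)$ freedom in the statement.

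The only real difficulty is bookkeeping: tracking which of $k_1, k_2$ plays the role of $k_{\min}$ versus $k_{\max}$ under the WLOG reduction, and handling the two subregimes of the IF estimator separately. All the underlying inequalities are elementary monotonicity comparisons of $n$ raised to rational exponents in $\alpha, \beta, d$.
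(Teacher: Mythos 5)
Your proposal is correct and takes exactly the route the paper intends: the corollary is a bookkeeping exercise of comparing the prediction-optimal exponents $\frac{d}{2\alpha+d}$, $\frac{d}{2\beta+d}$ against the necessary ranges for $k_{\min}$ and $k_{\max}$ in Corollary \ref{cor: opt known f single}, whose necessity is guaranteed by the bias lower bounds in Theorem \ref{theorem: plugin known f single}. Your elementary inequalities check out, and your observation that whichever resolution is assigned to $k_{\min}$ must be oversmoothed while whichever is assigned to $k_{\max}$ must be undersmoothed correctly captures the $(\ell_1,\ell_2)$ freedom in part 3.

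One small imprecision worth flagging: you write that ``the bias of $\hat{\psi}^{\mathrm{IF}}_{k_1,k_2}$ depends only on $k_{\max}$,'' but Theorem \ref{theorem: plugin known f single} shows the bias is $\asymp k_{\max}^{-(\alpha+\beta)/d} \vee \frac{k_{\min}}{n}$, so $k_{\min}$ also enters via the nonlinearity term. The right observation for the $(\ell_1,\ell_2)$ exchangeability is that both the bias and the variance bounds in Theorem \ref{theorem: plugin known f single} are symmetric functions of $(k_1,k_2)$, depending only on $k_{\min}$ and $k_{\max}$; therefore the constraints from Corollary \ref{cor: opt known f single} can be satisfied with either label in either role. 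This does not change the conclusion, but the stated justification is slightly off. Also, in parts 1 and 2 the logical chain is cleaner if stated as: since $k_{\min}\le k_1$ and the optimal range forces $k_{\min}$ strictly above $k_1^{\text{pred}}$, it follows that $k_1 > k_1^{\text{pred}}$, i.e.\ undersmoothing $k_1$ is unavoidable (rather than phrasing it as $k_1^{\text{pred}}$ itself being below the lower bound).
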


The top panels of Figure \ref{fig:tuning single} summarize these corollaries. For each of the three estimators, the region in the parameter space where some degree of undersmoothing is needed to achieve the best rate of estimation of $\psi(\bbP)$ is the same regardless of whether single or double sample splitting is used (except when $\alpha = \beta$). However, there are some differences between single and double sample splitting in terms of how to optimally tune the nuisance functions. First, for the plug-in estimators, the degree to which one must undersmooth and the best rate of estimation differs in the single sample splitting case. Single sample splitting requires a lesser degree of undersmoothing and results in slower optimal rates of convergence compared to the double sample splitting case when $\frac{\alpha + \beta}{2} < \frac{d}{2}$. Second, for the first-order estimator, there are stricter requirements on how to optimally tune the nuisance functions in the single sample splitting case. Recall that $\hat{\psi}^{\mathrm{IF}}_{k_1, k_2}$ required setting one of the resolutions to be of order $c n^{\frac{d}{2\alpha + 2\beta + d}}$ and the other resolution to be $O(n)$ (i.e., either undersmoothed or oversmoothed) in the double sample splitting case when $\frac{\alpha + \beta}{2} < \frac{d}{4}$. In the single sample splitting case, the smaller resolution has to be $O(\sqrt{n})$, which implies oversmoothing, in order for the nonlinearity bias to be sufficiently small.

\begin{remark}
    It is straightforward to see that all the bias and variance bounds in Theorem \ref{theorem: plugin known f single} hold for the corresponding cross-fit estimators. Therefore, all of the conclusions in this subsection immediately hold for the corresponding cross-fit estimators.
\end{remark}

\subsubsection{Estimators that depend on one nuisance function} \label{sec: single ss one}

We next analyze the Newey and Robins plug-in estimator in the single sample splitting case. We begin with describing bounds on the bias and variance of this estimator in terms of the sample size and resolution $k$. 

\begin{theorem} \label{theorem: plugin known f single nr}
Suppose that single sample splitting is performed. Under the assumptions given in Section \ref{sec: motivation}, the estimator $\hat{\psi}_{k}^{\mathrm{NR}}$ satisfies
\begin{align*}
    \sup_{\bbP \in \cP_{(\alpha,\beta)}} \left|\E_\bbP\left(\hat{\psi}_{k}^{\mathrm{NR}} - \psi(\bbP)\right)\right| & \asymp k^{-(\alpha + \beta)/d} \\
    \sup_{\bbP \in \cP_{(\alpha,\beta)}} \Var_\bbP(\hat{\psi}_{k}^{\mathrm{NR}}) & \lesssim \frac{1}{n}  +  \frac{k}{n^2}. 
\end{align*}
\end{theorem}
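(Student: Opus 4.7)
My plan is to exploit the independence afforded by single sample splitting ($\ell_1 \neq \ell_2$) to reduce the bias to a clean inner product and then handle the variance by conditioning on $\mathcal{D}_{\ell_1}$.

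\textbf{Bias.} First I would write $\psi(\bbP) = \E_\bbP[A(Y - b(\bX))]$ and use $\ell_1 \neq \ell_2$ together with the uniform design on $[0,1]^d$ to get
\begin{align*}
\E_\bbP[\hat{\psi}_k^{\mathrm{NR}} - \psi(\bbP)] &= \E_\bbP\bigl[A\bigl(b(\bX) - \hat{b}_k^{(\ell_1)}(\bX)\bigr)\bigr] \\ &= \int p(\bx)\, b(\bx)\, d\bx - \int p(\bx)\, \E_\bbP[\hat{b}_k^{(\ell_1)}(\bx)]\, d\bx.
\end{align*}
Since $\E_\bbP[\hat{b}_k^{(\ell_1)}(\bx)] = \int b(\by) K_{V_k}(\by,\bx) d\by = (K_{V_k} b)(\bx)$, the bias equals $\langle p,\, b - K_{V_k} b \rangle$. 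Using that $b - K_{V_k} b \perp V_k$ and self-adjointness of the projection, I rewrite this as $\langle p - K_{V_k} p,\, b - K_{V_k} b \rangle$. The upper bound $\lesssim k^{-(\alpha+\beta)/d}$ then follows from Cauchy--Schwarz and the standard wavelet approximation estimates $\|p - K_{V_k} p\|_2 \lesssim k^{-\alpha/d}$, $\|b - K_{V_k} b\|_2 \lesssim k^{-\beta/d}$ in $H(\alpha,M)$ and $H(\beta,M)$ (Appendix \ref{sec: wavelets}). For the matching lower bound, I take $p$ and $b$ to be proportional to a single wavelet $\Phi_{k\boldm}$ at resolution level $k$, rescaled to lie in their respective Hölder balls; the inner product $\langle p - K_{V_k} p, b - K_{V_k} b\rangle$ then concentrates on non-cancelling wavelet coefficients and realises the $k^{-(\alpha+\beta)/d}$ rate (this is the same construction as in the bias lower bounds of Theorem \ref{theorem: plugin known f}).

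\textbf{Variance.} I would apply the law of total variance conditioning on $\mathcal{D}_{\ell_1}$. For the inner term, conditional on $\mathcal{D}_{\ell_1}$ the summands indexed by $\mathcal{D}_{\ell_2}$ are i.i.d., so
\begin{equation*}
\Var_\bbP(\hat{\psi}_k^{\mathrm{NR}} \mid \mathcal{D}_{\ell_1}) \leq \tfrac{1}{n}\, \E_\bbP\bigl[A^2 (Y - \hat{b}_k^{(\ell_1)}(\bX))^2 \mid \mathcal{D}_{\ell_1}\bigr] \lesssim \tfrac{1}{n}\bigl(1 + \|b - \hat{b}_k^{(\ell_1)}\|_2^2\bigr),
\end{equation*}
using the boundedness of $A,Y$ and a triangle-type split $Y - \hat{b}_k^{(\ell_1)}(\bX) = (Y - b(\bX)) + (b(\bX) - \hat{b}_k^{(\ell_1)}(\bX))$. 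Taking $\E_\bbP$ and invoking the standard $L^2$ risk for wavelet projection estimators of Hölder functions ($\E\|b-\hat{b}_k^{(\ell_1)}\|_2^2 \lesssim k^{-2\beta/d} + k/n$) yields $\E_\bbP[\Var_\bbP(\hat{\psi}_k^{\mathrm{NR}}\mid \mathcal{D}_{\ell_1})] \lesssim \tfrac{1}{n} + \tfrac{k}{n^2}$. For the outer term, the calculation in the bias paragraph gives $\E_\bbP[\hat{\psi}_k^{\mathrm{NR}}\mid \mathcal{D}_{\ell_1}] = \E_\bbP[AY] - \tfrac{1}{n}\sum_{i\in\mathcal{D}_{\ell_1}} Y_i (K_{V_k} p)(\bX_i)$, whose variance is $\tfrac{1}{n}\Var_\bbP(Y(K_{V_k}p)(\bX)) \lesssim \tfrac{1}{n}\|K_{V_k} p\|_\infty^2 \lesssim \tfrac{1}{n}$, since $\|K_{V_k} p\|_\infty \leq \|p\|_\infty + \|p-K_{V_k}p\|_\infty$ is uniformly bounded on $H(\alpha,M)$. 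Adding the two pieces gives the claimed $\tfrac{1}{n} + \tfrac{k}{n^2}$ bound.

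\textbf{Main obstacle.} The bias calculation is quite frictionless here because single sample splitting eliminates any ``nonlinearity'' cross-terms (unlike in Theorem \ref{theorem: plugin known f single}, where both $A$ and $\bX$ are reused from the same subsample that estimates the other nuisance). The only genuinely delicate piece is keeping uniform control over $\bbP\in\cP_{(\alpha,\beta)}$ in the approximation estimates and verifying $\|K_{V_k}p\|_\infty = O(1)$; this relies on the Hölder embedding and the localisation of the CDV-type wavelets summarised in Appendix \ref{sec: wavelets}, and is essentially the same estimate used in the double sample splitting proof.
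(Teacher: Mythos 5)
Your bias upper bound and your variance argument are correct, and the variance piece is actually cleaner than the paper's route. By writing $\int p(\bx)\,\hat{b}^{(1)}_k(\bx)\,d\bx = \frac{1}{n}\sum_{i\in\D_1} Y_i\,(K_{V_k}p)(\bX_i)$, you reduce $\E_{\bbP,2}[\hat{\psi}_k^{\mathrm{NR}}\mid\D_1]$ to an average of i.i.d.\ bounded summands (using $\|K_{V_k}p\|_\infty\lesssim 1$), giving $\Var_{\bbP,1}[\E_{\bbP,2}(\hat{\psi}_k^{\mathrm{NR}})]\lesssim 1/n$ in one step, whereas the paper expands the integral against another copy of itself and invokes Lemmas~\ref{lem: exp pxpy} and~\ref{lem: exp kernel}. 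The inner term $\E_{\bbP,1}[\Var_{\bbP,2}(\hat{\psi}_k^{\mathrm{NR}})]\lesssim n^{-1}\E_{\bbP,1}[1 + \|b-\hat{b}^{(1)}_k\|_2^2]\lesssim 1/n + k/n^2$ matches the paper, which instead expands $(Y-\hat{b}^{(1)}_k)^2$ and uses Lemma~\ref{lem: nuisance function bounds}; both lead to the same bound.

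The genuine gap is in the bias lower bound. Taking $p$ and $b$ proportional to a \emph{single} basis function at resolution $k$ does not give $k^{-(\alpha+\beta)/d}$. If you literally use the scaling function $\Phi_{k\boldm}$, then $\Phi_{k\boldm}\in V_k$, so $\Pi(p\mid V_k^\perp)=0$ and the bias is identically zero. If you instead use a wavelet $\Psi_{j\boldm}^{\biota}$ at level $j$ with $2^{jd}\asymp k$ (which is indeed orthogonal to $V_k$), the Hölder-ball constraint forces coefficients of size $\epsilon_1\lesssim 2^{-j(\alpha+d/2)}$ and $\epsilon_2\lesssim 2^{-j(\beta+d/2)}$, so by Parseval
\begin{equation*}
\bigl\langle\Pi(p\mid V_k^\perp),\,\Pi(b\mid V_k^\perp)\bigr\rangle = \epsilon_1\epsilon_2 \asymp 2^{-j(\alpha+\beta+d)} = k^{-(\alpha+\beta)/d}\cdot k^{-1},
\end{equation*}
which is too small by a factor of $k$. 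The missing $2^{jd}\asymp k$ comes from summing over all $\boldm\in\mathcal{Z}_j$ (and $\biota\in\mathcal{I}$) at level $j$, each contributing an equal non-cancelling term; this is exactly why the paper's construction in~(\ref{eq: p lb}) and~(\ref{eq: b lb}) puts the maximal admissible weight $2^{-l(\alpha+d/2)}$ (resp.~$2^{-l(\beta+d/2)}$) on \emph{every} wavelet at every level $l$, yielding $\sum_{l\geq j}|\mathcal{Z}_l|\,|\mathcal{I}|\,2^{-l(\alpha+\beta+d)}\asymp\sum_{l\geq j}2^{-l(\alpha+\beta)}\asymp k^{-(\alpha+\beta)/d}$. (A sum over the single level $l=j$ also works; a single wavelet does not.) With that substitution your argument closes, and your parenthetical claim that this is ``the same construction as in the bias lower bounds of Theorem~\ref{theorem: plugin known f}'' becomes accurate.
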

\noindent The proof is given in Appendix E. The proof strategy is similar to that used for the double sample split estimators (as $\hat{\psi}_{k}^{\mathrm{NR}}$ trivially has no nonlinearity bias arising from estimating two nuisance functions in the same subsample). Due to the geometry of the wavelet projection estimator, the bias of $\hat{\psi}_{k}^{\mathrm{NR}}$ can be expressed in terms of orthogonal projections of $p$ and $b$ onto the span of the wavelet basis functions at the $k$th resolution level of $L^2([0,1]^d)$, which results in the bias scaling with the smoothness of $p$ and $b$ despite only estimating $b$.

\begin{remark}
    Observe that the rates of convergence of the bias and variance of $\hat{\psi}^{\mathrm{NR}}_{k}$ are identical to those of $\hat{\psi}^{\mathrm{IF}}_{k_1, k_2}$ when setting either $k_1$ or $k_2$ to $O(1)$, which recall is optimal for estimating $\psi(\bbP)$. Consequently, one way of interpreting the optimal resolution choices for $\hat{\psi}^{\mathrm{IF}}_{k_1, k_2}$ in low regularity regimes is to make $\hat{\psi}^{\mathrm{IF}}_{k_1, k_2}$ equivalent to a singly robust estimator. A similar phenomenon holds in the no sample splitting case (see Section \ref{sec: no ss one}).
\end{remark}

The following corollary describes how to optimally choose the resolution for $\hat{\psi}^{\mathrm{NR}}_{k}$.

\begin{cor} \label{cor: opt known f nr}
The estimator $\hat{\psi}^{\mathrm{NR}}_{k}$ is minimax rate optimal when choosing $n^{\frac{d}{2\alpha + 2\beta}} \lesssim k \lesssim n$ if $\frac{\alpha + \beta}{2} \geq \frac{d}{4}$ and $k \asymp n^{\frac{2d}{2\alpha + 2\beta + d}}$ if $\frac{\alpha + \beta}{2} < \frac{d}{4}$.
\end{cor}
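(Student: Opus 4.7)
The plan is to deduce the corollary as a direct optimization of the mean squared error bound that Theorem \ref{theorem: plugin known f single nr} provides. Squaring the bias bound and adding the variance bound, we obtain for every $k$
\begin{equation*}
\sup_{\bbP \in \cP_{(\alpha,\beta)}} \E_{\bbP}\bigl[(\hat{\psi}_k^{\mathrm{NR}} - \psi(\bbP))^2\bigr] \lesssim k^{-2(\alpha+\beta)/d} + \frac{1}{n} + \frac{k}{n^2}.
\end{equation*}
The task is then to exhibit, in each regularity regime, a choice of $k$ making this upper bound match the minimax rate in (\ref{eq: minimax rate}), and to verify that no smaller or larger $k$ can do better (so the stated interval/scaling is genuinely required, not just sufficient).

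For the parametric regime $\frac{\alpha+\beta}{2} \geq \frac{d}{4}$, the target rate is $n^{-1}$. The squared-bias term is $O(n^{-1})$ precisely when $k \gtrsim n^{d/(2\alpha+2\beta)}$, and the $k/n^2$ variance contribution is $O(n^{-1})$ precisely when $k \lesssim n$. Under the assumption $\alpha+\beta \geq d/2$, the interval $[n^{d/(2\alpha+2\beta)}, n]$ is non-empty, and for any such $k$ all three terms are simultaneously $O(n^{-1})$, giving minimax optimality. The necessity of the endpoints follows from the matching lower bounds: the bias lower bound in Theorem \ref{theorem: plugin known f single nr} rules out $k \ll n^{d/(2\alpha+2\beta)}$, and an analogue of the variance lower bound argument sketched in the remark after Corollary \ref{cor: opt known f double} rules out $k \gg n$.

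For the nonparametric regime $\frac{\alpha+\beta}{2} < \frac{d}{4}$, the target rate is $n^{-(4\alpha+4\beta)/(2\alpha+2\beta+d)}$. Here $n^{-1}$ is negligible compared to the target rate (since $4(\alpha+\beta)/(2\alpha+2\beta+d) < 1$ is equivalent to $2(\alpha+\beta) < d$), so only the squared bias and the $k/n^2$ variance term matter. Balancing $k^{-2(\alpha+\beta)/d} \asymp k/n^2$ yields the unique solution $k \asymp n^{2d/(2\alpha+2\beta+d)}$, and plugging this back gives both terms of order $n^{-4(\alpha+\beta)/(2\alpha+2\beta+d)}$, matching the minimax rate. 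Any deviation from this rate of $k$ (asymptotically larger or smaller) strictly inflates one of the two terms above the target, which, combined with the sharp lower bounds available from Theorem \ref{theorem: plugin known f single nr}, shows that the stated scaling is the only choice achieving optimality.

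Because both the bias and variance bounds in Theorem \ref{theorem: plugin known f single nr} have matching lower bound counterparts (the variance lower bound being analogous to that discussed for the double sample splitting estimators), there is no real obstacle in the proof beyond the two polynomial-in-$k$ optimizations above; the only place requiring a brief check is that the two stated regimes of $k$ genuinely exhaust the range yielding the minimax rate, and in particular that the interval $[n^{d/(2\alpha+2\beta)}, n]$ in the first regime is non-empty exactly when $\alpha+\beta \geq d/2$.
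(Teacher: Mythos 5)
Your proposal is correct and takes exactly the approach the paper intends: Theorem~\ref{theorem: plugin known f single nr} supplies two-sided bounds on the bias and an upper bound on the variance, and the corollary follows by squaring the bias, balancing $k^{-2(\alpha+\beta)/d}$, $1/n$, and $k/n^2$, and matching against (\ref{eq: minimax rate}). You also correctly identify that the necessity of the stated resolution ranges (beyond the sufficiency the corollary actually asserts) relies on the bias lower bound in the theorem plus a variance lower bound that the paper defers to a remark, which matches the paper's treatment.
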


The following two corollaries describe when  using the prediction-optimal resolution makes $\hat{\psi}^{\mathrm{NR}}_{k}$ minimax rate optimal and when undersmoothing is needed.

\begin{cor} \label{cor: pred known f nr}
Suppose we choose the prediction-optimal resolution $k = k_2^{\text{pred}}$. Then, $\hat{\psi}_{k_2^{\text{pred}}}^{\mathrm{NR}}$ is minimax rate optimal if and only if $\alpha \geq \frac{d}{2}$.
\end{cor}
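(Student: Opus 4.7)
The plan is to directly compare the prediction-optimal resolution $k_2^{\mathrm{pred}} \asymp n^{d/(2\beta + d)}$ against the range of resolutions that Corollary \ref{cor: opt known f nr} identifies as minimax rate optimal for $\hat{\psi}^{\mathrm{NR}}_k$. Since Corollary \ref{cor: opt known f nr} (and the matching lower bounds from Theorem \ref{theorem: plugin known f single nr}) gives a complete characterization of which $k$ achieve the minimax rate, the question reduces to a pure exponent comparison in two regimes.

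In the regime $\frac{\alpha + \beta}{2} \geq \frac{d}{4}$, the minimax rate is $n^{-1}$ and Corollary \ref{cor: opt known f nr} requires $n^{d/(2\alpha + 2\beta)} \lesssim k \lesssim n$. The upper bound $n^{d/(2\beta + d)} \lesssim n$ holds trivially since $\beta > 0$, so minimax rate optimality reduces to $n^{d/(2\alpha + 2\beta)} \lesssim n^{d/(2\beta + d)}$, which is equivalent to $2\beta + d \leq 2\alpha + 2\beta$, i.e., $\alpha \geq d/2$. In the low regularity regime $\frac{\alpha + \beta}{2} < \frac{d}{4}$, Corollary \ref{cor: opt known f nr} requires $k \asymp n^{2d/(2\alpha + 2\beta + d)}$, so minimax optimality would force $d/(2\beta + d) = 2d/(2\alpha + 2\beta + d)$, which simplifies to $\alpha = \beta + d/2$, again forcing $\alpha \geq d/2$. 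But $\alpha \geq d/2$ combined with $\beta > 0$ contradicts $\alpha + \beta < d/2$, so the prediction-optimal resolution is never minimax rate optimal in this low regularity regime. Combining the two cases yields the claimed equivalence.

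For the necessity direction, one can either invoke the lower bounds in Corollary \ref{cor: opt known f nr} directly, or, more concretely, use the matching lower bound on $|\E_\bbP(\hat{\psi}_k^{\mathrm{NR}} - \psi(\bbP))| \asymp k^{-(\alpha+\beta)/d}$ from Theorem \ref{theorem: plugin known f single nr}. Plugging $k = k_2^{\mathrm{pred}}$ yields a squared bias of order $n^{-2(\alpha+\beta)/(2\beta + d)}$, and the comparison $2(\alpha+\beta)/(2\beta + d) \geq 1$ is again equivalent to $\alpha \geq d/2$; when $\alpha + \beta < d/2$, the same algebraic manipulation shows that $2(\alpha+\beta)/(2\beta + d) < (4\alpha + 4\beta)/(2\alpha + 2\beta + d)$ unless $\alpha = \beta + d/2$, which is incompatible with $\alpha + \beta < d/2$. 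Since the analysis is entirely an algebraic check on exponents once the bias and variance scalings have been supplied by Theorem \ref{theorem: plugin known f single nr}, there is no real obstacle; the only care needed is to verify that the forbidden boundary case $\alpha = \beta + d/2$ cannot coexist with the low regularity assumption, which is immediate.
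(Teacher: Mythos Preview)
Your proposal is correct and follows essentially the approach implicit in the paper: the corollary is stated without a separate proof and is meant to follow directly from the bias/variance bounds in Theorem~\ref{theorem: plugin known f single nr} and the optimal-resolution characterization in Corollary~\ref{cor: opt known f nr}, which is exactly the exponent comparison you carry out. Your second paragraph (redoing the necessity check via the bias lower bound) is redundant with the first but not incorrect.
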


\begin{cor} \label{cor: undersmoothing known f nr}
The estimator $\hat{\psi}_{k}^{\mathrm{NR}}$ requires undersmoothing $k$ when $\alpha < \frac{d}{2}$ to achieve its best rate for estimating $\psi(\bbP)$ (i.e., the minimax rate in (\ref{eq: minimax rate})).
\end{cor}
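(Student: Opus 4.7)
The plan is to invoke the admissible resolution ranges established in the preceding Corollary on optimal resolutions for $\hat{\psi}_{k}^{\mathrm{NR}}$, compare them to the prediction-optimal resolution $k_2^{\text{pred}} \asymp n^{d/(2\beta + d)}$ from Section on nuisance function estimators, and then use the matching lower bound on the bias, $\sup_{\bbP \in \cP_{(\alpha,\beta)}} |\E_\bbP(\hat{\psi}_k^{\mathrm{NR}} - \psi(\bbP))| \gtrsim k^{-(\alpha+\beta)/d}$, from Theorem to argue necessity rather than just insufficiency of the prediction-optimal choice.

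In the regime $\frac{\alpha + \beta}{2} \geq \frac{d}{4}$, Corollary requires $k \gtrsim n^{d/(2\alpha + 2\beta)}$. A direct exponent comparison gives $\frac{d}{2\beta + d} < \frac{d}{2\alpha + 2\beta}$ exactly when $2\alpha < d$, so under $\alpha < \frac{d}{2}$ we have $k_2^{\text{pred}} \ll n^{d/(2\alpha + 2\beta)}$. The bias lower bound from Theorem then yields, at any $k \lesssim k_2^{\text{pred}}$, a squared bias of order $n^{-2(\alpha+\beta)/(2\beta+d)}$ which strictly dominates the minimax rate $n^{-1}$, ruling out the use of the prediction-optimal or any smaller resolution. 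In the regime $\frac{\alpha + \beta}{2} < \frac{d}{4}$, the hypothesis $\alpha < \frac{d}{2}$ is automatic from $\alpha + \beta < d/2$. Corollary requires $k \asymp n^{2d/(2\alpha + 2\beta + d)}$, and the comparison $\frac{d}{2\beta+d} < \frac{2d}{2\alpha+2\beta+d}$ reduces to $\alpha < \beta + d/2$, which is implied by $\alpha < d/2$. Again by Theorem, the squared bias at $k = k_2^{\text{pred}}$ is of order $n^{-2(\alpha+\beta)/(2\beta+d)}$, which exceeds the minimax rate $n^{-4(\alpha+\beta)/(2\alpha+2\beta+d)}$ whenever $\alpha < \beta + d/2$.

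In both regimes, we conclude that any $k$ of order at most $k_2^{\text{pred}}$ forces the bias to converge strictly more slowly than the minimax rate, so one must choose $k \gg k_2^{\text{pred}}$, i.e.\ undersmooth, whenever $\alpha < \frac{d}{2}$. The main obstacle is really just bookkeeping across the two regularity regimes and checking exponent inequalities cleanly; no new probabilistic argument is needed beyond the tight bias bound in Theorem and the admissible resolution range in the preceding Corollary, which is why the result fits naturally as a corollary rather than a standalone theorem.
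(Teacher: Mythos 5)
Your proposal is correct and follows the same approach the paper intends for this corollary: the paper leaves the proof implicit, but it is exactly what you wrote — invoke the tight (two-sided) bias bound $\asymp k^{-(\alpha+\beta)/d}$ from Theorem~\ref{theorem: plugin known f single nr}, evaluate the squared bias at $k \asymp k_2^{\text{pred}}$ in each regularity regime, and verify by elementary exponent comparison that it exceeds the minimax rate precisely when $\alpha < d/2$, forcing $k \gg k_2^{\text{pred}}$. Your exponent bookkeeping in both regimes ($2\alpha < d$ in the first, $\alpha < \beta + d/2$ in the second, the latter implied by $\alpha < d/2$) is accurate, and noting that $\alpha < d/2$ is automatic when $\frac{\alpha+\beta}{2} < \frac{d}{4}$ is the right observation to keep the case split tidy.
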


These results are included in Figures \ref{fig:minimax} and \ref{fig:tuning single}. Once again, undersmoothing is necessary to achieve optimal rates of convergence in low regularity regimes.

\begin{remark}
    If one were to consider an analogous estimator by exchanging the roles of $A$ and $Y$ (i.e., $\frac{1}{n} \sum_{i \in \D_2} Y_i(A_i - \hat{p}_k^{(1)}(\bX_i))$), the same results would hold when exchanging the roles of $\alpha$ and $\beta$. That is, Theorem \ref{theorem: plugin known f single nr} and Corollary \ref{cor: opt known f nr} would hold, but Corollaries \ref{cor: pred known f nr} and \ref{cor: undersmoothing known f nr} would imply that the prediction-optimal resolution ($k = k_1^{\text{pred}}$) results in minimax rate optimality if and only if $\beta \geq \frac{d}{2}$ and undersmoothing is needed if and only if $\beta < \frac{d}{2}$. An analogous comment applies in the no sample splitting case as well.
\end{remark}

\begin{remark}
    Similar to the analysis of the other single sample split estimators, the bias and variance bounds in Theorem \ref{theorem: plugin known f single nr} hold for the cross-fit version of $\hat{\psi}_{k}^{\mathrm{NR}}$. Thus, all of the conclusions in this subsection also apply to the cross-fit estimator.
\end{remark}

\begin{figure}[!h]
\centering
\begin{tabular}{ll}
  \includegraphics[width=0.45\textwidth]{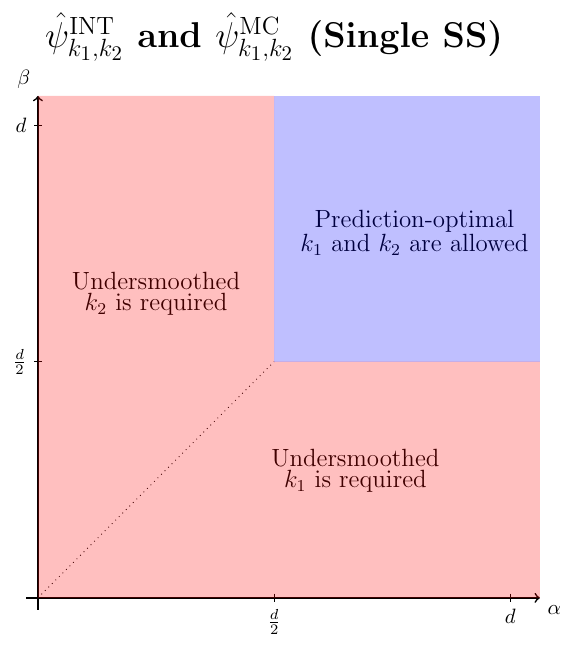} & 
  \includegraphics[width=0.45\textwidth]{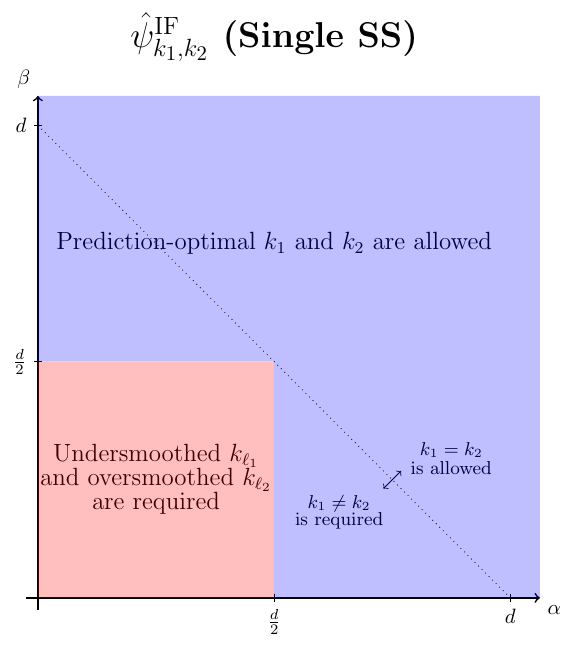} \\ 
  \multicolumn{2}{c}{\includegraphics[width=0.45\textwidth]{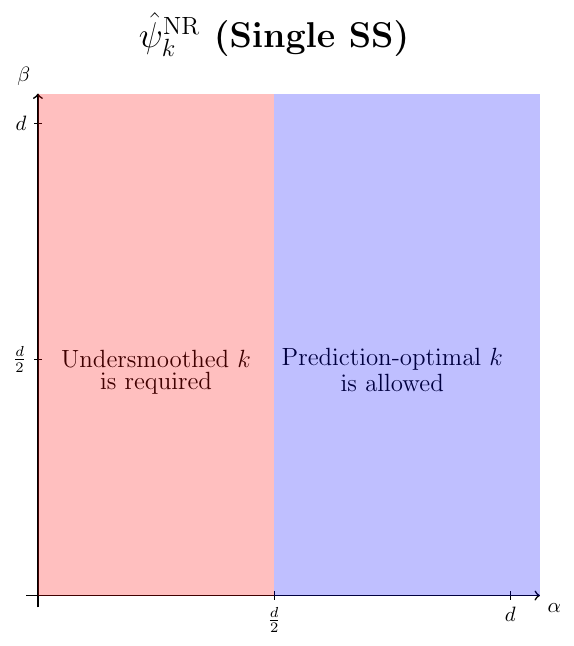}}
\end{tabular}
    \caption{Optimally tuning the resolutions for $\hat{\psi}_{k_1, k_2}^{\mathrm{INT}}$, $\hat{\psi}_{k_1, k_2}^{\mathrm{MC}}$, $\hat{\psi}_{k_1, k_2}^{\mathrm{IF}}$, and  $\hat{\psi}_{k}^{\mathrm{NR}}$ with single sample splitting (SS).}
    \label{fig:tuning single}
\end{figure}

\subsection{No sample splitting} \label{sec: no ss}

Last, we analyze all four estimators when no sample splitting is used (i.e., when all $\ell_j$'s are equal). 

\subsubsection{Estimators that depend on two nuisance functions} \label{sec: no ss two}

Theorem \ref{theorem: plugin known f no} establishes bounds on the bias and variance of the three estimators that depend on two nuisance functions. Similar to the single sample splitting case, we do not provide lower bounds on the variance of the estimators because the lower bounds on the bias are sufficient to establish optimal resolution choices and when prediction-optimal resolutions are satisfactory.

\begin{theorem} \label{theorem: plugin known f no} 
Suppose that no sample splitting is performed. Under the assumptions given in Section \ref{sec: motivation}, the following statements hold:
\begin{enumerate} 
    \item The estimator $\hat{\psi}_{k_1, k_2}^{\mathrm{INT}}$ satisfies
\begin{align*}
    \sup_{\bbP \in \cP_{(\alpha,\beta)}} \left|\E_\bbP\left(\hat{\psi}_{k_1, k_2}^{\mathrm{INT}} - \psi(\bbP)\right)\right| & \asymp k_{\mathrm{min}}^{-(\alpha + \beta) / d}   \vee \frac{k_{\mathrm{min}}}{n} \\
    \sup_{\bbP \in \cP_{(\alpha,\beta)}} \Var_\bbP(\hat{\psi}_{k_1, k_2}^{\mathrm{INT}}) & \lesssim \frac{1}{n} + \frac{k_{\mathrm{min}}^2}{n^2}.
\end{align*}

    \item The estimator $\hat{\psi}_{k_1, k_2}^{\mathrm{MC}}$ satisfies for $k_1, k_2 \ll n$
\begin{equation*}
    \sup_{\bbP \in \cP_{(\alpha,\beta)}} \left|\E_\bbP\left(\hat{\psi}_{k_1, k_2}^{\mathrm{MC}} - \psi(\bbP)\right)\right| \asymp k_{\mathrm{min}}^{-(\alpha + \beta) / d}   \vee \frac{k_{\mathrm{max}}}{n}
\end{equation*}
and for any $k_1, k_2$
\begin{equation*}
        \sup_{\bbP \in \cP_{(\alpha,\beta)}} \Var_\bbP(\hat{\psi}_{k_1, k_2}^{\mathrm{MC}}) \lesssim \frac{1}{n} + \frac{k_{\mathrm{min}}^2}{n^2} + \frac{k_{\mathrm{max}}}{n^2} + \frac{k_{\mathrm{min}}k_{\mathrm{max}}}{n^3} + \frac{k_{\mathrm{min}}^2 k_{\mathrm{max}}}{n^4} + \frac{k_{\mathrm{min}}^2 k_{\mathrm{max}}^2}{n^5}.
\end{equation*}

\item The estimator $\hat{\psi}^{\mathrm{IF}}_{k_1, k_2}$ satisfies for $k_1,k_2 \ll n$, 
\begin{equation*}
    \sup_{\bbP \in \cP_{(\alpha,\beta)}} \left|\E_\bbP\left(\hat{\psi}^{\mathrm{IF}}_{k_1, k_2} - \psi(\bbP)\right)\right| \asymp k_{\mathrm{max}}^{-\frac{\alpha + \beta}{d}}   \vee \frac{k_{\mathrm{max}}}{n}
\end{equation*}
and for any $k_1, k_2$
\begin{equation*}
    \sup_{\bbP \in \cP_{(\alpha,\beta)}} \Var_\bbP(\hat{\psi}^{\mathrm{IF}}_{k_1, k_2}) \lesssim  \frac{1}{n} + \frac{k_{\mathrm{min}}^2}{n^2} + \frac{k_{\mathrm{max}}}{n^2} + \frac{k_{\mathrm{max}}^2}{n^3} + \frac{k_{\mathrm{min}}^2 k_{\mathrm{max}}}{n^4} + \frac{k_{\mathrm{min}}^2 k_{\mathrm{max}}^2}{n^5}.
\end{equation*}
\end{enumerate}
\end{theorem}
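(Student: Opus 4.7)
My plan is to adapt the bias/variance decomposition used in the proof of Theorem \ref{theorem: plugin known f single}, carefully tracking the extra index collisions that arise when every piece of the estimator is built from the single subsample $\D_1$. For $\hat{\psi}^{\mathrm{INT}}_{k_1,k_2}$ the computation is essentially identical to the single sample splitting case: the only nonlinear object $\int \hat{p}^{(1)}_{k_1}\hat{b}^{(1)}_{k_2}\,d\bx$ depends on the data only through the nuisance estimators and involves no random evaluation point, while the linear term $\frac{1}{n}\sum_i A_iY_i$ has expectation $\E[AY]$ irrespective of which subsample it is computed on. So the bias and variance bounds for INT can be read off verbatim from the single sample splitting proof after identifying $\ell_3$ with $\ell_1=\ell_2$.

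\textbf{MC and IF bias.} For $\hat{\psi}^{\mathrm{MC}}_{k_1,k_2}$ I would expand $\E[\frac{1}{n}\sum_j \hat{p}^{(1)}_{k_1}(\bX_j)\hat{b}^{(1)}_{k_2}(\bX_j)]$ as a triple sum over $(i,j,l)$ and classify the terms by collision pattern. The all-distinct tuples reproduce the clean projection bias of order $k_{\mathrm{min}}^{-(\alpha+\beta)/d}$. The collisions $i=j\neq l$ give, after using the reproducing-kernel property to collapse the $l$-integral into an approximation of $b$, a contribution of order $\frac{1}{n}\E[A\,K_{V_{k_1}}(\bX,\bX)b(\bX)] \asymp k_1/n$; symmetrically $l=j\neq i$ contributes $k_2/n$; the remaining diagonal patterns are strictly smaller under $k_1,k_2\ll n$. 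Summing yields the nonlinearity bias $k_{\mathrm{max}}/n$. For $\hat{\psi}^{\mathrm{IF}}_{k_1,k_2}$ I would expand $(A_i-\hat{p}(\bX_i))(Y_i-\hat{b}(\bX_i))$ into four pieces and use the double-robust identity, which reduces the leading projection bias to a product of approximation errors and therefore scales as $k_{\mathrm{max}}^{-(\alpha+\beta)/d}$ rather than $k_{\mathrm{min}}^{-(\alpha+\beta)/d}$; the nonlinearity bias $k_{\mathrm{max}}/n$ then arises from the $i=j$ collisions in the mixed terms $A_i\hat{b}(\bX_i)$ and $\hat{p}(\bX_i)Y_i$ together with the diagonal collisions in $\hat{p}(\bX_i)\hat{b}(\bX_i)$. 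For the matching lower bounds I would construct $\bbP\in\cP_{(\alpha,\beta)}$ saturating the H\"older ball at the resolution just above $k_{\mathrm{min}}$ (respectively $k_{\mathrm{max}}$), adding a constant offset to the nuisance functions to prevent cancellation between the projection term and the nonlinearity term.

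\textbf{Variance.} Each estimator is a low-order V-statistic in the observations $\bO_i$ (order $2$ for INT, order $3$ for MC and IF once everything is drawn from the same sample). I would apply the Hoeffding decomposition: the $1/n$ term captures the H\'ajek projection, and the successive terms $k_{\mathrm{min}}^2/n^2$, $k_{\mathrm{max}}/n^2$, $k_{\mathrm{min}}k_{\mathrm{max}}/n^3$, $k_{\mathrm{max}}^2/n^3$, $k_{\mathrm{min}}^2 k_{\mathrm{max}}/n^4$, and $k_{\mathrm{min}}^2 k_{\mathrm{max}}^2/n^5$ arise from the variances of the $2$-, $3$-, $4$-, and $5$-fold interaction components after reduction using the reproducing-kernel identity. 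The key integrals to bound are the trace-type quantities $\int K_{V_{k_\ell}}(\bx,\by)^2\,d\by \asymp k_\ell$ and their higher-order analogues such as $\int K_{V_{k_1}}(\bx,\by)^2 K_{V_{k_2}}(\bx,\by)^2 \,d\bx\,d\by \asymp k_{\mathrm{min}}^2 k_{\mathrm{max}}$, which follow from the wavelet kernel properties recorded in Appendix \ref{sec: wavelets}.

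\textbf{Main obstacle.} The hardest part is the new fifth-order variance term $k_{\mathrm{min}}^2 k_{\mathrm{max}}^2 / n^5$ appearing for $\hat{\psi}^{\mathrm{MC}}$ and $\hat{\psi}^{\mathrm{IF}}$, which has no analogue in the single sample splitting case because it originates from $5$-tuples in which two disjoint pairs of indices collide, a pattern forbidden when the two nuisance samples are disjoint. Bounding it requires enumerating the contributing tuples, collapsing one kernel pair at a time via the reproducing property, and controlling a residual integral of the form $\int K_{V_{k_1}}(\bx_1,\by_1)K_{V_{k_1}}(\bx_2,\by_1)K_{V_{k_2}}(\bx_1,\by_2)K_{V_{k_2}}(\bx_2,\by_2)\,d\bx_1\,d\bx_2\,d\by_1\,d\by_2$ by a constant multiple of $k_{\mathrm{min}}^2 k_{\mathrm{max}}^2$. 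Conceptually, this new term together with the upgrade from $k_{\mathrm{min}}/n$ to $k_{\mathrm{max}}/n$ in the bias is the quantitative reason why no sample splitting cannot match the single- or double-split rates in low regularity regimes, and constitutes the structural payoff of the theorem.
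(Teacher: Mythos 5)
Your high-level strategy --- tracking which index collisions become newly allowed when the evaluation sample, the $p$-estimation sample, and the $b$-estimation sample all coincide --- is the right idea and matches the spirit of the paper's argument, which organizes the same bookkeeping via a leave-one-out decomposition $\hat{p}_{k_1}(\bx) = \hat{p}_{k_1}^{(-i)}(\bx) + \tfrac{A_i}{nf(\bX_i)}K_{V_{k_1}}(\bX_i,\bx)$ and direct covariance bounds rather than the Hoeffding decomposition. The INT claim, the bias argument, and the observation that the projection bias upgrades from $k_{\min}^{-(\alpha+\beta)/d}$ (plug-in) to $k_{\max}^{-(\alpha+\beta)/d}$ (IF) while the own-observation bias is of order $k_{\max}/n$ are all sound in outline.

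However, your identification of the $k_{\min}^2 k_{\max}^2/n^5$ variance term is wrong in two respects. You ascribe it to ``$5$-tuples in which two disjoint pairs of indices collide,'' but in fact it is produced by the fully degenerate diagonal in which all three indices of the cubic V-statistic coincide on both sides of the variance, i.e.\ the piece $\tfrac{1}{n^3}\sum_{i}\tfrac{A_iY_i}{f(\bX_i)^2}K_{V_{k_1}}(\bX_i,\bX_i)K_{V_{k_2}}(\bX_i,\bX_i)$, whose variance is $\tfrac{1}{n^5}\Var\!\left(\tfrac{AY}{f(\bX)^2}K_{V_{k_1}}(\bX,\bX)K_{V_{k_2}}(\bX,\bX)\right)$. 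Moreover, the $8$-fold integral you nominate,
\begin{equation*}
\int K_{V_{k_1}}(\bx_1,\by_1)K_{V_{k_1}}(\bx_2,\by_1)K_{V_{k_2}}(\bx_1,\by_2)K_{V_{k_2}}(\bx_2,\by_2)\,d\bx_1\,d\bx_2\,d\by_1\,d\by_2,
\end{equation*}
is not of order $k_{\min}^2 k_{\max}^2$: applying the reproducing property of $K_{V_{k_1}}$ in $\by_1$ and of $K_{V_{k_2}}$ in $\by_2$ collapses it to $\iint K_{V_{k_1}}(\bx_1,\bx_2)K_{V_{k_2}}(\bx_1,\bx_2)\,d\bx_1\,d\bx_2 = \mathrm{tr}(P_{V_{k_1}}P_{V_{k_2}})\asymp k_{\min}$, far smaller than you claim. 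The factor $k_{\min}^2 k_{\max}^2$ instead comes from the pointwise bound $K_{V_k}(\bx,\bx)\lesssim k$, so that $\int K_{V_{k_1}}(\bx,\bx)^2K_{V_{k_2}}(\bx,\bx)^2\,d\bx\lesssim k_1^2k_2^2$; no reproducing-kernel cancellation is available because both arguments of each kernel are the \emph{same} observation. The term you flag as ``forbidden under sample splitting'' is exactly this own-observation diagonal, not a two-pair collision.

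Finally, ``adding a constant offset to prevent cancellation'' is not enough detail for the lower bound in the regime where the own-observation bias $k_{\max}/n$ dominates. The clean construction takes $p=b=\epsilon$ constant and $\E_\bbP[AY\mid\bX]=0$: constants lie in $V_k$ by Property P3, so the projection bias is \emph{exactly} zero (not merely small), and $\E[AY\mid\bX]=0$ makes the remaining (non-own-observation) nonlinearity term $r_{n,k_1,k_2}$ vanish as well. This isolates the own-observation pieces $\tfrac{\epsilon^2}{n}\int K_{V_{k_\ell}}(\bx,\bx)\,d\bx\asymp\tfrac{k_\ell}{n}$, which are nonnegative by construction, yielding a lower bound $\gtrsim k_{\max}/n$ without any sign or cancellation argument.
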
 

\noindent The proof is given in Appendix F. Compared to the single sample splitting case, the bias of $\hat{\psi}_{k_1, k_2}^{\mathrm{MC}}$ and $\hat{\psi}_{k_1, k_2}^{\mathrm{IF}}$ incur an additional term of order $\frac{k_{\mathrm{max}}}{n}$ due to evaluating the nuisance function estimators at the same observations used to fit them, which we refer to as own observation bias. The variance bounds of these estimators also include additional terms in the no sample splitting case, although these terms do not dominate the variance when using optimal and prediction-optimal resolutions. The bias and variance bounds of $\hat{\psi}_{k_1, k_2}^{\mathrm{INT}}$ are trivially the same as in the single sample splitting case since the nuisance function estimators are not evaluated at sample points for this estimator. 

\begin{remark}
    We took $k_1, k_2 \ll n$ to simplify the presentation of the bias bounds of $\hat{\psi}^{\mathrm{MC}}_{k_1, k_2}$ and $\hat{\psi}^{\mathrm{IF}}_{k_1, k_2}$, as otherwise the bias of these estimators does not converge to 0. Indeed, our proofs show that $k_{\mathrm{min}}^{-\frac{\alpha + \beta}{d}}   \vee \frac{k_{\mathrm{max}}}{n} \vee \frac{k_1 k_2}{n^2}$ and $k_{\mathrm{max}}^{-\frac{\alpha + \beta}{d}}   \vee \frac{k_{\mathrm{max}}}{n} \vee \frac{k_1 k_2}{n^2}$ are upper bounds for the bias of $\hat{\psi}^{\mathrm{MC}}_{k_1, k_2}$ and $\hat{\psi}^{\mathrm{IF}}_{k_1, k_2}$ without this restriction of $k_1,k_2$. Straightforward extensions of the arguments in the proof can show that these are lower bounds as well. However, these considerations are inconsequential for our investigation of optimal versus prediction-optimal nuisance function tuning strategies, as the optimal and prediction-optimal rates for $k_1, k_2$ grow slower than $n$.
\end{remark}

We next describe the optimal resolution choices and optimal rates of convergence for these estimators in Corollary \ref{cor: opt known f no}.
\begin{cor} \label{cor: opt known f no}
If $\frac{\alpha + \beta}{2} \geq \frac{d}{2}$, then 
\begin{enumerate}
    \item $\hat{\psi}^{\mathrm{INT}}_{k_1, k_2}$ is minimax rate optimal when choosing $n^{\frac{d}{2\alpha + 2\beta}} \lesssim k_{\mathrm{min}}  \lesssim \sqrt{n}$.
    \item $\hat{\psi}^{\mathrm{MC}}_{k_1, k_2}$ is minimax rate optimal when choosing $n^{\frac{d}{2\alpha + 2\beta}} \lesssim k_{\mathrm{min}}$ and $k_{\mathrm{max}} \lesssim \sqrt{n}$.
    \item $\hat{\psi}^{\mathrm{IF}}_{k_1, k_2}$ is minimax rate optimal when choosing $n^{\frac{d}{2\alpha + 2\beta}} \lesssim k_{\mathrm{max}} \lesssim \sqrt{n}$. 
\end{enumerate}
If $\frac{\alpha + \beta}{2} < \frac{d}{2}$, then $\hat{\psi}^{\mathrm{INT}}_{k_1, k_2}$, $\hat{\psi}^{\mathrm{MC}}_{k_1, k_2}$, and $\hat{\psi}^{\mathrm{IF}}_{k_1, k_2}$ cannot be minimax rate optimal for any choice of $k_1, k_2$. The best rate of estimation of $\psi(\bbP)$ for each of these estimators is $n^{-\frac{2\alpha + 2\beta}{\alpha + \beta + d}}$. Further, 
\begin{enumerate}
    \item The best rate of estimation for $\hat{\psi}^{\mathrm{INT}}_{k_1, k_2}$ occurs when choosing $k_{\mathrm{min}} \asymp  n^{\frac{d}{\alpha + \beta + d}} $.
    \item The best rate of estimation for $\hat{\psi}^{\mathrm{MC}}_{k_1, k_2}$ occurs when choosing $k_{\mathrm{min}} \asymp k_{\mathrm{max}} \asymp  n^{\frac{d}{\alpha + \beta + d}} $.
    \item The best rate of estimation for $\hat{\psi}^{\mathrm{IF}}_{k_1, k_2}$ occurs when choosing $k_{\mathrm{max}} \asymp  n^{\frac{d}{\alpha + \beta + d}} $.
\end{enumerate}
\end{cor}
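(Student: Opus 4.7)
The plan is to derive the optimal resolutions and best attainable rates by substituting the bias and variance bounds from Theorem \ref{theorem: plugin known f no} into the MSE decomposition $\mathrm{MSE} = \mathrm{Bias}^2 + \mathrm{Var}$, then minimizing over $(k_1, k_2)$ and comparing the resulting rates against the minimax rate in (\ref{eq: minimax rate}). Since Theorem \ref{theorem: plugin known f no} provides matching upper and lower bounds on the bias (via $\asymp$), the minimization over $k_1, k_2$ yields both sufficiency and necessity for the claimed resolution choices to attain the best possible rates, modulo standard variance upper bounds.

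For the high-regularity regime $\frac{\alpha + \beta}{2} \geq \frac{d}{2}$, I would show that the parametric rate $n^{-1}$ is attained. For $\hat{\psi}^{\mathrm{INT}}_{k_1, k_2}$, the squared bias is of order $k_{\min}^{-2(\alpha + \beta)/d} \vee k_{\min}^2/n^2$; requiring each of these to be $O(n^{-1})$ gives exactly $n^{d/(2\alpha + 2\beta)} \lesssim k_{\min} \lesssim \sqrt{n}$, and the range is nonempty precisely because $\frac{\alpha+\beta}{2} \geq \frac{d}{2}$. The variance bound $1/n + k_{\min}^2/n^2$ is $O(n^{-1})$ on this range. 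For $\hat{\psi}^{\mathrm{MC}}_{k_1, k_2}$, the bias involves $k_{\max}/n$ rather than $k_{\min}/n$, so one obtains $n^{d/(2\alpha + 2\beta)} \lesssim k_{\min}$ and $k_{\max} \lesssim \sqrt{n}$, and I would verify that each of the variance terms $k_{\max}/n^2, k_{\min}k_{\max}/n^3, k_{\min}^2 k_{\max}/n^4, k_{\min}^2 k_{\max}^2/n^5$ is $O(n^{-1})$ when $k_{\max} \lesssim \sqrt{n}$. For $\hat{\psi}^{\mathrm{IF}}_{k_1, k_2}$, the bias is driven by $k_{\max}^{-(\alpha+\beta)/d} \vee k_{\max}/n$, so the analog of the INT analysis on $k_{\max}$ gives $n^{d/(2\alpha + 2\beta)} \lesssim k_{\max} \lesssim \sqrt{n}$; I would then check the variance terms are $O(n^{-1})$ under this constraint (the $k_{\max}^2/n^3$ term is the binding one and is $O(n^{-1})$ iff $k_{\max} \lesssim \sqrt{n}$).

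For the low-regularity regime $\frac{\alpha + \beta}{2} < \frac{d}{2}$, I would first argue that the parametric rate is impossible because the bias term $k_{\min}/n$ (for INT), $k_{\max}/n$ (for MC and IF) forces $k_{\min}$ or $k_{\max}$ to be at most $\sqrt{n}$, which is incompatible with the bias term $k_{\min}^{-(\alpha+\beta)/d}$ or $k_{\max}^{-(\alpha+\beta)/d}$ being $O(n^{-1/2})$. Comparing this to the minimax rate $n^{-4(\alpha+\beta)/(2\alpha+2\beta+d)}$, I would show that the best achievable rate is strictly worse. Minimizing the dominant bias trade-off: for INT, balancing $k_{\min}^{-(\alpha+\beta)/d} \asymp k_{\min}/n$ gives $k_{\min} \asymp n^{d/(\alpha+\beta+d)}$ and MSE of order $n^{-2(\alpha+\beta)/(\alpha+\beta+d)}$; for MC, balancing $k_{\min}^{-(\alpha+\beta)/d} \asymp k_{\max}/n$ and noting that increasing $k_{\max}$ beyond $k_{\min}$ only inflates the variance while decreasing $k_{\max}$ below $k_{\min}$ violates $k_{\max} \geq k_{\min}$, one takes $k_{\max} \asymp k_{\min} \asymp n^{d/(\alpha+\beta+d)}$; for IF, the bias only depends on $k_{\max}$, giving $k_{\max} \asymp n^{d/(\alpha+\beta+d)}$ with $k_{\min}$ free (and in fact optimal to take small). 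In each case I would then verify that the variance contributions are dominated by the squared bias $n^{-2(\alpha+\beta)/(\alpha+\beta+d)}$ under these choices.

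The main obstacle will be the bookkeeping of the several variance terms for $\hat{\psi}^{\mathrm{MC}}_{k_1, k_2}$ and $\hat{\psi}^{\mathrm{IF}}_{k_1, k_2}$, particularly verifying that the higher-order terms $k_{\min}^2 k_{\max}/n^4$ and $k_{\min}^2 k_{\max}^2/n^5$ are subdominant at the proposed optima; this amounts to routine but careful algebra using the constraint $\frac{\alpha+\beta}{2} < \frac{d}{2}$. A secondary subtlety is ensuring that the lower-bound claim (that the rate $n^{-2(\alpha+\beta)/(\alpha+\beta+d)}$ is actually the \emph{best} rate) uses the matching bias lower bound in Theorem \ref{theorem: plugin known f no}: any choice of $(k_1, k_2)$ attaining a strictly faster rate would have to force both components of the bias expression below this threshold simultaneously, which the arithmetic rules out.
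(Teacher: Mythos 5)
Your approach is correct and is the one the paper implicitly uses: plug the bias and variance bounds from Theorem~\ref{theorem: plugin known f no} into the MSE decomposition, optimize over $(k_1,k_2)$, and invoke the matching $\asymp$ lower bound on the bias to certify that no faster rate is attainable. The bookkeeping you sketch checks out: for the high-regularity regime the squared bias constraints pin down $k_{\min}$ or $k_{\max}$ to the window $[n^{d/(2\alpha+2\beta)},\sqrt n]$, which is nonempty exactly when $\frac{\alpha+\beta}{2}\geq\frac{d}{2}$; and in the low-regularity regime balancing $k^{-(\alpha+\beta)/d}\asymp k/n$ gives $k\asymp n^{d/(\alpha+\beta+d)}$ and MSE $\asymp n^{-2(\alpha+\beta)/(\alpha+\beta+d)}$, with all the higher-order variance terms subdominant. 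Note also that your chosen $k_1,k_2$ are all $\ll n$, so the theorem's bias expressions (stated under that restriction for $\hat\psi^{\mathrm{MC}}$ and $\hat\psi^{\mathrm{IF}}$) apply.

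One small slip: in the $\hat\psi^{\mathrm{IF}}$ analysis you assert that the variance term $k_{\max}^2/n^3$ is ``the binding one and is $O(n^{-1})$ iff $k_{\max}\lesssim\sqrt n$.'' In fact $k_{\max}^2/n^3\lesssim n^{-1}$ only requires $k_{\max}\lesssim n$, which is not binding; the constraint $k_{\max}\lesssim\sqrt n$ comes from the squared bias $(k_{\max}/n)^2\lesssim n^{-1}$, not from the variance. This does not affect the conclusion but the attribution should be corrected.
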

The bottom panel of Figure \ref{fig:minimax} illustrates where the estimators can be minimax rate optimal in the no sample splitting case. These results establish the necessity to perform sample splitting for $\hat{\psi}^{\mathrm{IF}}_{k_1, k_2}$ in low regularity regimes: the own observation bias arising from not performing sample splitting prevents $\hat{\psi}^{\mathrm{IF}}_{k_1, k_2}$ from achieving minimax rate optimality when $\frac{\alpha + \beta}{2} < \frac{d}{2}$.

\begin{remark}
    The regime where the estimators without sample splitting fail to achieve minimax rate optimality can be interpreted in terms of Dönsker conditions on the nuisance function classes. Dönsker conditions have been routinely used in the semiparametric literature to control the size of empirical process remainder terms in order to establish that estimators without sample splitting attain certain optimality properties \cite{van2000asymptotic, tsiatis2006semiparametric}. In our setting, the function class for $p$ is Dönsker if and only if $\alpha > \frac{d}{2}$ and similarly the function class for $b$ is Dönsker if and only if $\beta > \frac{d}{2}$ \cite{van2023weak, gine2021mathematical}.
    Therefore, in the regime where each of the estimators without sample splitting cannot achieve minimax rate optimality (i.e., $\frac{\alpha + \beta}{2} < \frac{d}{2}$), the model is so large that at least one of the nuisance function classes fails to be Dönsker. Unlike existing literature that establishes sufficiency of Dönsker conditions in the absence of sample splitting, these results illustrate examples of their necessity.
\end{remark}

\begin{remark}
Unlike the double and single sample splitting cases, setting $k_1 = k_2$ is optimal for $\hat{\psi}^{\mathrm{IF}}_{k_1, k_2}$ (in the sense that $\hat{\psi}^{\mathrm{IF}}_{k_1, k_2}$ achieves its best rate of estimation) in all Hölder smoothness classes of the nuisance functions. This is because $k_{\mathrm{max}}$ cannot be undersmoothed as much as in the double and single sample splitting cases due to the presence of the own observation bias of order $\frac{k_{\mathrm{max}}}{n}$. However, as in the double and single sample splitting cases, setting $k_1 = k_2$ results in the rate of estimation for $\hat{\psi}^{\mathrm{IF}}_{k_1, k_2}$ being slower than the minimax rate. Similarly, setting $k_1 = k_2$ is optimal for $\hat{\psi}^{\mathrm{INT}}_{k_1, k_2}$ and $\hat{\psi}^{\mathrm{MC}}_{k_1, k_2}$ across Hölder smoothness classes.
\end{remark}

Next, we compare the optimal resolution choices to the prediction-optimal resolutions in Corollaries \ref{cor: pred known f no} and \ref{cor: undermoothing known f no}.

\begin{cor} \label{cor: pred known f no}
Suppose we choose prediction-optimal resolutions $k_1 = k_1^{\text{pred}}$ and $k_2 = k_2^{\text{pred}}$. Then, $\hat{\psi}_{k_1^{\text{pred}}, k_2^{\text{pred}}}^{\mathrm{INT}}$, $\hat{\psi}_{k_1^{\text{pred}}, k_2^{\text{pred}}}^{\mathrm{MC}}$, $\hat{\psi}_{k_1^{\text{pred}}, k_2^{\text{pred}}}^{\mathrm{IF}}$ achieve their best rate for estimating $\psi(\bbP)$ (i.e., $n^{-1}$ when $\frac{\alpha + \beta}{2} \geq \frac{d}{2}$ and $n^{-\frac{2\alpha + 2\beta}{\alpha + \beta + d}}$ when $\frac{\alpha + \beta}{2} < \frac{d}{2}$) if and only if $\alpha \wedge \beta \geq \frac{d}{2}$ or $\alpha = \beta$.
\end{cor}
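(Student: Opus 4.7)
The plan is to substitute the prediction-optimal resolutions $k_1^{\text{pred}} \asymp n^{d/(2\alpha+d)}$ and $k_2^{\text{pred}} \asymp n^{d/(2\beta+d)}$ directly into the bias and variance bounds of Theorem \ref{theorem: plugin known f no}, and then compare the resulting mean squared error against the best achievable rates for each estimator recorded in Corollary \ref{cor: opt known f no}. By symmetry, I take $\alpha \leq \beta$ throughout, so that $k_{\min}^{\text{pred}} \asymp n^{d/(2\beta+d)}$ and $k_{\max}^{\text{pred}} \asymp n^{d/(2\alpha+d)}$. The common target rate across all three estimators is $n^{-1}$ when $\alpha+\beta\geq d$ and $n^{-2(\alpha+\beta)/(\alpha+\beta+d)}$ when $\alpha+\beta<d$.

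For sufficiency I would split into two cases. First, if $\alpha \wedge \beta \geq d/2$, then $\alpha+\beta \geq d$ and $k_{\max}^{\text{pred}} \lesssim \sqrt{n}$. Direct substitution shows that each bias term appearing in Theorem \ref{theorem: plugin known f no}, namely $k_{\min}^{-(\alpha+\beta)/d}$, $k_{\max}^{-(\alpha+\beta)/d}$, $k_{\min}/n$, and $k_{\max}/n$, is $O(n^{-1/2})$, and every variance term is $O(1/n)$, so each estimator attains MSE $\lesssim n^{-1}$. Second, if $\alpha = \beta$, then $k_1^{\text{pred}} = k_2^{\text{pred}} \asymp n^{d/(\alpha+\beta+d)}$, which lies inside the optimal resolution region stated in each part of Corollary \ref{cor: opt known f no} (in both regularity regimes), so each estimator attains its best rate.

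For necessity, I would assume $\alpha < \beta$ and $\alpha < d/2$ and use the matching lower bounds on the bias in Theorem \ref{theorem: plugin known f no}. For $\hat{\psi}^{\mathrm{INT}}$, the bias is $\gtrsim k_{\min}^{-(\alpha+\beta)/d} = n^{-(\alpha+\beta)/(2\beta+d)}$; the hypothesis $\alpha < \beta$ gives $(\alpha+\beta)/(2\beta+d) < (\alpha+\beta)/(\alpha+\beta+d)$ while $\alpha < d/2$ gives $2(\alpha+\beta)/(2\beta+d) < 1$, so the squared bias strictly exceeds whichever of the two candidate rates is in force. For $\hat{\psi}^{\mathrm{MC}}$ and $\hat{\psi}^{\mathrm{IF}}$, the bias lower bound contains a contribution of order $k_{\max}/n = n^{-2\alpha/(2\alpha+d)}$, and the algebraic identity $2\alpha(\alpha+\beta+d) - (\alpha+\beta)(2\alpha+d) = (\alpha-\beta)d$ shows that $\alpha < \beta$ forces $4\alpha/(2\alpha+d) < 2(\alpha+\beta)/(\alpha+\beta+d)$, while $\alpha < d/2$ forces $4\alpha/(2\alpha+d) < 1$, so the squared bias again strictly exceeds both candidate best rates.

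The main obstacle is the careful bookkeeping of which bias term dominates for each of the three estimators in each regularity regime, but the analysis collapses neatly under the single dichotomy $\alpha \wedge \beta \geq d/2$ versus $\alpha = \beta$. A minor technicality is that the lower bound in Theorem \ref{theorem: plugin known f no} for $\hat{\psi}^{\mathrm{MC}}$ and $\hat{\psi}^{\mathrm{IF}}$ is stated for $k_1, k_2 \ll n$; this is automatic at the prediction-optimal choices since $n^{d/(2\alpha+d)} \ll n$ whenever $\alpha > 0$.
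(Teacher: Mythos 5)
Your proof is correct and follows the route that the paper implicitly takes: substitute the prediction-optimal resolutions into the bias and variance bounds of Theorem~\ref{theorem: plugin known f no} and compare against the best achievable rates in Corollary~\ref{cor: opt known f no}. I verified the algebra in both directions. For sufficiency, when $\alpha \wedge \beta \geq d/2$ each bias term ($k_{\mathrm{min}}^{-(\alpha+\beta)/d}$, $k_{\mathrm{max}}^{-(\alpha+\beta)/d}$, $k_{\mathrm{min}}/n$, $k_{\mathrm{max}}/n$) is indeed $O(n^{-1/2})$ and all variance terms are $O(n^{-1})$; and when $\alpha = \beta$ the common value $n^{d/(\alpha+\beta+d)}$ falls inside the optimal resolution windows of Corollary~\ref{cor: opt known f no} in both regularity regimes. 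For necessity under $\alpha < \beta$ and $\alpha < d/2$: for $\hat{\psi}^{\mathrm{INT}}$ the dominant bias term is $k_{\mathrm{min}}^{-(\alpha+\beta)/d} = n^{-(\alpha+\beta)/(2\beta+d)}$ (since $\alpha < \beta$ makes this slower than $k_{\mathrm{min}}/n$), and your two inequalities cover both regimes; for $\hat{\psi}^{\mathrm{MC}}$ and $\hat{\psi}^{\mathrm{IF}}$ the factorization $(\alpha+\beta)(2\alpha+d) - 2\alpha(2\beta+d) = (\alpha-\beta)(2\alpha-d) > 0$ shows that $k_{\mathrm{max}}/n = n^{-2\alpha/(2\alpha+d)}$ is the dominant bias contribution, and your identity $2\alpha(\alpha+\beta+d) - (\alpha+\beta)(2\alpha+d) = (\alpha-\beta)d$ together with $\alpha < d/2$ closes both regimes. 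The observation that $k_1^{\text{pred}}, k_2^{\text{pred}} \ll n$ (so the two-sided bias bound of Theorem~\ref{theorem: plugin known f no} applies) is the right technical point to flag. This is the standard plug-and-compare argument the corollary is meant to summarize, and your bookkeeping is complete.
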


\begin{cor} \label{cor: undermoothing known f no}
The following statements hold
\begin{enumerate}
    \item $\hat{\psi}_{k_1, k_2}^{\mathrm{INT}}$ requires undersmoothing $k_1$ when $\beta < \alpha \wedge \frac{d}{2}$ and requires undersmoothing $k_2$ when $\alpha < \beta \wedge \frac{d}{2}$ to achieve its best rate for estimating $\psi(\bbP)$ (i.e., $n^{-1}$ when $\frac{\alpha + \beta}{2} \geq \frac{d}{2}$ and $n^{-\frac{2\alpha + 2\beta}{\alpha + \beta + d}}$ if $\frac{\alpha + \beta}{2} < \frac{d}{2}$).
    \item $\hat{\psi}_{k_1, k_2}^{\mathrm{MC}}$ requires undersmoothing $k_1$ and oversmoothing $k_2$ when $\beta < \alpha \wedge \frac{d}{2}$ and requires undersmoothing $k_2$ and oversmoothing $k_1$ when $\alpha < \beta \wedge \frac{d}{2}$ to achieve its best rate for estimating $\psi(\bbP)$ (i.e., $n^{-1}$ when $\frac{\alpha + \beta}{2} \geq \frac{d}{2}$ and $n^{-\frac{2\alpha + 2\beta}{\alpha + \beta + d}}$ if $\frac{\alpha + \beta}{2} < \frac{d}{2}$).
    \item $\hat{\psi}_{k_1, k_2}^{\mathrm{IF}}$ requires oversmoothing $k_1$ when $\alpha < \beta \wedge \frac{d}{2}$ and requires oversmoothing $k_2$ when $\beta < \alpha \wedge \frac{d}{2}$ to achieve its best rate for estimating $\psi(\bbP)$ (i.e., $n^{-1}$ when $\frac{\alpha + \beta}{2} \geq \frac{d}{2}$ and $n^{-\frac{2\alpha + 2\beta}{\alpha + \beta + d}}$ if $\frac{\alpha + \beta}{2} < \frac{d}{2}$).
\end{enumerate}
\end{cor}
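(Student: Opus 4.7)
The plan is to establish each of the three conclusions by a direct comparison between the prediction-optimal resolutions $k_1^{\text{pred}} \asymp n^{d/(2\alpha+d)}$ and $k_2^{\text{pred}} \asymp n^{d/(2\beta+d)}$ and the optimal resolution ranges recorded in Corollary~\ref{cor: opt known f no}. The single observation organizing the case analysis is that $k_1^{\text{pred}} < k_2^{\text{pred}}$ if and only if $\alpha > \beta$. Because the roles of the pairs $(\alpha, k_1, p)$ and $(\beta, k_2, b)$ are interchangeable in each of the three estimators, I would work throughout under the hypothesis $\beta < \alpha \wedge (d/2)$ --- in which case $k_1^{\text{pred}}$ plays the role of $k_{\mathrm{min}}^{\text{pred}}$ and $k_2^{\text{pred}}$ the role of $k_{\mathrm{max}}^{\text{pred}}$ --- with the symmetric hypothesis $\alpha < \beta \wedge (d/2)$ following by swapping roles and the case $\alpha = \beta$ rendering both hypotheses vacuous.

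For $\hat{\psi}^{\mathrm{INT}}_{k_1, k_2}$, Corollary~\ref{cor: opt known f no} requires $n^{d/(2\alpha+2\beta)} \lesssim k_{\mathrm{min}} \lesssim \sqrt{n}$ in the high regularity regime; substituting $k_{\mathrm{min}} = k_1^{\text{pred}}$ into the lower bound reduces to $\beta \geq d/2$, so the assumed $\beta < d/2$ forces $k_1 > k_1^{\text{pred}}$, i.e. undersmoothing. The upper bound $k_1^{\text{pred}} \lesssim \sqrt{n}$ reduces to $\alpha \geq d/2$, which is automatic under $\alpha > \beta$ and $\alpha + \beta \geq d$. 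In the low regularity regime, the target $k_{\mathrm{min}} \asymp n^{d/(\alpha+\beta+d)}$ strictly exceeds $k_1^{\text{pred}}$ whenever $\alpha > \beta$, again forcing undersmoothing. For $\hat{\psi}^{\mathrm{MC}}_{k_1, k_2}$ the $k_{\mathrm{min}}$ analysis is identical, yielding the required undersmoothing of $k_1$; the additional claim of oversmoothing $k_2$ follows from the $k_{\mathrm{max}}$ constraints, since $k_{\mathrm{max}} \lesssim \sqrt{n}$ is violated by $k_2^{\text{pred}}$ exactly when $\beta < d/2$ in high regularity, and $k_{\mathrm{max}} \asymp n^{d/(\alpha+\beta+d)} < k_2^{\text{pred}}$ whenever $\alpha > \beta$ in low regularity. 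For $\hat{\psi}^{\mathrm{IF}}_{k_1, k_2}$ only the $k_{\mathrm{max}}$ constraint is binding, and the same two comparisons show that $k_2 = k_2^{\text{pred}}$ violates the upper bound precisely when $\beta < d/2$ in high regularity and exceeds the required order $n^{d/(\alpha+\beta+d)}$ whenever $\alpha > \beta$ in low regularity, forcing oversmoothing of $k_2$ while leaving $k_1$ unconstrained. I would additionally verify that one cannot escape oversmoothing $k_2$ by letting $k_1$ take the role of $k_{\mathrm{max}}$: fixing $k_2 = k_2^{\text{pred}}$, any $k_1 > k_2 = n^{d/(2\beta+d)}$ already exceeds $n^{d/(\alpha+\beta+d)} \vee \sqrt{n}$ under the working hypothesis, again violating the $k_{\mathrm{max}}$ range.

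The main obstacle is purely combinatorial bookkeeping --- three estimators, two regularity regimes, two orderings of $\alpha$ and $\beta$ --- rather than any genuine analytical difficulty, since every case reduces to a one-step comparison of exponents against the tight ranges from Corollary~\ref{cor: opt known f no}. Necessity of the hypothesis $\beta < \alpha \wedge (d/2)$ (and symmetrically $\alpha < \beta \wedge (d/2)$) is built into these comparisons and is also consistent with Corollary~\ref{cor: pred known f no}, which pins down exactly when the prediction-optimal choices already sit inside the admissible ranges for the estimators.
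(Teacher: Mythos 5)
Your argument is correct and follows the same approach the paper implicitly takes: for each estimator, you compare the prediction-optimal exponents $d/(2\alpha+d)$ and $d/(2\beta+d)$ against the tight $k_{\mathrm{min}}/k_{\mathrm{max}}$ ranges in Corollary~\ref{cor: opt known f no}, and the case distinctions all reduce to elementary inequalities among $\alpha$, $\beta$, $d/2$. One small simplification: your extra verification at the end (ruling out $k_1$ absorbing the role of $k_{\mathrm{max}}$ for $\hat{\psi}^{\mathrm{IF}}$) is not needed, since $k_{\mathrm{max}} \geq k_2 = k_2^{\text{pred}}$ already exceeds the admissible ceiling regardless of $k_1$.
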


These results are illustrated in Figure \ref{fig:tuning no}. Unlike the double and single sample splitting cases, prediction-optimal resolutions are only satisfactory for $\hat{\psi}_{k_1, k_2}^{\mathrm{IF}}$ when $\alpha, \beta \geq \frac{d}{2}$ (except if $\alpha = \beta$). Otherwise, $\hat{\psi}_{k_1, k_2}^{\mathrm{IF}}$ requires undersmoothing one of the nuisance function estimators to prevent the own observation bias from growing too large. Additionally, when $\hat{\psi}_{k_1, k_2}^{\mathrm{IF}}$ is able to achieve rate optimality (i.e., when $\frac{\alpha + \beta}{2} \geq \frac{d}{2}$), there are stricter nuisance function tuning requirements compared to the sample splitting cases. In particular, one cannot undersmooth the nuisance functions as much as in the cases with sample splitting in order to prevent the own observation bias from growing too large. 

Similar conclusions hold for $\hat{\psi}_{k_1, k_2}^{\mathrm{MC}}$. This estimator now requires oversmoothing in low regularity regimes so that the own observation bias does not grow too large, unlike the sample splitting cases which did not require any oversmoothing. Also, there are stricter nuisance function tuning requirements when this estimator is able to achieve rate optimality (i.e., when $\frac{\alpha + \beta}{2} \geq \frac{d}{2}$) due to the presence of own observation bias compared to the sample splitting cases.

\begin{remark}
These results emphasize additional subtleties arising in estimating the expected conditional covariance compared to the expected conditional variance \cite{liu2020nearly, liu2021adaptive, shen2020optimal}. Since $\alpha = \beta$ in the expected conditional variance problem, these results imply that using prediction-optimal resolutions is satisfactory over all Hölder smoothness classes of the nuisance function when estimating the expected conditional variance with $\hat{\psi}_{k_1, k_2}^{\mathrm{INT}}$, $\hat{\psi}_{k_1, k_2}^{\mathrm{MC}}$, and $\hat{\psi}_{k_1, k_2}^{\mathrm{IF}}$. 
\end{remark}

\subsubsection{Estimators that depend on one nuisance function} \label{sec: no ss one}

Here, we analyze the Newey and Robins plug-in estimator in the no sample splitting case. We first bound the bias and variance of this estimator. As in Section \ref{sec: no ss two}, we only provide a lower bound on the bias because it is sufficient to establish optimal resolutions and analyze when prediction-optimal resolutions are satisfactory (due to the presence of own observation bias).

\begin{theorem} \label{theorem: plugin known f no nr}
Suppose that no sample splitting is performed. Under the assumptions given in Section \ref{sec: motivation}, the estimator $\hat{\psi}_{k}^{\mathrm{NR}}$ satisfies
\begin{align*}
    \sup_{\bbP \in \cP_{(\alpha,\beta)}} \left|\E_\bbP\left(\hat{\psi}_{k}^{\mathrm{NR}} - \psi(\bbP)\right)\right| & \asymp k^{-(\alpha + \beta)/d} \vee \frac{k}{n} \\
    \sup_{\bbP \in \cP_{(\alpha,\beta)}} \Var_\bbP(\hat{\psi}_{k}^{\mathrm{NR}}) & \lesssim \frac{1}{n}  +  \frac{k}{n^2} + \frac{k^2}{n^3}. 
\end{align*}
\end{theorem}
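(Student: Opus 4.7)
The plan is to algebraically decompose $\hat{\psi}_k^{\mathrm{NR}}$ by expanding $\hat{b}_k^{(1)}(\bX_i) = \frac{1}{n}\sum_{j \in \D_1} Y_j K_{V_k}(\bX_j, \bX_i)$ and separating the diagonal ($i = j$) and off-diagonal ($i \neq j$) contributions:
\begin{equation*}
\hat{\psi}_k^{\mathrm{NR}} \;=\; \frac{1}{n}\sum_{i \in \D_1} A_i Y_i \;-\; \frac{1}{n^2}\sum_{\substack{i,j \in \D_1 \\ i \neq j}} A_i Y_j K_{V_k}(\bX_j,\bX_i) \;-\; \frac{1}{n^2}\sum_{i \in \D_1} A_i Y_i K_{V_k}(\bX_i,\bX_i).
\end{equation*}
The first two pieces mirror the double-sample-split estimator and are responsible for the $k^{-(\alpha+\beta)/d}$ bias, while the diagonal piece is the source of the own observation bias of order $k/n$.

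For the bias upper bound I take expectations. Letting $b_k$ denote the $L^2([0,1]^d)$-projection of $b$ onto $V_k$, a direct calculation gives
\begin{equation*}
\E\bigl[\hat{\psi}_k^{\mathrm{NR}}\bigr] - \psi(\bbP) \;=\; \int p\,(b - b_k) \;+\; \frac{1}{n}\Bigl(\int p\,b_k \;-\; \E\bigl[AY\,K_{V_k}(\bX,\bX)\bigr]\Bigr).
\end{equation*}
Since $p_k \in V_k$ and $b - b_k \perp V_k$, the first term equals $\int (p-p_k)(b-b_k)$, which is bounded by $\|p-p_k\|_2\|b-b_k\|_2 \lesssim k^{-(\alpha+\beta)/d}$ via the standard wavelet approximation bounds in Hölder balls recalled in Appendix \ref{sec: wavelets}. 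The second term has absolute value $\lesssim \frac{1}{n}\bigl(\|p\,b_k\|_1 + C\,\E[K_{V_k}(\bX,\bX)]\bigr) \lesssim k/n$, using $\E[K_{V_k}(\bX,\bX)] = \dim V_k \asymp k$ and boundedness of $A, Y$.

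For the matching lower bound I would exhibit two adversarial distributions in $\cP_{(\alpha,\beta)}$. To saturate $k^{-(\alpha+\beta)/d}$, I take $p$ and $b$ whose wavelet coefficients place mass on a carefully chosen set of basis functions just beyond resolution $V_k$, as in the lower-bound constructions for Theorems \ref{theorem: plugin known f} and \ref{theorem: plugin known f single}. To saturate $k/n$, I take $p$ and $b$ constant (so $p_k = p$, $b_k = b$ and the first term vanishes) and impose $A \perp Y \mid \bX$, which makes $\E[AY \mid \bX] = pb$ and $\E[AY\,K_{V_k}(\bX,\bX)] = pb \cdot \dim V_k \asymp k$, producing a total bias of exact order $k/n$.

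For the variance I handle each piece of the decomposition separately and bound all cross-covariances by Cauchy--Schwarz. The first piece has variance $O(1/n)$ by boundedness and independence. The diagonal piece has variance $\frac{1}{n^3}\Var\bigl(AY\,K_{V_k}(\bX,\bX)\bigr) \lesssim \frac{1}{n^3}\E\bigl[K_{V_k}(\bX,\bX)^2\bigr] \lesssim k^2/n^3$, using that $K_{V_k}(\bx,\bx) \lesssim k$ pointwise by the locally finite support of the wavelet basis at resolution $V_k$. For the off-diagonal $U$-statistic I would expand the variance as a sum over pairs of index pairs and classify the nonzero covariances by the overlap cardinality of the two index pairs: configurations sharing exactly one index (of which there are $O(n^3)$) produce covariances of $O(1)$ and contribute $O(1/n)$, while configurations sharing two indices (of which there are $O(n^2)$) produce covariances of order at most $\E[K_{V_k}(\bX,\bX')^2] = \E[K_{V_k}(\bX,\bX)] \asymp k$ via the reproducing identity $\int K_{V_k}(\bx,\by)^2 d\by = K_{V_k}(\bx,\bx)$, contributing $O(k/n^2)$. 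The main obstacle will be this combinatorial bookkeeping for the $U$-statistic, where one must verify that no overlap configuration inflates the rate to $k^2/n^3$ from this piece alone; a secondary subtlety is confirming that the constant-nuisance-function construction for the $k/n$ lower bound yields an admissible element of $\cP_{(\alpha,\beta)}$.
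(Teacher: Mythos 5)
Your proof is correct and takes essentially the same route as the paper, with a few minor presentational variations worth noting. The paper's decomposition (its equation (eq: nr no ss rep)) writes the estimator as $\frac{1}{n}\sum_i A_i(Y_i - \hat{b}_k^{(-i)}(\bX_i))$ plus the diagonal term $-\frac{1}{n^2}\sum_i A_i Y_i K_{V_k}(\bX_i,\bX_i)$; your decomposition into AY average, off-diagonal U-statistic, and diagonal term is algebraically identical (the leave-one-out piece equals the first two), but you unpack the U-statistic explicitly rather than invoking the single-sample-split analysis by reference. Your bias identity
\[
\E\bigl[\hat{\psi}_k^{\mathrm{NR}}\bigr] - \psi(\bbP) = \int p\,(b-b_k) + \frac{1}{n}\Bigl(\int p\,b_k - \E\bigl[AYK_{V_k}(\bX,\bX)\bigr]\Bigr)
\]
is exactly what the paper's two pieces produce, and the bounds $\int p(b-b_k) = \int (p-p_k)(b-b_k) \lesssim k^{-(\alpha+\beta)/d}$ and $\frac{1}{n}\E[AYK_{V_k}(\bX,\bX)] \lesssim k/n$ match the paper's. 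For the variance, your combinatorial bookkeeping of the U-statistic (one-overlap giving $O(1/n)$, two-overlap giving $O(k/n^2)$ via $\iint K_{V_k}^2 = \int K_{V_k}(\bx,\bx)\,d\bx \asymp k$, diagonal term giving $O(k^2/n^3)$, cross-terms by Cauchy--Schwarz) is correct and reproduces the paper's $\frac{1}{n}+\frac{k}{n^2}+\frac{k^2}{n^3}$ bound, which the paper instead obtains by routing through the no-sample-split first-order estimator's variance analysis.

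The one place you genuinely diverge from the paper is the lower bound. The paper uses a single construction — the wavelet-concentrated $p,b$ of equations (eq: p lb), (eq: b lb) combined with $\E_\bbP[AY\mid\bx]=c>0$ — and handles the boundary regime $k^{-(\alpha+\beta)/d}\asymp k/n$ by shrinking the amplitude $\epsilon$ so that the two bias contributions cannot fully cancel. Your two-construction strategy is a clean alternative: the wavelet construction works outright when $k^{-(\alpha+\beta)/d}\gg k/n$ (the own-observation term is then lower order regardless of sign), and the constant-$p,b$ construction with $A\perp Y\mid\bX$ makes the first bias term vanish exactly, so it delivers $|{\rm bias}|\asymp k/n$ without any cancellation worry and covers the boundary and the $k/n$-dominant regime in one stroke. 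For that second construction, admissibility is immediate: constant $p,b\in(0,1)$ lie in every Hölder ball, and one may take, e.g., $A\mid\bX\sim\mathrm{Bernoulli}(p)$ and $Y\mid\bX\sim\mathrm{Bernoulli}(b)$ conditionally independent, which satisfies the boundedness requirement and gives $\E[AY\mid\bX]=pb$ as you need. So your proposal is a correct proof; the minor worries you flag (U-statistic bookkeeping, admissibility) do in fact check out, and the overall structure is a faithful if slightly more self-contained version of the paper's argument.
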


\noindent The proof is given in Appendix G. The proof strategy is similar to that used for the other estimators in the no sample splitting case, as $\hat{\psi}_{k}^{\mathrm{NR}}$ is also subject to own observation bias. Similar to the other estimators, the variance bound on $\hat{\psi}_{k}^{\mathrm{NR}}$ includes an additional term compared to the single sample splitting case, although this additional term does not dominate the variance when using optimal or prediction-optimal resolutions.

The following corollary establishes the optimal resolution choices and optimal rate of estimation of $\psi(\bbP)$ for $\hat{\psi}^{\mathrm{NR}}_{k}$.

\begin{cor} \label{cor: opt known f no nr}
If $\frac{\alpha + \beta}{2} \geq \frac{d}{2}$, $\hat{\psi}^{\mathrm{NR}}_{k}$ is minimax rate optimal when choosing $n^{\frac{d}{2\alpha + 2\beta}} \lesssim k  \lesssim \sqrt{n}$. If $\frac{\alpha + \beta}{2} < \frac{d}{2}$, $\hat{\psi}^{\mathrm{NR}}_{k}$ cannot be minimax rate optimal for any choice of $k$. The best rate of estimation of $\psi(\bbP)$ for $\hat{\psi}^{\mathrm{NR}}_{k}$ is $n^{-\frac{2\alpha + 2\beta}{\alpha + \beta + d}}$ which occurs when choosing $k \asymp  n^{\frac{d}{\alpha + \beta + d}}$.
\end{cor}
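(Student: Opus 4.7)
The plan is to directly minimize the mean squared error, which by Theorem \ref{theorem: plugin known f no nr} satisfies
\[
\mathrm{MSE}(k) \asymp \left(k^{-(\alpha+\beta)/d} \vee \tfrac{k}{n}\right)^2 + \tfrac{1}{n} + \tfrac{k}{n^2} + \tfrac{k^2}{n^3},
\]
and compare the resulting rate to the minimax rate in \eqref{eq: minimax rate}. The first step is to observe that whenever $k \lesssim n$, the variance bound simplifies since $\tfrac{k}{n^2} \lesssim \tfrac{1}{n}$ and $\tfrac{k^2}{n^3} \lesssim \tfrac{1}{n}$, so the $\tfrac{1}{n}$ term dominates. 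Since the optimal $k$ in either regime will turn out to be at most $n^{d/(\alpha+\beta+d)}$, which is $\lesssim n$, we may safely restrict to this range and treat the variance as $\asymp 1/n$.

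In the regime $(\alpha+\beta)/2 \geq d/2$ (equivalently $\alpha+\beta \geq d$), I would show that choosing $k$ with $n^{d/(2\alpha+2\beta)} \lesssim k \lesssim \sqrt{n}$ makes both $k^{-2(\alpha+\beta)/d}$ and $k^2/n^2$ at most $1/n$, so the MSE is $\asymp 1/n$, matching \eqref{eq: minimax rate}. The admissibility of the interval $[n^{d/(2\alpha+2\beta)}, \sqrt{n}]$ follows from $d/(2\alpha+2\beta) \leq 1/2$, which is exactly $\alpha+\beta \geq d$. Conversely, if $k \gg \sqrt{n}$ then $k^2/n^2 \gg 1/n$, and if $k \ll n^{d/(2\alpha+2\beta)}$ then $k^{-2(\alpha+\beta)/d} \gg 1/n$, so these bounds are both necessary.

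In the regime $(\alpha+\beta)/2 < d/2$ (equivalently $\alpha+\beta < d$), the interval collapses because $n^{d/(2\alpha+2\beta)} > \sqrt{n}$, and one cannot make both squared-bias terms smaller than $1/n$ simultaneously. The two squared-bias terms $k^{-2(\alpha+\beta)/d}$ and $k^2/n^2$ are decreasing and increasing in $k$, respectively, so the optimum is obtained by balancing them: setting $k^{-2(\alpha+\beta)/d} = k^2/n^2$ yields $k \asymp n^{d/(\alpha+\beta+d)}$, at which both terms equal $n^{-2(\alpha+\beta)/(\alpha+\beta+d)}$. Since $\alpha+\beta < d$ implies $2(\alpha+\beta)/(\alpha+\beta+d) < 1$, this rate dominates the variance contribution $1/n$, so it is the rate of the MSE. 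A short comparison with $n^{-1}$ (when $d/4 \leq (\alpha+\beta)/2 < d/2$) and with $n^{-(4\alpha+4\beta)/(2\alpha+2\beta+d)}$ (when $(\alpha+\beta)/2 < d/4$) shows the rate is strictly slower than the minimax rate in both sub-regimes, proving minimax suboptimality.

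The only mildly delicate step is checking that the $k/n^2$ and $k^2/n^3$ variance contributions remain negligible at the balanced resolution $k \asymp n^{d/(\alpha+\beta+d)}$; this is routine since $d/(\alpha+\beta+d) < 1$ gives $k/n^2 \ll 1/n$ and $2d/(\alpha+\beta+d) < 2$ gives $k^2/n^3 \ll 1/n$. Necessity of the balanced choice follows because any deviation increases one of the two squared-bias terms above $n^{-2(\alpha+\beta)/(\alpha+\beta+d)}$; this is the main (but quite standard) bookkeeping step.
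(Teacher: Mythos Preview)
Your argument is correct and is precisely the route the paper implicitly takes: the corollary is stated without proof and follows by optimizing the bias--variance bounds of Theorem~\ref{theorem: plugin known f no nr}, using the $\asymp$ on the bias for lower bounds and the variance upper bound for sufficiency. One minor point of notation: writing $\mathrm{MSE}(k)\asymp(\cdot)$ is a slight overstatement since Theorem~\ref{theorem: plugin known f no nr} only gives $\lesssim$ on the variance; your actual reasoning (bias${}^2$ for the lower bound, bias${}^2+{}$variance for the upper) is correct, so just state it that way.
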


The results are the bottom panel in Figure \ref{fig:minimax}. Unlike the sample splitting case, $\hat{\psi}^{\mathrm{NR}}_{k}$ cannot be rate optimal across all Hölder smoothness classes of the nuisance functions in the no sample splitting case. Due to the presence of own observation bias, the bias of $\hat{\psi}^{\mathrm{NR}}_{k}$ cannot converge at a fast enough rate when $\frac{\alpha + \beta}{2} < \frac{d}{2}$, similar to the estimators with two nuisance functions.

We last compare the optimal and prediction-optimal resolutions in Corollaries \ref{cor: pred known f nr no} and \ref{cor: undersmoothing known f nr no}.

\begin{cor} \label{cor: pred known f nr no}
Suppose we choose the prediction-optimal resolution $k = k_2^{\text{pred}}$. Then, $\hat{\psi}_{k_2^{\text{pred}}}^{\mathrm{NR}}$ achieves its best rate for estimating $\psi(\bbP)$ (i.e., $n^{-1}$ when $\frac{\alpha + \beta}{2} \geq \frac{d}{2}$ and $n^{-\frac{2\alpha + 2\beta}{\alpha + \beta + d}}$ if $\frac{\alpha + \beta}{2} < \frac{d}{2}$) if and only if $\alpha \wedge \beta \geq \frac{d}{2}$ or $\alpha = \beta$.
\end{cor}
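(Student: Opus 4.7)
The plan is to substitute the prediction-optimal resolution $k = k_2^{\text{pred}} \asymp n^{d/(2\beta+d)}$ directly into the bias and variance bounds from Theorem \ref{theorem: plugin known f no nr} and then determine when the resulting mean squared error matches the best achievable rate from Corollary \ref{cor: opt known f no nr}.

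First I would evaluate both bias contributions at this resolution. The approximation bias $k^{-(\alpha+\beta)/d}$ becomes $n^{-(\alpha+\beta)/(2\beta+d)}$, while the own-observation bias $k/n$ becomes $n^{-2\beta/(2\beta+d)}$. Comparing exponents, the dominant bias equals $n^{-2\beta/(2\beta+d)}$ when $\alpha \geq \beta$ and $n^{-(\alpha+\beta)/(2\beta+d)}$ when $\alpha \leq \beta$, so that the worst-case squared bias is $n^{-4\beta/(2\beta+d)}$ or $n^{-2(\alpha+\beta)/(2\beta+d)}$ in the two cases respectively. The variance at $k_2^{\text{pred}}$ is of order $n^{-1}$, since $2\beta+d\geq d$ implies $k_2^{\text{pred}}\leq n$, making each of $k/n^2$ and $k^2/n^3$ at most $n^{-1}$.

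Next I would split into the two regularity regimes appearing in the statement. In the $\sqrt{n}$-regime $(\alpha+\beta)/2 \geq d/2$, the best rate is $n^{-1}$, so rate-matching amounts to forcing the squared bias to be $\lesssim n^{-1}$. The sub-case analysis on $\alpha$ versus $\beta$ converts this into an inequality of the form $\beta \geq d/2$ (when $\alpha \geq \beta$) or $\alpha \geq d/2$ (when $\alpha \leq \beta$), each of which is to be interpreted against the constraint $\alpha + \beta \geq d$. In the non-$\sqrt{n}$-regime $(\alpha+\beta)/2 < d/2$, the best rate is $n^{-2(\alpha+\beta)/(\alpha+\beta+d)}$ attained at the rate-optimal resolution $n^{d/(\alpha+\beta+d)}$, so rate-matching reduces to the prediction-optimal resolution coinciding (up to constants) with the rate-optimal one; this occurs precisely when $\alpha = \beta$, in which case a direct substitution confirms that the squared bias at $k_2^{\text{pred}}$ equals $n^{-2(\alpha+\beta)/(\alpha+\beta+d)}$ as required.

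The main obstacle is combining the four resulting subcases (two regularity regimes $\times$ two bias-dominance cases) into the single iff condition stated in the corollary and verifying that no subcase is silently dropped along the boundary $\alpha + \beta = d$. Once the dominant bias term has been correctly identified in each subcase, the remaining algebraic manipulations are routine and mirror the corresponding verifications already used in the proof of Corollary \ref{cor: pred known f nr} for single sample splitting and Corollary \ref{cor: pred known f no} for the two-nuisance-function estimators in the no sample splitting setting.
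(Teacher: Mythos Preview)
Your approach is the right one and is exactly how the paper derives its corollaries: plug $k_2^{\text{pred}}\asymp n^{d/(2\beta+d)}$ into the tight bias bound $k^{-(\alpha+\beta)/d}\vee k/n$ of Theorem~\ref{theorem: plugin known f no nr}, observe that the variance terms are all $O(n^{-1})$, and compare the resulting squared bias to the best rate from Corollary~\ref{cor: opt known f no nr}. Your identification of the dominant bias term (own-observation bias $n^{-2\beta/(2\beta+d)}$ when $\alpha\ge\beta$, approximation bias $n^{-(\alpha+\beta)/(2\beta+d)}$ when $\alpha\le\beta$) and the resulting subcase conditions ($\beta\ge d/2$, $\alpha\ge d/2$, and $\alpha=\beta$ in the non-$\sqrt{n}$ regime) are all correct.

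The gap is precisely the step you flag as the ``main obstacle'' and then defer: combining the four subcases into the single condition $\alpha\ge\beta\wedge\tfrac{d}{2}$. This combination does \emph{not} go through. Your own subcase conditions assemble instead to ``$\alpha\wedge\beta\ge d/2$ or $\alpha=\beta$''---the same condition as in Corollary~\ref{cor: pred known f no} for $\hat\psi^{\mathrm{IF}}$. A concrete counterexample to the stated condition: take $\alpha=3d/4$, $\beta=d/4$. Then $\alpha+\beta=d$ puts us at the boundary of the $\sqrt{n}$-regime, and $k_2^{\text{pred}}=n^{2/3}$ gives own-observation bias $k/n=n^{-1/3}$, so the squared bias is $n^{-2/3}\gg n^{-1}$ and the best rate is \emph{not} achieved. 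Yet $\alpha=3d/4\ge\min(\beta,d/2)=d/4$, so the stated condition $\alpha\ge\beta\wedge\tfrac{d}{2}$ holds. The issue is that when $\alpha>\beta$ and $\beta<d/2$, the own-observation bias forces \emph{oversmoothing} (not undersmoothing) relative to $k_2^{\text{pred}}$; the stated condition and the companion Corollary~\ref{cor: undersmoothing known f nr no} together miss this oversmoothing region. So your plan is sound, but carrying it out will not reproduce the corollary as stated---you should expect to arrive at ``$\alpha\wedge\beta\ge d/2$ or $\alpha=\beta$'' instead.
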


\begin{cor} \label{cor: undersmoothing known f nr no}
The estimator $\hat{\psi}_{k}^{\mathrm{NR}}$ requires undersmoothing $k$ when $\alpha < \beta \wedge \frac{d}{2}$ and oversmoothing $k$ when $\beta < \alpha \wedge \frac{d}{2}$ to achieve its best rate for estimating $\psi(\bbP)$ (i.e., $n^{-1}$ when $\frac{\alpha + \beta}{2} \geq \frac{d}{2}$ and $n^{-\frac{2\alpha + 2\beta}{\alpha + \beta + d}}$ if $\frac{\alpha + \beta}{2} < \frac{d}{2}$).
\end{cor}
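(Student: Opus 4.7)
The plan is to derive this corollary directly from the optimal resolution ranges established in Corollary \ref{cor: opt known f no nr}, by comparing them against the prediction-optimal resolution $k_2^{\text{pred}} \asymp n^{d/(2\beta + d)}$. Since undersmoothing means selecting $k \gg k_2^{\text{pred}}$, the task reduces to verifying that, in each of the two regularity regimes, every rate-optimal choice of $k$ must grow strictly faster than $n^{d/(2\beta + d)}$ whenever $\alpha < \beta \wedge \frac{d}{2}$. The lower bounds on the bias of $\hat\psi_k^{\mathrm{NR}}$ in Theorem \ref{theorem: plugin known f no nr} ensure that the characterization of optimal $k$ in Corollary \ref{cor: opt known f no nr} is necessary, so it suffices to check the exponents on $n$.

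First, I would handle the regime $\frac{\alpha + \beta}{2} \geq \frac{d}{2}$, in which the best rate $n^{-1}$ is attained if and only if $n^{d/(2\alpha + 2\beta)} \lesssim k \lesssim \sqrt{n}$. The comparison $\frac{d}{2\beta + d} < \frac{d}{2\alpha + 2\beta}$ is equivalent to $2\beta + d > 2\alpha + 2\beta$, i.e. to $\alpha < d/2$, so under the hypothesis $\alpha < d/2$ the prediction-optimal scaling is strictly below the admissible lower endpoint and undersmoothing is mandatory. Observe that the companion hypothesis $\alpha < \beta$ is automatic in this regime, since $\alpha + \beta \geq d$ and $\alpha < d/2$ together force $\beta > d/2 > \alpha$, so the hypothesis is effectively $\alpha < d/2$ here.

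Second, I would treat the regime $\frac{\alpha + \beta}{2} < \frac{d}{2}$, where Corollary \ref{cor: opt known f no nr} dictates the essentially unique scaling $k \asymp n^{d/(\alpha + \beta + d)}$. Then $\frac{d}{\alpha + \beta + d} > \frac{d}{2\beta + d}$ iff $\alpha + \beta + d < 2\beta + d$, i.e. iff $\alpha < \beta$, which is part of the hypothesis. Conversely, under $\alpha + \beta < d$ the condition $\alpha < \beta$ already implies $\alpha < d/2$, so the active portion of the hypothesis in this regime is $\alpha < \beta$. Combining the two regimes shows that undersmoothing is necessary precisely under $\alpha < \beta \wedge \frac{d}{2}$.

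I do not foresee any serious obstacle: the entire argument is a bookkeeping comparison of exponents, leveraging the necessity of the optimal-resolution characterization already established. The only point requiring a little care is checking the boundary behavior so that ``$\gg$'' (rather than merely ``$\gtrsim$'') is justified, which follows because $\alpha < \beta \wedge d/2$ yields strict inequalities in the exponent comparisons above.
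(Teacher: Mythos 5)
Your proposal is correct and takes the same route as the paper: it is a bookkeeping comparison of the exponents from Theorem \ref{theorem: plugin known f no nr} and Corollary \ref{cor: opt known f no nr} against the prediction-optimal scaling $k_2^{\text{pred}} \asymp n^{d/(2\beta+d)}$, using the necessity of the stated optimal range (which follows from the matching lower bound on the bias $\asymp k^{-(\alpha+\beta)/d} \vee k/n$) to conclude that every rate-optimal $k$ is strictly undersmoothed. Your reduction of $\alpha<\beta\wedge d/2$ to $\alpha<d/2$ in the $\sqrt{n}$-regime and to $\alpha<\beta$ in the sub-$\sqrt{n}$-regime is exactly the calculation the paper relies on.
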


The results are summarized in the bottom right panel of Figure \ref{fig:tuning no}. Similar to the first-order estimator, the own observation bias affects when the Newey and Robins plug-in estimator can use prediction-optimal resolutions to achieve the best rate for estimating $\psi(\bbP)$. Moreover, when minimax rate optimality is possible, there are stricter nuisance function tuning requirements compared to the single sample splitting case so that the own observation bias does not grow too large.

\begin{figure}[!h]
\centering
\begin{tabular}{ll}
  \includegraphics[width=0.45\textwidth]{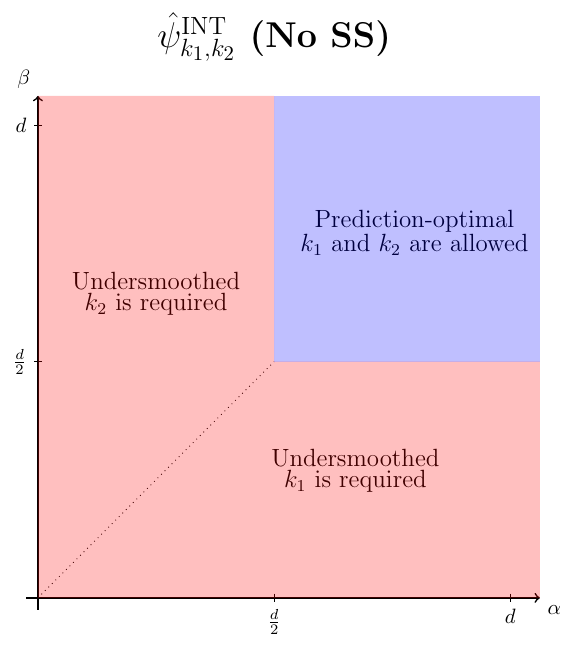} & \includegraphics[width=0.45\textwidth]{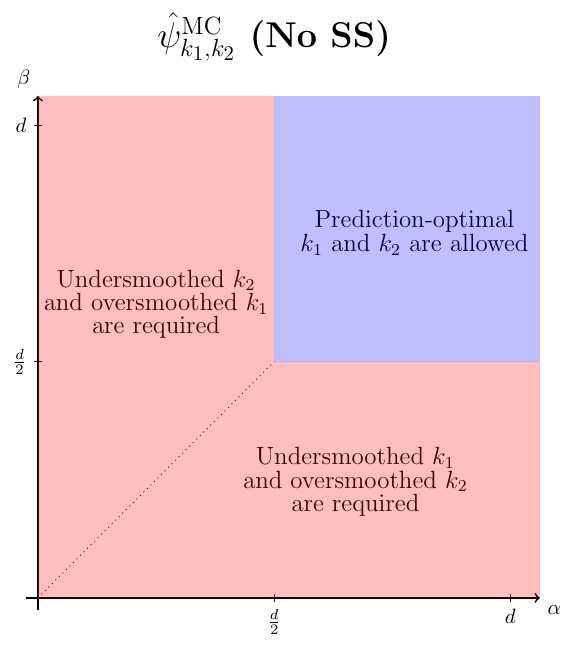} \\ 
  \includegraphics[width=0.45\textwidth]{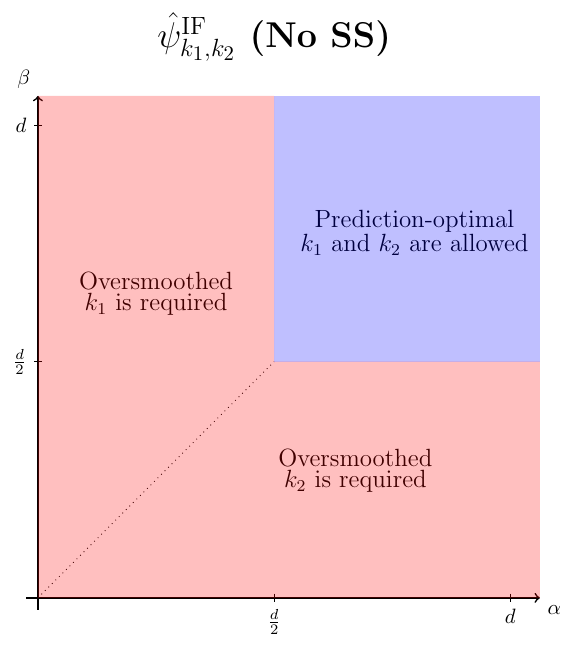} & 
  \includegraphics[width=0.45\textwidth]{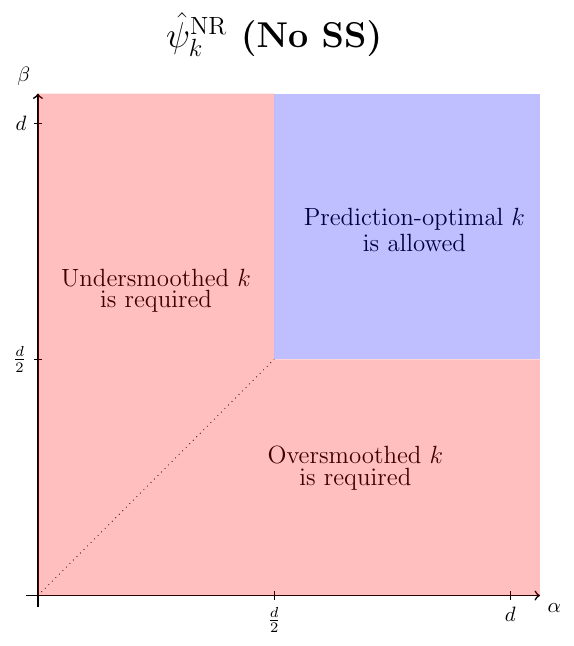}
\end{tabular}
\caption{Optimally tuning the resolutions for $\hat{\psi}_{k_1, k_2}^{\mathrm{INT}}$, $\hat{\psi}_{k_1, k_2}^{\mathrm{MC}}$, $\hat{\psi}_{k_1, k_2}^{\mathrm{IF}}$ and $\hat{\psi}_{k}^{\mathrm{NR}}$ with no sample splitting (SS).}
    \label{fig:tuning no}
  \end{figure}

\subsection{Comparison with existing results}

A few works have explored the possibility of producing minimax rate optimal estimators of $\psi(\bbP)$  by using sample splitting and suitable nuisance function tuning techniques. In this subsection, we summarize these works and compare their results to ours.

\subsubsection{Newey and Robins \cite{newey2018cross}}Newey and Robins \cite{newey2018cross} analyzed a sample split plug-in estimator and first-order estimator of $\psi(\bbP)$ under Hölder smoothness assumptions on the nuisance functions. They used regression splines to estimate the nuisance functions, in which case the tuning parameter is the number of splines. They found that their double sample split first-order estimator can be rate optimal in the $\frac{\alpha + \beta}{2} > \frac{d}{4}$ regime when using a suitable number of regression splines. Moreover, they showed that their sample split Newey and Robins plug-in estimator can be rate optimal in this regime when the density of $\bX$ is known (i.e., the second-moment matrix of the regression splines is known), and they extended this result in the unknown density case when $\alpha, \beta \leq 1$ or $\beta > \frac{d}{2}$. They also found that the bias of their first-order estimator incurs an additional $O(\frac{k}{n})$ term in the single sample splitting and no sample splitting cases when setting $k_1 = k_2 = k$, which no longer yields rate optimality when $\alpha$ and $\beta$ are sufficiently small (see also Fisher and Fisher \cite{fisher2023three}, which established a corresponding lower bound in this setting). Their use of nuisance function estimators based on least squares type estimation does not allow for the regime $\frac{\alpha + \beta}{2} < \frac{d}{4}$ due to the Gram matrix being non-invertible when using greater than $n$ regression splines.

In contrast, we adopt wavelet-based estimators of the nuisance functions which allows us to study the $\frac{\alpha + \beta}{2} < \frac{d}{4}$  regime. Our results can be seen as natural extensions to those of \cite{newey2018cross} in this regime: we find that (i) the double sample split first-order estimator can be rate optimal in this regime with suitable undersmoothing, (ii) the sample split Newey and Robins plug-in estimator is rate optimal in this regime when performing suitable undersmoothing, and (iii) the single sample split first-order estimator cannot be rate optimal in this regime when choosing $k_1 = k_2$. Moreover, since our analyses do not require $k_1 = k_2$, we clarify that the single sample split first-order estimator with sample splitting can be minimax rate optimal across all Hölder smoothness classes when undersmoothing only one of the nuisance function estimators. More broadly, the key contribution of our work is clarifying when prediction-optimal tuning is satisfactory versus when undersmoothing or oversmoothing is necessary across different regularity regimes, sample splitting strategies, and types of estimators.

\subsubsection{McClean et al.\ \cite{mcclean2024double}} After a preprint of this work appeared, McClean et al.\ \cite{mcclean2024double} analyzed a double sample split first-order estimator of $\psi(\bbP)$ under Hölder smoothness assumptions as well as under weaker assumptions on the nuisance functions. In their most closely related result (Theorem 2) to our work, they adopted Hölder smoothness assumptions and showed that the first-order estimator is minimax rate optimal when using undersmoothed kernel regression estimators of the nuisance functions. More specifically, they selected bandwidths of the kernel regression estimators so that one of the nuisance functions is undersmoothed while the other nuisance function estimator is consistent. These bandwidth conditions are analogous to our resolution conditions for the first-order estimator with wavelet-based nuisance function estimators in Corollary \ref{cor: opt known f double}. In addition to rates of estimation, McClean et al.\ \cite{mcclean2024double} showed that the double sample split first-order estimator with suitable undersmoothing is asymptotically normal in $\sqrt{n}$ and non-$\sqrt{n}$ regimes. Under less restrictive assumptions -- including when the smoothness parameters $\alpha, \beta$ and the density of $\bX$ are unknown -- they showed that the first-order estimator is rate optimal in the $\sqrt{n}$ regime when using local polynomial regression and k-nearest neighbors estimators of the nuisance functions. Finally, under no assumptions on the nuisance functions, they provide an analysis of the first-order estimator which illustrates how undersmoothing can help achieve rate optimality.

In contrast to McClean et al.\ \cite{mcclean2024double} that studied the benefits of undersmoothing nuisance function estimators under different types of models and nuisance function estimators, our work studies the benefits of undersmoothing (and oversmoothing) under different types of sample splitting strategies (i.e., double, single, and no sample splitting) and estimators of the functional of interest (i.e., three plug-in estimators and the first-order estimator). By doing so, we illustrate a delicate interplay between the optimal way to tune nuisance function estimators and the type of sample splitting strategy and estimator of the functional of interest. Furthermore, a distinguishing feature of our work are the lower bounds on the bias of the estimators, which establish the \emph{necessity} to undersmooth or oversmooth the nuisance function estimators for obtaining optimal rates of estimating $\psi(\bbP)$.

\section{Simulations} \label{sec: simulations} 

In this section, we perform numerical simulations to illustrate the finite sample behavior of the four types of estimators of $\psi(\bbP)$ (i.e., $\hat{\psi}_{k_1, k_2}^{\mathrm{INT}}$, $\hat{\psi}_{k_1, k_2}^{\mathrm{MC}}$, $\hat{\psi}_{k}^{\mathrm{NR}}$, $\hat{\psi}_{k_1, k_2}^{\mathrm{IF}}$). Specifically, we compare the prediction-optimal resolutions to the optimal resolutions for estimating $\psi(\bbP)$ for each of the four estimators in each possible sample splitting strategy and in various regularity regimes. We then illustrate the impact of using prediction-optimal versus optimal resolutions on the mean squared error, bias, and variance of the estimators of $\psi(\bbP)$. We also perform these analyses with corresponding cross-fit estimators in Appendix K.

\subsection{Design} \label{sec: sim design}

We set $n \in \{300, 30000\}$ and generated $n$ i.i.d.\ copies of $(X, A, Y)$ as follows. We first drew $X \sim \text{Uniform}(0, 1)$. We adopted the setting where the conditional means of $A$ and $Y$ are the same (i.e., $\mu := p = b$). We independently drew $A$ and $Y$ from the $\text{Normal}(\mu(X), 1)$ distribution.  We set $\mu$ to lie near the boundary of a Hölder space with smoothness class $s$ (see Appendix K for details). We set $s \in \{0.05, 0.25, 0.75\}$, which we refer to as the low, medium, and high regularity settings, to illustrate $\sqrt{n}$ and non-$\sqrt{n}$ regimes. Figure \ref{fig: distributions} illustrates $\mu$ in these settings.

We applied each estimator in each sample splitting strategy, where the sample splits were of equal size. To estimate the nuisance functions, we use the approximate wavelet projection estimator with the Haar basis. We numerically computed the prediction-optimal resolutions (i.e., minimizing the mean integrated squared error for $\mu$) and the optimal resolutions for estimating $\psi(\bbP)$ (i.e., minimizing the mean squared error of the estimator of $\psi(\bbP)$). In brief, we considered a discrete grid of $40\times40$ values for $(k_1, k_2)$ and performed Monte Carlo integration for each $(k_1, k_2)$.  See Appendix K for additional details.

\begin{figure}[!ht] 
  \centering
   \includegraphics[width=\textwidth]{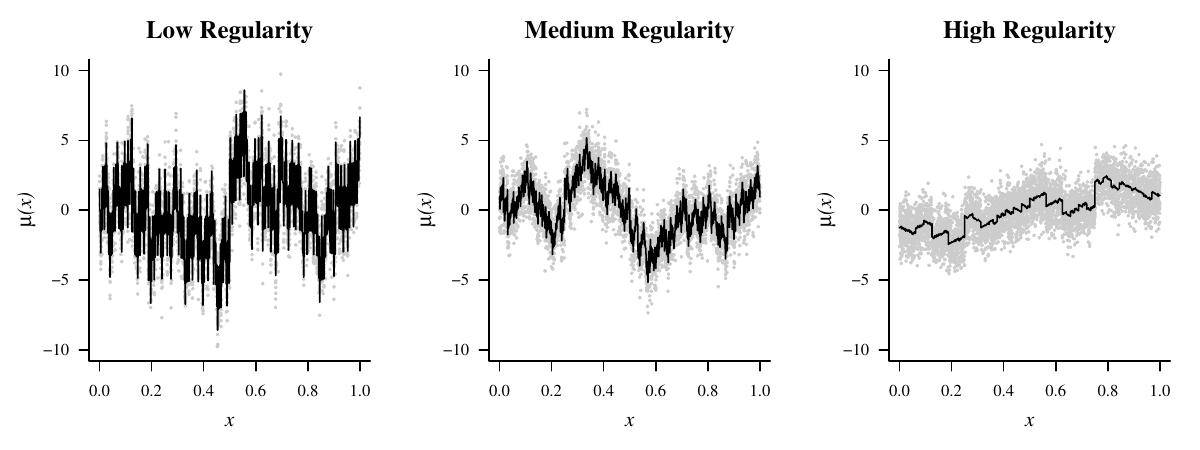}
   \caption{Conditional mean function $\mu$ in the low, medium, and high regularity regimes in the simulations. Grey points show 5,000 samples from the distribution of $A$ and $Y$.} \label{fig: distributions}
\end{figure}

\subsection{Results}
Here, we present the simulation results for $n = 300$ and present the remaining simulation results (i.e., cross-fit estimators and the $n=30,000$ case) in Appendix K.

 The results for the double sample splitting estimators are presented in Table \ref{tab:summary_double}. As suggested by our theoretical results, in the low and medium regularity regimes, the optimal resolution choices for $\hat{\psi}_{k_1, k_2}^{\mathrm{INT}}$ and  $\hat{\psi}_{k_1, k_2}^{\mathrm{MC}}$ were undersmoothed $k_1, k_2$ values and the optimal resolution choices for $\hat{\psi}_{k_1, k_2}^{\mathrm{IF}}$ involved undersmoothing only one of $k_1, k_2$ . By doing so, these estimators reduced their MSE for $\psi(\bbP)$ by reducing their squared bias at the expense of slightly increasing their variance. In the high regularity regime, the prediction-optimal and optimal resolution choices for each estimator were either exactly or nearly equal. Therefore, the estimators had the same MSE regardless of whether prediction-optimal and optimal resolutions were used. This finding also aligns with our theoretical results, as prediction-optimal and optimal resolutions achieve the same rates for estimating $\psi(\bbP)$ in this regime.

\begin{table}[!h]
\centering
\caption{Simulation results for double sample split estimators in settings with $n = 300$.} 
\label{tab:summary_double}
\begin{tabular}{llllllllllll}
  \hline
& & \multicolumn{5}{c}{Prediction-Optimal Resolutions} & \multicolumn{5}{c}{Optimal Resolutions} \\
\cmidrule(lr){3-7} \cmidrule(lr){8-12}
Regularity & Estimator & MSE & Bias\textsuperscript{2} & Var & $k_1$  & $k_2$ & MSE & Bias\textsuperscript{2} & Var  & $k_1$  & $k_2$ \\ 
  \hline
Low & Integral & 23.142 & 21.340 & 1.802 & 12 & 12 & 7.952 & 3.384 & 4.568 & 160 & 160 \\ 
   & Monte Carlo & 23.101 & 21.332 & 1.768 & 12 & 12 & 12.067 & 7.526 & 4.541 & 74 & 74 \\ 
   & First-Order & 22.081 & 21.395 & 0.686 & 12 & 12 & 4.847 & 2.937 & 1.910 & 192 & 12 \\ 
  Medium & Integral & 1.309 & 0.818 & 0.491 & 12 & 12 & 0.954 & 0.250 & 0.704 & 52 & 108 \\ 
   & Monte Carlo & 1.286 & 0.819 & 0.467 & 12 & 12 & 1.124 & 0.472 & 0.652 & 26 & 26 \\ 
   & First-Order & 0.911 & 0.819 & 0.092 & 12 & 12 & 0.303 & 0.120 & 0.183 & 12 & 81 \\ 
  High & Integral & 0.155 & 0.007 & 0.148 & 12 & 12 & 0.155 & 0.007 & 0.148 & 12 & 52 \\ 
   & Monte Carlo & 0.148 & 0.007 & 0.141 & 12 & 12 & 0.148 & 0.007 & 0.141 & 12 & 12 \\ 
   & First-Order & 0.035 & 0.007 & 0.028 & 12 & 12 & 0.031 & 0.004 & 0.027 & 9 & 12 \\ 
   \hline
\end{tabular}
\end{table}

Table \ref{tab:summary_single} summarizes the single sample splitting results. The optimal resolutions for $\hat{\psi}_{k_1, k_2}^{\mathrm{INT}}$ and  $\hat{\psi}_{k_1, k_2}^{\mathrm{MC}}$ often involved some degree of undersmoothing (e.g., in the low and medium regularity regimes), despite our theoretical results implying that prediction-optimal resolutions produce rate optimal estimation of $\psi(\bbP)$ for these estimators in these settings. As expected from our theoretical results, the optimal resolutions for $\hat{\psi}_{k_1, k_2}^{\mathrm{IF}}$ involved undersmoothing one of the nuisance functions and oversmoothing the other nuisance function in the low and medium regularity regimes. With optimal resolutions, the single sample split estimators generally reduced their MSEs, especially for $\hat{\psi}_{k_1, k_2}^{\mathrm{IF}}$.

\begin{table}[!h]
\centering
\caption{Simulation results for single sample split estimators in settings with $n = 300$.} 
\label{tab:summary_single}
\begin{tabular}{llllllllllll}
  \hline
& & \multicolumn{5}{c}{Prediction-Optimal Resolutions} & \multicolumn{5}{c}{Optimal Resolutions} \\
\cmidrule(lr){3-7} \cmidrule(lr){8-12}
Regularity & Estimator & MSE & Bias\textsuperscript{2} & Var & $k_1$  & $k_2$ & MSE & Bias\textsuperscript{2} & Var  & $k_1$  & $k_2$ \\ 
  \hline
Low & Integral & 10.692 & 8.827 & 1.865 & 22 & 22 & 3.278 & 0.145 & 3.134 & 63 & 255 \\ 
   & Monte Carlo & 10.797 & 8.813 & 1.984 & 22 & 22 & 4.381 & 0.468 & 3.913 & 52 & 52 \\ 
   & First-Order & 26.777 & 26.183 & 0.594 & 22 & 22 & 4.666 & 3.302 & 1.364 & 6 & 300 \\ 
   & Newey-Robins & 16.993 & 16.339 & 0.654 &  & 22 & 4.675 & 2.973 & 1.701 &  & 192 \\ 
  Medium & Integral & 0.898 & 0.433 & 0.464 & 12 & 12 & 0.670 & 0.031 & 0.639 & 58 & 30 \\ 
   & Monte Carlo & 0.891 & 0.433 & 0.458 & 12 & 12 & 0.709 & 0.073 & 0.636 & 22 & 22 \\ 
   & First-Order & 1.398 & 1.334 & 0.064 & 12 & 12 & 0.321 & 0.109 & 0.213 & 192 & 6 \\ 
   & Newey-Robins & 0.966 & 0.822 & 0.144 &  & 12 & 0.418 & 0.127 & 0.291 &  & 94 \\ 
  High & Integral & 0.135 & 0.001 & 0.134 & 12 & 12 & 0.120 & 0.001 & 0.118 & 19 & 22 \\ 
   & Monte Carlo & 0.132 & 0.001 & 0.132 & 12 & 12 & 0.117 & 0.008 & 0.109 & 15 & 12 \\ 
   & First-Order & 0.057 & 0.039 & 0.018 & 12 & 12 & 0.025 & 0.001 & 0.024 & 48 & 3 \\ 
   & Newey-Robins & 0.054 & 0.007 & 0.047 &  & 12 & 0.054 & 0.007 & 0.047 &  & 12 \\ 
   \hline
\end{tabular}
\end{table}

Table \ref{tab:summary_none} summarizes the no sample splitting results. Our theoretical results imply that prediction-optimal resolutions are satisfactory for obtaining the best rates of estimation possible for $\psi(\bbP)$ in all these settings. Here, we find that the optimal resolution choices often involved some degree of undersmoothing and oversmoothing, especially in the low and medium regularity regimes. Choosing optimal resolutions resulted in substantial reductions in MSE in the low and medium regularity regimes (e.g., a reduction by a factor of 45 for $\hat{\psi}_{k_1, k_2}^{\mathrm{IF}}$ in the low regularity regime).

\begin{table}[!h]
\centering
\caption{Simulation results for the estimators without sample splitting in settings with $n = 300$.} 
\label{tab:summary_none}
\begin{tabular}{llllllllllll}
  \hline
& & \multicolumn{5}{c}{Prediction-Optimal Resolutions} & \multicolumn{5}{c}{Optimal Resolutions} \\
\cmidrule(lr){3-7} \cmidrule(lr){8-12}
Regularity & Estimator & MSE & Bias\textsuperscript{2} & Var & $k_1$  & $k_2$ & MSE & Bias\textsuperscript{2} & Var  & $k_1$  & $k_2$ \\ 
  \hline
Low & Integral & 4.224 & 3.915 & 0.309 & 48 & 48 & 0.496 & 0.002 & 0.495 & 160 & 141 \\ 
   & Monte Carlo & 2.162 & 0.200 & 1.961 & 48 & 48 & 1.782 & 0.123 & 1.659 & 215 & 22 \\ 
   & First-Order & 12.601 & 12.321 & 0.280 & 48 & 48 & 0.276 & 0.019 & 0.258 & 150 & 12 \\ 
   & Newey-Robins & 4.224 & 3.915 & 0.309 &  & 48 & 0.541 & 0.004 & 0.537 &  & 101 \\ 
  Medium & Integral & 0.308 & 0.252 & 0.056 & 22 & 22 & 0.082 & 0.002 & 0.080 & 52 & 108 \\ 
   & Monte Carlo & 0.298 & 0.016 & 0.282 & 22 & 22 & 0.230 & 0.009 & 0.221 & 12 & 74 \\ 
   & First-Order & 0.797 & 0.772 & 0.026 & 22 & 22 & 0.038 & 0.001 & 0.037 & 12 & 141 \\ 
   & Newey-Robins & 0.308 & 0.252 & 0.056 &  & 22 & 0.089 & 0.001 & 0.089 &  & 48 \\ 
  High & Integral & 0.013 & 0.001 & 0.013 & 12 & 12 & 0.013 & 0.001 & 0.012 & 19 & 284 \\ 
   & Monte Carlo & 0.057 & 0.007 & 0.050 & 12 & 12 & 0.031 & 0.000 & 0.031 & 6 & 69 \\ 
   & First-Order & 0.024 & 0.019 & 0.005 & 12 & 12 & 0.006 & 0.000 & 0.006 & 43 & 3 \\ 
   & Newey-Robins & 0.013 & 0.001 & 0.013 &  & 12 & 0.013 & 0.001 & 0.012 &  & 19 \\
   \hline
\end{tabular}
\end{table}

\section{Discussion} \label{sec: discussion}

In this paper, we provide a detailed description of how to optimally tune nuisance function estimators for estimating a doubly robust functional which has witnessed interest across the causal inference and conditional independence testing literature. Our results describe when and how one has to tailor nuisance function tuning to obtain optimal rates of convergence for estimating the functional of interest. In particular, our results illustrate the necessity to perform some degree of undersmoothing or oversmoothing in low regularity regimes. Our results additionally demonstrate the impact of different sample splitting schemes and the necessity to perform double sample splitting for the sake of eventually obtaining minimax rate-optimal functional estimation. We demonstrate this through the lens of a popular class of estimators consisting of simple plug-in principles and influence function based first-order bias-corrected methods. 

While our analyses consider the nuisance functions to belong to Hölder spaces, the bounds we derived extend to other Besov spaces (see Appendix A). One may also potentially produce minimax optimal estimators that are adaptive over the Hölder smoothness classes of the nuisance functions by applying Lepski-type methods \cite{lepskii1991problem, lepskii1992asymptotically}. For the sake of concreteness, our analyses pertain to estimators of nuisance functions based on wavelet projections. Recently, McClean et al. \cite{mcclean2024double} considered a similar setup as this work and provided some parallel upper bounds when using other types of nuisance function estimators (e.g., kernel regression, local polynomial regression, k-nearest neighbors) for the double sample split first-order estimator. We expect similar lower bounds and optimal nuisance function tuning strategies to hold when using these types of nuisance function estimators.

Although the choice of the specific functional is somewhat driven by the fact that one can perform detailed calculations to establish both sharp upper and lower bounds for many classes of estimators for the problem, our results provide intuition for the more general bracket of doubly robust functionals in literature. That is, it is natural to expect similar phenomenon for estimating other doubly robust functionals in the literature, such as the average treatment effect. In particular, as we discussed in Section \ref{sec: setup}, our object of study has deep connections with the average treatment effect functional. However, estimation of the average treatment effect functional necessitates estimating the inverse propensity score (i.e. $1/p$ in our context). Although recent strategies of directly obtaining a projection estimator of $1/p$ \cite{chernozhukov2018double,newey2018cross,chernozhukov2022automatic,chernozhukov2018double2} can be analyzed, these strategies can only be operationalized in the $\sqrt{n}$-regime. It is unclear at this moment how to modify these estimators for the slower rate of convergence regimes that we explore in this paper. We keep these explorations for future research endeavors.

\paragraph{Code availability} All code used to conduct the numerical simulations and the output files are available at \url{https://github.com/stmcg/nuisance-tuning}.

\paragraph{Acknowledgements}
We would like to thank James Robins for insightful discussions. This work was primarily conducted when SM was at Harvard University. The simulations were run on the FASRC Cannon cluster supported by the FAS Division of Science Research Computing Group at Harvard University.

\paragraph{Funding}
SM was supported by the National Science Foundation Graduate Research Fellowship Program under Grant No.\ DGE2140743, National Library of Medicine of the National Institutes of Health under Award Number T32LM012411, and Fonds de recherche du Québec-Nature et technologies B1X research scholarship.

\bibliographystyle{unsrt}
\bibliography{bibliography}

\begin{thebibliography}{10}

\bibitem{bang2005doubly}
Heejung Bang and James~M Robins.
\newblock Doubly robust estimation in missing data and causal inference models.
\newblock {\em Biometrics}, 61(4):962--973, 2005.

\bibitem{chernozhukov2018double}
Victor Chernozhukov, Denis Chetverikov, Mert Demirer, Esther Duflo, Christian Hansen, Whitney Newey, and James Robins.
\newblock {Double/debiased machine learning for treatment and structural parameters}.
\newblock {\em The Econometrics Journal}, 21(1):C1--C68, 01 2018.

\bibitem{shah2020hardness}
Rajen~D Shah and Jonas Peters.
\newblock The hardness of conditional independence testing and the generalised covariance measure.
\newblock {\em The Annals of Statistics}, 48(3):1514--1538, 2020.

\bibitem{robins2008higher}
James Robins, Lingling Li, Eric Tchetgen, and Aad van~der Vaart.
\newblock Higher order influence functions and minimax estimation of nonlinear functionals.
\newblock In {\em Probability and statistics: essays in honor of David A. Freedman}, pages 335--421. Institute of Mathematical Statistics, 2008.

\bibitem{rotnitzky2021characterization}
Andrea Rotnitzky, Ezequiel Smucler, and James~M Robins.
\newblock Characterization of parameters with a mixed bias property.
\newblock {\em Biometrika}, 108(1):231--238, 2021.

\bibitem{robins2017higher}
James Robins, Lingling Li, Rajarshi Mukherjee, Eric~Tchetgen Tchetgen, and Aad van~der Vaart.
\newblock Higher order estimating equations for high-dimensional models.
\newblock {\em Annals of statistics}, 45(5):1951, 2017.

\bibitem{liu2021adaptive}
Lin Liu, Rajarshi Mukherjee, James~M Robins, and Eric~Tchetgen Tchetgen.
\newblock Adaptive estimation of nonparametric functionals.
\newblock {\em Journal of Machine Learning Research}, 22(99):1--66, 2021.

\bibitem{hall1992effect}
Peter Hall.
\newblock Effect of bias estimation on coverage accuracy of bootstrap confidence intervals for a probability density.
\newblock {\em The Annals of Statistics}, pages 675--694, 1992.

\bibitem{newey1998undersmoothing}
Whitney Newey, Fushing Hsieh, and James Robins.
\newblock Undersmoothing and bias corrected functional estimation.
\newblock Working Paper 98-17, Massachusetts Institute of Technology (MIT), Department of Economics, 1998.

\bibitem{gine2008simple}
Evarist Gin{\'e} and Richard Nickl.
\newblock A simple adaptive estimator of the integrated square of a density.
\newblock {\em Bernoulli}, 14(1):47--61, 2008.

\bibitem{paninski2008undersmoothed}
Liam Paninski and Masanao Yajima.
\newblock Undersmoothed kernel entropy estimators.
\newblock {\em IEEE Transactions on Information Theory}, 54(9):4384--4388, 2008.

\bibitem{newey2018cross}
Whitney~K Newey and James~R Robins.
\newblock Cross-fitting and fast remainder rates for semiparametric estimation.
\newblock {\em arXiv preprint arXiv:1801.09138}, 2018.

\bibitem{van2019efficient}
Mark~J. van~der Laan, David Benkeser, and Weixin Cai.
\newblock Efficient estimation of pathwise differentiable target parameters with the undersmoothed highly adaptive lasso.
\newblock {\em The International Journal of Biostatistics}, 2022.

\bibitem{van2019causal}
Mark~J van~der Laan, David Benkeser, and Weixin Cai.
\newblock Causal inference based on undersmoothing the highly adaptive lasso.
\newblock In {\em Association for the Advancement of Artificial Intelligence Spring Symposia, Palo Alto, California}, 2019.

\bibitem{fisher2023three}
Aaron Fisher and Virginia Fisher.
\newblock Three-way cross-fitting and pseudo-outcome regression for estimation of conditional effects and other linear functionals.
\newblock {\em arXiv preprint arXiv:2306.07230}, 2023.

\bibitem{kennedy2024minimax}
Edward~H Kennedy, Sivaraman Balakrishnan, James~M Robins, and Larry Wasserman.
\newblock Minimax rates for heterogeneous causal effect estimation.
\newblock {\em The Annals of Statistics}, 52(2):793--816, 2024.

\bibitem{balakrishnan2023fundamental}
Sivaraman Balakrishnan, Edward~H Kennedy, and Larry Wasserman.
\newblock The fundamental limits of structure-agnostic functional estimation.
\newblock {\em arXiv preprint arXiv:2305.04116}, 2023.

\bibitem{bruns2023augmented}
David Bruns-Smith, Oliver Dukes, Avi Feller, and Elizabeth~L Ogburn.
\newblock Augmented balancing weights as linear regression.
\newblock {\em arXiv preprint arXiv:2304.14545}, 2023.

\bibitem{mcgrath2025optimal}
Sean McGrath, Debarghya Mukherjee, Rajarshi Mukherjee, and Zixiao~Jolene Wang.
\newblock Optimal nuisance function tuning for estimating a doubly robust functional under proportional asymptotics.
\newblock {\em Advances in Neural Information Processing System}, 38:1--73, 2025.

\bibitem{robins1992estimating}
James~M Robins, Steven~D Mark, and Whitney~K Newey.
\newblock Estimating exposure effects by modelling the expectation of exposure conditional on confounders.
\newblock {\em Biometrics}, pages 479--495, 1992.

\bibitem{crump2009dealing}
Richard~K Crump, V~Joseph Hotz, Guido~W Imbens, and Oscar~A Mitnik.
\newblock Dealing with limited overlap in estimation of average treatment effects.
\newblock {\em Biometrika}, 96(1):187--199, 2009.

\bibitem{hernan2020causal}
Miguel~A Hern{\'a}n and James~M Robins.
\newblock {\em Causal Inference: What If}.
\newblock CRC Boca Raton, FL, 2020.

\bibitem{diaz2023non}
Iv{\'a}n D{\'\i}az.
\newblock Non-agency interventions for causal mediation in the presence of intermediate confounding.
\newblock {\em Journal of the Royal Statistical Society Series B: Statistical Methodology}, page qkad130, 2023.

\bibitem{zhou2022marginal}
Xiang Zhou and Aleksei Opacic.
\newblock Marginal interventional effects.
\newblock {\em arXiv preprint arXiv:2206.10717}, 2022.

\bibitem{robins2009semiparametric}
James Robins, Eric~Tchetgen Tchetgen, Lingling Li, and Aad van~der Vaart.
\newblock Semiparametric minimax rates.
\newblock {\em Electronic journal of statistics}, 3:1305, 2009.

\bibitem{bickel1982adaptive}
Peter~J Bickel.
\newblock On adaptive estimation.
\newblock {\em The Annals of Statistics}, pages 647--671, 1982.

\bibitem{bickel1988estimating}
Peter~J Bickel and Yaacov Ritov.
\newblock Estimating integrated squared density derivatives: sharp best order of convergence estimates.
\newblock {\em Sankhy{\=a}: The Indian Journal of Statistics, Series A}, pages 381--393, 1988.

\bibitem{powell1989semiparametric}
James~L Powell, James~H Stock, and Thomas~M Stoker.
\newblock Semiparametric estimation of index coefficients.
\newblock {\em Econometrica: Journal of the Econometric Society}, pages 1403--1430, 1989.

\bibitem{laurent1996efficient}
B{\'e}atrice Laurent.
\newblock Efficient estimation of integral functionals of a density.
\newblock {\em The Annals of Statistics}, 24(2):659--681, 1996.

\bibitem{zheng2010asymptotic}
Wenjing Zheng and Mark~J Van Der~Laan.
\newblock Asymptotic theory for cross-validated targeted maximum likelihood estimation.
\newblock Working Paper 273, U.C. Berkeley Division of Biostatistics, 2010.

\bibitem{kennedy2023towards}
Edward~H Kennedy.
\newblock Towards optimal doubly robust estimation of heterogeneous causal effects.
\newblock {\em Electronic Journal of Statistics}, 17(2):3008--3049, 2023.

\bibitem{hardle2012wavelets}
Wolfgang H{\"a}rdle, Gerard Kerkyacharian, Dominique Picard, and Alexander Tsybakov.
\newblock {\em Wavelets, approximation, and statistical applications}, volume 129.
\newblock Springer Science \& Business Media, 2012.

\bibitem{kennedy2024semiparametric}
Edward~H Kennedy.
\newblock Semiparametric doubly robust targeted double machine learning: a review.
\newblock {\em Handbook of statistical methods for precision medicine}, pages 207--236, 2024.

\bibitem{tsiatis2006semiparametric}
Anastasios~A Tsiatis.
\newblock {\em Semiparametric theory and missing data}.
\newblock Springer, 2006.

\bibitem{robins2017minimax}
James Robins, Lingling Li, Rajarshi Mukherjee, Eric~Tchetgen Tchetgen, Aad van~der Vaart, et~al.
\newblock Minimax estimation of a functional on a structured high-dimensional model.
\newblock {\em The Annals of Statistics}, 45(5):1951--1987, 2017.

\bibitem{liu2017semiparametric}
Lin Liu, Rajarshi Mukherjee, Whitney~K Newey, and James~M Robins.
\newblock Semiparametric efficient empirical higher order influence function estimators.
\newblock {\em arXiv preprint arXiv:1705.07577}, 2017.

\bibitem{lepskii1991problem}
OV~Lepskii.
\newblock On a problem of adaptive estimation in gaussian white noise.
\newblock {\em Theory of Probability \& Its Applications}, 35(3):454--466, 1991.

\bibitem{lepskii1992asymptotically}
OV~Lepskii.
\newblock Asymptotically minimax adaptive estimation. i: Upper bounds. optimally adaptive estimates.
\newblock {\em Theory of Probability \& Its Applications}, 36(4):682--697, 1992.

\bibitem{van2000asymptotic}
Aad~W Van~der Vaart.
\newblock {\em Asymptotic statistics}, volume~3.
\newblock Cambridge university press, 2000.

\bibitem{van2023weak}
A.W. van~der Vaart and J.A. Wellner.
\newblock {\em Weak Convergence and Empirical Processes: With Applications to Statistics}.
\newblock Springer Series in Statistics. Springer International Publishing, 2023.

\bibitem{gine2021mathematical}
Evarist Gin{\'e} and Richard Nickl.
\newblock {\em Mathematical foundations of infinite-dimensional statistical models}.
\newblock Cambridge University Press, 2021.

\bibitem{liu2020nearly}
Lin Liu, Rajarshi Mukherjee, and James~M Robins.
\newblock On nearly assumption-free tests of nominal confidence interval coverage for causal parameters estimated by machine learning.
\newblock {\em Statistical Science}, 35(3):518--539, 2020.

\bibitem{shen2020optimal}
Yandi Shen, Chao Gao, Daniela Witten, and Fang Han.
\newblock {Optimal estimation of variance in nonparametric regression with random design}.
\newblock {\em The Annals of Statistics}, 48(6):3589 -- 3618, 2020.

\bibitem{mcclean2024double}
Alec McClean, Sivaraman Balakrishnan, Edward~H Kennedy, and Larry Wasserman.
\newblock Double cross-fit doubly robust estimators: Beyond series regression.
\newblock {\em arXiv preprint arXiv:2403.15175}, 2024.

\bibitem{chernozhukov2022automatic}
Victor Chernozhukov, Whitney~K Newey, and Rahul Singh.
\newblock Automatic debiased machine learning of causal and structural effects.
\newblock {\em Econometrica}, 90(3):967--1027, 2022.

\bibitem{chernozhukov2018double2}
Victor Chernozhukov, Whitney~K Newey, and James Robins.
\newblock Double/de-biased machine learning using regularized riesz representers.
\newblock Working Paper 15/18, Centre for Microdata Methods and Practice, 2018.

\end{thebibliography}


\begin{thebibliography}{1}

\bibitem{gine2021mathematical}
Evarist Gin{\'e} and Richard Nickl.
\newblock {\em Mathematical foundations of infinite-dimensional statistical models}.
\newblock Cambridge University Press, 2021.

\bibitem{robins2017minimax}
James Robins, Lingling Li, Rajarshi Mukherjee, Eric~Tchetgen Tchetgen, Aad van~der Vaart, et~al.
\newblock Minimax estimation of a functional on a structured high-dimensional model.
\newblock {\em The Annals of Statistics}, 45(5):1951--1987, 2017.

\bibitem{robins2008higher}
James Robins, Lingling Li, Eric Tchetgen, and Aad van~der Vaart.
\newblock Higher order influence functions and minimax estimation of nonlinear functionals.
\newblock In {\em Probability and statistics: essays in honor of David A. Freedman}, pages 335--421. Institute of Mathematical Statistics, 2008.

\bibitem{liu2017semiparametric}
Lin Liu, Rajarshi Mukherjee, Whitney~K Newey, and James~M Robins.
\newblock Semiparametric efficient empirical higher order influence function estimators.
\newblock {\em arXiv preprint arXiv:1705.07577}, 2017.

\bibitem{newey2018cross}
Whitney~K Newey and James~R Robins.
\newblock Cross-fitting and fast remainder rates for semiparametric estimation.
\newblock {\em arXiv preprint arXiv:1801.09138}, 2018.

\bibitem{robins2009semiparametric}
James Robins, Eric~Tchetgen Tchetgen, Lingling Li, and Aad van~der Vaart.
\newblock Semiparametric minimax rates.
\newblock {\em Electronic journal of statistics}, 3:1305, 2009.

\bibitem{hardle2012wavelets}
Wolfgang H{\"a}rdle, Gerard Kerkyacharian, Dominique Picard, and Alexander Tsybakov.
\newblock {\em Wavelets, approximation, and statistical applications}, volume 129.
\newblock Springer Science \& Business Media, 2012.

\end{thebibliography}

\end{document}


\maketitle
\tableofcontents

\setcounter{theorem}{5}
\setcounter{equation}{1}
\setcounter{cor}{15}
\setcounter{remark}{15}

\begin{appendix}
\section{Wavelet bases and Hölder spaces} \label{sec: wavelets}

We review some results on orthonormal wavelet bases and characterizations of Hölder spaces. We consider representations of $h \in L^2([0,1]^d)$ in terms of an orthonormal wavelet basis. Specifically, we consider a compactly supported Cohen-Daubechies-Vial type orthonormal wavelet basis of regularity $S \geq 1$ \cite{gine2021mathematical} given by
\begin{equation*}
    \{ \Phi_{J_0 \boldm}, \Psi^{\biota}_{l \boldm^{\prime}}: \boldm \in \mathcal{Z}_{J_0}, \boldm^{\prime} \in \mathcal{Z}_{l}, \biota \in \mathcal{I}, l \in \mathbb{Z}, l \geq J_0 \},
\end{equation*} 
where $J_0 := J_0(S) \geq 1$ is such that $2^{J_0} \geq S$ and the index sets satisfy $\mathcal{Z}_{J_0}, \mathcal{Z}_{l} \subset \mathbb{Z}^d$ and $\mathcal{I}:= \{0, 1\}^d \setminus \{\boldsymbol{0}\}$. The basis functions $\Phi_{J_0 \boldm}, \Psi^{\biota}_{l \boldm}$ are obtained by translating and scaling $\Phi$ and $\Psi$. For $\bx = (x_1, \dots, x_d)^{\top} \in [0, 1]^d$, 
\begin{align*}
    \Phi_{l\boldm}(\bx) & := 2^{ld/2} \Phi(2^{l}\bx - \boldm), \qquad \Phi(\bx) := \phi(x_1) \cdots \phi(x_d) \\
    \Psi^{\biota}_{l\boldm^{\prime}}(\bx) & := 2^{ld/2} \Psi^{\biota}(2^l \bx - \boldm^{\prime}), \qquad  \Psi^{\biota}(\bx) := \psi^{\iota_1}(x_1) \cdots \psi^{\iota_d}(x_d)
\end{align*}
where $\psi^0 = \phi$ is the scaling function and $\psi^1 = \psi$ is the wavelet function.\footnote{To be precise, the construction of the wavelet basis of $L^2([0,1]^d)$ requires a bit more care than just translating and scaling $\Phi$ and $\Psi$ (e.g., including edge functions and subsequently performing Gram-Schmidt orthonormalization). See Chapter 4 of Giné and Nickl \cite{gine2021mathematical} for a detailed construction of the wavelet basis of $L^2([0,1]^d)$. We presented the wavelet basis in this manner to simplify the presentation, noting that our proofs only use bounds on the size and support of the wavelet basis functions (i.e., Property P2).} Due to the compact support of $\Phi$ and $\Psi$, we have that $|\mathcal{Z}_{J_0}| \asymp 1$ and $|\mathcal{Z}_{l}| \asymp 2^{ld}$. Letting $\langle g, h \rangle := \int g(\bx) h(\bx) d\bx$ denote the usual $L^2([0,1]^d)$ inner product, the wavelet series expansion of $h \in L^2([0,1]^d)$ is given by
\begin{equation*}
    h = \sum_{\boldm \in \mathcal{Z}_{J_0}} \langle \Phi_{J_0 \boldm}, h \rangle \Phi_{J_0 \boldm} + \sum_{l = J_0}^{\infty} \sum_{\boldm \in \mathcal{Z}_l} \sum_{\biota \in \mathcal{I}} \langle \Psi_{l \boldm}^{\biota}, h \rangle \Psi_{l \boldm}^{\biota}.
\end{equation*}

The nuisance function estimators we consider are based on approximate projections of $h \in L^2([0,1]^d)$ onto subspaces $V_j$ defined by
\begin{equation*}
    V_j := \text{span} \left\{ \Phi_{j\boldm}, \boldm \in \mathcal{Z}_j \right\}.
\end{equation*}
By standard properties of a multiresolution analysis, the projection kernel $K_{V_j}$ onto $V_j$ can be expressed as
\begin{equation*}
    K_{V_j}(\bx, \by) =  \sum_{\boldm \in \mathcal{Z}_j} \Phi_{j \boldm}(\bx) \Phi_{j \boldm}(\by)
\end{equation*}
and the projection of $h$ onto $V_j$ can be expressed as
\begin{align*}
    \Pi(h | V_j) & := \sum_{\boldm \in \mathcal{Z}_{j}} \langle \Phi_{j \boldm}, h \rangle \Phi_{j \boldm} \\
    & = \sum_{\boldm \in \mathcal{Z}_{J_0}} \langle \Phi_{J_0 \boldm}, h \rangle \Phi_{J_0 \boldm} + \sum_{l = J_0}^{j-1} \sum_{\boldm \in \mathcal{Z}_l} \sum_{\biota \in \mathcal{I}} \langle \Psi_{l \boldm}^{\biota}, h \rangle \Psi_{l \boldm}^{\biota}
\end{align*}
where $j \geq J_0$. With a slight abuse of notation, we will denote $V_j$ as $V_k$ where $k = 2^{jd}$. 

Many of the functions that we work with belong to Hölder spaces which can be characterized as follows. Let $C_u([0,1]^d)$ denote the space of uniformly continuous functions defined on $[0,1]^d$. Then, we define the Hölder space $H(\alpha, M)$ by
\begin{equation*} 
    H(\alpha, M) := \left\{ h \in C_u([0,1]^d) : \| h \|_{B_{\infty \infty}^{\alpha}} \leq M  \right\}
\end{equation*}
where for wavelet basis with regularity $S > \alpha$,
\begin{equation*}
    \| h \|_{B_{\infty \infty}^{\alpha}} := 2^{J_0(\alpha+\frac{d}{2})} \| \langle h, \Phi_{J_0 \cdot}  \rangle \|_{\infty} + \sup_{l \geq J_0, \boldm \in \mathcal{Z}_l, \biota \in \mathcal{I}} 2^{l(\alpha+\frac{d}{2})}  | \langle h, \Psi_{l\boldm}^{\biota} \rangle|.
\end{equation*}
Note that this definition of $H(\alpha, M)$ coincides with the Besov space $B^{\alpha}_{\infty \infty}$ defined in Giné and Nickl \cite{gine2021mathematical}. We can extend our results to hold for $B^{\alpha}_{2 \infty}$ using straightforward arguments.

Throughout the proofs, we will repeatedly use the following standard properties (e.g., Chapter 4 of Giné and Nickl \cite{gine2021mathematical}) of functions belonging to $H(\alpha, M)$ and of our choice of wavelet basis functions: \begin{itemize}
    \item[] (P1) We have that
\begin{equation*}
    \sup_{h \in H(\alpha, M)}\| h - \Pi(h | V_k) \|_{\infty} \leq c k^{-\frac{\alpha}{d}}
\end{equation*}
where $c$ only depends $\alpha$, $M$, and $d$.
    \item[] (P2): For any $k \geq 2^{J_0d}$ and $\boldm \in \mathcal{Z}_{k}$, $|\Phi_{k \boldm}|$ is uniformly bounded above by $O(\sqrt{k})$ and has a support contained in an interval of length $O(\frac{1}{k})$.

    \item (P3): For any $k \geq 2^{J_0d}$, $V_k$ contains all polynomials on $[0,1]^d$ up to degree $S-1$.
\end{itemize}
Property P1 is used in deriving the upper bounds on the bias of all of the estimators we consider. Property P2 is used in deriving upper bounds on the variance of the estimators, specifically in our technical lemmas that bound expectations of products of kernel functions. Property P3 is used in deriving lower bounds on the bias of some of the estimators in the no sample splitting case.

\section{Unknown 
covariate density} \label{sec: unknown f}

In this section, we discuss issues, challenges, and details involved in extending our results when the density of the covariates $\bX$, denoted by $f$, is unknown. 

Before presenting the precise mathematical formalization, we begin with a discussion of related literature in this domain. The literature in this regard can be broadly divided into two regimes of asymptotic inference. Specifically, when $\sqrt{n}$-consistent estimation of $\psi(\mathbb{P})$ is possible in an asymptotic minimax sense, several papers have considered an adaptation to unknown $f$ under minimal requirements. Specifically, Robins et al. \cite{robins2017minimax,robins2008higher} demonstrates a framework to obtain rate optimal minimax estimation of $\psi(\bbP)$ by performing sophisticated higher-order bias corrections starting from a plug-in principle that required estimation of $f$. Subsequently, Liu et al. \cite{liu2017semiparametric} constructed an estimator based on higher order influence functions that do not require estimation of $f$ and yet produces semiparametric efficient $\sqrt{n}$-consistent estimation of $\psi(\bbP)$ whenever information-theoretically possible.  In terms of simply using the first-order influence function-based estimation using $\hat{\psi}^{\mathrm{IF},1}$, Newey and Robins \cite{newey2018cross} demonstrated how to obtain $\sqrt{n}$-consistent semiparametric estimation through sample splitting and suitable undersmoothing of the estimators of $p$ and $b$. In this regard, they used least squares series estimators of $p$ and $b$ that do not require estimating $f$ but instead require inverting the Gram matrix of the series-features and hence can only operate when the number of features ($k_1$ and $k_2$ in our setting) is less than the sample size. A similar qualification holds for the higher order influence function-based estimator considered in Liu et al. \cite{liu2017semiparametric}. As a result, this estimator is not well-defined when one needs to operate under more features than the sample size i.e. $k_1,k_2>n$ in our notation. Since the focus of this paper is to understand the necessary and sufficient conditions of tuning estimators of $p$ and $b$ to obtain rate optimal estimation of $\psi(\bbP)$ in any information-theoretic regime of estimation, we do not consider these estimators in this paper. 

To take a step towards addressing the unknown $f$ case, we devote this section to explore the case when one estimates $f$ and uses such estimates in estimating $p$, $b$, and $\psi(\bbP)$. Indeed, this raises further questions regarding the interplay between the type of sample splitting and the nuisance function tuning strategies. To keep our discussions focused and consistent with the exposition in the main paper, we consider the case when $f$ is estimated optimally from a separate subsample. We further consider that all the nuisance functions are estimated in separate subsamples. We show that the same conclusions hold as in the known $f$ case in the non-$\sqrt{n}$ regime, provided that $f$ is sufficiently regular. That is, we illustrate that (i) all of the estimators require performing some degree undersmoothing to obtain optimal rates for estimating $\psi(\bbP)$ and (ii) all of the estimators except the Monte-Carlo based plug-in estimator can be minimax rate optimal when suitably tuning the nuisance function estimators. The complete understanding of results parallel to the main results in our paper for unknown $f$ case remains beyond the scope of the current paper and we plan to explore it further in future work.

\subsection{Setup}

\subsubsection{Model and minimax risk} 

First, we define the model in the unknown $f$ case, which is a straightforward extension of the model in the known $f$ case  (see Section 2.1 in the main text). Let $\cP_{(\alpha,\beta,\gamma,g)}$ denote the set of distributions $\bbP$ such that the following conditions hold
\begin{align*}
    (i) & \quad  A, Y \in [C_1, C_2] \quad \text{a.s.} \, \bbP \\
    (ii) & \quad   p \in H(\alpha, M), \, b \in H(\beta, M), \, f \in H(\gamma, M) \\
    (iii) & \quad  f(\bx) \in [M_1, M_2] \quad \forall \bx \in [0, 1]^d 
\end{align*}
where $C_1, C_2 \in \mathbb{R}$ and $M, M_1, M_2 \in \mathbb{R}^+$ are known constants. We consider that $\theta := (\alpha, \beta, \gamma) \in \Theta_g$ where
\begin{equation*}
    \Theta_g = \{ (\alpha, \beta, \gamma): 0 < \alpha, \beta, \gamma < \infty, \, \gamma > g(\alpha, \beta)\}.
\end{equation*}
We will require a specific $g(\alpha, \beta)$  and we have not aimed to optimize this requirement since the necessary and sufficient conditions on $g$ for the full spectrum of minimax estimation of $\psi(\bbP)$, to the best of our knowledge, are yet to be established in full generality. Letting 
\begin{equation*}
    g^*(\alpha, \beta) = \frac{2\delta(\Delta + 1)(1 - 4 \delta / d)}{(\Delta + 2)(1 + 4\delta / d) - 4(\delta / d)(1 - 4\delta / d)(\Delta + 1)}
\end{equation*}
where $\delta := \delta(\alpha, \beta) := \frac{\alpha + \beta}{2}$ and $\Delta := \Delta(\alpha, \beta) :=  |\frac{\alpha}{\beta} - 1|$, Robins et al. \cite{robins2009semiparametric,robins2008higher} showed that the minimax risk for estimating $\psi(\bbP)$ associated with the model $\cP_{(\alpha,\beta,\gamma,g^*)}$ is the same as in the known $f$ case, i.e.,
\begin{equation} \label{eq: minimax rate unknown f}
\inf_{\hat{\psi}}\sup_{\bbP\in \cP_{(\alpha,\beta,\gamma,g^*)}}\mathbb{E}_\mathbb{\bbP}\left[(\hat{\psi}-\psi(\bbP))^2\right] \asymp \begin{cases}
n^{-1}, & \frac{\alpha + \beta}{2} \geq \frac{d}{4}\\
n^{-\frac{4\alpha + 4\beta}{2\alpha + 2\beta +d}}, & \frac{\alpha + \beta}{2} < \frac{d}{4}
\end{cases}.
\end{equation}
The choices of $g$ in our results will therefore automatically satisfy $g\geq g^*$. To lessen the burden of notation, we will drop the subscript of $g$ in the model $\cP_{(\alpha,\beta,\gamma,g)}$.

\subsubsection{Estimators}

The nuisance function estimators and integral-based plug-in estimators described in the main paper will require an estimate of $f$ in the unknown $f$ case. We consider estimating $f$ with a suitably bounded, rate-optimal estimator $\hat{f}$. Specifically, we require the following properties on $\hat{f}$: 
\begin{align*}
    & (i) \quad  0 < c_1 \leq \hat{f}(\bx) \leq c_{2} < \infty \quad \forall \bx \in [0, 1]^d \\
    & (ii) \quad \sup_{\bbP \in \cP_{(\alpha,\beta,\gamma)}} \E_P[\| \hat{f} - f \|_{\infty} ] \lesssim \left( \frac{n}{\log n}\right)^{-\frac{\gamma}{2\gamma + d}}.
\end{align*}
One example of such an estimator is the following bounded, approximate wavelet projection estimator. Consider the approximate wavelet projection estimator of $f$ fit in subsample $\D_j$
\begin{equation*}
    \tilde{f}^{(j)}_k(\bx) = \frac{1}{n} \sum_{i \in \D_j}  K_{V_{k}}(\bX_i,\bx),\qquad \bx \in [0, 1]^d
\end{equation*}
where $k \asymp ( \frac{n}{\log n})^{\frac{d}{2\gamma + d}} $. It follows from standard results of compactly supported wavelet bases with regularity $\gamma$ that $\sup_{\bbP \in \cP_{(\alpha,\beta,\gamma)}} \E_P[\| \tilde{f}^{(j)}_k - f \|_{\infty} ] \lesssim \left( \frac{n}{\log n}\right)^{-\frac{\gamma}{2\gamma + d}}$ (Chapter 5 of Giné and Nickl \cite{gine2021mathematical}). Then, define $\hat{f}^{(j)}_k(\bx)$ to be $\tilde{f}^{(j)}_k(\bx)$ bounded in the interval $[M_1, M_2]$. Note that $\hat{f}^{(j)}_k$ satisfies (ii) since $|\hat{f}^{(j)}_k(\bx) - f(\bx)| \leq |\tilde{f}^{(j)}_k(\bx) - f(\bx)|$ for all $\bx \in [0, 1]^d$.

We consider using separate subsamples to estimate all nuisance functions, analogous to double sample splitting when $f$ is known. Our approximate wavelet projection estimator of $p$ that is constructed in subsample $\D_{j_1} \cup \D_{j_1^{\prime}}$ ($j_1 \neq j_1^{\prime}$) is given by
\begin{equation*}
    \hat{p}^{(j_1, j_1^{\prime})}_{k_1}(\bx) = \frac{1}{n} \sum_{i \in \D_{j_1}} \frac{A_i K_{V_{k_1}}(\bX_i,\bx)}{\hat{f}^{(j_1^{\prime})}(\bX_i)}, \qquad \bx \in [0, 1]^d.
\end{equation*}
Similarly, we estimate $b$ in subsample $\D_{j_2} \cup \D_{j_2^{\prime}}$ ($j_2 \neq j_2^{\prime}$) by 
\begin{equation*}
    \hat{b}^{(j_2, j_2^{\prime})}_{k_2}(\bx) = \frac{1}{n} \sum_{i \in \D_{j_2}} \frac{Y_i K_{V_{k_2}}(\bX_i,\bx) }{\hat{f}^{(j_2^{\prime})}(\bX_i)}, \qquad \bx \in [0, 1]^d.  
\end{equation*}

The plug-in type estimators and first-order estimator in the unknown $f$ case are then given by
\begin{align*}
    \hat{\psi}_{k_1, k_2}^{\mathrm{INT}} & = \frac{1}{n}\sum_{i \in \D_5} A_i Y_i -  \int \hat{p}^{(1, 2)}_{k_1}(\bx)\hat{b}^{(3, 4)}_{k_2}(\bx) \hat{f}^{(6)}(\bx) d\bx \\
    \hat{\psi}^{\mathrm{MC}}_{k_1, k_2} & = \frac{1}{n}\sum_{i \in \D_5} A_i Y_i -  \frac{1}{n}\sum_{i \in \D_6} \hat{p}^{(1, 2)}_{k_1}(\bX_i)\hat{b}^{(3, 4)}_{k_2}(\bX_i) \\
    \hat{\psi}_{k}^{\mathrm{NR}} & = \frac{1}{n} \sum_{i \in \D_3} A_i(Y_i - \hat{b}_k^{(1,2)}(\bX_i)) \\
    \hat{\psi}^{\mathrm{IF}}_{k_1, k_2} & = \frac{1}{n} \sum_{i \in \D_5} (A_i - \hat{p}^{(1,2)}_{k_1}(\bX_i))(Y_i - \hat{b}^{(3,4)}_{k_2}(\bX_i)).
\end{align*}

\subsection{Results}

We first establish bounds on the bias and variance of the plug-in type estimators and first-order estimator. As in the results in the main text, we provide lower bounds on the bias of all the estimators to establish the necessity to undersmooth the nuisance functions in low regularity regimes. Moreover, as in the double sample splitting case, we also include lower bounds on the variance of $\hat{\psi}^{\mathrm{MC}}_{k_1, k_2}$ to establish that the rate of convergence of the mean squared error of $\hat{\psi}^{\mathrm{MC}}_{k_1, k_2}$ is sub-optimal for any $k_1$ and $k_2$ and similarly for $\hat{\psi}^{\mathrm{IF}}_{k_1, k_2}$ when setting $k_1 = k_2$.

\begin{theorem} \label{theorem: plugin unknown f} 
Under the assumptions given in this section, the following statements hold:
\begin{enumerate} 
    \item[1a)] The estimator $\hat{\psi}_{k_1, k_2}^{\mathrm{INT}}$ satisfies
\begin{align*}
    \sup_{\bbP \in \cP_{(\alpha,\beta,\gamma)}} \left|\E_\bbP\left(\hat{\psi}_{k_1, k_2}^{\mathrm{INT}} - \psi(\bbP)\right)\right| & \lesssim (k_1 \wedge k_2)^{-(\alpha + \beta)/d} + \left( \frac{n}{\log n}\right)^{-\frac{\gamma}{2\gamma + d}}\\
    \sup_{\bbP \in \cP_{(\alpha,\beta,\gamma)}} \Var_\bbP(\hat{\psi}_{k_1, k_2}^{\mathrm{INT}}) & \lesssim \frac{1}{n}  +  \frac{k_1 \wedge k_2}{n^2} + \left( \frac{n}{\log n}\right)^{-\frac{2\gamma}{2\gamma + d}}. 
\end{align*}
    \item[1b)] If $\gamma$ is sufficiently large, there exists a $\bbP \in \cP_{(\alpha,\beta,\gamma)}$ such that for $k_1 \wedge k_2 \ll n^{\frac{d}{2\alpha + 2\beta}}$.
\begin{equation*}
    \left|\E_\bbP\left(\hat{\psi}^{\mathrm{INT}}_{k_1, k_2} - \psi(\bbP)\right)\right| \gtrsim (k_1 \wedge k_2)^{-(\alpha + \beta)/d} 
\end{equation*}

    \item[2a)] The estimator $\hat{\psi}_{k_1, k_2}^{\mathrm{MC}}$ satisfies
\begin{align*}
    \sup_{\bbP \in \cP_{(\alpha,\beta,\gamma)}} \left|\E_\bbP\left(\hat{\psi}_{k_1, k_2}^{\mathrm{MC}} - \psi(\bbP)\right)\right| & \lesssim (k_1 \wedge k_2)^{-(\alpha + \beta)/d} + \left( \frac{n}{\log n}\right)^{-\frac{\gamma}{2\gamma + d}}\\
    \sup_{\bbP \in \cP_{(\alpha,\beta,\gamma)}} \Var_\bbP(\hat{\psi}_{k_1, k_2}^{\mathrm{MC}}) & \lesssim \frac{1}{n}  +  \frac{k_1 \vee k_2}{n^2} + \frac{k_1k_2}{n^3} + \left( \frac{n}{\log n}\right)^{-\frac{2\gamma}{2\gamma + d}}.
\end{align*}
    \item[2b)] If $\gamma$ is sufficiently large, there exists a $\bbP \in \cP_{(\alpha,\beta,\gamma)}$ such that for $k_1 \wedge k_2 \ll n^{\frac{d}{2\alpha + 2\beta}}$
\begin{equation*}
    \left|\E_\bbP\left(\hat{\psi}^{\mathrm{MC}}_{k_1, k_2} - \psi(\bbP)\right)\right| \gtrsim (k_1 \wedge k_2)^{-(\alpha + \beta) / d}
\end{equation*}
and for $k_1, k_2 \gg n$,
\begin{equation*}
    \Var_\bbP(\hat{\psi}^{\mathrm{MC}}_{k_1, k_2}) \gtrsim \frac{k_1k_2}{n^3}.
\end{equation*}
    
    \item[3a)] The estimator $\hat{\psi}_{k}^{\mathrm{NR}}$ satisfies
\begin{align*}
    \sup_{\bbP \in \cP_{(\alpha,\beta,\gamma)}} \left|\E_\bbP\left(\hat{\psi}_{k}^{\mathrm{NR}} - \psi(\bbP)\right)\right| & \lesssim k^{-(\alpha + \beta)/d} + \left( \frac{n}{\log n}\right)^{-\frac{\gamma}{2\gamma + d}} \\
    \sup_{\bbP \in \cP_{(\alpha,\beta,\gamma)}} \Var_\bbP(\hat{\psi}_{k}^{\mathrm{NR}}) & \lesssim \frac{1}{n}  +  \frac{k}{n^2} + \left( \frac{n}{\log n}\right)^{-\frac{2\gamma}{2\gamma + d}}. 
\end{align*}
\item[3b)] If $\gamma$ is sufficiently large, there exists a $\bbP \in \cP_{(\alpha,\beta,\gamma)}$ such that for $k_1 \wedge k_2 \ll n^{\frac{d}{2\alpha + 2\beta}}$
\begin{equation*}
    \left|\E_\bbP\left(\hat{\psi}^{\mathrm{NR}}_{k} - \psi(\bbP)\right)\right| \gtrsim k^{-(\alpha + \beta) / d}.
\end{equation*}

\item[4a)] The estimator $\hat{\psi}^{\mathrm{IF}}_{k_1, k_2}$ satisfies
\begin{align*}
    \sup_{\bbP \in \cP_{(\alpha,\beta,\gamma)}} \left|\E_\bbP\left(\hat{\psi}^{\mathrm{IF}}_{k_1, k_2} - \psi(\bbP)\right)\right| & \lesssim (k_1 \vee k_2)^{-(\alpha+\beta)/d} + \left( \frac{n}{\log n}\right)^{-\frac{\gamma}{2\gamma + d}}  \\
    \sup_{\bbP \in \cP_{(\alpha,\beta,\gamma)}} \Var_\bbP(\hat{\psi}^{\mathrm{IF}}_{k_1, k_2}) & \lesssim  \frac{1}{n}  +  \frac{k_1 \vee k_2}{n^2} + \frac{k_1k_2}{n^3} + \left( \frac{n}{\log n}\right)^{-\frac{2\gamma}{2\gamma + d}}.
\end{align*}

\item[4b)] If $\gamma$ is sufficiently large, there exists a $\bbP \in \cP_{(\alpha,\beta,\gamma)}$ such that for $k_1 \vee k_2 \ll n^{\frac{d}{2\alpha + 2\beta}}$
\begin{equation*}
    \left|\E_\bbP\left(\hat{\psi}^{\mathrm{IF}}_{k_1, k_2} - \psi(\bbP)\right)\right| \gtrsim (k_1 \vee k_2)^{-(\alpha+\beta)/d} 
\end{equation*}
and for $k_1,k_2 \gg n$
\begin{equation*}
    \Var_\bbP(\hat{\psi}^{\mathrm{IF}}_{k_1, k_2}) \gtrsim \frac{k_1 k_2}{n^3}.
\end{equation*}
\end{enumerate}
\end{theorem}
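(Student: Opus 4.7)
The strategy is to reduce every claim to its known-$f$ analog plus an explicit error driven by the sup-norm rate of $\hat{f}$. Define the oracle analogs $\bar{p}_{k_1}^{(j_1)}(\bx) := \frac{1}{n}\sum_{i\in\D_{j_1}} A_i K_{V_{k_1}}(\bX_i,\bx)/f(\bX_i)$ and $\bar{b}_{k_2}^{(j_2)}$ similarly, which are the estimators one would naturally use if $f$ were known. Since $\hat{f}$ is bounded away from $0$ and $\infty$, $\|1/\hat{f}-1/f\|_\infty \lesssim \|\hat{f}-f\|_\infty$ pointwise. The subsample decomposition ensures that $\hat{f}^{(j_1')}$ is independent of $\{(\bX_i,A_i,Y_i):i\in \D_{j_1}\}$, and likewise the $\hat{f}^{(6)}$ used in the INT/MC estimators is independent of every other nuisance estimator, so all expectations can be computed by conditioning on the relevant density estimates first.

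For the upper bounds in 1a)--4a), expand each product around its oracle. For the INT estimator this takes the form
\begin{equation*}
\hat{p}^{(1,2)}_{k_1}(\bx)\hat{b}^{(3,4)}_{k_2}(\bx)\hat{f}^{(6)}(\bx) \;=\; \bar{p}^{(1)}_{k_1}(\bx)\bar{b}^{(3)}_{k_2}(\bx)f(\bx) + R(\bx),
\end{equation*}
where $R(\bx)$ is a sum of terms, each carrying exactly one factor of either $\hat{f}^{(\cdot)}-f$ or $1/\hat{f}^{(\cdot)}-1/f$ with the remaining factors uniformly bounded (using $A,Y\in[C_1,C_2]$ and property P2). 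Taking expectations, the oracle term yields exactly the bias bound $(k_1\wedge k_2)^{-(\alpha+\beta)/d}$ of Theorem \ref{theorem: plugin known f}, while each component of $\E R(\bx)$ is bounded by $\E\|\hat{f}-f\|_\infty \lesssim (n/\log n)^{-\gamma/(2\gamma+d)}$ uniformly in $\bx$. The same decomposition handles the MC, NR, and IF estimators. For variances, treating the $\hat{f}$-error terms as mean-zero conditional on $\hat{f}$ and using independence across subsamples reduces to the oracle variance bounds already established in Theorem \ref{theorem: plugin known f} plus variance contributions from $\hat{f}$ of order $(n/\log n)^{-2\gamma/(2\gamma+d)}$.

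For the lower bounds in 1b)--4b), take $\gamma$ large enough that $(n/\log n)^{-\gamma/(2\gamma+d)}$ is of strictly smaller order than $(k_1\wedge k_2)^{-(\alpha+\beta)/d}$ (resp.\ $(k_1\vee k_2)^{-(\alpha+\beta)/d}$) in the indicated regimes, and than $k_1k_2/n^3$ for the variance claims. The worst-case distributions used to prove the lower bounds in Theorem \ref{theorem: plugin known f} can be built with $f\equiv 1$ on $[0,1]^d$, so they automatically belong to $\cP_{(\alpha,\beta,\gamma)}$ for every $\gamma$. Applying those constructions to the oracle expansion above, the oracle lower bound transfers directly and the $\hat{f}$-induced error is negligible, giving the desired lower bounds up to constants.

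The main obstacle will be controlling the bias remainder $\E R(\bx)$ for the INT and MC estimators, which contains mixed terms such as $\E\int \bar{p}(\bx)\bar{b}(\bx)(\hat{f}^{(6)}(\bx)-f(\bx))d\bx$ and $\E\int\bigl(\hat{p}^{(1,2)}(\bx)-\bar{p}^{(1)}(\bx)\bigr)\bar{b}^{(3)}(\bx)f(\bx)d\bx$. For the latter, conditioning on $\hat{f}^{(2)}$ gives a conditional expectation equal to $\int p(\bx')K_{V_{k_1}}(\bx',\bx)\bigl[f(\bx')/\hat{f}^{(2)}(\bx')-1\bigr]d\bx'$, whose sup-norm over $\bx$ is bounded by $\|\hat{f}-f\|_\infty$ using the $L^1$-boundedness of $K_{V_{k_1}}(\cdot,\bx)$ that follows from property P2. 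Integrating against the oracle $\bar{b}^{(3)}$ (which has $\E\|\bar{b}^{(3)}\|_\infty$ controlled by the standard wavelet envelope) and taking a further expectation then delivers the $(n/\log n)^{-\gamma/(2\gamma+d)}$ rate.
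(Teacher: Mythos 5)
Your proposal takes essentially the same route as the paper: decompose each estimator around its known-$f$ oracle, exploit independence of the subsamples, control the $\hat{f}$-induced remainder via the sup-norm rate $(n/\log n)^{-\gamma/(2\gamma+d)}$ and the $L^1$-boundedness of $K_{V_k}(\cdot,\bx)$, and for the lower bounds recycle the $f \equiv 1$ constructions with $\gamma$ large. Your derivation of the conditional expectation of $\hat{p}^{(1,2)}_{k_1}(\bx) - \bar{p}^{(1)}_{k_1}(\bx)$ given $\hat{f}^{(2)}$ is the content of the paper's Lemma \ref{lem: proj dist}.

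One step is mis-stated, though: the phrase ``treating the $\hat{f}$-error terms as mean-zero conditional on $\hat{f}$'' is not correct. Neither $\hat{f}^{(6)}-f$ (which is deterministic given $\hat{f}^{(6)}$) nor $\hat{p}^{(1,2)}-\bar{p}^{(1)}$ (which has the nonzero conditional mean you yourself compute one paragraph later) is mean-zero given the relevant $\hat{f}$. The variance bounds actually follow from a repeated law-of-total-variance decomposition across the independent subsamples combined with an $a^2+b^2$ bound after splitting $\hat{p}^{(1,2)} = \tilde{p}^{(1)} + (\hat{p}^{(1,2)}-\tilde{p}^{(1)})$, with the second-moment control on the difference supplied by an analog of Lemma \ref{lem: exp pxpy}(iii). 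This is a misdescription rather than a missing idea -- your setup (subsample independence, sup-norm rate of $\hat{f}$) is precisely what the correct argument requires -- but you should not invoke a mean-zero property that does not hold. A small additional slip: $\hat{f}^{(6)}$ appears only in $\hat{\psi}^{\mathrm{INT}}_{k_1,k_2}$; the MC estimator uses $\D_6$ solely for the Monte Carlo sum, with no density estimate evaluated there.
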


The proof is given in Appendix \ref{sec: proof unknown f}. We express the bias of these estimators in terms of their bias in the known $f$ case (with double sample splitting for $\hat{\psi}_{k_1, k_2}^{\mathrm{INT}}$, $\hat{\psi}_{k_1, k_2}^{\mathrm{MC}}$, $\hat{\psi}^{\mathrm{IF}}_{k_1, k_2}$, single sample splitting for $\hat{\psi}^{\mathrm{NR}}_{k}$) plus a remainder term which is $O(( \frac{n}{\log n})^{-\frac{\gamma}{2\gamma + d}})$ due to property (ii) of $\hat{f}$. The derivations of the variance upper bounds are similar as in the known $f$ case, although they are lengthier due to additional variance decompositions needed from using more subsamples. Compared to the known $f$ case, the upper bounds on the variance of each of the estimators has an additional term of $O(( \frac{n}{\log n})^{-\frac{2\gamma}{2\gamma + d}})$, which also arises due to property (ii) of $\hat{f}$. The lower bounds on the variance of $\hat{\psi}^{\mathrm{MC}}_{k_1, k_2}$ and $\hat{\psi}^{\mathrm{IF}}_{k_1, k_2}$ follow from the same steps in the known $f$ case since $\hat{f}$ is bounded.

We next describe the optimal resolutions of the estimators in the $\frac{\alpha + \beta}{2} < \frac{d}{4}$ regime. Since the additional $O(( \frac{n}{\log n})^{-\frac{\gamma}{2\gamma + d}})$ terms in the bias and the $O(( \frac{n}{\log n})^{-\frac{2\gamma}{2\gamma + d}})$ terms in the variance are negligible in this regime when $\gamma$ is sufficiently large, we arrive at the following corollary which is the same as the known $f$ case (i.e., Corollaries 1 and 7 in the main text) in the $\frac{\alpha + \beta}{2} < \frac{d}{4}$ regime.

\begin{cor} \label{cor: unknown f}
If $\frac{\alpha + \beta}{2} < \frac{d}{4}$ and $\gamma$ is sufficiently large, then
\begin{enumerate}
    \item $\hat{\psi}^{\mathrm{INT}}_{k_1, k_2}$ is minimax rate optimal when choosing $k_1 \wedge k_2 \asymp n^{\frac{2d}{2\alpha + 2\beta + d}}$.
    \item $\hat{\psi}^{\mathrm{MC}}_{k_1, k_2}$ cannot be minimax rate optimal for any choice of $k_1, k_2$. The best rate of estimation of $\psi(\bbP)$ for $\hat{\psi}^{\mathrm{MC}}_{k_1, k_2}$ is $n^{-\frac{3\alpha + 3\beta}{\alpha + \beta + d}}$
    which occurs when choosing $k_\ell \asymp  n^{\frac{3d}{2\alpha + 2\beta + 2d}} $ for $\ell = 1, 2$.
    \item $\hat{\psi}^{\mathrm{NR}}_{k}$ is minimax rate optimal when choosing $k \asymp n^{\frac{2d}{2\alpha + 2\beta + d}}$.
    \item $\hat{\psi}^{\mathrm{IF}}_{k_1, k_2}$ is minimax rate optimal when choosing $k_1 \vee k_2 \asymp n^{\frac{2d}{2\alpha + 2\beta + d}}$ and $k_1 \wedge k_2 \lesssim n $.
\end{enumerate}
\end{cor}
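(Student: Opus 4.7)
The plan is to reduce the unknown-$f$ problem to the corresponding known-$f$ analyses (Corollaries \ref{cor: opt known f double} and \ref{cor: opt known f nr}) by verifying that the extra $\gamma$-dependent terms contributed by estimating $f$ are asymptotically negligible. First, I would make the ``sufficiently large $\gamma$'' hypothesis quantitative: the minimax target in this regime is $n^{-\frac{4(\alpha+\beta)}{2\alpha+2\beta+d}}$ (and $n^{-\frac{3(\alpha+\beta)}{\alpha+\beta+d}}$ for the MC estimator), so I would require $\gamma$ large enough that $\left(\frac{n}{\log n}\right)^{-\frac{2\gamma}{2\gamma+d}}$ is $o$ of both, which is easily seen to amount to an explicit inequality of the form $\gamma > g(\alpha,\beta)$ compatible with $\Theta_g$.

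Second, I would substitute this condition into the bias/variance bounds of Theorem \ref{theorem: plugin unknown f}. Under the chosen bound on $\gamma$, the additional $\left(\frac{n}{\log n}\right)^{-\gamma/(2\gamma+d)}$ bias term and $\left(\frac{n}{\log n}\right)^{-2\gamma/(2\gamma+d)}$ variance term in parts 1a), 2a), 3a), 4a) are dominated by the target MSE. Hence the MSE upper bounds reduce, up to negligible remainders, to those of Theorem \ref{theorem: plugin known f} for the double-split estimators and Theorem \ref{theorem: plugin known f single nr} for $\hat{\psi}^{\mathrm{NR}}_{k}$. Consequently, the optimal resolutions of Corollaries \ref{cor: opt known f double} and \ref{cor: opt known f nr} transfer verbatim, giving rate $n^{-\frac{4(\alpha+\beta)}{2\alpha+2\beta+d}}$ for $\hat{\psi}^{\mathrm{INT}}_{k_1,k_2}$, $\hat{\psi}^{\mathrm{NR}}_{k}$, and $\hat{\psi}^{\mathrm{IF}}_{k_1,k_2}$ at the stated choices of $k_1,k_2,k$. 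For $\hat{\psi}^{\mathrm{MC}}_{k_1,k_2}$, balancing the surviving $(k_1\wedge k_2)^{-2(\alpha+\beta)/d}$ squared-bias against the $k_1 k_2/n^3$ variance (and noting $k_1\vee k_2 / n^2$ is dominated at the optimum) gives $k_\ell \asymp n^{\frac{3d}{2\alpha+2\beta+2d}}$ with rate $n^{-\frac{3(\alpha+\beta)}{\alpha+\beta+d}}$.

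Third, to prove that $\hat{\psi}^{\mathrm{MC}}_{k_1,k_2}$ cannot match the minimax rate, I would combine the bias and variance \emph{lower} bounds from part 2b) of Theorem \ref{theorem: plugin unknown f}. The key observation is that the optimizing $k_\ell \asymp n^{\frac{3d}{2\alpha+2\beta+2d}}$ satisfies $k_\ell \gg n$ precisely when $\frac{\alpha+\beta}{2} < \frac{d}{4}$, i.e., exactly in the regime where the variance lower bound $\mathrm{Var}_\bbP(\hat{\psi}^{\mathrm{MC}}_{k_1,k_2}) \gtrsim k_1 k_2/n^3$ is available. Using this lower bound where $k_1, k_2 \gg n$ and the squared-bias lower bound $(k_1\wedge k_2)^{-2(\alpha+\beta)/d}$ in the complementary regime $k_1\wedge k_2 \ll n^{d/(2\alpha+2\beta)}$, one obtains a two-term MSE lower bound whose infimum over all admissible resolutions is $\asymp n^{-\frac{3(\alpha+\beta)}{\alpha+\beta+d}}$; the comparison $\frac{3(\alpha+\beta)}{\alpha+\beta+d} < \frac{4(\alpha+\beta)}{2\alpha+2\beta+d}$ (equivalent to $2(\alpha+\beta) < d$) certifies strict sub-optimality.

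The main (and fairly mild) obstacle is the intermediate-resolution gap for the MC lower bound, since the available variance lower bound is stated only for $k_1,k_2\gg n$. This is handled as in the known-$f$ analysis: the bias lower bound covers the small-$k$ regime, the variance lower bound covers the large-$k$ regime, and it suffices to note that any $k$ in the intermediate regime gives MSE at least as large as at the endpoints where the two bounds meet, yielding the claimed global lower bound without needing a new variance estimate in that gap.
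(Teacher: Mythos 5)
Your proposal is correct and follows essentially the same route as the paper, which after Theorem~\ref{theorem: plugin unknown f} states precisely this reduction: since the additional $O((n/\log n)^{-\gamma/(2\gamma+d)})$ bias term and $O((n/\log n)^{-2\gamma/(2\gamma+d)})$ variance term are negligible relative to the target rates (both $n^{-\frac{4(\alpha+\beta)}{2\alpha+2\beta+d}}$ and $n^{-\frac{3(\alpha+\beta)}{\alpha+\beta+d}}$ have exponents strictly below $1$ when $2(\alpha+\beta)<d$), the optimal resolution choices transfer directly from Corollaries~\ref{cor: opt known f double} and~\ref{cor: opt known f nr}. One small remark: the ``intermediate-resolution gap'' you flag for the MC lower bound is not actually present, because in this regime $n^{\frac{d}{2\alpha+2\beta}}\gg n$, so the bias lower bound's regime $k_1\wedge k_2\ll n^{d/(2\alpha+2\beta)}$ and the variance lower bound's regime $k_1,k_2\gg n$ together cover all resolutions (if both $k_1,k_2$ are not $\gg n$ then $k_1\wedge k_2\lesssim n\ll n^{d/(2\alpha+2\beta)}$), making the extra endpoint argument unnecessary.
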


We next make a few remarks on the optimal resolution choices:

\begin{remark}
As in the known $f$ case, the lower bounds for the optimal resolutions given in Corollary \ref{cor: unknown f} are indeed necessary. This directly follows from the lower bounds on the bias in Theorem \ref{theorem: plugin unknown f}.
\end{remark}

\begin{remark}
As discussed in Remark 2 in the main text, one may consider letting $k_1 = k_2$ in some settings. If we enforce $k_1 = k_2 = k$,  then $\hat{\psi}^{\mathrm{IF}}_{k_1, k_2}$ cannot be rate optimal for any choice of $k$ when $\frac{\alpha + \beta}{2} < \frac{d}{4}$ and $\gamma$ is sufficiently large. The best rate of estimation of $\psi(\bbP)$ for $\hat{\psi}^{\mathrm{IF}}_{k_1, k_2}$ is $n^{-\frac{3\alpha + 3\beta}{\alpha + \beta + d}}$ which occurs if and only if choosing $k \asymp  n^{\frac{3d}{2\alpha + 2\beta + 2d}} $. 
\end{remark}

Next, we consider using prediction-optimal resolutions. The following theorem establishes that the prediction-optimal resolution choices for $\hat{p}^{(j_1, j_1^{\prime})}_{k_1}$ and $\hat{b}^{(j_2, j_2^{\prime})}_{k_2}$ are the same as in the known $f$ case (i.e., $k_1^{\text{pred}} = c_1 n^\frac{d}{2\alpha + d}$ and $k_2^{\text{pred}} = c_2 n^\frac{d}{2\beta + d}$ respectively, for fixed $c_1, c_2 > 0$).

\begin{theorem} \label{theorem: pred opt unknown f}
    If $\gamma$ is sufficiently large and $k_1 \ll n^{\frac{d}{2\alpha}}$, then
    \begin{align*}
        \sup_{\bbP \in \cP_{(\alpha, \beta)}} \int \left( \E_\bbP\left(\hat{p}^{(j_1, j_1^{\prime})}_{k_1}(\bx) - p(\bx)\right)\right)^2 d\bx & \asymp k_1^{-2\alpha/d}. 
    \end{align*}
    Moreover, for any $k_1 \gg 1$,
    \begin{equation*}
        \sup_{\bbP \in \cP_{(\alpha, \beta)}} \int \Var_{\bbP}(\hat{p}^{(j_1, j_1^{\prime})}_{k_1}(\bx)) d\bx \asymp \frac{k_1}{n}.
    \end{equation*}
    Therefore, the rate of convergence of
    \begin{equation*}
        \sup_{\bbP \in \cP_{(\alpha, \beta)}} \E_{\bbP} \int (\hat{p}^{(j_1, j_1^{\prime})}_{k_1}(\bx) - p(\bx))^2 d\bx
    \end{equation*}
    is minimized if and only if $k_1 = c n^{\frac{d}{2\alpha + d}} $ for fixed $c > 0$.
\end{theorem}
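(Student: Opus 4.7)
\textbf{Proof plan for Theorem \ref{theorem: pred opt unknown f}.}

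The plan is to condition on the auxiliary subsample $\D_{j_1^{\prime}}$ used to build $\hat{f}^{(j_1^{\prime})}$, treat $\hat{f}$ as frozen while analyzing the $\D_{j_1}$-randomness, and then aggregate via the tower/total-variance identity. Since $\D_{j_1}$ is independent of $\D_{j_1^{\prime}}$,
\begin{align*}
\E[\hat{p}_{k_1}^{(j_1, j_1^{\prime})}(\bx) \mid \hat{f}^{(j_1^{\prime})}] = \int p(\by) K_{V_{k_1}}(\by, \bx)\, \frac{f(\by)}{\hat{f}^{(j_1^{\prime})}(\by)}\, d\by = \Pi(p \mid V_{k_1})(\bx) + R(\bx;\hat{f}^{(j_1^{\prime})}),
\end{align*}
with $R(\bx;\hat{f}) := \int p(\by) K_{V_{k_1}}(\by, \bx)\,\frac{f(\by)-\hat{f}(\by)}{\hat{f}(\by)}\,d\by$. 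Using the standard uniform $L^1$-bound $\sup_{\bx} \int |K_{V_{k_1}}(\by, \bx)|\,d\by \lesssim 1$ for compactly supported wavelet projection kernels (a consequence of Property P2), together with $\hat{f} \ge c_1 > 0$, I get $|R(\bx;\hat{f})| \lesssim \|f-\hat{f}\|_\infty$ uniformly in $\bx$, and property (ii) of $\hat{f}$ yields $|\E R(\bx;\hat{f})| \lesssim (n/\log n)^{-\gamma/(2\gamma+d)}$. Combining with the Property P1 bound $\|\Pi(p\mid V_{k_1}) - p\|_\infty \lesssim k_1^{-\alpha/d}$ delivers the upper bound $\int(\E\hat{p}-p)^2 d\bx \lesssim k_1^{-2\alpha/d} + (n/\log n)^{-2\gamma/(2\gamma+d)}$, with the first term dominating once $\gamma$ is large and $k_1 \ll n^{d/(2\alpha)}$. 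For the matching lower bound I would use the canonical worst-case wavelet perturbation: writing $k_1 = 2^{jd}$, take $p(\bx) = p_0 + c \sum_{\boldm \in \mathcal{Z}_j,\, \biota \in \mathcal{I}} 2^{-j(\alpha+d/2)} \Psi^{\biota}_{j\boldm}(\bx)$, with $p_0, c$ chosen so that $p \in H(\alpha, M) \cap [C_1, C_2]$, paired with any compatible $f \in H(\gamma, M)$ (for instance $f \equiv 1$). Orthogonality of the level-$j$ wavelets to $V_{k_1}$ gives $\|p - \Pi(p\mid V_{k_1})\|_2^2 \asymp k_1^{-2\alpha/d}$, and the elementary inequality $(a+b)^2 \ge \tfrac{1}{2} a^2 - b^2$ absorbs the $\hat{f}$-remainder whenever $k_1 \ll n^{d/(2\alpha)}$ and $\gamma$ is sufficiently large.

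For the variance I would apply the law of total variance. The inner conditional variance obeys
\begin{align*}
\Var(\hat{p}(\bx) \mid \hat{f}) \le \frac{1}{n}\, \E\!\left[\frac{A^2 K_{V_{k_1}}^2(\bX, \bx)}{\hat{f}(\bX)^2}\,\Big|\,\hat{f}\right] \lesssim \frac{1}{n} \int K_{V_{k_1}}^2(\by, \bx)\,d\by = \frac{1}{n}\, K_{V_{k_1}}(\bx, \bx),
\end{align*}
using boundedness of $A$, $f$, $1/\hat{f}$ and the reproducing identity $\int K_{V_{k_1}}^2(\by,\bx)\,d\by = K_{V_{k_1}}(\bx, \bx)$. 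Integrating and invoking $\int K_{V_{k_1}}(\bx, \bx)\,d\bx = |\mathcal{Z}_{k_1}| \asymp k_1$ gives the $O(k_1/n)$ contribution. The outer term equals $\Var(R(\bx; \hat{f})) \le \E\|f-\hat{f}\|_\infty^2$, which by a standard Talagrand/Bernstein concentration argument for the compactly supported wavelet density estimator is $O((n/\log n)^{-2\gamma/(2\gamma+d)})$, negligible next to $k_1/n$ once $\gamma$ is large and $k_1 \gg 1$. For the matching variance lower bound I specialize to $f\equiv 1$ and any conditional law of $A\mid\bX$ with $\Var(A\mid\bX=\by)\ge c>0$; the conditional law of total variance then yields $\Var(\hat{p}(\bx)\mid\hat{f}) \ge \frac{c}{n}\int K_{V_{k_1}}^2(\by, \bx)/\hat{f}(\by)^2\,d\by \gtrsim \frac{1}{n}K_{V_{k_1}}(\bx, \bx)$, and integrating produces $\gtrsim k_1/n$. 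Finally, balancing the matching bias$^2 \asymp k_1^{-2\alpha/d}$ and variance $\asymp k_1/n$ rates identifies $k_1 \asymp n^{d/(2\alpha+d)}$ as the unique minimizer of the MISE rate.

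The main obstacle is controlling the second moment of $\|f - \hat{f}\|_\infty$. Assumption (ii) on $\hat{f}$ only supplies its first moment, and the crude pointwise bound $\|f-\hat{f}\|_\infty \le M_2 - M_1$ combined with this only yields $\E\|f-\hat{f}\|_\infty^2 \lesssim (n/\log n)^{-\gamma/(2\gamma+d)}$, which is too weak for the clean $k_1^{-2\alpha/d}$ and $k_1/n$ conclusions. The stronger bound $\E\|f-\hat{f}\|_\infty^2 \lesssim (n/\log n)^{-2\gamma/(2\gamma+d)}$ that renders the remainders genuinely negligible can be obtained, for the wavelet construction of $\hat{f}$ described in the previous subsection, via Talagrand's concentration inequality applied to the class of compactly supported wavelet basis functions; verifying that this refined rate is inherited by the truncated estimator and quantifying exactly how large $\gamma$ must be (in terms of $\alpha$ and $d$) to guarantee both $k_1 \ll n^{d/(2\alpha)}$ and absorption of all $\hat{f}$-remainders is the delicate bookkeeping step that the phrase ``$\gamma$ sufficiently large'' in the statement hides.
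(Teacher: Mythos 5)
Your overall plan — condition on $\D_{j_1^{\prime}}$, write $\E[\hat{p}_{k_1}^{(j_1,j_1^{\prime})}(\bx)\mid\hat{f}] = \Pi(p\mid V_{k_1})(\bx) + R(\bx;\hat{f})$, use $\|R(\cdot;\hat{f})\|_\infty \lesssim \|f-\hat{f}\|_\infty$, control the variance via diagonal/off-diagonal splitting, and use a worst-case wavelet perturbation for the lower bound — is essentially the paper's own route (Lemma~\ref{lem: proj dist} plus the diagonal-term expansion of $\E[\hat{p}^2]$ and Lemmas~\ref{lem: exp pxpy}--\ref{lem: exp kernel single}). Two corrections on where the genuine delicacy actually sits.

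First, your worry about the bias is misplaced. The bias $\E_\bbP[\hat{p}(\bx)] - p(\bx)$ is the expectation over \emph{both} subsamples, so $R$ enters only through $\|\E_{j_1^{\prime}}[R(\cdot;\hat{f})]\|_\infty \lesssim \E_{j_1^{\prime}}[\|f-\hat{f}\|_\infty]$, and squaring \emph{after} the $\hat{f}$-average gives $\left(\E[\|f-\hat{f}\|_\infty]\right)^2 \lesssim (n/\log n)^{-2\gamma/(2\gamma+d)}$. Assumption (ii), the first-moment bound, is already enough here; no second moment is required for the $k_1^{-2\alpha/d}$ conclusion. This is exactly what Lemma~\ref{lem: proj dist} delivers.

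Second, your identification of a gap in the variance is real, but the paper's treatment makes the gap less visible than you suggest, so it is worth being precise. Writing $\Pi_{\hat{f}}(\bx) := \Pi(pf/\hat{f}\mid V_{k_1})(\bx)$, the off-diagonal contribution to $\E_\bbP[\hat{p}(\bx)^2]$ is $\frac{n(n-1)}{n^2}\,\E_{j_1^{\prime}}[\Pi_{\hat{f}}(\bx)^2]$, whereas the paper's displayed identity replaces this with $(1-\tfrac{1}{n})\,p_{k_1}(p(\bx))^2 = (1-\tfrac1n)(\E_{j_1^{\prime}}[\Pi_{\hat{f}}(\bx)])^2$. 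This silently drops the outer-variance term $\Var_{j_1^{\prime}}[\Pi_{\hat{f}}(\bx)]$ from the total-variance decomposition. Controlling that term requires precisely what you flag: a bound on $\E[\|f-\hat{f}\|_\infty^2]$, since $\int \Var_{j_1^{\prime}}[\Pi_{\hat{f}}(\bx)]\,d\bx \lesssim \E_{j_1^{\prime}}[\|f-\hat{f}\|_\infty^2]$. The trivial route through assumption (ii) and boundedness only yields $(n/\log n)^{-\gamma/(2\gamma+d)}$, which does not beat $k_1/n$ uniformly over $k_1 \gg 1$; even the sharper $(n/\log n)^{-2\gamma/(2\gamma+d)}$ that Talagrand's inequality supplies for the specific bounded wavelet estimator of $f$ is only dominated by $k_1/n$ once $k_1 \gtrsim n^{d/(2\gamma+d)}(\log n)^{2\gamma/(2\gamma+d)}$, not for all $k_1 \gg 1$. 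So either the model assumption on $\hat{f}$ should be strengthened to include a matching second-moment (or exponential-moment) bound, or the two-sided variance assertion should be restricted to the range of $k_1$ where the $\hat{f}$-fluctuation is provably dominated; as stated, ``for any $k_1 \gg 1$'' is optimistic. Your Talagrand route is the right fix, and you should pin down the threshold on $k_1$ it actually yields rather than leaving it buried in ``$\gamma$ sufficiently large.''
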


The proof is given in Appendix \ref{sec: proof pred opt unknown f}. The upper bounds on the bias and variance in Theorem \ref{theorem: pred opt unknown f} are rather common for nonparametric regression methods, and, in particular, they are identical to the bounds in the known $f$ case (see Chapter 10 of Härdle et al. \cite{hardle2012wavelets}). The main subtlety is that the bias of $\hat{p}^{(j_1, j_1^{\prime})}_{k_1}$ has an additional term of $O(( \frac{n}{\log n})^{-\frac{\gamma}{2\gamma + d}})$ due to estimating $f$, which we show becomes asymptotically negligible when $\gamma$ is sufficiently large and $k_1$ is sufficiently small. 

Now, we can compare using the optimal resolutions versus prediction-optimal resolutions. The following corollaries establish the necessity to perform undersmoothing in the $\frac{\alpha + \beta}{2} < \frac{d}{4}$ regime when $\gamma$ is sufficiently large. As one may expect, these results are the same as in the known $f$ case (with double sample splitting for $\hat{\psi}^{\mathrm{INT}}_{k_1, k_2}$, $\hat{\psi}^{\mathrm{MC}}_{k_1, k_2}$, and $\hat{\psi}^{\mathrm{IF}}_{k_1, k_2}$) when $\frac{\alpha + \beta}{2} < \frac{d}{4}$. 

\begin{cor} \label{cor: pred unknown f}
Suppose we choose prediction-optimal resolutions $k_1 = k_1^{\text{pred}}$ and $k_2 = k_2^{\text{pred}}$. If $\frac{\alpha + \beta}{2} < \frac{d}{4}$ and $\gamma$ is sufficiently large, then $\hat{\psi}_{k_1^{\text{pred}}, k_2^{\text{pred}}}^{\mathrm{INT}}$, $\hat{\psi}_{k_1^{\text{pred}}, k_2^{\text{pred}}}^{\mathrm{MC}}$, $\hat{\psi}_{k_2^{\text{pred}}}^{\mathrm{NR}}$, and $\hat{\psi}_{k_1^{\text{pred}}, k_2^{\text{pred}}}^{\mathrm{IF}}$ are not minimax rate optimal.
\end{cor}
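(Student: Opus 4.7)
The plan is to leverage the bias lower bounds already established in Theorem \ref{theorem: plugin unknown f} (parts 1b, 2b, 3b, 4b) and compare the resulting mean squared error against the minimax rate $n^{-\frac{4\alpha+4\beta}{2\alpha+2\beta+d}}$. Without loss of generality assume $\alpha \leq \beta$, so $k_1^{\text{pred}} \asymp n^{d/(2\alpha+d)} \geq k_2^{\text{pred}} \asymp n^{d/(2\beta+d)}$, giving $k_1^{\text{pred}} \wedge k_2^{\text{pred}} \asymp n^{d/(2\beta+d)}$ and $k_1^{\text{pred}} \vee k_2^{\text{pred}} \asymp n^{d/(2\alpha+d)}$. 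The case $\beta \leq \alpha$ follows by symmetry.

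First I would verify that the prediction-optimal choices satisfy the hypotheses of the bias lower bounds in Theorem \ref{theorem: plugin unknown f}, namely $k_1 \wedge k_2 \ll n^{d/(2\alpha+2\beta)}$ for parts 1b, 2b, 3b, and $k_1 \vee k_2 \ll n^{d/(2\alpha+2\beta)}$ for part 4b. For the former, this amounts to checking $n^{d/(2\beta+d)} \ll n^{d/(2\alpha+2\beta)}$, which reduces to $\alpha < d/2$; for the latter, checking $n^{d/(2\alpha+d)} \ll n^{d/(2\alpha+2\beta)}$, which reduces to $\beta < d/2$. Both are immediate from $\alpha+\beta < d/2$ in the regime considered. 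Applying the lower bounds thus gives, for some $\bbP \in \cP_{(\alpha,\beta,\gamma)}$,
\begin{equation*}
\bigl|\E_{\bbP}(\hat{\psi}^{\mathrm{INT}}_{k_1^{\text{pred}}, k_2^{\text{pred}}} - \psi(\bbP))\bigr| \;\gtrsim\; n^{-(\alpha+\beta)/(2\beta+d)},
\end{equation*}
and similarly for $\hat{\psi}^{\mathrm{MC}}_{k_1^{\text{pred}}, k_2^{\text{pred}}}$ and $\hat{\psi}^{\mathrm{NR}}_{k_2^{\text{pred}}}$, while $|\E_\bbP(\hat{\psi}^{\mathrm{IF}}_{k_1^{\text{pred}}, k_2^{\text{pred}}} - \psi(\bbP))| \gtrsim n^{-(\alpha+\beta)/(2\alpha+d)}$.

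The conclusion then follows by squaring these bias lower bounds and comparing to the target MSE rate. For the three plug-in-type estimators, I need $\frac{2(\alpha+\beta)}{2\beta+d} < \frac{4(\alpha+\beta)}{2\alpha+2\beta+d}$, which simplifies to $\alpha < \beta + d/2$; since $\alpha < d/2$ in the regime under consideration, this holds strictly. For the first-order estimator, I need $\frac{2(\alpha+\beta)}{2\alpha+d} < \frac{4(\alpha+\beta)}{2\alpha+2\beta+d}$, which simplifies to $\beta < \alpha + d/2$ and again holds since $\beta < d/2$. In either case the squared bias strictly dominates the minimax MSE rate, so the MSE of each estimator at the prediction-optimal resolutions is asymptotically strictly larger than $n^{-\frac{4\alpha+4\beta}{2\alpha+2\beta+d}}$, proving non-optimality.

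The main obstacle, such as it is, is purely bookkeeping: verifying the hypotheses of the bias lower bounds in Theorem \ref{theorem: plugin unknown f} at the prediction-optimal resolutions, and confirming that for $\gamma$ sufficiently large the estimation error from $\hat{f}$ (the $(n/\log n)^{-\gamma/(2\gamma+d)}$ terms appearing in the upper bounds) is negligible relative to $n^{-(\alpha+\beta)/(2\beta+d)}$ and $n^{-(\alpha+\beta)/(2\alpha+d)}$, so that these lower bounds on the bias of $\hat{\psi}$ are not absorbed into the remainder contributed by estimating the covariate density. The latter is ensured by taking $\gamma$ large enough that $\frac{\gamma}{2\gamma+d} > \frac{\alpha+\beta}{2\alpha+d}$, for instance. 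No subtleties arise beyond these exponent comparisons.
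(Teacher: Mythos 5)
Your proof is correct and is essentially the intended argument: the paper does not give an explicit proof of this corollary, stating instead that it follows from the bias lower bounds of Theorem~\ref{theorem: plugin unknown f}, which is exactly what you do. Your verification that the prediction-optimal resolutions satisfy the hypotheses of parts 1b--4b (i.e., $k_{\min}^{\mathrm{pred}}\ll n^{d/(2\alpha+2\beta)}$ reducing to $\alpha < d/2$, and $k_{\max}^{\mathrm{pred}}\ll n^{d/(2\alpha+2\beta)}$ reducing to $\beta < d/2$), and your exponent comparisons ($\alpha<\beta+d/2$ and $\beta<\alpha+d/2$) are all correct, and the concluding observation that the squared bias already dominates the target minimax MSE in each case is the right way to close.

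One small inaccuracy of exposition: the case $\beta\le\alpha$ does \emph{not} literally ``follow by symmetry'' for $\hat{\psi}_{k_2^{\mathrm{pred}}}^{\mathrm{NR}}$, since that estimator is asymmetric in $(\alpha,\beta)$ --- it always uses $k_2^{\mathrm{pred}}\asymp n^{d/(2\beta+d)}$ regardless of which of $\alpha,\beta$ is larger. Fortunately this does not matter: the hypothesis of part~3b is $k\ll n^{d/(2\alpha+2\beta)}$, which with $k=k_2^{\mathrm{pred}}$ reduces to $\alpha<d/2$ (always true here), and the exponent comparison again reduces to $\alpha<\beta+d/2$, which also holds unconditionally in the low-regularity regime since $\alpha<d/2<\beta+d/2$. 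So the argument goes through directly in both orderings; you should just state the NR case separately rather than appealing to symmetry. The rest of the proposal, including the remark that ``$\gamma$ sufficiently large'' is what makes the $\left(n/\log n\right)^{-\gamma/(2\gamma+d)}$ density-estimation remainder negligible relative to $n^{-(\alpha+\beta)/(2\alpha+d)}$ and $n^{-(\alpha+\beta)/(2\beta+d)}$, is already built into the hypotheses of Theorem~\ref{theorem: plugin unknown f} and is correctly identified.
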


\begin{cor} \label{cor: undermoothing unknown f} If $\frac{\alpha + \beta}{2} < \frac{d}{4}$ and $\gamma$ is sufficiently large, then
    \begin{enumerate}
    \item $\hat{\psi}_{k_1, k_2}^{\mathrm{INT}}$ requires undersmoothing $k_1$ and $k_2$ to achieve its best rate for estimating $\psi(\bbP)$ (i.e., the minimax rate in (\ref{eq: minimax rate unknown f})). 
    \item $\hat{\psi}_{k_1, k_2}^{\mathrm{MC}}$ requires undersmoothing $k_1$ and $k_2$ to achieve its best rate for estimating $\psi(\bbP)$ (i.e., $n^{-\frac{3\alpha + 3\beta}{\alpha + \beta + d}}$).
    \item $\hat{\psi}_{k}^{\mathrm{NR}}$ requires undersmoothing $k$ to achieve its best rate for estimating $\psi(\bbP)$ (i.e., the minimax rate in (\ref{eq: minimax rate unknown f})).
    \item $\hat{\psi}_{k_1, k_2}^{\mathrm{IF}}$ requires undersmoothing $k_1$ or $k_2$ (and it does not matter which) to achieve its best rate for estimating $\psi(\bbP)$ (i.e., the minimax rate in (\ref{eq: minimax rate unknown f})).
\end{enumerate}
\end{cor}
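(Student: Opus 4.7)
The plan is to reduce the corollary to a direct comparison of exponents of $n$ between the optimal resolution choices from Corollary \ref{cor: unknown f} and the prediction-optimal rates $k_1^{\text{pred}} \asymp n^{d/(2\alpha+d)}$, $k_2^{\text{pred}} \asymp n^{d/(2\beta+d)}$ established in Theorem \ref{theorem: pred opt unknown f}. Recall that undersmoothing $k_\ell$ simply means choosing $k_\ell$ that grows strictly faster than $k_\ell^{\text{pred}}$. The key structural observation is that the regime $(\alpha+\beta)/2 < d/4$ forces $\alpha, \beta < d/2$, which is precisely what is needed to win each of the exponent comparisons below.

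First, I would verify the relevant exponent inequalities. After clearing denominators, $\tfrac{2d}{2\alpha+2\beta+d} > \tfrac{d}{2\alpha+d}$ is equivalent to $2\alpha + d > 2\beta$, which holds because $2\beta < d$; swapping $\alpha \leftrightarrow \beta$ gives the companion inequality $\tfrac{2d}{2\alpha+2\beta+d} > \tfrac{d}{2\beta+d}$. Similarly, $\tfrac{3d}{2\alpha+2\beta+2d} > \tfrac{d}{2\alpha+d}$ reduces to $4\alpha + d > 2\beta$ (and its $\alpha \leftrightarrow \beta$ analogue), which again follows immediately from $\alpha, \beta < d/2$.

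Next, I would feed these inequalities into each of the four cases in turn. For $\hat{\psi}^{\mathrm{INT}}_{k_1,k_2}$, Corollary \ref{cor: unknown f} forces $k_1 \wedge k_2 \asymp n^{2d/(2\alpha+2\beta+d)}$, so \emph{both} $k_1$ and $k_2$ must exceed this threshold, which strictly dominates both $k_1^{\text{pred}}$ and $k_2^{\text{pred}}$. For $\hat{\psi}^{\mathrm{MC}}_{k_1,k_2}$, both resolutions must be $\asymp n^{3d/(2\alpha+2\beta+2d)}$, again exceeding the respective prediction-optimal scales. For $\hat{\psi}^{\mathrm{NR}}_k$, which uses only $\hat{b}_k$, the relevant benchmark is $k_2^{\text{pred}}$, and the optimal $k \asymp n^{2d/(2\alpha+2\beta+d)}$ exceeds it. For $\hat{\psi}^{\mathrm{IF}}_{k_1,k_2}$, the requirement $k_1 \vee k_2 \asymp n^{2d/(2\alpha+2\beta+d)}$ together with $k_1 \wedge k_2 \lesssim n$ ensures that whichever of the two attains the maximum is undersmoothed, while the other is only constrained to lie in $[1, n]$, exactly matching the ``and it does not matter which'' phrasing. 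The necessity direction (the word ``requires'') then follows from the matching bias lower bounds in Theorem \ref{theorem: plugin unknown f}: any $k_\ell$ chosen below its optimal scale produces a bias strictly larger than the corresponding best rate.

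The main obstacle is minor: one must confirm that the auxiliary $(n/\log n)^{-\gamma/(2\gamma+d)}$ contributions from estimating $f$ (present in both the bias and variance bounds of Theorem \ref{theorem: plugin unknown f}) remain negligible compared to the optimal rates at the relevant resolutions. This is precisely what ``$\gamma$ sufficiently large'' encodes---for any fixed $\alpha, \beta, d$, there is an explicit threshold on $\gamma$ beyond which the $f$-estimation remainder falls strictly below the best attainable $\psi(\bbP)$-estimation rate, so that the bias and variance terms depending only on $k_1, k_2$ govern the analysis. Beyond this bookkeeping step, the proof is purely algebraic, and the real content already lives in Corollary \ref{cor: unknown f} and Theorem \ref{theorem: pred opt unknown f}.
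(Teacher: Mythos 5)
Your proposal is correct and follows the same reasoning the paper relies on (the paper gives no separate proof of this corollary, treating it as a direct exponent comparison between Corollary \ref{cor: unknown f} and Theorem \ref{theorem: pred opt unknown f}). The algebra checks out: $(\alpha+\beta)/2 < d/4$ implies $\alpha, \beta < d/2$, which gives both $\tfrac{2d}{2\alpha+2\beta+d} > \tfrac{d}{2\ell+d}$ and $\tfrac{3d}{2\alpha+2\beta+2d} > \tfrac{d}{2\ell+d}$ for $\ell \in \{\alpha,\beta\}$, so the optimal resolutions strictly dominate the prediction-optimal ones, and the bias lower bounds of Theorem \ref{theorem: plugin unknown f} (valid for resolutions below $n^{d/(2\alpha+2\beta)}$, a range that contains the optimal scale since $\alpha+\beta < d/2$) supply the necessity direction. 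Your closing remark about the $(n/\log n)^{-\gamma/(2\gamma+d)}$ remainder being dominated for $\gamma$ large is the one piece of non-trivial bookkeeping, and you have identified it correctly as the content of the ``$\gamma$ sufficiently large'' hypothesis.
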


\section{Proof of Theorem 1} \label{sec: proof known f double}

Recall from Section 2.1 in the main text that we set the covariate density $f = 1$ for ease of illustration. We will write out the proofs of the upper bounds when $f$ is such that 
\begin{align*}
    (i) & \quad  f(\bx) \in [M_1, M_2] \quad \forall \bx \in [0, 1]^d\\
    (ii) & \quad f \in H(\gamma, M)
\end{align*}
where $\gamma \geq \alpha \vee \beta$ and $M, M_1, M_2 \in \mathbb{R}^{+}$ are known constants. In this case, the approximate wavelet projection estimators of the nuisance functions described in Section 2.2 in the main text now require division by $f(\bX_i)$, i.e. 
\begin{align}
    \hat{p}_{k_1}^{(j)}(\bx) & = \frac{1}{n} \sum_{i \in \mathcal{D}_j} \frac{A_i K_{V_{k_1}}(\bX_i,\bx)}{f(\bX_i)}, \qquad \bx \in [0, 1]^d \label{eq: wavelet estimator of p}\\
    \hat{b}_{k_2}^{(j^{\prime})}(\bx) & = \frac{1}{n} \sum_{i \in \mathcal{D}_{j^{\prime}}} \frac{Y_i K_{V_{k_2}}(\bX_i,\bx)}{f(\bX_i)}, \qquad \bx \in [0, 1]^d \label{eq: wavelet estimator of b}.
\end{align}
Moreover, the estimators of $\psi(\bbP)$ that we analyze are given by\footnote{Although we described the plug-in and first-order estimators in Section 2.2.3 in the main text, we define these estimators here for two reasons. First, $\hat{\psi}_{k_1, k_2}^{\mathrm{INT}}$ involves including $f(\bx)$ in the integral now that we do not enforce $f = 1$. Second, we introduce the convention used for the subsample labels which is used throughout the proofs.}
\begin{align*}
    \hat{\psi}_{k_1, k_2}^{\mathrm{INT}} & = \frac{1}{n}\sum_{i \in \D_3} A_i Y_i -  \int \hat{p}_{k_1}^{(1)}(\bx)\hat{b}_{k_2}^{(2)}(\bx)f(\bx) d\bx \\
    \hat{\psi}^{\mathrm{MC}}_{k_1, k_2} & = \frac{1}{n}\sum_{i \in \D_3} A_i Y_i -  \frac{1}{n}\sum_{i \in \D_4} \hat{p}^{(1)}_{k_1}(\bX_i)\hat{b}^{(2)}_{k_2}(\bX_i) \\
    \hat{\psi}^{\mathrm{IF}}_{k_1, k_2} & = \frac{1}{n} \sum_{i \in \D_3} (A_i - \hat{p}^{(1)}_{k_1}(\bX_i))(Y_i - \hat{b}^{(2)}_{k_2}(\bX_i)).
\end{align*}

\begin{remark} \label{rem:mc double modified}
    One can consider an alternative double sample split Monte Carlo-based plug-in estimator that uses three folds instead of four:
    \begin{equation*}
        \tilde{\psi}^{\mathrm{MC}}_{k_1, k_2}  = \frac{1}{n}\sum_{i \in \D_3} A_i Y_i -  \frac{1}{n}\sum_{i \in \D_3} \hat{p}^{(1)}_{k_1}(\bX_i)\hat{b}^{(2)}_{k_2}(\bX_i).
    \end{equation*}
    The same bias and variance bounds in Theorem 1 hold for this estimator as well. We show this in Section \ref{sec: mc three split}. 
\end{remark}

\subsection{Integral-based plug-in estimator}

\subsubsection{Proof of the upper bound} Let $\bbP \in \cP_{(\alpha,\beta)}$ be arbitrary. \\

\noindent \textbf{Bounding the bias:}

We can express the expectation of $\hat{\psi}_{k_1, k_2}^{\mathrm{INT}}$ by
\begin{align*}
    \E_\bbP(\hat{\psi}_{k_1, k_2}^{\mathrm{INT}}) & = \E_\bbP \left( A Y \right) - \int \Pi(p|V_{k_1})(\bx) \Pi(b|V_{k_2})(\bx) f(\bx) d\bx \\
    & = \E_\bbP\left( A Y \right) - \int ( p(\bx) - \Pi(p|V_{k_1}^{\perp})(\bx) ) ( b(\bx) - \Pi(b|V_{k_2}^{\perp})) f(\bx) d\bx \\
    & = \E_\bbP\left( A Y \right) - \int p(\bx)b(\bx) f(\bx) d\bx -  \int \Pi(p|V_{k_1}^{\perp})(\bx) \Pi(b|V_{k_2}^{\perp}) f(\bx) d\bx   \\ & \quad + \int  b(\bx)f(\bx) \Pi(p|V_{k_1}^{\perp})(\bx)  d\bx +  \int p(\bx)f(\bx) \Pi(b|V_{k_2}^{\perp}) d\bx. 
\end{align*}
By the triangle inequality, the bias is bounded by
\begin{align*}
    \left|\E_\bbP\left(\hat{\psi}_{k_1, k_2}^{\mathrm{INT}} - \psi(\bbP)\right)\right|  & \leq  \left| \int \Pi(p|V_{k_1}^{\perp})(\bx) \Pi(b|V_{k_2}^{\perp}) f(\bx) d\bx \right |  \\
    & \quad + \left| \int  b(\bx) f(\bx) \Pi(p|V_{k_1}^{\perp})(\bx)   d\bx \right|  +  \left| \int p(\bx)f(\bx) \Pi(b|V_{k_2}^{\perp})  d\bx  \right|.
\end{align*}

We first bound $\left| \int \Pi(p|V_{k_1}^{\perp})(\bx) \Pi(b|V_{k_2}^{\perp}) f(\bx) d\bx \right |$. By boundedness of $f$, 
\begin{equation*}
    \left| \int \Pi(p|V_{k_1}^{\perp})(\bx) \Pi(b|V_{k_2}^{\perp}) f(\bx) d\bx \right | \lesssim \| \Pi(p|V_{k_1}^{\perp}) \|_{\infty} \| \Pi(b|V_{k_2}^{\perp}) \|_{\infty}. 
\end{equation*}
Since $p$ is $\alpha$-Hölder and $b$ is $\beta$-Hölder, we have by Property P1 (Appendix \ref{sec: wavelets}) 
\begin{align*}
    \| \Pi(p|V_{k_1}^{\perp}) \|_{\infty} \| \Pi(b|V_{k_2}^{\perp}) \|_{\infty}  \lesssim k_1^{-\alpha/d}k_2^{-\beta/d} .
\end{align*}

To bound $\left| \int  b(\bx) f(\bx) \Pi(p|V_{k_1}^{\perp})(\bx)   d\bx \right|$, notice that by orthogonality of $\Pi(bf | V_{k_1})$ and $\Pi(p|V_{k_1}^{\perp})$ with respect to the Lebesgue measure 
\begin{align*}
    \int  b(\bx) f(\bx) \Pi(p|V_{k_1}^{\perp})(\bx)   d\bx  & =  \int  \Pi(bf | V_{k_1})(\bx) \Pi(p|V_{k_1}^{\perp})(\bx)  d\bx   \\
    & \quad +  \int  \Pi(bf | V_{k_1}^{\perp})(\bx)\Pi(p|V_{k_1}^{\perp})(\bx)  d\bx  \\
    & = \int  \Pi(bf | V_{k_1}^{\perp})(\bx)\Pi(p|V_{k_1}^{\perp})(\bx)  d\bx.  
\end{align*}
Since $p$ is $\alpha$-Hölder and $bf$ is $\beta$-Hölder, we have by Property P1 (Appendix \ref{sec: wavelets})
\begin{align*}
    \left| \int  \Pi(bf | V_{k_1}^{\perp})(\bx)\Pi(p|V_{k_1}^{\perp})(\bx)  d\bx  \right| & \leq  \| \Pi(bf | V_{k_1}^{\perp}) \|_{\infty} \| \Pi(p|V_{k_1}^{\perp}) \|_{\infty}  \\
    & \lesssim k_1^{-(\alpha + \beta)/d}.
\end{align*}

By the same arguments, we can bound $\left| \int p(\bx)f(\bx) \Pi(b|V_{k_2}^{\perp})  d\bx  \right|$ by
\begin{equation*}
    \left| \int p(\bx)f(\bx) \Pi(b|V_{k_2}^{\perp})  d\bx  \right| \lesssim k_{2}^{-(\alpha + \beta)/d}.
\end{equation*}

Therefore, we can bound the bias by
\begin{equation*}
    \left|\E_\bbP\left(\hat{\psi}_{k_1, k_2}^{\mathrm{INT}} - \psi(\bbP)\right)\right| \lesssim (k_1 \wedge k_2)^{-(\alpha + \beta)/d}.
\end{equation*}

\noindent \textbf{Bounding the variance:}

We can express the variance of $\hat{\psi}_{k_1, k_2}^{\mathrm{INT}}$ by
\begin{align*}
    \Var_\bbP(\hat{\psi}_{k_1, k_2}^{\mathrm{INT}}) & = \Var_\bbP\left( \frac{1}{n}\sum_{i \in \D_3} A_i Y_i\right) + \Var_\bbP\left( \int \hat{p}^{(1)}_{k_1}(\bx)\hat{b}^{(2)}_{k_2}(\bx) f(\bx) d\bx  \right) \\
    & = \frac{1}{n}\Var_\bbP(AY) +  \Var_{\bbP,1} \left[ \E_{\bbP,2}\left(\int \hat{p}^{(1)}_{k_1}(\bx)\hat{b}^{(2)}_{k_2}(\bx) f(\bx) d\bx\right)  \right] \\ & \quad +  \E_{\bbP,1} \left[ \Var_{\bbP,2}\left( \int \hat{p}^{(1)}_{k_1}(\bx)\hat{b}^{(2)}_{k_2}(\bx) f(\bx) d\bx  \right)  \right].
\end{align*}

The second term in the above expression can be bounded as follows:
\begin{align} \label{eq: var exp int known f}
    & \Var_{\bbP,1} \left[ \E_{\bbP,2}\left(\int \hat{p}^{(1)}_{k_1}(\bx)\hat{b}^{(2)}_{k_2}(\bx) f(\bx) d\bx\right)  \right] \nonumber \\
    & \quad = \Var_{\bbP,1} \left[ \int \hat{p}_{k_1}^{(1)}(\bx) \Pi(b|V_{k_2})(\bx) f(\bx) d\bx \right] \nonumber \\
    & \quad = \E_{\bbP,1} \left[ \left( \int \hat{p}_{k_1}^{(1)}(\bx) \Pi(b|V_{k_2})(\bx) f(\bx) d\bx \right)^2 \right] \nonumber \\
    & \quad \quad - \left[ \E_{\bbP,1}  \left( \int \hat{p}_{k_1}^{(1)}(\bx) \Pi(b|V_{k_2})(\bx) f(\bx) d\bx \right) \right]^2 \nonumber \\
    & \quad  = \E_{\bbP,1} \left[  \iint \hat{p}_{k_1}^{(1)}(\bx)\hat{p}_{k_1}^{(1)}(\by) \Pi(b|V_{k_2})(\bx)\Pi(b|V_{k_2})(\by) f(\bx)f(\by) d\bx d\by  \right] \nonumber \\ & \quad \quad  - \left[  \int \E_{\bbP,1}  \left( \hat{p}_{k_1}^{(1)}(\bx)\right) \Pi(b|V_{k_2})(\bx) f(\bx) d\bx \right]^2 \nonumber \\
    & \quad  =  \iint  \E_{\bbP,1} \left[\hat{p}_{k_1}^{(1)}(\bx)\hat{p}_{k_1}^{(1)}(\by) \right] \Pi(b|V_{k_2})(\bx)\Pi(b|V_{k_2})(\by) f(\bx)f(\by) d\bx d\by  \nonumber \\
    & \quad \quad  - \left[  \int \Pi(p|V_{k_1})(\bx) \Pi(b|V_{k_2})(\bx) f(\bx) d\bx \right]^2 \nonumber \\
    & \quad  =  \iint  \E_{\bbP,1} \left[\hat{p}_{k_1}^{(1)}(\bx)\hat{p}_{k_1}^{(1)}(\by) \right] \Pi(b|V_{k_2})(\bx)\Pi(b|V_{k_2})(\by) f(\bx)f(\by) d\bx d\by  \nonumber \\
    & \quad \quad  -  \iint \Pi(p|V_{k_1})(\bx)\Pi(p|V_{k_1})(\by) \Pi(b|V_{k_2})(\bx)\Pi(b|V_{k_2})(\by) f(\bx)f(\by) d\bx d\by  \nonumber \\
    & \quad \lesssim  \iint \left| \E_{\bbP,1} \left[\hat{p}_{k_1}^{(1)}(\bx)\hat{p}_{k_1}^{(1)}(\by) \right] - \Pi(p|V_{k_1})(\bx)\Pi(p|V_{k_1})(\by)  \right| d\bx d\by \nonumber \\
    & \quad  \lesssim \frac{1}{n} + \iint \frac{1}{n} \E_{\bbP} \left[ \left| K_{V_{k_1}}(\bX, \bx)K_{V_{k_1}}(\bX, \by) \right| \right] d\bx d\by \quad (\text{by Lemma \ref{lem: exp pxpy}}) \nonumber \\
    & \quad  \lesssim \frac{1}{n} \quad (\text{by Lemma \ref{lem: exp kernel}}).
\end{align}

The third term can be bounded as follows:
\begin{align} \label{eq: exp var int known f}
    & \E_{\bbP,1} \left[ \Var_{\bbP,2}\left( \int \hat{p}^{(1)}_{k_1}(\bx)\hat{b}^{(2)}_{k_2}(\bx) f(\bx) d\bx  \right)  \right] \nonumber \\
    & \quad = \E_{\bbP,1} \left[ \E_{\bbP,2}\left[ \left(\int \hat{p}^{(1)}_{k_1}(\bx)\hat{b}^{(2)}_{k_2}(\bx) f(\bx) d\bx \right)^2 \right] \right] \nonumber \\
    & \quad \quad - \E_{\bbP,1} \left[ \left( \E_{\bbP,2}\left[ \int \hat{p}^{(1)}_{k_1}(\bx)\hat{b}^{(2)}_{k_2}(\bx) f(\bx) d\bx  \right] \right)^2 \right] \nonumber \\
    & \quad = \E_{\bbP, 1, 2} \left[  \int \hat{p}_{k_1}^{(1)}(\bx)\hat{p}_{k_1}^{(1)}(\by)\hat{b}_{k_2}^{(2)}(\bx)\hat{b}_{k_2}^{(2)}(\by) f(\bx)f(\by)d\bx d\by   \right] \nonumber \\ & \quad \quad - \E_{\bbP,1} \left[ \left( \int \hat{p}_{k_1}^{(1)}(\bx)\Pi(b|V_{k_2})(\bx) f(\bx) d\bx \right)^2  \right] \nonumber \\
    & \quad =  \int \E_{\bbP,1} \left(\hat{p}_{k_1}^{(1)}(\bx)\hat{p}_{k_1}^{(1)}(\by)\right)\E_{\bbP,2}\left(\hat{b}_{k_2}^{(2)}(\bx)\hat{b}_{k_2}^{(2)}(\by)\right) f(\bx)f(\by) d\bx d\by  \nonumber \\
    & \quad \quad  -  \int \E_{\bbP,1} \left(\hat{p}_{k_1}^{(1)}(\bx)\hat{p}_{k_1}^{(1)}(\by)\right)\Pi(b|V_{k_2})(\bx)\Pi(b|V_{k_2})(\by) f(\bx)f(\by)d\bx d\by  \nonumber \\
    & \quad  \lesssim \int \left| \begin{array}{c} \E_{\bbP,1} \left(\hat{p}_{k_1}^{(1)}(\bx)\hat{p}_{k_1}^{(1)}(\by)\right) \\ \times \left\{ \E_{\bbP,2}\left(\hat{b}_{k_2}^{(2)}(\bx)\hat{b}_{k_2}^{(2)}(\by)\right) - \Pi(b|V_{k_2})(\bx)\Pi(b|V_{k_2})(\by) \right\} \end{array}\right| d\bx d\by \nonumber \\
    & \quad  \lesssim \iint \begin{array}{c}\left( 1 + \frac{1}{n}\E_{\bbP} \left[ \left| K_{V_{k_1}}(\bX, \bx)K_{V_{k_1}}(\bX, \by) \right| \right] \right) \\ \times \left( \frac{1}{n} + \frac{1}{n}\E_{\bbP} \left[ \left| K_{V_{k_2}}(\bX, \bx)K_{V_{k_2}}(\bX, \by) \right| \right] \right) \end{array} d\bx d\by \quad (\text{by Lemma \ref{lem: exp pxpy}}) \nonumber \\
    & \quad  \lesssim \frac{1}{n} + \frac{k_1 \wedge k_2}{n^2} \quad (\text{by Lemma \ref{lem: exp kernel}}).
\end{align}

\subsubsection{Proof of the lower bound}

Let $\bbP \in \cP_{(\alpha,\beta)}$ and $\bX \sim \text{Uniform}([0,1]^d)$. Recall from the proof of the upper bound that we can express the bias of $\hat{\psi}_{k_1, k_2}^{\mathrm{INT}}$ by 
\begin{align*}
    \E_\bbP\left(\hat{\psi}_{k_1, k_2}^{\mathrm{INT}} - \psi(\bbP)\right)   & = - \int  \Pi(p|V_{k_1}^{\perp})(\bx) \Pi(b|V_{k_2}^{\perp})(\bx) d\bx   \\
    & \quad +  \int  b(\bx) \Pi(p|V_{k_1}^{\perp})(\bx)   d\bx  +  \int p(\bx) \Pi(b|V_{k_2}^{\perp})(\bx)  d\bx .
\end{align*}

Suppose that $k_1 \leq k_2$ for $n$ sufficiently large. It follows from orthogonality of $\Pi(b | V_{k_1})$ and $\Pi(p|V_{k_1}^{\perp})$ with respect to the Lebesgue measure that 
\begin{align*}
    \int  b(\bx) \Pi(p | V_{k_1}^{\perp})(\bx)    d\bx &  = \int  \Pi(p | V_{k_1}^{\perp})(\bx)\Pi(b | V_{k_1})(\bx)  d\bx + \int  \Pi(p | V_{k_1}^{\perp})(\bx)\Pi(b | V_{k_1}^{\perp})(\bx)  d\bx \\
    &  = \int  \Pi(p | V_{k_1}^{\perp})(\bx)\Pi(b | V_{k_1}^{\perp})(\bx)  d\bx.
\end{align*}
Since $V_{k_1} \subseteq V_{k_2}$, we have by orthogonality of $\Pi(p | V_{k_1})$ and $\Pi(b|V_{k_2}^{\perp})$ with respect to the Lebesgue measure that
\begin{align*}
    \int p(\bx) \Pi(b|V_{k_2}^{\perp})(\bx)  d\bx  & =   \int  \Pi(p | V_{k_1})(\bx) \Pi(b|V_{k_2}^{\perp})(\bx)  d\bx  + \int  \Pi(p | V_{k_1}^{\perp})(\bx)\Pi(b|V_{k_2}^{\perp})(\bx)  d\bx  \\
    & = \int  \Pi(p | V_{k_1}^{\perp})(\bx)\Pi(b|V_{k_2}^{\perp})(\bx)  d\bx.  
\end{align*}
Thus,
\begin{equation*}
    \left| \E_\bbP(\hat{\psi}^{\mathrm{INT}}_{k_1, k_2}) - \psi(\bbP) \right|   = \left|   \int  \Pi(p | V_{k_1}^{\perp})(\bx)\Pi(b | V_{k_1}^{\perp})(\bx)  d\bx\right|.
\end{equation*}

We can choose suitable $p$ and $b$ as follows. Adopting the notation introduced in Appendix \ref{sec: wavelets}, for $\bx \in [0, 1]^d$ let 
\begin{align} 
    p(\bx) & := \epsilon \sum_{l = J_0}^{\infty} \sum_{\boldm \in \mathcal{Z}_l} \sum_{\biota \in \mathcal{I}} 2^{-l(\alpha + \frac{d}{2})} \Psi_{j\boldm}^{\biota}(\bx) \label{eq: p lb} \\
    b(\bx) & := \epsilon \sum_{l = J_0}^{\infty}  \sum_{\boldm \in \mathcal{Z}_l} \sum_{\biota \in \mathcal{I}} 2^{-l(\beta + \frac{d}{2})} \Psi_{j\boldm}^{\biota}(\bx) \label{eq: b lb}
\end{align}
where $\epsilon > 0$. Then, $\| p \|_{B_{\infty \infty}^{\alpha}} = \epsilon$ and $\| b \|_{B_{\infty \infty}^{\beta}} = \epsilon$, which implies that $p \in H(\alpha, M)$ and $b \in H(\beta, M)$ for $\epsilon$ sufficiently small. 

By Parseval's identity,
\begin{align*}
    & \left|   \int  \Pi(p | V_{k_1}^{\perp})(\bx)\Pi(b | V_{k_1}^{\perp})(\bx)  d\bx\right|\\
    & \quad = \epsilon^2 \left| \int \left( \sum_{l \geq j_1} \sum_{\boldm \in \mathcal{Z}_l} \sum_{\biota \in \mathcal{I}}  2^{-l(\alpha + \frac{d}{2})} \Psi_{j\boldm}^{\biota}(\bx) \right) \left( \sum_{l \geq j_1} \sum_{\boldm \in \mathcal{Z}_l} \sum_{\biota \in \mathcal{I}} 2^{-l(\beta + \frac{d}{2})} \Psi_{j\boldm}^{\biota}(\bx) \right) d\bx \right| \\
    & \quad =  \epsilon^2 \sum_{l \geq j_1} \sum_{\boldm \in \mathcal{Z}_l} \sum_{\biota \in \mathcal{I}} 2^{-l(\alpha + \beta + d)}  \\
    & \quad \gtrsim  \sum_{l \geq j_1} 2^{-l(\alpha + \beta )} \\
    & \quad \gtrsim 2^{-j_1(\alpha + \beta )} = k_1^{-(\alpha + \beta) / d}. 
\end{align*}

The case of $k_1 \geq k_2$ can be seen to follow in a same manner. By orthogonality of $\Pi(p | V_{k_2})$ and $\Pi(b | V_{k_2}^{\perp})$ with respect to the Lebesgue measure, 
\begin{equation*}
    \int  p(\bx) \Pi(b | V_{k_2}^{\perp})(\bx)  d\bx = \int  \Pi(p | V_{k_2}^{\perp})(\bx) \Pi(b | V_{k_2}^{\perp})(\bx)  d\bx.
\end{equation*}
Since $V_{k_2} \subseteq V_{k_1}$, we have by orthogonality of $\Pi(b | V_{k_2}^{\perp})$ and $\Pi(p | V_{k_1}^{\perp})$ with respect to the Lebesgue measure that
\begin{equation*}
    \int  b(\bx) \Pi(p | V_{k_1}^{\perp})(\bx)    d\bx = \int  \Pi(p | V_{k_1}^{\perp})(\bx)\Pi(b | V_{k_2}^{\perp})(\bx)  d\bx.  
\end{equation*}
Therefore,
\begin{equation*}
    \left| \E_\bbP(\hat{\psi}^{\mathrm{INT}}_{k_1, k_2}) - \psi(\bbP) \right|   = \left|   \int \Pi(p | V_{k_2}^{\perp})(\bx) \Pi(b | V_{k_2}^{\perp})(\bx)   d\bx\right|  \gtrsim k_2^{-(\alpha + \beta ) / d}
\end{equation*}
for our choice of $p$ and $b$.

\subsection{Monte Carlo-based plug-in estimator}

\subsubsection{Proof of the upper bound}
Let $\bbP \in \cP_{(\alpha,\beta,\gamma)}$ be arbitrary. First, note that the expectation of $\hat{\psi}_{k_1, k_2}^{\mathrm{MC}}$ is the same as that of $\hat{\psi}_{k_1, k_2}^{\mathrm{INT}}$. Therefore, 
\begin{equation*}
    \left|\E_\bbP\left(\hat{\psi}_{k_1, k_2}^{\mathrm{MC}} - \psi(\bbP)\right)\right| \lesssim k_1^{-(\alpha + \beta)/d} \vee k_2^{-(\alpha + \beta)/d}.
\end{equation*}

To bound the variance, we consider the decomposition
\begin{align*}
    \Var_\bbP(\hat{\psi}_{k_1, k_2}^{\mathrm{MC}}) & = \Var_\bbP\left( \frac{1}{n}\sum_{i \in \D_3} A_i Y_i\right) + \Var_\bbP\left( \frac{1}{n}\sum_{i \in \D_4} \hat{p}^{(1)}_{k_1}(\bX_i)\hat{b}^{(2)}_{k_2}(\bX_i)  \right) \\
    & = \frac{1}{n}\Var_\bbP(AY) +  \Var_{\bbP,1} \left[ \E_{\bbP,2,4}(\hat{p}^{(1)}_{k_1}(\bX)\hat{b}^{(2)}_{k_2}(\bX))  \right]  \\
    & \quad +  \E_{\bbP,1} \left[ \Var_{\bbP,2,4}\left( \frac{1}{n}\sum_{i \in \D_4} \hat{p}^{(1)}_{k_1}(\bX_i)\hat{b}^{(2)}_{k_2}(\bX_i)  \right)  \right].
\end{align*}

The second term can be bounded by
\begin{equation*}
    \Var_{\bbP,1} \left[ \E_{\bbP,2,4}(\hat{p}^{(1)}_{k_1}(\bX)\hat{b}^{(2)}_{k_2}(\bX))  \right] = \Var_{\bbP,1} \left[ \E_{\bbP,2} \left( \int \hat{p}^{(1)}_{k_1}(\bx) \hat{b}^{(2)}_{k_2}(\bx)f(\bx) d\bx\right) \right]  \lesssim \frac{1}{n}
\end{equation*}
where the inequality follows (\ref{eq: var exp int known f}) (in the derivation of the variance of $\hat{\psi}^{\mathrm{INT}}_{k_1, k_2}$). 

To bound the third term, we further decompose the variance as follows:
\begin{align*}
    & \E_{\bbP,1} \left[ \Var_{\bbP,2,4}\left( \frac{1}{n}\sum_{i \in \D_4} \hat{p}^{(1)}_{k_1}(\bX_i)\hat{b}^{(2)}_{k_2}(\bX_i)  \right)  \right] \\
    & \quad = \E_{\bbP,1} \left[ \E_{\bbP,2}  \left[ \Var_{\bbP,4}\left( \frac{1}{n}\sum_{i \in \D_4} \hat{p}^{(1)}_{k_1}(\bX_i)\hat{b}^{(2)}_{k_2}(\bX_i)  \right)  \right] \right] \\ & \quad \quad  + \E_{\bbP,1} \left[ \Var_{\bbP,2} \left[ \E_{\bbP,4}\left( \frac{1}{n}\sum_{i \in \D_4} \hat{p}^{(1)}_{k_1}(\bX_i)\hat{b}^{(2)}_{k_2}(\bX_i)  \right)  \right] \right] \\
    & \quad = \frac{1}{n} \E_{\bbP,1,2}  \left[ \Var_{\bbP,4}\left( \hat{p}^{(1)}_{k_1}(\bX)\hat{b}^{(2)}_{k_2}(\bX)  \right)  \right]  \\ & \quad \quad  + \E_{\bbP,1} \left[ \Var_{\bbP,2} \left[ \E_{\bbP,4}\left( \hat{p}^{(1)}_{k_1}(\bX)\hat{b}^{(2)}_{k_2}(\bX)  \right)  \right] \right].
\end{align*}

We have that
\begin{align*}
    \E_{\bbP,1} \left[ \Var_{\bbP,2} \left[ \E_{\bbP,4}\left( \hat{p}^{(1)}_{k_1}(\bX)\hat{b}^{(2)}_{k_2}(\bX)  \right)  \right] \right] & = \E_{\bbP,1} \left[ \Var_{\bbP,2} \left[ \int \hat{p}^{(1)}_{k_1}(\bx)\hat{b}^{(2)}_{k_2}(\bx) f(\bx) d\bx   \right] \right] \\
    & \lesssim \frac{1}{n} + \frac{k_1 \wedge k_2}{n^2}
\end{align*}
where the inequality follows from (\ref{eq: exp var int known f}) (in the derivation of the variance of $\hat{\psi}^{\mathrm{INT}}_{k_1, k_2}$). Moreover, 
\begin{align*}
    \frac{1}{n} \E_{\bbP,1,2}  \left[ \Var_{\bbP,4}\left( \hat{p}^{(1)}_{k_1}(\bX)\hat{b}^{(2)}_{k_2}(\bX)  \right)  \right] & \leq \frac{1}{n} \E_{\bbP,1,2}  \left[ \E_{\bbP,4}\left[ \left( \hat{p}^{(1)}_{k_1}(\bX)\hat{b}^{(2)}_{k_2}(\bX)  \right)^2 \right]  \right] \\
    & \lesssim \frac{1}{n} + \frac{k_1}{n^2} + \frac{k_2}{n^2} +  \frac{k_1k_2}{n^3} \quad (\text{by Lemma \ref{lem: nuisance function bounds}})
\end{align*}
Thus,
\begin{equation*}
    \Var_\bbP(\hat{\psi}_{k_1, k_2}^{\mathrm{MC}}) \lesssim \frac{1}{n}  +  \frac{k_1 \vee k_2}{n^2} + \frac{k_1k_2}{n^3}.
\end{equation*}

\subsubsection{Proof of the lower bound}

Since the expectation of $\hat{\psi}_{k_1, k_2}^{\mathrm{MC}}$ is the same as that of $\hat{\psi}_{k_1, k_2}^{\mathrm{INT}}$, we conclude that the lower bound of the bias holds when $\bX \sim \text{Uniform}([0,1]^d)$ and $\bbP \in \cP_{(\alpha,\beta)}$ where $p$ and $b$ are defined as in (\ref{eq: p lb}) and (\ref{eq: b lb}). For this choice of $\bbP$, we derive the lower bound on the variance when $k_1, k_2 \gg n$. We have that 
\begin{align*}
    \Var_\bbP(\hat{\psi}^{\mathrm{MC}}_{k_1, k_2}) & \geq \Var_{\bbP, 1, 2, 4}\left( \frac{1}{n}\sum_{i \in \D_4} \hat{p}^{(1)}_{k_1}(\bX_i)\hat{b}^{(2)}_{k_2}(\bX_i)  \right) \\
    & \geq \E_{\bbP,1, 2} \left[ \Var_{\bbP,4}\left( \frac{1}{n}\sum_{i \in \D_4} \hat{p}^{(1)}_{k_1}(\bX_i)\hat{b}^{(2)}_{k_2}(\bX_i)  \right)  \right] \\
    & = \frac{1}{n}\E_{\bbP,1, 2} \left[ \Var_{\bbP,4}\left(  \hat{p}^{(1)}_{k_1}(\bX)\hat{b}^{(2)}_{k_2}(\bX)  \right)  \right]. 
\end{align*}

It remains to show
\begin{equation} \label{eq: if exp var lb}
    \E_{\bbP, 1, 2} \left[\Var_{\bbP, 4} \left( \hat{p}^{(1)}_{k_1}(\bX) \hat{b}^{(2)}_{k_2}(\bX)  \right) \right] \gtrsim \frac{k_1k_2}{n^2}.
\end{equation}
We have that
\begin{align*}
    \E_{\bbP, 1, 2} \left[\Var_{\bbP, 4} \left( \hat{p}^{(1)}_{k_1}(\bX) \hat{b}^{(2)}_{k_2}(\bX)  \right) \right] & =   \E_{\bbP, 1, 2} \left[\E_{\bbP, 4} \left[ \left( \hat{p}^{(1)}_{k_1}(\bX) \hat{b}^{(2)}_{k_2}(\bX) \right)^2  \right] \right] \\
    & \quad -   \E_{\bbP, 1, 2} \left[\E_{\bbP, 4} \left[ \hat{p}^{(1)}_{k_1}(\bX) \hat{b}^{(2)}_{k_2}(\bX)   \right]^2 \right].
\end{align*}
By Lemma \ref{lem: nuisance function bounds}, we have for this choice of $\bbP$
\begin{equation*}
    \E_{\bbP, 1, 2} \left[\E_{\bbP, 4} \left[ \left( \hat{p}^{(1)}_{k_1}(\bX) \hat{b}^{(2)}_{k_2}(\bX) \right)^2  \right] \right] \gtrsim \frac{k_1k_2}{n^2}.
\end{equation*}
Moreover,
\begin{align*}
    \E_{\bbP, 1, 2} \left[\E_{\bbP, 4} \left[ \hat{p}^{(1)}_{k_1}(\bX) \hat{b}^{(2)}_{k_2}(\bX)   \right]^2 \right] & = \E_{\bbP, 1, 2} \left[ \iint \hat{p}^{(1)}_{k_1}(\bx)\hat{p}^{(1)}_{k_1}(\by) \hat{b}^{(2)}_{k_2}(\bx)\hat{b}^{(2)}_{k_2}(\by) d\bx d\by \right] \\
    & =  \iint \E_{\bbP, 1} \left[\hat{p}^{(1)}_{k_1}(\bx)\hat{p}^{(1)}_{k_1}(\by) \right] \E_{\bbP, 2} \left[ \hat{b}^{(2)}_{k_2}(\bx)\hat{b}^{(2)}_{k_2}(\by) \right] d\bx d\by.
\end{align*}
It follows from Lemma \ref{lem: exp pxpy} that for any $\bx, \by \in [0, 1]^d$ 
\begin{align*}
    \E_{\bbP, 1} \left[\hat{p}^{(1)}_{k_1}(\bx)\hat{p}^{(1)}_{k_1}(\by) \right] & =  \Pi(p|V_{k_1})(\bx) \Pi(p|V_{k_1})(\by) + S_{n,k_1}(\bx, \by) \\
    \E_{\bbP, 2} \left[ \hat{b}^{(2)}_{k_2}(\bx)\hat{b}^{(2)}_{k_2}(\by) \right] & =  \Pi(b|V_{k_2})(\bx) \Pi(b|V_{k_2})(\by) + S^{\prime}_{n,k_2}(\bx, \by) 
\end{align*}
where there exists some $C_1, C_2 \in \mathbb{R}^+$ such that for all $\bx, \by \in [0, 1]^d$
\begin{align*}
    | S_{n,k_1}(\bx, \by) | & \leq C_1 \left( \frac{1}{n} + \frac{1}{n} \E_{\bbP} \left[ \left| K_{V_{k_1}}(\bX, \bx)K_{V_{k_1}}(\bX, \by) \right| \right] \right) \\
    | S^{\prime}_{n,k_2}(\bx, \by) | & \leq C_2 \left( \frac{1}{n} + \frac{1}{n} \E_{\bbP} \left[ \left| K_{V_{k_2}}(\bX, \bx)K_{V_{k_2}}(\bX, \by) \right| \right] \right).
\end{align*}
Therefore,
\begin{align*}
    & \E_{\bbP, 1, 2} \left[\E_{\bbP, 4} \left[ \hat{p}^{(1)}_{k_1}(\bX) \hat{b}^{(2)}_{k_2}(\bX)   \right]^2 \right] \\
    & \quad \lesssim 1 + \iint | S_{n,k_1}(\bx, \by)  S^{\prime}_{n,k_2}(\bx, \by) | d\bx d\by \\
    & \quad \lesssim 1 + \iint \begin{array}{c}\left( \frac{1}{n} + \frac{1}{n} \E_{\bbP} \left[ \left| K_{V_{k_1}}(\bX, \bx)K_{V_{k_1}}(\bX, \by) \right| \right] \right) \\ \times \left( \frac{1}{n} + \frac{1}{n} \E_{\bbP} \left[ \left| K_{V_{k_2}}(\bX, \bx)K_{V_{k_2}}(\bX, \by) \right| \right] \right) \end{array} d\bx d\by \\
    & \quad \lesssim 1 + \frac{k_1 \wedge k_2}{n^2}
\end{align*}
which establishes (\ref{eq: if exp var lb}).

\subsubsection{Bounds for the three split estimator} \label{sec: mc three split}

Here, we show that the same bias and variance bounds hold for the Monte Carlo-based plug-in estimator with three splits:
\begin{equation*}
        \tilde{\psi}^{\mathrm{MC}}_{k_1, k_2}  = \frac{1}{n}\sum_{i \in \D_3} A_i Y_i -  \frac{1}{n}\sum_{i \in \D_3} \hat{p}^{(1)}_{k_1}(\bX_i)\hat{b}^{(2)}_{k_2}(\bX_i).
    \end{equation*}
Let $\mathbb{P} \in \mathcal{P}_{(\alpha, \beta)}$ be arbitrary. The bias of the three fold estimator $\tilde{\psi}^{\mathrm{MC}}_{k_1, k_2}$ is identical to that of the four fold estimator $\hat{\psi}^{\mathrm{MC}}_{k_1, k_2}$ by linearity of expectation. Moreover, the same upper bound on the variance immediately holds, as 
\begin{align*}
    \Var_{\mathbb{P}}(\tilde{\psi}^{\mathrm{MC}}_{k_1, k_2}) & \leq 2 \Var_{\mathbb{P}}\left(\frac{1}{n}\sum_{i \in \D_3} A_i Y_i\right) +   2\Var_{\mathbb{P}}\left(\frac{1}{n}\sum_{i \in \D_3} \hat{p}^{(1)}_{k_1}(\bX_i)\hat{b}^{(2)}_{k_2}(\bX_i) \right) \\
    & = 2\Var_{\mathbb{P}}(\hat{\psi}^{\mathrm{MC}}_{k_1, k_2}) \\
    & \lesssim \frac{1}{n} + \frac{k_1 \vee k_2}{n^2} + \frac{k_1k_2}{n^3}.
\end{align*}
Last, we establish the lower bound on the variance of $\tilde{\psi}^{\mathrm{MC}}_{k_1, k_2}$ when $\bX \sim \text{Uniform}([0,1]^d)$ and $\bbP \in \cP_{(\alpha,\beta)}$ where $p$ and $b$ are defined as in (\ref{eq: p lb}) and (\ref{eq: b lb}). We have that 
\begin{align*}
    \Var_{\mathbb{P}}(\tilde{\psi}^{\mathrm{MC}}_{k_1, k_2}) & = \Var_{\mathbb{P}}\left(\frac{1}{n}\sum_{i \in \D_3} A_i Y_i\right) + \Var_{\mathbb{P}}\left(\frac{1}{n}\sum_{i \in \D_3} \hat{p}^{(1)}_{k_1}(\bX_i)\hat{b}^{(2)}_{k_2}(\bX_i) \right) \\
    & \quad - 2 \Cov_{\mathbb{P}}\left(\frac{1}{n}\sum_{i \in \D_3} A_i Y_i, \frac{1}{n}\sum_{i \in \D_3} \hat{p}^{(1)}_{k_1}(\bX_i)\hat{b}^{(2)}_{k_2}(\bX_i)\right)
\end{align*}
We will show that the second term dominates when $k_1, k_2 \gg n$. Recall that the first term is $O(1/n)$. To bound the third term, we have that
\begin{align*}
    & \left| \Cov_{\mathbb{P}}\left(\frac{1}{n}\sum_{i \in \D_3} A_i Y_i, \frac{1}{n}\sum_{i \in \D_3} \hat{p}^{(1)}_{k_1}(\bX_i)\hat{b}^{(2)}_{k_2}(\bX_i)\right) \right| \\
    & \quad \leq \sqrt{\Var_{\mathbb{P}}\left( \frac{1}{n}\sum_{i \in \D_3} A_i Y_i \right) \Var_{\mathbb{P}} \left( \frac{1}{n}\sum_{i \in \D_3} \hat{p}^{(1)}_{k_1}(\bX_i)\hat{b}^{(2)}_{k_2}(\bX_i) \right)}
\end{align*}
where recall from our analysis of the four fold estimator that
\begin{align*}
    \Var_{\mathbb{P}}\left( \frac{1}{n}\sum_{i \in \D_3} A_i Y_i \right) & \lesssim \frac{1}{n} \\
    \Var_{\mathbb{P}} \left( \frac{1}{n}\sum_{i \in \D_3} \hat{p}^{(1)}_{k_1}(\bX_i)\hat{b}^{(2)}_{k_2}(\bX_i) \right) & \lesssim \frac{k_1k_2}{n^3}.
\end{align*}
Therefore, the third term is $O(\sqrt{\frac{k_1k_2}{n^4}})$. For the second term, recall that we showed in our analysis of the four fold estimator that for our choice of $\mathbb{P}$,
\begin{equation*}
    \Var\left(\frac{1}{n}\sum_{i \in \D_3} \hat{p}^{(1)}_{k_1}(\bX_i)\hat{b}^{(2)}_{k_2}(\bX_i) \right) \gtrsim \frac{k_1k_2}{n^3}
\end{equation*}
and $\left|\E_\bbP\left(\tilde{\psi}^{\mathrm{MC}}_{k_1, k_2} - \psi(\bbP)\right)\right| \gtrsim (k_1 \wedge k_2)^{-(\alpha + \beta)/d}$. Since $k_1, k_2 \gg n$, it follows that $\frac{k_1 k_2}{n^3} \gg \sqrt{\frac{k_1k_2}{n^4}}$, i.e., the second term dominates the variance. Thus, the same lower bound holds,
\begin{equation*}
    \Var_\bbP(\tilde{\psi}^{\mathrm{MC}}_{k_1, k_2}) \gtrsim \frac{k_1k_2}{n^3}.
\end{equation*}

\subsection{First-order bias-corrected estimator}

\subsubsection{Proof of the upper bound} Let $\bbP \in \cP_{(\alpha,\beta)}$ be arbitrary.\\ 

\noindent \textbf{Bounding the bias:}

We can express the expectation of $\hat{\psi}_{k_1, k_2}^{\mathrm{IF}}$ by
\begin{align*}
    \E_\bbP(\hat{\psi}^{\mathrm{IF}}_{k_1, k_2}) & = \E_{\bbP, 1, 2}\left[  \E_{\bbP,3} ((A - \hat{p}^{(1)}_{k_1}(\bX))(Y - \hat{b}^{(2)}_{k_2}(\bX)))  \right] \\
    & = \E_{\bbP, 1, 2}\left[  \E_{\bbP,3} \left(  \begin{array}{c}(A - p(\bX) + p(\bX) - \hat{p}^{(1)}_{k_1}(\bX)) \\ \times (Y - b(\bX) + b(\bX)  - \hat{b}^{(2)}_{k_2}(\bX)) \end{array} \right)  \right] \\
    & = \E_{\bbP, 1, 2}\left[  \E_{\bbP, 3} \left(  (A-p(\bX))(Y - b(\bX))  \right) \right] \\
    & \quad + \E_{\bbP, 1, 2}\left[  \E_{\bbP, 3} \left(  (p(\bX) - \hat{p}^{(1)}_{k_1}(\bX))(b(\bX) - \hat{b}^{(2)}_{k_2}(\bX))  \right) \right] \\ & \quad + \E_{\bbP, 1, 2}\left[  \E_{\bbP, 3} \left(  (p(\bX) - \hat{p}^{(1)}_{k_1}(\bX))(Y - b(\bX))  \right) \right] \\
    & \quad + \E_{\bbP, 1, 2}\left[  \E_{\bbP, 3} \left(  (A - p(\bX))(b(\bX) - \hat{b}^{(2)}_{k_2}(\bX))  \right) \right] \\
    & = \psi(\bbP) + \E_{\bbP, 1, 2} \left[  \E_{\bbP, 3} \left(  (p(\bX) - \hat{p}^{(1)}_{k_1}(\bX))(b(\bX) - \hat{b}^{(2)}_{k_2}(\bX)) \right) \right] \\
    & = \psi(\bbP) + \E_{\bbP, 1, 2} \left[ \int (p(\bx) - \hat{p}^{(1)}_{k_1}(\bx))(b(\bx) - \hat{b}^{(2)}_{k_2}(\bx)) f(\bx) d\bx \right] \\
    & = \psi(\bbP) + \int \left( p(\bx) - \Pi(p | V_{k_1})(\bx)\right)\left( b(\bx) - \Pi(b | V_{k_2})(\bx)\right) f(\bx) d\bx \\
    & = \psi(\bbP) + \int \Pi(p | V_{k_1}^{\perp})(\bx) \Pi(b | V_{k_2}^{\perp})(\bx) f(\bx) d\bx.
\end{align*}

Suppose without loss of generality that $k_1 \leq k_2$ for $n$ sufficiently large. Then
\begin{align*}
    \E_\bbP(\hat{\psi}^{\mathrm{IF}}_{k_1, k_2}) - \psi(\bbP) & =  \int p(\bx)f(\bx) \Pi(b | V_{k_2}^{\perp})(\bx)  d\bx  + \int \Pi(p | V_{k_1})(\bx) f(\bx) \Pi(b | V_{k_2}^{\perp})(\bx)  d\bx. 
\end{align*}
Recall from the derivation of the bias of $\hat{\psi}_{k_1, k_2}^{\mathrm{INT}}$ that
\begin{equation*}
    \left| \int p(\bx)f(\bx) \Pi(b | V_{k_2}^{\perp})(\bx)  d\bx \right| \lesssim k_2^{-(\alpha + \beta)/ d}.
\end{equation*}
Moreover, by orthogonality of $\Pi\left(\Pi(p | V_{k_1}) f | V_{k_2}^{\perp}\right)$ and $\Pi(b | V_{k_2}^{\perp})$ with respect to the Lebesgue measure
\begin{align*}
    \int \Pi(p | V_{k_1})(\bx) f(\bx) \Pi(b | V_{k_2}^{\perp})(\bx)  d\bx &  = \int \Pi\left(\Pi(p | V_{k_1}) f | V_{k_2}^{\perp}\right)(\bx) \Pi(b | V_{k_2}^{\perp})(\bx)  d\bx \\
    & \quad + \int \Pi\left(\Pi(p | V_{k_1}) f | V_{k_2}\right)(\bx) \Pi(b | V_{k_2}^{\perp})(\bx)  d\bx \\
    & = \int \Pi\left(\Pi(p | V_{k_1}) f | V_{k_2}^{\perp}\right)(\bx) \Pi(b | V_{k_2}^{\perp})(\bx)  d\bx.
\end{align*}
Since $p$ is $\alpha$-Hölder, then so is $\Pi(p | V_{k_1})$. Recalling that $b$ is $\beta$-Hölder, it follows from Property P1 (Appendix \ref{sec: wavelets}) that
\begin{align*}
    \left| \int \Pi\left(\Pi(p | V_{k_1}) f | V_{k_2}^{\perp}\right)(\bx) \Pi(b | V_{k_2}^{\perp})(\bx)  d\bx \right| & \leq \| \Pi\left(\Pi(p | V_{k_1}) f | V_{k_2}^{\perp}\right) \|_{\infty} \| \Pi(b | V_{k_2}^{\perp}) \|_{\infty} \\
    & \lesssim k_2^{-(\alpha + \beta)/d}.
\end{align*}
Therefore, we can bound the bias by
\begin{equation*}
    \left|\E_\bbP\left(\hat{\psi}_{k_1, k_2}^{\mathrm{IF}} - \psi(\bbP)\right)\right| \lesssim k_2^{-\frac{\alpha + \beta}{d}}. 
\end{equation*}

\noindent \textbf{Bounding the variance:}
    
We will show that
\begin{align}
    \Var_{\bbP,1,2} \left[ \E_{\bbP,3}\left( \hat{\psi}^{\mathrm{IF}}_{k_1, k_2}\right)  \right] & \lesssim \frac{1}{n} + \frac{k_1 \wedge k_2}{n^2} \label{var pt2 cov if} \\
    \E_{\bbP,1,2} \left[ \Var_{\bbP,3} \left( \hat{\psi}^{\mathrm{IF}}_{k_1, k_2}\right)  \right] & \lesssim \frac{1}{n} + \frac{k_1 \vee k_2}{n^2} + \frac{k_1k_2}{n^3} \label{eq: var pt1 cov if}.
\end{align}

We first show (\ref{var pt2 cov if}). We have that
\begin{align*}
    \Var_{\bbP,1,2} \left[ \E_{\bbP,3}\left( \hat{\psi}^{\mathrm{IF}}_{k_1, k_2}\right)  \right] & = \Var_{\bbP,1,2} \left[ \E_{\bbP,3}\left( \hat{\psi}^{\mathrm{IF}}_{k_1, k_2}\right) - \psi(\bbP)  \right]  \\
    & = \Var_{\bbP,1,2} \left[  \E_{\bbP,3} \left((A - \hat{p}^{(1)}_{k_1}(\bX))(Y - \hat{b}^{(2)}_{k_2}(\bX))\right) - \psi(\bbP)  \right]  \\
    & = \Var_{\bbP,1,2} \left[ \left( \E_{\bbP,3} (p(\bX) - \hat{p}^{(1)}_{k_1}(\bX))(b(\bX) - \hat{b}^{(2)}_{k_2}(\bX))\right)  \right] \\
    & = \E_{\bbP,1,2} \left[ \left( \E_{\bbP,3} \left((p(\bX) - \hat{p}^{(1)}_{k_1}(\bX))(b(\bX) - \hat{b}^{(2)}_{k_2}(\bX))\right)\right)^2  \right] \\
    & \quad - \left( \E_{\bbP,1,2} \left[  \E_{\bbP,3} \left((p(\bX) - \hat{p}^{(1)}_{k_1}(\bX))(b(\bX) - \hat{b}^{(2)}_{k_2}(\bX))\right)  \right] \right)^2
\end{align*}
where the third line follows from the same arguments the bias derivation. Observe that
\begin{align*}
    & \left( \E_{\bbP,1,2} \left[  \E_{\bbP,3} \left((p(\bX) - \hat{p}^{(1)}_{k_1}(\bX))(b(\bX) - \hat{b}^{(2)}_{k_2}(\bX))\right)  \right] \right)^2 \\
    & \quad = \iint \Pi(p|V_{k_1}^{\perp})(\bx) \Pi(p|V_{k_1}^{\perp})(\by)\Pi(b|V_{k_2}^{\perp})(\bx) \Pi(b|V_{k_2}^{\perp})(\by)  f(\bx) f(\by)  d\bx d\by.
\end{align*}
Further,
\begin{align*}
    & \E_{\bbP,1,2} \left[ \left( \E_{\bbP,3} \left((p(\bX) - \hat{p}^{(1)}_{k_1}(\bX))(b(\bX) - \hat{b}^{(2)}_{k_2}(\bX))\right)\right)^2  \right] \\
    & \quad = \E_{\bbP,1,2} \left[ \left( \int (p(\bx) - \hat{p}^{(1)}_{k_1}(\bx))(b(\bx) - \hat{b}^{(2)}_{k_2}(\bx)) f(\bx) d\bx \right)^2  \right] \\
    & \quad = \E_{\bbP,1,2} \left[ \begin{array}{c} \iint (p(\bx) - \hat{p}^{(1)}_{k_1}(\bx))(p(\by) - \hat{p}^{(1)}_{k_1}(\by))(b(\bx) \\- \hat{b}^{(2)}_{k_2}(\bx))(b(\by) - \hat{b}^{(2)}_{k_2}(\by)) f(\bx)f(\by) d\bx d\by   \end{array}\right] \\
    & \quad =   \iint \begin{array}{c}\E_{\bbP,1} \left[(p(\bx) - \hat{p}^{(1)}_{k_1}(\bx)) (p(\by) - \hat{p}^{(1)}_{k_1}(\by)) \right] \\ \times \E_{\bbP,2} \left[(b(\bx) - \hat{b}^{(2)}_{k_2}(\bx))(b(\by) - \hat{b}^{(2)}_{k_2}(\by))\right] \end{array} f(\bx)f(\by) d\bx d\by.   
\end{align*}
It follows from Lemma \ref{lem: exp pxpy} that for any $\bx, \by \in [0, 1]^d$
\begin{align*}
    \E_{\bbP,1} \left[(p(\bx) - \hat{p}^{(1)}_{k_1}(\bx)) (p(\by) - \hat{p}^{(1)}_{k_1}(\by)) \right] & =  \Pi(p|V_{k_1}^{\perp})(\bx) \Pi(p|V_{k_1}^{\perp})(\by) + S_{n,k_1}(\bx, \by) \\
    \E_{\bbP,2} \left[(b(\bx) - \hat{b}^{(2)}_{k_2}(\bx)) (b(\by) - \hat{b}^{(2)}_{k_2}(\by)) \right] & =  \Pi(b|V_{k_2}^{\perp})(\bx) \Pi(b|V_{k_2}^{\perp})(\by) + S^{\prime}_{n,k_2}(\bx, \by) 
\end{align*}
where there exists $C_1, C_2 \in \mathbb{R}^+$ such that for all $\bx, \by \in [0, 1]^d$
\begin{align*} 
    | S_{n,k_1}(\bx, \by) | & \leq C_1 \left( \frac{1}{n} + \frac{1}{n} \E_{\bbP} \left[ \left| K_{V_{k_1}}(\bX, \bx)K_{V_{k_1}}(\bX, \by) \right| \right] \right) \\
    | S^{\prime}_{n,k_2}(\bx, \by) | & \leq C_2 \left( \frac{1}{n} + \frac{1}{n} \E_{\bbP} \left[ \left| K_{V_{k_2}}(\bX, \bx)K_{V_{k_2}}(\bX, \by) \right| \right] \right).
\end{align*}
Then,
\begin{align*}
    \Var_{\bbP,1,2} \left[ \E_{\bbP,3}\left( \hat{\psi}^{\mathrm{IF}}_{k_1, k_2}\right)  \right] & \leq  \left| \iint \Pi(p|V_{k_1}^{\perp})(\bx) \Pi(p|V_{k_1}^{\perp})(\by) S^{\prime}_{n,k_2}(\bx, \by) f(\bx)f(\by) d\bx d\by \right| \\ 
    & \quad + \left| \iint \Pi(b|V_{k_2}^{\perp})(\bx) \Pi(b|V_{k_2}^{\perp})(\by) S_{n,k_1}(\bx, \by) f(\bx)f(\by) d\bx d\by \right| \\
    & \quad + \left| \iint S_{n,k_1}(\bx, \by) S^{\prime}_{n,k_2}(\bx, \by) f(\bx)f(\by) d\bx d\by \right| \\ 
    & \leq   \iint \| \Pi(p|V_{k_1}^{\perp}) \|_{\infty}^2 |S^{\prime}_{n,k_2}(\bx, \by)| f(\bx)f(\by) d\bx d\by  \\ 
    & \quad +  \iint \| \Pi(b|V_{k_2}^{\perp}) \|_{\infty}^2 | S_{n,k_1}(\bx, \by)| f(\bx)f(\by) d\bx d\by  \\
    & \quad + \iint |S_{n,k_1}(\bx, \by) S^{\prime}_{n,k_2}(\bx, \by)| f(\bx)f(\by) d\bx d\by  \\ 
    & \lesssim  k_1^{- \frac{2\alpha}{d}}  \iint  | S^{\prime}_{n,k_2}(\bx, \by)|  d\bx d\by  \\ 
    & \quad + k_2^{- \frac{2\beta}{d}}  \iint |S_{n,k_1}(\bx, \by)| d\bx d\by \\
    & \quad +  \iint |S_{n,k_1}(\bx, \by) S^{\prime}_{n,k_2}(\bx, \by)| d\bx d\by .
\end{align*}
Moreover, it follows from Lemma \ref{lem: exp kernel} that
\begin{align*}
    k_1^{- \frac{2\alpha}{d}}  \iint  | S^{\prime}_{n,k_2}(\bx, \by)|  d\bx d\by & \lesssim \frac{1}{n} \\
    k_2^{- \frac{2\beta}{d}}  \iint |S_{n,k_1}(\bx, \by)| d\bx d\by & \lesssim  \frac{1}{n} \\
    \iint |S_{n,k_1}(\bx, \by) S^{\prime}_{n,k_2}(\bx, \by)| d\bx d\by & \lesssim \frac{1}{n^2} + \frac{k_1 \wedge k_2}{n^2}
\end{align*}
which establishes (\ref{var pt2 cov if}).

Next, we show (\ref{eq: var pt1 cov if}). We have that
\begin{align*}
    & \E_{\bbP,1,2} \left[ \Var_{\bbP,3} \left( \hat{\psi}^{\mathrm{IF}}_{k_1, k_2}\right)  \right] \\
    & \quad = \frac{1}{n} \E_{\bbP,1,2} \left[ \Var_{\bbP,3} \left( (A - \hat{p}^{(1)}_{k_1}(\bX))(Y - \hat{b}^{(2)}_{k_2}(\bX)) \right) \right] \\
    & \quad \leq \frac{1}{n} \E_{\bbP,1,2} \left[ \E_{\bbP,3} \left( (A - \hat{p}^{(1)}_{k_1}(\bX))^2(Y - \hat{b}^{(2)}_{k_2}(\bX))^2 \right) \right] \\
    & \quad = \frac{1}{n} \E_{\bbP,1,2} \left[ \E_{\bbP,3} \left( (A^2 - 2A \hat{p}^{(1)}_{k_1}(\bX) + \hat{p}^{(1)}_{k_1}(\bX)^2)(Y^2 - 2Y\hat{b}^{(2)}_{k_2}(\bX) + \hat{b}^{(2)}_{k_2}(\bX)^2) \right) \right] \\
    & \quad = \frac{1}{n} \E_{\bbP,3} \left( \begin{array}{c} \left(A^2 - 2A \Pi(p | V_{k_1})(\bX) + \E_{\bbP,1} \left[\hat{p}^{(1)}_{k_1}(\bX)^2 \right]\right) \\ \times  \left(Y^2 - 2Y\Pi(b | V_{k_2})(\bX) + \E_{\bbP,2}\left[\hat{b}^{(2)}_{k_2}(\bX)^2\right]\right) \end{array}  \right).
\end{align*}
Since $A$, $Y$, $\bX$ are bounded random variables and $\Pi(p | V_{k_1})$ and $\Pi(b | V_{k_2})$ are bounded, 
\begin{align*}
    \E_{\bbP,1,2} \left[ \Var_{\bbP,3} \left( \hat{\psi}^{\mathrm{IF}}_{k_1, k_2}\right)  \right] & \lesssim \frac{1}{n} + \frac{1}{n} \E_{\bbP,3} \left( \E_{\bbP,1}\left[\hat{p}^{(1)}_{k_1}(\bX)^2\right] \right) +  \frac{1}{n} \E_{\bbP,3} \left( \E_{\bbP,2}\left[\hat{b}^{(2)}_{k_2}(\bX)^2\right] \right) \\ & \quad + \frac{1}{n} \E_{\bbP,3} \left(  \E_{\bbP,1}\left[\hat{p}^{(1)}_{k_1}(\bX)^2\right] \E_{\bbP,2}\left[\hat{b}^{(2)}_{k_2}(\bX)^2\right] \right).
\end{align*}
It then follows from Lemma \ref{lem: nuisance function bounds} that
\begin{equation*}
    \E_{\bbP,1,2} \left[ \Var_{\bbP,3} \left( \hat{\psi}^{\mathrm{IF}}_{k_1, k_2}\right)  \right] \lesssim \frac{1}{n} + \frac{k_1 \vee k_2}{n^2} + \frac{k_1k_2}{n^3}.
\end{equation*}

\subsubsection{Proof of the lower bound} 

Let $\bbP \in \cP_{(\alpha,\beta)}$ and $\bX \sim \text{Uniform}([0,1]^d)$. \\

\noindent \textbf{Bounding the bias:}

Recall from the proof of the upper bound that
\begin{equation*}
    \E_\bbP(\hat{\psi}^{\mathrm{IF}}_{k_1, k_2}) - \psi(\bbP)   =  \int \Pi(p | V_{k_1}^{\perp})(\bx)\Pi(b | V_{k_2}^{\perp})(\bx) d\bx.
\end{equation*}
Suppose without loss of generality that $k_1 \leq k_2$ for $n$ sufficiently large. We choose $p$ and $b$ as given in (\ref{eq: p lb}) and (\ref{eq: b lb}), and we follow the same approach used in the proof of the lower bound of the bias of $\hat{\psi}_{k_1, k_2}^{\mathrm{INT}}$. That is,  by Parseval's identity,
\begin{align*}
    & \left|   \int  \Pi(p | V_{k_1}^{\perp})(\bx)\Pi(b | V_{k_2}^{\perp})(\bx)  d\bx\right|\\
    & \quad = \epsilon^2 \left| \int \begin{array}{c} \left( \sum_{l \geq j_1} \sum_{\boldm \in \mathcal{Z}_l} \sum_{\biota \in \mathcal{I}}  2^{-l(\alpha + \frac{d}{2})} \Psi_{j\boldm}^{\biota}(\bx) \right) \\ \times \left( \sum_{l \geq j_2} \sum_{\boldm \in \mathcal{Z}_l} \sum_{\biota \in \mathcal{I}} 2^{-l(\beta + \frac{d}{2})} \Psi_{j\boldm}^{\biota}(\bx) \right) \end{array} d\bx \right| \\
    & \quad = \epsilon^2 \left| \int \begin{array}{c} \left( \sum_{l \geq j_1} \sum_{\boldm \in \mathcal{Z}_l} \sum_{\biota \in \mathcal{I}}  2^{-l(\alpha + \frac{d}{2})} \Psi_{j\boldm}^{\biota}(\bx) \right) \\ \times \left( \sum_{l \geq j_1} \sum_{\boldm \in \mathcal{Z}_l} \sum_{\biota \in \mathcal{I}} 2^{-l(\beta + \frac{d}{2})} I(l \geq j_2) \Psi_{j\boldm}^{\biota}(\bx) \right) \end{array} d\bx \right| \\
    & \quad =  \epsilon^2 \sum_{l \geq j_1} \sum_{\boldm \in \mathcal{Z}_l} \sum_{\biota \in \mathcal{I}} 2^{-l(\alpha + \beta + d)} I(l \geq j_2)  \\
    & \quad =  \epsilon^2 \sum_{l \geq j_2} \sum_{\boldm \in \mathcal{Z}_l} \sum_{\biota \in \mathcal{I}} 2^{-l(\alpha + \beta + d)}  \\
    & \quad \gtrsim  \sum_{l \geq j_2} 2^{-l(\alpha + \beta )} \\
    & \quad \gtrsim 2^{-j_2(\alpha + \beta )} = k_2^{-(\alpha + \beta) / d}. 
\end{align*}

\noindent \textbf{Bounding the variance:}

Next, we show $\Var_\bbP(\hat{\psi}^{\mathrm{IF}}_{k_1,k_2}) \gtrsim \frac{k_1k_2}{n^3}$ by showing $\E_{\bbP, 1, 2} \left[ \Var_{\bbP, 3}(\hat{\psi}^{\mathrm{IF}}_{k_1,k_2})  \right] \gtrsim  \frac{k_1k_2}{n^3}$. We have that
\begin{align*}
    & \E_{\bbP, 1, 2} \left[ \Var_{\bbP, 3}(\hat{\psi}^{\mathrm{IF}}_{k_1,k_2})  \right] \\
    & \quad = \frac{1}{n}  \E_{\bbP, 1, 2} \left[ \Var_{\bbP, 3} \left( (A - \hat{p}^{(1)}_{k_1}(\bX))(Y - \hat{b}^{(2)}_{k_2}(\bX))  \right) \right] \\
    & \quad = \frac{1}{n}  \E_{\bbP, 1, 2} \left[\Var_{\bbP, 3} \left( AY - Y\hat{p}^{(1)}_{k_1}(\bX) - A\hat{b}^{(2)}_{k_2}(\bX) + \hat{p}^{(1)}_{k_1}(\bX) \hat{b}^{(2)}_{k_2}(\bX)  \right) \right] \\
    & \quad \geq \frac{1}{n}  \E_{\bbP, 1, 2} \left[\Var_{\bbP, 3} \left( AY - Y\hat{p}^{(1)}_{k_1}(\bX) - A\hat{b}^{(2)}_{k_2}(\bX)   \right) \right] \\
    & \quad \quad + \frac{1}{n}  \E_{\bbP, 1, 2} \left[\Var_{\bbP, 3} \left( \hat{p}^{(1)}_{k_1}(\bX) \hat{b}^{(2)}_{k_2}(\bX)  \right) \right] \\
    & \quad \quad - \frac{2}{n}  \E_{\bbP, 1, 2} \left[\left( \Var_{\bbP, 3} \left( AY - Y\hat{p}^{(1)}_{k_1}(\bX) - A\hat{b}^{(2)}_{k_2}(\bX)   \right) \Var_{\bbP, 3} \left( \hat{p}^{(1)}_{k_1}(\bX) \hat{b}^{(2)}_{k_2}(\bX)  \right)\right)^{1/2} \right] \\
    & \quad \geq \frac{1}{n}  \E_{\bbP, 1, 2} \left[\Var_{\bbP, 3} \left( \hat{p}^{(1)}_{k_1}(\bX) \hat{b}^{(2)}_{k_2}(\bX)  \right) \right] \\
    & \quad \quad - \frac{2}{n}  \E_{\bbP, 1, 2} \left[\left( \Var_{\bbP, 3} \left( AY - Y\hat{p}^{(1)}_{k_1}(\bX) - A\hat{b}^{(2)}_{k_2}(\bX)   \right) \Var_{\bbP, 3} \left( \hat{p}^{(1)}_{k_1}(\bX) \hat{b}^{(2)}_{k_2}(\bX)  \right)\right)^{1/2} \right] \\
    & \quad \geq \frac{1}{n}  \E_{\bbP, 1, 2} \left[\Var_{\bbP, 3} \left( \hat{p}^{(1)}_{k_1}(\bX) \hat{b}^{(2)}_{k_2}(\bX)  \right) \right] \\
    & \quad \quad -   \frac{2}{n} \left(\begin{array}{c} \E_{\bbP, 1, 2} \left[ \Var_{\bbP, 3} \left( AY - Y\hat{p}^{(1)}_{k_1}(\bX) - A\hat{b}^{(2)}_{k_2}(\bX)    \right) \right] \\ \times  \E_{\bbP, 1, 2} \left[ \Var_{\bbP, 3} \left( \hat{p}^{(1)}_{k_1}(\bX) \hat{b}^{(2)}_{k_2}(\bX)  \right) \right] \end{array}\right)^{1/2}.
\end{align*}
It then suffices to show that
\begin{align}
    \E_{\bbP, 1, 2} \left[\Var_{\bbP, 3} \left( \hat{p}^{(1)}_{k_1}(\bX) \hat{b}^{(2)}_{k_2}(\bX)  \right) \right] & \asymp \frac{k_1k_2}{n^2} \label{eq: if known f var lb helper1} \\
    \E_{\bbP, 1, 2} \left[ \Var_{\bbP, 3} \left( AY - Y\hat{p}^{(1)}_{k_1}(\bX) - A\hat{b}^{(2)}_{k_2}(\bX)    \right) \right] & \lesssim \frac{k_1 \vee k_2}{n} \label{eq: if known f var lb helper2}.
\end{align}
The bound in (\ref{eq: if known f var lb helper1}) can be seen to follow from the same steps used in proving (\ref{eq: if exp var lb}) (appearing in the derivation of the lower bound of the variance of $\hat{\psi}_{k_1, k_2}^{\mathrm{MC}}$). For the sake of completeness, we write out the proof of (\ref{eq: if known f var lb helper1}).  We have that
\begin{align*}
    \E_{\bbP, 1, 2} \left[\Var_{\bbP, 3} \left( \hat{p}^{(1)}_{k_1}(\bX) \hat{b}^{(2)}_{k_2}(\bX)  \right) \right] & =   \E_{\bbP, 1, 2} \left[\E_{\bbP, 3} \left[ \left( \hat{p}^{(1)}_{k_1}(\bX) \hat{b}^{(2)}_{k_2}(\bX) \right)^2  \right] \right] \\
    & \quad -   \E_{\bbP, 1, 2} \left[\E_{\bbP, 3} \left[ \hat{p}^{(1)}_{k_1}(\bX) \hat{b}^{(2)}_{k_2}(\bX)   \right]^2 \right].
\end{align*}
By Lemma \ref{lem: nuisance function bounds},
\begin{equation*}
    \E_{\bbP, 1, 2} \left[\E_{\bbP, 3} \left[ \left( \hat{p}^{(1)}_{k_1}(\bX) \hat{b}^{(2)}_{k_2}(\bX) \right)^2  \right] \right] \asymp \frac{k_1k_2}{n^2}.
\end{equation*}
Moreover,
\begin{align*}
     & \E_{\bbP, 1, 2} \left[\E_{\bbP, 3} \left[ \hat{p}^{(1)}_{k_1}(\bX) \hat{b}^{(2)}_{k_2}(\bX)   \right]^2 \right] \\
     & \quad = \E_{\bbP, 1, 2} \left[ \iint \hat{p}^{(1)}_{k_1}(\bx)\hat{p}^{(1)}_{k_1}(\by) \hat{b}^{(2)}_{k_2}(\bx)\hat{b}^{(2)}_{k_2}(\by) f(\bx) f(\by) d\bx d\by \right] \\
    & \quad \lesssim  \iint \left| \E_{\bbP, 1} \left[\hat{p}^{(1)}_{k_1}(\bx)\hat{p}^{(1)}_{k_1}(\by) \right] \E_{\bbP, 2} \left[ \hat{b}^{(2)}_{k_2}(\bx)\hat{b}^{(2)}_{k_2}(\by) \right] \right| d\bx d\by \\
    & \quad  \lesssim \iint \begin{array}{c} \left( 1 + \frac{1}{n} \E_{\bbP} \left[ \left| K_{V_{k_1}}(\bX, \bx)K_{V_{k_1}}(\bX, \by) \right| \right] \right) \\ \times \left( 1 + \frac{1}{n} \E_{\bbP} \left[ \left| K_{V_{k_2}}(\bX, \bx)K_{V_{k_2}}(\bX, \by) \right| \right] \right) \end{array} d\bx d\by \quad (\text{by Lemma \ref{lem: exp pxpy}})\\
    & \quad  \lesssim 1 + \frac{k_1 \wedge k_2}{n^2} \quad (\text{by Lemma \ref{lem: exp kernel}}).
\end{align*}

Next, we have that
\begin{align*} 
    & \E_{\bbP, 1, 2} \left[ \Var_{\bbP, 3} \left( AY - Y\hat{p}^{(1)}_{k_1}(\bX) - A\hat{b}^{(2)}_{k_2}(\bX)    \right) \right]  \\
    \quad & \leq \E_{\bbP, 1, 2} \left[ \E_{\bbP, 3} \left[ \left( AY - Y\hat{p}^{(1)}_{k_1}(\bX) - A\hat{b}^{(2)}_{k_2}(\bX)    \right)^2 \right] \right] \\
    & \quad \lesssim \frac{k_1 \vee k_2}{n}.
\end{align*}
The last line can be seen to hold from the following bounds, which follows from Lemma \ref{lem: nuisance function bounds} and the observation that $A, Y, \bX$ are bounded random variables and $\Pi(p | V_{k_1}), \Pi(b | V_{k_2})$ are bounded and
\begin{align*}
    \E_{\bbP, 1} [\E_{\bX} ( \hat{p}^{(1)}_{k_1}(\bX)^2 )] & \lesssim \frac{k_1}{n} \\
    \E_{\bbP, 2} [\E_{\bX} ( \hat{b}^{(2)}_{k_2}(\bX)^2 )]  & \lesssim \frac{k_2}{n}.
\end{align*}

\section{Proof of Theorem 2} \label{sec: proof known f single}

As in the proof of Theorem 1, we write out the proofs of the upper bounds when $f$ is such that 
\begin{align*}
    (i) & \quad  f(\bx) \in [M_1, M_2] \quad \forall \bx \in [0, 1]^d\\
    (ii) & \quad f \in H(\gamma, M)
\end{align*}
where $\gamma \geq \alpha \vee \beta$ and $M, M_1, M_2 \in \mathbb{R}^{+}$ are known constants. Recall that we use the approximate wavelet projection estimators of the nuisance functions given by (\ref{eq: wavelet estimator of p}) and (\ref{eq: wavelet estimator of b}) in this case.

Moreover, the estimators of $\psi(\bbP)$ that we analyze are given by
\begin{align*}
    \hat{\psi}_{k_1, k_2}^{\mathrm{INT}} & = \frac{1}{n}\sum_{i \in \D_2} A_i Y_i -  \int \hat{p}_{k_1}^{(1)}(\bx)\hat{b}_{k_2}^{(1)}(\bx)f(\bx) d\bx \\
    \hat{\psi}^{\mathrm{MC}}_{k_1, k_2} & = \frac{1}{n}\sum_{i \in \D_2} A_i Y_i -  \frac{1}{n}\sum_{i \in \D_3} \hat{p}^{(1)}_{k_1}(\bX_i)\hat{b}^{(1)}_{k_2}(\bX_i) \\
    \hat{\psi}^{\mathrm{IF}}_{k_1, k_2} & = \frac{1}{n} \sum_{i \in \D_2} (A_i - \hat{p}^{(1)}_{k_1}(\bX_i))(Y_i - \hat{b}^{(1)}_{k_2}(\bX_i)).
\end{align*}

\begin{remark}
    Similar to Remark \ref{rem:mc double modified} in the double sample splitting case, one can consider an alternative single sample split Monte Carlo-based plug-in estimator that uses only two folds:
    \begin{equation*}
        \tilde{\psi}^{\mathrm{MC}}_{k_1, k_2}  = \frac{1}{n}\sum_{i \in \D_2} A_i Y_i -  \frac{1}{n}\sum_{i \in \D_2} \hat{p}^{(1)}_{k_1}(\bX_i)\hat{b}^{(1)}_{k_2}(\bX_i).
    \end{equation*}
    The same bias and variance bounds in Theorem 2 hold for this estimator as well. In particular, the bias of the two-fold estimator $\tilde{\psi}^{\mathrm{MC}}_{k_1, k_2}$ is identical to that of the three fold version $\hat{\psi}^{\mathrm{MC}}_{k_1, k_2}$ by linearity of expectation. Moreover, we can bound the variance of $\tilde{\psi}^{\mathrm{MC}}_{k_1, k_2}$ by $\Var_{\bbP}(\tilde{\psi}^{\mathrm{MC}}_{k_1, k_2}) \leq 2\Var_{\bbP}(\hat{\psi}^{\mathrm{MC}}_{k_1, k_2})$ using the Cauchy-Schwartz inequality.
\end{remark}

\subsection{Integral-based plug-in estimator}

\subsubsection{Proof of the upper bound} 

Let $\bbP \in \cP_{(\alpha,\beta)}$ be arbitrary. \\

\noindent \textbf{Bounding the bias:}

We have that
\begin{equation*}
    \E_\bbP(\hat{\psi}_{k_1, k_2}^{\mathrm{INT}}) = \E_\bbP \left( A Y \right) - \int \E_{\bbP}[\hat{p}^{(1)}_{k_1}(\bx)\hat{b}^{(1)}_{k_2}(\bx)] f(\bx) d\bx.
\end{equation*}
Our strategy will be the express the bias of the single sample split estimator into the bias of the double sample split estimator plus a remainder. It follows from the proof of Lemma \ref{lem: exp pxpy} that we may write
\begin{equation*}
    \E_{\bbP}[\hat{p}^{(1)}_{k_1}(\bx)\hat{b}^{(1)}_{k_2}(\bx)] = \Pi(p|V_{k_1})(\bx) \Pi(b|V_{k_2})(\bx) + O(n^{-1}) + r_{n, k_1, k_2}(\bx)
\end{equation*}
where
\begin{equation*}
    r_{n, k_1, k_2}(\bx) = \frac{1}{n}  \E_{\bbP,1}\left[ \frac{ AY  K_{V_{k_1}}(\bX, \bx)K_{V_{k_2}}(\bX, \bx)}{\left( f(\bX) \right)^2} \right].
\end{equation*}
Then
\begin{align*}
    \E_\bbP(\hat{\psi}_{k_1, k_2}^{\mathrm{INT}}) & = \E_\bbP \left( A Y \right) - \int \Pi(p|V_{k_1})(\bx) \Pi(b|V_{k_2})(\bx) f(\bx) d\bx \\
    & \quad + \int r_{n, k_1, k_2}(\bx) f(\bx) d\bx + O(n^{-1}).
\end{align*}
It then follows from the work in the double sample splitting case that
\begin{align*}
    \E_\bbP\left(\hat{\psi}_{k_1, k_2}^{\mathrm{INT}} - \psi(\bbP)\right) & = -\int \Pi(p|V_{k_1}^{\perp})(\bx) \Pi(b|V_{k_2}^{\perp}) f(\bx) d\bx + \int  b(\bx) f(\bx) \Pi(p|V_{k_1}^{\perp})(\bx)   d\bx \\
    & \qquad + \int p(\bx)f(\bx) \Pi(b|V_{k_2}^{\perp})  d\bx  + \int r_{n, k_1, k_2}(\bx) f(\bx) d\bx + O(n^{-1}).
\end{align*}
Observe that the bias of the single sample split estimator is the bias of the double sample split estimator plus $\int r_{n, k_1, k_2}(\bx) f(\bx) d\bx + O(n^{-1})$. 

It then suffices to show that $\left| \int r_{n, k_1, k_2}(\bx) f(\bx) d\bx \right| \lesssim \frac{k_1 \wedge k_2}{n}$. Observe that
\begin{align*}
    \left| \int r_{n, k_1, k_2}(\bx) f(\bx) d\bx \right| & \lesssim \int \left| r_{n, k_1, k_2}(\bx) \right| d\bx \\
    & \leq \frac{1}{n} \int \E_{\bbP,1}\left[\left| \frac{ AY  K_{V_{k_1}}(\bX, \bx)K_{V_{k_2}}(\bX, \bx)}{\left( f(\bX) \right)^2} \right|\right] d\bx \\
    & \lesssim \frac{1}{n} \int \E_{\bbP,1}\left[\left| K_{V_{k_1}}(\bX, \bx)K_{V_{k_2}}(\bX, \bx) \right|\right] d\bx \\
    & \lesssim \frac{k_1 \wedge k_2}{n} \quad (\text{by Lemma \ref{lem: exp kernel single}}).
\end{align*}
Therefore, we can bound the bias by
\begin{equation*}
    \left|\E_\bbP\left(\hat{\psi}_{k_1, k_2}^{\mathrm{INT}} - \psi(\bbP)\right)\right| \lesssim (k_1 \wedge k_2)^{-(\alpha + \beta)/d} + \frac{k_1 \wedge k_2}{n}.
\end{equation*}

\noindent \textbf{Bounding the variance:}

We have that
\begin{align*}
    \Var_\bbP(\hat{\psi}_{k_1, k_2}^{\mathrm{INT}}) & = \Var_\bbP\left( \frac{1}{n}\sum_{i \in \D_2} A_i Y_i\right) + \Var_\bbP\left( \int \hat{p}^{(1)}_{k_1}(\bx)\hat{b}^{(1)}_{k_2}(\bx) f(\bx) d\bx  \right) \\
    & = O\left(\frac{1}{n}\right) + \Var_\bbP\left( \int \hat{p}^{(1)}_{k_1}(\bx)\hat{b}^{(1)}_{k_2}(\bx) f(\bx) d\bx  \right).
\end{align*}
We follow a similar approach used in the double sample splitting case to bound the second term. We have that
\begin{align}
    & \Var_\bbP\left( \int \hat{p}^{(1)}_{k_1}(\bx)\hat{b}^{(1)}_{k_2}(\bx) f(\bx) d\bx  \right) \nonumber \\
    & \quad = \E_\bbP \left[ \left( \int \hat{p}^{(1)}_{k_1}(\bx)\hat{b}^{(1)}_{k_2}(\bx) f(\bx) d\bx  \right)^2\right] - \left( \E_\bbP \left[  \int \hat{p}^{(1)}_{k_1}(\bx)\hat{b}^{(1)}_{k_2}(\bx) f(\bx) d\bx  \right] \right)^2 \nonumber \\
    & \quad  = \E_\bbP \left[  \iint \hat{p}^{(1)}_{k_1}(\bx)\hat{p}^{(1)}_{k_1}(\by)\hat{b}^{(1)}_{k_2}(\bx)\hat{b}^{(1)}_{k_2}(\by) f(\bx)f(\by) d\bx  d\by \right] \nonumber \\
    & \quad  \quad - \left(   \int \E_\bbP \left[\hat{p}^{(1)}_{k_1}(\bx)\hat{b}^{(1)}_{k_2}(\bx) \right] f(\bx) d\bx   \right)^2 \nonumber\\
    & \quad  =   \iint \E_\bbP \left[\hat{p}^{(1)}_{k_1}(\bx)\hat{p}^{(1)}_{k_1}(\by)\hat{b}^{(1)}_{k_2}(\bx)\hat{b}^{(1)}_{k_2}(\by) \right] f(\bx)f(\by) d\bx  d\by  \nonumber \\
    & \quad  \quad -   \iint \E_\bbP \left[\hat{p}^{(1)}_{k_1}(\bx)\hat{b}^{(1)}_{k_2}(\bx) \right] \E_\bbP \left[\hat{p}^{(1)}_{k_1}(\by)\hat{b}^{(1)}_{k_2}(\by) \right] f(\bx)f(\by) d\bx d\by  \nonumber \\
    & \quad  \lesssim   \iint \left| \begin{array}{l} \E_\bbP \left[\hat{p}^{(1)}_{k_1}(\bx)\hat{p}^{(1)}_{k_1}(\by)\hat{b}^{(1)}_{k_2}(\bx)\hat{b}^{(1)}_{k_2}(\by) \right] \\ \quad  - \E_\bbP \left[\hat{p}^{(1)}_{k_1}(\bx)\hat{b}^{(1)}_{k_2}(\bx) \right] \E_\bbP \left[\hat{p}^{(1)}_{k_1}(\by)\hat{b}^{(1)}_{k_2}(\by) \right] \end{array} \right| d\bx  d\by \label{eq: var bound ss int}.
\end{align}
To simplify the integrand, we first focus on $\E_\bbP \left[\hat{p}^{(1)}_{k_1}(\bx)\hat{b}^{(1)}_{k_2}(\bx) \right] \E_\bbP \left[\hat{p}^{(1)}_{k_1}(\by)\hat{b}^{(1)}_{k_2}(\by) \right]$. It follows from the proof of Lemma \ref{lem: exp pxpy} that
\begin{align*}
    \E_\bbP \left[\hat{p}^{(1)}_{k_1}(\bx)\hat{b}^{(1)}_{k_2}(\bx) \right] & = \left(1 - \frac{1}{n}\right) \Pi(p|V_{k_1})(\bx) \Pi(b|V_{k_2})(\bx) \\
    & \quad + \frac{1}{n} \E_{\bbP,1}\left[ \frac{AY  K_{V_{k_1}}(\bX, \bx)K_{V_{k_2}}(\bX, \bx)}{\left( f(\bX) \right)^2} \right]  
\end{align*}
and so
\begin{align*}
    & \E_\bbP \left[\hat{p}^{(1)}_{k_1}(\bx)\hat{b}^{(1)}_{k_2}(\bx) \right] \E_\bbP \left[\hat{p}^{(1)}_{k_1}(\by)\hat{b}^{(1)}_{k_2}(\by) \right] \\
    & \quad = \Pi(p|V_{k_1})(\bx)\Pi(p|V_{k_1})(\by) \Pi(b|V_{k_2})(\bx)\Pi(b|V_{k_2})(\by) + O\left( \frac{1}{n} \right) \\
    & \quad \quad + \frac{1}{n}\left(1 - \frac{1}{n}\right) \Pi(p|V_{k_1})(\bx) \Pi(b|V_{k_2})(\bx) \E_{\bbP,1}\left[ \frac{AY  K_{V_{k_1}}(\bX, \by)K_{V_{k_2}}(\bX, \by)}{\left( f(\bX) \right)^2} \right]\\
    & \quad \quad + \frac{1}{n}\left(1 - \frac{1}{n}\right) \Pi(p|V_{k_1})(\by) \Pi(b|V_{k_2})(\by) \E_{\bbP,1}\left[ \frac{AY  K_{V_{k_1}}(\bX, \bx)K_{V_{k_2}}(\bX, \bx)}{\left( f(\bX) \right)^2} \right] \\
    & \quad \quad + \frac{1}{n^2} \E_{\bbP,1}\left[ \frac{AY  K_{V_{k_1}}(\bX, \bx)K_{V_{k_2}}(\bX, \bx)}{\left( f(\bX) \right)^2} \right]\E_{\bbP,1}\left[ \frac{AY  K_{V_{k_1}}(\bX, \by)K_{V_{k_2}}(\bX, \by)}{\left( f(\bX) \right)^2} \right].
\end{align*}
Next, we focus on $\E_\bbP \left[\hat{p}^{(1)}_{k_1}(\bx)\hat{p}^{(1)}_{k_1}(\by)\hat{b}^{(1)}_{k_2}(\bx)\hat{b}^{(1)}_{k_2}(\by) \right]$. Following a similar approach as in Lemma \ref{lem: exp pxpy},
\begin{align*}
    & \E_\bbP \left[\hat{p}^{(1)}_{k_1}(\bx)\hat{p}^{(1)}_{k_1}(\by)\hat{b}^{(1)}_{k_2}(\bx)\hat{b}^{(1)}_{k_2}(\by) \right] \\
    & \quad = \underbrace{\frac{1}{n^4} \sum_{i_1, i_2, j_1, j_2 \in \D_1} \E_\bbP\left[ \frac{A_{i_1}A_{i_2}Y_{j_1}Y_{j_2}\left(\begin{array}{c} K_{V_{k_1}}(\bX_{i_1},\bx)K_{V_{k_1}}(\bX_{i_2},\by) \\ \times K_{V_{k_2}}(\bX_{j_1},\bx)K_{V_{k_2}}(\bX_{j_2},\by) \end{array} \right)}{f(\bX_{i_1})f(\bX_{i_2})f(\bX_{j_1})f(\bX_{j_2})}  \right]}_{(*)}.
\end{align*}
We evaluate $(*)$ by breaking it down into the following cases.
\begin{itemize}
    \item \emph{All distinct} ($i_1, i_2, j_1, j_2$ all distinct): There are $n(n-1)(n-2)(n-3)$ such cases in $(*)$, each of which evaluates to   
    \begin{equation*}
        \Pi(p|V_{k_1})(\bx)\Pi(p|V_{k_1})(\by) \Pi(b|V_{k_2})(\bx)\Pi(b|V_{k_2})(\by).
    \end{equation*}
    \item \emph{All equal} ($i_1 = i_2 = j_1 = j_2)$: There are $n$ such cases in $(*)$, each of which is bounded above by
    \begin{equation*}
        c \E_{\bbP}\left[ |K_{V_{k_1}}(\bX,\bx)K_{V_{k_2}}(\bX,\bx)K_{V_{k_1}}(\bX,\by)K_{V_{k_2}}(\bX,\by)| \right].
    \end{equation*}

    \item \emph{Two distinct pairs:} 
    \begin{itemize}
        \item $i_1 = j_1$, $i_2 = j_2$ ($i_1 \neq i_2$): There are $n(n-1)$ such cases in $(*)$, each of which is bounded above by
    \begin{equation*}
        c \E_{\bbP}\left[ |K_{V_{k_1}}(\bX,\bx)K_{V_{k_2}}(\bX,\bx)|\right] \E_{\bbP}\left[|K_{V_{k_1}}(\bX,\by)K_{V_{k_2}}(\bX,\by)| \right].
    \end{equation*}
    \item $i_1 = i_2$, $j_1 = j_2$ ($i_1 \neq j_2$): There are $n(n-1)$ such cases in $(*)$, each of which is bounded above by
    \begin{equation*}
        c \E_{\bbP}\left[ |K_{V_{k_1}}(\bX,\bx)K_{V_{k_1}}(\bX,\by)|\right] \E_{\bbP}\left[ |K_{V_{k_2}}(\bX,\bx)K_{V_{k_2}}(\bX,\by)|\right]. 
    \end{equation*}
    \item $i_1 = j_2$, $i_2 = j_1$ ($i_1 \neq i_2$): There are $n(n-1)$ such cases in $(*)$, each of which is bounded above by
    \begin{equation*}
        c \E_{\bbP}\left[ |K_{V_{k_1}}(\bX,\bx)K_{V_{k_2}}(\bX,\by)|\right]^2.
    \end{equation*}
    \end{itemize}

    \item \emph{One pair:}
    \begin{itemize}
        \item $i_1 = j_1$, $i_2 \neq j_2$ ($i_1 \neq i_2$): There are $n(n-1)(n-2)$ such cases in $(*)$, each of which evaluates to
    \begin{equation*}
        \E_{\bbP}\left[\frac{AY K_{V_{k_1}}(\bX,\bx)K_{V_{k_2}}(\bX,\bx)}{f(\bX)^2}\right] \Pi(p|V_{k_1})(\by)\Pi(b|V_{k_2})(\by).
    \end{equation*}

    \item $i_1 \neq j_1$, $i_2 = j_2$ ($i_1 \neq i_2$): There are $n(n-1)(n-2)$ such cases in $(*)$, each of which evaluates to
    \begin{equation*}
        \E_{\bbP}\left[\frac{AY K_{V_{k_1}}(\bX,\by)K_{V_{k_2}}(\bX,\by)}{f(\bX)^2}\right] \Pi(p|V_{k_1})(\bx)\Pi(b|V_{k_2})(\bx).
    \end{equation*}

    \item By symmetry, there are $n(n-1)(n-2)$ cases in $(*)$ bounded by each of the following
    \begin{align*}
        & c \E_{\bbP}\left[ |K_{V_{k_1}}(\bX,\bx)K_{V_{k_1}}(\bX,\by)|\right] \\
        & c \E_{\bbP}\left[ |K_{V_{k_1}}(\bX,\bx)K_{V_{k_2}}(\bX,\by)|\right] \\
        & c \E_{\bbP}\left[ |K_{V_{k_1}}(\bX,\by)K_{V_{k_2}}(\bX,\bx)|\right] \\
        & c \E_{\bbP}\left[ |K_{V_{k_2}}(\bX,\bx)K_{V_{k_2}}(\bX,\by)|\right].
    \end{align*}
    \end{itemize}

    \item \emph{One triple:}
    \begin{itemize}
        \item $i_1 = i_2 = j_1$ ($j_2 \neq i_1)$; There are $n(n-1)$ such cases in $(*)$, each of which is bounded above by
        \begin{equation*}
            c \E_{\bbP}\left[ |K_{V_{k_1}}(\bX,\bx)K_{V_{k_2}}(\bX,\bx)K_{V_{k_1}}(\bX,\by)|\right].
        \end{equation*}
        \item By symmetry, there are $n(n-1)$ cases in $(*)$ bounded by each of the following
        \begin{align*}
            & c \E_{\bbP}\left[ |K_{V_{k_1}}(\bX,\bx)K_{V_{k_2}}(\bX,\bx)K_{V_{k_2}}(\bX,\by)|\right] \\
            & c \E_{\bbP}\left[ |K_{V_{k_1}}(\bX,\bx)K_{V_{k_1}}(\bX,\by)K_{V_{k_2}}(\bX,\by)|\right] \\
            & c \E_{\bbP}\left[ |K_{V_{k_2}}(\bX,\bx)K_{V_{k_1}}(\bX,\by)K_{V_{k_2}}(\bX,\by)|\right]. 
        \end{align*}
    \end{itemize}
\end{itemize}

Upon plugging in the expressions for $\E_\bbP \left[\hat{p}^{(1)}_{k_1}(\bx)\hat{b}^{(1)}_{k_2}(\bx) \right] \E_\bbP \left[\hat{p}^{(1)}_{k_1}(\by)\hat{b}^{(1)}_{k_2}(\by) \right]$ and \newline \noindent $\E_\bbP \left[\hat{p}^{(1)}_{k_1}(\bx)\hat{p}^{(1)}_{k_1}(\by)\hat{b}^{(1)}_{k_2}(\bx)\hat{b}^{(1)}_{k_2}(\by) \right]$ in (\ref{eq: var bound ss int}), cancelling like terms, and applying the triangle inequality, we have that
\begin{align*}
    & \Var_\bbP\left( \int \hat{p}^{(1)}_{k}(\bx)\hat{b}^{(1)}_{k}(\bx) f(\bx) d\bx  \right) \\
    & \quad \lesssim \frac{1}{n}   + \frac{1}{n} \iint \E_{\bbP}\left[ | K_{V_{k_1}}(\bX,\bx)K_{V_{k_1}}(\bX,\by) | \right] d\bx d\by\\ 
    & \quad \quad +  \frac{1}{n} \iint \E_{\bbP}\left[ | K_{V_{k_2}}(\bX,\bx)K_{V_{k_2}}(\bX,\by) | \right] d\bx d\by\\ 
    & \quad \quad +  \frac{1}{n} \iint \E_{\bbP}\left[ | K_{V_{k_1}}(\bX,\bx)K_{V_{k_2}}(\bX,\by) | \right] d\bx d\by\\ 
    & \quad \quad + \frac{1}{n^2} \iint \E_{\bbP}\left[ |K_{V_{k_1}}(\bX,\bx)K_{V_{k_2}}(\bX,\bx)|\right] \E_{\bbP}\left[|K_{V_{k_1}}(\bX,\by)K_{V_{k_2}}(\bX,\by) | \right] d\bx d\by \\ 
    & \quad \quad + \frac{1}{n^2} \iint \E_{\bbP}\left[ |K_{V_{k_1}}(\bX,\bx)K_{V_{k_1}}(\bX,\by)|\right] \E_{\bbP}\left[ |K_{V_{k_2}}(\bX,\bx)K_{V_{k_2}}(\bX,\by)|\right]  d\bx d\by \\ 
    & \quad \quad + \frac{1}{n^2} \iint \E_{\bbP}\left[ | K_{V_{k_1}}(\bX,\bx)K_{V_{k_2}}(\bX,\by)|\right]^2  d\bx d\by \\ 
    & \quad \quad + \frac{1}{n^2} \iint \E_{\bbP}\left[ |K_{V_{k_1}}(\bX,\bx)K_{V_{k_2}}(\bX,\bx)K_{V_{k_1}}(\bX,\by) |\right] d\bx d\by\\ 
    & \quad \quad + \frac{1}{n^2} \iint \E_{\bbP}\left[ |K_{V_{k_1}}(\bX,\bx)K_{V_{k_2}}(\bX,\bx)K_{V_{k_2}}(\bX,\by)|\right] d\bx d\by \\
    & \quad \quad + \frac{1}{n^3} \iint \E_{\bbP}\left[ K_{V_{k_1}}(\bX,\bx)K_{V_{k_2}}(\bX,\bx)K_{V_{k_1}}(\bX,\by)K_{V_{k_2}}(\bX,\by) \right] d\bx d\by.
\end{align*}
By Lemma \ref{lem: exp kernel},
\begin{align*}
    \frac{1}{n} \iint \E_{\bbP}\left[ | K_{V_{k_1}}(\bX,\bx)K_{V_{k_1}}(\bX,\by) | \right] d\bx d\by & \lesssim \frac{1}{n} \\
    \frac{1}{n} \iint \E_{\bbP}\left[ | K_{V_{k_2}}(\bX,\bx)K_{V_{k_2}}(\bX,\by) | \right] d\bx d\by & \lesssim \frac{1}{n} \\
    \frac{1}{n} \iint \E_{\bbP}\left[ | K_{V_{k_1}}(\bX,\bx)K_{V_{k_2}}(\bX,\by) | \right] d\bx d\by & \lesssim \frac{1}{n} \\
    \frac{1}{n^2} \iint \E_{\bbP} \left[ \left| K_{V_{k_1}}(\bX, \bx)K_{V_{k_1}}(\bX, \by) \right| \right] \E_{\bbP} \left[ \left| K_{V_{k_2}}(\bX, \bx)K_{V_{k_2}}(\bX, \by) \right| \right]  d\bx d\by & \lesssim \frac{k_1 \wedge k_2}{n^2} \\
    \frac{1}{n^2} \iint \E_{\bbP}\left[ | K_{V_{k_1}}(\bX,\bx)K_{V_{k_2}}(\bX,\by)|\right]^2  d\bx d\by & \lesssim \frac{k_1 \wedge k_2}{n^2} \\
    \frac{1}{n^2} \iint \E_{\bbP}\left[ |K_{V_{k_1}}(\bX,\bx)K_{V_{k_2}}(\bX,\bx)K_{V_{k_1}}(\bX,\by) |\right] d\bx d\by & \lesssim \frac{k_1 \wedge k_2}{n^2}\\ 
    \frac{1}{n^2} \iint \E_{\bbP}\left[ |K_{V_{k_1}}(\bX,\bx)K_{V_{k_2}}(\bX,\bx)K_{V_{k_2}}(\bX,\by)|\right] d\bx d\by & \lesssim \frac{k_1 \wedge k_2}{n^2} \\
    \frac{1}{n^3} \iint \E_{\bbP}\left[ K_{V_{k_1}}(\bX,\bx)K_{V_{k_2}}(\bX,\bx)K_{V_{k_1}}(\bX,\by)K_{V_{k_2}}(\bX,\by) \right] d\bx d\by & \lesssim \frac{(k_1 \wedge k_2)^2}{n^3}
\end{align*}
and by Lemma \ref{lem: exp kernel single},
\begin{equation*}
    \frac{1}{n^2} \iint \E_{\bbP}\left[ |K_{V_{k_1}}(\bX,\bx)K_{V_{k_2}}(\bX,\bx)|\right] \E_{\bbP}\left[|K_{V_{k_1}}(\bX,\by)K_{V_{k_2}}(\bX,\by) | \right] d\bx d\by \lesssim \frac{(k_1 \wedge k_2)^2}{n^2}.
\end{equation*}
Thus,
\begin{equation*}
    \Var_\bbP\left( \int \hat{p}^{(1)}_{k_1}(\bx)\hat{b}^{(1)}_{k_2}(\bx) f(\bx) d\bx  \right) \lesssim \frac{1}{n} + \frac{(k_1 \wedge k_2)^2}{n^2}.
\end{equation*}

\subsubsection{Proof of the lower bound}

Let $\bbP \in \cP_{(\alpha,\beta)}$ and $\bX \sim \text{Uniform}([0,1]^d)$. Recall from the proof of the upper bound that
\begin{align*}
    \E_\bbP\left(\hat{\psi}_{k_1, k_2}^{\mathrm{INT}} - \psi(\bbP)\right) & = -\int \Pi(p|V_{k_1}^{\perp})(\bx) \Pi(b|V_{k_2}^{\perp}) d\bx + \int  b(\bx) \Pi(p|V_{k_1}^{\perp})(\bx)   d\bx \\
    & \qquad + \int p(\bx) \Pi(b|V_{k_2}^{\perp})  d\bx  + \int r_{n, k_1, k_2}(\bx) d\bx + O(n^{-1})
\end{align*}
where $r_{n, k_1, k_2}$ is as defined in the proof of the upper bound. That is, the bias of the single sample split estimator is the bias of the double sample split estimator plus $\int r_{n, k_1, k_2}(\bx) d\bx + O(n^{-1})$.

Suppose without loss of generality that $k_1 \leq k_2$ for $n$ sufficiently large. It follows from the work in the double sample splitting case that
\begin{align*}
    \int \Pi(p|V_{k_1}^{\perp})(\bx) \Pi(b|V_{k_1}^{\perp}) d\bx & = -\int \Pi(p|V_{k_1}^{\perp})(\bx) \Pi(b|V_{k_2}^{\perp}) d\bx + \int  b(\bx) \Pi(p|V_{k_1}^{\perp})(\bx)   d\bx  \\
    & \quad + \int p(\bx) \Pi(b|V_{k_2}^{\perp}) d\bx.
\end{align*}
Therefore,
\begin{equation*}
    \E_\bbP\left(\hat{\psi}_{k_1, k_2}^{\mathrm{INT}} - \psi(\bbP)\right)   =  \int \Pi(p|V_{k_1}^{\perp})(\bx) \Pi(b|V_{k_1}^{\perp}) d\bx + \int r_{n, k_1, k_2}(\bx) d\bx + O(n^{-1}).
\end{equation*}
Choosing $p$ and $b$ as given in (\ref{eq: p lb}) and (\ref{eq: b lb}), recall that
\begin{equation*}
    \left| \int \Pi(p|V_{k_1}^{\perp})(\bx) \Pi(b|V_{k_1}^{\perp}) d\bx \right| \asymp k_1^{-\frac{\alpha + \beta}{d}}. 
\end{equation*}
Further, let $\E_{\bbP,1} [AY | \bX] = c \neq 0$. It then follows from Lemma \ref{lem: exp kernel single} that 
\begin{align*}
    \left| \int r_{n, k_1, k_2}(\bx) d\bx \right| & =  \frac{1}{n} \left| \int  \E_{\bbP,1}\left[ AY  K_{V_{k_1}}(\bX, \bx)K_{V_{k_2}}(\bX, \bx) \right] d\bx \right| \\
    & =  \frac{1}{n} \left| \int  \E_{\bbP,1}\left[ \E_{\bbP,1} [AY | \bX]  K_{V_{k_1}}(\bX, \bx)K_{V_{k_2}}(\bX, \bx) \right] d\bx \right| \\
    & \asymp  \frac{1}{n} \left| \int  \E_{\bbP,1}\left[  K_{V_{k_1}}(\bX, \bx)K_{V_{k_2}}(\bX, \bx) \right] d\bx \right| \\
    & \asymp \frac{k_1}{n}.
\end{align*}
This completes the proof of the lower bound provided that $k_1^{-\frac{\alpha + \beta}{d}}$ and $\frac{k_1}{n}$ are not of the same order. When $k_1$ is such that $k_1^{-\frac{\alpha + \beta}{d}} \asymp \frac{k_1}{n}$, we can choose $\epsilon$ (appearing in (\ref{eq: p lb}) and (\ref{eq: b lb}) which define $p$ and $b$) sufficiently small so that
\begin{equation*}
    \left| \int \Pi(p|V_{k_1}^{\perp})(\bx) \Pi(b|V_{k_1}^{\perp}) d\bx - \int r_{n, k_1, k_2}(\bx) d\bx \right| \gtrsim \frac{k_1}{n}
\end{equation*}
which completes the proof.

\subsection{Monte Carlo-based plug-in estimator}
Since the bias of $\hat{\psi}_{k_1, k_2}^{\mathrm{MC}}$ is the same as that of $\hat{\psi}_{k_1, k_2}^{\mathrm{INT}}$, it suffices to establish the upper bound on the variance of $\hat{\psi}_{k_1, k_2}^{\mathrm{MC}}$.

Similar to the double sample splitting case, we consider the decomposition 
\begin{align*}
    \Var_\bbP(\hat{\psi}_{k_1, k_2}^{\mathrm{MC}}) & = \Var_\bbP\left( \frac{1}{n}\sum_{i \in \D_2} A_i Y_i\right) + \Var_\bbP\left( \frac{1}{n}\sum_{i \in \D_3} \hat{p}^{(1)}_{k_1}(\bX_i)\hat{b}^{(1)}_{k_2}(\bX_i)  \right) \\
    & = \frac{1}{n}\Var_\bbP(AY) +  \Var_{\bbP,1} \left[ \E_{\bbP,3}(\hat{p}^{(1)}_{k_1}(\bX)\hat{b}^{(1)}_{k_2}(\bX))  \right]  \\
    & \quad +  \E_{\bbP,1} \left[ \Var_{\bbP,3}\left( \frac{1}{n}\sum_{i \in \D_3} \hat{p}^{(1)}_{k_1}(\bX_i)\hat{b}^{(1)}_{k_2}(\bX_i)  \right)  \right].
\end{align*}
The second term can be bounded by
\begin{equation*}
    \Var_{\bbP,1} \left[ \E_{\bbP,3}(\hat{p}^{(1)}_{k_1}(\bX)\hat{b}^{(1)}_{k_2}(\bX))  \right] = \Var_{\bbP,1} \left[ \int \hat{p}^{(1)}_{k_1}(\bx) \hat{b}^{(1)}_{k_2}(\bx)f(\bx) d\bx \right]  \lesssim \frac{1}{n} + \frac{(k_1 \wedge k_2)^2}{n^2}
\end{equation*}
where the inequality follows the bound of the variance of the single sample-split integral-based plug-in estimator.

To bound the third term, we have that
\begin{align*}
    \E_{\bbP,1} \left[ \Var_{\bbP,3}\left( \frac{1}{n}\sum_{i \in \D_3} \hat{p}^{(1)}_{k_1}(\bX_i)\hat{b}^{(1)}_{k_2}(\bX_i)  \right)  \right] & = \frac{1}{n} \E_{\bbP,1} \left[ \Var_{\bbP,3}\left(  \hat{p}^{(1)}_{k_1}(\bX)\hat{b}^{(1)}_{k_2}(\bX)  \right)  \right] \\
    & \leq \frac{1}{n} \E_{\bbP,1} \left[ \E_{\bbP,3}\left(  \hat{p}^{(1)}_{k_1}(\bX)^2\hat{b}^{(1)}_{k_2}(\bX)^2  \right)  \right] \\
    & = \frac{1}{n} \int \E_{\bbP,1} \left[   \hat{p}^{(1)}_{k_1}(\bx)^2\hat{b}^{(1)}_{k_2}(\bx)^2    \right] f(\bx) d\bx \\
    & \lesssim \frac{1}{n} \int \E_{\bbP,1} \left[   \hat{p}^{(1)}_{k_1}(\bx)^2\hat{b}^{(1)}_{k_2}(\bx)^2    \right] d\bx 
\end{align*}
where the last inequality follows from $f$ being bounded.

By the decomposition of $\E_{\bbP,1} \left[   \hat{p}^{(1)}_{k_1}(\bx)^2\hat{b}^{(1)}_{k_2}(\bx)^2    \right]$ performed in the variance bound of the single sample-split integral-based plug-in estimator,
\begin{align*}
    \int \E_{\bbP,1} \left[   \hat{p}^{(1)}_{k_1}(\bx)^2\hat{b}^{(1)}_{k_2}(\bx)^2    \right] d\bx & \lesssim 1 + \frac{1}{n} \int \E_{\bbP}\left[ K_{V_{k_1}}(\bX,\bx)^2 \right] d\bx \\ 
    & \quad \quad +  \frac{1}{n} \int \E_{\bbP}\left[ K_{V_{k_2}}(\bX,\bx)^2 \right] d\bx \\ 
    & \quad \quad +  \frac{1}{n} \int \E_{\bbP}\left[ |K_{V_{k_1}}(\bX,\bx)K_{V_{k_2}}(\bX,\bx)| \right] d\bx \\ 
    & \quad \quad + \frac{1}{n^2} \int \E_{\bbP}\left[ |K_{V_{k_1}}(\bX,\bx)K_{V_{k_2}}(\bX,\bx)|\right]^2  d\bx \\ 
    & \quad \quad + \frac{1}{n^2} \int \E_{\bbP}\left[ K_{V_{k_1}}(\bX,\bx)^2\right] \E_{\bbP}\left[ K_{V_{k_2}}(\bX,\bx)^2\right]  d\bx \\ 
    & \quad \quad + \frac{1}{n^2} \int \E_{\bbP}\left[ | K_{V_{k_1}}(\bX,\bx)^2K_{V_{k_2}}(\bX,\bx)| \right] d\bx \\
    & \quad \quad + \frac{1}{n^2} \int \E_{\bbP}\left[ | K_{V_{k_1}}(\bX,\bx)K_{V_{k_2}}(\bX,\bx)^2| \right] d\bx \\
    & \quad \quad + \frac{1}{n^3} \int \E_{\bbP}\left[ K_{V_{k_1}}(\bX,\bx)^2 K_{V_{k_2}}(\bX,\bx)^2 \right] d\bx.
\end{align*}
Then, by Lemma \ref{lem: exp kernel single},
\begin{align*}
    \frac{1}{n} \int \E_{\bbP}\left[ K_{V_{k_1}}(\bX,\bx)^2 \right] d\bx & \lesssim \frac{k_1}{n} \\
    \frac{1}{n} \int \E_{\bbP}\left[ K_{V_{k_2}}(\bX,\bx)^2 \right] d\bx & \lesssim \frac{k_2}{n} \\
    \frac{1}{n} \int \E_{\bbP}\left[ | K_{V_{k_1}}(\bX,\bx)K_{V_{k_2}}(\bX,\bx)| \right] d\bx & \lesssim \frac{k_1 \wedge k_2}{n} \\
    \frac{1}{n^2} \int \E_{\bbP}\left[ |K_{V_{k_1}}(\bX,\bx)K_{V_{k_2}}(\bX,\bx)|\right]^2  d\bx & \lesssim \frac{(k_1 \wedge k_2)^2}{n^2} \\
    \frac{1}{n^2} \int \E_{\bbP}\left[ K_{V_{k_1}}(\bX,\bx)^2\right] \E_{\bbP}\left[ K_{V_{k_2}}(\bX,\bx)^2\right]  d\bx & \lesssim \frac{k_1k_2}{n^2} \\
    \frac{1}{n^2} \int \E_{\bbP}\left[ | K_{V_{k_1}}(\bX,\bx)K_{V_{k_2}}(\bX,\bx)^2| \right] d\bx & \lesssim \frac{k_1k_2}{n^2} \\
    \frac{1}{n^3} \int \E_{\bbP}\left[ K_{V_{k_1}}(\bX,\bx)^2 K_{V_{k_2}}(\bX,\bx)^2 \right] d\bx & \lesssim \frac{(k_1 \wedge k_2)^2 (k_1 \vee k_2)}{n^3}.
\end{align*}
Therefore,
\begin{equation*}
    \int \E_{\bbP,1} \left[   \hat{p}^{(1)}_{k}(\bx)^2\hat{b}^{(1)}_{k}(\bx)^2    \right] d\bx \lesssim 1 + \frac{k_1 \vee k_2}{n} + \frac{k_1k_2}{n^2} + \frac{(k_1 \wedge k_2)^2 (k_1 \vee k_2)}{n^3}.
\end{equation*}
In summary, the third term is bounded by
\begin{align*}
    \E_{\bbP,1} \left[ \Var_{\bbP,3}\left( \frac{1}{n}\sum_{i \in \D_3} \hat{p}^{(1)}_{k_1}(\bX_i)\hat{b}^{(1)}_{k_2}(\bX_i)  \right)  \right] & \lesssim \frac{1}{n} \int \E_{\bbP,1} \left[   \hat{p}^{(1)}_{k_1}(\bx)^2\hat{b}^{(1)}_{k_2}(\bx)^2    \right] d\bx  \\
    & \lesssim \frac{1}{n} + \frac{k_1 \vee k_2}{n^2} + \frac{k_1k_2}{n^3} + \frac{(k_1 \wedge k_2)^2 (k_1 \vee k_2)}{n^4}.
\end{align*}

\subsection{First-order bias-corrected estimator}

\subsubsection{Proof of the upper bound}

Let $\bbP \in \cP_{(\alpha,\beta)}$ be arbitrary. \\

\noindent \textbf{Bounding the bias:}

Following the same steps in the double sample splitting case, we have that 
\begin{equation*}
    \E_\bbP(\hat{\psi}^{\mathrm{IF}}_{k_1, k_2}) = \psi(\bbP) + \E_{\bbP, 1} \left[ \int (p(\bx) - \hat{p}^{(1)}_{k_1}(\bx))(b(\bx) - \hat{b}^{(1)}_{k_2}(\bx)) f(\bx) d\bx \right]
\end{equation*}
It follows from the proof of Lemma \ref{lem: exp pxpy} that we can express $\E_{\bbP}[\hat{p}^{(1)}_{k_1}(\bx)\hat{b}^{(1)}_{k_2}(\bx)]$ in terms of \newline \noindent $\Pi(p|V_{k_1})(\bx) \Pi(b|V_{k_2})(\bx)$ plus a remainder. Specifically, 
\begin{equation*}
    \E_{\bbP}[\hat{p}^{(1)}_{k_1}(\bx)\hat{b}^{(1)}_{k_2}(\bx)] = \Pi(p|V_{k_1})(\bx) \Pi(b|V_{k_2})(\bx) + O(n^{-1}) + r_{n, k_1, k_2}(\bx)
\end{equation*}
where
\begin{equation*}
    r_{n, k_1, k_2}(\bx) = \frac{1}{n}  \E_{\bbP,1}\left[ \frac{ AY  K_{V_{k_1}}(\bX, \bx)K_{V_{k_2}}(\bX, \bx)}{\left( f(\bX) \right)^2} \right].
\end{equation*}
That is,
\begin{align*}
    \E_\bbP(\hat{\psi}^{\mathrm{IF}}_{k_1, k_2}) - \psi(\bbP) & = \int \left( p(\bx) - \Pi(p | V_{k_1})(\bx)\right)\left( b(\bx) - \Pi(b | V_{k_2})(\bx)\right) f(\bx) d\bx  \\
    & \qquad + \int r_{n, k_1, k_2}(\bx) f(\bx) d\bx + O(n^{-1}).
\end{align*}
Once again, the bias of the single sample split estimator is the bias of the double sample split estimator plus $\int r_{n, k_1, k_2}(\bx) f(\bx) d\bx + O(n^{-1})$. 

Suppose without loss of generality that $k_1 \geq k_2$ for $n$ sufficiently large. Recall the proof of the upper bound of the double sample split estimator that 
\begin{equation*}
    \left| \int \left( p(\bx) - \Pi(p | V_{k_1})(\bx)\right)\left( b(\bx) - \Pi(b | V_{k_2})(\bx)\right) f(\bx) d\bx \right| \lesssim k_1^{-(\alpha + \beta)/ d}.
\end{equation*}
Moreover, recall from derivation of the upper bound of the bias of $\hat{\psi}_{k_1, k_2}^{\mathrm{INT}}$ (single sample splitting case) that
\begin{equation*}
     \left| \int r_{n, k_1, k_2}(\bx) d\bx \right| \lesssim \frac{k_2}{n}.
\end{equation*}
Therefore, we can bound the bias by
\begin{equation*}
    \sup_{\bbP \in \cP_{(\alpha,\beta,\gamma)}} \left|\E_\bbP\left(\hat{\psi}_{k_1, k_2}^{\mathrm{IF}} - \psi(\bbP)\right)\right| \lesssim (k_1 \vee k_2)^{-\frac{\alpha + \beta}{d}} + \frac{k_1 \wedge k_2}{n}. 
\end{equation*}

\textbf{Bounding the variance:}

We will show that
\begin{align}
    \Var_{\bbP,1} \left[ \E_{\bbP,2}\left( \hat{\psi}^{\mathrm{IF}}_{k_1, k_2}\right)  \right] & \lesssim \frac{1}{n} + \frac{(k_1 \wedge k_2)^2}{n^2} \label{eq: var pt2 cov if single} \\
    \E_{\bbP,1} \left[ \Var_{\bbP,2} \left( \hat{\psi}^{\mathrm{IF}}_{k_1,k_2}\right)  \right] & \lesssim \frac{1}{n} + \frac{k_1 \vee k_2}{n^2} + \frac{k_1k_2}{n^3} + \frac{(k_1 \wedge k_2)^2 (k_1 \vee k_2)}{n^4} \label{eq: var pt1 cov if single}
\end{align}

We first show (\ref{eq: var pt2 cov if single}). By following the same steps in the double sample splitting case,
\begin{align*}
    \Var_{\bbP,1} \left[ \E_{\bbP,2}\left( \hat{\psi}^{\mathrm{IF}}_{k_1}\right)  \right] & = \Var_{\bbP,1} \left[ \left( \E_{\bbP,2} (p(\bX) - \hat{p}^{(1)}_{k_1}(\bX))(b(\bX) - \hat{b}^{(1)}_{k_2}(\bX))\right)  \right] \\
    & = \E_{\bbP,1} \left[ \left( \E_{\bbP,2} \left((p(\bX) - \hat{p}^{(1)}_{k_1}(\bX))(b(\bX) - \hat{b}^{(1)}_{k_2}(\bX))\right)\right)^2  \right] \\
    & \quad - \left( \E_{\bbP,1} \left[  \E_{\bbP,2} \left((p(\bX) - \hat{p}^{(1)}_{k_1}(\bX))(b(\bX) - \hat{b}^{(1)}_{k_2}(\bX))\right)  \right] \right)^2 \\
    & =: I - II.
\end{align*}
Our broad strategy is to express $I$ and $II$ in terms involving projections of $p$ and $b$ onto $V_{k_1}$ and $V_{k_2}$ as well as remainder terms. We will show that terms involving the projections will exactly cancel out between $I$ and $II$, and we will show that the ``large" remainders (i.e., of size $\frac{k_1 \wedge k_2}{n}$) will also exactly cancel out. Last, we will show that the other remainder terms are sufficiently small.

We next focus on express $I$ in terms of these projections and remainders. Following the same approach in the bias derivation,
\begin{align*}
    \E_{\bbP,1} \left[  \E_{\bbP,2} \left((p(\bX) - \hat{p}^{(1)}_{k_1}(\bX))(b(\bX) - \hat{b}^{(1)}_{k_2}(\bX))\right)  \right] & = \int \Pi(p | V_{k_1}^{\perp})(\bx) \Pi(b | V_{k_2}^{\perp})(\bx) f(\bx) d\bx  \\
    & \quad + \int r_{n,k_1,k_2}(\bx) f(\bx) d\bx + O(n^{-1})
\end{align*}
where
\begin{equation*}
    r_{n, k_1, k_2}(\bx) = \frac{1}{n}  \E_{\bbP,1}\left[ \frac{ AY  K_{V_{k_1}}(\bX, \bx)K_{V_{k_2}}(\bX, \bx)}{\left( f(\bX) \right)^2} \right].
\end{equation*}
Then, $II$ is given by
\begin{align*}
    II  & = \iint \Pi(p|V_{k_1}^{\perp})(\bx) \Pi(p|V_{k_1}^{\perp})(\by)\Pi(b|V_{k_2}^{\perp})(\bx) \Pi(b|V_{k_2}^{\perp})(\by)  f(\bx) f(\by)  d\bx d\by \\
    & \quad  + \iint \Pi(p|V_{k_1}^{\perp})(\bx) \Pi(b|V_{k_2}^{\perp})(\bx) r_{n,k_1,k_2}(\by)  f(\bx) f(\by)  d\bx d\by \\
    & \quad  + \iint \Pi(p|V_{k_1}^{\perp})(\by)\Pi(b|V_{k_2}^{\perp})(\by)  r_{n,k_1,k_2}(\bx) f(\bx) f(\by)  d\bx d\by \\ 
    & \quad  + \iint r_{n,k_1,k_2}(\bx) r_{n,k_1,k_2}(\by)  f(\bx) f(\by)  d\bx d\by \\
    & \quad + O(n^{-1}).
\end{align*}
Applying Lemma \ref{lem: exp kernel}, we have that $\iint |r_{n,k_1,k_2}(\bx) r_{n,k_1,k_2}(\by)  f(\bx) f(\by)|  d\bx d\by \lesssim \frac{(k_1 \wedge k_2)^2}{n^2}$. Therefore, $II$ is given by
\begin{align*}
    II &  = \iint \Pi(p|V_{k_1}^{\perp})(\bx) \Pi(p|V_{k_1}^{\perp})(\by)\Pi(b|V_{k_2}^{\perp})(\bx) \Pi(b|V_{k_2}^{\perp})(\by)  f(\bx) f(\by)  d\bx d\by \\
    & \quad  + \iint \Pi(p|V_{k_1}^{\perp})(\bx) \Pi(b|V_{k_2}^{\perp})(\bx) r_{n,k_1,k_2}(\by)  f(\bx) f(\by)  d\bx d\by \\
    & \quad  + \iint \Pi(p|V_{k_1}^{\perp})(\by)\Pi(b|V_{k_2}^{\perp})(\by)  r_{n,k_1,k_2}(\bx) f(\bx) f(\by)  d\bx d\by \\ 
    & \quad  + O\left(\frac{(k_1 \wedge k_2)^2}{n^2}\right) + O(n^{-1}).
\end{align*}

Now, we focus on expressing $I$ in terms of projections and remainders. We have that
\begin{align*}
    I = \iint \E_{\bbP,1} \left[ \begin{array}{c} (p(\bx) - \hat{p}^{(1)}_{k_1}(\bx))(b(\bx) - \hat{b}^{(1)}_{k_2}(\bx)) \\ \times (p(\by) - \hat{p}^{(1)}_{k_1}(\by))(b(\by) - \hat{b}^{(1)}_{k_2}(\by)) \end{array} \right] f(\bx) f(\by) d\bx d\by.
\end{align*}
To express $II$ in terms involving projections and remainders, we perform the following decompositions: 
\begin{align*}
    \E_{\bbP,1}[\hat{p}^{(1)}_{k_1}(\bx)\hat{p}^{(1)}_{k_1}(\by)] & = \Pi(p | V_{k_1})(\bx)\Pi(p | V_{k_1})(\by) + \tilde{r}_{n,k_1,k_2,1}(\bx, \by) \\
    \E_{\bbP,1}[\hat{p}^{(1)}_{k_1}(\bx)\hat{b}^{(1)}_{k_2}(\bx)] & = \Pi(p | V_{k_1})(\bx)\Pi(b | V_{k_2})(\bx) + r_{n,k_1,k_2}(\bx) \\
    \E_{\bbP,1}[\hat{p}^{(1)}_{k_1}(\bx)\hat{p}^{(1)}_{k_1}(\by)\hat{b}^{(1)}_{k_2}(\bx)] & = \Pi(p | V_{k_1})(\bx)\Pi(p | V_{k_1})(\by)\Pi(b | V_{k_2})(\bx)  \\
    & \quad + \Pi(p | V_{k_1})(\by)r_{n,k_1,k_2}(\bx) + \tilde{r}_{n,k_1,k_2,2}(\bx, \by) \\
    \E_{\bbP,1}[\hat{p}^{(1)}_{k_1}(\bx)\hat{b}^{(1)}_{k_2}(\bx)\hat{b}^{(1)}_{k_2}(\by)] & = \Pi(p | V_{k_1})(\bx)\Pi(b | V_{k_2})(\bx)\Pi(b | V_{k_2})(\by) \\
    & \quad + \Pi(b|V_{k_2})(\by)r_{n,k_1,k_2}(\bx) + \tilde{r}_{n,k_1,k_2,3}(\bx, \by) \\
    \E_{\bbP,1}[\hat{p}^{(1)}_{k_1}(\bx)\hat{p}^{(1)}_{k_1}(\by)\hat{b}^{(1)}_{k_2}(\bx)\hat{b}^{(1)}_{k_2}(\by)] & = \Pi(p | V_{k_1})(\bx)\Pi(p | V_{k_1})(\by)\Pi(b | V_{k_2})(\bx)\Pi(b | V_{k_2})(\by) \\
    & \quad + \Pi(p|V_{k_1})(\by)\Pi(b|V_{k_2})(\by)r_{n,k_1,k_2}(\bx) \\
    & \quad + \Pi(p|V_{k_1})(\bx)\Pi(b|V_{k_2})(\bx)r_{n,k_1,k_2}(\by) \\
    & \quad + \tilde{r}_{n,k_1,k_2,4}(\bx, \by)  
\end{align*}
The last decomposition is directly from the derivation of the variance of $\hat{\psi}_{k_1, k_2}^{\mathrm{INT}}$ (single sample splitting case). The other decompositions can be seen as special cases of the last one. 

By expanding the product in $I$ and using linearity of expectation, it can be seen that the projection terms and the remainder terms involving $r_{n,k_1,k_2}(\bx)$ and $r_{n,k_1,k_2}(\by)$ cancel out with those in $II$. Therefore, it remainders to show that the other remainder terms are sufficiently small in the sense that
\begin{equation*}
    \sum_{i = 1}^4 \iint |\tilde{r}_{n,k_1,k_2,i}(\bx, \by)| d\bx d\by \lesssim \frac{1}{n} + \frac{(k_1 \wedge k_2)^2}{n^2}.
\end{equation*}

Recall that we showed in derivation of the derivation of the variance of $\hat{\psi}_{k_1, k_2}^{\mathrm{INT}}$ (single sample splitting case) that
\begin{equation*}
     \iint |\tilde{r}_{n,k_1,k_2,4}(\bx, \by)| d\bx d\by \lesssim \frac{1}{n} + \frac{(k_1 \wedge k_2)^2}{n^2}.
\end{equation*}
The other three remainder terms can be seen as special cases of the this remainder term. Specifically, it follows from the same arguments that
\begin{align*}
     \iint |\tilde{r}_{n,k_1,k_2,2}(\bx, \by)| d\bx d\by & \lesssim \frac{1}{n} + \frac{1}{n} \iint \E_{\bbP}\left[ |K_{V_{k_1}}(\bX,\bx)K_{V_{k_2}}(\bX,\by)| \right] d\bx d\by \\
     & \quad + \frac{1}{n} \iint \E_{\bbP}\left[ |K_{V_{k_1}}(\bX,\bx)K_{V_{k_1}}(\bX,\by)| \right] d\bx d\by \\
     & \quad + \frac{1}{n^2} \iint \E_{\bbP}\left[ |K_{V_{k_1}}(\bX,\bx)K_{V_{k_1}}(\bX,\by)K_{V_{k_2}}(\bX,\by)| \right] d\bx d\by \\
     & \lesssim \frac{1}{n} + \frac{k_1 \wedge k_2}{n^2}
\end{align*}
and
\begin{align*}
     \iint |\tilde{r}_{n,k_1,k_2,3}(\bx, \by)| d\bx d\by & \lesssim \frac{1}{n} + \frac{1}{n} \iint \E_{\bbP}\left[ |K_{V_{k_1}}(\bX,\bx)K_{V_{k_2}}(\bX,\by)| \right] d\bx d\by \\
     & \quad + \frac{1}{n} \iint \E_{\bbP}\left[ |K_{V_{k_2}}(\bX,\bx)K_{V_{k_2}}(\bX,\by)| \right] d\bx d\by \\
     & \quad + \frac{1}{n^2} \iint \E_{\bbP}\left[ |K_{V_{k_1}}(\bX,\bx)K_{V_{k_2}}(\bX,\bx)K_{V_{k_2}}(\bX,\by)| \right] d\bx d\by \\
     & \lesssim \frac{1}{n} + \frac{k_1 \wedge k_2}{n^2}.
\end{align*}
and 
\begin{align*}
    \iint |\tilde{r}_{n,k_1,k_2}(\bx, \by)| d\bx d\by & \lesssim \frac{1}{n} + \frac{1}{n} \iint \E_{\bbP}[ |K_{V_{k_1}}(\bX,\bx)K_{V_{k_1}}(\bX,\by) |] d\bx d\by \\
    & \lesssim \frac{1}{n}
\end{align*}
which completes the proof of (\ref{eq: var pt2 cov if single}).

It remains to show (\ref{eq: var pt1 cov if single}). Following the strategy in the double sample splitting case (i.e., in establishing (\ref{eq: var pt1 cov if})), we have that
\begin{align*}
    & \E_{\bbP,1} \left[ \Var_{\bbP,2} \left( \hat{\psi}^{\mathrm{IF}}_{k_1,k_2}\right)  \right] \\
    \quad & \leq \frac{1}{n} \E_{\bbP,1} \left[ \E_{\bbP,2} \left( (A - \hat{p}^{(1)}_{k_1}(\bX))^2(Y - \hat{b}^{(1)}_{k_2}(\bX))^2 \right) \right] \\
    \quad & = \frac{1}{n} \E_{\bbP,1} \left[ \E_{\bbP,2} \left( (A^2 - 2A \hat{p}^{(1)}_{k_1}(\bX) + \hat{p}^{(1)}_{k_1}(\bX)^2)(Y^2 - 2Y\hat{b}^{(1)}_{k_2}(\bX) + \hat{b}^{(1)}_{k_2}(\bX)^2) \right) \right] 
\end{align*}
Since $A$, $Y$, $\bX$ are bounded random variables and $\Pi(p | V_{k_1})$ and $\Pi(b | V_{k_2})$ are bounded, 
\begin{align*}
    \E_{\bbP,1} \left[ \Var_{\bbP,2} \left( \hat{\psi}^{\mathrm{IF}}_{k_1, k_2}\right)  \right] & \lesssim \frac{1}{n} + \frac{1}{n} \E_{\bbP,2} \left( \E_{\bbP,1}\left[\hat{p}^{(1)}_{k_1}(\bX)^2\right] \right) +  \frac{1}{n} \E_{\bbP,2} \left( \E_{\bbP,1}\left[\hat{b}^{(1)}_{k_2}(\bX)^2\right] \right) \\
    & \quad + \frac{1}{n} \E_{\bbP,2} \left(  \left| \E_{\bbP,1}\left[\hat{p}^{(1)}_{k_1}(\bX) \hat{b}^{(1)}_{k_2}(\bX)\right] \right| \right)  \\
    & \quad + \frac{1}{n}  \E_{\bbP,2} \left( \left| \E_{\bbP,1}\left[\hat{p}^{(1)}_{k_1}(\bX)^2 \hat{b}^{(1)}_{k_2}(\bX)\right] \right| \right)  \\
    & \quad + \frac{1}{n}  \E_{\bbP,2} \left( \left|  \E_{\bbP,1}\left[\hat{p}^{(1)}_{k_1}(\bX) \hat{b}^{(1)}_{k_2}(\bX)^2\right] \right| \right)  \\
    & \quad + \frac{1}{n} \E_{\bbP,2} \left(  \E_{\bbP,1}\left[\hat{p}^{(1)}_{k_1}(\bX)^2 \hat{b}^{(1)}_{k_2}(\bX)^2\right] \right).
\end{align*}
Recall from Lemma \ref{lem: nuisance function bounds} that
\begin{align*}
    \E_{\bbP,2} \left( \E_{\bbP,1}\left[\hat{p}^{(1)}_{k_1}(\bX)^2\right] \right) \lesssim 1 + \frac{k_1}{n} \\
    \E_{\bbP,2} \left( \E_{\bbP,1}\left[\hat{b}^{(1)}_{k_2}(\bX)^2\right] \right) \lesssim 1 + \frac{k_2}{n}.
\end{align*}
Following the same arguments used in the derivation of the bias of this estimator, we have that
\begin{equation*}
    \E_{\bbP,2} \left( \left| \E_{\bbP,1}\left[\hat{p}^{(1)}_{k_1}(\bX)^2 \hat{b}^{(1)}_{k_2}(\bX)\right] \right| \right) \lesssim 1 + \frac{k_1 \wedge k_2}{n}.
\end{equation*}
Moreover, recall from proof of the upper bound on the variance of the single sample split Monte-Carlo based plug-in estimator that
\begin{equation*}
    \E_{\bbP,2} \left(  \E_{\bbP,1}\left[\hat{p}^{(1)}_{k_1}(\bX)^2 \hat{b}^{(1)}_{k_2}(\bX)^2\right] \right) \lesssim 1 + \frac{k_1 \vee k_2}{n} + \frac{k_1k_2}{n^2} + \frac{(k_1 \wedge k_2)^2 (k_1 \vee k_2)}{n^3}
\end{equation*}
The two remaining terms, i.e. $\E_{\bbP,2} \left( \left| \E_{\bbP,1}\left[\hat{p}^{(1)}_{k_1}(\bX)^2 \hat{b}^{(1)}_{k_2}(\bX)\right] \right| \right)$ and \newline $\E_{\bbP,2} \left( \left|  \E_{\bbP,1}\left[\hat{p}^{(1)}_{k_1}(\bX) \hat{b}^{(1)}_{k_2}(\bX)^2\right] \right| \right) $, can be seen as special of the last term bounded. In particular, we decompose these terms as performed in the proof of \ref{eq: var pt2 cov if single}. That is,
\begin{align*}
    \E_{\bbP,2} \left( \left| \E_{\bbP,1}\left[\hat{p}^{(1)}_{k_1}(\bX)^2 \hat{b}^{(1)}_{k_2}(\bX)\right] \right| \right)  & \lesssim 1 + \frac{1}{n} \iint \E_{\bbP}\left[ |K_{V_{k_1}}(\bX,\bx)K_{V_{k_2}}(\bX,\bx)| \right] d\bx \\
     & \quad + \frac{1}{n} \iint \E_{\bbP}\left[ |K_{V_{k_1}}(\bX,\bx)^2 \right] d\bx \\
     & \quad + \frac{1}{n^2} \iint \E_{\bbP}\left[ |K_{V_{k_1}}(\bX,\bx)^2K_{V_{k_2}}(\bX,\bx)| \right] d\bx  \\
     & \lesssim 1 + \frac{k_1 \wedge k_2}{n} + \frac{k_1}{n} + \frac{(k_1 \wedge k_2)^2}{n^2}
\end{align*}
and similarly,
\begin{equation*}
    \E_{\bbP,2} \left( \left|  \E_{\bbP,1}\left[\hat{p}^{(1)}_{k_1}(\bX) \hat{b}^{(1)}_{k_2}(\bX)^2\right] \right| \right)  \lesssim 1 + \frac{k_1 \wedge k_2}{n} + \frac{k_2}{n} + \frac{(k_1 \wedge k_2)^2}{n^2}
\end{equation*}
which completes the proof.

\subsubsection{Proof of the lower bound}

Let $\bbP \in \cP_{(\alpha,\beta)}$ and $\bX \sim \text{Uniform}([0,1]^d)$. Recall from the proof of the upper bound that
\begin{align*}
    \E_\bbP\left(\hat{\psi}_{k_1, k_2}^{\mathrm{IF}} - \psi(\bbP)\right) & = \int \Pi(p | V_{k_1}^{\perp})(\bx)\Pi(b | V_{k_2}^{\perp})(\bx) d\bx.  + \int r_{n, k_1,k_2}(\bx) d\bx + O(n^{-1}).
\end{align*}
where $r_{n, k_1, k_2}$ is as defined in the proof of the upper bound. Suppose without loss of generality that $k_1 \leq k_2$ for $n$ sufficiently large. Choosing $p$ and $b$ as given in (\ref{eq: p lb}) and (\ref{eq: b lb}), it follows from the proof of the lower bound in the double sample splitting case that 
\begin{align*}
    \left| \int \Pi(p | V_{k_1}^{\perp})(\bx)\Pi(b | V_{k_2}^{\perp})(\bx) d\bx  \right| \asymp k_2^{-(\alpha + \beta)/d}. 
\end{align*}
Moreover, letting $\E_{\bbP,1} [AY | \bX] = c \neq 0$, it follows from the proof of the lower bound of the bias of the integral-based plug-in estimator in the single sample splitting case that
\begin{equation*}
    \left| \int r_{n, k_1, k_2}(\bx) d\bx \right| \asymp \frac{k_1}{n}
\end{equation*}
which completes the proof provided that $k_2^{-\frac{\alpha + \beta}{d}}$ and $\frac{k_1}{n}$ are not of the same order. When $k_1$ and $k_2$ are such that $k_2^{-\frac{\alpha + \beta}{d}} \asymp \frac{k_1}{n}$, we can choose $\epsilon$ (appearing in (\ref{eq: p lb}) and (\ref{eq: b lb}) which define $p$ and $b$) sufficiently small so that
\begin{equation*}
    \left| \int \Pi(p|V_{k_1}^{\perp})(\bx) \Pi(b|V_{k_2}^{\perp}) d\bx - \int r_{n, k_1, k_2}(\bx) d\bx \right| \gtrsim \frac{k_1}{n}
\end{equation*}
which completes the proof of (\ref{eq: var pt1 cov if single}).

\section{Proof of Theorem 3} \label{sec: proof known f nr}

As in the proof of Theorem 1, we write out the proof of the upper bound when $f$ is such that 
\begin{align*}
    (i) & \quad  f(\bx) \in [M_1, M_2] \quad \forall \bx \in [0, 1]^d\\
    (ii) & \quad f \in H(\gamma, M)
\end{align*}
where $\gamma \geq \alpha \vee \beta$ and $M, M_1, M_2 \in \mathbb{R}^{+}$ are known constants. Recall that we use the approximate wavelet projection estimators of the nuisance functions given by (\ref{eq: wavelet estimator of p}) and (\ref{eq: wavelet estimator of b}) in this case.

Moreover, the Newey and Robins plug-in estimator that we analyze is given by
\begin{equation*}
   \hat{\psi}_{k}^{\mathrm{NR}} = \frac{1}{n} \sum_{i \in \D_2} A_i(Y_i - \hat{b}_k^{(1)}(\bX_i)).
\end{equation*}

\subsection{Proof of the upper bound}

Let $\bbP \in \cP_{(\alpha,\beta)}$ be arbitrary. \\

\noindent \textbf{Bounding the bias:}

Observe that
\begin{align*}
    \E_\bbP(\hat{\psi}^{\mathrm{NR}}_{k}) & =  \psi(\bbP) -  \E_\bbP\left[ p(\bX) (b(\bX) - \hat{b}^{(1)}_k(\bX)) \right] \\ 
    & =  \psi(\bbP) -  \int  p(\bx) f(\bx) (b(\bx) - \Pi(b|V_k)(\bx)) d\bx.
\end{align*}
Recall from the derivation of the bias of the integral-based plug-in estimator that
\begin{equation*}
    \left| \int  p(\bx) f(\bx) \Pi(b|V_k^{\perp})(\bx) d\bx \right| \lesssim k^{-(\alpha+\beta)/d}
\end{equation*}
which completes the upper bound on the bias of $\hat{\psi}_{k}^{\mathrm{NR}}$. \\

\noindent \textbf{Bounding the variance:} 

We will show that
\begin{align}
    \Var_{\bbP,1} \left[ \E_{\bbP,2}\left( \hat{\psi}^{\mathrm{NR}}_{k}\right)  \right] & \lesssim \frac{1}{n} \label{eq: var known f nr pt1} \\
    \E_{\bbP,1} \left[ \Var_{\bbP,2} \left( \hat{\psi}^{\mathrm{NR}}_{k}\right)  \right] & \lesssim \frac{1}{n} + \frac{k}{n^2} \label{eq: var known f nr pt2}.
\end{align}
We first show (\ref{eq: var known f nr pt1}) as follows:
\begin{align*}
    & \Var_{\bbP,1} \left[ \E_{\bbP,2}\left( \hat{\psi}^{\mathrm{NR}}_{k}\right)  \right] \\
    & \quad = \Var_{\bbP,1} \left[  \E_{\bbP,2} \left(A(Y - \hat{b}^{(1)}_{k}(\bX))\right)  \right]  \\
    & \quad  = \Var_{\bbP,1} \left[\E_{\bbP,2} \left(p(\bX)(b(\bX) - \hat{b}^{(1)}_{k}(\bX)) \right) \right] \\
    & \quad  = \Var_{\bbP,1} \left[\int p(\bx)f(\bx)(b(\bx) - \hat{b}^{(1)}_{k}(\bx)) d\bx \right] \\
    & \quad  = \E_{\bbP,1} \left[ \left( \int p(\bx)f(\bx)(b(\bx) - \hat{b}^{(1)}_{k}(\bx)) d\bx\right)^2  \right] \\
    & \quad \quad  - \left(   \E_{\bbP,1} \left[ \int p(\bx)f(\bx)(b(\bx) - \hat{b}^{(1)}_{k}(\bx)) d\bx  \right] \right)^2 \\
    & \quad  = \iint  p(\bx)p(\by)f(\by)f(\bx) \E_{\bbP,1} \left[(b(\bx) - \hat{b}^{(1)}_{k}(\bx))(b(\by) - \hat{b}^{(1)}_{k}(\by)) \right] d\bx d\by \\
    & \quad  \quad - \iint  p(\bx)p(\by)f(\by)f(\bx) (b(\bx) - \Pi(b|V_k)(\bx))(b(\by) - \Pi(b|V_k)(\by)) d\bx d\by \\
    & \quad  \lesssim \iint  \left| \E_{\bbP,1} \left[(b(\bx) - \hat{b}^{(1)}_{k}(\bx))(b(\by) - \hat{b}^{(1)}_{k}(\by)) \right] - \Pi(b|V_k^{\perp})(\bx) \Pi(b|V_k^{\perp})(\by)  \right| d\bx d\by \\
    & \quad \lesssim \frac{1}{n} + \iint \frac{1}{n} \E_{\bbP} \left[ \left| K_{V_{k_1}}(\bX, \bx)K_{V_{k_1}}(\bX, \by) \right| \right] d\bx d\by \quad (\text{by Lemma \ref{lem: exp pxpy}})\\
    & \quad \lesssim \frac{1}{n}  \quad (\text{by Lemma \ref{lem: exp kernel}}).
\end{align*}
Next, we show (\ref{eq: var known f nr pt2}). We have that
\begin{align*}
    \E_{\bbP,1} \left[ \Var_{\bbP,2} \left( \hat{\psi}^{\mathrm{NR}}_{k}\right)  \right] & = \frac{1}{n} \E_{\bbP,1} \left[ \Var_{\bbP,2} \left( A(Y - \hat{b}^{(1)}_{k}(\bX)) \right) \right] \\
    & \leq \frac{1}{n} \E_{\bbP,1} \left[ \E_{\bbP,2} \left( A^2(Y - \hat{b}^{(1)}_{k}(\bX))^2 \right) \right] \\
    & = \frac{1}{n} \E_{\bbP,1} \left[ \E_{\bbP,2} \left( A^2(Y^2 - 2Y\hat{b}^{(1)}_{k}(\bX) + \hat{b}^{(1)}_{k}(\bX)^2) \right) \right] \\
    & = \frac{1}{n} \E_{\bbP,2} \left[ A^2(Y^2 - 2Y\Pi(b | V_k)(\bX) + \E_{\bbP,1} \left[\hat{b}^{(1)}_{k}(\bX)^2)\right] \right].
\end{align*}
Since $A$, $Y$ are bounded random variables and $\Pi(b | V_{k})$ is bounded, 
\begin{equation*}
    \E_{\bbP,1} \left[ \Var_{\bbP,2} \left( \hat{\psi}^{\mathrm{NR}}_{k}\right)  \right] \lesssim \frac{1}{n} + \frac{1}{n} \E_{\bbP,3} \left( \E_{\bbP,1}\left[\hat{b}^{(1)}_{k}(\bX)^2\right] \right). 
\end{equation*}
It then follows from Lemma \ref{lem: nuisance function bounds} that
\begin{equation*}
    \E_{\bbP,1} \left[ \Var_{\bbP,2} \left( \hat{\psi}^{\mathrm{NR}}_{k}\right)  \right] \lesssim \frac{1}{n} + \frac{k}{n^2}.
\end{equation*}

\subsection{Proof of the lower bound}

Let $\bbP \in \cP_{(\alpha,\beta)}$ and $\bX \sim \text{Uniform}([0,1]^d)$. Recall from the proof of the upper bound that
\begin{equation*}
    \E_\bbP(\hat{\psi}^{\mathrm{NR}}_{k}) - \psi(\bbP)  = \int  p(\bx) \Pi(b|V_k^{\perp})(\bx) d\bx. 
\end{equation*}
By orthogonality of $\Pi(p|V_k)$ and $\Pi(b|V_k^{\perp})$ with respect to the Lebesgue measure,
\begin{align*}
    \E_\bbP(\hat{\psi}^{\mathrm{NR}}_{k}) - \psi(\bbP)  & = \int  \Pi(p|V_k)(\bx) \Pi(b|V_k^{\perp})(\bx) d\bx  + \int  \Pi(p|V_k^{\perp})(\bx) \Pi(b|V_k^{\perp})(\bx) d\bx  \\
    & = \int  \Pi(p|V_k^{\perp})(\bx) \Pi(b|V_k^{\perp})(\bx) d\bx.
\end{align*}
Choosing $p$ and $b$ as given in (\ref{eq: p lb}) and (\ref{eq: b lb}), recall that
\begin{equation*}
    \left| \int  \Pi(p|V_k^{\perp})(\bx) \Pi(b|V_k^{\perp})(\bx) d\bx \right| \gtrsim k^{-(\alpha + \beta) / d}
\end{equation*}
which completes the proof.

\section{Proof of Theorem 4} \label{sec: proof known f no ss}

As in the proof of Theorem 1, we write out the proofs of the upper bounds when $f$ is such that 
\begin{align*}
    (i) & \quad  f(\bx) \in [M_1, M_2] \quad \forall \bx \in [0, 1]^d\\
    (ii) & \quad f \in H(\gamma, M)
\end{align*}
where $\gamma \geq \alpha \vee \beta$ and $M, M_1, M_2 \in \mathbb{R}^{+}$ are known constants. Recall that we use the approximate wavelet projection estimators of the nuisance functions given by (\ref{eq: wavelet estimator of p}) and (\ref{eq: wavelet estimator of b}) in this case.

The estimators of $\psi(\bbP)$ that we analyze are given by
\begin{align*}
    \hat{\psi}_{k_1, k_2}^{\mathrm{INT}} & = \frac{1}{n}\sum_{i \in \D_1} A_i Y_i -  \int \hat{p}_{k_1}^{(1)}(\bx)\hat{b}_{k_2}^{(1)}(\bx)f(\bx) d\bx \\
    \hat{\psi}^{\mathrm{MC}}_{k_1, k_2} & = \frac{1}{n}\sum_{i \in \D_1} A_i Y_i -  \frac{1}{n}\sum_{i \in \D_1} \hat{p}^{(1)}_{k_1}(\bX_i)\hat{b}^{(1)}_{k_2}(\bX_i) \\
    \hat{\psi}^{\mathrm{IF}}_{k_1, k_2} & = \frac{1}{n} \sum_{i \in \D_1} (A_i - \hat{p}^{(1)}_{k_1}(\bX_i))(Y_i - \hat{b}^{(1)}_{k_2}(\bX_i)).
\end{align*}

For notational simplicity, we will remove the superscript of $(1)$ from the nuisance function estimators. Further, let $\hat{p}^{(-j)}_{k_1}$ denote the approximate wavelet projection estimator without the contribution of the $j$th observation in $\D_1$, i.e.
\begin{equation*}
    \hat{p}_k^{(-j)}(\bx) = \frac{1}{n} \sum_{\substack{i \in \D_1 \\ i \neq j}} \frac{Y_i K_{V_{k}}(\bX_i,\bx)}{f(\bX_i)}, \qquad \bx \in [0, 1]^d.
\end{equation*}
Let $\hat{b}^{(-j)}_{k_2}$ be defined analogously. Let $\E_{\bbP, -j}$ and $\Var_{\bbP, -j}$ denote the expectation and variance under distribution $\bbP$ over set $\D_1 \setminus \{ \bO_j \}$ conditional on $\{ \bO_j \}$. Let $\E_{\bbP, j}$ and $\Var_{\bbP, j}$ denote the expectation and variance under distribution $\bbP$ over $\{\bO_j\}$ conditional on $\D_1 \setminus \{\bO_j\}$. 

Note that 
\begin{align*}
    \E_{\bbP, -j}\left[ \hat{p}_k^{(-j)} (\bx) \right] & = \frac{n-1}{n} \Pi(p|V_k)(\bx) \\
    \E_{\bbP, -j}\left[ \hat{b}_k^{(-j)} (\bx) \right] & = \frac{n-1}{n} \Pi(b|V_k)(\bx).
\end{align*}

\subsection{Integral-based plug-in estimator} 

Let $\bbP \in \cP_{(\alpha,\beta,\gamma)}$ be arbitrary. The bias is trivially the same as in the single sample splitting case. To bound the variance, observe that
\begin{align*}
    \Var_{\bbP}(\hat{\psi}_{k_1, k_2}^{\mathrm{INT}}) \leq 2 \Var_{\bbP}\left( \frac{1}{n}\sum_{i \in \D_1} A_i Y_i \right) + 2 \Var_{\bbP}\left( \int \hat{p}_{k_1}(\bx)\hat{b}_{k_2}(\bx)f(\bx) d\bx \right)
\end{align*}
and recall that we showed in the single sample splitting case that
\begin{align*}
    \Var_{\bbP}\left( \frac{1}{n}\sum_{i \in \D_1} A_i Y_i \right) & \lesssim \frac{1}{n} \\
    \Var_{\bbP}\left( \int \hat{p}_{k_1}(\bx)\hat{b}_{k_2}(\bx)f(\bx) d\bx \right) & \lesssim \frac{1}{n} + \frac{k_{\mathrm{min}}^2}{n^2}.
\end{align*}

\subsection{Monte Carlo-based plug-in estimator}

Recall that we can write
\begin{align}
    \hat{p}_{k_1}(\bx) & = \hat{p}_{k_1}^{(-i)}(\bx) + \frac{A_i}{nf(\bX_i)} K_{V_{k_1}}(\bX_i, \bx) \label{eq: phat minus i}\\
    \hat{b}_{k_2}(\bx) & = \hat{b}_{k_2}^{(-i)}(\bx) + \frac{Y_i}{nf(\bX_i)} K_{V_{k_2}}(\bX_i, \bx) \label{eq: bhat minus i}.
\end{align}
Using (\ref{eq: phat minus i}) and (\ref{eq: bhat minus i}), we can express $\hat{\psi}^{\mathrm{MC}}_{k_1, k_2}$ as
\begin{align}
    \hat{\psi}^{\mathrm{MC}}_{k_1, k_2} & = \frac{1}{n}\sum_{i \in \D_1} A_i Y_i -  \frac{1}{n}\sum_{i \in \D_1} \hat{p}_{k_1}^{(-i)}(\bX_i) \hat{b}_{k_2}^{(-i)}(\bX_i) \nonumber \\
    & \quad - \frac{1}{n^2}\sum_{i \in \D_1} \frac{A_i}{f(\bX_i)} K_{V_{k_1}}(\bX_i, \bX_i) \hat{b}^{(-i)}_{k_2}(\bX_i) \nonumber \\
    & \quad - \frac{1}{n^2}\sum_{i \in \D_1} \frac{Y_i}{f(\bX_i)} K_{V_{k_2}}(\bX_i, \bX_i) \hat{p}^{(-i)}_{k_1}(\bX_i) \nonumber \\
    & \quad - \frac{1}{n^3}\sum_{i \in \D_1} \frac{A_iY_i}{f(\bX_i)^2} K_{V_{k_1}}(\bX_i, \bX_i)K_{V_{k_2}}(\bX_i, \bX_i) . \label{eq: mc no ss rep}
\end{align}

\subsubsection{Proof of the upper bound}

Let $\bbP \in \cP_{(\alpha,\beta,\gamma)}$ be arbitrary. \\

\noindent \textbf{Bounding the bias:}

We first analyze the expected value of the second term in (\ref{eq: mc no ss rep}). We have that
\begin{align*}
    \E_{\bbP} \left( \frac{1}{n}\sum_{i \in \D_1} \hat{p}_{k_1}^{(-i)}(\bX_i) \hat{b}_{k_2}^{(-i)}(\bX_i) \right) & = \E_{\bbP} \left( \hat{p}_{k_1}^{(-i)}(\bX_i) \hat{b}_{k_2}^{(-i)}(\bX_i) \right) \\
    & = \int \E_{\bbP} \left( \hat{p}_{k_1}^{(-i)}(\bx) \hat{b}_{k_2}^{(-i)}(\bx) \right) f(\bx) d\bx \\
    & = \left(\frac{n-1}{n}\right)^2 \int \E_{\bbP} \left( \hat{p}_{k_1}(\bx) \hat{b}_{k_2}(\bx) \right) f(\bx) d\bx.
\end{align*}
Recall from the single sample splitting case that
\begin{align*}
    \int \E_{\bbP} \left( \hat{p}_{k_1}(\bx) \hat{b}_{k_2}(\bx) \right) f(\bx) d\bx & = \int \Pi(p|V_{k_1})(\bx) \Pi(b|V_{k_2})(\bx) f(\bx) d\bx  +  O\left( \frac{k_1 \wedge k_2}{n} \right)
\end{align*}
and so the second term in (\ref{eq: mc no ss rep}) can be expressed as
\begin{equation*}
    \E_{\bbP} \left( \frac{1}{n}\sum_{i \in \D_1} \hat{p}_{k_1}^{(-i)}(\bX_i) \hat{b}_{k_2}^{(-i)}(\bX_i) \right) = \int \Pi(p|V_{k_1})(\bx) \Pi(b|V_{k_2})(\bx) f(\bx) d\bx  +  O\left( \frac{k_1 \wedge k_2}{n} \right).
\end{equation*}

Next, we bound the expectation of the third term in (\ref{eq: mc no ss rep}). We have that
\begin{align*}
    & \left| \E_{\bbP} \left( \frac{1}{n^2}\sum_{i \in \D_1} \frac{A_i}{f(\bX_i)} K_{V_{k_1}}(\bX_i, \bX_i) \hat{b}^{(-i)}_{k_2}(\bX_i)\right) \right| \\
    & \quad = \frac{1}{n} \left| \E_{\bbP} \left( \frac{A_i}{f(\bX_i)} K_{V_{k_1}}(\bX_i, \bX_i) \hat{b}^{(-i)}_{k_2}(\bX_i) \right) \right| \\
    & \quad = \frac{1}{n} \left| \int p(\bx) K_{V_{k_1}}(\bx, \bx) \E_{\bbP} \left(\hat{b}^{(-i)}_{k_2}(\bx)\right) d\bx \right| \\
    & \quad \lesssim \frac{1}{n}  \int K_{V_{k_1}}(\bx, \bx)  d\bx   \\
    & \quad \lesssim \frac{k_1}{n} \quad (\text{by Lemma \ref{lem: exp kernel single same}}).
\end{align*}

Similarly, the expectation of the fourth term in (\ref{eq: mc no ss rep}) is bounded by $O(\frac{k_2}{n})$. Last, we bound the expectation of the fifth term in (\ref{eq: mc no ss rep}). We have that
\begin{align*}
    & \left| \E_{\bbP}\left(  \frac{1}{n^3}\sum_{i \in \D_1} \frac{A_iY_i}{f(\bX_i)^2} K_{V_{k_1}}(\bX_i, \bX_i)K_{V_{k_2}}(\bX_i, \bX_i) \right) \right| \\
    & \quad = \frac{1}{n^2} \left| \E_{\bbP}\left( \frac{A_iY_i}{f(\bX_i)^2} K_{V_{k_1}}(\bX_i, \bX_i)K_{V_{k_2}}(\bX_i, \bX_i) \right) \right| \\
    & \quad \lesssim \frac{1}{n^2}  \E_{\bbP}\left(  K_{V_{k_1}}(\bX_i, \bX_i)K_{V_{k_2}}(\bX_i, \bX_i)  \right) \\
    & \quad \lesssim \frac{1}{n^2}  \int   K_{V_{k_1}}(\bx, \bx)K_{V_{k_2}}(\bx, \bx)  d\bx \\
    & \quad \lesssim \frac{k_1 k_2}{n^2} \quad (\text{by Lemma \ref{lem: exp kernel single same}}).
\end{align*}

Therefore, 
\begin{equation*}
    \E_{\bbP}(\hat{\psi}^{\mathrm{MC}}_{k_1, k_2}) = \E_{\bbP}(AY) - \int \Pi(p|V_{k_1})(\bx) \Pi(b|V_{k_2})(\bx) f(\bx) d\bx +  O\left( \frac{k_1 \vee k_2}{n} \right) +  O\left( \frac{k_1 k_2}{n^2} \right). 
\end{equation*}
Observe that the expectation of the Monte Carlo-based plug-in estimator without sample splitting is equal to the expectation of the double sample split estimator plus $O\left( \frac{k_1 \vee k_2}{n} \right) +  O\left( \frac{k_1 k_2}{n^2} \right)$. It then follows from the work in the double sample splitting case that
\begin{equation*}
    \left|\E_\bbP\left(\hat{\psi}_{k_1, k_2}^{\mathrm{MC}} - \psi(\bbP)\right)\right| \lesssim (k_1 \wedge k_2)^{-(\alpha + \beta)/d} + \frac{k_1 \vee k_2}{n} + \frac{k_1k_2}{n^2}.
\end{equation*}
The result then follows when noting that $\frac{k_1k_2}{n^2} \ll \frac{k_1 \vee k_2}{n}$ when $k_1, k_2 \ll n$.\\

\noindent \textbf{Bounding the variance:}

By (\ref{eq: mc no ss rep}), we have that
\begin{align*}
    \Var_{\bbP}(\hat{\psi}^{\mathrm{MC}}_{k_1, k_2}) & \leq 5\Var_{\bbP} \left( \frac{1}{n}\sum_{i \in \D_1} A_i Y_i \right) +   5\Var_{\bbP} \left(\frac{1}{n}\sum_{i \in \D_1} \hat{p}_{k_1}^{(-i)}(\bX_i) \hat{b}_{k_2}^{(-i)}(\bX_i) \right) \\
    & \quad +  5\Var_{\bbP} \left(\frac{1}{n^3}\sum_{i \in \D_1} \frac{A_iY_i}{f(\bX_i)^2} K_{V_{k_1}}(\bX_i, \bX_i)K_{V_{k_2}}(\bX_i, \bX_i) \right) \\
    & \quad +  5\Var_{\bbP} \left(\frac{1}{n^2}\sum_{i \in \D_1} \frac{A_i}{f(\bX_i)} K_{V_{k_1}}(\bX_i, \bX_i) \hat{b}^{(-i)}_{k_2}(\bX_i)\right) \\
    & \quad +  5\Var_{\bbP} \left(\frac{1}{n^2}\sum_{i \in \D_1} \frac{Y_i}{f(\bX_i)} K_{V_{k_2}}(\bX_i, \bX_i) \hat{p}^{(-i)}_{k_1}(\bX_i) \right).
\end{align*}

Recall that $\Var_{\bbP} \left( \frac{1}{n}\sum_{i \in \D_1} A_i Y_i \right) \lesssim \frac{1}{n}$. We next bound \newline $\Var_{\bbP} \left(\frac{1}{n^3}\sum_{i \in \D_1} \frac{A_iY_i}{f(\bX_i)^2} K_{V_{k_1}}(\bX_i, \bX_i)K_{V_{k_2}}(\bX_i, \bX_i) \right)$ as follows
\begin{align*}
    & \Var_{\bbP} \left(\frac{1}{n^3}\sum_{i \in \D_1} \frac{A_iY_i}{f(\bX_i)^2} K_{V_{k_1}}(\bX_i, \bX_i)K_{V_{k_2}}(\bX_i, \bX_i) \right) \\
    & \quad = \frac{1}{n^5} \Var_{\bbP} \left( \frac{A_iY_i}{f(\bX_i)^2} K_{V_{k_1}}(\bX_i, \bX_i)K_{V_{k_2}}(\bX_i, \bX_i) \right) \\
    & \quad \leq \frac{1}{n^5} \E_{\bbP} \left[ \left( \frac{A_iY_i}{f(\bX_i)^2} K_{V_{k_1}}(\bX_i, \bX_i)K_{V_{k_2}}(\bX_i, \bX_i) \right)^2 \right] \\
    & \quad \lesssim \frac{1}{n^5} \int K_{V_{k_1}}(\bx, \bx)^2K_{V_{k_2}}(\bx, \bx)^2 d\bx \\
    & \quad \lesssim \frac{k_1^2k_2^2}{n^5} \quad (\text{by Lemma \ref{lem: exp kernel single same}}).
\end{align*}

We next show that
\begin{equation} \label{eq: var a k bhat}
    \Var_{\bbP} \left(\frac{1}{n^2}\sum_{i \in \D_1} \frac{A_i}{f(\bX_i)} K_{V_{k_1}}(\bX_i, \bX_i) \hat{b}^{(-i)}_{k_2}(\bX_i)\right) \lesssim \frac{k_2^2}{n^3} + \frac{k_1k_2^2}{n^4}.
\end{equation}
To do so, we will show that for all $i \in \D_1$
\begin{equation}
        \Var_{\bbP}\left( \frac{A_i}{f(\bX_i)} K_{V_{k_1}}(\bX_i, \bX_i) \hat{b}^{(-i)}_{k_2}(\bX_i)\right) \lesssim k_1^2 + \frac{k_1^2k_2}{n} \label{eq: no ss mc var pt1a}
\end{equation}
and for all $i, j \in \D_1, i \neq j$
\begin{equation}
    \left| \Cov_{\bbP}\left(\frac{A_i}{f(\bX_i)} K_{V_{k_1}}(\bX_i, \bX_i) \hat{b}^{(-i)}_{k_2}(\bX_i), \frac{A_j}{f(\bX_j)} K_{V_{k_1}}(\bX_j, \bX_j) \hat{b}^{(-j)}_{k_2}(\bX_j)\right) \right| \lesssim \frac{k_1^2}{n} \label{eq: no ss mc var pt2a}.
\end{equation}
To see that (\ref{eq: no ss mc var pt1a}) holds, observe that for any $i \in \D_1$
\begin{align*}
    \Var_{\bbP} \left(\frac{A_i}{f(\bX_i)} K_{V_{k_1}}(\bX_i, \bX_i) \hat{b}^{(-i)}_{k_2}(\bX_i)\right) & \lesssim \E_{\bbP} \left[ K_{V_{k_1}}(\bX_i, \bX_i)^2 \hat{b}^{(-i)}_{k_2}(\bX_i)^2\right] \\
    & \lesssim \int  K_{V_{k_1}}(\bx, \bx)^2 \E_{\bbP} \left[\hat{b}^{(-i)}_{k_2}(\bx)^2\right] d\bx \\
    & \lesssim k_1^2 \int  \E_{\bbP} \left[\hat{b}^{(-i)}_{k_2}(\bx)^2\right] d\bx \quad (\text{by Lemma \ref{lem: exp kernel single same}})\\
    & \lesssim k_1^2 \int   \E_{\bbP} \left[\hat{b}_{k_2}(\bx)^2\right] d\bx \\
    & \lesssim k_1^2 + \frac{k_1^2k_2}{n} \quad (\text{by Lemma \ref{lem: exp pxpy}}).
\end{align*}
We next show (\ref{eq: no ss mc var pt2a}). For any $i,j \in \D_1$ where $i \neq j$,
\begin{align*}
    & \Cov_{\bbP}\left(\frac{A_i}{f(\bX_i)} K_{V_{k_1}}(\bX_i, \bX_i) \hat{b}^{(-i)}_{k_2}(\bX_i), \frac{A_j}{f(\bX_j)} K_{V_{k_1}}(\bX_j, \bX_j) \hat{b}^{(-j)}_{k_2}(\bX_j)\right)\\
    & \quad = \E_{\bbP}\left[ \frac{A_iA_j}{f(\bX_i)f(\bX_j)}K_{V_{k_1}}(\bX_i, \bX_i)K_{V_{k_1}}(\bX_j, \bX_j)\hat{b}_{k_2}^{(-i)}(\bX_i)\hat{b}_{k_2}^{(-j)}(\bX_j) \right] \\
    & \quad \quad - \left(\E_{\bbP}\left[ \frac{A_i}{f(\bX_i)} K_{V_{k_1}}(\bX_i, \bX_i) \hat{b}^{(-i)}_{k_2}(\bX_i) \right]\right)^2
\end{align*}
where
\begin{align*}
    & \E_{\bbP}\left[ \frac{A_iA_j}{f(\bX_i)f(\bX_j)}K_{V_{k_1}}(\bX_i, \bX_i)K_{V_{k_1}}(\bX_j, \bX_j)\hat{b}_{k_2}^{(-i)}(\bX_i)\hat{b}_{k_2}^{(-j)}(\bX_j) \right] \\
    & \quad = \frac{1}{n^2}\sum_{\substack{i^{\prime}, j^{\prime} \in \D_1\\ i^{\prime} \neq i, j^{\prime} \neq j}} \E_{\bbP}\left[ \frac{A_iA_jY_{i^{\prime}}Y_{j^{\prime}}}{f(\bX_i)f(\bX_j)f(\bX_{i^{\prime}})f(\bX_{j^{\prime}})} \left(\begin{array}{c} K_{V_{k_1}}(\bX_i, \bX_i)K_{V_{k_1}}(\bX_j, \bX_j) \\ \times K_{V_{k_2}}(\bX_{i^{\prime}}, \bX_i) K_{V_{k_2}}(\bX_{j^{\prime}}, \bX_j) \end{array} \right) \right] \\
    & \quad = \frac{1}{n^2}\sum_{\substack{i^{\prime}, j^{\prime} \in \D_1\\ i^{\prime} \neq i, j^{\prime} \neq j \\ i^{\prime} \neq j^{\prime}}} \E_{\bbP}\left[ \frac{A_iA_jY_{i^{\prime}}Y_{j^{\prime}}}{f(\bX_i)f(\bX_j)f(\bX_{i^{\prime}})f(\bX_{j^{\prime}})} \left(\begin{array}{c} K_{V_{k_1}}(\bX_i, \bX_i)K_{V_{k_1}}(\bX_j, \bX_j) \\ \times K_{V_{k_2}}(\bX_{i^{\prime}}, \bX_i) K_{V_{k_2}}(\bX_{j^{\prime}}, \bX_j) \end{array} \right) \right] \\
    & \quad \quad + \frac{1}{n^2}\sum_{\substack{i^{\prime}, j^{\prime} \in \D_1\\ i^{\prime} \neq i, j^{\prime} \neq j \\ i^{\prime} = j^{\prime}}} \E_{\bbP}\left[ \frac{A_iA_jY_{i^{\prime}}Y_{j^{\prime}}}{f(\bX_i)f(\bX_j)f(\bX_{i^{\prime}})f(\bX_{j^{\prime}})} \left(\begin{array}{c} K_{V_{k_1}}(\bX_i, \bX_i)K_{V_{k_1}}(\bX_j, \bX_j) \\ \times K_{V_{k_2}}(\bX_{i^{\prime}}, \bX_i) K_{V_{k_2}}(\bX_{j^{\prime}}, \bX_j) \end{array} \right) \right] \\
    & \quad = \frac{(n-1)(n-2)}{n^2} \left( \E_{\bbP}\left[ \frac{A_iY_{i^{\prime}}}{f(\bX_i)f(\bX_{i^{\prime}})} K_{V_{k_1}}(\bX_i, \bX_i) K_{V_{k_2}}(\bX_{i^{\prime}}, \bX_i) \right] \right)^2  \\
    & \quad \quad + \frac{n-2}{n^2}  \E_{\bbP}\left[ \frac{A_iA_jY_{i^{\prime}}^2}{f(\bX_i)f(\bX_j)f(\bX_{i^{\prime}})^2} \left( \begin{array}{c} K_{V_{k_1}}(\bX_i, \bX_i)K_{V_{k_1}}(\bX_j, \bX_j) \\ \times K_{V_{k_2}}(\bX_{i^{\prime}}, \bX_i) K_{V_{k_2}}(\bX_{i^{\prime}}, \bX_j) \end{array}\right) \right] 
\end{align*}
and
\begin{align*}
    & \left( \E_{\bbP}\left[ \frac{A_i}{f(\bX_i)} K_{V_{k_1}}(\bX_i, \bX_i) \hat{b}^{(-i)}_{k_2}(\bX_i) \right] \right)^2 \\
    & \quad = \frac{(n-1)^2}{n^2} \left( \E_{\bbP} \left[ \frac{A_iY_{i^{\prime}}}{f(\bX_i)f(\bX_{i^{\prime}})} K_{V_{k_1}}(\bX_i, \bX_i) K_{V_{k_2}}(\bX_{i^{\prime}}, \bX_i) \right] \right)^2.
\end{align*}
Therefore, we can express the covariance as follows
\begin{align*}
    & \Cov_{\bbP}\left(\frac{A_i}{f(\bX_i)} K_{V_{k_1}}(\bX_i, \bX_i) \hat{b}^{(-i)}_{k_2}(\bX_i), \frac{A_j}{f(\bX_j)} K_{V_{k_1}}(\bX_j, \bX_j) \hat{b}^{(-j)}_{k_2}(\bX_j)\right)\\
    & \quad = \frac{n-2}{n^2}  \E_{\bbP}\left[ \frac{A_iA_jY_{i^{\prime}}^2}{f(\bX_i)f(\bX_j)f(\bX_{i^{\prime}})^2} \left( \begin{array}{c} K_{V_{k_1}}(\bX_i, \bX_i)K_{V_{k_1}}(\bX_j, \bX_j) \\ \times K_{V_{k_2}}(\bX_{i^{\prime}}, \bX_i) K_{V_{k_2}}(\bX_{i^{\prime}}, \bX_j) \end{array}\right) \right] \\
     & \quad \quad  - \frac{n-1}{n^2} \left( \E_{\bbP} \left[ \frac{A_iY_{i^{\prime}}}{f(\bX_i)f(\bX_{i^{\prime}})} K_{V_{k_1}}(\bX_i, \bX_i) K_{V_{k_2}}(\bX_{i^{\prime}}, \bX_i) \right] \right)^2.
\end{align*}
We can bound each of the two terms in the covariance as follows
\begin{align*}
    & \E_{\bbP}\left[ \frac{A_iA_jY_{i^{\prime}}^2}{f(\bX_i)f(\bX_j)f(\bX_{i^{\prime}})^2} \left( \begin{array}{c} K_{V_{k_1}}(\bX_i, \bX_i)K_{V_{k_1}}(\bX_j, \bX_j) \\ \times K_{V_{k_2}}(\bX_{i^{\prime}}, \bX_i) K_{V_{k_2}}(\bX_{i^{\prime}}, \bX_j) \end{array}\right) \right] \\
    & \quad \lesssim \E_{\bbP}\left[ \left| K_{V_{k_1}}(\bX_i, \bX_i)K_{V_{k_1}}(\bX_j, \bX_j) K_{V_k}(\bX_{i^{\prime}}, \bX_i) K_{V_k}(\bX_{i^{\prime}}, \bX_j) \right| \right] \\
    & \quad \lesssim k_1^2 \E_{\bbP}\left[ \left| K_{V_k}(\bX_{i^{\prime}}, \bX_i) K_{V_k}(\bX_{i^{\prime}}, \bX_j) \right| \right] \quad (\text{by Lemma \ref{lem: exp kernel single same}}) \\
    & \quad \lesssim k_1^2 \iint \E_{\bbP}\left[ \left| K_{V_k}(\bX_{i^{\prime}}, \bx) K_{V_k}(\bX_{i^{\prime}}, \by) \right|\right] d\bx d\by  \\
    & \quad \lesssim k_1^2 \quad (\text{by Lemma \ref{lem: exp kernel}})
\end{align*}
and
\begin{align*}
    & \left| \E_{\bbP} \left[ \frac{A_iY_{i^{\prime}}}{f(\bX_i)f(\bX_{i^{\prime}})} K_{V_{k_1}}(\bX_i, \bX_i) K_{V_{k_1}}(\bX_{i^{\prime}}, \bX_i) \right] \right| \\
    & \quad \lesssim  \E_{\bbP} \left[\left| K_{V_{k_1}}(\bX_i, \bX_i) K_{V_{k_1}}(\bX_{i^{\prime}}, \bX_i) \right] \right| \\
    & \quad \lesssim  k_1 \E_{\bbP} \left[\left| K_{V_{k_1}}(\bX_{i^{\prime}}, \bX_i) \right] \right| \quad (\text{by Lemma \ref{lem: exp kernel single same}})\\
    & \quad \lesssim  k_1 \quad (\text{by Lemma \ref{lem: exp kernel single}}).
\end{align*}
This establishes (\ref{eq: no ss mc var pt2a}).

It follows in the same manner that
\begin{equation*}
    \Var_{\bbP} \left(\frac{1}{n^2}\sum_{i \in \D_1} \frac{Y_i}{f(\bX_i)} K_{V_{k_2}}(\bX_i, \bX_i) \hat{p}^{(-i)}_{k_1}(\bX_i) \right) \lesssim \frac{k_2^2}{n^3} + \frac{k_1k_2^2}{n^4}.
\end{equation*}

Last, we bound $\Var_{\bbP} \left(\frac{1}{n}\sum_{i \in \D_1} \hat{p}_{k_1}^{(-i)}(\bX_i) \hat{b}_{k_2}^{(-i)}(\bX_i) \right)$ using a similar approach used to show (\ref{eq: var a k bhat}). We will show that for all $i \in \D_1$
\begin{equation}
    \Var_{\bbP} \left( \hat{p}_{k_1}^{(-i)}(\bX_i) \hat{b}_{k_2}^{(-i)}(\bX_i) \right) \lesssim 1 + \frac{k_{\mathrm{max}}}{n} + \frac{k_{\mathrm{min}}k_{\mathrm{max}}}{n^2} + \frac{k_{\mathrm{min}}^2 k_{\mathrm{max}}}{n^3}  \label{eq: no ss mc var pt1} \\
\end{equation}
and for all $i,j\in \D_1, i \neq j$
\begin{align}
    \Cov_{\bbP} \left( \hat{p}_{k_1}^{(-i)}(\bX_i) \hat{b}_{k_2}^{(-i)}(\bX_i), \hat{p}_{k_1}^{(-j)}(\bX_j) \hat{b}_{k_2}^{(-j)}(\bX_j) \right) & \lesssim \frac{1}{n} + \frac{k_{\mathrm{min}}^2}{n^2} + \frac{k_{\mathrm{max}}}{n^2} + \frac{k_{\mathrm{min}}k_{\mathrm{max}}}{n^3} \nonumber \\
    & \quad + \frac{k_{\mathrm{min}}^2 k_{\mathrm{max}}}{n^4} \label{eq: no ss mc var pt2}
\end{align}
We first establish (\ref{eq: no ss mc var pt1}). For any $i \in \D_1$, we have that
\begin{align*}
    \Var_{\bbP} \left( \hat{p}_{k_1}^{(-i)}(\bX_i) \hat{b}_{k_2}^{(-i)}(\bX_i) \right) & = \E_{\bbP,-i} \left[ \Var_{\bbP,i} \left[ \hat{p}_{k_1}^{(-i)}(\bX_i) \hat{b}_{k_2}^{(-i)}(\bX_i) \right] \right] \\
    & \quad + \Var_{\bbP,-i} \left[ \E_{\bbP,i} \left[ \hat{p}_{k_1}^{(-i)}(\bX_i) \hat{b}_{k_2}^{(-i)}(\bX_i) \right] \right].
\end{align*}
Recall from the proof of the upper bound of the variance of $\hat{\psi}_{k_1, k_2}^{\mathrm{MC}}$ (single sample splitting case) that
\begin{align*}
    \E_{\bbP,-i} \left[ \Var_{\bbP,i} \left[ \hat{p}_{k_1}^{(-i)}(\bX_i) \hat{b}_{k_2}^{(-i)}(\bX_i) \right] \right] & \leq \E_{\bbP,-i} \left[ \E_{\bbP,i} \left[ (\hat{p}_{k_1}^{(-i)}(\bX_i) \hat{b}_{k_2}^{(-i)}(\bX_i))^2 \right] \right] \\
    & = \int \E_{\bbP,-i} \left[ \hat{p}_{k_1}^{(-i)}(\bx)^2 \hat{b}_{k_2}^{(-i)}(\bx)^2 \right] f(\bx) d\bx \\
    & \lesssim 1 + \frac{k_{\mathrm{max}}}{n} + \frac{k_{\mathrm{min}}k_{\mathrm{max}}}{n^2} + \frac{k_{\mathrm{min}}^2 k_{\mathrm{max}}}{n^3}.
\end{align*}
Recall from the proof of the upper bound of the variance of $\hat{\psi}_{k_1, k_2}^{\mathrm{INT}}$ (single sample splitting case) that
\begin{align*}
    \Var_{\bbP,-i} \left[ \E_{\bbP,i} \left[ \hat{p}_{k_1}^{(-i)}(\bX_i) \hat{b}_{k_2}^{(-i)}(\bX_i) \right] \right] & = \Var_{\bbP,-i} \left[ \int \hat{p}_{k_1}^{(-i)}(\bx) \hat{b}_{k_2}^{(-i)}(\bx) f(\bx) d\bx\right] \\
    & \lesssim \frac{1}{n} + \frac{k_{\mathrm{min}}^2}{n^2}
\end{align*}
which establishes (\ref{eq: no ss mc var pt1}). 

The remainder of the proof will establish (\ref{eq: no ss mc var pt2}). We follow a similar approach used to show (\ref{eq: no ss mc var pt2a}). Let $i,j \in \D_1$ such that $i \neq j$ be arbitrary. We have that
\begin{align*}
    & \Cov_{\bbP} \left( \hat{p}_{k_1}^{(-i)}(\bX_i) \hat{b}_{k_2}^{(-i)}(\bX_i), \hat{p}_{k_1}^{(-j)}(\bX_j) \hat{b}_{k_2}^{(-j)}(\bX_j) \right) \\
    & = \E_{\bbP} \left( \hat{p}_{k_1}^{(-i)}(\bX_i) \hat{b}_{k_2}^{(-i)}(\bX_i) \hat{p}_{k_1}^{(-j)}(\bX_j) \hat{b}_{k_2}^{(-j)}(\bX_j) \right) - \left[ \E_{\bbP} \left( \hat{p}_{k_1}^{(-i)}(\bX_i) \hat{b}_{k_2}^{(-i)}(\bX_i)\right) \right]^2.
\end{align*}
Recall from the single sample splitting case that
\begin{align*}
    & \E_{\bbP} \left( \hat{p}_{k_1}^{(-i)}(\bX_i) \hat{b}_{k_2}^{(-i)}(\bX_i)\right)  \\
    & \quad =  \int \E_{\bbP} \left( \hat{p}_{k_1}^{(-i)}(\bx) \hat{b}_{k_2}^{(-i)}(\bx)\right) f(\bx) d\bx  \\
    & \quad = \left( 1 + O\left(\frac{1}{n}\right) \right) \int \E_{\bbP} \left( \hat{p}_{k_1}(\bx) \hat{b}_{k_2}(\bx)\right) f(\bx) d\bx  \\
    & \quad = \left( 1 + O\left(\frac{1}{n}\right) \right) \bigg[ \int \Pi(p|V_{k_1})(\bx) \Pi(b|V_{k_2})(\bx)f(\bx)d\bx \\
    & \quad \qquad \qquad \qquad \qquad \quad+ \frac{1}{n} \int \E_{\bbP}\left[ \frac{AY  K_{V_{k_1}}(\bX, \bx)K_{V_{k_2}}(\bX, \bx)}{\left( f(\bX) \right)^2} \right] f(\bx) d\bx  + O\left( \frac{1}{n} \right) \bigg].
\end{align*}
By definition of $\hat{p}_{k_1}^{(-i)}$ and $\hat{b}_{k_2}^{(-i)}$, we can write
\begin{align*}
    & \E_{\bbP} \left( \hat{p}_{k_1}^{(-i)}(\bX_i) \hat{b}_{k_2}^{(-i)}(\bX_i) \hat{p}_{k_1}^{(-j)}(\bX_j) \hat{b}_{k_2}^{(-j)}(\bX_j) \right) \\
    & \quad = \frac{1}{n^4}  \sum_{\substack{i_1, i_2, j_1, j_2 \in \D_1 \\ i_1 \neq i, i_2 \neq j \\ j_1 \neq i, j_2 \neq j}} \E_{\bbP}\left[ \frac{A_{i_1}A_{i_2}Y_{j_1}Y_{j_2}\left( \begin{array}{c} K_{V_{k_1}}(\bX_{i_1},\bX_i)K_{V_{k_1}}(\bX_{i_2},\bX_j) \\ \times K_{V_{k_2}}(\bX_{j_1},\bX_i)K_{V_{k_2}}(\bX_{j_2},\bX_j) \end{array} \right)}{f(\bX_{i_1})f(\bX_{i_2})f(\bX_{j_1})f(\bX_{j_2})}  \right] \\
    & \quad = \frac{1}{n^4}  \sum_{\substack{i_1, i_2, j_1, j_2 \in \D_1 \\ i_1 \neq i, i_2 \neq j \\ j_1 \neq i, j_2 \neq j \\ \{i_1,i_2,j_1,j_2\} \cap \{i, j\} = \emptyset}} \E_{\bbP}\left[ \frac{A_{i_1}A_{i_2}Y_{j_1}Y_{j_2}\left( \begin{array}{c} K_{V_{k_1}}(\bX_{i_1},\bX_i)K_{V_{k_1}}(\bX_{i_2},\bX_j) \\ \times K_{V_{k_2}}(\bX_{j_1},\bX_i)K_{V_{k_2}}(\bX_{j_2},\bX_j) \end{array} \right)}{f(\bX_{i_1})f(\bX_{i_2})f(\bX_{j_1})f(\bX_{j_2})}  \right] \\
    & \quad \quad + \frac{1}{n^4}  \sum_{\substack{i_1, i_2, j_1, j_2 \in \D_1 \\ i_1 \neq i, i_2 \neq j \\ j_1 \neq i, j_2 \neq j \\ \{i_1,i_2,j_1,j_2\} \cap \{i, j\} \neq \emptyset}} \E_{\bbP}\left[ \frac{A_{i_1}A_{i_2}Y_{j_1}Y_{j_2}\left( \begin{array}{c} K_{V_{k_1}}(\bX_{i_1},\bX_i)K_{V_{k_1}}(\bX_{i_2},\bX_j) \\ \times K_{V_{k_2}}(\bX_{j_1},\bX_i)K_{V_{k_2}}(\bX_{j_2},\bX_j) \end{array} \right)}{f(\bX_{i_1})f(\bX_{i_2})f(\bX_{j_1})f(\bX_{j_2})}  \right]  \\
    & \quad = \frac{1}{n^4}  \sum_{\substack{i_1, i_2, j_1, j_2 \in \D_1 \\ i_1 \neq i, i_2 \neq j \\ j_1 \neq i, j_2 \neq j}} \E_{\bbP}\left[ \frac{A_{i_1}A_{i_2}Y_{j_1}Y_{j_2}\left(\begin{array}{c} \int K_{V_{k_1}}(\bX_{i_1},\bx) K_{V_{k_2}}(\bX_{j_1},\bx)f(\bx)d\bx \\ \times \int K_{V_{k_1}}(\bX_{i_2},\by)K_{V_{k_2}}(\bX_{j_2},\by) f(\by)  d\by \end{array}\right)}{f(\bX_{i_1})f(\bX_{i_2})f(\bX_{j_1})f(\bX_{j_2})}  \right] \\
    & \quad \quad + \frac{1}{n^4}  \sum_{\substack{i_1, i_2, j_1, j_2 \in \D_1 \\ i_1 \neq i, i_2 \neq j \\ j_1 \neq i, j_2 \neq j \\ \{i_1,i_2,j_1,j_2\} \cap \{i, j\} \neq \emptyset}} \E_{\bbP}\left[ \frac{A_{i_1}A_{i_2}Y_{j_1}Y_{j_2} \left( \begin{array}{c} K_{V_{k_1}}(\bX_{i_1},\bX_i)K_{V_{k_1}}(\bX_{i_2},\bX_j) \\ \times K_{V_{k_2}}(\bX_{j_1},\bX_i)K_{V_{k_2}}(\bX_{j_2},\bX_j) \end{array} \right)}{f(\bX_{i_1})f(\bX_{i_2})f(\bX_{j_1})f(\bX_{j_2})}  \right]  \\
    & = I + II
\end{align*}
Recall that we analyzed $I$ in the single sample splitting case, which yields
\begin{align*}
    \left| I - \left[ \E_{\bbP} \left( \hat{p}_{k_1}^{(-i)}(\bX_i) \hat{b}_{k_2}^{(-i)}(\bX_i)\right) \right]^2 \right| \lesssim \frac{1}{n} + \frac{(k_1 \wedge k_2)^2}{n^2}
\end{align*}
It remains to bound $II$. We decompose $II$ into four cases based on 
\begin{equation*}
    u = | \{i_1,i_2,j_1,j_2\} \cap \{i,j\} |.
\end{equation*}
\begin{itemize}
    \item $u = 4$. This implies $i_1 = j_1 = j$ and $i_2 = j_2 = i$ in $II$. The contribution in $II$ is then
\begin{align*}
    &  \frac{1}{n^4} \left| \sum_{\substack{i_1, i_2, j_1, j_2 \in \D_1 \\ i_1 = j_1 = j, i_2 = j_2 = i}} \E_{\bbP}\left[ \frac{A_{i_1}A_{i_2}Y_{j_1}Y_{j_2} \left(\begin{array}{c} K_{V_{k_1}}(\bX_{i_1},\bX_i)K_{V_{k_1}}(\bX_{i_2},\bX_j) \\ \times K_{V_{k_2}}(\bX_{j_1},\bX_i)K_{V_{k_2}}(\bX_{j_2},\bX_j) \end{array}\right)}{f(\bX_{i_1})f(\bX_{i_2})f(\bX_{j_1})f(\bX_{j_2})} \right] \right| \\
    & \quad = \frac{1}{n^4} \left|  \E_{\bbP}\left[ \frac{A_{j}A_{i}Y_{j}Y_{i} K_{V_{k_1}}(\bX_{i},\bX_j)^2K_{V_{k_2}}(\bX_{i},\bX_j)^2}{f(\bX_{i})^2f(\bX_{j})^2} \right] \right| \\
    & \quad \lesssim \frac{1}{n^4} \E_{\bbP} \left[ K_{V_{k_1}}(\bX_{i},\bX_j)^2 K_{V_{k_2}}(\bX_{i},\bX_j)^2 \right] \\
    & \quad \lesssim \frac{1}{n^4} \int \E_{\bbP} \left[K_{V_{k_1}}(\bX,\bx)^2 K_{V_{k_2}}(\bX,\bx)^2 \right] d\bx \\
    & \quad \lesssim \frac{(k_1 \wedge k_2)^2 (k_1 \vee k_2)}{n^4} \quad (\text{by Lemma \ref{lem: exp kernel single}}).
\end{align*}

\item $u = 3$. Suppose without loss of generality that $i_2 \not\in \{i,j\}$. This implies $i_1 = j_1 = j$, and $i_2 = i$. The contribution in $II$ is
\begin{align*}
    &  \frac{1}{n^4} \left| \sum_{\substack{i_1, i_2, j_1, j_2 \in \D_1 \\ i_1 = j_1 = j, i_2 = i, j_2 \not \in \{i,j\}}} \E_{\bbP}\left[ \frac{A_{i_1}A_{i_2}Y_{j_1}Y_{j_2} \left(\begin{array}{c} K_{V_{k_1}}(\bX_{i_1},\bX_i)K_{V_{k_1}}(\bX_{i_2},\bX_j) \\ \times K_{V_{k_2}}(\bX_{j_1},\bX_i)K_{V_{k_2}}(\bX_{j_2},\bX_j) \end{array}\right)}{f(\bX_{i_1})f(\bX_{i_2})f(\bX_{j_1})f(\bX_{j_2})} \right] \right| \\
    & \quad = \frac{1}{n^4} \left| \sum_{\substack{i_1, i_2, j_1, j_2 \in \D_1 \\ i_1 = j_1 = j, i_2 = i, j_2 \not \in \{i,j\}}} \E_{\bbP}\left[ \frac{A_{j}A_{i}Y_{j}Y_{j_2} \left( \begin{array}{c} K_{V_{k_1}}(\bX_{i},\bX_j)^2 \\ \times K_{V_{k_2}}(\bX_{j},\bX_i) K_{V_{k_2}}(\bX_{j_2},\bX_j) \end{array} \right)}{f(\bX_{j})^2f(\bX_{i})f(\bX_{j_2})} \right] \right| \\
    & \quad \lesssim \frac{1}{n^3} \E_{\bbP}\left[ | K_{V_{k_1}}(\bX_{i},\bX_j)^2 K_{V_{k_2}}(\bX_{i},\bX_j)K_{V_{k_2}}(\bX_{j_2},\bX_j) | \right] \\
    & \quad \lesssim \frac{1}{n^3} \E_{\bbP}\left[ |K_{V_{k_1}}(\bX_{i},\bX_j)^2 K_{V_{k_2}}(\bX_{i},\bX_j)| \right]   \quad (\text{by Lemma \ref{lem: exp kernel single}}) \\
    & \quad \lesssim \frac{1}{n^3} \int  \E_{\bbP}\left[ | K_{V_{k_1}}(\bX,\bx)^2 K_{V_{k_2}}(\bX,\bx) | \right] d\bx \\
    & \quad \lesssim \frac{k_1k_2}{n^3}  \quad (\text{by Lemma \ref{lem: exp kernel single}}).
\end{align*}

\item $u = 2$. We will consider each of the $\binom{4}{2} = 6$ cases where $u = 2$. First, suppose that $i_1, j_1 \in \{i,j\}$. This implies that $i_1 = j_1 = j$. The contribution in II when $i_2 \neq j_2$ is
\begin{align*}
    & \frac{1}{n^4} \left| \sum_{\substack{i_1, i_2, j_1, j_2 \in \D_1 \\ i_1 = j_1 = j \\ i_2, j_2 \not\in \{i,j\}, i_2 \neq j_2}} \E_{\bbP}\left[ \frac{A_{i_1}A_{i_2}Y_{j_1}Y_{j_2} \left( \begin{array}{c} K_{V_{k_1}}(\bX_{i_1},\bX_i)K_{V_{k_1}}(\bX_{i_2},\bX_j) \\ \times K_{V_{k_2}}(\bX_{j_1},\bX_i)K_{V_{k_2}}(\bX_{j_2},\bX_j) \end{array} \right)}{f(\bX_{i_1})f(\bX_{i_2})f(\bX_{j_1})f(\bX_{j_2})} \right] \right| \\
    & \quad = \frac{1}{n^4}  \sum_{\substack{i_1, i_2, j_1, j_2 \in \D_1 \\ i_1 = j_1 = j \\ i_2, j_2 \not\in \{i,j\}, i_2 \neq j_2}} \left| \E_{\bbP}\left[ \frac{A_{j}A_{i_2}Y_{j}Y_{j_2} \left( \begin{array}{c} K_{V_{k_1}}(\bX_{j},\bX_i)K_{V_{k_1}}(\bX_{i_2},\bX_j) \\ \times K_{V_{k_2}}(\bX_{j},\bX_i)K_{V_{k_2}}(\bX_{j_2},\bX_j) \end{array} \right)}{f(\bX_{j})^2f(\bX_{i_2})f(\bX_{j_2})} \right] \right| \\
    & \quad \lesssim \frac{1}{n^2}  \E_{\bbP}\left[ | K_{V_{k_1}}(\bX_{j},\bX_i)K_{V_{k_1}}(\bX_{i_2},\bX_j)K_{V_{k_2}}(\bX_{j},\bX_i)K_{V_{k_2}}(\bX_{j_2},\bX_j) | \right] \\
    & \quad \lesssim \frac{1}{n^2}  \E_{\bbP}\left[ | K_{V_{k_1}}(\bX_{j},\bX_i)K_{V_{k_2}}(\bX_{j},\bX_i) | \right]   \quad (\text{by Lemma \ref{lem: exp kernel single}}) \\
    & \quad \lesssim \frac{1}{n^2}  \int \E_{\bbP}\left[ | K_{V_{k_1}}(\bX_{j},\bx)K_{V_{k_2}}(\bX_{j},\bx) | \right] d\bx  \\
    & \quad \lesssim \frac{k_1 \wedge k_2}{n^2}  \quad (\text{by Lemma \ref{lem: exp kernel single}})
\end{align*}
and when $i_2 = j_2$ is
\begin{align*}
    & \frac{1}{n^4} \left|  \sum_{\substack{i_1, i_2, j_1, j_2 \in \D_1 \\ i_1 = j_1 = j \\ i_2, j_2 \not\in \{i,j\}, i_2 = j_2}} \E_{\bbP}\left[ \frac{A_{i_1}A_{i_2}Y_{j_1}Y_{j_2} \left(\begin{array}{c} K_{V_{k_1}}(\bX_{i_1},\bX_i)K_{V_{k_1}}(\bX_{i_2},\bX_j) \\ \times K_{V_{k_2}}(\bX_{j_1},\bX_i)K_{V_{k_2}}(\bX_{j_2},\bX_j) \end{array} \right)}{f(\bX_{i_1})f(\bX_{i_2})f(\bX_{j_1})f(\bX_{j_2})} \right] \right| \\
    & \quad = \frac{1}{n^4} \left|  \sum_{\substack{i_1, i_2, j_1, j_2 \in \D_1 \\ i_1 = j_1 = j \\ i_2, j_2 \not\in \{i,j\}, i_2 = j_2}} \E_{\bbP}\left[ \frac{A_{j}A_{i_2}Y_{j}Y_{i_2} \left( \begin{array}{c} K_{V_{k_1}}(\bX_{j},\bX_i)K_{V_{k_1}}(\bX_{i_2},\bX_j) \\ \times K_{V_{k_2}}(\bX_{j},\bX_i)K_{V_{k_2}}(\bX_{i_2},\bX_j) \end{array} \right)}{f(\bX_{j})^2f(\bX_{i_2})^2} \right] \right| \\
    & \quad \lesssim \frac{1}{n^3}  \E_{\bbP}\left[ | K_{V_{k_1}}(\bX_{j},\bX_i)K_{V_{k_1}}(\bX_{i_2},\bX_j)K_{V_{k_2}}(\bX_{j},\bX_i)K_{V_{k_2}}(\bX_{i_2},\bX_j) |\right]  \\
    & \quad \lesssim \frac{1}{n^3}  \int \E_{\bbP}\left[ | K_{V_{k_1}}(\bx,\bX_i)K_{V_{k_1}}(\bX_{i_2},\bx)K_{V_{k_2}}(\bx,\bX_i)K_{V_{k_2}}(\bX_{i_2},\bx) | \right] d\bx \\
    & \quad \lesssim \frac{1}{n^3}  \int \left(\E_{\bbP}\left[ | K_{V_{k_1}}(\bx,\bX_i)K_{V_{k_2}}(\bx,\bX_i) | \right] \right)^2 d\bx  \\
    & \quad \lesssim \frac{(k_1 \wedge k_2)^2}{n^3}.
\end{align*}

By symmetry, the case where $i_2,j_2 \in \{i,j\}$ also contributes $O(\frac{k_1 \wedge k_2}{n^2}) + O(\frac{(k_1 \wedge k_2)^2}{n^3})$ to $II$. 

For the third case, consider that $i_1, i_2 \in \{i,j\}$. This implies that $i_1 = j$ and $i_2 = i$. By similar arguments as before, the contribution in $II$ when $j_1 \neq j_2$ is bounded by
\begin{align*}
    & \frac{1}{n^2} \E_{\bbP}\left[| K_{V_{k_1}}(\bX_{j},\bX_i)K_{V_{k_1}}(\bX_{i},\bX_j)K_{V_{k_2}}(\bX_{j_1},\bX_i)K_{V_{k_2}}(\bX_{j_2},\bX_j) |\right] \\
    & \quad \lesssim \frac{1}{n^2} \E_{\bbP}\left[ K_{V_{k_1}}(\bX_i,\bX_j)^2\right] \quad (\text{by Lemma \ref{lem: exp kernel single}}) \\
    & \quad \lesssim \frac{k_1}{n^2}.
\end{align*}
The contribution in $II$ when $j_1 = j_2$ is bounded by
\begin{align*}
    & \frac{1}{n^3} \E_{\bbP}\left[| K_{V_{k_1}}(\bX_i,\bX_j)^2K_{V_{k_2}}(\bX_{j_1},\bX_i)K_{V_{k_2}}(\bX_{j_1},\bX_j) |\right] \\
    & \quad \lesssim \frac{k_1k_2}{n^3} \quad (\text{by Lemma \ref{lem: exp kernel triple}}).
\end{align*}
By symmetric arguments, the case where $j_1,j_2 \in \{i,j\}$ contributes $O(\frac{k_2}{n^2}) + O(\frac{k_1k_2}{n^3})$ in $II$.

For the fifth case, consider that $j_1,i_2 \in \{i,j\}$. This implies that $j_1 = j$ and $i_2 = i$. The contribution in $II$ when $i_1 \neq j_2$ is bounded by
\begin{align*}
    & \frac{1}{n^2} \E_{\bbP}\left[ |K_{V_{k_1}}(\bX_{i_1},\bX_i)K_{V_{k_1}}(\bX_{i},\bX_j)K_{V_{k_2}}(\bX_{j},\bX_i)K_{V_{k_2}}(\bX_{j_2},\bX_j)| \right] \\
    & \quad \lesssim \frac{1}{n^2} \E_{\bbP}\left[ |K_{V_{k_1}}(\bX_{i},\bX_j)K_{V_{k_2}}(\bX_{j},\bX_i)| \right]  \quad (\text{by Lemma \ref{lem: exp kernel single}}) \\
    & \quad \lesssim \frac{k_1 \wedge k_2}{n^2}  \quad (\text{by Lemma \ref{lem: exp kernel single}}).
\end{align*}
The contribution in $II$ when $i_1 = j_2$ is bounded by
\begin{align*}
    & \frac{1}{n^3} \E_{\bbP}\left[ |K_{V_{k_1}}(\bX_{i_1},\bX_i)K_{V_{k_1}}(\bX_{i},\bX_j)K_{V_{k_2}}(\bX_{j},\bX_i)K_{V_{k_2}}(\bX_{i_1},\bX_j)| \right] \\
    & \quad \frac{(k_1 \wedge k_2)^2}{n^3} \quad (\text{by Lemma \ref{lem: exp kernel triple}}).
\end{align*}
By symmetric arguments, the case where $i_2,j_1 \in \{i,j\}$ contributes $O(\frac{k_1 \wedge k_2}{n^2}) + O(\frac{(k_1 \wedge k_2)^2}{n^3})$ in $II$. 

In summary, the $u=2$ cases contribute $O(\frac{k_1 \vee k_2}{n^2}) + O(\frac{k_1k_2}{n^3})$ in $II$.

\item $u = 1$. Suppose without loss of generality that $i_1 \in \{i,j\}$. This implies that $i_1 = j$. We consider 3 broad cases based on how many of the $i_2,j_1,j_2$ are distinct:
\begin{itemize}
    \item $i_2,j_1,j_2$ are distinct: The contribution in $II$ is bounded by
    \begin{align*}
        & \frac{1}{n} \E_{\bbP}\left[ |K_{V_{k_1}}(\bX_i,\bX_j)K_{V_{k_1}}(\bX_{i_2},\bX_j)K_{V_{k_2}}(\bX_{j_1},\bX_i)K_{V_{k_2}}(\bX_{j_2},\bX_j)| \right] \\
        & \quad \lesssim \frac{1}{n} \E_{\bbP}\left[ |K_{V_{k_1}}(\bX_i,\bX_j)| \right]  \quad (\text{by Lemma \ref{lem: exp kernel single}}) \\
        & \quad \lesssim \frac{1}{n}  \quad (\text{by Lemma \ref{lem: exp kernel single}}).
    \end{align*}

    \item One pair of equal $i_2,j_1,j_2$. When $i_2 = j_1$, the contribution in $II$ is bounded by
    \begin{align*}
        & \frac{1}{n^2} \E_{\bbP}\left[ |K_{V_{k_1}}(\bX_i,\bX_j)K_{V_{k_1}}(\bX_{i_2},\bX_j)K_{V_{k_2}}(\bX_{i_2},\bX_i)K_{V_{k_2}}(\bX_{j_2},\bX_j)| \right] \\
        & \quad \lesssim \frac{1}{n^2} \E_{\bbP}\left[ |K_{V_{k_1}}(\bX_i,\bX_j)K_{V_{k_1}}(\bX_{i_2},\bX_j)K_{V_{k_2}}(\bX_{i_2},\bX_i)| \right]  \quad (\text{by Lemma \ref{lem: exp kernel single}}) \\
        & \quad \lesssim \frac{k_1 \wedge k_2}{n^2}  \quad (\text{by Lemma \ref{lem: exp kernel triple}}).
    \end{align*}
    Similarly, when $j_1 = j_2$, the contribution in $II$ is bounded by
    \begin{align*}
        & \frac{1}{n^2} \E_{\bbP}\left[ |K_{V_{k_1}}(\bX_i,\bX_j)K_{V_{k_1}}(\bX_{i_2},\bX_j)K_{V_{k_2}}(\bX_{j_1},\bX_i)K_{V_{k_2}}(\bX_{j_1},\bX_j)| \right] \\
        & \quad \lesssim \frac{1}{n^2} \E_{\bbP}\left[ |K_{V_{k_1}}(\bX_i,\bX_j)K_{V_{k_2}}(\bX_{j_1},\bX_i)K_{V_{k_2}}(\bX_{j_1},\bX_j)| \right]  \quad (\text{by Lemma \ref{lem: exp kernel single}}) \\
        & \quad \lesssim \frac{k_1 \wedge k_2}{n^2}  \quad (\text{by Lemma \ref{lem: exp kernel triple}}).
    \end{align*}
    When $i_2 = j_2$, the contribution in $II$ is bounded by
    \begin{align*}
        & \frac{1}{n^2} \E_{\bbP}\left[ |K_{V_{k_1}}(\bX_i,\bX_j)K_{V_{k_1}}(\bX_{i_2},\bX_j)K_{V_{k_2}}(\bX_{j_1},\bX_i)K_{V_{k_2}}(\bX_{i_2},\bX_j)| \right] \\
        & \quad \lesssim \frac{1}{n^2} \E_{\bbP}\left[ |K_{V_{k_1}}(\bX_{i_2},\bX_j)K_{V_{k_2}}(\bX_{i_2},\bX_j)| \right]  \quad (\text{by Lemma \ref{lem: exp kernel single}}) \\
        & \quad \lesssim \frac{k_1 \wedge k_2}{n^2}  \quad (\text{by Lemma \ref{lem: exp kernel single}}).
    \end{align*}

    \item All equal $i_2, j_1, j_2$. The contribution in $II$ is bounded by
    \begin{align*}
        & \frac{1}{n^3} \E_{\bbP}\left[ |K_{V_{k_1}}(\bX_i,\bX_j)K_{V_{k_1}}(\bX_{i_2},\bX_j)K_{V_{k_2}}(\bX_{i_2},\bX_i)K_{V_{k_2}}(\bX_{i_2},\bX_j)| \right] \\
        & \quad \lesssim \frac{(k_1 \wedge k_2)^2}{n^3} \quad (\text{by Lemma \ref{lem: exp kernel triple}}).
    \end{align*}
\end{itemize}
In summary,
\begin{equation*}
    II \lesssim \frac{1}{n} + \frac{k_1 \vee k_2}{n^2} + \frac{k_1k_2}{n^3} + \frac{(k_1 \wedge k_2)^2 (k_1 \vee k_2)}{n^4}
\end{equation*}
which completes the proof of the variance bound.

\end{itemize}

\subsubsection{Proof of the lower bound}

Let $\bbP \in \cP_{(\alpha,\beta)}$. Recall from the proof of the upper bound that
\begin{align*}
    \E_{\bbP}(\hat{\psi}^{\mathrm{MC}}_{k_1, k_2}) & = \E_{\bbP}(AY) - \left(\frac{n-1}{n}\right)^2 \int \E_{\bbP} \left( \hat{p}_{k_1}(\bx) \hat{b}_{k_2}(\bx) \right) f(\bx) d\bx \\
    & \quad - \frac{1}{n} \E_{\bbP} \left( \frac{A_i}{f(\bX_i)} K_{V_{k_1}}(\bX_i, \bX_i) \hat{b}^{(-i)}_{k_2}(\bX_i) \right) \\
    & \quad - \frac{1}{n} \E_{\bbP} \left( \frac{Y_i}{f(\bX_i)} K_{V_{k_2}}(\bX_i, \bX_i) \hat{p}^{(-i)}_{k_1}(\bX_i) \right) \\
    & \quad + O\left( \frac{k_1k_2}{n^2} \right).
\end{align*}
We perform a more precise analysis of the second term for the lower bound compared to the upper bound. Specifically, recall from the single sample splitting case that
\begin{align*}
    & \left(\frac{n-1}{n}\right)^2 \int \E_{\bbP} \left( \hat{p}_{k_1}(\bx) \hat{b}_{k_2}(\bx) \right) f(\bx) d\bx \\
    & \quad = \int \E_{\bbP} \left( \hat{p}_{k_1}(\bx) \hat{b}_{k_2}(\bx) \right) f(\bx) d\bx + O\left( \frac{1}{n} \right) + O\left( \frac{k_1 \wedge k_2}{n^2} \right) \\
    & \quad = \int \Pi(p|V_{k_1})(\bx) \Pi(b|V_{k_2})(\bx) f(\bx) d\bx + \int r_{n, k_1, k_2}(\bx) f(\bx) d\bx \\
    & \quad \quad + O\left( \frac{1}{n} \right) + O\left( \frac{k_1 \wedge k_2}{n^2} \right).
\end{align*}
Let $\bX \sim \text{Uniform}([0,1]^d)$. Suppose without loss of generality that $k_1 \leq k_2$ for $n$ sufficiently large. Then, we can express the bias of $\hat{\psi}_{k_1, k_2}^{\mathrm{MC}}$ by
\begin{align} 
    \E_\bbP\left(\hat{\psi}_{k_1, k_2}^{\mathrm{MC}} - \psi(\bbP)\right) & = - \int \Pi(p|V_{k_1}^{\perp})(\bx) \Pi(b|V_{k_1}^{\perp})(\bx) d\bx - \int r_{n, k_1, k_2}(\bx) d\bx \nonumber \\
    & \quad - \frac{1}{n} \E_{\bbP} \left( A_i K_{V_{k_1}}(\bX_i, \bX_i) \hat{b}^{(-i)}_{k_2}(\bX_i) \right) \nonumber \\
    & \quad - \frac{1}{n} \E_{\bbP} \left( Y_i K_{V_{k_2}}(\bX_i, \bX_i) \hat{p}^{(-i)}_{k_1}(\bX_i) \right) \nonumber \\
    & \quad + O\left( \frac{1}{n} \right) + O\left( \frac{k_1k_2}{n^2} \right). \label{eq: mc bias lb no ss}
\end{align}
Next, we establish lower bounds on absolute value of each of the first four terms in (\ref{eq: mc bias lb no ss}). 

\begin{itemize}
    \item Case 1: $k_1^{-(\alpha + \beta)/d} \gg \frac{k_2}{n}$. Choose $p$ and $b$ as given in (\ref{eq: p lb}) and (\ref{eq: b lb}). Then, recall from the double sample splitting case that
\begin{align*}
    \left| \int \Pi(p|V_{k_1}^{\perp})(\bx) \Pi(b|V_{k_1}^{\perp})(\bx) d\bx \right| & \gtrsim k_1^{-(\alpha + \beta)/d}.
\end{align*}
Recall from the proof of the upper bound on the bias that
\begin{align*}
    \int r_{n, k_1, k_2}(\bx) d\bx & \lesssim \frac{k_1}{n}  \ll k_1^{-(\alpha + \beta)/d} \\
    \frac{1}{n} \E_{\bbP} \left( A_i K_{V_{k_1}}(\bX_i, \bX_i) \hat{b}^{(-i)}_{k_2}(\bX_i) \right) & \lesssim \frac{k_1}{n} \ll k_1^{-(\alpha + \beta)/d} \\
    \frac{1}{n} \E_{\bbP} \left( Y_i K_{V_{k_2}}(\bX_i, \bX_i) \hat{p}^{(-i)}_{k_1}(\bX_i) \right) & \lesssim \frac{k_2}{n} \ll k_1^{-(\alpha + \beta)/d}.
\end{align*}
Therefore, we have that
\begin{equation*}
    \E_\bbP\left(\hat{\psi}_{k_1, k_2}^{\mathrm{MC}} - \psi(\bbP)\right) \gtrsim k_1^{-(\alpha + \beta)/d}.
\end{equation*}

\item Case 2: $k_1^{-(\alpha + \beta)/d} \lesssim \frac{k_2}{n}$. Choose $p(\bx) = b(\bx) = \epsilon > 0$ and $\E_{\bbP}(AY | \bx) = 0$ for all $\bx \in [0,1]^d$. Since $p,b \in V_{k_1}$ holds by Property P3 (Appendix \ref{sec: wavelets}), we have that
\begin{equation*}
    \int \Pi(p|V_{k_1}^{\perp})(\bx) \Pi(b|V_{k_1}^{\perp})(\bx) d\bx = 0
\end{equation*}
Since $\E_{\bbP}(AY|\bx) = 0$, we have that
\begin{align*}
    \int r_{n, k_1, k_2}(\bx) d\bx & := \frac{1}{n}  \E_{\bbP,1}\left[ AY  K_{V_{k_1}}(\bX, \bx)K_{V_{k_2}}(\bX, \bx) \right] \\
    & = 0.
\end{align*}
Recall from the proof of the upper bound on the bias that
\begin{align*}
    \frac{1}{n} \E_{\bbP} \left( Y_i K_{V_{k_2}}(\bX_i, \bX_i) \hat{p}^{(-i)}_{k_1}(\bX_i) \right) & = \frac{\epsilon^2}{n} \int K_{V_{k_2}}(\bx, \bx)  d\bx + O\left(\frac{k_2}{n^2}\right) \\
    \frac{1}{n} \E_{\bbP} \left( A_i K_{V_{k_1}}(\bX_i, \bX_i) \hat{b}^{(-i)}_{k_2}(\bX_i) \right) & = \frac{\epsilon^2}{n} \int K_{V_{k_1}}(\bx, \bx)  d\bx + O\left(\frac{k_1}{n^2}\right).
\end{align*}
Therefore, we can write
\begin{align*}
    \E_\bbP\left(\hat{\psi}_{k_1, k_2}^{\mathrm{MC}} - \psi(\bbP)\right) & = - \frac{\epsilon^2}{n} \int K_{V_{k_2}}(\bx, \bx)  d\bx - \frac{\epsilon^2}{n} \int K_{V_{k_1}}(\bx, \bx)  d\bx \\
    & \quad + O\left(\frac{1}{n}\right) + O\left(\frac{k_1k_2}{n^2}\right) 
\end{align*}
Recall from Lemma \ref{lem: exp kernel single same} that
\begin{equation*}
    \int K_{V_{k}}(\bx, \bx)  d\bx \asymp k
\end{equation*}
and $K_{V_{k}}(\bx, \bx) \geq 0$ for all $\bx \in [0, 1]^d$. Therefore, 
\begin{equation*}
    \E_\bbP\left(\hat{\psi}_{k_1, k_2}^{\mathrm{MC}} - \psi(\bbP)\right) \gtrsim \frac{k_2}{n}.
\end{equation*}

\end{itemize}

\subsection{First-order bias-corrected estimator} 

\subsubsection{Proof of the upper bound}

Let $\bbP \in \cP_{(\alpha,\beta,\gamma)}$ be arbitrary.\\

\noindent \textbf{Bounding the bias:}

We follow a similar approach used for analyzing the Monte Carlo-based plug-in estimator in the no sample splitting case. Using (\ref{eq: phat minus i}) and (\ref{eq: bhat minus i}), we can express $\hat{\psi}^{\mathrm{IF}}_{k_1, k_2}$ as
\begin{align}
    \hat{\psi}^{\mathrm{IF}}_{k_1, k_2} & = \frac{1}{n} \sum_{i \in \D_1} (A_i - \hat{p}^{(-i)}_{k_1}(\bX_i))(Y_i - \hat{b}^{(-i)}_{k_2}(\bX_i)) \nonumber \\
    & \quad - \frac{1}{n^2} \sum_{i \in \D_1} \left( \frac{A_i K_{V_{k_1}}(\bX_i,\bX_i)}{f(\bX_i)} \right)(Y_i - \hat{b}^{(-i)}_{k_2}(\bX_i)) \nonumber \\
    & \quad - \frac{1}{n^2} \sum_{i \in \D_1} (A_i - \hat{p}^{(-i)}_{k_1}(\bX_i))\left( \frac{Y_i K_{V_{k_2}}(\bX_i,\bX_i)}{f(\bX_i)} \right) \nonumber \\
    & \quad + \frac{1}{n^3} \sum_{i \in \D_1} \left( \frac{A_i K_{V_{k_1}}(\bX_i,\bX_i)}{f(\bX_i)} \right)\left( \frac{Y_i K_{V_{k_2}}(\bX_i,\bX_i)}{f(\bX_i)} \right). \label{eq: if no ss rep}
\end{align}

We first analyze the expected value of the first term in (\ref{eq: if no ss rep}). We will show that the expectation of this term is the same as the expectation of the double sample split estimator plus a remainder of $O\left( \frac{k_1 \wedge k_2}{n} \right)$. We have that
\begin{align}
    & \E_{\bbP} \left[ \frac{1}{n} \sum_{i \in \D_1} (A_i - \hat{p}^{(-i)}_{k_1}(\bX_i))(Y_i - \hat{b}^{(-i)}_{k_2}(\bX_i)) \right] \nonumber \\
    & \quad = \E_{\bbP} \left[ (A_i - \hat{p}^{(-i)}_{k_1}(\bX_i))(Y_i - \hat{b}^{(-i)}_{k_2}(\bX_i)) \right] \nonumber \\
    & \quad = \E_{\bbP}(AY) - \E_{\bbP}[\hat{p}^{(-i)}_{k_1}(\bX_i)Y_i] - \E_{\bbP}[A_i\hat{b}^{(-i)}_{k_2}(\bX_i)] + \E_{\bbP}[\hat{p}^{(-i)}_{k_1}(\bX_i)\hat{b}^{(-i)}_{k_2}(\bX_i)] \nonumber \\
    & \quad = \E_{\bbP}(AY) - \int \E_{\bbP}(\hat{p}^{(-i)}_{k_1}(\bx))b(\bx)f(\bx)d\bx - \int p(\bx) \E_{\bbP}[\hat{b}^{(-i)}_{k_2}(\bx)] f(\bx) d\bx \nonumber \\
    & \quad \quad + \int \E_{\bbP}[\hat{p}^{(-i)}_{k_1}(\bx)\hat{b}^{(-i)}_{k_2}(\bx)]f(\bx)d\bx. \label{eq: if bias no ss term1}
\end{align}
Observe that
\begin{align}
    \int \E_{\bbP}(\hat{p}^{(-i)}_{k_1}(\bx))b(\bx)f(\bx)d\bx & = \int \Pi(p|V_{k_1})(\bx)b(\bx)f(\bx)d\bx + O\left(\frac{1}{n}\right) \label{eq: if bias no ss temp1} \\
    \int p(\bx) \E_{\bbP}[\hat{b}^{(-i)}_{k_2}(\bx)] f(\bx) d\bx & = \int p(\bx) \Pi(b|V_{k_2})(\bx) f(\bx) d\bx + O\left(\frac{1}{n}\right). \label{eq: if bias no ss temp2} 
\end{align}
Moreover, recall from the proof of the upper bound of the bias of $\hat{\psi}^{\mathrm{MC}}_{k_1, k_2}$ (no sample splitting case) that
\begin{align}
        & \int \E_{\bbP}[\hat{p}^{(-i)}_{k_1}(\bx)\hat{b}^{(-i)}_{k_2}(\bx)]f(\bx)d\bx \nonumber \\
        & \quad = \int \Pi(p|V_{k_1})(\bx) \Pi(b|V_{k_2})(\bx) f(\bx) d\bx +  \int r_{n,k_1,k_2}(\bx)f(\bx) d\bx \nonumber \\
        & \quad = \int \Pi(p|V_{k_1})(\bx) \Pi(b|V_{k_2})(\bx) f(\bx) d\bx +  O\left( \frac{k_1 \wedge k_2}{n} \right). \label{eq: if bias no ss temp3} 
\end{align}
Plugging in (\ref{eq: if bias no ss temp1}), (\ref{eq: if bias no ss temp2}), and (\ref{eq: if bias no ss temp3}) into (\ref{eq: if bias no ss term1}), we can see that the expected value of the first term in (\ref{eq: if no ss rep}) equals expected value of the double sample split first-order estimator plus a remainder of $O\left( \frac{k_1 \wedge k_2}{n} \right)$. Therefore,
\begin{align*}
     & \E_{\bbP} \left[ \frac{1}{n} \sum_{i \in \D_1} (A_i - \hat{p}^{(-i)}_{k_1}(\bX_i))(Y_i - \hat{b}^{(-i)}_{k_2}(\bX_i)) \right] \\
     & \quad = \psi(\bbP) + O\left( (k_1 \vee k_2)^{-(\alpha + \beta)/d} \right) + O\left( \frac{k_1 \wedge k_2}{n} \right).
\end{align*}

Next, we bound the expectation of the second term in (\ref{eq: if no ss rep}). We have that
\begin{align*}
    & \left| \E_{\bbP} \left( \frac{1}{n^2} \sum_{i \in \D_1} \left( \frac{A_i K_{V_{k_1}}(\bX_i,\bX_i)}{f(\bX_i)} \right)(Y_i - \hat{b}^{(-i)}_{k_2}(\bX_i)) \right) \right| \\
    & \quad = \frac{1}{n} \left| \E_{\bbP} \left[  \left( \frac{A_i K_{V_{k_1}}(\bX_i,\bX_i)}{f(\bX_i)} \right)(Y_i - \hat{b}^{(-i)}_{k_2}(\bX_i)) \right] \right| \\
    & \quad \leq \frac{1}{n} \left| \E_{\bbP} \left[   \frac{A_i Y_i K_{V_{k_1}}(\bX_i,\bX_i)}{f(\bX_i)} \right] \right| + \frac{1}{n} \left| \E_{\bbP} \left[  \frac{A_i K_{V_{k_1}}(\bX_i,\bX_i)\hat{b}^{(-i)}_{k_2}(\bX_i)}{f(\bX_i)}  \right] \right|. 
\end{align*}
Recall from the proof of the upper bound of the bias of $\hat{\psi}^{\mathrm{MC}}_{k_1, k_2}$ (no sample splitting case) that
\begin{align*}
    \frac{1}{n} \left| \E_{\bbP} \left[   \frac{A_i Y_i K_{V_{k_1}}(\bX_i,\bX_i)}{f(\bX_i)} \right] \right| \lesssim \frac{k_1}{n}.
\end{align*}
It follows in the exact same manner that 
\begin{equation*}
    \frac{1}{n} \left| \E_{\bbP} \left[  \frac{A_i K_{V_{k_1}}(\bX_i,\bX_i)\hat{b}^{(-i)}_{k_2}(\bX_i)}{f(\bX_i)}  \right] \right| \lesssim \frac{k_1}{n}.
\end{equation*}

Similarly, the expectation of the third term in (\ref{eq: if no ss rep}) is bounded by $O(\frac{k_2}{n})$. Finally, recall from the proof of the upper bound of the bias of $\hat{\psi}^{\mathrm{MC}}_{k_1, k_2}$ (no sample splitting case) that the expectation of the fourth term in (\ref{eq: if no ss rep}) is bounded by $O(\frac{k_1k_2}{n^2})$. 

Therefore,
\begin{equation*}
    \left|\E_\bbP\left(\hat{\psi}_{k_1, k_2}^{\mathrm{IF}} - \psi(\bbP)\right)\right| \lesssim (k_1 \vee k_2)^{-(\alpha + \beta)/d} + \frac{k_1 \vee k_2}{n} + \frac{k_1k_2}{n^2}.
\end{equation*}
The result then follows when noting that $\frac{k_1k_2}{n^2} \ll \frac{k_1 \vee k_2}{n}$ when $k_1, k_2 \ll n$. \\

\noindent \textbf{Bounding the variance:}

We have that
\begin{align*}
    \Var_{\bbP}(\hat{\psi}^{\mathrm{IF}}_{k_1, k_2}) & \leq 4 \Var_{\bbP} \left( \frac{1}{n}\sum_{i \in \D_1} A_i Y_i \right) + 4 \Var_{\bbP} \left( \frac{1}{n}\sum_{i \in \D_1} A_i \hat{b}_{k_2}(\bX_i) \right) \\
    & \quad + 4 \Var_{\bbP} \left( \frac{1}{n}\sum_{i \in \D_1} \hat{p}_{k_1}(\bX_i) Y_i \right) + 4 \Var_{\bbP} \left( \frac{1}{n}\sum_{i \in \D_1} \hat{p}_{k_1}(\bX_i) \hat{b}_{k_2}(\bX_i) \right).
\end{align*}
Recall that $\Var_{\bbP} \left( \frac{1}{n}\sum_{i \in \D_1} A_i Y_i \right) \lesssim \frac{1}{n}$. Additionally, recall from the proof of the upper bound of the variance of $\hat{\psi}^{\mathrm{MC}}_{k_1, k_2}$ (no sample splitting case) that
\begin{equation*}
    \Var_{\bbP} \left( \frac{1}{n}\sum_{i \in \D_1} \hat{p}_{k_1}(\bX_i) \hat{b}_{k_2}(\bX_i) \right) \lesssim \frac{1}{n} + \frac{k_{\mathrm{min}}^2}{n^2} + \frac{k_{\mathrm{max}}}{n^2} + \frac{k_{\mathrm{min}}k_{\mathrm{max}}}{n^3} + \frac{k_{\mathrm{min}}^2 k_{\mathrm{max}}}{n^4} + \frac{k_{\mathrm{min}}^2 k_{\mathrm{max}}^2}{n^5}.
\end{equation*}
Next, we bound $\Var_{\bbP} \left( \frac{1}{n}\sum_{i \in \D_1} A_i \hat{b}_{k_2}(\bX_i) \right)$. Using (\ref{eq: bhat minus i}), 
\begin{align*}
    \Var_{\bbP} \left( \frac{1}{n}\sum_{i \in \D_1} A_i \hat{b}_{k_2}(\bX_i) \right) & \leq 2 \Var_{\bbP} \left( \frac{1}{n}\sum_{i \in \D_1} A_i \hat{b}^{(-i)}_{k_2}(\bX_i) \right) \\
    & \quad + 2\Var_{\bbP} \left( \frac{1}{n^2}\sum_{i \in \D_1}  \frac{A_iY_i}{f(\bX_i)} K_{V_{k_2}}(\bX_i, \bx) \right).
\end{align*}
Then,
\begin{align*}
    \Var_{\bbP} \left( \frac{1}{n^2}\sum_{i \in \D_1}  \frac{A_iY_i}{f(\bX_i)} K_{V_{k_2}}(\bX_i, \bx) \right) & = \frac{1}{n^3} \Var_{\bbP} \left( \frac{A_iY_i}{f(\bX_i)} K_{V_{k_2}}(\bX_i, \bX_i) \right) \\
    & \leq \frac{1}{n^3} \E_{\bbP} \left[ \left( \frac{A_iY_i}{f(\bX_i)} K_{V_{k_2}}(\bX_i, \bX_i) \right)^2 \right] \\
    & \lesssim \frac{1}{n^3} \int K_{V_{k_2}}(\bx, \bx)^2 d\bx \\
    & \lesssim \frac{k_2^2}{n^3} \quad (\text{by Lemma \ref{lem: exp kernel single same}}).
\end{align*}
We next show that
\begin{equation} \label{eq: no ss nr var helper}
    \Var_{\bbP} \left( \frac{1}{n}\sum_{i \in \D_1} A_i \hat{b}^{(-i)}_{k_2}(\bX_i) \right) \lesssim \frac{1}{n} + \frac{k_2}{n^2}.
\end{equation}
To see that (\ref{eq: no ss nr var helper}) holds, it suffices to show that
\begin{align*} 
    \Var_{\bbP}(A_i\hat{b}_{k_2}^{(-i)}(\bX_i)) & \lesssim 1 + \frac{k_2}{n}, \qquad \forall i \in \D_1  \\
    \left| \Cov_{\bbP}(A_i\hat{b}_{k_2}^{(-i)}(\bX_i), A_j\hat{b}_{k_2}^{(-j)}(\bX_j)) \right| & \lesssim \frac{1}{n}, \qquad \forall i, j \in \D_1, i \neq j .
\end{align*}
which follow from the same steps used to show (\ref{eq: no ss mc var pt1a}) and (\ref{eq: no ss mc var pt2a}) respectively (appearing in the proof of the upper bound of the variance of $\hat{\psi}^{\mathrm{MC}}_{k_1, k_2}$ in the no sample splitting case).

By exchanging the roles of $A$ and $Y$, we also have that
\begin{equation*}
    \Var_{\bbP} \left( \frac{1}{n}\sum_{i \in \D_1} \hat{p}_{k_1}(\bX_i) Y_i \right) \lesssim \frac{1}{n} + \frac{k_1}{n^2} + \frac{k_1^2}{n^3}
\end{equation*}
which completes the proof of the variance bound.

\subsection{Proof of the lower bound}

Let $\bbP \in \cP_{(\alpha,\beta)}$. Recall from the proof of the upper bound that
\begin{align*}
    \E_{\bbP}(\hat{\psi}^{\mathrm{IF}}_{k_1, k_2}) & =  \E_{\bbP}\left[ (A_i - \hat{p}^{(-i)}_{k_1}(\bX_i))(Y_i - \hat{b}^{(-i)}_{k_2}(\bX_i)) \right]  \\
    & \quad - \frac{1}{n} \E_{\bbP}\left[ \left( \frac{A_i K_{V_{k_1}}(\bX_i,\bX_i)}{f(\bX_i)} \right)(Y_i - \hat{b}^{(-i)}_{k_2}(\bX_i)) \right]  \\
    & \quad - \frac{1}{n} \E_{\bbP}\left[ (A_i - \hat{p}^{(-i)}_{k_1}(\bX_i))\left( \frac{Y_i K_{V_{k_2}}(\bX_i,\bX_i)}{f(\bX_i)} \right) \right] \\
    & \quad + O\left( \frac{k_1k_2}{n^2} \right) 
\end{align*}
where
\begin{align*}
    \E_{\bbP}\left[ (A_i - \hat{p}^{(-i)}_{k_1}(\bX_i))(Y_i - \hat{b}^{(-i)}_{k_2}(\bX_i)) \right] & = \psi(\bbP) + \int \Pi(p|V_{k_1}^{\perp})(\bx) \Pi(b|V_{k_2}^{\perp})(\bx) d\bx \\
    & \quad + \int r_{n, k_1, k_2}(\bx)f(\bx) d\bx + O\left(\frac{1}{n}\right).
\end{align*}
Let $\bX \sim \text{Uniform}([0,1]^d)$. Then, we can express the bias of $\hat{\psi}^{\mathrm{IF}}_{k_1, k_2}$ by
\begin{align} 
    \E_\bbP\left(\hat{\psi}_{k_1, k_2}^{\mathrm{IF}} - \psi(\bbP)\right) & = \int \Pi(p|V_{k_1}^{\perp})(\bx) \Pi(b|V_{k_2}^{\perp})(\bx) d\bx + \int r_{n, k_1, k_2}(\bx) d\bx \nonumber \\
    & \quad - \frac{1}{n} \E_{\bbP} \left( A_i K_{V_{k_1}}(\bX_i, \bX_i) (Y_i - \hat{b}^{(-i)}_{k_2}(\bX_i)) \right) \nonumber \\
    & \quad - \frac{1}{n} \E_{\bbP} \left( (A_i - \hat{p}^{(-i)}_{k_1}(\bX_i)) Y_i K_{V_{k_2}}(\bX_i, \bX_i)  \right) \nonumber \\
    & \quad + O\left( \frac{1}{n} \right) + O\left( \frac{k_1k_2}{n^2} \right). \label{eq: if bias lb no ss}
\end{align}

Next, we establish lower bounds on absolute value of each of the first four terms in (\ref{eq: if bias lb no ss}). 

\begin{itemize}
    \item Case 1: $k_2^{-(\alpha + \beta)/d} \gg \frac{k_2}{n}$. Choose $p$ and $b$ as given in (\ref{eq: p lb}) and (\ref{eq: b lb}). Then, recall from the double sample splitting case that
\begin{align*}
    \left| \int \Pi(p|V_{k_1}^{\perp})(\bx) \Pi(b|V_{k_2}^{\perp})(\bx) d\bx \right| & \gtrsim k_2^{-(\alpha + \beta)/d}.
\end{align*}
Recall from the proof of the upper bound on the bias that
\begin{align*}
    \left| \int r_{n, k_1, k_2}(\bx) d\bx \right| & \lesssim \frac{k_1}{n}  \ll k_2^{-(\alpha + \beta)/d} \\
    \left| \frac{1}{n} \E_{\bbP} \left( A_i K_{V_{k_1}}(\bX_i, \bX_i) (Y_i - \hat{b}^{(-i)}_{k_2}(\bX_i)) \right) \right| & \lesssim \frac{k_1}{n} \ll k_2^{-(\alpha + \beta)/d} \\
    \left| \frac{1}{n} \E_{\bbP} \left( (A_i - \hat{p}^{(-i)}_{k_1}(\bX_i)) Y_i K_{V_{k_2}}(\bX_i, \bX_i)  \right) \right| & \lesssim \frac{k_2}{n} \ll k_2^{-(\alpha + \beta)/d}.
\end{align*}
Therefore, we have that
\begin{equation*}
    \E_\bbP\left(\hat{\psi}_{k_1, k_2}^{\mathrm{IF}} - \psi(\bbP)\right) \gtrsim k_2^{-(\alpha + \beta)/d}.
\end{equation*}

\item Case 2: $k_2^{-(\alpha + \beta)/d} \lesssim \frac{k_2}{n}$. Choose $p(\bx) = b(\bx) = \epsilon > 0$ and $\E_{\bbP}(AY | \bx) = 0$ for all $\bx \in [0,1]^d$. It follows from the same arguments in proof of the lower bound of $\hat{\psi}_{k_1, k_2}^{\mathrm{MC}}$ (no sample splitting, Case 2) that 
\begin{align*}
    \int \Pi(p|V_{k_1}^{\perp})(\bx) \Pi(b|V_{k_1}^{\perp})(\bx) d\bx & = 0 \\
    \int r_{n, k_1, k_2}(\bx) d\bx & = 0 \\
    \frac{1}{n} \E_{\bbP} \left( A_i K_{V_{k_1}}(\bX_i, \bX_i) (Y_i - \hat{b}^{(-i)}_{k_2}(\bX_i)) \right) & = - \frac{\epsilon^2}{n} \int K_{V_{k_1}}(\bx, \bx)  d\bx + O\left(\frac{k_1}{n^2}\right) \\
    \frac{1}{n} \E_{\bbP} \left( (A_i - \hat{p}^{(-i)}_{k_1}(\bX_i)) Y_i K_{V_{k_2}}(\bX_i, \bX_i)  \right) & = - \frac{\epsilon^2}{n} \int K_{V_{k_2}}(\bx, \bx)  d\bx + O\left(\frac{k_2}{n^2}\right).
\end{align*}
Therefore, we can write
\begin{align*}
    \E_\bbP\left(\hat{\psi}_{k_1, k_2}^{\mathrm{IF}} - \psi(\bbP)\right) & = \frac{\epsilon^2}{n} \int K_{V_{k_1}}(\bx, \bx)  d\bx +  \frac{\epsilon^2}{n} \int K_{V_{k_2}}(\bx, \bx)  d\bx \\
     & \quad + O\left( \frac{1}{n} \right) + O\left( \frac{k_1k_2}{n^2} \right).
\end{align*}
Recall from Lemma \ref{lem: exp kernel single same} that
\begin{equation*}
    \int K_{V_{k}}(\bx, \bx)  d\bx \asymp k
\end{equation*}
and $K_{V_{k_1}}(\bx, \bx) \geq 0$ for all $\bx \in [0, 1]^d$. Therefore,
\begin{equation*}
    \E_\bbP\left(\hat{\psi}_{k_1, k_2}^{\mathrm{IF}} - \psi(\bbP)\right) \gtrsim \frac{k_2}{n}.
\end{equation*}
\end{itemize}

\section{Proof of Theorem 5} \label{sec: proof known f nr no ss}

As in the proof of Theorem 1, we write out the proofs of the upper bounds when $f$ is such that 
\begin{align*}
    (i) & \quad  f(\bx) \in [M_1, M_2] \quad \forall \bx \in [0, 1]^d\\
    (ii) & \quad f \in H(\gamma, M)
\end{align*}
where $\gamma \geq \alpha \vee \beta$ and $M, M_1, M_2 \in \mathbb{R}^{+}$ are known constants. Recall that we use the approximate wavelet projection estimators of the nuisance functions given by (\ref{eq: wavelet estimator of p}) and (\ref{eq: wavelet estimator of b}) in this case.

The Newey and Robins plug-in estimator that we analyze is given by
\begin{equation*}
   \hat{\psi}_{k}^{\mathrm{NR}} = \frac{1}{n} \sum_{i \in \D_1} A_i(Y_i - \hat{b}_k^{(1)}(\bX_i)).
\end{equation*}

Through the proof, we adopt the additional notation introduced in the proof of Theorem 4 (see Appendix \ref{sec: proof known f no ss}).

\subsection{Proof of the upper bound}

Let $\bbP \in \cP_{(\alpha,\beta)}$ be arbitrary. \\

\noindent \textbf{Bounding the bias:}

Using (\ref{eq: bhat minus i}), we can express $\hat{\psi}^{\mathrm{NR}}_k$ as
\begin{equation}
    \hat{\psi}_{k}^{\mathrm{NR}} = \frac{1}{n} \sum_{i \in \D_1} A_i(Y_i - \hat{b}_k^{(-i)}(\bX_i)) - \frac{1}{n} \sum_{i \in \D_1} \frac{A_iY_i}{nf(\bX_i)} K_{V_k}(\bX_i, \bX_i). \label{eq: nr no ss rep}
\end{equation}
We first analyze the expected value of the first term in (\ref{eq: nr no ss rep}). We have that
\begin{align*}
    \E_{\bbP}\left[ \frac{1}{n} \sum_{i \in \D_1} A_i(Y_i - \hat{b}_k^{(-i)}(\bX_i)) \right] & = \E_{\bbP}\left[ A_i(Y_i - \hat{b}_k^{(-i)}(\bX_i)) \right] \\
    & = \psi(\bbP) - \E_{\bbP}\left[ p(\bX_i)(b(\bX_i) - \hat{b}_k^{(-i)}(\bX_i)) \right] \\
    & = \psi(\bbP) - \int p(\bx)\left(b(\bx) - \frac{n-1}{n}\Pi(b | V_{k_2})(\bx)\right) f(\bx) d\bx \\
    & = \psi(\bbP) - \int p(\bx) (b(\bx) - \Pi(b | V_{k_2})(\bx)) f(\bx) d\bx \\
    & \quad + O\left(\frac{1}{n}\right)
\end{align*}
which recall is equal to the expected value of the sample split Newey and Robins plug-in estimator plus a remainder of $O(\frac{1}{n})$. Therefore, 
\begin{equation*}
    \E_{\bbP}\left[ \frac{1}{n} \sum_{i \in \D_1} A_i(Y_i - \hat{b}_k^{(-i)}(\bX_i)) \right] = \psi(\bbP) + O\left( k^{-(\alpha + \beta) / d}\right) + O\left( \frac{1}{n} \right). 
\end{equation*}

Next, we bound the expectation of the second term. Recall from the proof of the upper bound of the bias of $\hat{\psi}_{k_1,k_2}^{\mathrm{IF}}$ (no sample splitting case) that
\begin{equation*}
    \E_{\bbP} \left[ \frac{1}{n} \sum_{i \in \D_1} \frac{A_iY_i}{nf(\bX_i)} K_{V_k}(\bX_i, \bX_i) \right] \lesssim \frac{k}{n}.
\end{equation*}

Therefore,
\begin{equation*}
    \left|\E_\bbP\left(\hat{\psi}_{k}^{\mathrm{NR}} - \psi(\bbP)\right)\right| \lesssim k^{-(\alpha + \beta)/d} + \frac{k}{n}.
\end{equation*}

\noindent \textbf{Bounding the variance:}

We have that
\begin{equation*}
    \Var_{\bbP}(\hat{\psi}_{k}^{\mathrm{NR}}) \leq 2 \Var_{\bbP} \left( \frac{1}{n}\sum_{i \in \D_1} A_i Y_i \right) + 2 \Var_{\bbP} \left( \frac{1}{n}\sum_{i \in \D_1} A_i \hat{b}_{k_2}(\bX_i) \right).
\end{equation*}
Then, recall from the proof of the upper bound of the variance of $\hat{\psi}_{k_1,k_2}^{\mathrm{IF}}$ (no sample splitting case) that
\begin{align*}
    \Var_{\bbP} \left( \frac{1}{n}\sum_{i \in \D_1} A_i Y_i \right) & \lesssim \frac{1}{n} \\
    \Var_{\bbP} \left( \frac{1}{n}\sum_{i \in \D_1} A_i \hat{b}_{k_2}(\bX_i) \right) & \lesssim \frac{1}{n} + \frac{k_2}{n^2} + \frac{k_2^2}{n^3}. 
\end{align*}

\subsection{Proof of the lower bound}

Let $\bbP \in \cP_{(\alpha,\beta)}$. Recall from the proof of the upper bound that
\begin{align*}
    \E_{\bbP}(\hat{\psi}_{k}^{\mathrm{NR}}) - \psi(\bbP) & =  -\int p(\bx) (b(\bx) - \Pi(b | V_{k_2})(\bx)) f(\bx) d\bx \\
    & \quad - \E_{\bbP}\left[\frac{1}{n} \sum_{i \in \D_1} \frac{A_iY_i}{nf(\bX_i)} K_{V_k}(\bX_i, \bX_i)\right] + O\left(\frac{1}{n}\right).
\end{align*}
Let $\bX \sim \text{Uniform}([0,1]^d)$ and choose $p$ and $b$ as given in (\ref{eq: p lb}) and (\ref{eq: b lb}). Then, recall from the double sample splitting case that
\begin{align*}
    \left| \int p(\bx) (b(\bx) - \Pi(b | V_{k_2})(\bx)) d\bx \right| \asymp k^{-(\alpha + \beta)/d}.
\end{align*}
Further, let $\E_{\bbP}(AY | \bx) = c > 0$. Then, 
\begin{align*}
    \E_{\bbP} \left[ \frac{1}{n} \sum_{i \in \D_1} \frac{A_iY_i}{n} K_{V_k}(\bX_i, \bX_i) \right] & = \frac{1}{n}\E_{\bbP} \left[ A_iY_i K_{V_k}(\bX_i, \bX_i) \right]  \\
    & = \frac{1}{n} \int \E_{\bbP}(A_iY_i|\bx) K_{V_k}(\bx, \bx) d\bx  \\
    & = \frac{c}{n} \int K_{V_k}(\bx, \bx) d\bx \\
    & \asymp \frac{k}{n} \quad (\text{by Lemma \ref{lem: exp kernel single same}}).
\end{align*}
This completes the proof of the lower bound provided that $k^{-(\alpha + \beta)/d}$ and $\frac{k}{n}$ are not of the same order. When $k$ is such that $k^{-(\alpha + \beta)/d} \asymp \frac{k}{n}$, we can choose $\epsilon$ (appearing in (\ref{eq: p lb}) and (\ref{eq: b lb}) which define $p$ and $b$) sufficiently small so that
\begin{equation*}
    \left| \int p(\bx) (b(\bx) - \Pi(b | V_{k_2})(\bx)) d\bx - \frac{c}{n} \int K_{V_k}(\bx, \bx) d\bx  \right| \gtrsim \frac{k}{n}.
\end{equation*}

\section{Proof of Theorem \ref{theorem: plugin unknown f}} \label{sec: proof unknown f}

Throughout the proof, we will use the following notation. For $\bx \in [0, 1]^d$, let  
\begin{align}
    p_{k_1}(p(\bx)) &:= \E_{\bbP, 1, 2} [\hat{p}^{(1, 2)}_{k_1}(\bx)] =  \E_{\bbP,2} \left[ \Pi\left( \frac{pf}{\hat{f}^{(2)}} | V_{k_1}\right)(\bx) \right] \label{eq: p projection unknown f}\\ 
    p_{k_2}(b(\bx)) &:= \E_{\bbP, 3, 4} [\hat{b}^{(3, 4)}_{k_2}(\bx)]  = \E_{\bbP,4} \left[ \Pi\left( \frac{bf}{\hat{f}^{(4)}} | V_{k_2}\right)(\bx) \right]. \label{eq: b projection unknown f}
\end{align}
Moreover, it will be convenient in our proofs in the unknown $f$ case to define estimators of $p$ and $b$ which assume that $f$ is known. With a slight abuse of notation, to distinguish whether the nuisance function estimators assume that $f$ is known or unknown, we use tilde symbol in the known $f$ case, i.e.,
\begin{align*}
    \tilde{p}^{(1)}_{k_1}(\bx) = \frac{1}{n} \sum_{i \in \D_1} \frac{A_i K_{V_{k_1}}(\bX_i,\bx)}{f(\bX_i)} \\
    \tilde{b}^{(3)}_{k_2}(\bx) = \frac{1}{n} \sum_{i \in \D_3} \frac{Y_i K_{V_{k_2}}(\bX_i,\bx)}{f(\bX_i)}.
\end{align*}

\subsection{Integral-based plug-in estimator} 

\subsubsection{Proof of the upper bound}

Let $\bbP \in \cP_{(\alpha,\beta,\gamma)}$ be arbitrary. \\

\noindent \textbf{Bounding the bias:}

Observe that
\begin{align*}
    \E_\bbP(\hat{\psi}_{k_1, k_2}^{\mathrm{INT}}) & = \E_\bbP\left( A Y \right) -  \int  p_{k_1}(p(\bx)) p_{k_2}(b(\bx)) \E_{\bbP,6}(\hat{f}^{(6)}(\bx)) d\bx  \\
    & = \E_\bbP\left( A Y \right) -  \int  p_{k_1}(p(\bx)) p_{k_2}(b(\bx)) f(\bx) d\bx \\
    & \quad - \int  p_{k_1}(p(\bx)) p_{k_2}(b(\bx)) [f(\bx) - \E_{\bbP,6}(\hat{f}^{(6)}(\bx)) ]d\bx  \\
    & = \E_\bbP\left( A Y \right) -  \int p(\bx)b(\bx) f(\bx) d\bx \\
    & \quad -  \int \left( p_{k_1}(p(\bx)) -  p(\bx) \right) \left( p_{k_2}(b(\bx)) - b(\bx)  \right) f(\bx)  d\bx   \\ & \quad -  \int p(\bx) f(\bx) \left( p_{k_2}(b(\bx)) - b(\bx)  \right)  d\bx - \int  b(\bx) f(\bx) \left( p_{k_1}(p(\bx)) -  p(\bx) \right)   d\bx \\ & \quad - \int  p_{k_1}(p(\bx)) p_{k_2}(b(\bx)) [f(\bx) - \E_{\bbP,6}(\hat{f}^{(6)}(\bx)) ]d\bx  
\end{align*}
which implies
\begin{align*}
    \left|\E_\bbP\left(\hat{\psi}_{k_1, k_2}^{\mathrm{INT}} - \psi(\bbP)\right)\right|  & \leq  \left| \int \left(p_{k_1}(p(\bx)) -  p(\bx) \right) \left( p_{k_2}(b(\bx)) - b(\bx)  \right)  f(\bx)  d\bx \right |  \\
    & \quad + \left| \int p(\bx) f(\bx) \left( p_{k_2}(b(\bx)) - b(\bx)  \right)  d\bx \right|  \\
    & \quad + \left| \int  b(\bx) f(\bx) \left( p_{k_1}(p(\bx)) -  p(\bx) \right)   d\bx  \right| \\ & \quad +\left|  \int  p_{k_1}(p(\bx)) p_{k_2}(b(\bx)) [f(\bx) - \E_{\bbP,6}(\hat{f}^{(6)}(\bx)) ]d\bx \right|.
\end{align*}

By boundedness of $f$ and Lemma \ref{lem: proj dist},
\begin{align*}
    & \left| \int \left(p_{k_1}(p(\bx)) -  p(\bx) \right) \left( p_{k_2}(b(\bx)) - b(\bx)  \right)  f(\bx)  d\bx \right | \\
    & \quad \lesssim \int \| p_{k_1}(p(\cdot)) -  p \|_{\infty} \|  p_{k_2}(b(\cdot)) - b   \|_{\infty}  d\bx \\ 
    &  \quad \lesssim k_1^{-\alpha/d}k_2^{-\beta/d} + \left( \frac{n}{\log n}\right)^{-\frac{\gamma}{2\gamma + d}}.
\end{align*}

Next, we bound $\left| \int  p(\bx) f(\bx) \left( p_{k_2}(b(\bx)) -  b(\bx) \right) d\bx \right|$. We have that
\begin{align*}
    \left| \int  p(\bx) f(\bx) \left( p_{k_2}(b(\bx)) -  b(\bx) \right) d\bx \right| & \leq \left| \int  p(\bx) f(\bx) (p_{k_2}(b(\bx)) - \Pi(b | V_{k_2})(\bx)) d\bx \right| \\
    & \quad  + \left| \int  p(\bx) f(\bx) (\Pi(b | V_{k_2})(\bx) - b(\bx)) d\bx \right|. 
\end{align*}
Recall from the known $f$ case, 
\begin{equation*}
    \left| \int  p(\bx) f(\bx) (\Pi(b | V_{k_2})(\bx) - b(\bx)) d\bx \right|  \lesssim k_2^{-(\alpha + \beta)/d}.
\end{equation*}
By (\ref{eq: p_k proj}) and (\ref{eq: exp proj}), 
\begin{align*}
    & \left| \int  p(\bx) f(\bx) (p_{k_2}(b(\bx)) - \Pi(b | V_{k_2})(\bx)) d\bx \right| \\
    & \quad = \left| \int  b(\bx) f(\bx) \E_{\bbP,4} \left[ \Pi\left( \frac{ b(f - \hat{f}^{(4)})}{\hat{f}^{(4)}} | V_{k_2}\right)(\bx) \right] d\bx \right| \\ 
    & \quad \lesssim \E_{\bbP,4} \left[ \left\| \Pi\left( \frac{ b(f - \hat{f}^{(4)})}{\hat{f}^{(4)}} | V_{k_2}\right) \right\|_{\infty} \right]  \\
    & \quad \lesssim \left( \frac{n}{\log n}\right)^{-\frac{\gamma}{2\gamma + d}}.
\end{align*}
Thus,
\begin{equation*}
    \left| \int  p(\bx) f(\bx) \left( p_{k_2}(b(\bx)) -  b(\bx) \right) d\bx \right| \lesssim  k_2^{-(\alpha + \beta) / d} +  \left( \frac{n}{\log n}\right)^{-\frac{\gamma}{2\gamma + d}}.
\end{equation*}
Similarly, it can be shown in the same manner that
\begin{equation*}
    \left| \int  b(\bx) f(\bx) \left( p_{k_1}(p(\bx)) -  p(\bx) \right)   d\bx \right| \lesssim  k_1^{-(\alpha + \beta) / d} +  \left( \frac{n}{\log n}\right)^{-\frac{\gamma}{2\gamma + d}}.
\end{equation*}

Last, we bound $\left|  \int  p_{k_1}(p(\bx)) p_{k_2}(b(\bx)) [f(\bx) - \E_{\bbP,6}(\hat{f}^{(6)}(\bx)) ]d\bx \right|$. By boundedness of $p$ and $b$ and Lemma \ref{lem: proj dist}, 
\begin{align*}
    \| p_{k_1}(p(\cdot)) \|_{\infty}  & \leq \| p_{k_1}(p(\cdot)) - p \|_{\infty} + \| p \|_{\infty}  \lesssim 1 \\
    \| p_{k_2}(b(\cdot)) \|_{\infty}  & \leq \| p_{k_2}(b(\cdot)) - b \|_{\infty} + \| b \|_{\infty}  \lesssim 1.
\end{align*}
Then,
\begin{align*}
    & \left|  \int  p_{k_1}(p(\bx)) p_{k_2}(b(\bx)) [f(\bx) - \E_{\bbP,6}(\hat{f}^{(6)}(\bx)) ]d\bx \right| \\
    & \quad \lesssim  \| p_{k_1}(p(\cdot))\|_{\infty} \| p_{k_2}(b(\cdot))\|_{\infty} \E_{\bbP,6} \left[ \| f - \hat{f}^{(6)} \|_{\infty} \right] \\
    & \quad \lesssim \left( \frac{n}{\log n}\right)^{-\frac{\gamma}{2\gamma + d}}.
\end{align*}

Therefore, we can bound the bias by
\begin{equation*}
    \left|\E_\bbP\left(\hat{\psi}_{k_1, k_2}^{\mathrm{INT}} - \psi(\bbP)\right)\right| \lesssim (k_1 \wedge k_2)^{-(\alpha + \beta)/d} + \left( \frac{n}{\log n}\right)^{-\frac{\gamma}{2\gamma + d}}.
\end{equation*}

\noindent \textbf{Bounding the variance:}

We bound the variance in a similar manner as the known $f$ case. We have that
\begin{align*}
    \Var_\bbP(\hat{\psi}_{k_1, k_2}^{\mathrm{INT}}) & = \Var_\bbP\left( \frac{1}{n}\sum_{i \in \D_5} A_i Y_i\right) + \Var_\bbP\left( \int \hat{p}^{(1, 2)}_{k_1}(\bx)\hat{b}^{(3, 4)}_{k_2}(\bx) \hat{f}^{(6)}(\bx) d\bx  \right) \\
    & = \frac{1}{n}\Var_\bbP(AY) +  \Var_{\bbP,1,2} \left[ \E_{\bbP,3,4,6}\left(\int \hat{p}^{(1, 2)}_{k_1}(\bx)\hat{b}^{(3, 4)}_{k_2}(\bx) \hat{f}^{(6)}(\bx) d\bx\right)  \right]  \\ & \quad +  \E_{\bbP,1,2} \left[ \Var_{\bbP,3,4,6}\left( \int \hat{p}^{(1, 2)}_{k_1}(\bx)\hat{b}^{(3, 4)}_{k_2}(\bx) \hat{f}^{(6)}(\bx) d\bx  \right)  \right].
\end{align*}

We will show that the second and third terms in the above expression have the following upper bounds

\begin{align}
    \Var_{\bbP,1,2} \left[ \E_{\bbP,3,4,6}\left( \int \hat{p}^{(1, 2)}_{k_1}(\bx)\hat{b}^{(3, 4)}_{k_2}(\bx) \hat{f}^{(6)}(\bx) d\bx \right)  \right] & \lesssim \frac{1}{n} + \left( \frac{n}{\log n}\right)^{-\frac{2\gamma}{2\gamma + d}} \label{eq: var exp v5} \\
    \E_{\bbP,1,2} \left[ \Var_{\bbP,3,4,6} \left( \int \hat{p}^{(1, 2)}_{k_1}(\bx)\hat{b}^{(3, 4)}_{k_2}(\bx) \hat{f}^{(6)}(\bx) d\bx \right)  \right] & \lesssim \frac{1}{n} + \frac{k_1 \wedge k_2}{n^2} + \left( \frac{n}{\log n}\right)^{-\frac{2\gamma}{2\gamma + d}} \label{eq: exp var v5}. 
\end{align}

We first show (\ref{eq: var exp v5}). We have that
\begin{align*}
    & \Var_{\bbP,1,2} \left[ \E_{\bbP,3,4,6}\left( \int \hat{p}^{(1, 2)}_{k_1}(\bx)\hat{b}^{(3, 4)}_{k_2}(\bx) \hat{f}^{(6)}(\bx) d\bx \right)  \right] \\
    & \quad = \Var_{\bbP,1,2} \left[  \int \hat{p}^{(1,2)}_{k_1}(\bx) p_{k_2}(b(\bx)) \E_{\bbP,6}(\hat{f}^{(6)}(\bx)) d\bx \right] \\
    & \quad \leq 2 \Var_{\bbP,1} \bigg[ \int \tilde{p}^{(1)}_{k_1}(\bx) p_{k_2}(b(\bx)) \E_{\bbP,6}(\hat{f}^{(6)}(\bx)) d\bx \bigg] \\ & \quad \quad  + 2\Var_{\bbP,1,2} \bigg[ \int (\hat{p}^{(1,2)}_{k_1}(\bx) - \tilde{p}^{(1)}_{k_1}(\bx)) p_{k_2}(b(\bx)) \E_{\bbP,6}(\hat{f}^{(6)}(\bx)) d\bx  \bigg].
\end{align*}
We can bound the first term as follows
\begin{align*}
    & \Var_{\bbP,1} \bigg[ \int \tilde{p}^{(1)}_{k_1}(\bx) p_{k_2}(b(\bx)) \E_{\bbP,6}(\hat{f}^{(6)}(\bx)) d\bx \bigg] \\
    & \quad = \E_{\bbP,1} \left[ \left( \int \tilde{p}^{(1)}_{k_1}(\bx) p_{k_2}(b(\bx)) \E_{\bbP,6}(\hat{f}^{(6)}(\bx)) d\bx \right)^2 \right] \\ & \quad \quad - \left[ \E_{\bbP,1}  \left( \int \tilde{p}^{(1)}_{k_1}(\bx) p_{k_2}(b(\bx)) \E_{\bbP,6}(\hat{f}^{(6)}(\bx)) d\bx \right) \right]^2 \\
    & \quad  = \E_{\bbP,1} \left[  \iint \tilde{p}^{(1)}_{k_1}(\bx)\tilde{p}^{(1)}_{k_1}(\by) p_{k_2}(b(\bx))p_{k_2}(b(\by)) \E_{\bbP,6}(\hat{f}^{(6)}(\bx))\E_{\bbP,6}(\hat{f}^{(6)}(\by)) d\bx d\by  \right] \\ & \quad \quad - \left[ \int  \Pi(p|V_{k_1})(\bx) p_{k_2}(b(\bx)) \E_{\bbP,6}(\hat{f}^{(6)}(\bx)) d\bx \right]^2 \\
    & \quad  =  \iint  \E_{\bbP,1} \left[\tilde{p}^{(1)}_{k_1}(\bx)\tilde{p}^{(1)}_{k_1}(\by) \right] p_{k_2}(b(\bx))p_{k_2}(b(\by)) \E_{\bbP,6}(\hat{f}^{(6)}(\bx))\E_{\bbP,6}(\hat{f}^{(6)}(\by)) d\bx d\by   \\
    & \quad \quad -  \iint \Pi(p|V_{k_1})(\bx)\Pi(p|V_{k_1})(\by) p_{k_2}(b(\bx))p_{k_2}(b(\by)) \E_{\bbP,6}(\hat{f}^{(6)}(\bx))\E_{\bbP,6}(\hat{f}^{(6)}(\by)) d\bx d\by \\ 
    & \quad  \lesssim \iint \left|  \E_{\bbP,1} \left[\tilde{p}^{(1)}_{k_1}(\bx)\tilde{p}^{(1)}_{k_1}(\by) \right] - \Pi(p|V_{k_1})(\bx)\Pi(p|V_{k_1})(\by)  \right| d\bx d\by\\
    & \quad  \lesssim \frac{1}{n} + \iint \frac{1}{n} \E_{\bbP} \left[ \left| K_{V_{k_1}}(\bX, \bx)K_{V_{k_1}}(\bX, \by) \right| \right] d\bx d\by \quad (\text{by Lemma \ref{lem: exp pxpy}}) \\
    & \quad  \lesssim \frac{1}{n} \quad (\text{by Lemma \ref{lem: exp kernel}}).
\end{align*}
We can bound the second term as follows
\begin{align*}
    & \Var_{\bbP,1,2} \bigg[ \int (\hat{p}^{(1,2)}_{k_1}(\bx) - \tilde{p}^{(1)}_{k_1}(\bx)) p_{k_2}(b(\bx)) \E_{\bbP,6}(\hat{f}^{(6)}(\bx)) d\bx  \bigg] \\
    & \quad \leq \E_{\bbP,1,2} \left[ \left( \int (\hat{p}^{(1,2)}_{k_1}(\bx) - \tilde{p}^{(1)}_{k_1}(\bx)) p_{k_2}(b(\bx)) \E_{\bbP,6}(\hat{f}^{(6)}(\bx)) d\bx \right)^2 \right] \\
    & \quad = \E_{\bbP,1,2} \bigg[ \begin{array}{c}
     \iint (\hat{p}^{(1,2)}_{k_1}(\bx) - \tilde{p}^{(1)}_{k_1}(\bx))(\hat{p}^{(1,2)}_{k_1}(\by) - \tilde{p}^{(1)}_{k_1}(\by))\\ \times  p_{k_2}(b(\bx))p_{k_2}(b(\by)) \E_{\bbP,6}(\hat{f}^{(6)}(\bx))\E_{\bbP,6}(\hat{f}^{(6)}(\by)) d\bx d\by 
   \end{array} \bigg] \\
    & \quad =   \iint \begin{array}{c}\E_{\bbP,1,2} \bigg[ (\hat{p}^{(1,2)}_{k_1}(\bx) - \tilde{p}^{(1)}_{k_1}(\bx))(\hat{p}^{(1,2)}_{k_1}(\by) - \tilde{p}^{(1)}_{k_1}(\by))\bigg] \\ \times p_{k_2}(b(\bx))p_{k_2}(b(\by)) \E_{\bbP,6}(\hat{f}^{(6)}(\bx))\E_{\bbP,6}(\hat{f}^{(6)}(\by)) \end{array} d\bx d\by   \\
    & \quad \lesssim   \iint \left| \E_{\bbP,1,2} \bigg[ (\hat{p}^{(1,2)}_{k_1}(\bx) - \tilde{p}^{(1)}_{k_1}(\bx))(\hat{p}^{(1,2)}_{k_1}(\by) - \tilde{p}^{(1)}_{k_1}(\by))\bigg]  \right| d\bx d\by  \\
    & \quad \lesssim \left( \frac{n}{\log n}\right)^{-\frac{2\gamma}{2\gamma + d}}  + \iint \frac{1}{n} \E_{\bbP} \left[ \left| K_{V_{k_1}}(\bX, \bx)K_{V_{k_1}}(\bX, \by) \right| \right] d\bx d\by \quad (\text{by Lemma \ref{lem: exp pxpy}}) \\
    & \quad \lesssim \frac{1}{n}  + \left( \frac{n}{\log n}\right)^{-\frac{2\gamma}{2\gamma + d}}   \quad (\text{by Lemma \ref{lem: exp kernel}}).
\end{align*}

Next we show (\ref{eq: exp var v5}). We have that
\begin{align*}
    & \E_{\bbP,1,2} \left[ \Var_{\bbP,3,4,6} \left( \int \hat{p}^{(1, 2)}_{k_1}(\bx)\hat{b}^{(3, 4)}_{k_2}(\bx) \hat{f}^{(6)}(\bx) d\bx \right)  \right] \\
    & \quad = \E_{\bbP,1,2} \left[ \E_{\bbP,3,4}  \left[ \Var_{\bbP,6} \left( \int \hat{p}^{(1, 2)}_{k_1}(\bx)\hat{b}^{(3, 4)}_{k_2}(\bx) \hat{f}^{(6)}(\bx) d\bx \right)  \right] \right] \\ & \quad \quad + \E_{\bbP,1,2} \left[ \Var_{\bbP,3,4} \left[ \E_{\bbP,6} \left( \int \hat{p}^{(1, 2)}_{k_1}(\bx)\hat{b}^{(3, 4)}_{k_2}(\bx) \hat{f}^{(6)}(\bx) d\bx \right)  \right] \right]. 
\end{align*}
The remainder of the proof will establish the following:
\begin{align}
    & \E_{\bbP,1,2} \left[ \E_{\bbP,3,4}  \left[ \Var_{\bbP,6} \left( \int \hat{p}^{(1, 2)}_{k_1}(\bx)\hat{b}^{(3, 4)}_{k_2}(\bx) \hat{f}^{(6)}(\bx) d\bx \right)  \right] \right] \nonumber \\
    & \quad \lesssim \left( \frac{n}{\log n}\right)^{-\frac{2\gamma}{2\gamma + d}} \left[ 1 + \frac{k_1 \wedge k_2}{n^2} \right] \label{eq: mc unknown f exp var helper1} 
\end{align}
and
\begin{align}
    & \E_{\bbP,1,2} \left[ \Var_{\bbP,3,4} \left[ \E_{\bbP,6} \left( \int \hat{p}^{(1, 2)}_{k_1}(\bx)\hat{b}^{(3, 4)}_{k_2}(\bx) \hat{f}^{(6)}(\bx) d\bx \right)  \right] \right] \nonumber \\
    & \quad \lesssim  \frac{1}{n} + \frac{k_1 \wedge k_2}{n^2} + \left( \frac{n}{\log n}\right)^{-\frac{2\gamma}{2\gamma + d}}. \label{eq: mc unknown f exp var helper2}
\end{align}

We can show (\ref{eq: mc unknown f exp var helper1}) as follows
\begin{align*}
    & \E_{\bbP,1,2} \left[ \E_{\bbP,3,4}  \left[ \Var_{\bbP,6} \left( \int \hat{p}^{(1, 2)}_{k_1}(\bx)\hat{b}^{(3, 4)}_{k_2}(\bx) \hat{f}^{(6)}(\bx) d\bx \right)  \right] \right] \\
    & \quad = \E_{\bbP,1,2} \left[ \E_{\bbP,3,4}  \left[ \Var_{\bbP,6} \left( \int \hat{p}^{(1, 2)}_{k_1}(\bx)\hat{b}^{(3, 4)}_{k_2}(\bx) [\hat{f}^{(6)}(\bx) - f(\bx)] d\bx \right)  \right] \right] \\
    & \quad  \leq \E_{\bbP,1,2,3,4,6}  \left[ \iint \begin{array}{c} \hat{p}^{(1,2)}_{k_1}(\bx)\hat{p}^{(1,2)}_{k_1}(\by)\hat{b}^{(3,4)}_{k_2}(\bx) \hat{b}^{(3,4)}_{k_2}(\by) \\ \times [\hat{f}^{(6)}(\bx) - f(\bx)][\hat{f}^{(6)}(\by) - f(\by)] \end{array} d\bx d\by  \right] \\
    & \quad = \iint \begin{array}{c}\E_{\bbP,1,2}  \left[ \hat{p}^{(1,2)}_{k_1}(\bx)\hat{p}^{(1,2)}_{k_1}(\by) \right] \E_{\bbP,3,4}  \left[ \hat{b}^{(3,4)}_{k_2}(\bx)\hat{b}^{(3,4)}_{k_2}(\by) \right] \\ \times \E_{\bbP, 6} \left[ [\hat{f}^{(6)}(\bx) - f(\bx)][\hat{f}^{(6)}(\by) - f(\by)] \right] \end{array} d\bx d\by  \\
    & \quad \leq \iint \begin{array}{c} \left| \E_{\bbP,1,2} \left[ \hat{p}^{(1,2)}_{k_1}(\bx)\hat{p}^{(1,2)}_{k_1}(\by) \right] \right| \left| \E_{\bbP,3,4}  \left[ \hat{b}^{(3,4)}_{k_2}(\bx)\hat{b}^{(3,4)}_{k_2}(\by) \right] \right| \\ \times \E_{\bbP, 6} \left[ \left\| \hat{f}^{(6)} - f \right\|_{\infty}^2 \right] \end{array} d\bx d\by \\
    & \quad \leq \left( \frac{n}{\log n}\right)^{-\frac{2\gamma}{2\gamma + d}} \iint \left| \E_{\bbP,1,2} \left[ \hat{p}^{(1,2)}_{k_1}(\bx)\hat{p}^{(1,2)}_{k_1}(\by) \right] \right| \left| \E_{\bbP,3,4}  \left[ \hat{b}^{(3,4)}_{k_2}(\bx)\hat{b}^{(3,4)}_{k_2}(\by) \right] \right| d\bx d\by \\
    & \quad \lesssim \left( \frac{n}{\log n}\right)^{-\frac{2\gamma}{2\gamma + d}} \iint \begin{array}{c}\left( 1 + \frac{1}{n}\E_{\bbP} \left[ \left| K_{V_{k_1}}(\bX, \bx)K_{V_{k_1}}(\bX, \by) \right| \right] \right) \\ \times \left( 1 + \frac{1}{n}\E_{\bbP} \left[ \left| K_{V_{k_2}}(\bX, \bx)K_{V_{k_2}}(\bX, \by) \right| \right] \right) \end{array} d\bx d\by \quad (\text{by Lemma \ref{lem: exp pxpy}})\\
    & \quad \lesssim \left( \frac{n}{\log n}\right)^{-\frac{2\gamma}{2\gamma + d}} \left[ 1 + \frac{k_1 \wedge k_2}{n^2} \right] \quad (\text{by Lemma \ref{lem: exp kernel}}).
\end{align*}

To show (\ref{eq: mc unknown f exp var helper2}), we note that
\begin{align*}
    & \E_{\bbP,1,2} \left[ \Var_{\bbP,3,4} \left[ \E_{\bbP,6} \left( \int \hat{p}^{(1, 2)}_{k_1}(\bx)\hat{b}^{(3, 4)}_{k_2}(\bx) \hat{f}^{(6)}(\bx) d\bx \right)  \right] \right]  \\
    & \quad = \E_{\bbP,1,2} \left[ \Var_{\bbP,3,4} \left[  \int \hat{p}^{(1, 2)}_{k_1}(\bx)\hat{b}^{(3, 4)}_{k_2}(\bx) \E_{\bbP,6}(\hat{f}^{(6)}(\bx)) d\bx   \right] \right] \\ 
    & \quad \leq 2 \E_{\bbP,1,2} \left[ \Var_{\bbP,3} \left[  \int \hat{p}^{(1, 2)}_{k_1}(\bx)\tilde{b}^{(3)}_{k_2}(\bx) \E_{\bbP,6}(\hat{f}^{(6)}(\bx)) d\bx   \right] \right] \\ 
    & \quad \quad + 2 \E_{\bbP,1,2} \left[ \Var_{\bbP,3,4} \left[  \int \hat{p}^{(1, 2)}_{k_1}(\bx) \left[ \hat{b}^{(3, 4)}_{k_2}(\bx) - \tilde{b}^{(3)}_{k_2}(\bx) \right]\E_{\bbP,6}(\hat{f}^{(6)}(\bx)) d\bx   \right] \right].
\end{align*}
We can bound the first term as follows
\begin{align*}
    & \E_{\bbP,1,2} \left[ \Var_{\bbP,3} \left[  \int \hat{p}^{(1, 2)}_{k_1}(\bx)\tilde{b}^{(3)}_{k_2}(\bx) \E_{\bbP,6}(\hat{f}^{(6)}(\bx)) d\bx   \right] \right] \\
    & \quad  = \E_{\bbP,1,2,3} \left[  \iint \hat{p}^{(1,2)}_{k_1}(\bx)\hat{p}^{(1,2)}_{k_1}(\by)\tilde{b}^{(3)}_{k_2}(\bx)\tilde{b}^{(3)}_{k_2}(\by) \E_{\bbP,6}(\hat{f}^{(6)}(\bx))\E_{\bbP,6}(\hat{f}^{(6)}(\by)) d\bx d\by   \right] \\ & \quad \quad  - \E_{\bbP,1,2} \left[ \left(\int \hat{p}^{(1,2)}_{k_1}(\bx) \Pi(b|V_{k_2})(\bx) \E_{\bbP,6}(\hat{f}^{(6)}(\bx)) d\bx   \right)^2 \right] \\
    & \quad  =  \iint \begin{array}{c} \E_{\bbP,1,2} \left[ \hat{p}^{(1,2)}_{k_1}(\bx)\hat{p}^{(1,2)}_{k_1}(\by) \right] \E_{\bbP,3} \left[ \tilde{b}^{(3)}_{k_2}(\bx)\tilde{b}^{(3)}_{k_2}(\by) \right] \\ \times \E_{\bbP,6}(\hat{f}^{(6)}(\bx))\E_{\bbP,6}(\hat{f}^{(6)}(\by)) \end{array}d\bx d\by   \\ & \quad \quad  -  \iint \begin{array}{c} \E_{\bbP,1,2} \left[\hat{p}^{(1,2)}_{k_1}(\bx) \hat{p}^{(1,2)}_{k_1}(\by)\right] \Pi(b|V_{k_2})(\bx)\Pi(b|V_{k_2})(\by) \\ \times \E_{\bbP,6}(\hat{f}^{(6)}(\bx))\E_{\bbP,6}(\hat{f}^{(6)}(\by)) \end{array} d\bx d\by   \\
    & \quad  \lesssim \iint \left| \begin{array}{c} \E_{\bbP,1,2} \left[ \hat{p}^{(1,2)}_{k_1}(\bx)\hat{p}^{(1,2)}_{k_1}(\by) \right] \\ \times \left\{ \E_{\bbP,3} \left[ \tilde{b}^{(3)}_{k_2}(\bx)\tilde{b}^{(3)}_{k_2}(\by)\right] - \Pi(b|V_{k_2})(\bx)\Pi(b|V_{k_2})(\by) \right\} \end{array} \right| d\bx d\by \\
    & \quad \lesssim \iint \begin{array}{c} \left( 1 + \frac{1}{n}\E_{\bbP} \left[ \left| K_{V_{k_1}}(\bX, \bx)K_{V_{k_1}}(\bX, \by) \right| \right] \right) \\ \times \left( \frac{1}{n} + \frac{1}{n}\E_{\bbP} \left[ \left| K_{V_{k_2}}(\bX, \bx)K_{V_{k_2}}(\bX, \by) \right| \right] \right) \end{array} d\bx d\by \quad (\text{by Lemma \ref{lem: exp pxpy}})\\
    & \quad \lesssim \frac{1}{n} + \frac{k_1 \wedge k_2}{n^2} \quad (\text{by Lemma \ref{lem: exp kernel}}).
\end{align*}

We can bound the second term as follows
\begin{align*}
    & \E_{\bbP,1,2} \left[ \Var_{\bbP,3,4} \left[  \int \hat{p}^{(1, 2)}_{k_1}(\bx) \left[ \hat{b}^{(3, 4)}_{k_2}(\bx) - \tilde{b}^{(3)}_{k_2}(\bx) \right]\E_{\bbP,6}(\hat{f}^{(6)}(\bx)) d\bx   \right] \right] \\
    & \quad  \leq \E_{\bbP,1,2,3,4} \left[  \iint \begin{array}{c}\hat{p}^{(1,2)}_{k_1}(\bx)\hat{p}^{(1,2)}_{k_1}(\by) \E_{\bbP,6}(\hat{f}^{(6)}(\bx))\E_{\bbP,6}(\hat{f}^{(6)}(\by)) \\ \times \left[ \hat{b}^{(3, 4)}_{k_2}(\bx) - \tilde{b}^{(3)}_{k_2}(\bx) \right] \left[ \hat{b}^{(3, 4)}_{k_2}(\by) - \tilde{b}^{(3)}_{k_2}(\by) \right]  \end{array} d\bx d\by   \right] \\ 
    & \quad  =  \iint \begin{array}{c}\E_{\bbP,1,2} \left[ \hat{p}^{(1,2)}_{k_1}(\bx)\hat{p}^{(1,2)}_{k_1}(\by) \right] \E_{\bbP,6}(\hat{f}^{(6)}(\bx))\E_{\bbP,6}(\hat{f}^{(6)}(\by)) \\ \E_{\bbP,3,4} \left[ \left[ \hat{b}^{(3, 4)}_{k_2}(\bx) - \tilde{b}^{(3)}_{k_2}(\bx) \right] \left[ \hat{b}^{(3, 4)}_{k_2}(\by) - \tilde{b}^{(3)}_{k_2}(\by) \right] \right]  \end{array} d\bx d\by   \\
    & \quad  \lesssim \iint \begin{array}{c} \left| \E_{\bbP,1,2} \left[ \hat{p}^{(1,2)}_{k_1}(\bx)\hat{p}^{(1,2)}_{k_1}(\by) \right] \right| \\ \times \left| \E_{\bbP,3,4} \left[ \left[ \hat{b}^{(3, 4)}_{k_2}(\bx) - \tilde{b}^{(3)}_{k_2}(\bx) \right]\left[ \hat{b}^{(3, 4)}_{k_2}(\by) - \tilde{b}^{(3)}_{k_2}(\by) \right] \right] \right| \end{array} d\bx d\by \\
    & \quad \lesssim \iint  \begin{array}{c}\left( 1 + \frac{1}{n} \E_{\bbP} \left[ \left| K_{V_{k_1}}(\bX, \bx)K_{V_{k_1}}(\bX, \by) \right| \right] \right) \\ \times \left( \left( \frac{n}{\log n}\right)^{-\frac{2\gamma}{2\gamma + d}} + \frac{1}{n} \E_{\bbP, 1} \left[ \left| K_{V_{k_2}}(\bX, \bx)K_{V_{k_2}}(\bX, \by) \right| \right] \right)\end{array} d\bx d\by \quad (\text{by Lemma \ref{lem: exp pxpy}})\\
    & \quad \lesssim \left( \frac{n}{\log n}\right)^{-\frac{2\gamma}{2\gamma + d}} + \frac{1}{n} + \frac{k_1 \wedge k_2}{n^2} \quad (\text{by Lemma \ref{lem: exp kernel}}).
\end{align*}

\subsubsection{Proof of the lower bound}
Let $\bbP \in \cP_{(\alpha,\beta,\gamma)}$, where $\bX \sim \text{Uniform}([0,1]^d)$. Recall from the proof of the upper bound on the bias that
\begin{align*}
    \E_\bbP(\hat{\psi}^{\mathrm{INT}}_{k_1, k_2}) - \psi(\bbP) & =  -  \int \left( p_{k_1}(p(\bx)) -  p(\bx) \right) \left( p_{k_2}(b(\bx)) - b(\bx)  \right)  d\bx   \\ & \quad -  \int p(\bx)  \left( p_{k_2}(b(\bx)) - b(\bx)  \right)   d\bx - \int  b(\bx)  \left( p_{k_1}(p(\bx)) -  p(\bx) \right)    d\bx \\
    & \quad - \int  p_{k_1}(p(\bx)) p_{k_2}(b(\bx)) [1 - \E_{\bbP,6}(\hat{f}^{(6)}(\bx)) ]d\bx  . 
\end{align*}
Recall from the upper bound that
\begin{equation*}
    \left| \int  p_{k_1}(p(\bx)) p_{k_2}(b(\bx)) [1 - \E_{\bbP,6}(\hat{f}^{(6)}(\bx)) ]d\bx \right| \lesssim \left( \frac{n}{\log n}\right)^{-\frac{\gamma}{2\gamma + d}}
\end{equation*}
Using the decomposition in (\ref{eq: p_k proj}) (in Lemma \ref{lem: proj dist}), 
\begin{align*}
    \int \left( p_{k_1}(p(\bx)) -  p(\bx) \right) \left( p_{k_2}(b(\bx)) - b(\bx)  \right) d\bx & = \int \Pi(p | V_{k_1}^{\perp}(\bx) \Pi(b | V_{k_2}^{\perp})(\bx)  d\bx + R_{1, n, k_1, k_2} \\
    \int p(\bx)  \left( p_{k_2}(b(\bx)) - b(\bx)  \right)   d\bx & = - \int p(\bx)  \Pi(b | V_{k_2}^{\perp})(\bx)   d\bx + R_{2, n, k_1, k_2} \\
    \int  b(\bx)  \left( p_{k_1}(p(\bx)) -  p(\bx) \right)    d\bx & = - \int  b(\bx)  \Pi(p | V_{k_1}^{\perp})(\bx)    d\bx + R_{3, n, k_1, k_2}
\end{align*}
where 
\begin{equation*}
    |R_{j, n, k_1, k_2}| \lesssim \left( \frac{n}{\log n}\right)^{-\frac{\gamma}{2\gamma + d}}, \qquad j = 1, 2, 3.
\end{equation*}
Suppose without loss of generality that $k_1 \leq k_2$ for $n$ sufficiently large. It follows from the proof of the lower bound on the bias in the known $f$ case that
\begin{equation*}
     \E_\bbP(\hat{\psi}^{\mathrm{INT}}_{k_1, k_2}) - \psi(\bbP)   =    \int \Pi(p | V_{k_1}^{\perp})(\bx) \Pi(b | V_{k_1}^{\perp})(\bx)   d\bx  + R_{1, n, k_1, k_2} + R_{2, n, k_1, k_2} + R_{3, n, k_1, k_2}.
\end{equation*}
Choosing $p$ and $b$ as in the known $f$ case (i.e., as in (\ref{eq: p lb}) and (\ref{eq: b lb})),
\begin{equation*}
     \int \Pi(p | V_{k_1}^{\perp})(\bx) \Pi(b | V_{k_1}^{\perp})(\bx)   d\bx \gtrsim k_1^{-(\alpha + \beta)/d}. 
\end{equation*}
The lower bound on the bias is then established by noting that
\begin{equation*}
    k_1^{-(\alpha + \beta)/d} \gg \left( \frac{n}{\log n}\right)^{-\frac{\gamma}{2\gamma + d}}
\end{equation*}
for $k_1 \ll n^{\frac{d/4}{(\alpha + \beta) / 2}}$ and $\gamma$ sufficiently large.

\subsection{Monte Carlo-based plug-in estimator}

\subsubsection{Proof of the upper bound}

Let $\bbP \in \cP_{(\alpha,\beta,\gamma)}$ be arbitrary.\\

\noindent \textbf{Bounding the bias:}

Following the same approach used in the known $f$ case, we have that
\begin{align*}
    \E_\bbP(\hat{\psi}^{\mathrm{MC}}_{k_1, k_2}) & =  \E_\bbP\left( A Y \right) - \int p(\bx)b(\bx)f(\bx)  d\bx \\
    & \quad -  \int \left( p_{k_1}(p(\bx)) -  p(\bx) \right) \left( p_{k_2}(b(\bx)) - b(\bx)  \right) f(\bx) d\bx   \\ & \quad -  \int p(\bx) f(\bx) \left( p_{k_2}(b(\bx)) - b(\bx)  \right)   d\bx - \int  b(\bx) f(\bx) \left( p_{k_1}(p(\bx)) -  p(\bx) \right)    d\bx 
\end{align*}
Observe that the expectation of $\hat{\psi}^{\mathrm{MC}}_{k_1, k_2}$ is the same as that of $\hat{\psi}^{\mathrm{INT}}_{k_1, k_2}$ (in the unknown $f$ case) without the additional term of $\int  p_{k_1}(p(\bx)) p_{k_2}(b(\bx)) [f(\bx) - \E_{\bbP,6}(\hat{f}^{(6)}(\bx)) ]d\bx$. Moreover, recall that
\begin{equation*}
    \left| \int  p_{k_1}(p(\bx)) p_{k_2}(b(\bx)) [f(\bx) - \E_{\bbP,6}(\hat{f}^{(6)}(\bx)) ]d\bx \right| \lesssim \left( \frac{n}{\log n}\right)^{-\frac{\gamma}{2\gamma + d}}
\end{equation*}
Therefore, 
\begin{equation*}
    \left|\E_\bbP\left(\hat{\psi}_{k_1, k_2}^{\mathrm{MC}} - \psi(\bbP)\right)\right| \lesssim (k_1 \wedge k_2)^{-(\alpha + \beta)/d} + \left( \frac{n}{\log n}\right)^{-\frac{\gamma}{2\gamma + d}}.
\end{equation*}

\noindent \textbf{Bounding the variance:}

To bound the variance, we consider the decomposition
\begin{align*}
    \Var_\bbP(\hat{\psi}_{k_1, k_2}^{\mathrm{MC}}) & = \Var_\bbP\left( \frac{1}{n}\sum_{i \in \D_5} A_i Y_i\right) + \Var_\bbP\left( \frac{1}{n}\sum_{i \in \D_6} \hat{p}^{(1, 2)}_{k_1}(\bX_i)\hat{b}^{(3, 4)}_{k_2}(\bX_i)  \right) \\
    & = \frac{1}{n}\Var_\bbP(AY) +  \Var_{\bbP,1, 2} \left[ \E_{\bbP,3,4,6}(\hat{p}^{(1,2)}_{k_1}(\bX)\hat{b}^{(3,4)}_{k_2}(\bX))  \right]  \\
    & \quad +  \E_{\bbP,1,2} \left[ \Var_{\bbP,3,4,6}\left( \frac{1}{n}\sum_{i \in \D_6} \hat{p}^{(1,2)}_{k_1}(\bX_i)\hat{b}^{(3,4)}_{k_2}(\bX_i)  \right)  \right].
\end{align*}

We can bound the second term by 
\begin{align*}
    \Var_{\bbP,1, 2} \left[ \E_{\bbP,3,4,6}(\hat{p}^{(1,2)}_{k_1}(\bX)\hat{b}^{(3,4)}_{k_2}(\bX))  \right] & = \Var_{\bbP,1,2} \left[  \int \hat{p}^{(1,2)}_{k_1}(\bx) p_{k_2}(b(\bx)) f(\bx) d\bx \right] \\
    & \lesssim \frac{1}{n} + \left( \frac{n}{\log n}\right)^{-\frac{2\gamma}{2\gamma + d}},
\end{align*}
where the last line follows follows from the exact same steps used to show (\ref{eq: var exp v5}) (in the derivation of the variance of $\hat{\psi}_{k_1, k_2}^{\mathrm{INT}}$ in the double sample splitting case) when replacing $\E_{\bbP,6}(\hat{f}^{(6)}(\cdot))$ with $f(\cdot)$.  

It remains to bound the third term. We will show that
\begin{equation} \label{eq: var unknown f mc pt3}
    \E_{\bbP,1,2} \left[ \Var_{\bbP,3,4,6}\left( \frac{1}{n}\sum_{i \in \D_6} \hat{p}^{(1,2)}_{k_1}(\bX_i)\hat{b}^{(3,4)}_{k_2}(\bX_i)  \right)  \right] \lesssim \frac{1}{n} + \frac{k_1 \vee k_2}{n^2} +  \frac{k_1k_2}{n^3}.
\end{equation}
We have that
\begin{align*}
    & \E_{\bbP,1,2} \left[ \Var_{\bbP,3,4,6}\left( \frac{1}{n}\sum_{i \in \D_6} \hat{p}^{(1,2)}_{k_1}(\bX_i)\hat{b}^{(3,4)}_{k_2}(\bX_i)  \right)  \right] \\
    & \quad = \E_{\bbP,1,2} \left[ \E_{\bbP,3,4}  \left[ \Var_{\bbP,6}\left( \frac{1}{n}\sum_{i \in \D_6} \hat{p}^{(1, 2)}_{k_1}(\bX_i)\hat{b}^{(3,4)}_{k_2}(\bX_i)  \right)  \right] \right] \\ & \quad \quad + \E_{\bbP,1,2} \left[ \Var_{\bbP,3,4} \left[ \E_{\bbP,6}\left( \frac{1}{n}\sum_{i \in \D_6} \hat{p}^{(1,2)}_{k_1}(\bX_i)\hat{b}^{(3,4)}_{k_2}(\bX_i)  \right)  \right] \right] \\
    & \quad = \frac{1}{n} \E_{\bbP,1,2,3,4}  \left[ \Var_{\bbP,6}\left( \hat{p}^{(1,2)}_{k_1}(\bX)\hat{b}^{(3,4)}_{k_2}(\bX)  \right)  \right] \\
    & \quad \quad + \E_{\bbP,1,2} \left[ \Var_{\bbP,3,4} \left[ \E_{\bbP,6}\left( \hat{p}^{(1,2)}_{k_1}(\bX)\hat{b}^{(3,4)}_{k_2}(\bX)  \right)  \right] \right]. 
\end{align*}

To bound $\E_{\bbP,1,2} \left[ \Var_{\bbP,3,4} \left[ \E_{\bbP,6}\left( \hat{p}^{(1,2)}_{k_1}(\bX)\hat{b}^{(3,4)}_{k_2}(\bX)  \right)  \right] \right]$, we note that 
\begin{align*}
    & \E_{\bbP,1,2} \left[ \Var_{\bbP,3,4} \left[ \E_{\bbP,6}\left( \hat{p}^{(1,2)}_{k_1}(\bX)\hat{b}^{(3,4)}_{k_2}(\bX)  \right)  \right] \right] \\
    & \quad = \E_{\bbP,1,2} \left[ \Var_{\bbP,3,4} \left[  \int \hat{p}^{(1, 2)}_{k_1}(\bx)\hat{b}^{(3, 4)}_{k_2}(\bx) f(\bx) d\bx   \right] \right] \\
    & \quad \lesssim  \frac{1}{n} + \frac{k_1 \wedge k_2}{n^2} + \left( \frac{n}{\log n}\right)^{-\frac{2\gamma}{2\gamma + d}}
\end{align*}
where the last line follows from the exact same steps used to show (\ref{eq: mc unknown f exp var helper2}) (in the derivation of the variance of $\hat{\psi}_{k_1, k_2}^{\mathrm{INT}}$ in the double sample splitting case) when replacing $\E_{\bbP,6}(\hat{f}^{(6)}(\cdot))$ with $f(\cdot)$. 

Next, we bound $\frac{1}{n} \E_{\bbP,1,2,3,4}  \left[ \Var_{\bbP,6}\left( \hat{p}^{(1,2)}_{k_1}(\bX)\hat{b}^{(3,4)}_{k_2}(\bX)  \right)  \right]$. We have that
\begin{align*}
    & \frac{1}{n} \E_{\bbP,1,2,3,4}  \left[ \Var_{\bbP,6}\left( \hat{p}^{(1,2)}_{k_1}(\bX)\hat{b}^{(3,4)}_{k_2}(\bX)  \right)  \right] \\
    & \quad \leq \frac{1}{n} \E_{\bbP,1,2,3,4}  \left[ \E_{\bbP,6}\left[ \left( \hat{p}^{(1,2)}_{k_1}(\bX)\hat{b}^{(3,4)}_{k_2}(\bX)  \right)^2 \right]  \right] \\
    & \quad = \frac{1}{n} \int   \E_{\bbP,1,2} \left[ \hat{p}^{(1,2)}_{k_1}(\bx)^2 \right] \E_{\bbP,3,4} \left[ \hat{b}^{(3,4)}_{k_2}(\bx)^2 \right]  f(\bx) d\bx \\
    & \quad \lesssim \frac{1}{n} \int   \E_{\bbP,1,2} \left[ \hat{p}^{(1,2)}_{k_1}(\bx)^2 \right] \E_{\bbP,3,4} \left[ \hat{b}^{(3,4)}_{k_2}(\bx)^2 \right]  d\bx \\
    & \quad \lesssim \frac{1}{n} \int \left( 1 + \frac{1}{n} \E_{\bbP} \left[ K_{V_{k_1}}(\bX, \bx)^2  \right] \right) \left( 1 + \frac{1}{n} \E_{\bbP} \left[ K_{V_{k_2}}(\bX, \bx)^2 \right] \right) d\bx \quad (\text{by Lemma \ref{lem: nuisance function bounds}})\\
    & \quad \lesssim \frac{1}{n} + \frac{k_1}{n^2} + \frac{k_2}{n^2} + \frac{k_1k_2}{n^3}  \quad (\text{by Lemma \ref{lem: exp kernel single}}).
\end{align*}
which establishes (\ref{eq: var unknown f mc pt3}) and thus the variance bound.

\subsubsection{Proof of the lower bound}

Let $\bbP \in \cP_{(\alpha,\beta,\gamma)}$ be arbitrary.\\ 

\noindent \textbf{Bounding the bias:}

As noted in the proof of the upper bound of the bias, the expectation of $\hat{\psi}^{\mathrm{MC}}_{k_1, k_2}$ is the same as that of $\hat{\psi}^{\mathrm{INT}}_{k_1, k_2}$ (in the unknown $f$ case) without the additional term of $\int  p_{k_1}(p(\bx)) p_{k_2}(b(\bx)) [f(\bx) - \E_{\bbP,6}(\hat{f}^{(6)}(\bx)) ]d\bx$. Moreover, recall that
\begin{equation*}
    \left| \int  p_{k_1}(p(\bx)) p_{k_2}(b(\bx)) [f(\bx) - \E_{\bbP,6}(\hat{f}^{(6)}(\bx)) ]d\bx \right| \lesssim \left( \frac{n}{\log n}\right)^{-\frac{\gamma}{2\gamma + d}}
\end{equation*}
Now, suppose that $\bbP \in \cP_{(\alpha,\beta,\gamma)}$ where $\bX \sim \text{Uniform}([0,1]^d)$ and $p$ and $b$ are given by (\ref{eq: p lb}) and (\ref{eq: b lb}). The lower bound on the bias of $\hat{\psi}^{\mathrm{MC}}_{k_1, k_2}$ then follows from the proof of the lower bound of the bias of $\hat{\psi}^{\mathrm{INT}}_{k_1, k_2}$. 

\noindent \textbf{Bounding the variance:}

Suppose that $k_1, k_2 \gg n$. Following the approach used in the known $f$ case, it suffices to show that 
\begin{equation}
    \E_{\bbP, 1, 2, 3, 4} \left[\Var_{\bbP, 6} \left( \hat{p}^{(1,2)}_{k_1}(\bX) \hat{b}^{(3,4)}_{k_2}(\bX)  \right) \right] \gtrsim \frac{k_1k_2}{n^2}.  \label{eq: mc unknown f exp var lb}
\end{equation}

We have that
\begin{align*}
    \E_{\bbP, 1, 2, 3, 4} \left[\Var_{\bbP, 6} \left( \hat{p}^{(1, 2)}_{k_1}(\bX) \hat{b}^{(3, 4)}_{k_2}(\bX)  \right) \right] & =   \E_{\bbP, 1, 2, 3, 4} \left[\E_{\bbP, 6} \left[ \left( \hat{p}^{(1, 2)}_{k_1}(\bX) \hat{b}^{(3, 4)}_{k_2}(\bX) \right)^2  \right] \right] \\
    & \quad -   \E_{\bbP, 1, 2, 3, 4} \left[\E_{\bbP, 6} \left[ \hat{p}^{(1, 2)}_{k_1}(\bX) \hat{b}^{(3, 4)}_{k_2}(\bX)   \right]^2 \right].
\end{align*}
By Lemma \ref{lem: nuisance function bounds}
\begin{equation*}
    \E_{\bbP, 1, 2, 3, 4} \left[\E_{\bbP, 6} \left[ \left( \hat{p}^{(1, 2)}_{k_1}(\bX) \hat{b}^{(3, 4)}_{k_2}(\bX) \right)^2  \right] \right] \gtrsim \frac{k_1k_2}{n^2}.
\end{equation*}
Moreover,
\begin{align*}
    & \E_{\bbP, 1, 2, 4, 5} \left[\E_{\bbP, 6} \left[ \hat{p}^{(1, 2)}_{k_1}(\bX) \hat{b}^{(3, 4)}_{k_2}(\bX)   \right]^2 \right] \\
    & \quad = \E_{\bbP, 1, 2, 3, 4} \left[ \iint \hat{p}^{(1, 2)}_{k_1}(\bx)\hat{p}^{(1, 2)}_{k_1}(\by) \hat{b}^{(3, 4)}_{k_2}(\bx)\hat{b}^{(3, 4)}_{k_2}(\by) d\bx d\by \right] \\
    & \quad =  \iint \E_{\bbP, 1, 2} \left[\hat{p}^{(1, 2)}_{k_1}(\bx)\hat{p}^{(1, 2)}_{k_1}(\by) \right] \E_{\bbP, 3, 4} \left[ \hat{b}^{(3, 4)}_{k_2}(\bx)\hat{b}^{(3, 4)}_{k_2}(\by) \right] d\bx d\by \\
    & \quad \lesssim \iint \begin{array}{c} \left( 1 + \frac{1}{n} \E_{\bbP} \left[ \left| K_{V_{k_1}}(\bX, \bx)K_{V_{k_1}}(\bX, \by) \right| \right] \right) \\ \times \left( 1 + \frac{1}{n} \E_{\bbP} \left[ \left| K_{V_{k_1}}(\bX, \bx)K_{V_{k_1}}(\bX, \by) \right| \right] \right) \end{array} d\bx d\by \quad (\text{by Lemma \ref{lem: exp pxpy}}) \\
    & \quad \lesssim 1 + \frac{k_1 \wedge k_2}{n^2} \quad (\text{by Lemma \ref{lem: exp kernel single}})
\end{align*}
which establishes (\ref{eq: mc unknown f exp var lb}).

\subsection{Newey and Robins plug-in estimator}

\subsubsection{Proof of the upper bound}
Let $\bbP \in \cP_{(\alpha,\beta,\gamma)}$ be arbitrary.\\

\noindent \textbf{Bounding the bias:}

 First, we find an upper bound on the bias of $\hat{\psi}_{k}^{\mathrm{NR}}$. Observe that
\begin{align*}
    \E_\bbP(\hat{\psi}^{\mathrm{NR}}_{k}) & =  \psi(\bbP) -  \E_\bbP\left[ p(\bX) (b(\bX) - \hat{b}^{(1,2)}_k(\bX)) \right] \\ 
    & =  \psi(\bbP) -  \int  p(\bx) f(\bx) (b(\bx) - p_k(b(\bx))) d\bx. 
\end{align*}
Recall from the derivation of the bias of $\hat{\psi}_{k_1, k_2}^{\mathrm{INT}}$ (in the unknown $f$ case) that 
\begin{equation*}
    \left| \int  p(\bx) f(\bx) (b(\bx) - p_k(b(\bx))) d\bx  \right| \lesssim k^{-(\alpha + \beta) / d} +  \left( \frac{n}{\log n}\right)^{-\frac{\gamma}{2\gamma + d}}
\end{equation*}
which completes the upper bound on the bias of $\hat{\psi}^{\mathrm{NR}}_{k}$.

\noindent \textbf{Bounding the variance:}

To bound the variance of $\hat{\psi}^{\mathrm{NR}}_{k}$, we first bound $\Var_{\bbP,1,2} \left[ \E_{\bbP,3}\left( \hat{\psi}^{\mathrm{NR}}_{k}\right)  \right]$ following the same approach used in the proof of (\ref{eq: exp var v5}) (in the derivation of the variance of $\hat{\psi}_{k_1, k_2}^{\mathrm{INT}}$ in the double sample splitting case). That is, we may write,
\begin{align*}
    \Var_{\bbP,1,2} \left[ \E_{\bbP,3}\left( \hat{\psi}^{\mathrm{NR}}_{k}\right)  \right] & = \Var_{\bbP,1,2} \left[ \E_{\bbP,3}\left( p(\bX)(b(\bX) - \hat{b}^{(1,2)}_{k}(\bX))\right)  \right] \\
    & = \Var_{\bbP,1,2} \left[ \int  p(\bx)f(\bx)(b(\bx) - \hat{b}^{(1,2)}_{k}(\bx)) d\bx  \right] \\
    & \leq 2\Var_{\bbP,1,2} \left[ \int  p(\bx)f(\bx)\tilde{b}^{(1)}_{k}(\bx) d\bx  \right]  \\
    & \quad + 2\Var_{\bbP,1,2} \left[ \int  p(\bx)f(\bx)(\tilde{b}^{(1)}_{k}(\bx) - \hat{b}^{(1,2)}_{k}(\bx)) d\bx  \right]. 
\end{align*}
It then follows from the same steps used in the proof of (\ref{eq: exp var v5}) that
\begin{align*}
    \Var_{\bbP,1,2} \left[ \int  p(\bx)f(\bx)\tilde{b}^{(1)}_{k}(\bx) d\bx  \right] & \lesssim \frac{1}{n} \\
    \Var_{\bbP,1,2} \left[ \int  p(\bx)f(\bx)(\tilde{b}^{(1)}_{k}(\bx) - \hat{b}^{(1,2)}_{k}(\bx)) d\bx  \right]  & \lesssim \frac{1}{n} + \left( \frac{n}{\log n}\right)^{-\frac{2\gamma}{2\gamma + d}}.
\end{align*}
Therefore,
\begin{equation*}
    \Var_{\bbP,1,2} \left[ \E_{\bbP,3}\left( \hat{\psi}^{\mathrm{NR}}_{k}\right)  \right] \lesssim \frac{1}{n} + \left( \frac{n}{\log n}\right)^{-\frac{2\gamma}{2\gamma + d}}.
\end{equation*}
Second, we bound $\E_{\bbP,1,2} \left[ \Var_{\bbP,3}\left( \hat{\psi}^{\mathrm{NR}}_{k}\right)  \right]$ in the same manner as in the known $f$ case. We have that
\begin{align*}
    \E_{\bbP,1,1} \left[ \Var_{\bbP,3} \left( \hat{\psi}^{\mathrm{NR}}_{k}\right)  \right] & = \frac{1}{n} \E_{\bbP,1,2} \left[ \Var_{\bbP,3} \left( A(Y - \hat{b}^{(1,2)}_{k}(\bX)) \right) \right] \\
    & \leq \frac{1}{n} \E_{\bbP,1,2} \left[ \E_{\bbP,3} \left( A^2(Y - \hat{b}^{(1,2)}_{k}(\bX))^2 \right) \right] \\
    & = \frac{1}{n} \E_{\bbP,1,2} \left[ \E_{\bbP,3} \left( A^2(Y^2 - 2Y\hat{b}^{(1,2)}_{k}(\bX) + \hat{b}^{(1,2)}_{k}(\bX)^2) \right) \right] \\
    & = \frac{1}{n} \E_{\bbP,3} \left[ A^2(Y^2 - 2Yp_k(b(\bX)) + \E_{\bbP,1,2} \left[\hat{b}^{(1,2)}_{k}(\bX)^2)\right] \right].
\end{align*}
Since $A$, $Y$ are bounded random variables and $p_k(b(\cdot))$ is bounded, 
\begin{equation*}
    \E_{\bbP,1,2} \left[ \Var_{\bbP,3} \left( \hat{\psi}^{\mathrm{NR}}_{k}\right)  \right] \lesssim \frac{1}{n} + \frac{1}{n} \E_{\bbP,3} \left( \E_{\bbP,1,2}\left[\hat{b}^{(1,2)}_{k}(\bX)^2\right] \right). 
\end{equation*}
It then follows from Lemma \ref{lem: nuisance function bounds} that
\begin{equation*}
    \E_{\bbP,1,2} \left[ \Var_{\bbP,3} \left( \hat{\psi}^{\mathrm{NR}}_{k}\right)  \right] \lesssim \frac{1}{n} + \frac{k}{n^2}.
\end{equation*}

\subsubsection{Proof of the lower bound}

Let $\bbP \in \cP_{(\alpha,\beta,\gamma)}$, where $\bX \sim \text{Uniform}([0,1]^d)$. Recall from the proof of the upper bound on the bias that
\begin{align*}
    \E_\bbP(\hat{\psi}^{\mathrm{NR}}_{k}) - \psi(\bbP) =  \int  p(\bx) (b(\bx) - p_k(b(\bx))) d\bx.
\end{align*}
Moreover, recall from the lower bound on the bias of the Monte-Carlo based plug-in estimator that
\begin{equation*}
    \int  p(\bx) (b(\bx) - p_k(b(\bx))) d\bx = - \int p(\bx)  \Pi(b | V_{k}^{\perp}(\bx))   d\bx + R_{2, n, k}
\end{equation*}
where
\begin{equation*}
    |R_{2, n, k}| \lesssim \left( \frac{n}{\log n}\right)^{-\frac{\gamma}{2\gamma + d}}.
\end{equation*}
Choosing $p$ and $b$ as given in (\ref{eq: p lb}) and (\ref{eq: b lb}), recall from the known $f$ case that
\begin{equation*}
    \left| \int p(\bx)  \Pi(b | V_{k}^{\perp}(\bx)   d\bx \right| \gtrsim k^{-(\alpha + \beta)/d}.
\end{equation*}
The lower bound on the bias is then established by noting that
\begin{equation*}
    k^{-(\alpha + \beta)/d} \gg \left( \frac{n}{\log n}\right)^{-\frac{\gamma}{2\gamma + d}}
\end{equation*}
for $k \ll n^{\frac{d/4}{(\alpha + \beta) / 2}}$ and $\gamma$ sufficiently large.

\subsection{First-order bias-corrected estimator} 

\subsubsection{Proof of the upper bound}

Let $\bbP \in \cP_{(\alpha,\beta,\gamma)}$ be arbitrary.\\

\noindent \textbf{Bounding the bias:}

Following the same approach used in the known $f$ case,
\begin{equation*}
     \E_\bbP(\hat{\psi}^{\mathrm{IF}}_{k_1, k_2}) - \psi(\bbP)  =  \int (p_{k_1}(p(\bx)) - p(\bx))(p_{k_2}(b(\bx)) - p(\bx)) f(\bx) d\bx. 
\end{equation*}
Using the decomposition in (\ref{eq: p_k proj}) (in Lemma \ref{lem: proj dist}), 
\begin{align*} 
    \E_\bbP(\hat{\psi}^{\mathrm{IF}}_{k_1, k_2}) - \psi(\bbP)  & = \int \begin{array}{c} \left(\E_{\bbP,2} \left[ \Pi\left( \frac{ p(f - \hat{f}^{(2)})}{\hat{f}^{(2)}} | V_{k_1}\right)(\bx) \right] -\Pi(p | V_{k_1}^{\perp})(\bx) \right) \\ \times \left(\E_{\bbP,4} \left[ \Pi\left( \frac{ b(f - \hat{f}^{(4)})}{\hat{f}^{(4)}} | V_{k_2}\right)(\bx) \right] - \Pi(b | V_{k_2}^{\perp})(\bx) \right)  \end{array} f(\bx) d\bx  \\
    & = \int \Pi(p | V_{k_1}^{\perp})(\bx)\Pi(b | V_{k_2}^{\perp})(\bx) f(\bx) d\bx \\
    & \quad + \int \begin{array}{c} \E_{\bbP,2} \left[ \Pi\left( \frac{ p(f - \hat{f}^{(2)})}{\hat{f}^{(2)}} | V_{k_1}\right)(\bx) \right] \\ \times \E_{\bbP,4} \left[ \Pi\left( \frac{ b(f - \hat{f}^{(4)})}{\hat{f}^{(4)}} | V_{k_2}\right)(\bx) \right] \end{array} f(\bx) d\bx \\
    & \quad - \int \Pi(b | V_{k_2}^{\perp})(\bx) \E_{\bbP,2} \left[ \Pi\left( \frac{ p(f - \hat{f}^{(2)})}{\hat{f}^{(2)}} | V_{k_1}\right)(\bx) \right] f(\bx) d\bx \\
    & \quad - \int \Pi(p | V_{k_1}^{\perp})(\bx) \E_{\bbP,4} \left[ \Pi\left( \frac{ b(f - \hat{f}^{(4)})}{\hat{f}^{(4)}} | V_{k_2}\right)(\bx) \right] f(\bx) d\bx.
\end{align*}
Recall from the known $f$ case that
\begin{equation*}
    \int \Pi(p | V_{k_1}^{\perp})(\bx)\Pi(b | V_{k_2}^{\perp})(\bx) f(\bx) d\bx \lesssim (k_1 \vee k_2)^{-\frac{\alpha + \beta}{d}}.
\end{equation*}
Moreover, recall from the proof of Lemma \ref{lem: proj dist} that
\begin{align*}
    \E_{\bbP,2} \left[ \left\| \Pi\left( \frac{ p(f - \hat{f}^{(2)})}{\hat{f}^{(2)}} | V_{k_1}\right) \right\|_{\infty} \right]  & \lesssim \left( \frac{n}{\log n}\right)^{-\frac{\gamma}{2\gamma + d}} \\
    \E_{\bbP,4} \left[ \left\| \Pi\left( \frac{ b(f - \hat{f}^{(4)})}{\hat{f}^{(4)}} | V_{k_2}\right) \right\|_{\infty} \right] & \lesssim \left( \frac{n}{\log n}\right)^{-\frac{\gamma}{2\gamma + d}}.
\end{align*}
Therefore,
\begin{align*}
    \left| \int \begin{array}{c} \E_{\bbP,2} \left[ \Pi\left( \frac{ p(f - \hat{f}^{(2)})}{\hat{f}^{(2)}} | V_{k_1}\right)(\bx) \right] \\ \times \E_{\bbP,4} \left[ \Pi\left( \frac{ b(f - \hat{f}^{(4)})}{\hat{f}^{(4)}} | V_{k_2}\right)(\bx) \right] \end{array} f(\bx) d\bx \right| & \lesssim \left( \frac{n}{\log n}\right)^{-\frac{2\gamma}{2\gamma + d}} \\
    \left| \int \Pi(b | V_{k_2}^{\perp})(\bx) \E_{\bbP,2} \left[ \Pi\left( \frac{ p(f - \hat{f}^{(2)})}{\hat{f}^{(2)}} | V_{k_1}\right)(\bx) \right] f(\bx) d\bx \right| & \lesssim k_2^{-\beta/d} \left( \frac{n}{\log n}\right)^{-\frac{\gamma}{2\gamma + d}} \\
    \left| \int \Pi(p | V_{k_1}^{\perp})(\bx) \E_{\bbP,4} \left[ \Pi\left( \frac{ b(f - \hat{f}^{(4)})}{\hat{f}^{(4)}} | V_{k_2}\right)(\bx) \right] f(\bx) d\bx \right| & \lesssim  k_1^{-\alpha/d} \left( \frac{n}{\log n}\right)^{-\frac{\gamma}{2\gamma + d}} 
\end{align*}

Therefore, we can bound the bias by
\begin{equation*}
    \left|\E_\bbP\left(\hat{\psi}^{\mathrm{IF}}_{k_1, k_2} - \psi(\bbP)\right)\right| \lesssim (k_1 \vee k_2)^{-\frac{\alpha + \beta}{d}} + \left( \frac{n}{\log n}\right)^{-\frac{\gamma}{2\gamma + d}}
\end{equation*}

\noindent \textbf{Bounding the variance:}

We will show that
\begin{align}
    \Var_{\bbP,1,2,3,4} \left[ \E_{\bbP,5}\left( \hat{\psi}^{\mathrm{IF}}_{k_1, k_2}\right)  \right] & \lesssim \frac{1}{n} + \frac{k_1 \wedge k_2}{n^2} + \left( \frac{n}{\log n}\right)^{-\frac{2\gamma}{2\gamma + d}} \label{eq: if var exp unknown f} \\
    \E_{\bbP,1,2,3,4} \left[ \Var_{\bbP,5} \left( \hat{\psi}^{\mathrm{IF}}_{k_1, k_2}\right)  \right] & \lesssim \frac{1}{n} + \frac{k_1 \vee k_2}{n^2} + \frac{k_1k_2}{n^3}. \label{eq: if exp var ub unknown f}
\end{align}

We first show (\ref{eq: if var exp unknown f}). It follows from the work in the known $f$ case that we may write
\begin{align*}
    & \Var_{\bbP,1,2,3,4} \left[ \E_{\bbP,5}\left( \hat{\psi}^{\mathrm{IF}}_{k_1, k_2}\right)  \right] \\
    & \quad = \underbrace{\Var_{\bbP,1,2} \left[ \E_{\bbP,1,2} \left(  \int  (p(\bx) - \hat{p}^{(1,2)}_{k_1}(\bx))(b(\bx) - \hat{b}^{(3,4)}_{k_2}(\bx)) f(\bx) d\bx \right) \right]}_{:=T_1} \\ 
    & \quad \quad + \underbrace{\E_{\bbP,1,2} \left[ \Var_{\bbP,3,4} \left(  \int  (p(\bx) - \hat{p}^{(1,2)}_{k_1}(\bx))(b(\bx) - \hat{b}^{(3,4)}_{k_2}(\bx)) f(\bx) d\bx \right) \right]}_{:=T_2}.
\end{align*}
We first bound $T_1$. We have that
\begin{align*}
    T_1 & = \Var_{\bbP,1,2} \left[  \int  (p(\bx) - \hat{p}^{(1,2)}_{k_1}(\bx))(b(\bx) - p_{k_2}(b(\bx))) f(\bx) d\bx \right] \\
    & = \Var_{\bbP,1,2} \left[  \int  \hat{p}^{(1,2)}_{k_1}(\bx)(b(\bx) - p_{k_2}(b(\bx))) f(\bx) d\bx \right] \\
    & \lesssim \frac{1}{n}  + \left( \frac{n}{\log n}\right)^{-\frac{2\gamma}{2\gamma + d}} 
\end{align*}
where the last line follows from the same manner as in the proof of (\ref{eq: exp var v5}) (in the derivation of the variance of $\hat{\psi}_{k_1, k_2}^{\mathrm{INT}}$ in the double sample splitting case). 

Next, we bound $T_2$. We have that
\begin{align*}
    T_2 & \leq 2 \E_{\bbP,1,2} \left[ \Var_{\bbP,3} \left(  \int  (p(\bx) - \hat{p}^{(1,2)}_{k_1}(\bx))\tilde{b}^{(3)}_{k_2}(\bx) f(\bx) d\bx \right) \right] \\
    & \quad + 2 \E_{\bbP,1,2} \left[ \Var_{\bbP,3,4} \left(  \int  (p(\bx) - \hat{p}^{(1,2)}_{k_1}(\bx))(\hat{b}^{(3,4)}_{k_2}(\bx) - \tilde{b}^{(3)}_{k_2}(\bx)) f(\bx) d\bx \right) \right].
\end{align*}
By following the same argument as used to prove (\ref{eq: mc unknown f exp var helper2}) (in the derivation of the variance of $\hat{\psi}_{k_1, k_2}^{\mathrm{INT}}$),
\begin{align*}
    & \E_{\bbP,1,2} \left[ \Var_{\bbP,3} \left(  \int  (p(\bx) - \hat{p}^{(1,2)}_{k_1}(\bx))\tilde{b}^{(3)}_{k_2}(\bx) f(\bx) d\bx \right) \right] \\
    & \quad \lesssim \iint \left| \begin{array}{c}\E_{\bbP,1,2} \left[ (p(\bx) - \hat{p}^{(1,2)}_{k_1}(\bx))(p(\by) - \hat{p}^{(1,2)}_{k_1}(\by)) \right] \\  \times \left\{ \E_{\bbP,3} \left[ \tilde{b}^{(3)}_{k_2}(\bx)\tilde{b}^{(3)}_{k_2}(\by)\right] - \Pi(b|V_{k_2})(\bx)\Pi(b|V_{k_2})(\by) \right\}\end{array} \right| d\bx d\by \\
    & \quad \lesssim \iint \begin{array}{c}\left| \E_{\bbP,1,2} \left[ (p(\bx) - \hat{p}^{(1,2)}_{k_1}(\bx))(p(\by) - \hat{p}^{(1,2)}_{k_1}(\by)) \right] \right| \\ \times \left( \frac{1}{n} + \frac{1}{n}\E_{\bbP} \left[ \left| K_{V_{k_2}}(\bX, \bx)K_{V_{k_2}}(\bX, \by) \right| \right] \right) \end{array} d\bx d\by. 
\end{align*}
By Lemma \ref{lem: exp pxpy}, there exists a $C \in \mathbb{R}^+$ such that for all $\bx, \by \in [0, 1]^d$
\begin{align*}
    & \left| \E_{\bbP,1,2} \left[ (p(\bx) - \hat{p}^{(1,2)}_{k_1}(\bx))(p(\by) - \hat{p}^{(1,2)}_{k_1}(\by)) \right] \right| \\ & \quad \leq |p(\bx)p(\by)| + |p(\bx) p_{k_1}(p(\by))| + |p(\by) p_{k_1}(p(\bx))| +  \left| \E_{\bbP,1,2} \left[ \hat{p}^{(1,2)}_{k_1}(\bx))\hat{p}^{(1,2)}_{k_1}(\by) \right] \right| \\
    & \quad \leq C \left( 1 + \frac{1}{n} \E_{\bbP} \left[ \left| K_{V_{k_1}}(\bX, \bx)K_{V_{k_1}}(\bX, \by) \right| \right] \right).
\end{align*}
Therefore,
\begin{align*}
    & \E_{\bbP,1,2} \left[ \Var_{\bbP,3} \left(  \int  (p(\bx) - \hat{p}^{(1,2)}_{k_1}(\bx))\tilde{b}^{(3)}_{k_2}(\bx) f(\bx) d\bx \right) \right] \\
    & \quad \lesssim \frac{1}{n} + \frac{1}{n} \iint  \E_{\bbP} \left[ \left| K_{V_{k_2}}(\bX, \bx)K_{V_{k_2}}(\bX, \by) \right| \right] d\bx d\by \\
    & \quad \quad + \frac{1}{n^2} \iint  \E_{\bbP} \left[ \left| K_{V_{k_1}}(\bX, \bx)K_{V_{k_1}}(\bX, \by) \right| \right] \E_{\bbP} \left[ \left| K_{V_{k_2}}(\bX, \bx)K_{V_{k_2}}(\bX, \by) \right| \right] d\bx d\by \\
    & \quad \lesssim \frac{1}{n} + \frac{k_1 \wedge k_2}{n^2} \quad (\text{by Lemma \ref{lem: exp kernel}}).
\end{align*}

Finally, we bound $\E_{\bbP,1,2} \left[ \Var_{\bbP,3,4} \left(  \int  (p(\bx) - \hat{p}^{(1,2)}_{k_1}(\bx))(\hat{b}^{(3,4)}_{k_2}(\bx) - \tilde{b}^{(3)}_{k_2}(\bx)) f(\bx) d\bx \right) \right]$ as follows:
\begin{align*}
    & \E_{\bbP,1,2} \left[ \Var_{\bbP,3,4} \left(  \int  (p(\bx) - \hat{p}^{(1,2)}_{k_1}(\bx))(\hat{b}^{(3,4)}_{k_2}(\bx) - \tilde{b}^{(3)}_{k_2}(\bx)) f(\bx) d\bx \right) \right] \\
    & \quad \lesssim \iint \left| \begin{array}{c} \E_{\bbP,1,2} \left[ (p(\bx) - \hat{p}^{(1,2)}_{k_1}(\bx))(p(\by) - \hat{p}^{(1,2)}_{k_1}(\by)) \right] \\  \times \E_{\bbP,3,4} \left[(\hat{b}^{(3,4)}_{k_2}(\bx) - \tilde{b}^{(3)}_{k_2}(\bx))(\hat{b}^{(3,4)}_{k_2}(\by) - \tilde{b}^{(3)}_{k_2}(\by))\right] \end{array}  \right| d\bx d\by \\
    & \quad \lesssim \iint \begin{array}{c}\left( 1 + \frac{1}{n} \E_{\bbP} \left[ \left| K_{V_{k_1}}(\bX, \bx)K_{V_{k_1}}(\bX, \by) \right| \right] \right) 
 \\ \times \left( \left( \frac{n}{\log n}\right)^{-\frac{2\gamma}{2\gamma + d}} + \frac{1}{n} \E_{\bbP, 1} \left[ \left| K_{V_{k_2}}(\bX, \bx)K_{V_{k_2}}(\bX, \by) \right| \right] \right) \end{array} d\bx d\by \quad (\text{by Lemma \ref{lem: exp pxpy}}) \\
    & \quad \lesssim \left(  \frac{n}{\log n}\right)^{-\frac{2\gamma}{2\gamma + d}} + \frac{1}{n} + \frac{k_1 \wedge k_2}{n^2} \quad (\text{by Lemma \ref{lem: exp kernel}}).
\end{align*}
which completes the proof of (\ref{eq: if var exp unknown f}).

It remains to show (\ref{eq: if exp var ub unknown f}). We follow the same approach used in the known $f$ case. We have that
\begin{align*}
    \E_{\bbP,1,2,3,4} \left[ \Var_{\bbP,5} \left( \hat{\psi}^{\mathrm{IF}}_{k_1, k_2}\right)  \right] & = \frac{1}{n} \E_{\bbP,1,2,3,4} \left[ \Var_{\bbP,5} \left( (A - \hat{p}^{(1,2)}_{k_1}(\bX))(Y - \hat{b}^{(3,4)}_{k_2}(\bX)) \right) \right] \\
    & \leq \frac{1}{n} \E_{\bbP,1,2,3,4} \left[ \E_{\bbP,5} \left( (A - \hat{p}^{(1,2)}_{k_1}(\bX))^2(Y - \hat{b}^{(3,4)}_{k_2}(\bX))^2 \right) \right] \\
    & = \frac{1}{n} \E_{\bbP,1,2,3,4} \left[ \E_{\bbP,5} \left( \begin{array}{c}(A^2 - 2A \hat{p}^{(1,2)}_{k_1}(\bX) + \hat{p}^{(1,2)}_{k_1}(\bX)^2) \\ \times (Y^2 - 2Y\hat{b}^{(3,4)}_{k_23,4}(\bX) + \hat{b}^{(3,4)}_{k_2}(\bX)^2) \end{array} \right) \right] \\
    & = \frac{1}{n} \E_{\bbP,5} \left( \begin{array}{c}\left(A^2 - 2A p_{k_1}(p(\bX)) + \E_{\bbP,1,2} \left[\hat{p}^{(1,2)}_{k_1}(\bX)^2 \right]\right) \\ \times \left(Y^2 - 2Yp_{k_2}(b(\bX)) + \E_{\bbP,3,4}\left[\hat{b}^{(3,4)}_{k_2}(\bX)^2\right] \right) \end{array} \right).
\end{align*}
Since $A$, $Y$, $\bX$ are bounded random variables and $p_{k_1}(p(\cdot))$ and $p_{k_2}(b(\cdot))$ are bounded, 
\begin{align*}
    \E_{\bbP,1,2,3,4} \left[ \Var_{\bbP,5} \left( \hat{\psi}^{\mathrm{IF}}_{k_1, k_2}\right)  \right] & \lesssim \frac{1}{n} + \frac{1}{n} \E_{\bbP,5} \left( \E_{\bbP,1,2}\left[\hat{p}^{(1,2)}_{k_1}(\bX)^2\right] \right)  \\
    & \quad + \frac{1}{n} \E_{\bbP,5} \left( \E_{\bbP,3,4}\left[\hat{b}^{(3,4)}_{k_2}(\bX)^2\right] \right) \\ & \quad + \frac{1}{n} \E_{\bbP,5} \left(  \E_{\bbP,1,2}\left[\hat{p}^{(1,2)}_{k_1}(\bX)^2\right] \E_{\bbP,3,4}\left[\hat{b}^{(3,4)}_{k_2}(\bX)^2\right] \right).
\end{align*}
Recall that it was shown in the proof of (\ref{eq: var unknown f mc pt3}) (in the derivation of the variance of $\hat{\psi}^{\mathrm{MC}}_{k_1, k_2}$)) that
\begin{equation*}
    \frac{1}{n} \E_{\bbP,5} \left(  \E_{\bbP,1,2}\left[\hat{p}^{(1,2)}_{k_1}(\bX)^2\right] \E_{\bbP,3,4}\left[\hat{b}^{(3,4)}_{k_2}(\bX)^2\right] \right) \lesssim \frac{1}{n} + \frac{k_1}{n^2} + \frac{k_2}{n^2} + \frac{k_1k_2}{n^3}. 
\end{equation*}
Moreover,
\begin{align*}
    \frac{1}{n} \E_{\bbP,5} \left( \E_{\bbP,1,2}\left[\hat{p}^{(1,2)}_{k_1}(\bX)^2\right] \right) & = \frac{1}{n} \int \E_{\bbP,1,2}\left[\hat{p}^{(1,2)}_{k_1}(\bx)^2\right] f(\bx) d\bx \\
    & \lesssim \frac{1}{n} \int \left( 1 + \frac{1}{n} \E_{\bbP} \left[ K_{V_{k_1}}(\bX, \bx)^2 \right] \right) d\bx \quad (\text{by Lemma \ref{lem: exp pxpy}})\\
    & \lesssim \frac{1}{n} + \frac{k_1}{n^2} \quad (\text{by Lemma \ref{lem: exp kernel single}}).
\end{align*}
Similarly, 
\begin{equation*}
    \frac{1}{n} \E_{\bbP,5} \left( \E_{\bbP,3,4}\left[\hat{b}^{(3,4)}_{k_2}(\bX)^2\right] \right) \lesssim \frac{1}{n} + \frac{k_2}{n^2}.
\end{equation*}
Therefore,
\begin{equation*}
    \E_{\bbP,1,2,3,4} \left[ \Var_{\bbP,5} \left( \hat{\psi}^{\mathrm{IF}}_{k_1, k_2}\right)  \right] \lesssim \frac{1}{n} + \frac{k_1 \vee k_2}{n^2} + \frac{k_1k_2}{n^3}.
\end{equation*}
which establishes (\ref{eq: if exp var ub unknown f}).

\subsubsection{Proof of the lower bound}

Let $\bbP \in \cP_{(\alpha,\beta,\gamma)}$, where $\bX \sim \text{Uniform}([0,1]^d)$. Recall from the proof of the upper bound that 
\begin{equation*}
     \E_\bbP(\hat{\psi}^{\mathrm{IF}}_{k}) - \psi(\bbP)  =  \int \Pi(p | V_{k_1}^{\perp})(\bx) \Pi(b | V_{k_2}^{\perp})(\bx) d\bx + R^{\prime}_{1,n,k_1,k_2}
\end{equation*}
where 
\begin{equation*}
    | R^{\prime}_{1,n,k_1,k_2} | \lesssim \left( \frac{n}{\log n}\right)^{-\frac{2\gamma}{2\gamma + d}} + (k_1^{-\alpha / d} + k_2^{-\beta / d}) \left( \frac{n}{\log n}\right)^{-\frac{\gamma}{2\gamma + d}}.
\end{equation*}
Suppose without loss of generality that $k_1 \leq k_2$ for $n$ sufficiently large. Recall from the known $f$ case that when $p$ and $b$ is as given in (\ref{eq: p lb}) and (\ref{eq: b lb}), 
\begin{equation*}
    \left| \int \Pi(p | V_{k_1}^{\perp})(\bx) \Pi(b | V_{k_2}^{\perp})(\bx) d\bx \right| \gtrsim k_2^{-(\alpha+\beta)/d}.
\end{equation*}
Suppose that $k_2 \ll n^{\frac{d/4}{(\alpha + \beta) / 2}}$. Then for sufficiently large $\gamma$, $k_2^{-(\alpha+\beta)/d} \gg | R^{\prime}_{1,n,k_1,k_2} |$ which completes the bound of the bias.

Next, we bound the variance using the same approach as in the known $f$ case. It suffices to show that for $k_1,k_2 \gg n$
\begin{align}
    \E_{\bbP, 1, 2, 3, 4} \left[\Var_{\bbP, 5} \left( \hat{p}^{(1,2)}_{k_1}(\bX) \hat{b}^{(3,4)}_{k_2}(\bX)  \right) \right] & \asymp \frac{k_1k_2}{n^2} \label{eq: if var lb helper1} \\
    \E_{\bbP, 1, 2, 3, 4} \left[ \Var_{\bbP, 5} \left( AY - Y\hat{p}^{(1,2)}_{k_1}(\bX) - A\hat{b}^{(3,4)}_{k_2}(\bX)    \right) \right] & \lesssim \frac{k_1 \vee k_2}{n} \label{eq: if var lb helper2}.
\end{align}

We first bound (\ref{eq: if var lb helper1}). We have that
\begin{align*}
    \E_{\bbP, 1, 2, 3, 4} \left[\Var_{\bbP, 5} \left( \hat{p}^{(1, 2)}_{k_1}(\bX) \hat{b}^{(3, 4)}_{k_2}(\bX)  \right) \right] & =   \E_{\bbP, 1, 2, 3, 4} \left[\E_{\bbP, 5} \left[ \left( \hat{p}^{(1, 2)}_{k_1}(\bX) \hat{b}^{(3, 4)}_{k_2}(\bX) \right)^2  \right] \right] \\
    & \quad -   \E_{\bbP, 1, 2, 3, 4} \left[\E_{\bbP, 5} \left[ \hat{p}^{(1, 2)}_{k_1}(\bX) \hat{b}^{(3, 4)}_{k_2}(\bX)   \right]^2 \right].
\end{align*}
By Lemma \ref{lem: nuisance function bounds},
\begin{equation*}
    \E_{\bbP, 1, 2, 3, 4} \left[\E_{\bbP, 5} \left[ \left( \hat{p}^{(1, 2)}_{k_1}(\bX) \hat{b}^{(3, 4)}_{k_2}(\bX) \right)^2  \right] \right] \asymp \frac{k_1k_2}{n^2}.
\end{equation*}
Moreover,
\begin{align*}
     & \E_{\bbP, 1, 2, 3, 4} \left[\E_{\bbP, 5} \left[ \hat{p}^{(1, 2)}_{k_1}(\bX) \hat{b}^{(3, 4)}_{k_2}(\bX)   \right]^2 \right] \\
     & \quad = \E_{\bbP, 1, 2, 3, 4} \left[ \iint \hat{p}^{(1, 2)}_{k_1}(\bx)\hat{p}^{(1, 2)}_{k_1}(\by) \hat{b}^{(3, 4)}_{k_2}(\bx)\hat{b}^{(3, 4)}_{k_2}(\by) d\bx d\by \right] \\
    & \quad \lesssim  \iint \left| \E_{\bbP, 1, 2} \left[\hat{p}^{(1, 2)}_{k_1}(\bx)\hat{p}^{(1, 2)}_{k_1}(\by) \right] \E_{\bbP, 3, 4} \left[ \hat{b}^{(3, 4)}_{k_2}(\bx)\hat{b}^{(3, 4)}_{k_2}(\by) \right] \right| d\bx d\by \\
    & \quad  \lesssim \iint \begin{array}{c}\left( 1 + \frac{1}{n} \E_{\bbP} \left[ \left| K_{V_{k_1}}(\bX, \bx)K_{V_{k_1}}(\bX, \by) \right| \right] \right) \\ \times \left( 1 + \frac{1}{n} \E_{\bbP} \left[ \left| K_{V_{k_2}}(\bX, \bx)K_{V_{k_2}}(\bX, \by) \right| \right] \right) \end{array} d\bx d\by \quad (\text{by Lemma \ref{lem: exp pxpy}})\\
    & \quad  \lesssim 1 + \frac{k_1 \wedge k_2}{n^2} \quad (\text{by Lemma \ref{lem: exp kernel single}}).
\end{align*}

The bound in (\ref{eq: if var lb helper2}) can be seen to hold from the following bounds, which follows from Lemma \ref{lem: nuisance function bounds} and the observation that $A, Y, \bX$ are bounded random variables and $p_{k_1}(p(\cdot)), p_{k_2}(b(\cdot))$ are bounded and 
\begin{align*}
    \E_{\bbP, 1, 2} [\E_{\bX} ( \hat{p}^{(1, 2)}_{k_1}(\bX)^2 )] & \lesssim \frac{k_1}{n} \\
    \E_{\bbP, 3, 4} [\E_{\bX} ( \hat{b}^{(3, 4)}_{k_2}(\bX)^2 )]  & \lesssim \frac{k_2}{n} 
\end{align*}

\section{Proof of Theorem \ref{theorem: pred opt unknown f}} \label{sec: proof pred opt unknown f}

Without loss of generality, we let $j_1 = 1, j_1^{\prime} = 2$ for notational simplicity. Moreover, throughout the proof, we use the notation $p_{k_1}(p(\cdot))$ and $p_{k_2}(b(\cdot))$ as defined in (\ref{eq: p projection unknown f}) and (\ref{eq: b projection unknown f}) (appearing at the beginning of the proof of Theorem \ref{theorem: plugin unknown f}).

\subsection{Proof of the upper bound}

Let $\bbP \in \cP_{(\alpha,\beta,\gamma)}$ be arbitrary.\\

\noindent \textbf{Bounding the bias:}

By Lemma \ref{lem: proj dist}, we can bound the bias by
\begin{equation*}
    \| p_{k_1}(p(\cdot)) - p \|_{\infty} \lesssim k_1^{-\alpha/d} + \left( \frac{n}{\log n}\right)^{-\frac{\gamma}{2\gamma + d}}.
\end{equation*}
For $k_1 \ll n^{\frac{d}{2\alpha}}$ and $\gamma$ sufficiently large, we have that $k_1^{-\alpha/d} \gg \left( \frac{n}{\log n}\right)^{-\frac{\gamma}{2\gamma + d}}$. Therefore, we can express the bias by
\begin{equation*}
    \| p_{k_1}(p(\cdot)) - p \|_{\infty} \lesssim k_1^{-\alpha/d}.
\end{equation*}
Thus,
\begin{equation*}
    \int \left( \E_\bbP\left(\hat{p}^{(1, 2)}_{k_1}(\bx) - p(\bx)\right)\right)^2 d\bx \lesssim \| p_{k_1}(p(\cdot)) - p \|_{\infty}^2 \lesssim k_1^{-2\alpha/d}.
\end{equation*}

\noindent \textbf{Bounding the variance:}

Let $\bx \in [0, 1]^d$ be arbitrary. We have that
\begin{align*}
    \Var_{\bbP}(\hat{p}^{(1, 2)}_{k_1}(\bx)) & = \E_{\bbP}(\hat{p}^{(1, 2)}_{k_1}(\bx)^2) - p_k(p(\bx))^2.
\end{align*}
Following the same approach used in the proof of Lemma \ref{lem: exp pxpy}, we can express $\E_{\bbP}(\hat{p}^{(1, 2)}_{k_1}(\bx)^2)$ as
\begin{align*}
    \E_{\bbP}(\hat{p}^{(1, 2)}_{k_1}(\bx)^2) & = \left( 1 - \frac{1}{n}\right)p_{k_1}(p(\bx))^2 + \frac{1}{n}  \E_{\bbP,1}\left[ \frac{A^2 K_{V_{k_1}}(\bX, \bx)^2}{\left( \hat{f}^{(2)}(\bX) \right)^2} \right]. 
\end{align*}
Therefore, 
\begin{equation*}
    \Var_{\bbP}(\hat{p}^{(1, 2)}_{k_1}(\bx)) = \frac{1}{n}p_{k_1}(p(\bx))^2 + \frac{1}{n}  \E_{\bbP}\left[ \frac{A^2 K_{V_{k_1}}(\bX, \bx)^2}{\left( \hat{f}^{(2)}(\bX) \right)^2} \right]. 
\end{equation*}
Since $p_{k_1}$ and $\hat{f}$ are uniformly bounded and $A$ is a bounded random variable,
\begin{equation*}
    \int \Var_{\bbP}(\hat{p}^{(1, 2)}_{k_1}(\bx)) d\bx \lesssim \frac{1}{n} + \frac{1}{n} \int \E_{\bbP}\left[ K_{V_{k_1}}(\bX, \bx)^2  \right] d\bx.
\end{equation*}
Then, by Lemma \ref{lem: exp kernel single},
\begin{equation*}
    \int \Var_{\bbP}(\hat{p}^{(1, 2)}_{k_1}(\bx)) d\bx \lesssim \frac{1}{n} + \frac{k_1}{n} \lesssim \frac{k_1}{n}.
\end{equation*}

\subsection{Proof of the lower bound}

\noindent \textbf{Bounding the bias:}

Let $\bx \in [0, 1]^d$ and $\bbP \in \cP_{(\alpha,\beta,\gamma)}$ be arbitrary. Recall from the proof of Lemma \ref{lem: proj dist} that we can express the expectation of $\hat{p}^{(1, 2)}_{k_1}(\bx)$ as
\begin{equation*}
    \E_\bbP[\hat{p}^{(1, 2)}_{k_1}(\bx)] = \Pi(p | V_{k_1})(\bx) + R_{n, k_1}(\bx)
\end{equation*}
where $\|R_{n, k_1}\|_{\infty} \lesssim \left( \frac{n}{\log n}\right)^{-\frac{\gamma}{2\gamma + d}}$. Then,
\begin{align*}
    & \int \left( \E_\bbP\left(\hat{p}^{(1, 2)}_{k_1}(\bx) - p(\bx)\right)\right)^2 d\bx \\
    & \quad = \int (\Pi(p | V_{k_1})(\bx) - p(\bx))^2 d\bx  -2 \int (\Pi(p | V_{k_1})(\bx) - p(\bx)) R_{n, k_1}(\bx) d\bx \\
    & \quad + \int R_{n, k_1}(\bx)^2 d\bx  \\
    & = \int \Pi(p | V_{k_1}^{\perp})(\bx)^2 d\bx + 2 \int \Pi(p | V_{k_1}^{\perp})(\bx) R_{n, k_1}(\bx) d\bx  + \int R_{n, k_1}(\bx)^2 d\bx.   
\end{align*}
By the triangle inequality,
\begin{align*}
    \int \left( \E_\bbP\left(\hat{p}^{(1, 2)}_{k_1}(\bx) - p(\bx)\right)\right)^2 d\bx & \geq \left| \int \Pi(p | V_{k_1}^{\perp})(\bx)^2 d\bx \right| - 2 \left| \int \Pi(p | V_{k_1}^{\perp})(\bx) R_{n, k_1}(\bx) d\bx  \right| \\ & \quad -  \left| \int R_{n, k_1}(\bx)^2 d\bx \right|.
\end{align*}
We next derive upper bounds for the second and third terms and then derive a lower bound for the first term. For $k_1 \ll n^{\frac{d}{2\alpha}}$ and $\gamma$ sufficiently large, we have that $k_1^{-\alpha/d} \gg \left( \frac{n}{\log n}\right)^{-\frac{\gamma}{2\gamma + d}}$. Since $p$ is $\alpha$-Hölder, we have by Property P1 (Appendix \ref{sec: wavelets}) that $\| \Pi(p | V_{k_1}^{\perp}) \|_{\infty} \lesssim k_1^{-\alpha / d}$. Therefore,
\begin{align*}
    \left| \int \Pi(p | V_{k_1}^{\perp})(\bx) R_{n, k_1}(\bx) d\bx \right| & \lesssim \| \Pi(p | V_{k_1}^{\perp}) \|_{\infty} \|R_{n, k_1}\|_{\infty} \\
    & \lesssim k_1^{-\alpha / d} \left( \frac{n}{\log n}\right)^{-\frac{\gamma}{2\gamma + d}} \\
    & \ll k_1^{-2\alpha / d}.
\end{align*}
Moreover,
\begin{align*}
    \left| \int R_{n, k_1}(\bx)^2 d\bx   \right| & \lesssim \|R_{n, k_1}\|_{\infty}^2 \\
    & \lesssim \left( \frac{n}{\log n}\right)^{-\frac{2\gamma}{2\gamma + d}} \\
    & \ll k_1^{-2\alpha / d}.
\end{align*}
Now, suppose that we choose $p$ as given in (\ref{eq: p lb}). Then, recall from the derivation of the bias of the integral-based plug-in estimator that
\begin{equation*}
    \left| \int \Pi(p | V_{k_1}^{\perp})(\bx)^2 d\bx \right| \gtrsim k_1^{-2\alpha / d}
\end{equation*}
which completes the lower bound on the bias.

\noindent \textbf{Bounding the variance:}

Let $\bx \in [0, 1]^d$ and $\bbP \in \cP_{(\alpha,\beta,\gamma)}$ be arbitrary. Recall from the derivation of the upper bound of the variance that
\begin{equation*}
    \Var_{\bbP}(\hat{p}^{(1, 2)}_{k_1}(\bx)) = \frac{1}{n}p_{k_1}(p(\bx))^2 + \frac{1}{n}  \E_{\bbP}\left[ \frac{A^2 K_{V_{k_1}}(\bX, \bx)^2}{\left( \hat{f}^{(2)}(\bX) \right)^2} \right]. 
\end{equation*}
Since $p_{k_1}$ is uniformly bounded,
\begin{equation*}
    \int \Var_{\bbP}(\hat{p}^{(1, 2)}_{k_1}(\bx)) d\bx = O\left(\frac{1}{n}\right) + \frac{1}{n} \int \E_{\bbP}\left[ \frac{A^2 K_{V_{k_1}}(\bX, \bx)^2}{\left( \hat{f}^{(2)}(\bX) \right)^2} \right] d\bx. 
\end{equation*}
Suppose that $|A| > 0$ with probability 1 and $\bX \sim \text{Uniform}([0,1]^d)$. Then,
\begin{align*}
    \int \E_{\bbP}\left[ \frac{A^2 K_{V_{k_1}}(\bX, \bx)^2}{\left( \hat{f}^{(2)}(\bX) \right)^2} \right] d\bx & \gtrsim \int \E_{\bbP}\left[ K_{V_{k_1}}(\bX, \bx)^2 \right] d\bx \quad (\text{by boundedness of $\hat{f}$})\\
    & \gtrsim k_1 \quad (\text{by Lemma \ref{lem: exp kernel single}}).
\end{align*}

\section{Technical lemmas} \label{sec: technical lemmas}

\begin{lemma} \label{lem: exp pxpy}
For any $\bx, \by \in [0,1]^d$ and $\bbP \in \cP_{(\alpha, \beta, \gamma)}$, the following statements hold
\begin{itemize}
    \item[(i)] 
    \begin{align*}
    \E_{\bbP,1} \left[\tilde{p}^{(1)}_{k_1}(\bx)\tilde{p}^{(1)}_{k_1}(\by) \right]  & = \Pi( p | V_{k_1})(\bx) \Pi( p | V_{k_1})(\by)  + r_{n,k_1}(\bx,\by)\\
        \E_{\bbP,3} \left[\tilde{b}^{(3)}_{k_2}(\bx)\tilde{b}^{(3)}_{k_2}(\by) \right]  & = \Pi( b | V_{k_2})(\bx) \Pi( b | V_{k_2})(\by)  + r^{\prime}_{n,k_2}(\bx,\by)  
        \end{align*}
    where for $R_{n,k}(\bx,\by)$ equal to $r_{n,k}(\bx,\by)$ or $r^{\prime}_{n,k}(\bx,\by)$
\begin{equation*}
    | R_{n,k}(\bx,\by) | \leq c\left( \frac{1}{n} + \frac{1}{n} \E_{\bbP} \left[ \left| K_{V_k}(\bX, \bx)K_{V_k}(\bX, \by) \right| \right] \right).
\end{equation*}
for some $c \in \mathbb{R}^+$ that does not depend on $\bx, \by$.

    \item[(ii)] For $c_1, c_2 \in \mathbb{R}^+$ that do not depend on $\bx, \by$,
    \begin{align*}
    \left| \E_{\bbP,1,2} \left[\hat{p}^{(1,2)}_{k_1}(\bx)\hat{p}^{(1,2)}_{k_1}(\by) \right] \right| & \leq c_1\left(1 + \frac{1}{n} \E_{\bbP} \left[ \left| K_{V_{k_1}}(\bX, \bx)K_{V_{k_1}}(\bX, \by) \right| \right]\right)  \\
    \left| \E_{\bbP,3,4} \left[\hat{b}^{(3,4)}_{k_2}(\bx)\hat{b}^{(3,4)}_{k_2}(\by) \right] \right| & \leq c_2 \left(1 + \frac{1}{n} \E_{\bbP} \left[ \left| K_{V_{k_2}}(\bX, \bx)K_{V_{k_2}}(\bX, \by) \right| \right]\right).  
    \end{align*}
    
    \item[(iii)] For $c_3, c_4 \in \mathbb{R}^+$ that do not depend on $\bx, \by$,
    \begin{align*}
    & \left| \E_{\bbP,1,2} \bigg[ (\hat{p}^{(1,2)}_{k_1}(\bx) - \tilde{p}^{(1)}_{k_1}(\bx))(\hat{p}^{(1,2)}_{k_1}(\by) - \tilde{p}^{(1)}_{k_1}(\by))\bigg] \right|\\
    & \quad \leq c_3 \left( \left( \frac{n}{\log n}\right)^{-\frac{2\gamma}{2\gamma + d}} + \frac{1}{n} \E_{\bbP} \left[ \left| K_{V_{k_1}}(\bX, \bx)K_{V_{k_1}}(\bX, \by) \right| \right]\right)  \\
    & \left| \E_{\bbP,3,4} \bigg[ (\hat{b}^{(3,4)}_{k_2}(\bx) - \tilde{b}^{(3)}_{k_2}(\bx))(\hat{b}^{(3,4)}_{k_2}(\by) - \tilde{b}^{(3)}_{k_2}(\by))\bigg] \right| \\
    & \quad \leq c_4\left(\left( \frac{n}{\log n}\right)^{-\frac{2\gamma}{2\gamma + d}} + \frac{1}{n} \E_{\bbP} \left[ \left| K_{V_{k_2}}(\bX, \bx)K_{V_{k_2}}(\bX, \by) \right| \right] \right).
\end{align*}
\end{itemize}

\end{lemma}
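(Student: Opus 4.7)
The proof plan decomposes each product of nuisance estimators into diagonal $(i=j)$ and off-diagonal $(i\neq j)$ contributions over $\D_1$ (respectively $\D_3$), exploits independence of the samples, and uses the reproducing-kernel identity $\int g(\bu)K_{V_k}(\bu,\bz)\,d\bu = \Pi(g|V_k)(\bz)$ to isolate the main terms.

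For part (i), I would expand
\[
\tilde{p}^{(1)}_{k_1}(\bx)\tilde{p}^{(1)}_{k_1}(\by) = \frac{1}{n^2}\sum_{i,j\in\D_1}\frac{A_iA_jK_{V_{k_1}}(\bX_i,\bx)K_{V_{k_1}}(\bX_j,\by)}{f(\bX_i)f(\bX_j)}.
\]
The $n(n-1)$ off-diagonal pairs are independent, so their expectation equals $\frac{n-1}{n}\Pi(p|V_{k_1})(\bx)\Pi(p|V_{k_1})(\by)$ by the reproducing identity; the $n$ diagonal terms sum to $n^{-1}\E_{\bbP}[A^2K_{V_{k_1}}(\bX,\bx)K_{V_{k_1}}(\bX,\by)/f(\bX)^2]$. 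The remainder $r_{n,k_1}$ is therefore the $-n^{-1}\Pi(p|V_{k_1})(\bx)\Pi(p|V_{k_1})(\by)$ correction plus this diagonal expectation: the first piece is $O(1/n)$ uniformly because Property~P1 gives $\|p-\Pi(p|V_{k_1})\|_\infty\lesssim k_1^{-\alpha/d}$ and $p$ is bounded, while the second is $\leq c\,n^{-1}\E_{\bbP}[|K_{V_{k_1}}(\bX,\bx)K_{V_{k_1}}(\bX,\by)|]$ by boundedness of $A$ and of $f$ above and below. The $\tilde{b}^{(3)}_{k_2}$ case is identical.

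For part (ii), the same split applies to $\hat{p}^{(1,2)}_{k_1}(\bx)\hat{p}^{(1,2)}_{k_1}(\by)$ with $\hat{f}^{(2)}$ in the denominator. Conditioning on $\D_2$, each off-diagonal inner expectation factors into $\Pi(pf/\hat{f}^{(2)}|V_{k_1})$ at $\bx$ and $\by$; because $\hat{f}^{(2)}\geq c_1$ and $pf$ is bounded, $pf/\hat{f}^{(2)}$ is uniformly bounded, and Property~P2 implies the wavelet projection is bounded $L^\infty\to L^\infty$ (via $\int|K_{V_{k_1}}(\bz,\cdot)|\,d\bz\lesssim 1$), so each factor is $O(1)$ and the off-diagonal contribution is $O(1)$. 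The diagonal contribution is again $\leq c\,n^{-1}\E_{\bbP}[|K_{V_{k_1}}(\bX,\bx)K_{V_{k_1}}(\bX,\by)|]$. For part (iii), I would start from the identity
\[
\frac{1}{\hat{f}^{(2)}(\bX_i)}-\frac{1}{f(\bX_i)}=\frac{f(\bX_i)-\hat{f}^{(2)}(\bX_i)}{f(\bX_i)\hat{f}^{(2)}(\bX_i)},
\]
which recasts $\hat{p}^{(1,2)}_{k_1}(\bx)-\tilde{p}^{(1)}_{k_1}(\bx)$ as an i.i.d.\ sum over $\D_1$ carrying an additional $(f-\hat{f}^{(2)})/\hat{f}^{(2)}$ factor. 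The diagonal piece is bounded by $c\,n^{-1}\E_{\bbP}[|K_{V_{k_1}}(\bX,\bx)K_{V_{k_1}}(\bX,\by)|]$ since $(f-\hat{f}^{(2)})^2/(f\hat{f}^{(2)})^2$ is uniformly bounded, and the off-diagonal piece, conditional on $\D_2$, factors into two copies of $\Pi(p(f-\hat{f}^{(2)})/\hat{f}^{(2)}|V_{k_1})$, each bounded in sup norm by $c\|f-\hat{f}^{(2)}\|_\infty$.

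The main obstacle is the sharp second-moment bound $\E_{\bbP,2}[\|f-\hat{f}^{(2)}\|_\infty^2]\lesssim (n/\log n)^{-2\gamma/(2\gamma+d)}$ needed at the end of part (iii). Property (ii) of $\hat{f}$ only gives the first-moment rate, and combining it with a.s.\ boundedness of $\hat{f}$ yields only $(n/\log n)^{-\gamma/(2\gamma+d)}$, which is insufficient. The sharper rate is obtained by exploiting the explicit bounded wavelet construction of $\hat{f}$ and invoking a Talagrand-type concentration inequality for suprema of the associated empirical process (cf.\ Chapter~5 of Gin\'e and Nickl), which furnishes sub-Gaussian tails for $\|\hat{f}-f\|_\infty$ around its mean and hence the required second-moment bound. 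What remains is bookkeeping of constants together with the kernel-integration facts already encapsulated in Lemmas~\ref{lem: exp kernel single} and \ref{lem: exp kernel single same}.
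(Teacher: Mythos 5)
Your decomposition into diagonal and off-diagonal pairs, the use of the reproducing identity $\int g(\bu)K_{V_k}(\bu,\bz)\,d\bu = \Pi(g|V_k)(\bz)$ (conditioning on the density subsample for parts (ii) and (iii)), and the $L^\infty$-boundedness of the wavelet projection are exactly what the paper's proof does; the handling of parts (i) and (ii) is correct and in full agreement. For part (iii) the paper follows precisely the same algebraic route you describe, arriving at the bound $\E_{\bbP,2}\left[\left\|\Pi\left(\frac{p(f-\hat f^{(2)})}{\hat f^{(2)}}\,\big|\,V_{k_1}\right)\right\|_\infty^2\right]\lesssim \E_{\bbP,2}[\|f-\hat f^{(2)}\|_\infty^2]$, and then simply \emph{asserts} $\E_{\bbP,2}[\|f-\hat f^{(2)}\|_\infty^2]\lesssim(n/\log n)^{-2\gamma/(2\gamma+d)}$ without deriving it.

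The obstacle you flag is therefore real: the stated assumption (ii) on $\hat f$ controls only $\E_{\bbP}[\|\hat f - f\|_\infty]$, and combining this with a.s.\ boundedness of $\hat f$ yields only the first-order rate for the second moment, which is not enough. The paper implicitly relies on the specific truncated wavelet density estimator given in Appendix~\ref{sec: unknown f} and the concentration results of Giné and Nickl (which yield moment bounds of all orders at the rate $(n/\log n)^{-\gamma/(2\gamma+d)}$), exactly as you propose. A cleaner fix is to strengthen assumption (ii) to a second-moment bound $\sup_{\bbP}\E_{\bbP}[\|\hat f - f\|_\infty^2]\lesssim(n/\log n)^{-2\gamma/(2\gamma+d)}$, which the example estimator satisfies; your Talagrand-concentration route supplies the same conclusion for that estimator but is not automatic from the paper's stated hypotheses on a generic $\hat f$.
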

\begin{proof}
We will prove the statements in (i)-(iii) involving estimators of $p$, noting that the statements involving estimators of $b$ can be shown in the same manner. For notational convenience, we will let $\lesssim$ denote a uniform bound over $\bx, \by \in [0, 1]^d$ (i.e., for positive sequences $a_n(\bx, \by)$ and $b_n(\bx, \by)$, $a_n(\bx, \by) \lesssim b_n(\bx, \by)$ to implies that there exists a positive constant $C$ such that $a_n(\bx, \by) \leq C b_n(\bx, \by) $ for all $n \geq n_0$ for all $\bx, \by \in [0, 1]^d$) throughout the proof. 

\begin{itemize}
    \item[(i)] We have that
\begin{align*}
    \E_{\bbP,1} \left[\tilde{p}^{(1)}_{k_1}(\bx)\tilde{p}^{(1)}_{k_1}(\by) \right] & = \E_{\bbP,1}\left[ \frac{1}{n^2} \sum_{\substack{i,j \in \D_1 \\ i \neq j}} \left( \frac{A_i K_{V_{k_1}}(\bX_i,\bx)}{f(\bX_i)} \right) \left(  \frac{A_j K_{V_{k_1}}(\bX_j,\by)}{f(\bX_j)} \right) \right] \\ & \quad  +  \E_{\bbP,1}\left[ \frac{1}{n^2} \sum_{\substack{i,j \in \D_1 \\ i = j}} \left( \frac{A_i K_{V_{k_1}}(\bX_i,\bx)}{f(\bX_i)} \right) \left(  \frac{A_j K_{V_{k_1}}(\bX_j,\by)}{f(\bX_j)} \right) \right]
\end{align*}
where
\begin{align*}
    & \E_{\bbP,1}\left[ \frac{1}{n^2} \sum_{\substack{i,j \in \D_1 \\ i \neq j}} \left( \frac{A_i K_{V_{k_1}}(\bX_i,\bx)}{f(\bX_i)} \right) \left(  \frac{A_j K_{V_{k_1}}(\bX_j,\by)}{f(\bX_j)} \right) \right] \\
    & \quad  = \frac{n(n-1)}{n^2}  \E_{\bbP,1}\left( \frac{A K_{V_{k_1}}(\bX, \bx)}{f(\bX)} \right)\E_{\bbP,1} \left(\frac{A K_{V_{k_1}}(\bX, \by)}{f(\bX)} \right)  \\
    & \quad  =  \left( 1 - \frac{1}{n}\right)\Pi( p | V_{k_1})(\bx) \Pi( p | V_{k_1})(\by) 
\end{align*}
and
\begin{align*}
    & \left| \E_{\bbP,1}\left[ \frac{1}{n^2} \sum_{\substack{i,j \in \D_1 \\ i = j}} \left( \frac{A_i K_{V_{k_1}}(\bX_i,\bx)}{f(\bX_i)} \right) \left(  \frac{A_j K_{V_{k_1}}(\bX_j,\by)}{f(\bX_j)} \right) \right] \right| \\
    & \quad \leq  \frac{1}{n}  \E_{\bbP,1}\left[ \frac{A^2 \left| K_{V_{k_1}}(\bX, \bx)K_{V_{k_1}}(\bX, \by) \right|}{\left( f(\bX) \right)^2} \right] \\
    & \quad \lesssim \frac{1}{n} \E_{\bbP, 1} \left[ \left| K_{V_{k_1}}(\bX, \bx)K_{V_{k_1}}(\bX, \by) \right| \right].  
\end{align*}
The result then follows from $\Pi( p | V_{k_1})$ being bounded.

\item[(ii)] We have that
\begin{align*}
    & \E_{\bbP,1,2} \left[\hat{p}^{(1,2)}_{k_1}(\bx)\hat{p}^{(1,2)}_{k_1}(\by) \right] \\
    & \quad = \E_{\bbP,1,2}\left[ \frac{1}{n^2} \sum_{\substack{i,j \in \D_1 \\ i \neq j}} \left( \frac{A_i K_{V_{k_1}}(\bX_i,\bx)}{\hat{f}^{(2)}(\bX_i)} \right) \left(  \frac{A_j K_{V_{k_1}}(\bX_j,\by)}{\hat{f}^{(2)}(\bX_j)} \right) \right] \\ & \quad \quad  +  \E_{\bbP,1,2}\left[ \frac{1}{n^2} \sum_{\substack{i,j \in \D_1 \\ i = j}} \left( \frac{A_i K_{V_{k_1}}(\bX_i,\bx)}{\hat{f}^{(2)}(\bX_i)} \right) \left(  \frac{A_j K_{V_{k_1}}(\bX_j,\by)}{\hat{f}^{(2)}(\bX_j)} \right) \right].
\end{align*}
The first term can be bounded by
\begin{align*}
    & \left| \E_{\bbP,1,2}\left[ \frac{1}{n^2} \sum_{\substack{i,j \in \D_1 \\ i \neq j}} \left( \frac{A_i K_{V_{k_1}}(\bX_i,\bx)}{\hat{f}^{(2)}(\bX_i)} \right) \left(  \frac{A_j K_{V_{k_1}}(\bX_j,\by)}{\hat{f}^{(2)}(\bX_j)} \right) \right] \right| \\
    & \quad = \frac{n(n-1)}{n^2} \left| \E_{\bbP, 2} \left[ \Pi\left( \frac{pf}{\hat{f}^{(2)}} | V_{k_1}\right)(\bx) \Pi\left( \frac{pf}{\hat{f}^{(2)}} | V_{k_1}\right)(\by) \right] \right|  \\
    & \quad \leq \frac{n(n-1)}{n^2}  \E_{\bbP, 2} \left[ \left\| \Pi\left( \frac{pf}{\hat{f}^{(2)}} | V_{k_1}\right) \right\|_{\infty}^2 \right]  \\
    & \quad \lesssim \frac{n(n-1)}{n^2}  \E_{\bbP, 2} \left[ \left\| \frac{pf}{\hat{f}^{(2)}} \right\|_{\infty}^2 \right] \\
    & \quad \lesssim 1
\end{align*}
where the last inequality follows from boundedness of $p$, $f$, and $\hat{f}^{(2)}$. The second term can be bounded by 
\begin{align*}
    & \left| \E_{\bbP,1,2}\left[ \frac{1}{n^2} \sum_{\substack{i,j \in \D_1 \\ i = j}} \left( \frac{A_i K_{V_{k_1}}(\bX_i,\bx)}{\hat{f}^{(2)}(\bX_i)} \right) \left(  \frac{A_j K_{V_{k_1}}(\bX_j,\by)}{\hat{f}^{(2)}(\bX_j)} \right) \right] \right| \\
    & \quad \leq  \frac{1}{n}  \E_{\bbP,1,2}\left[ \frac{A^2 \left| K_{V_{k_1}}(\bX, \bx)K_{V_{k_1}}(\bX, \by) \right|}{\left( \hat{f}^{(2)}(\bX) \right)^2} \right] \\
    & \quad \lesssim \frac{1}{n} \E_{\bbP, 1} \left[ \left| K_{V_{k_1}}(\bX, \bx)K_{V_{k_1}}(\bX, \by) \right| \right].  
\end{align*}

\item[(iii)] We have that
\begin{align*}
    & \E_{\bbP,1,2} \bigg[ (\hat{p}^{(1,2)}_{k_1}(\bx) - \tilde{p}^{(1)}_{k_1}(\bx))(\hat{p}^{(1,2)}_{k_1}(\by) - \tilde{p}^{(1)}_{k_1}(\by))\bigg] \\
    & \quad = \E_{\bbP,1,2} \left[ \frac{1}{n^2} \sum_{\substack{i,j \in \D_1 \\ i \neq j}}  \begin{array}{c}\left( \frac{A_i K_{V_{k_1}}(\bX_i,\bx)(f(\bX_i) - \hat{f}^{(2)}(\bX_i))}{f(\bX_i)\hat{f}^{(2)}(\bX_i)} \right) \\ \times   \left( \frac{A_j K_{V_{k_1}}(\bX_j,\by)(f(\bX_j) - \hat{f}^{(2)}(\bX_j))}{f(\bX_j)\hat{f}^{(2)}(\bX_j)} \right) \end{array} \right] \\ & \quad \quad + \E_{\bbP,1,2} \left[ \frac{1}{n^2} \sum_{\substack{i,j \in \D_1 \\ i = j}}  \begin{array}{c}\left( \frac{A_i K_{V_{k_1}}(\bX_i,\bx)(f(\bX_i) - \hat{f}^{(2)}(\bX_i))}{f(\bX_i)\hat{f}^{(2)}(\bX_i)} \right)  \\ \times \left( \frac{A_j K_{V_{k_1}}(\bX_j,\by)(f(\bX_j) - \hat{f}^{(2)}(\bX_j))}{f(\bX_j)\hat{f}^{(2)}(\bX_j)} \right) \end{array} \right] 
\end{align*}
where
\begin{align*}
    & \left| \E_{\bbP,1,2} \left[ \frac{1}{n^2} \sum_{\substack{i,j \in \D_1 \\ i \neq j}}  \begin{array}{c}\left( \frac{A_i K_{V_{k_1}}(\bX_i,\bx)(f(\bX_i) - \hat{f}^{(2)}(\bX_i))}{f(\bX_i)\hat{f}^{(2)}(\bX_i)} \right) \\ \times   \left( \frac{A_j K_{V_{k_1}}(\bX_j,\by)(f(\bX_j) - \hat{f}^{(2)}(\bX_j))}{f(\bX_j)\hat{f}^{(2)}(\bX_j)} \right) \end{array} \right] \right|\\
    & \quad = \frac{n(n-1)}{n^2} \left| \E_{\bbP,2} \left[ \Pi\left( \frac{p(f - \hat{f}^{(2)})}{\hat{f}^{(2)}} | V_{k_1}\right)(\bx) \Pi\left( \frac{p(f - \hat{f}^{(2)})}{\hat{f}^{(2)}} | V_{k_1}\right)(\by) \right] \right| \\
    & \quad \leq \E_{\bbP,2} \left[ \left\| \Pi\left( \frac{p(f - \hat{f}^{(2)})}{\hat{f}^{(2)}} | V_{k_1}\right) \right\|_{\infty}^2 \right] \\
    & \quad \lesssim \E_{\bbP,2} \left[ \left\| f - \hat{f}^{(2)} \right\|_{\infty}^2 \right] \\
    & \quad \lesssim \left( \frac{n}{\log n}\right)^{-\frac{2\gamma}{2\gamma + d}}
\end{align*}
and
\begin{align*}
    & \left| \E_{\bbP,1,2} \left[ \frac{1}{n^2} \sum_{\substack{i,j \in \D_1 \\ i = j}}  \begin{array}{c}\left( \frac{A_i K_{V_{k_1}}(\bX_i,\bx)(f(\bX_i) - \hat{f}^{(2)}(\bX_i))}{f(\bX_i)\hat{f}^{(2)}(\bX_i)} \right)  \\ \times \left( \frac{A_j K_{V_{k_1}}(\bX_j,\by)(f(\bX_j) - \hat{f}^{(2)}(\bX_j))}{f(\bX_j)\hat{f}^{(2)}(\bX_j)} \right) \end{array} \right] \right| \\
    & \quad \leq  \frac{1}{n}  \E_{\bbP,1,2} \left[ \frac{A^2 \left|K_{V_{k_1}}(\bX,\bx)K_{V_{k_1}}(\bX,\by)\right|(f(\bX) - \hat{f}^{(2)}(\bX))^2}{\left[f(\bX)\hat{f}^{(2)}(\bX)\right]^2}  \right] \\
    & \quad \lesssim  \frac{1}{n}  \E_{\bbP,1} \left[  \left|K_{V_{k_1}}(\bX,\bx)K_{V_{k_1}}(\bX,\by)\right|  \right]. 
\end{align*}
\end{itemize}

\end{proof}

\begin{lemma} \label{lem: exp kernel} For any $\bbP \in \cP_{(\alpha, \beta)} \cup \cP_{(\alpha, \beta, \gamma)}$,
\begin{align}
    \iint \E_{\bbP} \left[ \left| K_{V_{k_1}}(\bX, \bx)K_{V_{k_2}}(\bX, \by) \right| \right] d\bx d\by & \lesssim 1 \label{eq: exp kernel}\\
    \iint \begin{array}{c} \E_{\bbP} \left[ \left| K_{V_{k_1}}(\bX, \bx)K_{V_{k_1}}(\bX, \by) \right| \right] \\ \times \E_{\bbP} \left[ \left| K_{V_{k_2}}(\bX, \bx)K_{V_{k_2}}(\bX, \by) \right| \right] \end{array} d\bx d\by & \lesssim k_1 \wedge k_2  \label{eq: exp kernel squared} \\
    \iint \E_{\bbP} \left[ \left| K_{V_{k_1}}(\bX, \bx)K_{V_{k_2}}(\bX, \by) \right| \right]^2  d\bx d\by & \lesssim k_1 \wedge k_2  \label{eq: exp kernel squared v2} \\
    \iint \E_{\bbP} \left[ |K_{V_{k_1}}(\bX,\bx)K_{V_{k_2}}(\bX,\bx)K_{V_{k_1}}(\bX,\by) |\right] d\bx d\by & \lesssim k_1 \wedge k_2 \label{eq: exp kernel cubed}\\ 
    \iint \E_{\bbP} \left[ |K_{V_{k_1}}(\bX,\bx)K_{V_{k_2}}(\bX,\bx)K_{V_{k_2}}(\bX,\by)|\right] d\bx d\by & \lesssim k_1 \wedge k_2 \label{eq: exp kernel cubed v2}\\
    \iint \E_{\bbP} \left[ K_{V_{k_1}}(\bX,\bx)K_{V_{k_2}}(\bX,\bx)K_{V_{k_1}}(\bX,\by)K_{V_{k_2}}(\bX,\by) \right] d\bx d\by & \lesssim (k_1 \wedge k_2)^2.  \label{eq: exp kernel fourth}
\end{align}
\end{lemma}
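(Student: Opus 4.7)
The plan is to reduce each of (\ref{eq: exp kernel})--(\ref{eq: exp kernel fourth}) to a deterministic kernel estimate by exchanging expectation and integration (Fubini is justified because $f$ is bounded, giving $\E_\bbP[|g(\bX)|] \leq M_2 \int |g(\bz)|\,d\bz$). Everything then reduces to three auxiliary bounds on the wavelet projection kernel $K_{V_k}$, all derived from Property P2: \textbf{(A)} $\sup_\bz \int |K_{V_k}(\bz,\by)|\,d\by \lesssim 1$; \textbf{(B)} $|K_{V_k}(\bz,\by)| \lesssim k$ with $K_{V_k}(\bz,\by) = 0$ unless $|\bz-\by|_\infty \lesssim k^{-1/d}$; \textbf{(C)} $\sup_\bz \int |K_{V_{k_1}}(\bz,\bx) K_{V_{k_2}}(\bz,\bx)|\,d\bx \lesssim k_1 \wedge k_2$.

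To establish (A) and (B), I expand $K_{V_k}(\bz,\by) = \sum_{\boldm \in \mathcal{Z}_k} \Phi_{k\boldm}(\bz)\Phi_{k\boldm}(\by)$; by P2 each $|\Phi_{k\boldm}|$ is $O(\sqrt{k})$ and supported in a box of volume $O(1/k)$, and these supports have bounded overlap, so for each fixed $\bz$ only $O(1)$ indices contribute. This gives $|K_{V_k}(\bz,\by)| \lesssim \sqrt{k}\cdot\sqrt{k} = k$ and forces $|\bz-\by|_\infty \lesssim k^{-1/d}$ for nonvanishing, yielding (B). For (A), integrating instead in $\by$ uses $\int|\Phi_{k\boldm}(\by)|\,d\by \lesssim \sqrt{k}\cdot k^{-1} = k^{-1/2}$, so the $O(1)$ surviving terms contribute $\sqrt{k}\cdot k^{-1/2} = O(1)$. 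For (C), assume $k_1 \leq k_2$ without loss of generality, bound $|K_{V_{k_1}}(\bz,\bx)| \lesssim k_1$ via (B), and note that the support of $K_{V_{k_2}}(\bz,\cdot)$ has volume $O(1/k_2)$ with peak $k_2$, so the integral is $\lesssim k_1 \cdot k_2 \cdot k_2^{-1} = k_1 \wedge k_2$.

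With (A)--(C) in hand, the six inequalities follow by Fubini and bookkeeping. Inequality (\ref{eq: exp kernel}) factorizes as $\int \bigl(\int |K_{V_{k_1}}(\bz,\bx)|\,d\bx\bigr)\bigl(\int |K_{V_{k_2}}(\bz,\by)|\,d\by\bigr) f(\bz)\,d\bz \lesssim 1$ by (A) twice. For (\ref{eq: exp kernel squared}) and (\ref{eq: exp kernel squared v2}), a preliminary step is to bound, using (B) twice on the $\bz$-integrand and computing the volume of the intersection of two balls, $\E_\bbP[|K_{V_k}(\bX,\bx)K_{V_k}(\bX,\by)|] \lesssim k\,\mathbf{1}(|\bx-\by|_\infty \lesssim k^{-1/d})$; multiplying two such factors in (\ref{eq: exp kernel squared}) gives $k_1 k_2\,\mathbf{1}(|\bx-\by|_\infty \lesssim (k_1 \vee k_2)^{-1/d})$, and integrating over $(\bx,\by)$ produces the volume factor $(k_1 \vee k_2)^{-1}$, yielding $k_1 \wedge k_2$. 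The analogous asymmetric computation (now with the intersection controlled by the \emph{smaller} ball of radius $(k_1 \vee k_2)^{-1/d}$) delivers $\E_\bbP[|K_{V_{k_1}}(\bX,\bx)K_{V_{k_2}}(\bX,\by)|] \lesssim (k_1 \wedge k_2)\,\mathbf{1}(|\bx-\by|_\infty \lesssim (k_1 \wedge k_2)^{-1/d})$, and squaring and integrating gives (\ref{eq: exp kernel squared v2}). Finally, (\ref{eq: exp kernel cubed})--(\ref{eq: exp kernel fourth}) are handled by swapping order of integration so that the $\bx$ and $\by$ integrals factor, after which (A) and (C) finish directly.

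The main technical content is (C); (A) and (B) are essentially standard multiresolution facts. What makes (C) sharp -- and tighter than the $\sqrt{k_1 k_2}$ bound one gets from Cauchy--Schwarz applied to $\int K_{V_{k_j}}(\bz,\bx)^2\,d\bx = K_{V_{k_j}}(\bz,\bz) \lesssim k_j$ -- is that the compact support of one kernel restricts the integration region to volume $O(1/(k_1 \vee k_2))$, producing the $k_1 \wedge k_2$ scaling that is crucial for the downstream variance calculations.
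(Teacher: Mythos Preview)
Your proof is correct and takes a genuinely different route from the paper. The paper proves each of the six inequalities by expanding $K_{V_k}(\bx,\by)=\sum_{\boldm}\Phi_{k\boldm}(\bx)\Phi_{k\boldm}(\by)$, writing products of kernels as multi-sums over index tuples $(\boldm,\boldm',\boldr,\ldots)$, and then counting how many tuples have overlapping supports while bounding each surviving integral $\int|\Phi_{k_1,\boldm}\Phi_{k_2,\boldm'}\cdots|\,d\bx$ case by case. Your approach instead abstracts this combinatorics into three kernel-level facts (A), (B), (C) and then dispatches all six inequalities by Fubini plus the pointwise localized estimate $\E_\bbP[|K_{V_{k_1}}(\bX,\bx)K_{V_{k_2}}(\bX,\by)|]\lesssim (k_1\wedge k_2)\,\mathbf{1}(|\bx-\by|_\infty\lesssim (k_1\wedge k_2)^{-1/d})$ and its same-resolution analogue. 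The two arguments ultimately rest on the same property P2, but your packaging is more modular: once (A)--(C) are recorded, each of (\ref{eq: exp kernel cubed})--(\ref{eq: exp kernel fourth}) is a one-line consequence, and you avoid the separate $k_1\ge k_2$ versus $k_1\le k_2$ case analyses the paper carries out for (\ref{eq: exp kernel cubed}) and (\ref{eq: exp kernel cubed v2}). The paper's expansion, on the other hand, makes the constants and the index-counting explicit and generalizes more mechanically to the related bounds in Lemmas~\ref{lem: exp kernel single} and~\ref{lem: exp kernel triple}, where products like $\int|\Phi_{k_1,\boldm}\Phi_{k_2,\boldm'}\Phi_{k_1,\boldr}|\,d\bx$ appear and a kernel-level statement like your (C) needs to be reproved in slightly different form anyway.
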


\begin{proof}
As in the proof of Lemma \ref{lem: exp pxpy}, we will let $\lesssim$ denote a uniform bound over $\bx \in [0, 1]^d$ for notational convenience. While we write the proofs of the first two inequalities (i.e., (\ref{eq: exp kernel}) and (\ref{eq: exp kernel squared})) in full detail, we write the proofs of the remaining inequalities more concisely since they follow the exact same approach.

\textbf{Showing (\ref{eq: exp kernel})}: Recall from Appendix \ref{sec: wavelets} that for $\bx, \by \in [0, 1]^d$,
\begin{equation*}
    K_{V_k}(\bx, \by) = \sum_{\boldm \in \mathcal{Z}_k} \Phi_{k,\boldm}(\bx)\Phi_{k,\boldm}(\by)
\end{equation*}
where $|\mathcal{Z}_k| \asymp k$. Then,
\begin{equation*}
    K_{V_{k_1}}(\bX, \bx)K_{V_{k_2}}(\bX, \by) = \sum_{(\boldm, \boldm^{\prime}) \in \mathcal{Z}_{k_1} \times \mathcal{Z}_{k_2}} \Phi_{k_1, \boldm}(\bX) \Phi_{k_2, \boldm^{\prime}}(\bX) \Phi_{k_1, \boldm}(\bx) \Phi_{k_2, \boldm^{\prime}}(\by)
\end{equation*}
and so
\begin{align*}
    & \E_{\bbP} \left[ \left| K_{V_{k_1}}(\bX, \bx)K_{V_{k_2}}(\bX, \by) \right| \right] \\
    & \quad \leq \sum_{(\boldm, \boldm^{\prime}) \in \mathcal{Z}_{k_1} \times \mathcal{Z}_{k_2}}  |\Phi_{k_1, \boldm}(\bx) \Phi_{k_2, \boldm^{\prime}}(\by)| \E_{\bbP}[|\Phi_{k_1, \boldm}(\bX) \Phi_{k_2, \boldm^{\prime}}(\bX)|].
\end{align*}
Since $f$ is bounded,
\begin{equation*}
    \E_{\bbP}[|\Phi_{k_1, \boldm}(\bX) \Phi_{k_2, \boldm^{\prime}}(\bX)|] \lesssim \int |\Phi_{k_1, \boldm}(\bx) \Phi_{k_2, \boldm^{\prime}}(\bx)| d\bx.
\end{equation*}
Assume without loss of generality that $k_1 \geq k_2$. For each $\boldm \in \mathcal{Z}_{k_1}$, there are $O(1)$ values of $\boldm^{\prime} \in \mathcal{Z}_{k_2}$ such that the supports of $\Phi_{k_1, \boldm}$ and $\Phi_{k_2, \boldm^{\prime}}$ overlap. Letting $\mathcal{S}_{k_1, k_2}$ denote the set of such $(\boldm, \boldm^{\prime})$, this implies $|\mathcal{S}_{k_1, k_2}| \asymp k_1$. For any $(\boldm, \boldm^{\prime}) \in \mathcal{S}_{k_1, k_2}$
\begin{equation} \label{eq: exp kernel bound temp}
    \int |\Phi_{k_1, \boldm}(\bx) \Phi_{k_2, \boldm^{\prime}}(\bx)| d\bx \lesssim \sqrt{\frac{k_2}{k_1}}
\end{equation}
since (i) $\Phi_{k_1, \boldm}$ has a support which is contained in an interval of length $O(\frac{1}{k_1})$ and (ii) $|\Phi_{k_1, \boldm}|$ and $|\Phi_{k_2, \boldm^{\prime}}|$ are uniformly bounded above by $O(\sqrt{k_1})$ and $O(\sqrt{k_2})$, respectively. Therefore, we can write
\begin{equation}  \label{eq: exp kernel xy}
    \E_{\bbP} \left[ \left| K_{V_{k_1}}(\bX, \bx)K_{V_{k_2}}(\bX, \by) \right| \right] \lesssim \sqrt{\frac{k_2}{k_1}} \sum_{(\boldm, \boldm^{\prime}) \in \mathcal{S}_{k_1, k_2}}  |\Phi_{k_1, \boldm}(\bx) \Phi_{k_2, \boldm^{\prime}}(\by)|.
\end{equation}
Finally, we have that
\begin{align*}
    & \iint \E_{\bbP} \left[ \left| K_{V_{k_1}}(\bX, \bx)K_{V_{k_2}}(\bX, \by) \right| \right] d\bx d\by \\
    & \quad \lesssim \sqrt{\frac{k_2}{k_1}} \sum_{(\boldm, \boldm^{\prime}) \in \mathcal{S}_{k_1, k_2}} \underbrace{\int |\Phi_{k_1, \boldm}(\bx)| d\bx}_{\lesssim \frac{1}{\sqrt{k_1}}} \underbrace{\int |\Phi_{k_2, \boldm^{\prime}}(\by)| d\by}_{\lesssim \frac{1}{\sqrt{k_2}}} \lesssim 1.
\end{align*}

\textbf{Showing (\ref{eq: exp kernel squared})}: By (\ref{eq: exp kernel xy}),
\begin{align}
    & \iint \E_{\bbP} \left[ \left| K_{V_{k_1}}(\bX, \bx)K_{V_{k_1}}(\bX, \by) \right| \right] \E_{\bbP} \left[ \left| K_{V_{k_2}}(\bX, \bx)K_{V_{k_2}}(\bX, \by) \right| \right] d\bx d\by \nonumber \\
    & \quad  \lesssim  \sum_{\substack{(\boldm, \boldm^{\prime}) \in \mathcal{S}_{k_1} \\ (\boldr, \boldr^{\prime}) \in \mathcal{S}_{k_2}}} \int  \left|\Phi_{k_1, \boldm}(\bx)\Phi_{k_2, \boldr}(\bx)\right| d\bx   \int \left|\Phi_{k_1, \boldm^{\prime}}(\by)\Phi_{k_2, \boldr^{\prime}}(\by)\right| d\by.   \label{eq: summation for exp kernel squared}
\end{align}
Assume without loss of generality that $k_1 \geq k_2$. For each $\boldm$, there are $O(1)$ distinct $\boldr$ such that the supports of $\Phi_{k_1, \boldm}$ and $\Phi_{k_2, \boldr}$ overlap. For such $\boldm$ and $\boldr$, 
\begin{equation*}
    \int  \left|\Phi_{k_1, \boldm}(\bx)\Phi_{k_2, \boldr}(\bx)\right| d\bx \lesssim \sqrt{\frac{k_2}{k_1}}
\end{equation*}
since (i) $\Phi_{k_1, \boldm}$ has a support which is contained in an interval of length $O(\frac{1}{k_1})$ and (ii) $|\Phi_{k_1, \boldm}|$ and $|\Phi_{k_2, \boldr}|$ are uniformly bounded above by $O(\sqrt{k_1})$ and $O(\sqrt{k_2})$, respectively. Similarly, for each $\boldm^{\prime}$, there are $O(1)$ distinct $\boldr^{\prime}$ such that the supports of $\Phi_{k_1, \boldm^{\prime}}$ and $\Phi_{k_2, \boldr^{\prime}}$ overlap. For such $\boldm^{\prime}$ and $\boldr^{\prime}$,
\begin{equation*}
    \int  \left|\Phi_{k_1, \boldm^{\prime}}(\by)\Phi_{k_2, \boldr^{\prime}}(\by)\right| d\by \lesssim \sqrt{\frac{k_2}{k_1}}.
\end{equation*}
In summary, there are $O(k_1)$ non-zero terms in the summation in (\ref{eq: summation for exp kernel squared}), and each of the non-zero terms in the summation are bounded above by $O(\frac{k_2}{k_1})$. This implies (\ref{eq: exp kernel squared}).

\textbf{Showing (\ref{eq: exp kernel squared v2})}: Assume without loss of generality that $k_1 \geq k_2$. By (\ref{eq: exp kernel xy}),
\begin{align*}
    & \iint \E_{\bbP} \left[ \left| K_{V_{k_1}}(\bX, \bx)K_{V_{k_2}}(\bX, \by) \right| \right]^2 d\bx d\by \nonumber \\
    & \quad \lesssim \frac{k_2}{k_1} \sum_{\substack{(\boldm, \boldm^{\prime}) \in \mathcal{S}_{k_1, k_2} \\ (\boldr, \boldr^{\prime}) \in \mathcal{S}_{k_1, k_2}}} \int  |\Phi_{k_1, \boldm}(\bx) \Phi_{k_1, \boldr}(\bx) | d\bx \int |\Phi_{k_2, \boldm^{\prime}}(\by)  \Phi_{k_2, \boldr^{\prime}}(\by)| d\by. 
\end{align*}
By similar arguments made earlier in the proof, there are $O(k_1)$ non-zero terms in the summation in and each of the non-zero terms in the summation are bounded above by $O(1)$. This implies (\ref{eq: exp kernel squared v2}).

\textbf{Showing (\ref{eq: exp kernel cubed})}: We have that
\begin{align*}
    & K_{V_{k_1}}(\bX, \bx)K_{V_{k_2}}(\bX, \bx)K_{V_{k_1}}(\bX, \by) \\
    & \quad = \sum_{\substack{ \boldm, \boldr \in \mathcal{Z}_{k_1} \\ \boldm^{\prime} \in \mathcal{Z}_{k_2}}} \Phi_{k_1, \boldm}(\bX) \Phi_{k_2, \boldm^{\prime}}(\bX)\Phi_{k_1, \boldr}(\bX) \Phi_{k_1, \boldm}(\bx) \Phi_{k_2, \boldm^{\prime}}(\bx) \Phi_{k_1, \boldr}(\by). 
\end{align*}
and so
\begin{align*}
    & \E_{\bbP} \left[ |K_{V_{k_1}}(\bX, \bx)K_{V_{k_2}}(\bX, \bx)K_{V_{k_1}}(\bX, \by)| \right] \\
    & \quad \leq \sum_{\substack{ \boldm, \boldr \in \mathcal{Z}_{k_1} \\ \boldm^{\prime} \in \mathcal{Z}_{k_2}}} \E_{\bbP} \left[ |\Phi_{k_1, \boldm}(\bX) \Phi_{k_2, \boldm^{\prime}}(\bX)\Phi_{k_1, \boldr}(\bX) | \right] | \Phi_{k_1, \boldm}(\bx) \Phi_{k_2, \boldm^{\prime}}(\bx) \Phi_{k_1, \boldr}(\by) |.
\end{align*}
Then,
\begin{equation*}
    \E_{\bbP} \left[ | \Phi_{k_1, \boldm}(\bX) \Phi_{k_2, \boldm^{\prime}}(\bX)\Phi_{k_1, \boldr}(\bX)| \right] \lesssim \int |\Phi_{k_1, \boldm}(\bx) \Phi_{k_2, \boldm^{\prime}}(\bx)\Phi_{k_1, \boldr}(\bx)| d\bx. 
\end{equation*}
First, assume that $k_1 \geq k_2$. By similar arguments made earlier in the proof, 
\begin{equation*}
    \int |\Phi_{k_1, \boldm}(\bx) \Phi_{k_2, \boldm^{\prime}}(\bx)\Phi_{k_1, \boldr}(\bx)| d\bx = \begin{cases} O(\sqrt{k_2}) \quad & \text{if $(\boldm, \boldm^{\prime}, \boldr) \in \tilde{\mathcal{S}}_{k_1, k_2}$} \\ 0 & \text{otherwise}\end{cases}
\end{equation*}
where $|\tilde{\mathcal{S}}_{k_1, k_2}| \asymp k_1$. Therefore, we can write,
\begin{align*}
    & \iint \E_{\bbP} \left[ |K_{V_{k_1}}(\bX, \bx)K_{V_{k_2}}(\bX, \bx)K_{V_{k_1}}(\bX, \by)| \right] d\bx d\by \\
    & \quad \lesssim \sqrt{k_2} \sum_{(\boldm, \boldm^{\prime}, \boldr) \in \tilde{\mathcal{S}}_{k_1, k_2}} \underbrace{\int |\Phi_{k_1, \boldm}(\bx) \Phi_{k_2, \boldm^{\prime}}(\bx)| d\bx}_{\lesssim \sqrt{\frac{k_2}{k_1}}} \underbrace{\int |\Phi_{k_1, \boldr}(\by)| d\by}_{\lesssim \frac{1}{\sqrt{k_1}}} \lesssim k_2.
\end{align*}
Next, assume that $k_1 \leq k_2$. By similar arguments made earlier in the proof, 
\begin{equation*}
    \int |\Phi_{k_1, \boldm}(\bx) \Phi_{k_2, \boldm^{\prime}}(\bx)\Phi_{k_1, \boldr}(\bx)| d\bx = \begin{cases} O\left(\frac{k_1}{\sqrt{k_2}}\right) \quad & \text{if $(\boldm, \boldm^{\prime}, \boldr) \in \tilde{\mathcal{S}}^{\prime}_{k_1, k_2}$} \\ 0 & \text{otherwise}\end{cases}
\end{equation*}
where $|\tilde{\mathcal{S}}^{\prime}_{k_1, k_2}| \asymp k_2$. Therefore, 
\begin{align*}
    & \iint \E_{\bbP} \left[ |K_{V_{k_1}}(\bX, \bx)K_{V_{k_2}}(\bX, \bx)K_{V_{k_1}}(\bX, \by)| \right] d\bx d\by \\
    & \quad \lesssim \frac{k_1}{\sqrt{k_2}} \sum_{(\boldm, \boldm^{\prime}, \boldr) \in \tilde{\mathcal{S}}^{\prime}_{k_1, k_2}} \underbrace{\int |\Phi_{k_1, \boldm}(\bx) \Phi_{k_2, \boldm^{\prime}}(\bx)| d\bx}_{\lesssim \sqrt{\frac{k_1}{k_2}}} \underbrace{\int |\Phi_{k_1, \boldr}(\by)| d\by}_{\lesssim \frac{1}{\sqrt{k_1}}} \lesssim k_1.
\end{align*}

\textbf{Showing (\ref{eq: exp kernel cubed v2})}: We have that 
\begin{align*}
    & K_{V_{k_1}}(\bX,\bx)K_{V_{k_2}}(\bX,\bx)K_{V_{k_2}}(\bX,\by) \\
    & \quad = \sum_{\substack{ \boldm \in \mathcal{Z}_{k_1} \\ \boldm^{\prime}, \boldr \in \mathcal{Z}_{k_2}}} \Phi_{k_1, \boldm}(\bX) \Phi_{k_2, \boldm^{\prime}}(\bX)\Phi_{k_2, \boldr}(\bX) \Phi_{k_1, \boldm}(\bx) \Phi_{k_2, \boldm^{\prime}}(\bx) \Phi_{k_2, \boldr}(\by)
\end{align*}
and so
\begin{align*}
    & \E_{\bbP} \left[ |K_{V_{k_1}}(\bX, \bx)K_{V_{k_2}}(\bX, \bx)K_{V_{k_2}}(\bX, \by)| \right] \\
    & \quad \leq \sum_{\substack{ \boldm \in \mathcal{Z}_{k_1} \\ \boldm^{\prime}, \boldr \in \mathcal{Z}_{k_2}}} \E_{\bbP} \left[ |\Phi_{k_1, \boldm}(\bX) \Phi_{k_2, \boldm^{\prime}}(\bX)\Phi_{k_2, \boldr}(\bX) | \right] | \Phi_{k_1, \boldm}(\bx) \Phi_{k_2, \boldm^{\prime}}(\bx) \Phi_{k_2, \boldr}(\by) |.
\end{align*}
Then,
\begin{equation*}
     \E_{\bbP} \left[ | \Phi_{k_1, \boldm}(\bX) \Phi_{k_2, \boldm^{\prime}}(\bX)\Phi_{k_2, \boldr}(\bX)| \right] \lesssim \int |\Phi_{k_1, \boldm}(\bx) \Phi_{k_2, \boldm^{\prime}}(\bx)\Phi_{k_2, \boldr}(\bx)| d\bx. 
\end{equation*}
First, assume that $k_1 \geq k_2$. By similar arguments made earlier in the proof
\begin{equation*}
    \int |\Phi_{k_1, \boldm}(\bx) \Phi_{k_2, \boldm^{\prime}}(\bx)\Phi_{k_2, \boldr}(\bx)| d\bx  = \begin{cases} O\left(\frac{k_2}{\sqrt{k_1}}\right) \quad & \text{if $(\boldm, \boldm^{\prime}, \boldr) \in \check{\mathcal{S}}_{k_1, k_2}$} \\ 0 & \text{otherwise}\end{cases}
\end{equation*}
where $|\check{\mathcal{S}}_{k_1, k_2}| \asymp k_1$. Therefore, we can write 
\begin{align*}
    & \iint \E_{\bbP} \left[ |K_{V_{k_1}}(\bX, \bx)K_{V_{k_2}}(\bX, \bx)K_{V_{k_2}}(\bX, \by)| \right] d\bx d\by \\
    & \quad \lesssim \frac{k_2}{\sqrt{k_1}} \sum_{(\boldm, \boldm^{\prime}, \boldr) \in \check{\mathcal{S}}_{k_1, k_2}} \underbrace{\int |\Phi_{k_1, \boldm}(\bx) \Phi_{k_2, \boldm^{\prime}}(\bx)| d\bx}_{\lesssim \sqrt{\frac{k_2}{k_1}}} \underbrace{\int |\Phi_{k_2, \boldr}(\by)| d\by}_{\lesssim \frac{1}{\sqrt{k_2}}} \lesssim k_2.
\end{align*}
Next, assume that $k_1 \leq k_2$. By similar arguments made earlier in the proof
\begin{equation*}
    \int |\Phi_{k_1, \boldm}(\bx) \Phi_{k_2, \boldm^{\prime}}(\bx)\Phi_{k_2, \boldr}(\bx)| d\bx  = \begin{cases} O(\sqrt{k_1}) \quad & \text{if $(\boldm, \boldm^{\prime}, \boldr) \in \check{\mathcal{S}}^{\prime}_{k_1, k_2}$} \\ 0 & \text{otherwise}\end{cases}
\end{equation*}
where $|\check{\mathcal{S}}^{\prime}_{k_1, k_2}| \asymp k_2$. Therefore, 
\begin{align*}
    & \iint \E_{\bbP} \left[ |K_{V_{k_1}}(\bX, \bx)K_{V_{k_2}}(\bX, \bx)K_{V_{k_2}}(\bX, \by)| \right] d\bx d\by \\
    & \quad \lesssim \sqrt{k_1} \sum_{(\boldm, \boldm^{\prime}, \boldr) \in \check{\mathcal{S}}^{\prime}_{k_1, k_2}} \underbrace{\int |\Phi_{k_1, \boldm}(\bx) \Phi_{k_2, \boldm^{\prime}}(\bx)| d\bx}_{\lesssim \sqrt{\frac{k_1}{k_2}}} \underbrace{\int |\Phi_{k_2, \boldr}(\by)| d\by}_{\lesssim \frac{1}{\sqrt{k_2}}} \lesssim k_1.
\end{align*}

\textbf{Showing (\ref{eq: exp kernel fourth})}: We have that
\begin{align*}
    & K_{V_{k_1}}(\bX, \bx)K_{V_{k_2}}(\bX, \bx)K_{V_{k_1}}(\bX, \by)K_{V_{k_2}}(\bX, \by) \\
    & \quad = \sum_{\substack{\boldm, \boldr  \in \mathcal{Z}_{k_1} \\ \boldm^{\prime}, \boldr^{\prime} \in \mathcal{Z}_{k_2}}} \begin{array}{c} \Phi_{k_1, \boldm}(\bX) \Phi_{k_2, \boldm^{\prime}}(\bX)\Phi_{k_1, \boldr}(\bX) \Phi_{k_2, \boldr^{\prime}}(\bX) \\ \times \Phi_{k_1, \boldm}(\bx) \Phi_{k_2, \boldm^{\prime}}(\bx) \Phi_{k_1, \boldr}(\by) \Phi_{k_2, \boldr^{\prime}}(\by) \end{array}
\end{align*}
and so
\begin{align*}
    & \E_{\bbP} \left[ |K_{V_{k_1}}(\bX, \bx)K_{V_{k_2}}(\bX, \bx)K_{V_{k_1}}(\bX, \by)K_{V_{k_2}}(\bX, \by)| \right] \\
    & \quad \leq \sum_{\substack{\boldm, \boldr  \in \mathcal{Z}_{k_1} \\ \boldm^{\prime}, \boldr^{\prime} \in \mathcal{Z}_{k_2}}} \begin{array}{c} \E_{\bbP} \left[ |\Phi_{k_1, \boldm}(\bX) \Phi_{k_2, \boldm^{\prime}}(\bX)\Phi_{k_1, \boldr}(\bX) \Phi_{k_2, \boldr^{\prime}}(\bX)| \right] \\ \times \Phi_{k_1, \boldm}(\bx) \Phi_{k_2, \boldm^{\prime}}(\bx) \Phi_{k_1, \boldr}(\by) \Phi_{k_2, \boldr^{\prime}}(\by) \end{array}.
\end{align*}
Then,
\begin{align*}
    & \E_{\bbP} \left[ |\Phi_{k_1, \boldm}(\bX) \Phi_{k_2, \boldm^{\prime}}(\bX)\Phi_{k_1, \boldr}(\bX) \Phi_{k_2, \boldr^{\prime}}(\bX)| \right]\\
    & \quad \lesssim \int | \Phi_{k_1, \boldm}(\bx) \Phi_{k_2, \boldm^{\prime}}(\bx)\Phi_{k_1, \boldr}(\bx) \Phi_{k_2, \boldr^{\prime}}(\bx)| d\bx.
\end{align*}
Assume without loss of generality that $k_1 \geq k_2$. By similar arguments made earlier in the proof,
\begin{equation*}
    \int | \Phi_{k_1, \boldm}(\bx) \Phi_{k_2, \boldm^{\prime}}(\bx)\Phi_{k_1, \boldr}(\bx) \Phi_{k_2, \boldr^{\prime}}(\bx)| d\bx = \begin{cases} O(k_2) \quad & \text{if $(\boldm, \boldm^{\prime}, \boldr, \boldr^{\prime}) \in \bar{\mathcal{S}}_{k_1, k_2}$} \\ 0 & \text{otherwise}\end{cases}
\end{equation*}
where $|\bar{\mathcal{S}}_{k_1, k_2}| \asymp k_1$. Therefore, we can write,
\begin{align*}
    & \iint \E_{\bbP} \left[ |\Phi_{k_1, \boldm}(\bX) \Phi_{k_2, \boldm^{\prime}}(\bX)\Phi_{k_1, \boldr}(\bX) \Phi_{k_2, \boldr^{\prime}}(\bX)| \right] d\bx d\by \\
    & \quad \lesssim k_2 \sum_{(\boldm, \boldm^{\prime}, \boldr, \boldr^{\prime}) \in \bar{\mathcal{S}}_{k_1, k_2}} \underbrace{\int |\Phi_{k_1, \boldm}(\bx) \Phi_{k_2, \boldm^{\prime}}(\bx)| d\bx}_{\lesssim \sqrt{\frac{k_2}{k_1}}} \underbrace{\int |\Phi_{k_1, \boldr}(\by)\Phi_{k_2, \boldr^{\prime}}(\by)| d\by}_{\lesssim \sqrt{\frac{k_2}{k_1}}} \lesssim k_2.
\end{align*}

\end{proof}

\begin{lemma} \label{lem: exp kernel single}
The following statements hold:
\begin{itemize}
    \item[(i)] For any $\bbP \in \cP_{(\alpha, \beta)} \cup \cP_{(\alpha, \beta, \gamma)}$,
\begin{align}
    \| \E_{\bbP} \left[ \left| K_{V_k}(\bX, \cdot) \right| \right] \|_{\infty}  & \lesssim 1 \label{eq: exp kernel single one}\\
    \int \E_{\bbP} \left[ |K_{V_{k_1}}(\bX, \bx) K_{V_{k_2}}(\bX, \bx)| \right] d\bx  & \lesssim k_1 \wedge k_2 \label{eq: exp kernel single second} \\
    \int \E_{\bbP}\left[ | K_{V_{k_1}}(\bX,\bx)K_{V_{k_2}}(\bX,\bx)^2| \right] d\bx & \lesssim k_1k_2 \label{eq: exp kernel single third} \\
    \int \E_{\bbP}\left[ K_{V_{k_1}}(\bX,\bx)^2 K_{V_{k_2}}(\bX,\bx)^2 \right] d\bx & \lesssim (k_1 \wedge k_2)^2 (k_1 \vee k_2) \label{eq: exp kernel single fourth} \\
    \int \E_{\bbP} \left[ K_{V_{k_1}}(\bX, \bx)^2\right] \E_{\bbP}\left[ K_{V_{k_2}}(\bX, \bx)^2 \right] d\bx  & \lesssim k_1 k_2. \label{eq: exp kernel single pt3} \\
    \int \E_{\bbP} \left[ |K_{V_{k_1}}(\bX, \bx) K_{V_{k_2}}(\bX, \bx)| \right]^2 d\bx  & \lesssim (k_1 \wedge k_2)^2 \label{eq: exp kernel single second squared}.
\end{align}

\item[(ii)] For any $\bbP \in \cP_{(\alpha, \beta)} \cup \cP_{(\alpha, \beta, \gamma)}$ with $\bX \sim \text{Uniform}([0,1]^d)$, 
\begin{align}
    \int \E_{\bbP} \left[ K_{V_{k_1}}(\bX, \bx)^2\right] \E_{\bbP}\left[ K_{V_{k_2}}(\bX, \bx)^2 \right] d\bx  & \gtrsim k_1 k_2 \label{eq: exp kernel single lb pt1}\\
    \int \E_{\bbP} \left[ K_{V_{k_1}}(\bX, \bx) K_{V_{k_2}}(\bX, \bx) \right] d\bx  & \gtrsim k_1 \wedge k_2. \label{eq: exp kernel single lb pt2}
\end{align}
\end{itemize}

\end{lemma}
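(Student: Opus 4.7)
The overall strategy mirrors the proof of Lemma \ref{lem: exp kernel}: expand each factor $K_{V_{k_j}}(\bX, \bx)$ using the orthonormal basis as $\sum_{\boldm \in \mathcal{Z}_{k_j}} \Phi_{k_j, \boldm}(\bX) \Phi_{k_j, \boldm}(\bx)$, push the expectation inside by linearity, bound $\E_\bbP[|\prod_j \Phi_{k_j, \boldm_j}(\bX)|] \lesssim \int |\prod_j \Phi_{k_j, \boldm_j}(\bz)| d\bz$ using boundedness of $f$, and then invoke Property P2 (uniform size bound $\|\Phi_{k,\boldm}\|_\infty \lesssim \sqrt{k}$ and support contained in an interval of length $O(1/k)$) to bound and count the nonzero contributions. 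The crucial structural simplification over Lemma \ref{lem: exp kernel} is that here both kernel factors are evaluated at the \emph{same} $\bx$: for each fixed $\bx$ only $O(1)$ indices $\boldm \in \mathcal{Z}_{k_j}$ yield $\Phi_{k_j,\boldm}(\bx) \neq 0$, so the total number of nonzero index-tuples in the expansion is $O(k_1 \vee k_2)$ rather than the $O(k_1 \cdot k_2)$ pairs appearing in Lemma \ref{lem: exp kernel}. A short case split on $k_1 \leq k_2$ vs.\ $k_1 \geq k_2$ then yields (\ref{eq: exp kernel single one})--(\ref{eq: exp kernel single fourth}) by the same arithmetic used in Lemma \ref{lem: exp kernel}.

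For (\ref{eq: exp kernel single pt3}) and (\ref{eq: exp kernel single second squared}) the cleaner route is via the reproducing kernel identity, which I would prove up front: since $K_{V_k}(\cdot, \bx) \in V_k$, we have $\int K_{V_{k_1}}(\bz, \bx) K_{V_{k_2}}(\bz, \bx) d\bz = K_{V_{k_1 \wedge k_2}}(\bx, \bx)$ whenever $V_{k_1 \wedge k_2} \subseteq V_{k_1 \vee k_2}$ (which holds by the multiresolution nesting). Combined with boundedness of $f$, this gives $\E_\bbP[K_{V_k}(\bX,\bx)^2] \lesssim K_{V_k}(\bx,\bx) = \sum_\boldm \Phi_{k,\boldm}(\bx)^2 \lesssim k$ uniformly; multiplying and integrating over $\bx$ gives (\ref{eq: exp kernel single pt3}). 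Bound (\ref{eq: exp kernel single second squared}) follows by first establishing the \emph{pointwise-in-$\bx$} estimate $\E_\bbP[|K_{V_{k_1}}(\bX,\bx) K_{V_{k_2}}(\bX,\bx)|] \lesssim k_1 \wedge k_2$ via the wavelet-expansion counting argument described above, and then squaring and integrating. For part (ii) under $\bX \sim \text{Uniform}([0,1]^d)$, the reproducing identity becomes an \emph{equality}: $\E_\bbP[K_{V_{k_j}}(\bX,\bx)^2] = K_{V_{k_j}}(\bx,\bx)$ and $\E_\bbP[K_{V_{k_1}}(\bX,\bx)K_{V_{k_2}}(\bX,\bx)] = K_{V_{k_1 \wedge k_2}}(\bx,\bx)$. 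Integrating the latter over $\bx$ immediately gives $|\mathcal{Z}_{k_1 \wedge k_2}| \asymp k_1 \wedge k_2$, proving (\ref{eq: exp kernel single lb pt2}).

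The remaining and most delicate step is (\ref{eq: exp kernel single lb pt1}), which reduces by the same identity to showing $\int K_{V_{k_1}}(\bx,\bx) K_{V_{k_2}}(\bx,\bx) d\bx \gtrsim k_1 k_2$. This is the main obstacle, since it requires a \emph{pointwise lower bound} $K_{V_k}(\bx,\bx) \gtrsim k$ on a set of Lebesgue measure bounded below uniformly in $k$, rather than merely the uniform upper bound and the integral identity $\int K_{V_k}(\bx,\bx) d\bx = |\mathcal{Z}_k| \asymp k$ (which only yield such a set by a reverse Markov argument, without control on the intersection across different $k$). The plan is to fix once and for all an interior cube $B \subset (0,1)^d$ and use the scaling-translation structure of $\Phi_{k,\boldm} = 2^{ld/2}\Phi(2^l \cdot - \boldm)$ together with the fact that $\Phi(\bx)^2$ is bounded below by a positive constant on a set of positive Lebesgue measure, to exhibit for every sufficiently large $k$ and every $\bx \in B$ an index $\boldm \in \mathcal{Z}_k$ with $\Phi_{k,\boldm}(\bx)^2 \gtrsim k$; hence $K_{V_k}(\bx,\bx) \geq \Phi_{k,\boldm}(\bx)^2 \gtrsim k$ on $B$. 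The conclusion $\int_B K_{V_{k_1}}(\bx,\bx) K_{V_{k_2}}(\bx,\bx) d\bx \gtrsim k_1 k_2 |B|$ then delivers (\ref{eq: exp kernel single lb pt1}). Handling boundary indices in the Cohen--Daubechies--Vial construction is the only real subtlety, which is why working inside a fixed interior cube $B$ sidesteps the issue.
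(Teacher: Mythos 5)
Your strategy largely reproduces the paper's (expand via the wavelet basis, count overlapping translates, invoke Property P2), and for two of the upper bounds, (\ref{eq: exp kernel single pt3}) and (\ref{eq: exp kernel single second squared}), as well as for (\ref{eq: exp kernel single lb pt2}), you take a genuinely cleaner route. The reproducing-kernel identity $\int K_{V_{k_1}}(\bz,\bx)K_{V_{k_2}}(\bz,\bx)\,d\bz = K_{V_{k_1\wedge k_2}}(\bx,\bx)$ (valid by multiresolution nesting and self-adjointness of $\Pi_{V_{k_1\wedge k_2}}$) immediately gives the uniform bound $\E_\bbP[K_{V_k}(\bX,\bx)^2] \lesssim K_{V_k}(\bx,\bx)\lesssim k$, from which (\ref{eq: exp kernel single pt3}) and (\ref{eq: exp kernel single second squared}) follow by integration, and in the uniform case (\ref{eq: exp kernel single lb pt2}) drops out instantly since $\int K_{V_{k_1\wedge k_2}}(\bx,\bx)\,d\bx = |\mathcal{Z}_{k_1\wedge k_2}| \asymp k_1\wedge k_2$. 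This replaces the paper's more laborious change-of-basis computation (expanding $\Phi_{k_2,\boldm'}$ in the $V_{k_1}$ basis and summing the squared coefficients). You buy brevity and transparency; what is lost is that the paper's explicit coefficient bookkeeping would carry over to slightly rougher settings where one cannot cleanly reduce to a single diagonal kernel.

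The one real gap is your treatment of (\ref{eq: exp kernel single lb pt1}). You correctly identify that the crux is a \emph{pointwise} lower bound $K_{V_k}(\bx,\bx)\gtrsim k$, but your proposed mechanism --- ``exhibit for every $\bx\in B$ an index $\boldm$ with $\Phi_{k,\boldm}(\bx)^2 \gtrsim k$'' --- does not hold for Daubechies-type (non-Haar) scaling functions, which vanish at interior points of their supports; there is no single translate that is uniformly bounded below. The correct statement is that the \emph{sum} $K_{V_k}(\bx,\bx)=\sum_{\boldm}\Phi_{k,\boldm}(\bx)^2$ is uniformly bounded below by a constant times $k$. This follows because $g(u):=\sum_{m\in\mathbb{Z}^d}\phi(u_1-m_1)^2\cdots\phi(u_d-m_d)^2$ is $1$-periodic, continuous (for $S\geq 2$), and strictly positive everywhere: if $g(u_0)=0$ then $\phi(u_0^{(j)}-m)=0$ for all $m$ in some coordinate $j$, contradicting the fact (Property P3) that the constant function $1=\sum_m c_m\phi(\cdot-m)$ is reproduced by the basis. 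Hence $\inf g > 0$, and scaling gives $K_{V_k}(\bx,\bx)\gtrsim k$ on the interior, modulo the CDV boundary corrections your interior-cube device already handles. For what it is worth, the paper's own phrasing here --- that ``the non-zero terms \ldots are bounded below by $ck$ whenever $\bx$ lies in the support'' --- has exactly the same imprecision; the conclusion they reach is standard and correct, but the stated reason is not. So the fix you need is to replace the pointwise claim about an individual translate by the uniform lower bound on the diagonal kernel itself.
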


\begin{proof}

\begin{itemize}
    \item[(i)] We follow an approach similar to the proof of Lemma \ref{lem: exp kernel}. As in the proof of Lemma \ref{lem: exp pxpy}, we will let $\lesssim$ denote a uniform bound over $\bx \in [0, 1]^d$ for notational convenience.

\textbf{Showing (\ref{eq: exp kernel single one})}: Recall from Appendix \ref{sec: wavelets} that for $\bx, \by \in [0, 1]^d$,
\begin{equation*}
    K_{V_k}(\bx, \by) = \sum_{\boldm \in \mathcal{Z}_k} \Phi_{k,\boldm}(\bx)\Phi_{k,\boldm}(\by)
\end{equation*}
where $|\mathcal{Z}_k| \asymp k$. Then,
\begin{equation*}
    \E_{\bbP} \left[ \left| K_{V_k}(\bX, \bx) \right| \right] \lesssim \sum_{\boldm \in \mathcal{Z}_k} | \Phi_{k,\boldm}(\bx) | \int | \Phi_{k,\boldm}(\bx^{\prime}) | d\bx^{\prime}. 
\end{equation*}
Since $|\Phi_{k,\boldm}|$ is uniformly bounded above by $O(\sqrt{k})$ and has support contained in an interval of length $O(\frac{1}{k})$, $\int | \Phi_{k,\boldm}(\bx^{\prime}) | d\bx^{\prime}  \lesssim \frac{1}{\sqrt{k}}$. Therefore,
\begin{align*}
    \E_{\bbP} \left[ \left| K_{V_k}(\bX, \bx) \right| \right] &  \lesssim \frac{1}{\sqrt{k}} \sum_{\boldm \in \mathcal{Z}_k} | \Phi_{k,\boldm}(\bx) | \lesssim 1
\end{align*}
where the last inequality follows from the observation that (i) there are $O(1)$ distinct $\boldm$ such that $\Phi_{k,\boldm}(\bx)$ is non-zero, and (ii) $|\Phi_{k,\boldm}|$ is uniformly bounded above by $O(\sqrt{k})$. 

\textbf{Showing (\ref{eq: exp kernel single second})}: Recall that we can write
\begin{equation*}
    K_{V_{k_1}}(\bX, \bx)K_{V_{k_2}}(\bX, \bx) = \sum_{\boldm, \boldm^{\prime} \in \mathcal{Z}_{k_1} \times \mathcal{Z}_{k_2}} \Phi_{k_1, \boldm}(\bx) \Phi_{k_2, \boldm^{\prime}}(\bx) \Phi_{k_1, \boldm}(\bX) \Phi_{k_2, \boldm^{\prime}}(\bX).
\end{equation*}
Then,
\begin{align*}
    & \E_{\bbP}\left[ \left| K_{V_{k_1}}(\bX, \bx)K_{V_{k_2}}(\bX, \bx) \right| \right] \\
    & \quad \lesssim \sum_{\boldm, \boldm^{\prime} \in \mathcal{Z}_{k_1} \times \mathcal{Z}_{k_2}} |\Phi_{k_1, \boldm}(\bx) \Phi_{k_2, \boldm^{\prime}}(\bx)| \int | \Phi_{k_1, \boldm}(\bx) \Phi_{k_2, \boldm^{\prime}}(\bx) | d\bx
\end{align*}
and so
\begin{equation*}
    \int \E_{\bbP}\left[ \left| K_{V_{k_1}}(\bX, \bx)K_{V_{k_2}}(\bX, \bx) \right| \right] d \bx \lesssim \sum_{\boldm, \boldm^{\prime} \in \mathcal{Z}_{k_1} \times \mathcal{Z}_{k_2}} \left(\int | \Phi_{k_1, \boldm}(\bx) \Phi_{k_2, \boldm^{\prime}}(\bx) | d\bx \right)^2.
\end{equation*}
Suppose without loss of generality that $k_1 \geq k_2$. Recall from the proof of (\ref{eq: exp kernel}) in Lemma \ref{lem: exp kernel} (specifically, (\ref{eq: exp kernel bound temp})), 
\begin{equation*}
    \int | \Phi_{k_1, \boldm}(\bx) \Phi_{k_2, \boldm^{\prime}}(\bx) | d\bx  = \begin{cases} O\left(\sqrt{\frac{k_2}{k_1}}\right) \quad & \text{if $(\boldm, \boldm^{\prime}) \in \mathcal{S}_{k_1, k_2}$} \\ 0 & \text{otherwise}\end{cases}
\end{equation*}
where $|\mathcal{S}_{k_1, k_2}| \asymp k_1$. Therefore, 
\begin{equation*}
    \int \E_{\bbP}\left[ \left| K_{V_{k_1}}(\bX, \bx)K_{V_{k_2}}(\bX, \bx) \right| \right] d\bx \lesssim  \sum_{(\boldm, \boldm^{\prime}) \in \mathcal{S}_{k_1, k_2}} \left(\sqrt{\frac{k_2}{k_1}}\right)^2 \lesssim k_2.
\end{equation*}

\textbf{Showing (\ref{eq: exp kernel single third})}: We follow a nearly identical approach used to show (\ref{eq: exp kernel cubed}). We have that
\begin{align*}
    & K_{V_{k_1}}(\bX, \bx)^2K_{V_{k_2}}(\bX, \bx) \\
    & \quad = \sum_{\substack{ \boldm, \boldr \in \mathcal{Z}_{k_1} \\ \boldm^{\prime} \in \mathcal{Z}_{k_2}}} \Phi_{k_1, \boldm}(\bX) \Phi_{k_2, \boldm^{\prime}}(\bX)\Phi_{k_1, \boldr}(\bX) \Phi_{k_1, \boldm}(\bx) \Phi_{k_2, \boldm^{\prime}}(\bx) \Phi_{k_1, \boldr}(\bx). 
\end{align*}
Then,
\begin{align*}
    & \E_{\bbP} \left[ |K_{V_{k_1}}(\bX, \bx)^2K_{V_{k_2}}(\bX, \bx)| \right] \\
    & \quad \lesssim \sum_{\substack{ \boldm, \boldr \in \mathcal{Z}_{k_1} \\ \boldm^{\prime} \in \mathcal{Z}_{k_2}}}  | \Phi_{k_1, \boldm}(\bx) \Phi_{k_2, \boldm^{\prime}}(\bx) \Phi_{k_1, \boldr}(\bx) | \int |\Phi_{k_1, \boldm}(\bx) \Phi_{k_2, \boldm^{\prime}}(\bx)\Phi_{k_1, \boldr}(\bx) | d\bx
\end{align*}
and so
\begin{align*}
    & \int \E_{\bbP} \left[ |K_{V_{k_1}}(\bX, \bx)^2K_{V_{k_2}}(\bX, \bx)| \right] d\bx \\
    & \quad \lesssim \sum_{\substack{ \boldm, \boldr \in \mathcal{Z}_{k_1} \\ \boldm^{\prime} \in \mathcal{Z}_{k_2}}} \left( \int |\Phi_{k_1, \boldm}(\bx) \Phi_{k_2, \boldm^{\prime}}(\bx)\Phi_{k_1, \boldr}(\bx) | d\bx \right)^2.
\end{align*}
Suppose that $k_1 \geq k_2$. Recall from the proof of (\ref{eq: exp kernel cubed}) in Lemma \ref{lem: exp kernel} that 
\begin{equation*}
    \int |\Phi_{k_1, \boldm}(\bx) \Phi_{k_2, \boldm^{\prime}}(\bx)\Phi_{k_1, \boldr}(\bx) | d\bx = \begin{cases} O(\sqrt{k_2}) \quad & \text{if $(\boldm, \boldm^{\prime}, \boldr) \in \tilde{\mathcal{S}}_{k_1, k_2}$} \\ 0 & \text{otherwise}\end{cases}
\end{equation*}
where $|\tilde{\mathcal{S}}_{k_1, k_2}| \asymp k_1$. Therefore, 
\begin{equation*}
    \int \E_{\bbP} \left[ |K_{V_{k_1}}(\bX, \bx)^2K_{V_{k_2}}(\bX, \bx)| \right] d\bx \lesssim \sum_{(\boldm, \boldm^{\prime}, \boldr) \in \tilde{\mathcal{S}}_{k_1, k_2}} \left(\sqrt{k_2}\right)^2  \lesssim k_1 k_2.
\end{equation*}
Next, suppose that $k_1 \leq k_2$. Recall from the proof of (\ref{eq: exp kernel cubed}) in Lemma \ref{lem: exp kernel} that 
\begin{equation*}
    \int |\Phi_{k_1, \boldm}(\bx) \Phi_{k_2, \boldm^{\prime}}(\bx)\Phi_{k_1, \boldr}(\bx) | d\bx = \begin{cases} O\left(\frac{k_1}{\sqrt{k_2}}\right) \quad & \text{if $(\boldm, \boldm^{\prime}, \boldr) \in \tilde{\mathcal{S}}^{\prime}_{k_1, k_2}$} \\ 0 & \text{otherwise}\end{cases}
\end{equation*}
where $|\tilde{\mathcal{S}}^{\prime}_{k_1, k_2}| \asymp k_2$. Therefore, 
\begin{equation*}
    \int \E_{\bbP} \left[ |K_{V_{k_1}}(\bX, \bx)^2K_{V_{k_2}}(\bX, \bx)| \right] d\bx  \lesssim  \sum_{(\boldm, \boldm^{\prime}, \boldr) \in \tilde{\mathcal{S}}^{\prime}_{k_1, k_2}} \left( \frac{k_1}{\sqrt{k_2}} \right)^2  \lesssim k_1^2 \lesssim k_1k_2.
\end{equation*}

\textbf{Showing (\ref{eq: exp kernel single fourth})}: 
We follow a nearly identical approach used to show (\ref{eq: exp kernel fourth}) in Lemma \ref{lem: exp kernel}. We have that
\begin{align*}
    & K_{V_{k_1}}(\bX,\bx)^2K_{V_{k_2}}(\bX,\bx)^2 \\
    & \quad = \sum_{\substack{\boldm, \boldr  \in \mathcal{Z}_{k_1} \\ \boldm^{\prime}, \boldr^{\prime} \in \mathcal{Z}_{k_2}}} \begin{array}{c} \Phi_{k_1, \boldm}(\bX) \Phi_{k_2, \boldm^{\prime}}(\bX)\Phi_{k_1, \boldr}(\bX) \Phi_{k_2, \boldr^{\prime}}(\bX) \\ \times \Phi_{k_1, \boldm}(\bx) \Phi_{k_2, \boldm^{\prime}}(\bx) \Phi_{k_1, \boldr}(\bx) \Phi_{k_2, \boldr^{\prime}}(\bx) \end{array}.
\end{align*}
Then,
\begin{align*}
    & \E_{\bbP}\left[ K_{V_{k_1}}(\bX,\bx)^2 K_{V_{k_2}}(\bX,\bx)^2 \right] \\
    & \quad \lesssim \sum_{\substack{\boldm, \boldr  \in \mathcal{Z}_{k_1} \\ \boldm^{\prime}, \boldr^{\prime} \in \mathcal{Z}_{k_2}}} \begin{array}{c} |\Phi_{k_1, \boldm}(\bx) \Phi_{k_2, \boldm^{\prime}}(\bx) \Phi_{k_1, \boldr}(\bx) \Phi_{k_2, \boldr^{\prime}}(\bx)| \\ \times \int |\Phi_{k_1, \boldm}(\bx) \Phi_{k_2, \boldm^{\prime}}(\bx)\Phi_{k_1, \boldr}(\bx) \Phi_{k_2, \boldr^{\prime}}(\bx)| d\bx \end{array} 
\end{align*}
and so
\begin{align*}
    & \int \E_{\bbP}\left[ K_{V_{k_1}}(\bX,\bx)^2 K_{V_{k_2}}(\bX,\bx)^2 \right] d\bx \\
    & \quad \lesssim \left( \int |\Phi_{k_1, \boldm}(\bx) \Phi_{k_2, \boldm^{\prime}}(\bx)\Phi_{k_1, \boldr}(\bx) \Phi_{k_2, \boldr^{\prime}}(\bx)|  d\bx \right)^2.
\end{align*}
Assume without loss of generality that $k_1 \geq k_2$. Recall from the proof of (\ref{eq: exp kernel fourth}) in Lemma \ref{lem: exp kernel} that
\begin{equation*}
    \int | \Phi_{k_1, \boldm}(\bx) \Phi_{k_2, \boldm^{\prime}}(\bx)\Phi_{k_1, \boldr}(\bx) \Phi_{k_2, \boldr^{\prime}}(\bx)| d\bx = \begin{cases} O(k_2) \quad & \text{if $(\boldm, \boldm^{\prime}, \boldr, \boldr^{\prime}) \in \bar{\mathcal{S}}_{k_1, k_2}$} \\ 0 & \text{otherwise}\end{cases}
\end{equation*}
where $|\bar{\mathcal{S}}_{k_1, k_2}| \asymp k_1$. Therefore, 
\begin{equation*}
    \int \E_{\bbP}\left[ K_{V_{k_1}}(\bX,\bx)^2 K_{V_{k_2}}(\bX,\bx)^2 \right] d\bx \lesssim \sum_{(\boldm, \boldm^{\prime}, \boldr, \boldr^{\prime}) \in \bar{\mathcal{S}}_{k_1, k_2}} \left( k_2 \right)^2 \lesssim k_1k_2^2.
\end{equation*}

\textbf{Showing (\ref{eq: exp kernel single pt3})}: Recall from the proof of (\ref{eq: exp kernel single second}) that, 
\begin{align*}
    \E_{\bbP} \left[ K_{V_{k_1}}(\bX, \bx)^2\right] & \lesssim \sum_{(\boldm, \boldm^{\prime}) \in \mathcal{S}_{k_1, k_1}} | \Phi_{k_1, \boldm}(\bx) \Phi_{k_1, \boldm^{\prime}}(\bx) | \\
    \E_{\bbP} \left[ K_{V_{k_2}}(\bX, \bx)^2\right] & \lesssim \sum_{(\boldr, \boldr^{\prime}) \in \mathcal{S}_{k_2, k_2}} | \Phi_{k_2, \boldr}(\bx) \Phi_{k_2, \boldr^{\prime}}(\bx) |
\end{align*}
where $|\mathcal{S}_{k_1, k_1}| \asymp k_1$ and $|\mathcal{S}_{k_2, k_2}| \asymp k_2$. Then, 
\begin{align*}
    & \int \E_{\bbP} \left[ K_{V_{k_1}}(\bX, \bx)^2\right] E_{\bbP}\left[ K_{V_{k_2}}(\bX, \bx)^2 \right] d\bx \\
    &  \quad \lesssim \sum_{\substack{(\boldm, \boldm^{\prime}) \in \mathcal{S}_{k_1, k_1} \\ (\boldr, \boldr^{\prime}) \in \mathcal{S}_{k_2, k_2}}} \int  \left|  \Phi_{k_1, \boldm}(\bx) \Phi_{k_1, \boldm^{\prime}}(\bx)\Phi_{k_2, \boldr}(\bx) \Phi_{k_2, \boldr^{\prime}}(\bx)   \right| d\bx. 
\end{align*}
Suppose without loss of generality that $k_1 \geq k_2$. Similar to the strategy used throughout the proof of Lemma \ref{lem: exp pxpy}, we observe the following: The integral in the above expression is non-zero for $O(1)$ distinct $\boldr$ for each $\boldm$ and for $O(1)$ distinct $\boldr^{\prime}$ for each $\boldm^{\prime}$. Since $|\mathcal{S}_{k_1, k_1}| \asymp k_1$, there are $O(k_1)$ nonzero terms in the summation. Moreover, the integrand is uniformly bounded above by $O(k_1k_2)$ and has support contained in an interval of length $O(\frac{1}{k_1})$. Therefore, (\ref{eq: exp kernel single pt3}) holds.

\textbf{Showing (\ref{eq: exp kernel single second squared})}: 
Assume without loss of generality that $k_1 \geq k_2$. Recall from the proof of (\ref{eq: exp kernel single second}) that, 
\begin{equation*}
    \E_{\bbP} \left[ |K_{V_{k_1}}(\bX, \bx) K_{V_{k_2}}(\bX, \bx)|\right] \lesssim \sqrt{\frac{k_2}{k_1}}\sum_{(\boldm, \boldm^{\prime}) \in \mathcal{S}_{k_1, k_2}} | \Phi_{k_1, \boldm}(\bx) \Phi_{k_2, \boldm^{\prime}}(\bx) |
\end{equation*}
where $|\mathcal{S}_{k_1, k_2}| \asymp k_1$. Then,
\begin{align*}
    & \int \E_{\bbP} \left[ |K_{V_{k_1}}(\bX, \bx) K_{V_{k_2}}(\bX, \bx)| \right]^2 d\bx \\ 
    & \quad \lesssim \frac{k_2}{k_1} \sum_{\substack{(\boldm, \boldm^{\prime}) \in \mathcal{S}_{k_1, k_2} \\ (\boldr, \boldr^{\prime}) \in \mathcal{S}_{k_1, k_2}}} \int | \Phi_{k_1, \boldm}(\bx) \Phi_{k_2, \boldm^{\prime}}(\bx) \Phi_{k_1, \boldr}(\bx) \Phi_{k_2, \boldr^{\prime}}(\bx) | d\bx.
\end{align*}
As argued in the proof of (\ref{eq: exp kernel single pt3}), there are $O(k_1)$ nonzero terms in the summation and the nonzero terms are bounded above by $O(k_2)$. Therefore, (\ref{eq: exp kernel single second squared}) holds.

\item[(ii)]

\textbf{Showing (\ref{eq: exp kernel single lb pt1})}: We will first focus on obtaining a suitable expression for \newline $\E_{\bbP} \left[ K_{V_{k}}(\bX, \bx)^2\right]$. Let $\bx \in [0,1]^d$. Recall that
\begin{equation*}
    K_{V_k}(\bX, \bx)^2 = \sum_{\boldm, \boldm^{\prime} \in \mathcal{Z}_k} \Phi_{k, \boldm}(\bx) \Phi_{k, \boldm^{\prime}}(\bx) \Phi_{k, \boldm}(\bX) \Phi_{k, \boldm^{\prime}}(\bX)
\end{equation*}
where $|\mathcal{Z}_k| \asymp k$.
Therefore,
\begin{align*}
    \E_{\bbP} \left[ K_{V_{k}}(\bX, \bx)^2\right] & = \sum_{\boldm, \boldm^{\prime} \in \mathcal{Z}_k} \Phi_{k, \boldm}(\bx) \Phi_{k, \boldm^{\prime}}(\bx) \int \Phi_{k, \boldm}(\bx) \Phi_{k, \boldm^{\prime}}(\bx) d\bx \\
    & = \sum_{\boldm \in \mathcal{Z}_k} \Phi_{k, \boldm}(\bx)^2
\end{align*}
where the second line follows from orthonormality of the translates $\Phi_{k, \boldm}$. For any $\bx$, there are $O(1)$ terms in the summation that are non-zero. Further, the non-zero terms in the summation are clearly positive and bounded below by $ck$ ($c \in \mathbb{R}^+$) whenever $\bx$ lies in the support of $\Phi_{k, \boldm}$. This implies that $\E_{\bbP} \left[ K_{V_{k}}(\bX, \bx)^2\right]$ is uniformly bounded below by $c^{\prime}k$ ($c^{\prime} \in \mathbb{R}$), which in turn establishes (\ref{eq: exp kernel single lb pt1}).

\textbf{Showing (\ref{eq: exp kernel single lb pt2})}: Let $\bx \in [0,1]^d$. Recall that we can write
\begin{equation*}
    K_{V_{k_1}}(\bX, \bx)K_{V_{k_2}}(\bX, \bx) = \sum_{\boldm, \boldm^{\prime} \in \mathcal{Z}_{k_1} \times \mathcal{Z}_{k_2}} \Phi_{k_1, \boldm}(\bx) \Phi_{k_2, \boldm^{\prime}}(\bx) \Phi_{k_1, \boldm}(\bX) \Phi_{k_2, \boldm^{\prime}}(\bX)
\end{equation*}
where $|\mathcal{Z}_{k_1}| \asymp k_1$ and $|\mathcal{Z}_{k_2}| \asymp k_2$.
Then,
\begin{align}
    & \int \E_{\bbP}[K_{V_{k_1}}(\bX, \bx)K_{V_{k_2}}(\bX, \bx)] d \bx \nonumber \\
    & \quad = \sum_{\boldm, \boldm^{\prime} \in \mathcal{Z}_{k_1} \times \mathcal{Z}_{k_2}} \E_{\bbP} \left[ \Phi_{k_1, \boldm}(\bX) \Phi_{k_2, \boldm^{\prime}}(\bX)\right] \int \Phi_{k_1, \boldm} (\bx) \Phi_{k_2, \boldm^{\prime}}(\bx) d\bx \nonumber \\
    & \quad = \sum_{\boldm, \boldm^{\prime} \in \mathcal{Z}_{k_1} \times \mathcal{Z}_{k_2}} \left( \int \Phi_{k_1, \boldm} (\bx) \Phi_{k_2, \boldm^{\prime}}(\bx) d\bx \right)^2. \label{eq: exp kernel single temp}
\end{align}
Suppose without loss of generality that $k_1 \geq k_2$. Our strategy is to express the integrand into a quantity that does not depend on $k_2$.  To do so, recall that the translates $\Phi_{k, \boldm}$ form an orthonormal basis of the linear space $V_k$. Moreover, recall that the linear spaces are nested, i.e., $V_k \subset V_{k^\prime}$ for $k \leq k^\prime$. Then, since $V_{k_2} \subset V_{k_1}$, we can write 
\begin{equation} \label{eq: change basis}
    \Phi_{k_2, \boldm^{\prime}}(\bx) = \sum_{\tilde{\boldm} \in \mathbb{Z}^d} \alpha_{\boldm^{\prime}, \tilde{\boldm}, k_1, k_2} \Phi_{k_1, \tilde{\boldm}}(\bx).
\end{equation}
We next show that $\sum_{\tilde{\boldm}  \in \mathcal{Z}_{k_1}} \alpha_{\boldm^{\prime}, \tilde{\boldm}, k_1, k_2}^2 = 1$. By orthonormality of the translates $\Phi_{k \boldm}$ and using (\ref{eq: change basis}), 
\begin{align}
    1 & = \int \Phi_{k_2, \boldm^{\prime}}(\bx)^2 d\bx \nonumber\\
    & = \int \left(  \sum_{\tilde{\boldm}  \in \mathbb{Z}^d} \alpha_{\boldm^{\prime}, \tilde{\boldm}, k_1, k_2} \Phi_{k_1, \tilde{\boldm}}(\bx) \right)^2 d \bx \nonumber\\
    & = \sum_{\tilde{\boldm}^{(1)} \in \mathbb{Z}^d} \sum_{\tilde{\boldm}^{(2)} \in \mathbb{Z}^d} \alpha_{\boldm^{\prime}, \tilde{\boldm}^{(1)}, k_1, k_2} \alpha_{\boldm^{\prime}, \tilde{\boldm}^{(2)}, k_1, k_2} \int   \Phi_{k_1, \tilde{\boldm}^{(1)}}(\bx)\Phi_{k_1, \tilde{\boldm}^{(2)}}(\bx)  d \bx \nonumber \\
    & = \sum_{\tilde{\boldm}^{(1)} \in \mathcal{Z}_{k_1}}   \alpha_{\boldm^{\prime}, \tilde{\boldm}^{(1)}, k_1, k_2}^2 \label{eq: coef bound}.
\end{align}

Now, plugging in (\ref{eq: change basis}) in (\ref{eq: exp kernel single temp}), we have that
\begin{align*}
    & \int \E_{\bbP}[K_{V_{k_1}}(\bX, \bx)K_{V_{k_2}}(\bX, \bx)] d \bx  \\
    & \quad \geq \sum_{\boldm^{\prime} \in \mathcal{Z}_{k_2}} \sum_{\boldm \in \mathcal{Z}_{k_1}} \left(  \sum_{\tilde{\boldm}  \in \mathbb{Z}^d} \alpha_{\boldm^{\prime}, \tilde{\boldm}, k_1, k_2} \int \Phi_{k_1, \boldm}(\bx)\Phi_{k_1, \tilde{\boldm}}(\bx) d\bx \right)^2.
\end{align*}
By orthonormality of the translates $\Phi_{k, \boldm}$ with respect to the Lebesgue measure, we then have that
\begin{align*}
    \int \E_{\bbP}[K_{V_{k_1}}(\bX, \bx)K_{V_{k_2}}(\bX, \bx)] d \bx & \geq \sum_{\boldm^{\prime} \in \mathcal{Z}_{k_2}} \sum_{\boldm \in \mathcal{Z}_{k_1}} \alpha_{\boldm^{\prime}, \boldm, k_1, k_2}^2 \\
    & = \sum_{\boldm^{\prime} \in \mathcal{Z}_{k_2}} 1 \quad (\text{by (\ref{eq: coef bound})})\\
    & \gtrsim k_2.
\end{align*}

\end{itemize}

\end{proof}

\begin{lemma} \label{lem: exp kernel single same}
The following statements hold
\begin{align}
    \sup_{\bx \in [0, 1]^d} K_{V_k}(\bx, \bx) & \lesssim k  \label{eq: exp kernel single same ub one} \\
    \int K_{V_k}(\bx, \bx)  d\bx & \gtrsim k. \label{eq: exp kernel single same lb one}
\end{align}

\begin{proof}
Let $\bx \in [0, 1]^d$ be arbitrary. Recall from Appendix \ref{sec: wavelets} that 
\begin{equation*}
    K_{V_k}(\bx, \bx) = \sum_{\boldm \in \mathcal{Z}_k} \Phi_{k,\boldm}^2(\bx)
\end{equation*}
where $|\mathcal{Z}_k| \asymp k$. 

To show  (\ref{eq: exp kernel single same ub one}), we note that there are $O(1)$ distinct $\boldm$ such that $\Phi_{k,\boldm}(\bx)$ is non-zero, and $|\Phi_{k,\boldm}|$ is uniformly bounded above by $O(\sqrt{k})$.  

To show (\ref{eq: exp kernel single same lb one}), observe that
\begin{align*}
    \int K_{V_k}(\bx, \bx)  d\bx = \sum_{\boldm \in \mathcal{Z}_k} \int \Phi_{k,\boldm}^2(\bx) d\bx = \sum_{\boldm \in \mathcal{Z}_k} 1 \gtrsim k
\end{align*}
where the second equality follows from the orthonormality of the translates $\Phi_{k, \boldm}$.
\end{proof}
\end{lemma}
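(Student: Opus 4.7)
The plan is to prove both bounds by expanding the reproducing kernel in the orthonormal wavelet basis and exploiting the two key properties of the basis functions listed in Property P2 of Appendix \ref{sec: wavelets}: each $\Phi_{k,\boldm}$ is bounded by $O(\sqrt{k})$ in supremum norm and is supported in a ball of diameter $O(1/k)$, and $|\mathcal{Z}_k| \asymp k$. The starting point for both bounds is the identity
\begin{equation*}
    K_{V_k}(\bx, \bx) = \sum_{\boldm \in \mathcal{Z}_k} \Phi_{k,\boldm}(\bx)^2,
\end{equation*}
obtained from the definition of the projection kernel given in Appendix \ref{sec: wavelets}.

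For the upper bound \eqref{eq: exp kernel single same ub one}, I would fix $\bx \in [0,1]^d$ and argue that because each $\Phi_{k,\boldm}$ has support contained in an interval of diameter $O(1/k)$ and the translates are roughly on a grid of spacing $1/k^{1/d}$, only $O(1)$ indices $\boldm \in \mathcal{Z}_k$ contribute a non-zero summand at $\bx$. Each such summand is bounded by $\|\Phi_{k,\boldm}\|_\infty^2 \lesssim k$, and the bound holds uniformly in $\bx$. This is the same counting argument used repeatedly in Lemmas \ref{lem: exp kernel} and \ref{lem: exp kernel single}, so this step is essentially immediate.

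For the lower bound \eqref{eq: exp kernel single same lb one}, the main observation is that integrating term-by-term and invoking the orthonormality of the translates $\{\Phi_{k,\boldm}\}_{\boldm \in \mathcal{Z}_k}$ collapses the sum cleanly:
\begin{equation*}
    \int K_{V_k}(\bx, \bx)\, d\bx = \sum_{\boldm \in \mathcal{Z}_k} \int \Phi_{k,\boldm}(\bx)^2\, d\bx = \sum_{\boldm \in \mathcal{Z}_k} 1 = |\mathcal{Z}_k| \asymp k.
\end{equation*}
In fact this gives a matching two-sided bound, although only the lower bound is stated.

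Neither step presents a real obstacle. The only mild subtlety is that the wavelet basis described in Appendix \ref{sec: wavelets} is a genuine orthonormal basis of $L^2([0,1]^d)$ (obtained via edge corrections and Gram–Schmidt), so the identity $\int \Phi_{k,\boldm}^2 = 1$ is available without qualification; the footnote in the appendix explicitly permits us to use only size-and-support bounds together with orthonormality, which is exactly what both parts of this proof rely on.
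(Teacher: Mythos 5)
Your proof is correct and follows essentially the same route as the paper: the identity $K_{V_k}(\bx,\bx) = \sum_{\boldm \in \mathcal{Z}_k}\Phi_{k,\boldm}(\bx)^2$, the $O(1)$-overlap counting plus $\|\Phi_{k,\boldm}\|_\infty \lesssim \sqrt{k}$ for the upper bound, and orthonormality of the translates to collapse the integral for the lower bound. Your remark that orthonormality actually yields the matching upper bound $\int K_{V_k}(\bx,\bx)\,d\bx \lesssim k$ as well is a correct and natural extra observation, though the paper does not record it.
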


\begin{lemma} \label{lem: exp kernel triple}
For any $\bbP \in \cP_{(\alpha, \beta)} \cup \cP_{(\alpha, \beta, \gamma)}$,
\begin{align}
    \E_{\bbP}\left[ |K_{V_{k_1}}(\bX_{i_1},\bX_{i_2})K_{V_{k_1}}(\bX_{i_2},\bX_{i_3})K_{V_{k_2}}(\bX_{i_1},\bX_{i_3}))| \right] & \lesssim k_1 \wedge k_2 \label{eq: exp kernel triple zero}  \\
    \E_{\bbP}\left[| K_{V_{k_1}}(\bX_{i_1},\bX_{i_2})^2K_{V_{k_2}}(\bX_{i_1},\bX_{i_3})K_{V_{k_2}}(\bX_{i_2},\bX_{i_3}) |\right] & \lesssim  k_1 (k_1 \wedge k_2) \lesssim k_1k_2 \label{eq: exp kernel triple second} \\
    \E_{\bbP}\left[ \left| \begin{array}{c} K_{V_{k_1}}(\bX_{i_1},\bX_{i_2})K_{V_{k_1}}(\bX_{i_2},\bX_{i_3}) \\ \times K_{V_{k_2}}(\bX_{i_2},\bX_{i_3})K_{V_{k_2}}(\bX_{i_1},\bX_{i_3}) \end{array} \right| \right] & \lesssim (k_1 \wedge k_2)^2 \label{eq: exp kernel triple first}  
\end{align}
where $i_1,i_2,i_3$ are distinct.
\end{lemma}

\begin{proof}
Since we adopt similar arguments used in the proofs of Lemmas \ref{lem: exp kernel}-\ref{lem: exp kernel single same}, we write this proof more concisely.

\textbf{Showing (\ref{eq: exp kernel triple zero})}: We have that
\begin{align*}
    & K_{V_{k_1}}(\bX_{i_1},\bX_{i_2})K_{V_{k_1}}(\bX_{i_2},\bX_{i_3})K_{V_{k_2}}(\bX_{i_1},\bX_{i_3}) \\
    & \quad = \sum_{\substack{\boldm, \boldr  \in \mathcal{Z}_{k_1} \\ \boldm^{\prime} \in \mathcal{Z}_{k_2}}} \Phi_{k_1, \boldm}(\bX_{i_1}) \Phi_{k_1, \boldm}(\bX_{i_2}) \Phi_{k_1, \boldr}(\bX_{i_2}) \Phi_{k_1, \boldr}(\bX_{i_3})  \Phi_{k_2, \boldm^{\prime}}(\bX_{i_1})    \Phi_{k_2, \boldm^{\prime}}(\bX_{i_3})
\end{align*}
where $|\mathcal{Z}_{k_1}| \asymp k_1$ and $|\mathcal{Z}_{k_2}| \asymp k_2$. By boundedness of $f$,
\begin{align*}
    & \E_{\bbP}\left[ |K_{V_{k_1}}(\bX_{i_1},\bX_{i_2})K_{V_{k_1}}(\bX_{i_2},\bX_{i_3})K_{V_{k_2}}(\bX_{i_1},\bX_{i_3})| \right] \\
    & \lesssim \iiint |K_{V_{k_1}}(\bx,\by)K_{V_{k_1}}(\by,\bz)K_{V_{k_2}}(\bx,\bz)| d\bx d\by d\bz \\
    & = \sum_{\substack{\boldm, \boldr  \in \mathcal{Z}_{k_1} \\ \boldm^{\prime} \in \mathcal{Z}_{k_2}}}  \begin{array}{c} \int | \Phi_{k_1, \boldm}(\bx) \Phi_{k_2, \boldm^{\prime}}(\bx) | d\bx \int | \Phi_{k_1, \boldm}(\by) \Phi_{k_1, \boldr}(\by)|d\by \\ \times \int |\Phi_{k_1, \boldr}(\bz) \Phi_{k_2, \boldm^{\prime}}(\bz) |d\bz \end{array}. 
\end{align*}
Suppose that $k_1 \geq k_2$. For each $\boldm \in \mathcal{Z}_{k_1}$, there are $O(1)$ distinct $\boldr,\boldm^{\prime}$ such that all three integrals are non-zero. Moreover, the three integrands are uniformly bounded above by $O(\sqrt{k_1k_2})$, $O(k_1)$, and $O(\sqrt{k_1k_2})$, respectively, and each has support contained in an interval of length $O(\frac{1}{k_1})$. Therefore, (\ref{eq: exp kernel triple zero}) holds.

Similarly, suppose that $k_1 \leq k_2$. For each $\boldm^{\prime} \in \mathcal{Z}_{k_2}$, there are $O(1)$ distinct $\boldm,\boldr$ such that all three integrals are non-zero. Moreover, the three integrands are uniformly bounded above by $O(\sqrt{k_1k_2})$, $O(k_1)$, and $O(\sqrt{k_1k_2})$, respectively, and have support contained in an interval of length $O(\frac{1}{k_2})$, $O(\frac{1}{k_1})$, and $O(\frac{1}{k_2})$, respectively. Therefore, (\ref{eq: exp kernel triple zero}) holds.

\textbf{Showing (\ref{eq: exp kernel triple second})}: We have that
\begin{align*}
    & K_{V_{k_1}}(\bX_{i_1},\bX_{i_2})^2K_{V_{k_2}}(\bX_{i_1},\bX_{i_3})K_{V_{k_2}}(\bX_{i_2},\bX_{i_3}) \\
    & \quad = \sum_{\substack{\boldm, \boldr  \in \mathcal{Z}_{k_1} \\ \boldm^{\prime}, \boldr^{\prime} \in \mathcal{Z}_{k_2}}} \begin{array}{c} \Phi_{k_1, \boldm}(\bX_{i_1}) \Phi_{k_1, \boldm}(\bX_{i_2}) \Phi_{k_1, \boldr}(\bX_{i_1}) \Phi_{k_1, \boldr}(\bX_{i_2}) \\ \times \Phi_{k_2, \boldm^{\prime}}(\bX_{i_1}) \Phi_{k_2, \boldm^{\prime}}(\bX_{i_3}) \Phi_{k_2, \boldr^{\prime}}(\bX_{i_2})    \Phi_{k_2, \boldr^{\prime}}(\bX_{i_3}). \end{array}
\end{align*}
where $|\mathcal{Z}_{k_1}| \asymp k_1$ and $|\mathcal{Z}_{k_2}| \asymp k_2$. By boundedness of $f$,
\begin{align*}
    & \E_{\bbP}\left[ |K_{V_{k_1}}(\bX_{i_1},\bX_{i_2})^2K_{V_{k_2}}(\bX_{i_1},\bX_{i_3})K_{V_{k_2}}(\bX_{i_2},\bX_{i_3})| \right] \\
    & \lesssim \iiint |K_{V_{k_1}}(\bx,\by)^2K_{V_{k_2}}(\bx,\bz)K_{V_{k_2}}(\by,\bz)| d\bx d\by d\bz \\
    & = \sum_{\substack{\boldm, \boldr  \in \mathcal{Z}_{k_1} \\ \boldm^{\prime}, \boldr^{\prime} \in \mathcal{Z}_{k_2}}}  \begin{array}{c} \int | \Phi_{k_1, \boldm}(\bx) \Phi_{k_1, \boldr}(\bx) \Phi_{k_2, \boldm^{\prime}}(\bx) | d\bx \int | \Phi_{k_1, \boldm}(\by) \Phi_{k_1, \boldr}(\by) \Phi_{k_2, \boldr^{\prime}}(\by)|d\by \\ \times  \int |\Phi_{k_2, \boldm^{\prime}}(\bz) \Phi_{k_2, \boldr^{\prime}}(\bz)   |d\bz \end{array}. 
\end{align*}
Suppose that $k_1 \geq k_2$. For each $\boldm \in \mathcal{Z}_{k_1}$, there are $O(1)$ distinct $\boldr,\boldm^{\prime}, \boldr^{\prime}$ such that all three integrals are non-zero. Moreover, the three integrands are uniformly bounded above by $O(k_1\sqrt{k_2})$, $O(k_1\sqrt{k_2})$, and $O(k_2)$, respectively, and have support contained in an interval of length $O(\frac{1}{k_1})$, $O(\frac{1}{k_1})$, and $O(\frac{1}{k_2})$, respectively. Therefore, (\ref{eq: exp kernel triple second}) holds.

Similarly, suppose that $k_1 \leq k_2$. For each $\boldm^{\prime} \in \mathcal{Z}_{k_2}$, there are $O(1)$ distinct $\boldm, \boldr,\boldr^{\prime}$ such that all three integrals are non-zero. Moreover, the three integrands are uniformly bounded above by $O(k_1\sqrt{k_2})$, $O(k_1\sqrt{k_2})$, and $O(k_2)$, respectively, and each has support contained in an interval of length $O(\frac{1}{k_2})$. Therefore, (\ref{eq: exp kernel triple second}) holds.

\textbf{Showing (\ref{eq: exp kernel triple first})}: We have that
\begin{align*}
    & K_{V_{k_1}}(\bX_{i_1},\bX_{i_2})K_{V_{k_1}}(\bX_{i_2},\bX_{i_3})K_{V_{k_2}}(\bX_{i_2},\bX_{i_3})K_{V_{k_2}}(\bX_{i_1},\bX_{i_3}) \\
    & \quad = \sum_{\substack{\boldm, \boldr  \in \mathcal{Z}_{k_1} \\ \boldm^{\prime}, \boldr^{\prime} \in \mathcal{Z}_{k_2}}} \begin{array}{c} \Phi_{k_1, \boldm}(\bX_{i_1}) \Phi_{k_1, \boldm}(\bX_{i_2}) \Phi_{k_1, \boldr}(\bX_{i_2}) \Phi_{k_1, \boldr}(\bX_{i_3}) \\ \times \Phi_{k_2, \boldm^{\prime}}(\bX_{i_2}) \Phi_{k_2, \boldm^{\prime}}(\bX_{i_3}) \Phi_{k_2, \boldr^{\prime}}(\bX_{i_1})    \Phi_{k_2, \boldr^{\prime}}(\bX_{i_3}) \end{array}
\end{align*}
where $|\mathcal{Z}_{k_1}| \asymp k_1$ and $|\mathcal{Z}_{k_2}| \asymp k_2$. By boundedness of $f$,
\begin{align*}
    & \E_{\bbP}\left[ |K_{V_{k_1}}(\bX_{i_1},\bX_{i_2})K_{V_{k_1}}(\bX_{i_2},\bX_{i_3})K_{V_{k_2}}(\bX_{i_2},\bX_{i_3})K_{V_{k_2}}(\bX_{i_1},\bX_{i_3})| \right] \\
    & \lesssim \iiint |K_{V_{k_1}}(\bx,\by)K_{V_{k_1}}(\by,\bz)K_{V_{k_2}}(\by,\bz)K_{V_{k_2}}(\bx,\bz)| d\bx d\by d\bz \\
    & = \sum_{\substack{\boldm, \boldr  \in \mathcal{Z}_{k_1} \\ \boldm^{\prime}, \boldr^{\prime} \in \mathcal{Z}_{k_2}}}  \begin{array}{c} \int | \Phi_{k_1, \boldm}(\bx) \Phi_{k_2, \boldr^{\prime}}(\bx) | d\bx \int | \Phi_{k_1, \boldm}(\by) \Phi_{k_1, \boldr}(\by) \Phi_{k_2, \boldm^{\prime}}(\by)|d\by \\ \times \int |\Phi_{k_1, \boldr}(\bz) \Phi_{k_2, \boldm^{\prime}}(\bz)    \Phi_{k_2, \boldr^{\prime}}(\bz) |d\bz \end{array}. 
\end{align*}
Suppose without loss of generality that $k_1 \geq k_2$. For each $\boldm \in \mathcal{Z}_{k_1}$, there are $O(1)$ distinct $\boldr,\boldm^{\prime}, \boldr^{\prime}$ such that all three integrals are non-zero. Moreover, the three integrands are uniformly bounded above by $O(\sqrt{k_1k_2})$, $O(k_1\sqrt{k_2})$, and $O(k_2\sqrt{k_1})$, respectively, and each has support contained in an interval of length $O(\frac{1}{k_1})$. Therefore, (\ref{eq: exp kernel triple first}) holds.

\end{proof}

\begin{lemma} \label{lem: nuisance function bounds}
For any $\bbP \in \cP_{(\alpha, \beta, \gamma)}$, the following statements hold
\begin{itemize}
    \item[(i)]
    \begin{align}
    \E_{\bbP, 1} [\E_{\bX} ( \hat{p}^{(1)}_{k_1}(\bX)^2 )] & \lesssim 1 +  \frac{k_1}{n}  \label{eq: nuisance function bound pt1}\\
    \E_{\bbP, 1, 2} [\E_{\bX} ( \hat{p}^{(1, 2)}_{k_1}(\bX)^2 )] & \lesssim 1 +  \frac{k_1}{n}  \label{eq: nuisance function bound pt1 unknown f}
    \end{align}
    and similarly
    \begin{align}
    \E_{\bbP, 2} [\E_{\bX} ( \hat{b}^{(2)}_{k_2}(\bX)^2 )] & \lesssim 1 +  \frac{k_2}{n}  \label{eq: nuisance function bound pt2}  \\
    \E_{\bbP, 3, 4} [\E_{\bX} ( \hat{b}^{(3, 4)}_{k_2}(\bX)^2 )] & \lesssim 1 +  \frac{k_2}{n}.  \label{eq: nuisance function bound pt2 unknown f} 
    \end{align}
    
    \item[(ii)]
\begin{align}
    \E_{\bbP, 1, 2} \left[\E_{\bbP, 3} \left[ \left( \hat{p}^{(1)}_{k_1}(\bX) \hat{b}^{(2)}_{k_2}(\bX) \right)^2  \right] \right] & \lesssim 1 + \frac{k_1}{n} + \frac{k_2}{n} +  \frac{k_1k_2}{n^2} \label{eq: nuisance function bound pt3} \\
    \E_{\bbP, 1, 2, 3, 4} \left[\E_{\bbP, 5} \left[ \left( \hat{p}^{(1, 2)}_{k_1}(\bX) \hat{b}^{(3, 4)}_{k_2}(\bX) \right)^2  \right] \right] & \lesssim 1 + \frac{k_1}{n} + \frac{k_2}{n} +  \frac{k_1k_2}{n^2}. \label{eq: nuisance function bound pt3 unknown f}
\end{align}

\item[(iii)] When $k_1, k_2 \gg n$, $\bX \sim \text{Uniform}([0,1]^d)$, and $|A|, |Y| > 0$ with probability 1,
\begin{align}
    \E_{\bbP, 1, 2} \left[\E_{\bbP, 3} \left[ \left( \hat{p}^{(1)}_{k_1}(\bX) \hat{b}^{(2)}_{k_2}(\bX) \right)^2  \right] \right] & \gtrsim \frac{k_1k_2}{n^2} \label{eq: nuisance function bound pt4} \\
    \E_{\bbP, 1, 2, 3, 4} \left[\E_{\bbP, 5} \left[ \left( \hat{p}^{(1,2)}_{k_1}(\bX) \hat{b}^{(3,4)}_{k_2}(\bX) \right)^2  \right] \right] & \gtrsim \frac{k_1k_2}{n^2} \label{eq: nuisance function bound pt4 unknown f}.
\end{align}

\end{itemize}

\end{lemma}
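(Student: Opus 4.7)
The plan is to reduce every bound to a kernel integral controlled by Lemmas \ref{lem: exp pxpy}, \ref{lem: exp kernel single}, and \ref{lem: exp kernel single same}. For part (i) in the known-$f$ case, I would expand
\begin{equation*}
  \hat{p}^{(1)}_{k_1}(\bx)^2 = \frac{1}{n^2} \sum_{i, j \in \D_1} \frac{A_i A_j K_{V_{k_1}}(\bX_i,\bx) K_{V_{k_1}}(\bX_j,\bx)}{f(\bX_i) f(\bX_j)}
\end{equation*}
and split the double sum into $i \neq j$ and $i = j$ pieces exactly as in the proof of Lemma \ref{lem: exp pxpy}(i). The diagonal terms contribute $(1-n^{-1})\Pi(p | V_{k_1})(\bx)^2 \lesssim 1$ after integrating $\bx$ against $f$, and the off-diagonal terms contribute $\frac{1}{n} \E_\bbP\left[A^2 K_{V_{k_1}}(\bX,\bx)^2 / f(\bX)^2\right]$, whose integral in $\bx$ is $O(k_1 / n)$ by (\ref{eq: exp kernel single pt3}) applied with $k_2 = k_1$ (equivalently by the $\E[K^2]$ bound obtained in the same way as in Lemma \ref{lem: exp kernel single}). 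This delivers (\ref{eq: nuisance function bound pt1}), and (\ref{eq: nuisance function bound pt2}) follows by the same computation for $\hat{b}^{(2)}_{k_2}$. The unknown-$f$ analogues (\ref{eq: nuisance function bound pt1 unknown f})--(\ref{eq: nuisance function bound pt2 unknown f}) are identical since $\hat{f}^{(2)}$ is bounded above and below by hypothesis, so $1/\hat{f}^{(2)}$ can be replaced by a constant in all the kernel-integral bounds.

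For part (ii), independence of $\D_1, \D_2, \D_3$ gives
\begin{equation*}
  \E_{\bbP, 1, 2}\left[\E_{\bbP, 3}\left[(\hat{p}^{(1)}_{k_1}(\bX)\hat{b}^{(2)}_{k_2}(\bX))^2\right]\right] = \int \E_{\bbP, 1}[\hat{p}^{(1)}_{k_1}(\bx)^2] \, \E_{\bbP, 2}[\hat{b}^{(2)}_{k_2}(\bx)^2] \, f(\bx) \, d\bx.
\end{equation*}
From the expansion in part (i) I have the pointwise bound $\E_1[\hat{p}^{(1)}_{k_1}(\bx)^2] \lesssim 1 + \frac{1}{n} \E_\bbP[K_{V_{k_1}}(\bX,\bx)^2]$ and similarly for $b$. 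Multiplying, expanding into four integrals, and applying (\ref{eq: exp kernel single one}) with $\bx$-argument for the leading $1$ term and (\ref{eq: exp kernel single pt3}) for the cross term bounds the whole expression by $1 + k_1/n + k_2/n + k_1 k_2/n^2$, giving (\ref{eq: nuisance function bound pt3}). The same factorization holds for (\ref{eq: nuisance function bound pt3 unknown f}) because $\D_1, \D_2$ are disjoint from $\D_3, \D_4$ in the unknown-$f$ construction, and $\hat{f}^{(2)}, \hat{f}^{(4)}$ are again just bounded multipliers.

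For part (iii), the key observation is that under the stated hypotheses the diagonal ``variance'' term in $\E_1[\hat{p}^{(1)}_{k_1}(\bx)^2]$ dominates the projection term once $k_1 \gg n$. Concretely, since $|A| \geq c > 0$ a.s.\ and $f$ is bounded, the off-diagonal contribution gives
\begin{equation*}
  \E_{\bbP, 1}[\hat{p}^{(1)}_{k_1}(\bx)^2] \gtrsim \frac{1}{n} \E_\bbP[K_{V_{k_1}}(\bX,\bx)^2],
\end{equation*}
and analogously for $\hat{b}^{(2)}_{k_2}$. With $\bX$ uniform, $f$ is constant, so the factorization from part (ii) yields
\begin{equation*}
  \E_{\bbP, 1, 2}\left[\E_{\bbP, 3}[(\hat{p}^{(1)}_{k_1}(\bX)\hat{b}^{(2)}_{k_2}(\bX))^2]\right] \gtrsim \frac{1}{n^2} \int \E_\bbP[K_{V_{k_1}}(\bX,\bx)^2] \, \E_\bbP[K_{V_{k_2}}(\bX,\bx)^2] \, d\bx \gtrsim \frac{k_1 k_2}{n^2}
\end{equation*}
by the lower bound (\ref{eq: exp kernel single lb pt1}) in Lemma \ref{lem: exp kernel single}. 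This proves (\ref{eq: nuisance function bound pt4}); (\ref{eq: nuisance function bound pt4 unknown f}) follows identically using the two-sided boundedness of $\hat{f}^{(2)}, \hat{f}^{(4)}$. The main obstacle I anticipate is verifying carefully that the ``positive'' variance-type diagonal term cannot be cancelled by negative contributions when $k_1 \gg n$; but because both $\E_1[\hat{p}^{(1)}_{k_1}(\bx)^2]$ and $\E_\bbP[K_{V_{k_1}}(\bX,\bx)^2] = \sum_{\boldm} \Phi_{k_1 \boldm}(\bx)^2 \int \Phi_{k_1 \boldm}^2 f$ are manifestly non-negative sums of squares, both the pointwise lower bound and the integral lower bound go through cleanly, so this apparent obstacle is actually benign.
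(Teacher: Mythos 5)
Your proposal is correct, and for parts (ii) and (iii) it takes a genuinely cleaner route than the paper. For part (i) you follow essentially the same path as the paper (expand the double sum, use the pointwise bound $\E_1[\hat{p}^{(1)}_{k_1}(\bx)^2] \lesssim 1 + n^{-1}\E_{\bbP}[K_{V_{k_1}}(\bX,\bx)^2]$ from Lemma \ref{lem: exp pxpy}, then integrate), so there is nothing to compare there, apart from a small labeling slip: what you call the ``diagonal'' terms, contributing $(1-n^{-1})\Pi(p|V_{k_1})^2$, are really the $i\neq j$ pairs, and the $i=j$ pairs give the $n^{-1}\E[A^2K^2/f^2]$ term. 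The arithmetic is unaffected.

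For parts (ii) and (iii) the paper expands the quadruple sum $\frac{1}{n^4}\sum_{i_1,i_2 \in \D_1,\; j_1,j_2 \in \D_2} S_{i_1,i_2,j_1,j_2}$ and carries out a term-by-term case analysis on the index coincidences, applying Lemma \ref{lem: exp kernel single} to each case; for the lower bound it then has to argue that the positive contribution from the diagonal case $i_1=i_2,\ j_1=j_2$, which is of size $k_1k_2/n^2$, dominates the possibly negative off-diagonal contributions of size $O(k_1/n + k_2/n + 1)$, and it is here that the hypothesis $k_1,k_2\gg n$ is genuinely used. Your route avoids the case analysis entirely: you first factor $\E_{\bbP,1,2}[\hat{p}_{k_1}^{(1)}(\bx)^2\hat{b}_{k_2}^{(2)}(\bx)^2] = \E_{\bbP,1}[\hat{p}_{k_1}^{(1)}(\bx)^2]\,\E_{\bbP,2}[\hat{b}_{k_2}^{(2)}(\bx)^2]$ by independence of $\D_1$ and $\D_2$, then multiply the one-factor pointwise bounds (upper bound $\lesssim 1 + n^{-1}\E[K^2]$, lower bound $\gtrsim n^{-1}\E[K^2]$ using $|A|\geq c$ and nonnegativity of both summands) and integrate. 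This is more modular, and, notably, it establishes the lower bound in (iii) without invoking $k_1,k_2\gg n$, since keeping the product intact means there is no off-diagonal cancellation to worry about; the hypothesis is only relevant to when the bound is informative. You also correctly observe that the unknown-$f$ versions follow verbatim since $\hat{f}$ is bounded above and below. One small imprecision: citing (\ref{eq: exp kernel single one}) for the leading $1$ term in (ii) is unnecessary (that term is trivially $O(1)$), and the middle terms $n^{-1}\int \E[K_{V_{k_\ell}}^2]\,d\bx \lesssim k_\ell/n$ should be cited to (\ref{eq: exp kernel single second}) with $k_1=k_2$ rather than (\ref{eq: exp kernel single one}) or (\ref{eq: exp kernel single pt3}); but the substance is right.
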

\begin{proof}

\begin{itemize}
    \item[(i)] First, we show (\ref{eq: nuisance function bound pt1}). We have that
\begin{align*}
    \E_{\bbP, 1} [\E_{\bX} ( \hat{p}^{(1)}_{k_1}(\bX)^2 )] & = \int \E_{\bbP, 1} (\hat{p}^{(1)}_{k_1}(\bx)^2) f(\bx) d\bx \\
    & \lesssim \int \left( 1 + \frac{1}{n} \E_{\bbP} \left[ K_{V_{k_1}}(\bX, \bx)^2 \right] \right) d\bx \quad (\text{by Lemma \ref{lem: exp pxpy}}) \\
    & \lesssim 1 + \frac{k_1}{n} \quad (\text{by Lemma \ref{lem: exp kernel single}}).
\end{align*}
The bounds in (\ref{eq: nuisance function bound pt1 unknown f}), (\ref{eq: nuisance function bound pt2}), and (\ref{eq: nuisance function bound pt2 unknown f}) can be shown in the same manner.

\item[(ii)] 
Next, we show (\ref{eq: nuisance function bound pt3}). We have that
\begin{equation*}
    \E_{\bbP, 1, 2} \left[\E_{\bbP, 3} \left[ \left( \hat{p}^{(1)}_{k_1}(\bX) \hat{b}^{(2)}_{k_2}(\bX) \right)^2  \right] \right] =   \frac{1}{n^4} \sum_{\substack{i_1, i_2 \in \D_1 \\ j_1, j_2 \in \D_2}} S_{i_1, i_2, j_1, j_2}
\end{equation*}
where $S_{i_1, i_2, j_1, j_2}$ is given by
\begin{equation*}
     \E_{\bbP, 1, 2} \left[ \frac{A_{i_1}A_{i_2}Y_{j_1}Y_{j_2}}{f(\bX_{i_1})f(\bX_{i_2})f(\bX_{j_1})f(\bX_{j_2})} \int \begin{array}{c} K_{V_{k_1}}(\bX_{i_1}, \bx)K_{V_{k_1}}(\bX_{i_2}, \bx) \\ \times K_{V_{k_2}}(\bX_{j_1}, \bx)K_{V_{k_2}}(\bX_{j_2}, \bx)\end{array}f(\bx)d\bx\right].
\end{equation*}
The remainder of the proof will bound $|S_{i_1, i_2, j_1, j_2}|$ for each $i_i, i_2, j_1, j_2$. When $i_1 = i_2$ and $j_1 = j_2$,
\begin{align*}
    S_{i, i, j, j} & = \E_{\bbP, 1, 2} \left[ \frac{A_i^2Y_j^2}{f(\bX_{i})^2f(\bX_{j})^2} \int K_{V_{k_1}}(\bX_{i}, \bx)^2K_{V_{k_2}}(\bX_{j}, \bx)^2 f(\bx)d\bx\right] \\
    & \lesssim \E_{\bbP, 1, 2} \left[ \int K_{V_{k_1}}(\bX_{i}, \bx)^2K_{V_{k_2}}(\bX_{j}, \bx)^2d\bx\right] \\
    & =  \int \E_{\bbP, 1} [K_{V_{k_1}}(\bX_{i}, \bx)^2] \E_{\bbP, 2} [K_{V_{k_2}}(\bX_{j}, \bx)^2]d\bx \\
    & \lesssim k_1k_2 \quad (\text{by Lemma \ref{lem: exp kernel single}}).
\end{align*}

When $i_1 = i_2$ and $j_1 \neq j_2$,
\begin{align*}
    | S_{i, i, j_1, j_2} | & \lesssim \E_{\bbP, 1, 2} \left[  \int | K_{V_{k_1}}(\bX_{i}, \bx)^2K_{V_{k_2}}(\bX_{j_1}, \bx)K_{V_{k_2}}(\bX_{j_2}, \bx)|d\bx\right] \\
    & =   \int \E_{\bbP, 1} [K_{V_{k_1}}(\bX_{i}, \bx)^2] \E_{\bbP, 2} [|K_{V_{k_2}}(\bX_{j_1}, \bx)|] \E_{\bbP, 2}[|K_{V_{k_2}}(\bX_{j_2}, \bx)|] d\bx \\
    & \lesssim k_1 \quad (\text{by Lemma \ref{lem: exp kernel single}}).
\end{align*}
It follows in the same manner that $| S_{i_i, i_2, j, j} | \lesssim k_2$. 

When $i_1 \neq i_2$ and $j_1 \neq j_2$,
\begin{align*}
    | S_{i_1, i_2, j_1, j_2} |  & \lesssim \E_{\bbP, 1, 2} \left[  \int | K_{V_{k_1}}(\bX_{i_1}, \bx)K_{V_{k_1}}(\bX_{i_2}, \bx)K_{V_{k_2}}(\bX_{j_1}, \bx)K_{V_{k_2}}(\bX_{j_2}, \bx)|d\bx\right] \\
    & =   \int \begin{array}{c} \E_{\bbP, 1} [|K_{V_{k_1}}(\bX_{i_1}, \bx)|]\E_{\bbP, 1} [|K_{V_{k_1}}(\bX_{i_2}, \bx)|] \\ \times \E_{\bbP, 2} [|K_{V_{k_2}}(\bX_{j_1}, \bx)|] \E_{\bbP, 2}[|K_{V_{k_2}}(\bX_{j_2}, \bx)|] \end{array} d\bx \\
    & \lesssim  1 \quad (\text{by Lemma \ref{lem: exp kernel single}}).
\end{align*}
This establishes (\ref{eq: nuisance function bound pt3}). Since $\hat{f}$ is bounded, (\ref{eq: nuisance function bound pt3 unknown f}) follows in the same manner.

\item[(iii)]
Next, we show (\ref{eq: nuisance function bound pt4}). It follows from the work in establishing the upper bound that it suffices to show that $S_{i_1, i_2, j_1, j_2} \gg k_1k_2$ when $i_1 = i_2$ and $j_1 = j_2$. We have that
\begin{align*}
    S_{i, i, j, j} & = \E_{\bbP, 1, 2} \left[ A_i^2Y_j^2 \int K_{V_{k_1}}(\bX_{i}, \bx)^2K_{V_{k_2}}(\bX_{j}, \bx)^2 d\bx\right] \\
    & \gtrsim \E_{\bbP, 1, 2} \left[ \int K_{V_{k_1}}(\bX_{i}, \bx)^2K_{V_{k_2}}(\bX_{j}, \bx)^2d\bx\right] \\
    & =  \int \E_{\bbP, 1} [K_{V_{k_1}}(\bX_{i}, \bx)^2] \E_{\bbP, 2} [K_{V_{k_2}}(\bX_{j}, \bx)^2]d\bx \\
    & \gtrsim k_1k_2 \quad (\text{by Lemma \ref{lem: exp kernel single}}).
\end{align*}
Since $\hat{f}$ is bounded, (\ref{eq: nuisance function bound pt4 unknown f}) follows in the same manner
\end{itemize}

\end{proof}

\begin{lemma} \label{lem: proj dist}
\begin{align}
    \sup_{\bbP \in \cP_{(\alpha, \beta, \gamma)}} \left\| \E_{\bbP,1,2} (\hat{p}^{(1, 2)}_{k_1}(\cdot)) - p \right\|_{\infty} & \lesssim k_1^{-\alpha/d} + \left( \frac{n}{\log n}\right)^{-\frac{\gamma}{2\gamma + d}} \label{proj dist p}\\
    \sup_{\bbP \in \cP_{(\alpha, \beta, \gamma)}} \left\| \E_{\bbP,3,3} (\hat{b}^{(3, 4)}_{k_2}(\cdot)) - b \right\|_{\infty} & \lesssim k_2^{-\beta/d} + \left( \frac{n}{\log n}\right)^{-\frac{\gamma}{2\gamma + d}}.  \label{proj dist b} 
\end{align}
\end{lemma}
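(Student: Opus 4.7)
The plan is to write out $\E_{\bbP,1,2}[\hat{p}^{(1,2)}_{k_1}(\bx)]$ explicitly, split the resulting projection into a ``known-$f$'' piece plus a plug-in-error piece, and then bound the two pieces separately using Property P1 and the rate assumption on $\hat{f}$.

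First, conditioning on $\D_2$ and using the definition of $\hat{p}^{(1,2)}_{k_1}$ together with the tower property gives
\begin{equation*}
\E_{\bbP,1,2}[\hat{p}^{(1,2)}_{k_1}(\bx)]
= \E_{\bbP,2}\!\left[\,\int \frac{p(\by)\, f(\by)}{\hat{f}^{(2)}(\by)}\, K_{V_{k_1}}(\by,\bx)\, d\by\right]
= \E_{\bbP,2}\!\left[\Pi\!\left(\tfrac{pf}{\hat{f}^{(2)}}\,\Big|\, V_{k_1}\right)(\bx)\right],
\end{equation*}
which is exactly the quantity $p_{k_1}(p(\bx))$ used throughout the unknown-$f$ proofs. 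The natural decomposition is then
\begin{equation*}
\frac{pf}{\hat{f}^{(2)}} = p + \frac{p\,(f-\hat{f}^{(2)})}{\hat{f}^{(2)}},
\end{equation*}
so that
\begin{equation*}
\E_{\bbP,1,2}[\hat{p}^{(1,2)}_{k_1}(\bx)] - p(\bx)
= -\,\Pi(p\mid V_{k_1}^{\perp})(\bx) \;+\; \E_{\bbP,2}\!\left[\Pi\!\left(\tfrac{p(f-\hat{f}^{(2)})}{\hat{f}^{(2)}}\,\Big|\, V_{k_1}\right)(\bx)\right].
\end{equation*}

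For the first term, Property P1 in Appendix \ref{sec: wavelets} gives immediately $\|\Pi(p\mid V_{k_1}^{\perp})\|_\infty \lesssim k_1^{-\alpha/d}$ uniformly over $\bbP \in \cP_{(\alpha,\beta,\gamma)}$. For the second term, the key step is to control the projection in $L^\infty$. Since the Cohen-Daubechies-Vial scaling functions are compactly supported with $|\Phi_{k_1 \boldm}|\lesssim \sqrt{k_1}$ and supports of diameter $O(1/k_1)$ (Property P2), at most $O(1)$ basis functions are non-zero at any $\bx$, and a direct estimate of $\Pi(h\mid V_{k_1})(\bx) = \sum_{\boldm} \Phi_{k_1\boldm}(\bx)\int\Phi_{k_1\boldm}h$ yields the uniform operator bound $\|\Pi(h\mid V_{k_1})\|_\infty \lesssim \|h\|_\infty$ with a constant independent of $k_1$. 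Combining this with the a.s.\ lower bound $\hat{f}\geq c_1>0$ and boundedness of $p$, we obtain
\begin{equation*}
\left\|\Pi\!\left(\tfrac{p(f-\hat{f}^{(2)})}{\hat{f}^{(2)}}\,\Big|\, V_{k_1}\right)\right\|_\infty
\;\lesssim\; \left\|\tfrac{p(f-\hat{f}^{(2)})}{\hat{f}^{(2)}}\right\|_\infty
\;\lesssim\; \|f-\hat{f}^{(2)}\|_\infty.
\end{equation*}
Taking expectation over $\D_2$ and invoking property (ii) of $\hat{f}$, i.e.\ $\E_{\bbP}[\|f-\hat{f}^{(2)}\|_\infty]\lesssim (n/\log n)^{-\gamma/(2\gamma+d)}$, then the triangle inequality in $L^\infty$ assemble the two bounds into \eqref{proj dist p}. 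The statement \eqref{proj dist b} follows by the identical argument after exchanging the roles of $(A,p,k_1,\D_1,\D_2)$ with $(Y,b,k_2,\D_3,\D_4)$.

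The main (and really only) subtlety is the uniform-in-$k_1$ operator bound $\|\Pi(\cdot\mid V_{k_1})\|_{L^\infty\to L^\infty}\lesssim 1$; everything else is bookkeeping. This bound is standard for the multiresolution bases described in Appendix~\ref{sec: wavelets}, and can be verified directly from Property P2 by the pointwise-$O(1)$-support argument just described, so the proof should be short once this step is in hand.
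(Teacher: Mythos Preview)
Your proposal is correct and follows essentially the same approach as the paper: both compute $\E_{\bbP,1,2}[\hat{p}^{(1,2)}_{k_1}(\bx)]$ as $\Pi(p\mid V_{k_1})(\bx) + \E_{\bbP,2}[\Pi(p(f-\hat{f}^{(2)})/\hat{f}^{(2)}\mid V_{k_1})(\bx)]$, bound the first term via Property~P1, and bound the second via the uniform operator bound on $\Pi(\cdot\mid V_{k_1})$ together with boundedness of $p,\hat{f}$ and the rate assumption~(ii) on $\hat{f}$. The only cosmetic difference is that the paper obtains the decomposition by adding and subtracting $\tilde{p}^{(1)}_{k_1}$, whereas you decompose $pf/\hat{f}^{(2)}$ directly; you are also more explicit than the paper about justifying $\|\Pi(\cdot\mid V_{k_1})\|_{L^\infty\to L^\infty}\lesssim 1$ from Property~P2.
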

\begin{proof}
We will show (\ref{proj dist p}) and note that (\ref{proj dist b}) can be shown in the same manner. 

Let $\bbP \in \cP_{(\alpha, \beta, \gamma)}$ be arbitrary. Observe that
\begin{align*}
    \E_{\bbP,1} (\hat{p}^{(1, 2)}_{k_1}(\bx)) & = \E_{\bbP,1} (\tilde{p}^{(1)}_{k_1}(\bx)) + \E_{\bbP, 1}(\hat{p}^{(1, 2)}_{k_1}(\bx) - \tilde{p}^{(1)}_{k_1}(\bx)) \\
    & = \E_{\bbP,1} \left[ \frac{A K_{V_{k_1}}(\bX,\bx)}{f(\bX)} \right] + \E_{\bbP,1} \left[ \frac{A K_{V_{k_1}}(\bX,\bx)}{f(\bX)} \left( \frac{f(\bX) - \hat{f}^{(2)}(\bX)}{\hat{f}^{(2)}(\bX)} \right) \right] \\
    & = \Pi(p | V_{k_1})(\bx) + \Pi\left( \frac{ p(f - \hat{f}^{(2)})}{\hat{f}^{(2)}} | V_{k_1}\right)(\bx).
\end{align*}
Then,
\begin{equation} \label{eq: p_k proj}
    \E_{\bbP,1,2} (\hat{p}^{(1, 2)}_{k_1}(\bx)) = \Pi(p | V_{k_1})(\bx) + \E_{\bbP,2} \left[ \Pi\left( \frac{ p(f - \hat{f}^{(2)})}{\hat{f}^{(2)}} | V_{k_1}\right)(\bx) \right]
\end{equation}
and so
\begin{align*}
    \left\| \E_{\bbP,1,2} (\hat{p}^{(1, 2)}_{k_1}(\cdot)) - p \right\|_{\infty} & \leq \| \Pi(p | V_{k_1}) - p  \|_{\infty} + \E_{\bbP,2} \left[ \left\| \Pi\left( \frac{ p(f - \hat{f}^{(2)})}{\hat{f}^{(2)}} | V_{k_1}\right) \right\|_{\infty} \right].
\end{align*}
By Property P1 (Appendix \ref{sec: wavelets}), 
\begin{equation*}
    \| \Pi(p | V_{k_1}) - p  \|_{\infty} \lesssim k_1^{-\alpha/d}.
\end{equation*}
Moreover,
\begin{align}
    \E_{\bbP,2} \left[ \left\| \Pi\left( \frac{ p(f - \hat{f}^{(2)})}{\hat{f}^{(2)}} | V_{k_1}\right) \right\|_{\infty} \right] & \lesssim \E_{\bbP,2} \left[ \left\|  \frac{ p(f - \hat{f}^{(2)})}{\hat{f}^{(2)}}  \right\|_{\infty} \right]\nonumber \\
    & \lesssim \E_{\bbP,2} \left[ \| f - \hat{f}^{(2)} \|_{\infty} \right]  \quad (\text{by boundedness of $p, \hat{f}$}) \nonumber \\
    & \lesssim \left( \frac{n}{\log n}\right)^{-\frac{\gamma}{2\gamma + d}} \quad (\text{by assumption (ii) on $\hat{f}$})\label{eq: exp proj}
\end{align}
which completes the proof of (\ref{proj dist p}).
\end{proof}

\section{Simulations}

\subsection{Simulation design}

\subsubsection{Setting Hölder smooth functions}

We set the $\mu$ function with Hölder smoothness class $s < 1$ as follows. By Appendix \ref{sec: wavelets}, we can express $h \in L^2([0, 1])$ in terms of the wavelet series
\begin{equation*}
    h(x) = \langle \phi, h \rangle\phi(x) + \sum_{l = 0}^{\infty} \sum_{m = 0}^{2^l - 1} \langle \psi_{l m}, h \rangle \psi_{l m}(x), \quad x\in(0,1)
\end{equation*}
where $\phi(x) = I(x \in (0, 1))$ is the Haar scaling function and $\psi_{l m}$ is the $m$th translate at the $l$th resolution level of the Haar wavelet function $\psi(x) = I(x \in (0, \frac{1}{2})) - I(x \in (\frac{1}{2},1))$. Recall that $h$ has Hölder smoothness class $s$ if $2^{(s + \frac{1}{2})}\langle \phi, h \rangle  < \infty$ and $\sup_{l, m} 2^{l(s + \frac{1}{2})}\langle \psi_{l m}, h \rangle  < \infty$  for all $l, m$. Thus, we set $\mu$ such that
\begin{equation*}
    \mu(x)  = \sum_{l = 0}^{L} \sum_{m = 0}^{2^l - 1} d_{l,m} \psi_{l m}(x), \quad x\in(0,1)
\end{equation*}
where $L= 10$ and the $d_{l, m}$ were set by $d_{l, m} = \epsilon_{l, m} 2^{-l(s+\frac{1}{2})}$ where the $\epsilon_{l,m}$ are i.i.d. Rademacher random variables.

The $\mu$ function in each of the three regularity settings (i.e., $s \in \{0.05, 0.25, 0.75\}$) is illustrated in Figure 5 in the main text.

\subsubsection{Estimators}

In the double sample splitting case, we split the sample into disjoint subsamples $\{\D_1, \D_2, \D_3\}$, each of size $n_s = n / 3$, and use the following estimators
\begin{align*}
    \hat{\psi}_{k_1, k_2}^{\mathrm{INT}} & = \frac{1}{n_s}\sum_{i \in \D_3} A_i Y_i -  \int \hat{p}_{k_1}^{(1)}(x)\hat{b}_{k_2}^{(2)}(x) dx \\
    \hat{\psi}^{\mathrm{MC}}_{k_1, k_2} & = \frac{1}{n_s}\sum_{i \in \D_3} A_i Y_i -  \frac{1}{n_s}\sum_{i \in \D_3} \hat{p}^{(1)}_{k_1}(X_i)\hat{b}^{(2)}_{k_2}(X_i) \\
    \hat{\psi}^{\mathrm{IF}}_{k_1, k_2} & = \frac{1}{n_s} \sum_{i \in \D_3} (A_i - \hat{p}^{(1)}_{k_1}(X_i))(Y_i - \hat{b}^{(2)}_{k_2}(X_i)).
\end{align*}
In the single sample splitting case, we split the sample into disjoint subsamples $\{\D_1, \D_2\}$, each of size $n_s = n / 2$, and use the following estimators
\begin{align*}
    \hat{\psi}_{k_1, k_2}^{\mathrm{INT}} & = \frac{1}{n_s}\sum_{i \in \D_2} A_i Y_i -  \int \hat{p}_{k_1}^{(1)}(x)\hat{b}_{k_2}^{(1)}(x) dx \\
    \hat{\psi}^{\mathrm{MC}}_{k_1, k_2} & = \frac{1}{n_s}\sum_{i \in \D_2} A_i Y_i -  \frac{1}{n_s}\sum_{i \in \D_2} \hat{p}^{(1)}_{k_1}(X_i)\hat{b}^{(1)}_{k_2}(X_i) \\
    \hat{\psi}^{\mathrm{IF}}_{k_1, k_2} & = \frac{1}{n_s} \sum_{i \in \D_2} (A_i - \hat{p}^{(1)}_{k_1}(X_i))(Y_i - \hat{b}^{(1)}_{k_2}(X_i)) \\
    \hat{\psi}_{k}^{\mathrm{NR}} & = \frac{1}{n_s} \sum_{i \in \D_2} A_i(Y_i - \hat{b}_k^{(1)}(X_i)).
\end{align*}
In the no sample splitting case, we let $\D_1$ denote the full sample (of size $n_s = n$) and use the following estimators
\begin{align*}
    \hat{\psi}_{k_1, k_2}^{\mathrm{INT}} & = \frac{1}{n_s}\sum_{i \in \D_1} A_i Y_i -  \int \hat{p}_{k_1}^{(1)}(x)\hat{b}_{k_2}^{(1)}(x) dx \\
    \hat{\psi}^{\mathrm{MC}}_{k_1, k_2} & = \frac{1}{n_s}\sum_{i \in \D_1} A_i Y_i -  \frac{1}{n_s}\sum_{i \in \D_1} \hat{p}^{(1)}_{k_1}(X_i)\hat{b}^{(1)}_{k_2}(X_i) \\
    \hat{\psi}^{\mathrm{IF}}_{k_1, k_2} & = \frac{1}{n_s} \sum_{i \in \D_1} (A_i - \hat{p}^{(1)}_{k_1}(X_i))(Y_i - \hat{b}^{(1)}_{k_2}(X_i)) \\
    \hat{\psi}_{k}^{\mathrm{NR}} & = \frac{1}{n} \sum_{i \in \D_1} A_i(Y_i - \hat{b}_k^{(1)}(X_i)).
\end{align*}

We additionally apply the estimators with cross-fitting in the double sample splitting and single sample splitting cases. In the double sample splitting case, this approach applies the estimators $3! = 6$ times based on exchanging the roles of $\{\D_1, \D_2, \D_3\}$ and averaging the estimates. In the single sample splitting case, this approach applies the estimators exchanging the roles of $\{\D_1, \D_2\}$ and averaging the two estimates. 

As discussed in the main text, we adopt approximate wavelet projection estimators of the nuisance functions using the Haar basis. Specifically, we use the nuisance function estimators given by
\begin{align*}
    \hat{p}_k^{(j)}(x) = \frac{1}{n_s} \sum_{i \in \mathcal{D}_j} A_i K_{V_k}(X_i,x) \\
    \hat{b}_k^{(j)}(x) = \frac{1}{n_s} \sum_{i \in \mathcal{D}_j} Y_i K_{V_k}(X_i,x)
\end{align*}
where the projection kernel is given by
\begin{equation*}
    K_{V_{k}}(X,x) := k \sum_{\ell = 1}^k I(X \in \mathcal{I}_\ell) I(x \in \mathcal{I}_\ell), \quad x \in [0, 1]
\end{equation*}
for $\mathcal{I}_\ell = [\frac{\ell-1}{k}, \frac{\ell}{k}]$, $\ell \in \{1, \dots, k\}$.

\subsubsection{Prediction-optimal and optimal resolutions}

To select the prediction-optimal and optimal resolutions, we consider a grid of values for the $k_j$. Specifically, we consider a grid of 40 values starting from 3 and ending at $n$, denoted by $\mathcal{K}_n$, with points more densely spaced at the lower end of 
the grid. The candidate values of $(k_1, k_2)$ is formed by the grid $\mathcal{K}_n \times \mathcal{K}_n$. 

We selected the prediction-optimal resolutions in each scenario (i.e., regularity regime and sample size). In each of $M$ Monte Carlo replications, we generate a sample of size $n_s$, fit $\hat{p}_{k}^{(j)}$ and $\hat{b}_{k}^{(j)}$ for each $k \in \mathcal{K}_n$ (where $\D_j$ is the full sample of size $n_s$), and estimate the mean integrated squared errors (MISEs) $\int_0^1 (p(x) - \hat{p}_k^{(j)}(x))^2 \, dx$ and $\int_0^1 (b(x) - \hat{b}_k^{(j)}(x))^2 \, dx$ by Monte Carlo integration with $10^4$ samples. That is, the MISE for   $\hat{p}^{(j)}_k$ is estimated by $\frac{1}{B}
\sum_{i=1}^{B} (p(x_i) - \hat{p}_k^{(j)}(x_i))^2$ where $x_1, \ldots, x_{B}$ are i.i.d. samples from a $\mathrm{Uniform}(0,1)$ distribution  and $B = 10^4$. The prediction-optimal $k_1$ is the $k \in \mathcal{K}_n$ that minimizes the average of the estimated MISE for $p$ over the $M$ replications; the prediction-optimal $k_2$ is defined analogously for $b$. 

We selected the optimal resolutions for each estimator in each scenario as follows. In each of $M$ Monte Carlo replications, we generate a sample of size $n$, compute the estimate of $\psi(\bbP)$ for each $(k_1, k_2) \in \mathcal{K}_n \times \mathcal{K}_n$ (or every $k \in \mathcal{K}_n$ for the Newey--Robins estimator), and compute the squared error $(\hat{\psi}_{k_1, k_2} - \psi(\bbP))^2$. The optimal $(k_1, k_2)$ is the pair that minimizes the average of these squared errors over the $M$ replications; for the Newey-Robins plug-in estimator, the optimal $k$ is the minimizer over $\mathcal{K}_n$. 

We use a value of $M = 30,000$ in the scenarios with $n = 300$ and use $M = 1,000$ in the scenarios with $n = 30,000$.

\subsection{Simulation results}

\subsubsection{Scenario: $n=300$}

Tables  \ref{tab:summary_double n=300 cv} and \ref{tab:summary_single n=300 cv} summarize the simulation results for the estimators with cross-fitting in the double sample splitting and single sample splitting cases, respectively.

\begin{table}[!h]
\centering
\caption{Simulation results for double sample split estimators with cross-fitting in settings with $n = 300$.} 
\label{tab:summary_double n=300 cv}
\begin{tabular}{llllllllllll}
  \hline
& & \multicolumn{5}{c}{Prediction-Optimal Resolutions} & \multicolumn{5}{c}{Optimal Resolutions} \\
\cmidrule(lr){3-7} \cmidrule(lr){8-12}
Regularity & Estimator & MSE & Bias\textsuperscript{2} & Var & $k_1$  & $k_2$ & MSE & Bias\textsuperscript{2} & Var  & $k_1$  & $k_2$ \\ 
  \hline
Low & Integral & 21.599 & 21.378 & 0.222 & 12 & 12 & 3.811 & 2.325 & 1.487 & 300 & 300 \\ 
   & Monte Carlo & 21.791 & 21.367 & 0.424 & 12 & 12 & 10.326 & 7.551 & 2.775 & 74 & 74 \\ 
   & First-Order & 21.570 & 21.388 & 0.182 & 12 & 12 & 3.061 & 2.116 & 0.945 & 300 & 48 \\ 
  Medium & Integral & 0.859 & 0.820 & 0.039 & 12 & 12 & 0.230 & 0.071 & 0.158 & 192 & 160 \\ 
   & Monte Carlo & 0.955 & 0.820 & 0.135 & 12 & 12 & 0.724 & 0.312 & 0.411 & 52 & 48 \\ 
   & First-Order & 0.841 & 0.820 & 0.021 & 12 & 12 & 0.116 & 0.043 & 0.074 & 192 & 12 \\ 
  High & Integral & 0.020 & 0.007 & 0.013 & 12 & 12 & 0.019 & 0.001 & 0.018 & 52 & 48 \\ 
   & Monte Carlo & 0.055 & 0.007 & 0.048 & 12 & 12 & 0.055 & 0.007 & 0.048 & 12 & 12 \\ 
   & First-Order & 0.013 & 0.007 & 0.006 & 12 & 12 & 0.009 & 0.001 & 0.007 & 34 & 12 \\ 
   \hline
\end{tabular}
\end{table}

\begin{table}[!h]
\centering
\caption{Simulation results for single sample split estimators with cross-fitting in settings with $n = 300$.} 
\label{tab:summary_single n=300 cv}
\begin{tabular}{llllllllllll}
  \hline
& & \multicolumn{5}{c}{Prediction-Optimal Resolutions} & \multicolumn{5}{c}{Optimal Resolutions} \\
\cmidrule(lr){3-7} \cmidrule(lr){8-12}
Regularity & Estimator & MSE & Bias\textsuperscript{2} & Var & $k_1$  & $k_2$ & MSE & Bias\textsuperscript{2} & Var  & $k_1$  & $k_2$ \\ 
  \hline
Low & Integral & 9.043 & 8.821 & 0.222 & 22 & 22 & 0.530 & 0.006 & 0.523 & 74 & 116 \\ 
   & Monte Carlo & 9.345 & 8.805 & 0.541 & 22 & 22 & 2.256 & 0.079 & 2.177 & 58 & 58 \\ 
   & First-Order & 26.563 & 26.191 & 0.372 & 22 & 22 & 4.273 & 3.301 & 0.972 & 6 & 300 \\ 
   & Newey-Robins & 16.581 & 16.342 & 0.240 &  & 22 & 4.183 & 2.971 & 1.213 &  & 192 \\ 
  Medium & Integral & 0.474 & 0.432 & 0.042 & 12 & 12 & 0.087 & 0.001 & 0.086 & 48 & 38 \\ 
   & Monte Carlo & 0.566 & 0.432 & 0.133 & 12 & 12 & 0.344 & 0.073 & 0.271 & 22 & 22 \\ 
   & First-Order & 1.369 & 1.333 & 0.036 & 12 & 12 & 0.237 & 0.109 & 0.128 & 192 & 6 \\ 
   & Newey-Robins & 0.862 & 0.821 & 0.041 &  & 12 & 0.299 & 0.127 & 0.172 &  & 94 \\ 
  High & Integral & 0.015 & 0.001 & 0.014 & 12 & 12 & 0.012 & 0.000 & 0.012 & 19 & 26 \\ 
   & Monte Carlo & 0.049 & 0.001 & 0.048 & 12 & 12 & 0.040 & 0.003 & 0.037 & 19 & 15 \\ 
   & First-Order & 0.049 & 0.039 & 0.010 & 12 & 12 & 0.011 & 0.001 & 0.010 & 3 & 48 \\ 
   & Newey-Robins & 0.021 & 0.007 & 0.014 &  & 12 & 0.021 & 0.007 & 0.014 &  & 12 \\
   \hline
\end{tabular}
\end{table}

\subsubsection{Scenario: $n=30,000$}

\paragraph{Double sample splitting}

Tables \ref{tab:summary_double n=30000} and \ref{tab:summary_double n=30000 cv} summarize the simulation results for the estimators without and with cross-fitting, respectively.

\begin{table}[H]
\centering
\caption{Simulation results for double sample split estimators in settings with $n = 30,000$. The MSE, squared bias, and variance results are multiplied by a factor of 100.} 
\label{tab:summary_double n=30000}
\begin{tabular}{llllllllllll}
  \hline
& & \multicolumn{5}{c}{Prediction-Optimal Resolutions} & \multicolumn{5}{c}{Optimal Resolutions} \\
\cmidrule(lr){3-7} \cmidrule(lr){8-12}
Regularity & Estimator & MSE & Bias\textsuperscript{2} & Var & $k_1$  & $k_2$ & MSE & Bias\textsuperscript{2} & Var  & $k_1$  & $k_2$ \\ 
  \hline
Low & Integral & 65.570 & 62.542 & 3.028 & 740 & 740 & 5.017 & 0.327 & 4.690 & 16856 & 11976 \\ 
   & Monte Carlo & 65.111 & 62.649 & 2.462 & 740 & 740 & 6.477 & 1.574 & 4.903 & 4708 & 4708 \\ 
   & First-Order & 62.897 & 62.694 & 0.202 & 740 & 740 & 0.757 & 0.217 & 0.540 & 569 & 11976 \\ 
  Medium & Integral & 4.451 & 3.802 & 0.649 & 264 & 264 & 0.832 & 0.055 & 0.777 & 3414 & 7651 \\ 
   & Monte Carlo & 4.306 & 3.803 & 0.504 & 264 & 264 & 0.823 & 0.120 & 0.703 & 2094 & 2094 \\ 
   & First-Order & 3.811 & 3.784 & 0.027 & 264 & 264 & 0.079 & 0.011 & 0.068 & 6337 & 264 \\ 
  High & Integral & 0.180 & 0.037 & 0.143 & 129 & 129 & 0.147 & 0.002 & 0.145 & 2704 & 264 \\ 
   & Monte Carlo & 0.159 & 0.036 & 0.122 & 129 & 129 & 0.126 & 0.002 & 0.124 & 264 & 264 \\ 
   & First-Order & 0.050 & 0.037 & 0.012 & 129 & 129 & 0.014 & 0.002 & 0.013 & 129 & 264 \\ 
   \hline
\end{tabular}
\end{table}

\begin{table}[H]
\centering
\caption{Simulation results for double sample split estimators with cross-fitting in settings with $n = 30,000$. The MSE, squared bias, and variance results are multiplied by a factor of 100.} 
\label{tab:summary_double n=30000 cv}
\begin{tabular}{llllllllllll}
  \hline
& & \multicolumn{5}{c}{Prediction-Optimal Resolutions} & \multicolumn{5}{c}{Optimal Resolutions} \\
\cmidrule(lr){3-7} \cmidrule(lr){8-12}
Regularity & Estimator & MSE & Bias\textsuperscript{2} & Var & $k_1$  & $k_2$ & MSE & Bias\textsuperscript{2} & Var  & $k_1$  & $k_2$ \\ 
  \hline
Low & Integral & 63.045 & 62.732 & 0.313 & 740 & 740 & 1.126 & 0.229 & 0.897 & 16856 & 18329 \\ 
   & Monte Carlo & 64.079 & 62.730 & 1.349 & 740 & 740 & 5.400 & 1.587 & 3.813 & 4708 & 4708 \\ 
   & First-Order & 62.789 & 62.734 & 0.055 & 740 & 740 & 0.296 & 0.053 & 0.244 & 569 & 25498 \\ 
  Medium & Integral & 3.851 & 3.792 & 0.059 & 264 & 264 & 0.127 & 0.020 & 0.107 & 6337 & 7651 \\ 
   & Monte Carlo & 4.025 & 3.793 & 0.233 & 264 & 264 & 0.546 & 0.118 & 0.429 & 2094 & 2094 \\ 
   & First-Order & 3.798 & 3.791 & 0.007 & 264 & 264 & 0.024 & 0.005 & 0.019 & 9185 & 264 \\ 
  High & Integral & 0.049 & 0.037 & 0.012 & 129 & 129 & 0.013 & 0.000 & 0.013 & 1124 & 740 \\ 
   & Monte Carlo & 0.076 & 0.037 & 0.039 & 129 & 129 & 0.043 & 0.002 & 0.041 & 264 & 264 \\ 
   & First-Order & 0.041 & 0.037 & 0.004 & 129 & 129 & 0.004 & 0.000 & 0.004 & 264 & 411 \\ 
   \hline
\end{tabular}
\end{table}

\paragraph{Single sample splitting}

Tables \ref{tab:summary_single n=30000} and \ref{tab:summary_single n=30000 cv} summarize the simulation results for the estimators without and with cross-fitting, respectively.

\begin{table}[H]
\centering
\caption{Simulation results for single sample split estimators in settings with $n = 30,000$. The MSE, squared bias, and variance results are multiplied by a factor of 100.} 
\label{tab:summary_single n=30000}
\begin{tabular}{llllllllllll}
  \hline
& & \multicolumn{5}{c}{Prediction-Optimal Resolutions} & \multicolumn{5}{c}{Optimal Resolutions} \\
\cmidrule(lr){3-7} \cmidrule(lr){8-12}
Regularity & Estimator & MSE & Bias\textsuperscript{2} & Var & $k_1$  & $k_2$ & MSE & Bias\textsuperscript{2} & Var  & $k_1$  & $k_2$ \\ 
  \hline
Low & Integral & 6.856 & 3.444 & 3.413 & 925 & 925 & 3.548 & 0.050 & 3.499 & 2387 & 1340 \\ 
   & Monte Carlo & 6.591 & 3.438 & 3.154 & 925 & 925 & 3.432 & 0.065 & 3.368 & 1124 & 1124 \\ 
   & First-Order & 122.097 & 121.909 & 0.188 & 925 & 925 & 1.388 & 0.783 & 0.605 & 129 & 23494 \\ 
   & Newey-Robins & 42.374 & 41.594 & 0.780 &  & 925 & 1.877 & 0.444 & 1.433 &  & 9185 \\ 
  Medium & Integral & 2.522 & 1.868 & 0.654 & 264 & 264 & 0.697 & 0.004 & 0.693 & 925 & 740 \\ 
   & Monte Carlo & 2.428 & 1.865 & 0.563 & 264 & 264 & 0.622 & 0.004 & 0.618 & 740 & 925 \\ 
   & First-Order & 6.397 & 6.379 & 0.018 & 264 & 264 & 0.166 & 0.110 & 0.055 & 13059 & 129 \\ 
   & Newey-Robins & 3.952 & 3.788 & 0.163 &  & 264 & 0.264 & 0.029 & 0.235 &  & 4243 \\ 
  High & Integral & 0.135 & 0.003 & 0.132 & 129 & 129 & 0.135 & 0.003 & 0.132 & 129 & 129 \\ 
   & Monte Carlo & 0.120 & 0.003 & 0.117 & 129 & 129 & 0.120 & 0.003 & 0.117 & 129 & 129 \\ 
   & First-Order & 0.114 & 0.106 & 0.008 & 129 & 129 & 0.018 & 0.001 & 0.017 & 740 & 3 \\ 
   & Newey-Robins & 0.081 & 0.037 & 0.044 &  & 129 & 0.047 & 0.002 & 0.045 &  & 264 \\ 
   \hline
\end{tabular}
\end{table}

\begin{table}[H]
\centering
\caption{Simulation results for single sample split estimators with cross-fitting in settings with $n = 30,000$. The MSE, squared bias, and variance results are multiplied by a factor of 100.} 
\label{tab:summary_single n=30000 cv}
\begin{tabular}{llllllllllll}
  \hline
& & \multicolumn{5}{c}{Prediction-Optimal Resolutions} & \multicolumn{5}{c}{Optimal Resolutions} \\
\cmidrule(lr){3-7} \cmidrule(lr){8-12}
Regularity & Estimator & MSE & Bias\textsuperscript{2} & Var & $k_1$  & $k_2$ & MSE & Bias\textsuperscript{2} & Var  & $k_1$  & $k_2$ \\ 
  \hline
Low & Integral & 3.767 & 3.375 & 0.391 & 925 & 925 & 0.400 & 0.002 & 0.397 & 2094 & 1340 \\ 
   & Monte Carlo & 4.941 & 3.388 & 1.553 & 925 & 925 & 1.792 & 0.071 & 1.721 & 1124 & 1124 \\ 
   & First-Order & 122.117 & 122.018 & 0.100 & 925 & 925 & 1.206 & 0.674 & 0.532 & 30000 & 129 \\ 
   & Newey-Robins & 41.868 & 41.518 & 0.350 &  & 925 & 1.408 & 0.358 & 1.050 &  & 10045 \\ 
  Medium & Integral & 1.926 & 1.863 & 0.062 & 264 & 264 & 0.071 & 0.000 & 0.071 & 6969 & 569 \\ 
   & Monte Carlo & 2.099 & 1.862 & 0.237 & 264 & 264 & 0.281 & 0.002 & 0.279 & 569 & 569 \\ 
   & First-Order & 6.386 & 6.376 & 0.009 & 264 & 264 & 0.137 & 0.105 & 0.033 & 15492 & 129 \\ 
   & Newey-Robins & 3.843 & 3.782 & 0.061 &  & 264 & 0.153 & 0.028 & 0.124 &  & 4243 \\ 
  High & Integral & 0.016 & 0.004 & 0.012 & 129 & 129 & 0.016 & 0.004 & 0.012 & 129 & 129 \\ 
   & Monte Carlo & 0.041 & 0.004 & 0.038 & 129 & 129 & 0.041 & 0.004 & 0.038 & 129 & 129 \\ 
   & First-Order & 0.110 & 0.105 & 0.004 & 129 & 129 & 0.006 & 0.001 & 0.006 & 3 & 740 \\ 
   & Newey-Robins & 0.049 & 0.037 & 0.012 &  & 129 & 0.014 & 0.002 & 0.012 &  & 264 \\ 
   \hline
\end{tabular}
\end{table}

\paragraph{No sample splitting}
Table \ref{tab:summary_no n=30000} summarizes the simulation results for the estimators in the no sample splitting case.

\begin{table}[H]
\centering
\caption{Simulation results for the estimators without sample splitting in settings with $n = 30,000$. The MSE, squared bias, and variance results are multiplied by a factor of 100.} 
\label{tab:summary_no n=30000}
\begin{tabular}{llllllllllll}
  \hline
& & \multicolumn{5}{c}{Prediction-Optimal Resolutions} & \multicolumn{5}{c}{Optimal Resolutions} \\
\cmidrule(lr){3-7} \cmidrule(lr){8-12}
Regularity & Estimator & MSE & Bias\textsuperscript{2} & Var & $k_1$  & $k_2$ & MSE & Bias\textsuperscript{2} & Var  & $k_1$  & $k_2$ \\ 
  \hline
Low & Integral & 0.404 & 0.001 & 0.402 & 1572 & 1572 & 0.386 & 0.002 & 0.384 & 1823 & 3414 \\ 
   & Monte Carlo & 60.577 & 58.405 & 2.172 & 1572 & 1572 & 1.580 & 0.013 & 1.567 & 740 & 2094 \\ 
   & First-Order & 57.355 & 57.258 & 0.097 & 1572 & 1572 & 0.079 & 0.001 & 0.078 & 411 & 5209 \\ 
   & Newey-Robins & 0.404 & 0.001 & 0.402 &  & 1572 & 0.404 & 0.001 & 0.402 &  & 1572 \\ 
  Medium & Integral & 0.082 & 0.014 & 0.067 & 740 & 740 & 0.067 & 0.000 & 0.067 & 1823 & 925 \\ 
   & Monte Carlo & 2.506 & 2.203 & 0.304 & 740 & 740 & 0.257 & 0.000 & 0.257 & 264 & 1340 \\ 
   & First-Order & 2.984 & 2.975 & 0.010 & 740 & 740 & 0.010 & 0.000 & 0.010 & 129 & 3414 \\ 
   & Newey-Robins & 0.082 & 0.014 & 0.067 &  & 740 & 0.082 & 0.014 & 0.067 &  & 740 \\ 
  High & Integral & 0.021 & 0.008 & 0.012 & 264 & 264 & 0.013 & 0.001 & 0.012 & 411 & 264 \\ 
   & Monte Carlo & 0.175 & 0.133 & 0.042 & 264 & 264 & 0.039 & 0.000 & 0.039 & 129 & 129 \\ 
   & First-Order & 0.036 & 0.033 & 0.004 & 264 & 264 & 0.005 & 0.000 & 0.005 & 411 & 3 \\ 
   & Newey-Robins & 0.021 & 0.008 & 0.012 &  & 264 & 0.021 & 0.008 & 0.012 &  & 264 \\ 
   \hline
\end{tabular}
\end{table}

\end{appendix}

\bibliographystyle{unsrt}
\bibliography{bibliography}